\newcommand{\Hom}       {\operatorname{Hom}}
\newcommand{\Map}       {\operatorname{Map}}
\newcommand{\Mul}       {\operatorname{Mul}} 
\newcommand{\Fun}       {\operatorname{Fun}}
\newcommand{\op}        {\operatorname{op}}
\newcommand{\laxlim}    {\operatorname*{laxlim}}
\newcommand{\laxcolim}{\operatorname*{laxcolim}} 
\newcommand{\laxlimdag}    {\operatorname*{laxlim^\dagger}}
\newcommand{\laxcolimdag}    {\operatorname*{laxcolim^\dagger}}
\newcommand{\sh}	{\mathrm{sh}}
\newcommand{\Spc}{\mathcal{S}}          
\newcommand{\Spcp}{\mathcal{S}_\ast}    
\newcommand{\Spcgl}{\mathcal{S}_{gl}}   
\newcommand{\I}{\mathcal{I}}            
\newcommand{\Tp}{\mathcal{T}_\ast}      
\newcommand{\Top}{\mathcal{T}}           
\newcommand{\Li}{\CI}             
\newcommand{\OO}{\mathbf{O}}            
\newcommand{\Ar}{\operatorname{Ar}}     
\newcommand{\Arinj}{\Ar_{\mathrm{inj}}} 
\newcommand{\CB}        {{\mathcal{B}}}
\newcommand{\CC}        {{\mathcal{C}}}
\newcommand{\CD}        {{\mathcal{D}}}
\newcommand{\CE}        {{\mathcal{E}}}
\newcommand{\CI}        {{\mathcal{I}}}
\newcommand{\CJ}        {{\mathcal{J}}}
\newcommand{\CK}        {{\mathcal{K}}}
\newcommand{\CM}        {{\mathcal{M}}}
\newcommand{\CP}        {{\mathcal{P}}} 
\newcommand{\CQ}        {{\mathcal{Q}}}
\newcommand{\CT}        {{\mathcal{T}}}
\newcommand{\CV}        {{\mathcal{V}}}
\newcommand{\CW}        {{\mathcal{W}}}
\newcommand{\CO}        {{\mathcal{O}}}
\renewcommand{\CW}      {{\mathcal{W}}}
\newcommand{\CMod}{\mathcal{CM}}       
\newcommand{\tCMod}{\widetilde{\CMod}} 
\newcommand{\Mod}{\operatorname{Mod}}   
\newcommand{\Finp}{\mathrm{Fin}_\ast} 
\newcommand{\Alg}{\operatorname{Alg}}   
\newcommand{\CAlg}{\operatorname{CAlg}} 
\newcommand{\Glo}       {\mathrm{Glo}}
\newcommand{\Orb}       {\mathrm{Orb}}
\newcommand{\Rep}		{\mathrm{Rep}}
\newcommand{\Fin}       {\mathrm{Fin}}
\newcommand{\Sp}        {\mathrm{Sp}}
\newcommand{\PSp}       {\mathrm{PSp}}
\newcommand{\Cat}       {\mathrm{Cat}}
\newcommand{\Op}        {\mathrm{Op}}
\newcommand{\Tw}        {\mathrm{Tw}}
\newcommand{\pr}        {\mathrm{pr}}
\newcommand{\sur}       {\mathrm{sur}}
\newcommand{\h}{\mathrm{h}}
\newcommand{\Fgtglo}[1]		{U^{gl}_{#1}}
\newcommand{\ORfgl}		   {\OR_{fgl}}
\newcommand{\Sfgl}			{S_{fgl}}
\newcommand{\PSpfgl}	  {\PSp_{fgl}}
\newcommand{\Fgtfgl}[2]  {U^{fgl}_{#1,#2}}
\newcommand{\OR}		   {\mathbf{OR}}
\newcommand{\ORf}		  {\ORfgl}  
\newcommand{\ORgl}		  {\OR_{gl}}
\newcommand{\tOR}		  {\OR}
\newcommand{\ttOR}		  {\widetilde{\OR}}
\newcommand{\Slax}		  {S_{gl}}
\newcommand{\PSplax}    {\PSp_{gl}^\dagger}
\newcommand{\Splax}      {\Sp_{gl}^\dagger}
\newcommand{\Llax}        {L_{gl}}
\newcommand{\Spgl}		 {\Sp_{gl}}
\newcommand{\Sgl}		  {S_{gl}}
\newcommand{\colim}  		{\operatornamewithlimits{colim}}
\newcommand{\cocolon}{\nobreak \mskip6mu plus1mu \mathpunct{}\nonscript\mkern-\thinmuskip {:}\mskip2mu \relax}
\newcommand{\Unco}[1]       {\mathrm{Un}^\mathrm{co}(#1)}
\newcommand{\Unct}[1]       {\mathrm{Un}^\mathrm{ct}(#1)}
\newcommand{\DgSpc}[1]		{{{#1}\text{$-$}\Spc}}
\newcommand{\DgSpcp}[1]	   {{{#1}\text{$-$}\Spcp}}
\newcommand{\DgTop}[2]	   {{{#1}\text{$-$}{#2}\Top}}
\newcommand{\DgTp}[2]	    {{{#1}\text{$-$}{#2}\Tp}}
\newtheorem{theorem}{Theorem}[section]
\newtheorem*{thm*}{Theorem}
\newtheorem{lemma}[theorem]{Lemma}
\newtheorem{proposition}[theorem]{Proposition}
\newtheorem{corollary}[theorem]{Corollary}
\theoremstyle{definition}
\newtheorem{remark}[theorem]{Remark}
\newtheorem{definition}[theorem]{Definition}
\newtheorem{example}[theorem]{Example}
\newtheorem{construction}[theorem]{Construction}
\newtheorem{notation}[theorem]{Notation}
\numberwithin{equation}{theorem}
\newtheorem*{remark*}{Remark}
\newtheorem{Step}{Step}
\title{Global homotopy theory via partially lax limits}
\author{Sil Linskens, Denis Nardin and Luca Pol}
\begin{document}
\begin{abstract}
 We provide new $\infty$-categorical models for unstable and stable global homotopy theory. 
 We use the notion of partially lax limits to formalize the 
 idea that a global object is a collection of $G$-objects, one for each compact 
 Lie group $G$, which are compatible with the restriction-inflation functors. 
 More precisely, we show that the $\infty$-category of global spaces is equivalent to 
 a partially lax limit of the functor sending a compact 
 Lie group $G$ to the $\infty$-category of $G$-spaces.
 We also prove the stable version of this result, showing that the 
 $\infty$-category of global spectra is equivalent to the partially lax limit of a diagram 
 of $G$-spectra. Finally, the techniques employed in the previous cases allow us to describe 
 the $\infty$-category of proper $G$-spectra for a Lie group $G$, as a limit of a diagram of 
 $H$-spectra for $H$ running over all compact subgroups of $G$.
\end{abstract}

\maketitle
\setcounter{tocdepth}{1}
\tableofcontents

\section{Introduction}

It has been noted since the beginning of equivariant homotopy theory that there are equivariant objects which exist uniformly and compatibly for all compact Lie groups in a certain family, and which exhibit extra functoriality. For example given compact Lie groups $\Pi$ and $G$, there exists a construction for the classifying space of $G$-equivariant $\Pi$-principal bundles which is uniform on the group $G$ and which is functorial on all continuous group homomorphism, \cite{Schwede18}*{Remark 1.1.29}. Similarly, there are uniform constructions for many equivariant cohomology theories, such as K-theory, cobordism and stable cohomotopy, just to mention a few. The objects exhibiting such a ``global'' behaviour are the subject of study of \emph{global homotopy theory}.

In this paper we provide a new $\infty$-categorical model for global homotopy theory by formalizing the idea that a global stable/unstable object is a collection of $G$-objects, one for each compact Lie group $G$, which are compatible with the restriction-inflation functors. The key categorical construction that we will use to make this slogan precise is that of a partially lax limit, which we recall below. The main result of our paper is that this construction agrees with the models of global homotopy theory considered in the literature. Specifically we will compare to the models of \cite{GH} and \cite{Schwede18} in the unstable and stable case respectively. We first present our result in the simpler context of unstable global homotopy theory, and then consider the stable analogue of our main result. Finally we discuss an application of the techniques developed in this paper to proper equivariant homotopy theory.

\subsection*{Unstable global homotopy theory}
Global spaces were first proposed in \cite{GH} as a powerful framework for studying the homotopy theory of topological stacks and topological groupoids, which in turn generalize orbifolds and complexes of groups. This homotopy theory records the isotropy data of such objects as a particular diagram of fixed points spaces. To make this precise, \cite{GH} defined the $\infty$-category of \emph{global spaces} as the presheaf $\infty$-category 
\[
\Spc_{gl} = \Fun(\Glo^{\op},\Spc).
\]
Here $\Glo$ is the $\infty$-category whose objects are all compact Lie groups $G$, and whose morphism spaces are given by $\hom(H,G)_{hG}$; the homotopy orbits of the conjugation $G$-action on the space of continuous group homomorphisms. In particular, a global space $X$ consists of the data of a fixed point space $X^G$ for every compact Lie group $G$ which are functorial in all continuous group homomorphisms. Furthermore, the conjugation actions have been trivialized, reflecting the fact that spaces of isotropy are insensitive to inner automorphisms. 

This definition is motivated by Elmendorf's theorem in equivariant homotopy theory which states that the $\infty$-category of \emph{$G$-spaces} $\Spc_G$ is equivalent to the presheaf $\infty$-category on the $G$-orbit category $\OO_G$. Here $\Spc_G$ is defined as the $\infty$-categorical localization of $G$-CW-complexes at the homotopy equivalences, and $\OO_G$ is the full subcategory of $G$-spaces spanned by the transitive $G$-spaces $G/H$ for a closed subgroup $H\subseteq G$. 

There is in fact a strong connection between equivariant and global homotopy theory. Let $\Orb$ denote the wide subcategory of $\Glo$ spanned by the injective group homomorphisms. Gepner-Henriques~\cite{GH} observed that the slice $\infty$-category 
$\Orb_{/G}$ is equivalent to the $G$-orbit category $\OO_G$. In particular, this allows us to define a restriction functor 
\[
\mathrm{res}_G\colon \Spcgl \to \Fun(\OO_G^{\op}, \Spc)\simeq \Spc_G
\]
by precomposing with forgetful functor $\OO_G\simeq \Orb_{/G}\to \Glo$. Thus a global space has an associated underlying $G$-space for all compact Lie groups $G$. Furthermore, that all these $G$-spaces come from the same global object imposes strong compatibility conditions among them. 

We would like to understand how to recover a global space $X$ from its restrictions $\mathrm{res}_G X$ to all compact Lie groups $G$, together with the previously mentioned compatibility conditions. The precise sense in which this is possible requires the notion of a (partially) lax limit, which we now recall following~\cite{GHN} and \cite{Berman}.

\subsection*{Partially lax limits}
Let $\CI$ be an $\infty$-category and consider a functor $F\colon \CI\to \Cat_{\infty}$. Intuitively, the \emph{lax limit of} $F$ is the $\infty$-category $\laxlim F$ whose objects consist of the following data
\begin{itemize}
	\item  an object $X_i\in F(i)$ for each $i\in\CI$;
	\item and compatible morphisms $f_{\alpha}\colon F(\alpha)(X_i)\to X_j$ for every arrow $\alpha\colon i\to j$ in $\CI$.
\end{itemize}
A morphism $\{X_i,f_\alpha\}\rightarrow \{X_i',f'_\alpha\}$ is a suitably natural collection of maps $\{g_i\colon X_i\rightarrow X_i'\}$. More precisely, $\laxlim F$ is the $\infty$-category of sections of the cocartesian fibration associated to $F.$ For our description we will require that for certain arrows $\alpha$ in $\CI$, the map $f_\alpha$ is an equivalence. We therefore fix a collection of edges $\mathcal{W} \subset \CI$, which contains all equivalences and which is stable under homotopy and composition, and denote by $\CI^\dagger$ the resulting marked $\infty$-category. The \emph{partially lax limit} of $F$ is then the subcategory of $\laxlim F$ spanned by those objects $(\{X_i\}, \{f_\alpha\})$ for which the canonical map $f_\alpha$ is an equivalence for all edges $\alpha \in \mathcal{W}$. Note that if $\mathcal{W}$ contains only equivalences, then we recover the lax limit of $F$. On the other hand, if $\mathcal{W}$ contains all edges, we recover the usual notion of the limit of $F$. In particular we obtain canonical functors 
\[
\lim F \to \laxlimdag F \to \laxlim F,
\]
which indicates that a partially lax limit interpolates between the limit and the lax limit of a diagram.
For exposition's sake, we have only defined the partially lax limit of a functor with values in $\Cat_\infty$, but there are similar definitions if we replace $\Cat_\infty$ with $\Cat_\infty^\otimes$, the $\infty$-category of symmetric monoidal $\infty$-categories. We refer the reader to Section~\ref{sec-partially-lim} for more details on this construction.

As mentioned, in this paper we show that a global space can be thought of as a compatible collection of $G$-spaces. We can formalize what ``compatible'' means using the language of partially lax limits. To this end, let $(\Glo^{\op})^\dagger$ denote the $\infty$-category $\Glo^{\op}$ where we marked all the edges in $\Orb^{\op}\subseteq \Glo^{\op}$, i.e. all the injective edges. We prove the following theorem, which summarizes the main result of Section~\ref{sec-global-spaces}.

\begin{thm*}[\ref{thm:unstablelaxlim}]
	There exists a functor $\Spc_\bullet \colon \Glo^{\op} \to \Cat_\infty^\otimes$ 
	which sends a compact Lie group $G$ to the $\infty$-category of $G$-spaces 
	$\Spc_G$ endowed with the cartesian symmetric monoidal structure, and a 
	continuous group homomorphism $\alpha\colon H \to G$ to the restriction-inflation 
	functors. Furthermore, there is a  symmetric monoidal equivalence 
	\[
	\Spcgl \simeq \laxlimdag_{G\in(\Glo^{\op})^\dagger} \Spc_G
	\]
	between the $\infty$-category of global spaces with the cartesian monoidal 
	structure and the partially lax limit over $(\Glo^{\op})^\dagger$ of the diagram 
	$\Spc_\bullet$.
\end{thm*}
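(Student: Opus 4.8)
The plan is to reduce the theorem to the single structural fact that $\Glo$ is equivalent to the partially lax colimit of the orbit categories $\OO_G$, and then to invoke the formal duality under which forming presheaf $\infty$-categories turns partially lax colimits of small $\infty$-categories into partially lax limits of presheaf $\infty$-categories. Accordingly I would first build the diagram. Let $\OO_\bullet\colon\Glo\to\Cat_\infty$ be the functor with $\OO_\bullet(G)\simeq\OO_G\simeq\Orb_{/G}$ and $\OO_\bullet(\alpha)\colon\OO_H\to\OO_G$ acting on objects by $H/L\mapsto G/\alpha(L)$; I expect to construct it by unstraightening a concrete cocartesian fibration $p\colon\CE\to\Glo$ whose objects are pairs $(G,L\hookrightarrow G)$, whose fibre over $G$ is $\Orb_{/G}$, and for which a $p$-cocartesian lift of $\alpha\colon H\to G$ carries $(H,L\hookrightarrow H)$ to $(G,\alpha(L)\hookrightarrow G)$ (checking the coherences, including those of the conjugation $2$-cells, is the bookkeeping part; alternatively one cites the known restriction--inflation functoriality of genuine $G$-spaces). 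Then $\Spc_\bullet$ is the functor $G\mapsto\mathcal{P}(\OO_G)=\Spc_G$, $\alpha\mapsto\OO_\bullet(\alpha)^\ast$; the latter is, by construction, restriction along $H/L\mapsto G/\alpha(L)$, i.e. the restriction--inflation functor. The Cartesian symmetric monoidal refinement comes from the facts that each $\mathcal{P}(\OO_G)$ is Cartesian closed and that the $\OO_\bullet(\alpha)^\ast$ preserve finite products, packaged via the universal property of Cartesian symmetric monoidal structures.

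The heart of the argument is the identification of $\Glo$ with the partially lax colimit. The forgetful functors $j_G\colon\Orb_{/G}\to\Glo$, $(L\hookrightarrow G)\mapsto L$, together with the natural transformations $j_H\Rightarrow j_G\circ\OO_\bullet(\alpha)$ whose component at $(L\hookrightarrow H)$ is the corestriction $\alpha|_L\colon L\twoheadrightarrow\alpha(L)$, assemble into a functor $J\colon\CE\to\Glo$. Writing $W$ for the set of $p$-cocartesian lifts of the injective edges of $\Glo$, I claim $J$ exhibits $\Glo$ as the localization $\CE[W^{-1}]$, equivalently $\Glo\simeq\laxcolimdag\OO_\bullet$ for the marking of $\Glo$ by $\Orb$. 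To prove this I would check that $s\colon\Glo\to\CE$, $G\mapsto(G,\mathrm{id}_G\colon G\hookrightarrow G)$, is a fully faithful left adjoint of $J$: the natural identifications $\Map_\CE\bigl(s(G'),(G,L\hookrightarrow G)\bigr)\simeq\Map_\Glo(G',L)=\Map_\Glo\bigl(G',J(G,L\hookrightarrow G)\bigr)$ give $s\dashv J$, while $J\circ s\simeq\mathrm{id}_\Glo$ shows the unit is an equivalence. Since $J$ inverts $W$ it factors as $\overline{J}\colon\CE[W^{-1}]\to\Glo$, and the induced $\overline{s}$ is an inverse: $\overline{J}\,\overline{s}\simeq\mathrm{id}$, and $\overline{s}\,\overline{J}$ sends $(G,L\hookrightarrow G)$ to the image of $(L,\mathrm{id}_L)$, which is identified with the image of $(G,L\hookrightarrow G)$ by the $p$-cocartesian lift $(L,\mathrm{id}_L)\to(G,L\hookrightarrow G)$ of the injective edge $L\hookrightarrow G$ — an element of $W$ and hence invertible in $\CE[W^{-1}]$.

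Now I apply $\mathcal{P}(-)$ to $\CE\to\CE[W^{-1}]\simeq\Glo$. Using the identification $\mathcal{P}(\CE)\simeq\laxlim_{\Glo^{\op}}\Spc_\bullet$ — the presheaf $\infty$-category on the total space of a cocartesian fibration is the lax limit of the fibrewise presheaf categories (see Section~\ref{sec-partially-lim}) — together with the fact that a presheaf on $\CE$ factors through $\CE[W^{-1}]$ precisely when it inverts the cocartesian lifts of injective edges, which on the lax-limit side says exactly that $\alpha^\ast X_G\to X_H$ is an equivalence for every injective $\alpha$, we obtain
\[
\Spcgl=\mathcal{P}(\Glo)\ \simeq\ \laxlimdag_{(\Glo^{\op})^\dagger}\Spc_\bullet .
\]
Unwinding, a global space $Y$ goes to the family $(\mathrm{res}_G Y)_G=(j_G^\ast Y)_G$ with its evident structure maps, so the equivalence is the expected comparison. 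As each $\mathrm{res}_G$ preserves finite products and finite products in a partially lax limit are computed fibrewise compatibly with the structure maps, the equivalence preserves finite products, hence upgrades canonically to a symmetric monoidal equivalence of the Cartesian symmetric monoidal structures; alternatively one runs the whole argument internally to $\Cat_\infty^\otimes$.

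The main obstacle is the identification $\Glo\simeq\CE[W^{-1}]$: establishing the adjunction $s\dashv J$ and controlling the morphisms inverted by $J$ genuinely uses the structure of $\Glo$ — injective homomorphisms, subconjugacy, and the conjugation data — and making the localization precise requires care with mapping spaces in $\CE[W^{-1}]$. The symmetric monoidal bookkeeping, both in the construction of $\Spc_\bullet$ and in upgrading the presheaf duality, is a secondary and lower-stakes difficulty.
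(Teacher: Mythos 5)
Your proposal is correct and follows essentially the same route as the paper: your cocartesian fibration $\CE\to\Glo$ with the stated cocartesian lifts is the paper's $\Arinj(\Glo)\to\Glo$ from Construction~\ref{cons-functoriality_ARR} (whose fibration structure rests on the factorization system of Proposition~\ref{prop:Glofactsystem} via Proposition~\ref{prop:ArRcocart}), your adjunction $s\dashv J$ is exactly Lemma~\ref{lem:adjointsource}, your identification $\CE[W^{-1}]\simeq\Glo$ is the content of Proposition~\ref{prop:partially-lax-colimit-is-C}, and the presheaf duality step is Proposition~\ref{prop:funpartlim}. One small correction: the adjunction $s\dashv J$ is a formal fact about arrow categories and does not by itself use the subconjugacy or conjugation structure of $\Glo$ — what genuinely needs the factorization system is the cocartesian fibration structure on $\CE$ and the observation that the counit of $s\dashv J$ lands pointwise in $W$.
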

By the above theorem a global space $X$ consists of the following data:
\begin{itemize}
	\item a $G$-space $\mathrm{res}_G X$ for each compact Lie group $G$,
	\item an $H$-equivariant map $f_\alpha\colon \alpha^*\mathrm{res}_G X\to \mathrm{res}_H X$ for each continuous group homomorphism $\alpha\colon H \to G$.
	\item the maps $f_\alpha$ are functorial, so that $f_{\beta\circ \alpha}\simeq f_\beta\circ \beta^*(f_{\alpha})$ for all composable maps $\alpha$ and $\beta$, and $f_{\mathrm{id}}=\mathrm{id}$;
	\item $f_\alpha$ is an equivalence for every continuous \emph{injective} homomorphism $\alpha$.
    \item a homotopy between  the map $f_{c_g}$ induced by the conjugation isomorphism and the map $l_g \colon c_g^* \mathrm{res}_G X \to \mathrm{res}_G X$ given by left multiplication by $g$. 
	\item higher coherences for the homotopies.
\end{itemize}
This is a precise formulation of the compatibility conditions encoded in a global space.

\subsection*{Global stable homotopy theory}

Our discussion so far has been limited to the homotopy theory of global spaces, but there are also numerous examples of equivariant cohomology theories exhibiting a global behaviour. These cohomology theories are represented by global spectra, and their study is called \emph{global stable homotopy theory}.

The consideration of ``global spectra" grew out of the literature on equivariant stable homotopy theory, and was considered in works such as \cite{GreenleesMay97}. Morally, a global spectrum models a compatible family of equivariant spectra for all compact Lie groups at once. Our main result makes this moral precise, and provides the same description as in the unstable case.

There are multiple models for the homotopy theory of global spectra. In this paper we will use the framework developed by Schwede in~\cite{Schwede18}. His approach has the advantage of being very concrete; the category of global spectra is modelled by the usual category of orthogonal spectra but with a finer notion of equivalence, the global equivalences. The category of orthogonal spectra with the global stable model structure of~\cite{Schwede18}*{Theorem 4.3.17} underlies a symmetric monoidal $\infty$-category $\Spgl$.
As any orthogonal spectrum is a global spectrum, this approach comes with a good range of examples. For instance, there are global analogues of the sphere spectrum, cobordism, topological and algebraic $K$-theory spectra, Borel cohomology, symmetric product spectra and many others. Global spectra have also been shown to give cohomology theories on orbifolds and topological stacks in~\cite{Juran}, thereby establishing them as a natural home for (genuine) cohomology theories on topological stacks. As part of the framework developed by Schwede, the $\infty$-category of global spectra comes with symmetric monoidal restriction functors
\[
\mathrm{res}_G \colon \Spgl \to \Sp_G
\]
into the $\infty$-category of $G$-spectra, for all compact Lie groups $G$. As a first indication that a global spectrum should consist of just this data, together with various comparison maps, note that the functors $\mathrm{res}_G$ are jointly conservative by the very definition of global equivalences. 

However, not all equivariant spectra admit global refinements. In fact being a ``global'' object forces strong compatibility conditions between the underlying $G$-spectra for different $G$. For example, $\mathrm{res}_GX$ is always a split $G$-spectrum by~\cite{Schwede18}*{Remark 4.1.2} and its $G$-homotopy groups for all $G$ together admit the structure of a global functor, see~\cite{Schwede18}*{Example 4.2.3}. We can again formalize how a global spectrum is determined by its restrictions for all compact Lie groups using the language of partially lax limits. Recall that $(\Glo^{\op})^\dagger$ denotes the $\infty$-category $\Glo^{\op}$, marked by all the edges in $\Orb^{\op}$, i.e. the injective group homomorphisms.

\begin{thm*}[\ref{thm-laxlim-global-spectra}]\label{thm-main-result-global-spectra}
	There exists a functor $\Sp_\bullet \colon \Glo^{\op} \to \Cat_\infty^\otimes$ 
	which sends a compact Lie group $G$ to the symmetric monoidal $\infty$-category of 
	$G$-spectra $\Sp_G^\otimes$, and a continuous group homomorphism 
	$\alpha\colon H \to G$ to the restriction-inflation functor. Furthermore, there is 
	a symmetric monoidal equivalence 
	\[
	\Spgl \simeq \laxlimdag_{G\in (\Glo^{\op})^\dagger} \Sp_G
	\] 
	between Schwede's $\infty$-category of global spectra  and the partially lax limit 
	over $(\Glo^{\op})^\dagger$ of the diagram $\Sp_\bullet$.
\end{thm*}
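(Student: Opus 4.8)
The plan is to bootstrap from the unstable statement (Theorem~\ref{thm:unstablelaxlim}) by stabilizing fiberwise. The strategy mirrors the proof strategy sketched for the unstable case: the key input is again that surjective and injective group homomorphisms form an orthogonal factorization system on $\Glo$ (Proposition~\ref{prop:Glofactsystem}), which lets us (a) assemble the restriction-inflation functors $\alpha^*\colon \Sp_G^\otimes\to\Sp_H^\otimes$ into a functor $\Sp_\bullet\colon\Glo^{\op}\to\Cat_\infty^\otimes$ trivializing the conjugation action, and (b) recover a global spectrum from the partially lax limit data. First I would construct $\Sp_\bullet$. For each compact Lie group $G$, genuine $G$-spectra can be obtained from pointed $G$-spaces by inverting the representation spheres $S^V$; more precisely $\Sp_G^\otimes$ is the symmetric monoidal localization of $\CP_\Sigma$ of the orbit category (or of pointed $G$-spaces) at the representation spheres, equivalently it is obtained by a fiberwise stabilization construction applied to $\Spc_{G,\ast}$. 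The functoriality of $G\mapsto\Sp_G^\otimes$ in $\Glo^{\op}$ should then be deduced from the functoriality of $G\mapsto\Spc_{G}^\otimes$ together with naturality of this stabilization construction; the point to check is that $\alpha^*$ sends the class of maps being inverted (suspensions by representations) into the corresponding class, which holds because $\alpha^*S^V\cong S^{\alpha^*V}$.

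**Next** I would pass to the partially lax limit. One should prove a general statement to the effect that a fiberwise (symmetric monoidal) localization commutes with $\laxlimdag$ over a marked $\infty$-category, at least under the hypothesis that the localizing classes are themselves natural and compatible with the marked/lax structure. Applying this with the unstable equivalence $\Spcpgl\simeq\laxlimdag\Spc_{G,\ast}$ in hand, we get $\laxlimdag\Sp_G$ identified with a localization of $\Spcpgl$ (or of some category of global prespectra built from it) at the global representation spheres. The content then becomes: this localization is exactly Schwede's $\infty$-category of global spectra $\Spgl$. For this I would use that $\Spgl$ is itself a localization of orthogonal spectra at the global equivalences, and that the restriction functors $\mathrm{res}_G\colon\Spgl\to\Sp_G$ are jointly conservative (noted in the excerpt) and symmetric monoidal; jointly conservative plus the fact that $\Spgl$ is generated under colimits by the global suspension spectra of $\Glo$-objects should pin down the identification with the localization computed on the partially lax limit side. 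In other words the comparison functor $\Spgl\to\laxlimdag\Sp_G$ is induced by the $\mathrm{res}_G$, is symmetric monoidal by construction, and is an equivalence because it is a map of presentable stable $\infty$-categories which is essentially surjective (by the localization description) and fully faithful on a set of compact generators (the global suspension spectra, where it reduces to the unstable statement after stabilizing mapping spaces).

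**Alternatively**, and perhaps more cleanly, I would phrase the whole argument as: $\laxlimdag\Sp_G\simeq\laxlimdag\Sp(\Spc_{G,\ast})\simeq\Sp(\laxlimdag\Spc_{G,\ast})\simeq\Sp(\Spcpgl)$, where the middle equivalence is the statement that fiberwise stabilization commutes with $\laxlimdag$ (this needs the marked edges to be such that the corresponding functors preserve finite limits, which injective restriction functors do, being right adjoints), and the last uses Theorem~\ref{thm:unstablelaxlim}; then separately identify $\Sp(\Spcpgl)$ with $\Spgl$. But $\Sp(\Spcpgl)$ need not agree with genuine global spectra on the nose — global spectra are \emph{not} merely the stabilization of global spaces, just as genuine $G$-spectra are not the stabilization of $G$-spaces. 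So the correct formulation uses the fiberwise localization at representation spheres rather than naive stabilization, and the commutation statement must be for that localization; I expect establishing this commutation (fiberwise symmetric monoidal localization versus partially lax limit) in the needed generality to be the main obstacle, since one must control how the universal property of the localization interacts with sections of a cocartesian fibration. Once that categorical lemma is in place, the identification of the resulting localization of $\Spcpgl$ with Schwede's $\Spgl$ should follow from comparing generators and the known conservativity and monoidality of the $\mathrm{res}_G$, together with a computation of mapping spectra out of global suspension spectra that reduces to the unstable equivalence.
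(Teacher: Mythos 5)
Your overall shape is reasonable (construct the diagram, localize, compare to Schwede), but the crucial middle step has a genuine gap, and the paper's proof is organized around precisely the obstruction you gloss over.

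The obstruction: the passage $\Spc_{G,\ast}\rightsquigarrow\Sp_G$ by ``inverting representation spheres'' is \emph{not} a reflective Bousfield localization --- $\Sp_G$ is not a full subcategory of $\Spc_{G,\ast}$, it is a pushout in $\CAlg(\Pr^L)$ which freely adjoins inverses. The commutation lemma you want (a fiberwise localization commutes with partially lax limits over a marked base, cf.\ Lemma~\ref{lem:adjunction-to-the-marked-lax-limit}) is stated and proved for reflective localizations, and there is no version of it that applies to this kind of ``free'' stabilization. So the proposed chain $\laxlimdag \Sp_G \simeq \Sp(\laxlimdag\Spc_{G,\ast})$ (or its refinement at representation spheres) does not reduce to any available categorical lemma. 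Moreover, for a non-injective $\alpha\colon H\twoheadrightarrow G$, the localizing classes are genuinely incompatible: $\alpha^*\colon\Rep(G)\to\Rep(H)$ misses most $H$-representations, so $\alpha^*$ applied to a ``freely inverted'' category of $G$-objects does not yet see the missing deloopings --- and adding them is precisely the inexplicit process that must be controlled. The introduction of the paper flags this as the reason the universal-property functoriality (which you invoke to build $\Sp_\bullet$) is useless for \emph{computing} the partially lax limit.

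The paper resolves this by factoring through prespectra: it replaces $\Spc_{G,\ast}\to\Sp_G$ by $\Spc_{G,\ast}\to\PSp_G\to\Sp_G$, where the first functor is a free-module functor (strongly monoidal, not a localization) and the second \emph{is} a reflective Bousfield localization (Proposition~\ref{prop-Sp_G-local}), so that Lemma~\ref{lem:adjunction-to-the-marked-lax-limit} applies after one checks $\PSp_\alpha$ preserves stable equivalences (Proposition~\ref{prop-transformation-localization-spectra}). But this requires first constructing and computing the partially lax limit of $\PSp_\bullet$ explicitly, which occupies Sections~\ref{sec-cat-of-eq-prespectra}--\ref{sec:funct_prespectra}: the model $\PSp_G\simeq\Mod_{S_G}\Fun(\OR_G,\Spcp)$, the promonoidal category $\tOR$ over $\Glo^{\op}$ encoding the profunctorial family $\OR_\bullet$, the module--lax-limit interchange Theorem~\ref{thm:modules-in-laxlimit}, and the identification $\laxlimdag\PSp_\bullet\simeq\Mod_{S_{gl}}\Fun(\ORgl,\Spcp)$. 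Your proposal does not supply anything playing this role. Finally, identifying the localized lax limit with Schwede's $\Spgl$ is not a matter of ``comparing generators'': one must reconcile the $\ORgl$-model (all representations) with Schwede's model, which is built on \emph{faithful} representations; this is the content of Section~\ref{sec:global-spectra-par-lax-lim}, where the adjunction $j_!\dashv j^*$ along $\ORfgl\hookrightarrow\ORgl$ is shown to descend to an equivalence only after passing to $\Omega$-spectra. That extra degree of freedom (faithful vs.\ arbitrary representations) is invisible in your sketch and is where the final nontrivial step lives.

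Short version: your plan correctly identifies the ingredients at the conceptual level, but it assumes a localization-commutes-with-lax-limit lemma that does not apply (since stabilization is not reflective), and it under-estimates both the work needed to get an explicit handle on the prespectrum lax limit and the faithful-versus-all-representations discrepancy in the final comparison with Schwede's model.
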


\subsection*{Proper equivariant stable homotopy theory}
The techniques employed in the proof of Theorem~\ref{thm-laxlim-global-spectra} can also be used in other settings. Given a (not necessarily compact) Lie group $G$, we can consider the $\infty$-category of proper $G$-spectra $\Sp_{G,\pr}$. This is the $\infty$-category underlying the category of orthogonal $G$-spectra with the proper stable model structure of \cite{Proper}, in which a  map $f\colon X \to Y$ is a weak equivalence if and only if for all compact subgroups $H\leq G$, the map induced on homotopy groups $\pi_*^H(f)\colon \pi_*^H(X)\to \pi_*^H(Y)$ is an isomorphism. Write $\OO_{G,\pr}$ for the proper $G$-orbit category, which is defined to be the subcategory of $\OO_G$ spanned by the cosets $G/H$, where $H$ a compact subgroup of $G$. Our techniques allow us to prove:

\begin{thm*}[\ref{thm-lim-proper-spectra}]
	Let $G$ be a Lie group. There is a symmetric monoidal equivalence
	\[
	\Sp_{G,\pr} \simeq \lim_{H\in \OO_{G,\pr}^{\op}} \Sp_H
	\]
	between the $\infty$-category of proper $G$-spectra and the limit of the functor 
	$\Sp_\bullet$ restricted along the canonical functor 
	$\iota_G \colon \OO_{G,\pr}^{\op} \to \Glo^{\op}$ sending $G/H$ to $H$.
\end{thm*}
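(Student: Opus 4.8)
The plan is to mirror the proof of Theorem~\ref{thm-laxlim-global-spectra}. The first point to record is why the right-hand side is an \emph{honest} limit rather than a partially lax one: a morphism $G/H \to G/K$ of proper orbits comes from a subconjugation relation, hence is sent by $\iota_G$ to an \emph{injective} homomorphism $H\to K$. Thus $\iota_G$ factors through the wide subcategory $\Orb^{\op}\subseteq\Glo^{\op}$, so every edge in its image is marked in $(\Glo^{\op})^\dagger$; the defining condition of the partially lax limit then forces every structure map to be an equivalence, and $\laxlimdag_{\OO_{G,\pr}^{\op}}(\Sp_\bullet\circ\iota_G)\simeq\lim_{H\in\OO_{G,\pr}^{\op}}\Sp_H$. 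Pulling back along $\iota_G$ the datum making $\Sp_\bullet$ a functor on $\Glo^{\op}$ moreover furnishes the symmetric monoidal functoriality of $H\mapsto\Sp_H$ over $\OO_{G,\pr}^{\op}$, via the same orthogonal factorization system input (Proposition~\ref{prop:Glofactsystem}).

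Next I would construct the comparison functor. The proper stable model structure of~\cite{Proper} provides symmetric monoidal restriction functors $\mathrm{res}_H\colon\Sp_{G,\pr}\to\Sp_H$ for every compact $H\le G$; these are compatible, up to coherent homotopy, with the subconjugation morphisms of $\OO_{G,\pr}$ (one combines restriction with the conjugation equivalences, the coherence again being controlled by the factorization system), and therefore assemble into a symmetric monoidal functor
\[
\Phi\colon\Sp_{G,\pr}\longrightarrow\lim_{H\in\OO_{G,\pr}^{\op}}\Sp_H .
\]
I would then observe that $\Phi$ is a colimit-preserving symmetric monoidal functor between presentable stable $\infty$-categories (each $\mathrm{res}_H$ admits a right adjoint, and colimits in the target are computed in each $\Sp_H$ since the transition functors are cocontinuous), and that $\Phi$ is conservative --- this is literally the definition of a proper equivalence. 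By the standard recognition criterion for equivalences between compactly generated stable $\infty$-categories, it then remains to check that the essential image of $\Phi$ generates the target under colimits and that $\Phi$ is fully faithful.

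For the first point, the suspension spectra $\Sigma^\infty_+ G/H$ of the proper orbits ($H\le G$ compact) generate $\Sp_{G,\pr}$ by~\cite{Proper}, and unwinding $\Phi$ identifies $\Phi(\Sigma^\infty_+ G/H)$ with the value on the sphere of the left adjoint to $\mathrm{ev}_{G/H}$ (so that $\Phi(\Sigma^\infty_+ G/H)(G/L)\simeq\Sigma^\infty_+\mathrm{res}_L(G/H)$); since the evaluations $\mathrm{ev}_{G/L}$ are jointly conservative and the genuine fixed-point spectra detect objects in each $\Sp_L$, these images generate the limit. Full faithfulness then reduces to comparing $\map_{\Sp_{G,\pr}}(\Sigma^\infty_+ G/H,\Sigma^\infty_+ G/K)$ with the end $\lim_{L\in\OO_{G,\pr}^{\op}}\map_{\Sp_L}(\Sigma^\infty_+\mathrm{res}_L(G/H),\Sigma^\infty_+\mathrm{res}_L(G/K))$. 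Both are evaluated by decomposing $\mathrm{res}_L(G/H)$ into its $L$-cells --- orbits $L/(L\cap gHg^{-1})$, automatically compact --- and applying the $(\Sigma^\infty_+\dashv\text{genuine fixed points})$ adjunctions, turning each side into the proper analogue of the double coset / tom~Dieck decomposition indexed by $H\backslash G/K$; the two agree precisely because every isotropy group occurring is compact, so the decomposition ``closes up'' inside $\OO_{G,\pr}$.

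The hard part will be this last mapping-spectrum comparison. Since $G$ is an arbitrary Lie group, $G/K$ need not be a finite $H$-CW complex, so one must control a potentially infinite colimit over cells --- by a cofinality argument or by exhausting by $H$-cocompact subcomplexes --- and then match it, Weyl-group actions included, against the end on the diagram side, coherently in families over $\OO_{G,\pr}^{\op}$. A more routine obstacle is the homotopy-coherent construction of $\Phi$ itself together with its compatibilities, which, as indicated above, I would obtain by transporting the structure already present on $\Sp_\bullet$ along $\iota_G$ rather than building it by hand.
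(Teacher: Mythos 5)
Your approach is genuinely different from the paper's. You propose to build a comparison functor $\Phi\colon \Sp_{G,\pr}\to\lim_{H}\Sp_H$ from the model-categorical restriction functors, observe it is cocontinuous and conservative, and then verify the standard recognition criterion: full faithfulness on the compact generators $\Sigma^\infty_+ G/H$ together with generation of the target. The paper never constructs such a comparison at the spectrum level. Instead it works with prespectra throughout, models each $\PSp_H$ as $\Mod_{S_H}(\DgSpcp{\OR_H})$, exhibits the entire diagram as the family of operadic fibres of a single Day-convolution fibration built from $\ttOR_G^\otimes$ over $(\OO_{G,\pr}^{\op})^\amalg$, computes the limit formally to be $\Mod_{S_G}(\DgSpcp{\OR_G})\simeq \PSp_G$, and then identifies the passage to spectrum objects on both sides with the \emph{same} Bousfield localization using Proposition~\ref{prop-Sp_G-local}. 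What this buys is that the mapping-spectrum comparison, which dominates your proposal, simply never arises: the equivalence is established before stabilization and persists through the localization.

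As written, your proposal has two gaps. First, the generation claim for $\lim_H\Sp_H$ is asserted, not argued: joint conservativity of the evaluations $\mathrm{ev}_{G/L}$ is not the same as generation of the limit under colimits, and you would need a further argument involving the adjoints of those evaluations. Second, and more seriously, the full-faithfulness check reduces (as you say) to a tom~Dieck-style double-coset decomposition of $\mathrm{res}_L(G/H)$ for $L,H\le G$ compact and $G$ an arbitrary Lie group. Here $L\backslash G/H$ can be infinite and the $L$-CW structure on $\mathrm{res}_L(G/H)$ is not finite, so one must produce and control an infinite colimit decomposition, track the Weyl-group contributions, and do all of this coherently in the variable $L\in\OO_{G,\pr}^{\op}$ so that it matches the end on the diagram side. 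You flag this as ``the hard part'' but leave it open; without a concrete exhaustion or cofinality argument the proof does not close. The paper's formal route exists precisely to avoid having to do this.
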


Having introduced the main theorems of this article. We continue the introduction by discussing the proof strategy for each in some detail.

\subsection*{The proof strategy for Theorem~\ref{thm:unstablelaxlim}} 
We begin with a discussion of the proof of the unstable result. Implicit in~\cite{Rezk} is the following crucial observation (see also Proposition~\ref{prop:Glofactsystem}): the space of factorizations of any map $\alpha\colon H\rightarrow G$ in $\Glo$ into a surjective followed by an injective group homomorphism is contractible. In fewer words, the surjective and injective maps form an orthogonal factorization system on $\Glo$. This is the main ingredient in the proof of Theorem~\ref{thm:unstablelaxlim}, and moreover, we would like to argue it is at the core of the relationship between global and $G$-equivariant homotopy theory. 

This claim is justified by the following two facts. The first is that the functoriality under the restriction-inflation functors of the different $\infty$-categories of equivariant spaces is equivalent to the previous observation. The second is that the observation formally implies that one can recover a global space $X$ from the $\Glo^{\op}$-indexed diagram of $G$-spaces $\mathrm{res}_G X$. 

Let us first explain how the $\infty$-categories of equivariant spaces are functorial in the category $\Glo^{\op}$. Due to the existence of a non-trivial topology on the morphism spaces, this is not immediate. For example, note that exhibiting this functoriality also entails giving a homotopy coherent trivialization of the conjugation action on $\Spc_G$. The key is that the existence of the orthogonal factorization system allows one to define functors \[\alpha_!\colon \Orb_{/H}\rightarrow \Orb_{/G}, \quad (K\hookrightarrow H) \mapsto (\alpha(K)\hookrightarrow G).\] On objects $\alpha_!$ factorizes the composite $K\hookrightarrow H\rightarrow G$ into a surjection followed by an injection, and then only remembers the injective part. The fact that such factorizations are unique is equivalent to the fact that this functor is well-defined. Precomposing with $\alpha_!^{\op}$ gives the standard restriction functor $\alpha^*\colon \Spc_G\rightarrow \Spc_H.$ Furthermore given this description of the individual restriction functors, it is clear that they are functorial in $\Glo^{\op}$.

Next we explain how the observation implies that one can recover a global space from its restrictions. When one takes an object $(\{\mathrm{res}_G X\},\{f_\alpha\})$ of the partially lax limit over $\Glo^\dagger$ of the diagram $\Spc_\bullet$, the functoriality of the associated global space in injections is recorded by restricting to each $\mathrm{res}_G X$, and the functoriality in surjections is given by the morphisms $f_\alpha$. One recovers the functoriality in all morphisms in $\Glo$ by factorizing an arbitrary morphism into an injection followed by a surjection. The ability to split the functoriality in this way again reduces to the observation that the surjective and injective maps form an orthogonal factorization system. We make precise all of the ideas sketched here in Section \ref{sec-global-spaces}.

\subsection*{The proof strategy for Theorem~\ref{thm-laxlim-global-spectra}}

The proof of Theorem~\ref{thm-laxlim-global-spectra} is considerably more involved than its unstable analogue, and takes up the majority of the second half of the paper. Therefore we now give an overview of the proof as a roadmap for the reader. 

Firstly, we discuss the existence of the functor $\Sp_\bullet$. Recall that a $G$-spectrum can be thought as a pointed $G$-space together with a compatible collection of deloopings for all representation spheres. With modern tools we can give this construction a universal property: as a symmetric monoidal $\infty$-category $\Sp_G$ is obtained from the $\infty$-category of pointed $G$-spaces by freely inverting the representation spheres $S^V$ for every $G$-representation $V$, see~\cite{gepner2020equivariant}*{Appendix C}. This universal property, combined with the unstable functor $\Spc_\bullet$ of Theorem~\ref{thm:unstablelaxlim}, immediately gives the functoriality of $G$-spectra in $\Glo^{\op}$ as in our theorem. 

Unfortunately, constructing the functor $\Sp_\bullet$ via the universal property of equivariant spectra is unhelpful for our purposes, as it is too inexplicit for calculating the partially lax limit. For example, note that for a surjective group homomorphism $\alpha\colon H\rightarrow G$ and $G$-spectrum $E$, to obtain the $H$-spectrum $\alpha^*E$ one has to freely add deloopings with respect to representation spheres not in the image of $\alpha^*\colon \Rep(G)\rightarrow \Rep(H)$. This is a process which one cannot easily control.

Therefore, pivotal to our proof is an explicit construction of the functor $\Sp_\bullet$. The calculation of the partially lax limit of $\Sp_\bullet$ will then follow from this by a long series of nontrivial formal arguments. The crucial idea is construct and calculate with a functoriality on prespectrum objects rather than at the level of spectrum objects. In this setting, we are able to built the functoriality of equivariant prespectra  explicitly using the functoriality of the $\infty$-categories $\OO_G$ and $\Rep(G)$, the category of representations and linear isometries. 

To make this precise, let us first specify our model of $G$-prespectra. We define an $\infty$-category $\OR_G$, naturally fibred over $\OO_G^{\op}$, whose objects are pairs $(H,V)$, where $H$ is a closed subgroup of $G$ and $V$ is a $H$-representation, see Definition~\ref{def-OR_G}. This is canonically symmetric promonoidal and so the $\infty$-category of functors $\Fun(\OR_G,\Spcp)$ is symmetric monoidal via Day convolution. There is a functor $S_G\colon \OR_G\rightarrow \Spcp$ which sends the object $(H,V)$ to the pointed space $(S^V)^H$. This is a commutative algebra object in $\Fun(\OR_G,\Spcp)$ via the universal property of Day convolution. The first ingredient of the proof is the following:

\begin{Step}
	The $\infty$-category $\Sp_G$ is equivalent to an explicit Bousfield localization of the $\infty$-category 
 \[
 \PSp_G:= \Mod_{S_G}\Fun(\OR_G,\Spcp).
 \]
\end{Step}   

We obtain this description by reinterpreting the construction of $G$-spectra as a Bousfield localization of the level model structure on orthogonal $G$-spectra internally to $\infty$-categories. This identification is the culmination of Sections \ref{sec-cat-of-eq-prespectra} and \ref{sec:eq_prespectra}, and the reader can find a precise statement as Proposition~\ref{prop-Sp_G-local} and Corollary \ref{cor:G-prespectra-are-modules}.

Having obtained this identification, we can build the functoriality of equivariant prespectra by exhibiting the pairs $(\OR_G,S_G)$ as functorial in $\Glo^{\op}$. In fact the categories $\OR_G$ will only be (pro)functorial in $\Glo^{\op}$, but this is a subtlety which we choose to gloss over in this introduction. To exhibit this functoriality, we build a global version of the category $\OR_G$ and the algebra object $S_{G}$, which we denote by $\ORgl$ and $\Sgl$, see Definition~\ref{definition:ORgl}. The $\infty$-category $\ORgl$ is naturally fibred over $\Glo^{\op}$ and has objects $(G,V)$, where $G$ is a compact Lie group and $V$ is a $G$-representation, and $S_{gl}\colon \ORgl\rightarrow \Spcp$ sends $(G,V)$ to the pointed space $(S^V)^G$.  

There is a precise sense in which the pair $(\ORgl,\Sgl)$ contain all of the functoriality of the pairs $(\OR_G,S_G)$ in $\Glo$. For the group direction this stems from the fact that the surjections and injections form an orthogonal factorization system on $\Glo$, while for the representation direction this follows from the observation that $\ORgl$ is a cocartesian fibration over $\Glo^{\op}$ classifying the functor $\Rep(-)\colon \Glo^{\op}\rightarrow \Cat_\infty$ which sends a compact Lie group $G$ to its category of $G$-representations, with functoriality given by restriction.
These observations allow us to prove the following result, see Proposition~\ref{proposition:functoriality-of-prespectra}.
\begin{Step}
There exists a functor \[\PSp_\bullet \colon \Glo^{\op}\rightarrow \Cat_\infty^\otimes,\quad G\mapsto \PSp_G.\] Furthermore the partially lax limit of $\PSp_\bullet$ over $(\Glo^{\op})^\dag$ is given by $\Mod_{S_{gl}}\Fun(\ORgl,\Spcp)$.
\end{Step}
We have shown in Step 1 that $\Sp_G$ is a Bousfield localization of $\PSp_G$. We call a map in $\PSp_G$ a stable equivalence if it is inverted by the functor $\PSp_G\rightarrow \Sp_G$. 
\begin{Step}
The diagram $\PSp_\bullet$ preserves stable equivalences, and therefore induces a diagram $\Sp_\bullet$. Furthermore, as indicated by the notation, this diagram is equivalent to the functoriality of equivariant spectra built at the beginning of this section using the universal property of $\Sp_G$.
\end{Step}
In particular, on morphisms this diagram gives the standard restriction-inflation functors on equivariant spectra, see Corollary~\ref{cor:funct-Sp_bullet}. The following result follows formally from this.
\begin{Step}
The partially lax limit of $\Sp_\bullet$ is given by an explicit Bousfield localization of the $\infty$-category 
\[
\Mod_{S_{gl}}\Fun(\ORgl,\Spcp).
\]
\end{Step}

Finally, we compare this $\infty$-category to Schwede's model of global spectra, $\Spgl$. Once again we do this by first translating his construction into one internal to $\infty$-categories. We define an $\infty$-category $\ORfgl$ as the subcategory of $\ORgl$ spanned by the objects $(G,V)$, where $V$ is a \textit{faithful} $G$-representations. Restricting $\Sgl$ we obtain a commutative algebra object $\Sfgl$ in $\Fun(\ORfgl,\Spcp)$. We then show:
\begin{Step}
$\Spgl$ is an explicit Bousfield localization of the category $\Mod_{\Sfgl}(\Fun(\ORfgl,\Spcp))$.
\end{Step}
The precise statement is obtained by combining Proposition~\ref{prop-glspectra-local-object} and Corollary \ref{cor:PSpgl=PSpfgl}. Finally we show in Section \ref{sec:global-spectra-par-lax-lim} that the canonical inclusion $j\colon \ORfgl \to \ORgl$ induces an adjunction 
\[
j_! \colon \Mod_{\Sfgl}(\Fun(\ORfgl,\Spcp)) \leftrightarrows \Mod_{\Sgl}(\Fun(\ORgl,\Spcp))\cocolon j^*
\]
on prespectrum objects. Then we show that this adjunction descends to an adjunction on the corresponding Bousfield localizations of Steps 4 and 5. Finally we prove that the fibrancy conditions imposed by these localizations cancel out the difference between all and faithful representations, so that we obtain an equivalence 
\[
\Spgl \simeq\laxlimdag \Sp_\bullet,
\]
concluding the proof of Theorem \ref{thm-laxlim-global-spectra}.

Finally let us note that to fill in all of the details of this argument requires a long list of technical results about the relationship between various constructions applied to model categories and $\infty$-categories, Day convolution monoidal structures induced by promonoidal categories, and partially lax limits of symmetric monoidal categories. We have included these in Part I to make the paper self-contained, and because we failed to find a convenient reference for many of these facts.

\subsection*{Related work}

There are many models of global unstable homotopy theory. The first was given in \cite{GH}, and since then others have been obtained in \cite{Schwede18} and \cite{Schwede20}. The second of these papers, together with \cite{korschgen}, proves that all these models induce the same $\infty$-category. Finally, we would like to mention the unpublished manuscript \cite{Rezk}, which contains many of the ideas we exploit in Section~\ref{sec-global-spaces}.

There has been a lot of work towards finding a good framework for the study of global stable homotopy theory, see~\cites{Bohmann14, GreenleesMay97} and~\cite{Lewis}*{Chapter II}. Schwede's model~\cite{Schwede18} has so far being the most successful one, in part because of its numerous applications to equivariant stable homotopy theory, see for example~\cite{Schwede2017} and~\cite{Markusannals}. Hausmann~\cite{Markus2} gave a model for global homotopy theory for the family of finite groups by endowing the category of symmetric spectra with a global model structure. There is also a model for $G$-global homotopy theory~\cite{LenzGglobal} which is a synthesis between classical equivariant homotopy theory and Schwede's global homotopy theory. This specializes to global homotopy theory by setting $G$ to be the trivial group.
Recently, Lenz~\cite{Lenz2022} gave an $\infty$-categorical model for global stable homotopy theory for the family of finite groups using spectral Mackey functors. However to the best of our knowledge, our model is the first $\infty$-categorical model for global stable homotopy theory for the family of all compact Lie groups and not just the finite ones. 

\subsection*{Future directions}
In this paper we focused only on global and proper equivariant homotopy theory, but it is quite natural to wonder if we can recover our two results as a special case of a more general one. For any Lie group $G$, we can in fact consider $G$-global homotopy theory which is a generalization of global and $G$-equivariant homotopy theory. We conjecture that $G$-global stable homotopy theory is equivalent to the partially lax limit of the functor $\Sp_\bullet$ restricted along the canonical functor $\Glo^{\op}_{/G} \to \Glo^{\op}$.

\subsection*{Organization of the paper} 
The paper is divided into three main parts. 

In the first part we first discuss the relationship between model and $\infty$-categories. Then we recall the concept of a promonoidal $\infty$-category and use this to define the Day convolution product on functor categories. We then introduce the notions of partially lax (co)limits and collect various useful results that we will need throughout the paper. We finish Part I by describing the lax limits of symmetric monoidal $\infty$-categories in terms of the operadic norm functor. 

The second part of the paper contains the proofs of our main results. In Section~\ref{sec-global-spaces} we introduce the $\infty$-category of global spaces and prove Theorem~\ref{thm:unstablelaxlim}. This is an unstable version of Theorem~\ref{thm-laxlim-global-spectra}, and serves as a warm up for the considerably more involved proof of the stable case. We therefore recommend the reader to read this section before moving forward. In Section~\ref{sec-cat-of-eq-prespectra} we recall various model structures on the categories of orthogonal $G$-spectra for a Lie group $G$, and hence define the underlying $\infty$-categories of proper $G$-spectra and of global spectra. In Section~\ref{sec:eq_prespectra} we apply a variant of Elmendorf's theorem and use this to provide specific models for the $\infty$-categories of proper $G$-prespectra and global prespectra. In Section~\ref{sec:funct_prespectra} we construct the functor $\Sp_\bullet$ from the introduction, and in Section~\ref{sec:global-spectra-par-lax-lim} we identify the partially lax limits with the $\infty$-category of global spectra. Finally in Section~\ref{sec-proper-section}, we apply the same techniques to describe the $\infty$-category of proper $G$-spectra as a limit, proving Theorem~\ref{thm-lim-proper-spectra}.

The third part of the paper contains an appendix on the tensor product of modules in an $\infty$-category.

\subsection*{Acknowledgements}
The authors thank Stefan Schwede for first suggesting that a description of global spectra as a partially lax limit should be possible. We would also like to thank Stefan Schwede, Lennart Meier, Markus Hausmann, Branko Juran and Bastiaan Cnossen for numerous helpful conversations on this topic. We also thank the referees for helpful suggestions which improved the paper.

During the preparation of this text the first author was a member of the Hausdorff Center for Mathematics at the University of Bonn funded by the German Research Foundation (DFG). This material is based upon work supported by the Swedish Research Council under grant no. 2016-06596 while the third author was in residence at Institut Mittag-Leffler in Djursholm, Sweden during the semester \textit{Higher algebraic structures in algebra, topology, and geometry}. The first author was supported by the DFG Schwerpunktprogramm 1786 ``Homotopy Theory and Algebraic Geometry'' (project ID SCHW 860/1-1).  The third author was supported by the SFB 1085 Higher Invariants in Regensburg.

\part{Partially lax limits, promonoidal \texorpdfstring{$\infty$}{infty}-categories and  Day convolution}

In this part of the paper we introduce the necessary machinery to state and prove our main 
results. In the first section we give references for the passage from topological/model categories to $\infty$-categories. We then discuss the Day convolution 
product for functor $\infty$-categories, where the source is only assumed to be a promonoidal $\infty$-category. Finally we recall the notion of partially lax limits of $\infty$-categories 
and symmetric monoidal $\infty$-categories, and proof some useful properties about them.

\section{From topological/model categories to \texorpdfstring{$\infty$}{infty}-categories} 

In this paper we will often need to pass from topological categories (or operads) and (symmetric monoidal) model categories to $\infty$-categories. In this section we recall how this is done, and provide relevant references. After this section we will largely leave these identifications implicit for the rest of the paper.

\subsection{Topological categories and operads}

We can promote a topological category $\CC$ to an $\infty$-category by first applying 
the singular functor to the mapping spaces (see~\cite{HTT}*{Section 1.1.4}) and then applying
the coherent (also called simplicial) nerve functor~\cite{HTT}*{Corollary 1.1.5.12}. This defines a functor 
\[
\mathrm{TopCat} \to \Cat_\infty
\]
from topological categories to $\infty$-categories. Importantly, applying this functor to a topologically enriched category $\CC$ preserves the set of objects and the weak homotopy type of the mapping space between any two objects, see~\cite{HTT}*{Theorem 1.1.5.13}. Throughout this paper we will not distinguish between the topological category and its $\infty$-categorical counterpart. 

There is a similar functorial construction between topological operads and $\infty$-operads, which we now recall. Given a topological coloured operad $\CO$, we let $\CO^\otimes$ denote the topological 
 category whose objects are pairs $(I_+, (C_i)_{i \in I})$ where $I_+ \in \Finp$ and $C_i$ are 
 colours in $\CO$. Given a pair of objects $C=(I_+, \{C_i\}_{i \in I})$ and 
 $D=(J_+, \{D_j\}_{j\in J})$ 
 in $\CO^\otimes$, the morphism space $\CO^\otimes(C,D)$ is given by 
 \[
 \coprod_{\alpha \colon I_+ \to J_+} \prod_{j \in J} \CO(\{C_i\}_{\alpha(i)=j}, D_j).
 \]
 Composition is defined in the obvious way. This is the topological analogue 
 of~\cite{HA}*{Notation 2.1.1.22}. Note that $\CO^\otimes$ admits a functor to $\Finp$. By the process before, this induces a functor of $\infty$-categories $\CO^\otimes \rightarrow \Finp$.

\begin{lemma}\label{ORG_operad}
    Let $\CO$ be a topological coloured operad. Then the forgetful functor 
    $p\colon \CO^\otimes \to \Finp$ defines an $\infty$-operad. Moreover this construction is functorial in the sense that it sends maps of topological coloured operads to maps 
    of $\infty$-operads. 
\end{lemma}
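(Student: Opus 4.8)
The plan is to reduce this to the analogous fact about simplicial coloured operads. Recall that for a \emph{fibrant} simplicial coloured operad $\mathcal{D}$ --- one whose multi-mapping spaces are Kan complexes --- the coherent nerve $\N(\mathcal{D}^\otimes)$ of the associated simplicial category $\mathcal{D}^\otimes$ is an $\infty$-operad over $\Finp$ (this is the simplicial counterpart of the statement we want, and it appears in \cite{HA} around \cite{HA}*{Notation 2.1.1.22}, together with the fact that it is functorial). So the only real task is to recognise $p\colon \CO^\otimes\to\Finp$ as such an operadic nerve and invoke this.

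First I would form $\Sing\CO$, the simplicial coloured operad with the same colours as $\CO$ and with multi-mapping spaces $\Sing\CO(\{C_i\}_{i\in I},D)$; it is fibrant, since the singular complex of any space is a Kan complex. The key (and essentially only) point to check is that applying $\Sing$ commutes with the operad-to-category construction $(-)^\otimes$: from the explicit formula
\[
\CO^\otimes(C,D)\;=\;\coprod_{\alpha\colon I_+\to J_+}\ \prod_{j\in J}\ \CO(\{C_i\}_{\alpha(i)=j},\,D_j)
\]
this follows from the two elementary facts that $\Sing$ preserves products (being a right adjoint) and preserves coproducts of topological spaces (each $\Delta^n$ is connected, so a simplex of a disjoint union lands in a single summand). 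Granting this, the $\infty$-category obtained from $\CO^\otimes$ by applying $\Sing$ and then the coherent nerve, as in the discussion above, is precisely $\N\big((\Sing\CO)^\otimes\big)$ over $\Finp$, and the first assertion is the cited result. For functoriality I would observe that a map $\phi\colon\CO\to\CP$ of topological coloured operads induces a map $\CO^\otimes\to\CP^\otimes$ of the associated topological categories over $\Finp$ directly from the mapping-space formula; applying $\Sing$ and the coherent nerve (both functorial) yields the desired map of $\infty$-operads.

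Alternatively, for a self-contained verification rather than a citation, I would unwind \cite{HA}*{Definition 2.1.1.10}. For an inert $\alpha\colon I_+\to J_+$ and $C=(I_+,(C_i))$ one checks that the morphism $C\to\big(J_+,(C_{\alpha^{-1}(j)})_{j\in J}\big)$ assembled from identity multi-morphisms is $p$-cocartesian; the combinatorial input is that $\alpha$ restricts to a bijection $\alpha^{-1}(J)\xrightarrow{\ \sim\ }J$, so that $\{C_i : \beta\alpha(i)=k\}=\{C_{\alpha^{-1}(j)} : \beta(j)=k\}$ for every $\beta\colon J_+\to K_+$, which makes the relevant mapping-space square a componentwise homotopy pullback. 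This establishes axiom (1), and specialising to the projections $\rho^i\colon\langle n\rangle\to\langle 1\rangle$ also gives axiom (3), with witness $\big(\langle n\rangle,(C_1,\dots,C_n)\big)$ and its tautological inert projections. Axiom (2) is then immediate: on the component over a fixed $f\colon\langle m\rangle\to\langle n\rangle$, both the source and the target of the Segal map are literally $\prod_{1\le i\le n}\Sing\CO(\{C_a\}_{f(a)=i},D_i)$, and the map is the identity. I expect the only step requiring any care to be the compatibility of $\Sing$ with $(-)^\otimes$ in the second paragraph, and even that is routine bookkeeping; everything else is a citation or a direct reading-off from the mapping-space formula, and the fact that $\CO^\otimes$ is an $\infty$-category is automatic once $\Sing$ has been applied.
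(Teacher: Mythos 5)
Your proof follows the same route as the paper: apply $\Sing$ to obtain a fibrant simplicial coloured operad, observe that $\Sing$ commutes with the construction $(-)^\otimes$, and invoke \cite{HA}*{Proposition 2.1.1.27}. The only point worth stating explicitly in the functoriality claim is that the induced functor sends inert morphisms to inert morphisms (being a functor over $\Finp$ is not by itself enough to be a map of $\infty$-operads) — but this is clear from your own description of the cocartesian lifts of inert arrows as the morphisms assembled from identity multi-morphisms, which are visibly preserved.
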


\begin{proof}
 Recall that a topological category is seen as 
 an $\infty$-category by applying the singular functor on mapping spaces and then by applying the coherent nerve functor to the resulting simplicial category. 
 Since the singular functor preserves products and sends every object to a 
 fibrant one, it sends the topological coloured operad $\CO$ to a fibrant\footnote{Recall that a simplicial operad is fibrant if each multispace is a fibrant simplicial set, see \cite{HA}*{Definition 2.1.1.26}.} simplicial operad $\CO_s$. Moreover by direct inspection, the singular functor sends the topological category $\CO^\otimes$ defined above to $\CO^\otimes_s$ as defined in~\cite{HA}*{Notation 2.1.1.22}. Applying the coherent nerve to $\CO^\otimes_s \to \Fin_*$ we obtain an $\infty$-operad by~\cite{HA}*{Proposition 2.1.1.27}, proving the first claim. A simple check shows that the formation of the topological category $\CO^\otimes$ is functorial in maps of topological operads. Applying the singular functor and the coherent nerve then gives a functor of $\infty$-categories over $\Finp$. Furthermore the cocartesian edges over inert edges are explicitly constructed in the proof of ~\cite{HA}*{Proposition 2.1.1.27}, and the functor constructed clearly preserves these edges.
\end{proof}

\subsection{Model categories and $\infty$-categories}
We will very often pass from model categories to $\infty$-categories. Therefore we explain and give references for this passage.

Let $\mathcal{M}$ be a model category with class of weak equivalences denoted by $W$. We always assume that $\mathcal{M}$ has functorial factorizations. The model category $\mathcal{M}$ presents an $\infty$-category which we denote by $\mathcal{M}[W^{-1}]$. We may define $\mathcal{M}[W^{-1}]$ as the Dwyer-Kan localization of $N(\mathcal{M})$ at the weak equivalences of $\mathcal{M}$, i.e. as the initial $\infty$-category with a functor from $\mathcal{M}$ which inverts the morphisms in $W$.  Write $\mathcal{M}^f$, $\mathcal{M}^c$, and $\mathcal{M}^\circ$ for the full subcategories of $\mathcal{M}$ spanned by the fibrant, cofibrant and bifibrant objects respectively. The composite
\[
N(\mathcal{M}^f)\rightarrow N(\mathcal{M})\rightarrow \mathcal{M}[W^{-1}] 
\]
is a Dwyer-Kan localization at the restriction of $W$ to $\mathcal{M}^f$, and similarly for the case of cofibrant and bifibrant objects. See for example the discussion in~\cite{HA}*{Remark 1.3.4.16}. 

If $\mathcal{M}$ is a topological model category, then the enriched structure gives another construction of $\mathcal{M}[W^{-1}]$. In this case, $\mathcal{M}[W^{-1}]$ is equivalent to the $\infty$-category associated to the topologically enriched category $\mathcal{M}^\circ$ as in the previous section, see~\cite{HA}*{Theorem 1.3.4.20}. Throughout our paper it will be necessary to use all these different constructions of $\mathcal{M}[W^{-1}]$. 

We note that if the model category $\mathcal{M}$ is cofibrantly generated and the underlying category is locally presentable, then $\mathcal{M}[W^{-1}]$ is a presentable $\infty$-category, see \cite{HA}*{Proposition 1.3.4.22}. Also we note that any Quillen adjunction of model categories $F \colon \mathcal{M}_0 \rightleftarrows\mathcal{M}_1\cocolon G$ induces an adjunction of underlying $\infty$-categories $F \colon \mathcal{M}_0[W_0^{-1}] \rightleftarrows \mathcal{M}_1[W_1^{-1}]\cocolon G$ by~\cite{Hinich}*{Proposition 1.5.1}.

Next we may consider symmetric monoidal model categories. By ~\cite{HA}*{Proposition 4.1.7.6}, if $\mathcal{M}$ is a symmetric monoidal model category then the $\infty$-category $\mathcal{M}[W^{-1}]$ admits a symmetric monoidal structure such that the localization functor $\CM^c\rightarrow \CM[W^{-1}]$ is strong monoidal, and if $F$ is a symmetric monoidal left Quillen functor then $F$ is again symmetric monoidal.

Once again we obtain a different construction of the symmetric monoidal $\infty$-category $\mathcal{M}[W^{-1}]$ when $\mathcal{M}$ is topological. Namely one can first restrict to bifibrant objects and then form the topological coloured operad $N^{\otimes}(\mathcal{M})$ with colors $X\in \mathcal{M}^{\circ}$ and multi-morphism spaces 
\[\Mul_{N^{\otimes}(\mathcal{M}^\circ)}(\{X_1,\dots,X_n\}, Y) = \Map_{\mathcal{M}^\circ}(X_1\otimes \dots \otimes X_n,Y).\] This then gives an $\infty$-operad by Lemma \ref{ORG_operad}. By \cite{HA}*{Proposition 4.1.7.10} this is in fact a symmetric monoidal $\infty$-category whose underlying $\infty$-category is equivalent to $\mathcal{M}[W^{-1}]$. Furthermore, by \cite{HA}*{Corollary 4.1.7.16}, these two methods of obtaining a symmetric monoidal structure on $\mathcal{M}[W^{-1}]$ are equivalent.

\subsection{Pointed categories}\label{subsec-pointed}
Many of the typical constructions one applies to model categories admit an analogue internally to $\infty$-categories. Furthermore, in many cases these constructions are not only analogous but in fact equivalent. 

For example we may consider the formation of pointed objects. Given a model category $\CM$ with final object $*$, we can equip the slice category $\CM_* = \CM_{\ast/}$ with a model structure in which fibrations, cofibrations and weak equivalences are detected by the forgetful functor $\CM_* \to \CM$, see ~\cite{Hovey}*{Proposition 1.1.8}. 
If $\CM$ is cofibrantly generated with set of generating cofibrations $I$ and set of generating acyclic cofibrations $J$, then $\CM_*$ is also cofibrantly generated by the sets $I_+$ and $J_+$, see~\cite{Hovey}*{Lemma 2.1.21}. If $\CM$ is symmetric monoidal with cofibrant unit given by $*$, then the slice category $\CM_*$ with the smash product is again a symmetric monoidal model category with cofibrant unit, see~\cite{Hovey}*{Proposition 4.2.9}. 

Let us now discuss the same construction for $\infty$-categories. Given a presentable symmetric monoidal $\infty$-category $(\CC,\otimes)$, we can endow the slice $\CC_* = \CC_{*/}$ with a symmetric monoidal structure $\wedge_\otimes$ given as follows: for all $(* \to C),(* \to D) \in \CC_*$, we define $C\wedge_\otimes D$ by the following pushout in $\CC$: 
\[
\begin{tikzcd}
	{C\otimes * \sqcup *\otimes D} & {C\otimes D} \\
	{*\otimes *} & {C\wedge_\otimes D}.
	\arrow[from=2-1, to=2-2]
	\arrow[from=1-1, to=1-2]
	\arrow[from=1-2, to=2-2]
	\arrow[from=1-1, to=2-1]
	\arrow["\lrcorner"{anchor=center, pos=0.125, rotate=180}, draw=none, from=2-2, to=1-1]
\end{tikzcd}
\]
The existence of such symmetric monoidal structure on $\CC_*$ is a formal consequence of~\cite{HA}*{Proposition 4.8.2.11} as we now explain. Indeed the cited reference shows that the functor $(-)_*\colon \mathrm{Pr}^{\mathrm{L}} \to \mathrm{Pr}^{\mathrm{L}}_*$ from presentable $\infty$-categories to pointed presentable $\infty$-categories is a smashing localization, so it induces a functor on commutative algebras $\CAlg( \mathrm{Pr}^{\mathrm{L}})\to \CAlg(\mathrm{Pr}_*^{\mathrm{L}})$ showing that a symmetric monoidal structure on $\CC_*$ exists. 
Furthermore ~\cite{HA}*{Proposition 4.8.2.11} implies that this symmetric monoidal structure is uniquely determined by the condition that the tensor product on $\CC_*$ commutes with colimits on each variable and makes the functor $(-)_+ \colon \CC \to \CC_*$ into a symmetric monoidal functor. From this one obtains the concrete description of $\wedge_\otimes$ as given above. 

\begin{example}
	Applying this construction to $\Spc$ with the cartesian product returns $\Spcp$, the category of pointed spaces with the smash product. We write $\Spc^\times$ for the $\infty$-operad giving the former, and $\Spcp^\wedge$ for the latter.
\end{example}

We now give a result that connects these two constructions. 

\begin{proposition}\label{prop-ptd-obj-model}
	Let $\CM$ be a symmetric monoidal model category with cofibrant final object, which is also the monoidal unit. 
	Suppose that the underlying $\infty$-category $\CM[W^{-1}]$ is presentable. 
	Then the functor $(-)_+ \colon \CM \to \CM_*$ induces a symmetric monoidal equivalence
	\[
	 (\CM[W^{-1}])_*\simeq \CM_*[W^{-1}] .
	\]
\end{proposition}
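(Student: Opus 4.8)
The plan is to first identify the two $\infty$-categories by a monadicity argument and then match the symmetric monoidal structures using the uniqueness clause of \cite{HA}*{Proposition 4.8.2.11} recalled above. Since the monoidal unit of $\CM$ is the final object and is cofibrant, \cite{Hovey}*{Proposition 4.2.9} makes $\CM_*$ with the smash product into a symmetric monoidal model category with cofibrant unit; hence $\CM_*[W^{-1}]$ is canonically symmetric monoidal, its tensor product preserves colimits in each variable (tensoring with a cofibrant object is left Quillen, hence derives to a left adjoint), and it is pointed because $\CM_*$ has a zero object. The forgetful functor $U\colon\CM_*\to\CM$ is right Quillen --- it detects fibrations and acyclic fibrations --- and its left adjoint $X\mapsto X_+:=X\sqcup *$ is a symmetric monoidal left Quillen functor, since $X_+\wedge Y_+\cong(X\otimes Y)_+$ by the explicit description of the smash product. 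It therefore induces a symmetric monoidal, colimit-preserving functor $(-)_+\colon\CM[W^{-1}]\to\CM_*[W^{-1}]$.

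To identify the underlying $\infty$-categories, observe that $U$ preserves all weak equivalences, so it descends directly to a functor $U\colon\CM_*[W^{-1}]\to\CM[W^{-1}]$; this functor is conservative (as $U$ detects weak equivalences), admits a left adjoint $L$ (the left derived functor of $(-)_+$), and preserves sifted colimits (its underlying functor preserves connected colimits). By the $\infty$-categorical Barr--Beck theorem \cite{HA}*{Theorem 4.7.3.5}, $U$ is monadic, so $\CM_*[W^{-1}]\simeq\Mod_T(\CM[W^{-1}])$ for the monad $T=U\circ L$. Cofibrancy of the final object is essential here: it guarantees that $L$ carries an object $c$ to a representative of the coproduct $c\sqcup *$ of $c$ with the terminal object, so that $T$ is the ``add a disjoint basepoint'' monad $c\mapsto c\sqcup *$ on $\CM[W^{-1}]$. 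Since the forgetful functor $(\CM[W^{-1}])_*\to\CM[W^{-1}]$ is likewise monadic, with this same monad, we obtain an equivalence $\CM_*[W^{-1}]\simeq(\CM[W^{-1}])_*$ over $\CM[W^{-1}]$, compatible with the two copies of $(-)_+$.

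Finally, transporting the symmetric monoidal structure of $\CM_*[W^{-1}]$ across this equivalence equips $(\CM[W^{-1}])_*$ with a symmetric monoidal structure whose tensor product commutes with colimits in each variable and for which $(-)_+$ is symmetric monoidal; by the uniqueness clause of \cite{HA}*{Proposition 4.8.2.11} this must be $\wedge_\otimes$, which upgrades the previous equivalence to the desired symmetric monoidal equivalence $\CM_*[W^{-1}]\simeq(\CM[W^{-1}])_*$. The one genuinely delicate point is the identification of the monad $T$ with $c\mapsto c\sqcup *$ --- equivalently, the assertion that $\CM_*$ models the coslice under the honest terminal object and not under a cofibrant replacement of it --- and this is precisely where the hypothesis that the final object is cofibrant is used; dropping it makes the statement false.
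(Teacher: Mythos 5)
Your proof is correct and takes a genuinely different route from the paper's for the underlying-category identification. The paper simply cites \cite{Cisinski}*{Corollary 7.6.13} for the fact that $\CM_*[W^{-1}]$ models the coslice $(\CM[W^{-1}])_*$, whereas you reconstruct it from scratch via Barr--Beck: you show the forgetful $U\colon\CM_*[W^{-1}]\to\CM[W^{-1}]$ is conservative, left-adjointable, and preserves sifted colimits, identify the resulting monad with $c\mapsto c\sqcup *$, and observe that the $\infty$-categorical coslice has the same monadic description. What the two approaches buy is different: the paper's route is shorter and leans on a black-box reference; yours is self-contained and makes visible exactly where cofibrancy of $*$ enters (to identify the underived coproduct $X\sqcup *$ with the homotopy coproduct, hence the monad with ``add a disjoint basepoint''). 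The monoidal part --- invoking the uniqueness clause in \cite{HA}*{Proposition 4.8.2.11} once $(-)_+$ is known to be strong monoidal and colimit-preserving --- is essentially identical to the paper's.

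One point worth tightening: passing from ``both forgetful functors are monadic with \emph{this same monad}'' to ``the two categories of modules are equivalent over $\CM[W^{-1}]$'' is slightly more than an equality of underlying endofunctors; one should produce a comparison functor $\CM_*[W^{-1}]\to(\CM[W^{-1}])_*$ over $\CM[W^{-1}]$ that intertwines the two copies of $(-)_+$, and then invoke the Barr--Beck comparison criterion (\cite{HA}*{Corollary 4.7.3.16}, which this paper itself uses in the proof of Proposition~\ref{prop:Modules_localization_commute}). Likewise, the assertion that the derived $U$ preserves sifted colimits ``because its underlying functor preserves connected colimits'' is correct but implicitly uses that, in the coslice model structure, homotopy colimits over connected (hence sifted) shapes are created by $U$; this is where $U$ preserving cofibrations, acyclic cofibrations, and cofibrant objects (the last again using cofibrancy of $*$) is doing work. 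Neither is a gap in the idea, but both deserve a sentence in a polished write-up.
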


\begin{proof}
	First we note that the underlying $\infty$-category $\CM_*[W^{-1}]$ models the $\infty$-categorical slice $(\CM[W^{-1}])_*$, see for example \cite{Cisinski}*{Corollary 7.6.13}. Note also that $(-)_+\colon \CM\rightarrow \CM_*$ is left Quillen and strong monoidal, and therefore we obtain a strong monoidal colimit preserving functor 
	\[
	(-)_+\colon \CM[W^{-1}]\rightarrow \CM_*[W^{-1}]
	\]
	which is equivalent to the standard left adjoint $(-)_+$ under the equivalence $\CM_*[W^{-1}]\simeq \CM[W^{-1}]_*$ by inspection. Also, $\CM_*[W^{-1}]$ is automatically presentable and closed monoidal. Now we can conclude the result, because there is a unique closed symmetric monoidal structure on $\CM[W^{-1}]_*$ such that $(-)_+$ is strong monoidal.
\end{proof}

Next we consider the formation of module categories. Recall that given a symmetric monoidal $\infty$-category $\CC$ and a commutative algebra object $S\in \CAlg(\CC)$, the category of $S$-modules in $\CC$, $\Mod_S(\CC)$ is a symmetric monoidal category via the relative tensor product, constructed in \cite{HA}*{Section 4.5.2}. We will always consider $\Mod_S(\CC)$ as symmetric monoidal in this way.

\begin{proposition}\label{prop:Modules_localization_commute}
	Let $\CM$ be a symmetric monoidal and cofibrantly generated model category with weak 
	equivalences $W$, generating cofibrations $I$ and generating acyclic cofibrations $J$, 
	and let $A$ be a commutative algebra object in $\CM$ whose underlying object is cofibrant. Suppose that $\Mod_A(\CM)$ 
	admits a symmetric monoidal and cofibrantly generated model structure where fibrations 
	and weak equivalences are tested on underlying objects, and the sets $A \otimes I$ and $A \otimes J$ 
	form a set of generating cofibrations and generating acyclic cofibrations respectively. 
	Write $W_m$ for the class of weak equivalences in $\Mod_A(\CM)$. Then applying $\Mod_A$ to the functor $\CM^c\rightarrow \CM[W^{-1}]$ induces a symmetric monoidal equivalence
    \[
    \Mod_{A}(\CM)[W_m^{-1}] \simeq \Mod_{A}(\CM[W^{-1}]).
    \] 
\end{proposition}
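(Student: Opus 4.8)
The plan is to deduce this from the analogous statement for modules over $A$ in the $1$-categorical sense versus modules in the localized $\infty$-category, using the description of both sides via bifibrant objects and the general principle that a left Bousfield-type comparison of localizations can be checked after passing to underlying categories. First I would observe that, since $A$ is cofibrant and fibrations and weak equivalences in $\Mod_A(\CM)$ are detected on underlying objects, the forgetful functor $\Mod_A(\CM) \to \CM$ is right Quillen, with left adjoint the free functor $A \otimes (-)$; this is moreover a symmetric monoidal adjunction. Passing to underlying $\infty$-categories via the results recalled in the previous subsection, we obtain an adjunction $A \otimes (-) \colon \CM[W^{-1}] \rightleftarrows \Mod_A(\CM)[W_m^{-1}] \cocolon \fgt$ in which the right adjoint is conservative (weak equivalences are detected on underlying objects) and preserves all colimits (it preserves filtered colimits and geometric realizations since these are computed on underlying objects, by the hypotheses on the generating (acyclic) cofibrations and the usual small object argument analysis). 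Moreover $\fgt$ is lax symmetric monoidal and $A \otimes (-)$ is symmetric monoidal.

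Next I would invoke the Barr--Beck--Lurie theorem, \cite{HA}*{Theorem 4.7.3.5}, to conclude that the canonical comparison functor $\Mod_A(\CM)[W_m^{-1}] \to \Mod_{A}(\CM[W^{-1}])$ is an equivalence of $\infty$-categories, where on the right $A$ is regarded as a commutative algebra object of $\CM[W^{-1}]$ via the symmetric monoidal functor $\CM \to \CM[W^{-1}]$ (it is already cofibrant, hence fibrant-cofibrant after fibrant replacement does nothing to its weak equivalence class). Indeed, the endomorphism monad of the adjunction $(A\otimes(-), \fgt)$ on $\CM[W^{-1}]$ is identified with the monad $A \otimes (-)$ by comparing the two canonical algebra structures, and Barr--Beck applies since $\fgt$ is conservative and preserves the relevant colimits. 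This handles the underlying $\infty$-categorical equivalence.

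It then remains to upgrade this equivalence to a symmetric monoidal one. Here I would use that both $\Mod_A(\CM)[W_m^{-1}]$ and $\Mod_A(\CM[W^{-1}])$ receive symmetric monoidal functors from $\CM[W^{-1}]$, namely $A \otimes (-)$ in each case, and that these functors are essentially surjective on objects up to colimits and compatible with the respective relative tensor products. More precisely, the relative tensor product on $\Mod_A(\CC)$ for a symmetric monoidal $\infty$-category $\CC$ is characterized by the property that $A \otimes (-) \colon \CC \to \Mod_A(\CC)$ is symmetric monoidal and colimit-preserving (cf. \cite{HA}*{Section 4.5.2}); the analogous model-categorical statement, that $A \otimes (-) \colon \CM \to \Mod_A(\CM)$ is strong symmetric monoidal and left Quillen, passes to underlying $\infty$-categories by \cite{HA}*{Proposition 4.1.7.6}. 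Since the Barr--Beck comparison functor intertwines these two copies of $A \otimes (-)$ and is colimit-preserving, the uniqueness of the symmetric monoidal structure with this property forces it to be symmetric monoidal.

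The main obstacle I anticipate is verifying the hypotheses of Barr--Beck--Lurie cleanly, specifically that the forgetful functor $\Mod_A(\CM)[W_m^{-1}] \to \CM[W^{-1}]$ preserves geometric realizations of $\fgt$-split simplicial objects: one must check that the model-categorical forgetful functor (restricted to bifibrant objects) computes the correct homotopy colimits, which uses the hypothesis that $A \otimes I$ and $A \otimes J$ generate, together with the fact that $A$ is cofibrant so that $A \otimes (-)$ is homotopically well-behaved. The monoidal upgrade, by contrast, should be essentially formal once the underlying equivalence and the compatibility of the free functors is in place.
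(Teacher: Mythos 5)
Your first two paragraphs track the paper's strategy closely: the underlying equivalence does come from Barr--Beck monadicity over the common base $\CM[W^{-1}]$, using conservativity and colimit-preservation of the forgetful functor and the identification of the two free monads. (The paper cites \cite{HA}*{Corollary 4.7.3.16}, a comparison-of-monadic-adjunctions corollary of \cite{HA}*{Theorem 4.7.3.5}, and works through exactly the checks you describe, plus condition (e), the identification of the two monads, which you fold into ``by comparing the two canonical algebra structures'' -- that step does need the explicit description of the free $A$-module functor from \cite{HA}*{Corollary 4.3.3.13} and the cofibrancy of $A$.)

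The genuine gap is the third paragraph. You first extract an equivalence of underlying $\infty$-categories from Barr--Beck and then try to promote it to a symmetric monoidal one by an appeal to ``uniqueness of the symmetric monoidal structure'' compatible with the free functor. That is not a universal property in a usable form: knowing that $A\otimes(-)\colon\CM[W^{-1}]\to\CD$ is a colimit-preserving symmetric monoidal functor does not pin down a symmetric monoidal structure on $\CD$, and the fact that your comparison functor intertwines the two free functors as plain functors does not by itself produce a lax or strong monoidal structure on it. The paper sidesteps this entirely by reversing the order: it first builds a symmetric monoidal functor $\theta$ directly -- starting from the strong monoidal inclusion $N(\Mod_A(\CM)^c)\to N(\Mod_A(\CM^c))\simeq \Mod_A(N(\CM^c))$, composing with $\Mod_A$ applied to the symmetric monoidal localization $N(\CM^c)\to N(\CM^c)[W^{-1}]$, and then invoking the universal property of symmetric monoidal Dwyer--Kan localization -- and only afterwards applies \cite{HA}*{Corollary 4.7.3.16} to show that this specific $\theta$ is an equivalence on underlying $\infty$-categories. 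Since a symmetric monoidal functor which is an underlying equivalence is automatically a symmetric monoidal equivalence, the monoidal conclusion is then free. I'd recommend restructuring your argument along those lines: construct the comparison functor as a map of $\infty$-operads from the outset, and reserve Barr--Beck for checking that this given map is an equivalence.
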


\begin{proof}
	This is essentially~\cite{HA}*{4.3.3.17}. However since the statement there does not literally apply, let us spell out the argument.  We need to show that there exists a symmetric monoidal 
	equivalence
	\[
	\theta \colon N(\Mod_A(\CM)^c)[W_m^{-1}] \xrightarrow{\simeq} \Mod_A(N(\CM^c)[W^{-1}]).
	\] 
	We start by noting that the forgetful functor $U\colon \Mod_A(\CM) \to \CM$ is left Quillen. One can verify this by observing that $U$ sends the generating (acyclic) cofibrations to (acyclic) cofibrations, using that 
	$A$ is cofibrant and that $\CM$ satisfies the pushout-product axiom. Since a cofibrant $A$-module is then also cofibrant in $\CM$, there 
	exists a symmetric monoidal functor 
	\[
	N(\Mod_A(\CM)^c) \to N(\Mod_A(\CM^c))\simeq \Mod_A(N(\CM^c)).
	\]
	Postcomposing with the symmetric monoidal functor $N(\CM^c)\to N(\CM^c)[W^{-1}]$ 
	and using the universal property of symmetric monoidal localization we obtain a 
	symmetric monoidal functor $\theta$ as claimed. To show that $\theta$ is an equivalence, 
	we apply~\cite{HA}*{4.7.3.16} to the diagram
	\[
	\begin{tikzcd}
		N(\Mod_A(\CM)^c)[W_m^{-1}] \arrow[rr, "\theta"] \arrow[rd, "U"'] & & 
		\Mod_A(N(\CM^c)[W^{-1}]) \arrow[dl, "U'"] \\
		& N(\CM^c)[W^{-1}]. & 
	\end{tikzcd}
	\]
	We need to check:
	\begin{itemize}
		\item[(a)] The $\infty$-categories $N(\Mod_A(\CM)^c)[W_m^{-1}]$ and 
		$\Mod_A(N(\CM^c)[W^{-1}])$ admit geometric realization of simplicial objects. In fact, 
		both categories admit all colimits. For $ N(\Mod_A(\CM)^c)[W_m^{-1}]$ this is ~\cite{BHH2017}*{Theorem 2.5.9}. For $ \Mod_A(N(\CM^c)[W^{-1}])$, we note that 
		$N(\CM^c)[W^{-1}]$ admits all colimits by the previous reference and
		that these can be calculated as homotopy colimits in the model category 
		by~\cite{BHH2017}*{Remark 2.5.7}. 
		Since $A$ is cofibrant, the functor $A\otimes -\colon \CM \to \CM$ is left Quillen and so 
		it induces a colimit preserving functor $N(\CM^c)[W^{-1}]\to N(\CM^c)[W^{-1}]$ by~\cite{Hinich}*{Proposition 1.5.1}.
		Finally, we can invoke~\cite{HA}*{Proposition 4.3.3.9} to deduce the existence of all colimits in 
		$\mathrm{Mod}_A(N(\CM^c)[W^{-1}])$.
		\item[(b)] The functors $U$ and $U'$ admits left adjoints $F$ and $F'$. 
		The existence of a left adjoint to $U$ follows from the fact that $U$ is determined by a 
		right Quillen functor. 
		The existence of a left adjoint to $U'$ follows from~\cite{HA}*{Corollary 4.3.3.14}.
		\item[(c)] The functor $U'$ is conservative and preserves geometric realizations 
		of simplicial objects. This follows from~\cite{HA}*{Corollary 4.3.3.2, Proposition 4.3.3.9}.
		\item[(d)] The functor $U$ is conservative and preserves geometric realizations of 
		simplicial objects. The first assertion is immediate from the definition of the weak  equivalences in $\Mod_A(\CM)$, and the second follows from the fact that $U$ is also a 
		left Quillen functor.
		\item[(e)] The natural map $U'\circ F' \to U \circ F$ is an equivalence. 
		Unwinding the definitions, we are reduced to proving that if $N$ is a cofibrant object of 
		$\CM$, then the natural map $N \to A \otimes N$ induces an equivalence 
		$F'(N)\simeq A \otimes N$. This follows from the explicit description of $F'$ given 
		in~\cite{HA}*{Corollary 4.3.3.13}.
	\end{itemize}
\end{proof}

\begin{remark}
Suppose $\mathcal{M}$ is a symmetric monoidal cofibrantly generated model category. If $\mathcal{M}$ is locally presentable, then the existence of the model structure on $\Mod_A(\mathcal{M})$ as in Proposition~\ref{prop:Modules_localization_commute} holds by ~\cite{SchwedeSchipley}*{Remark 4.2}.
\end{remark}

\section{Promonoidal \texorpdfstring{$\infty$}{infty}-categories and Day convolution}
We start this section by recalling the notion of a promonoidal $\infty$-category. We recall the definition of the operadic norm functor and use this to define the Day convolution product on a functor category. We then collecting various important results about the Day convolution product which will be important later. We finish the section by giving a symmetric monoidal recognition criteria for presheaf categories, inspired by Elmendorf's theorem.

We start off by recalling the following useful notion from~\cite{Ayala-Francis}*{Definition 0.7}.

\begin{definition} 
A functor $p \colon \CC\to\CB$ between $\infty$-categories is an \emph{exponentiable fibration} if the pullback functor $p^*\colon {\Cat_\infty}_{/\CB}\to {\Cat_\infty}_{/\CC}$ admits a right adjoint $p_*$, which we call the pushforward. 
\end{definition}

\begin{example}\label{ex-(co)cartesian-are-exponentiable}
Both cocartesian and cartesian fibrations are exponentiable, see~\cite{Ayala-Francis}*{Lemma 2.15}.
\end{example}

\begin{example}\label{exp-stable-under-pullback}
Exponentiable fibrations are stable under pullbacks, see~\cite{Ayala-Francis}*{Corollary 1.17}
\end{example}

For any $\infty$-operad $\CO^\otimes$, we let $\CO^\otimes_{act}\coloneqq\CO^\otimes\times_{\Finp}\Fin\subseteq \CO^\otimes$ denote the subcategory of active arrows. We recall the following definition from~\cite{Exp2}*{Definition~10.2}.

\begin{definition}\label{def-promonoidal}
Let $\CO^\otimes$ be an $\infty$-operad. A map of $\infty$-operads $p\colon \CC^\otimes\to \CO^\otimes$ defines a $\CO^\otimes$-\emph{promonoidal} $\infty$-category if the restricted functor $p_{act}\colon \CC^\otimes_{act}\to \CO^\otimes_{act}$ is exponentiable. A functor of $\CO^\otimes$-promonoidal $\infty$-categories is simply a map of $\CO^\otimes$-operads. 
\end{definition}

\begin{example}
Any $\CO^\otimes$-symmetric monoidal $\infty$-category is $\CO^\otimes$-promonoidal by Example~\ref{ex-(co)cartesian-are-exponentiable}.
\end{example}

\begin{example}\label{example:cocartesian-is-promonoidal}
	Let $\CC$ be an $\infty$-category. Then the $\infty$-operad $\CC^\amalg\to\Finp$ of \cite{HA}*{Construction~2.4.3.1} is a symmetric promonoidal $\infty$-category. In fact
	\[\CC^\amalg\times_{\Finp}\Fin\to\Fin\]
	is the cartesian fibration which classifies the functor sending $I$ to $\Fun(I,\CC)$.
\end{example}

\begin{example}\label{example:cart-over-cocart-pro}
	Consider a cartesian fibration $p\colon \CC\rightarrow \CI$. Similarly to Example~\ref{example:cocartesian-is-promonoidal}, one can show that the induced map $p^\amalg\colon \CC^\amalg \rightarrow \CI^\amalg$ exhibits $\CC^\amalg$ as a $\CI^\amalg$-promonoidal $\infty$-category.
\end{example}

The key property of promonoidal $\infty$-categories is that they induce operadic norm functors. 

\begin{definition}\label{def-norm}
  Let $p\colon \CC^\otimes\rightarrow \CO^\otimes$ be a $\CO^\otimes$-promonoidal 
  $\infty$-category. Then the functor 
  \[
  p^*\colon (\Op_\infty)_{/\CO^\otimes}\to(\Op_\infty)_{/\CC^\otimes}
  \] 
  has a right adjoint by~\cite{Exp2}*{Theorem/Construction 10.6}, which we denote by 
  $N_p$ and call the \emph{norm} along $p$. Note that $p^*$ also has a left 
  adjoint $p_!$ which is given by postcomposition with $p$.
\end{definition}

The norm interacts well with pullbacks along maps of $\infty$-operads.

\begin{lemma}\label{lemma:norm-and-pullbacks}
 Let $p\colon \CC^\otimes\to \CO^\otimes$ be a $\CO^\otimes$-promonoidal 
 $\infty$-category and let $f\colon \CP^\otimes\to \CO^\otimes$ be a map of $\infty$-
 operads. Write $p'\colon \CC^\otimes\times_{\CO^\otimes}\CP^\otimes\to \CP^\otimes$ 
 and $f'\colon \CC^\otimes\times_{\CO^\otimes}\CP^\otimes\to \CO^\otimes$ for the functors obtained via basechange. Then there is a natural equivalence of functors 
    \[
    f^*N_p\simeq N_{p'}(f')^* \colon (\Op_\infty)_{/\CC^\otimes}\to (\Op_\infty)_{/\CP^\otimes}\,.
    \]
   In other words, for every $\CD^\otimes \in (\Op_\infty)_{/\CC^\otimes}$ there is an equivalence of $\infty$-operads over $\CP^\otimes$
    \[N_p(\CD^\otimes)\times_{\CO^\otimes}\CP^\otimes \simeq N_{p'}(\CD^\otimes\times_{\CO^\otimes}\CP^\otimes)\,.\]
\end{lemma}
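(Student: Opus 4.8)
The plan is to deduce the statement from the fact that the norm functor $N_p$ is, by \cite{Exp2}*{Theorem/Construction 10.6}, characterized as a right adjoint to $p^*$, together with the standard categorical fact that such adjoints compose and base-change compatibly along pullback squares. Concretely, I would set up the pullback square of $\infty$-operads
\[
\begin{tikzcd}
\CC^\otimes\times_{\CO^\otimes}\CP^\otimes \arrow[r, "f'"] \arrow[d, "p'"'] & \CC^\otimes \arrow[d, "p"] \\
\CP^\otimes \arrow[r, "f"'] & \CO^\otimes
\end{tikzcd}
\]
and first check that $p'$ is again $\CP^\otimes$-promonoidal, so that $N_{p'}$ exists: by Definition~\ref{def-promonoidal} this amounts to showing $(p')_{act}$ is exponentiable, which follows since exponentiable fibrations are stable under pullback (Example~\ref{exp-stable-under-pullback}) and the formation of active subcategories commutes with the pullback, i.e. $(\CC^\otimes\times_{\CO^\otimes}\CP^\otimes)_{act} \simeq \CC^\otimes_{act}\times_{\CO^\otimes_{act}}\CP^\otimes_{act}$.

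Next I would observe that at the level of the left adjoints $p_!$, $f_!$, $(p')_!$, $(f')_!$ (each given by postcomposition, by Definition~\ref{def-norm}) there is an evident equality of functors $f_! (p')_! \simeq p_! (f')_!\colon (\Op_\infty)_{/\CC^\otimes\times_{\CO^\otimes}\CP^\otimes}\to (\Op_\infty)_{/\CO^\otimes}$, since both are postcomposition with the composite $\CC^\otimes\times_{\CO^\otimes}\CP^\otimes\to\CO^\otimes$. Equivalently, on pullback functors one has a Beck--Chevalley-type identification: $f^* p^*\simeq (p')^*(f')^{*}$ as functors $(\Op_\infty)_{/\CO^\otimes}\to (\Op_\infty)_{/\CC^\otimes\times_{\CO^\otimes}\CP^\otimes}$, simply because both compute the iterated pullback $\CD^\otimes\mapsto \CD^\otimes\times_{\CO^\otimes}(\CC^\otimes\times_{\CO^\otimes}\CP^\otimes)$ up to canonical equivalence. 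Passing to right adjoints — which exist termwise by promonoidality of $p$, $f'$, $p'$, and noting that $f$ need not be promonoidal but $f^*$ still admits the right adjoint relevant here only through the composite — we obtain the desired natural equivalence $f^* N_p\simeq N_{p'}(f')^*$. The second formulation, $N_p(\CD^\otimes)\times_{\CO^\otimes}\CP^\otimes\simeq N_{p'}(\CD^\otimes\times_{\CO^\otimes}\CP^\otimes)$, is then just the translation of this equivalence of functors evaluated on $\CD^\otimes$, using that $f^*(-) = (-)\times_{\CO^\otimes}\CP^\otimes$ and $(f')^*(-) = (-)\times_{\CO^\otimes}\CP^\otimes$ by the pullback square above.

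The main obstacle is ensuring that the Beck--Chevalley square of adjunctions is genuinely commutative at the $\infty$-categorical level rather than just up to an uncontrolled equivalence, and in particular that the mate transformation $f^* N_p \to N_{p'}(f')^*$ built from the unit/counit data is an equivalence. I would argue this by checking it is an equivalence on each object $\CD^\otimes$: unwinding the adjunctions, it suffices to show that for every $\CE^\otimes\in(\Op_\infty)_{/\CP^\otimes}$ the induced map on mapping spaces
\[
\Map_{/\CP^\otimes}(\CE^\otimes, f^*N_p\CD^\otimes)\to \Map_{/\CP^\otimes}(\CE^\otimes, N_{p'}(f')^*\CD^\otimes)
\]
is an equivalence, and both sides rewrite — via the defining adjunctions and the pullback square — as $\Map_{/\CC^\otimes\times_{\CO^\otimes}\CP^\otimes}((p')^*\CE^\otimes, (f')^*\CD^\otimes)$. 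This is a formal Yoneda-style verification, so no serious computation is needed, but care is required to make the chain of natural identifications coherent. An alternative, perhaps cleaner, route is to invoke a general lemma that for a pullback square of $\infty$-operads in which the vertical maps are promonoidal, the norm satisfies base change; if such a statement is available in \cite{Exp2} it can be cited directly, otherwise the mate argument above is self-contained.
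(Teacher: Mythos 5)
Your final paragraph contains the correct and complete argument, and it is, up to reformulation, the same as the paper's: the paper reduces the equivalence of the two right adjoint functors $f^*N_p$ and $N_{p'}(f')^*$ to an equivalence of their respective left adjoints $p^*f_!$ and $(f')_!(p')^*$, which in turn is the iterated-pullback identity
\[
p^* f_! \CE^\otimes \simeq \CE^\otimes\times_{\CO^\otimes}\CC^\otimes \simeq \CE^\otimes\times_{\CP^\otimes}(\CC^\otimes\times_{\CO^\otimes}\CP^\otimes) \simeq (f')_!(p')^*\CE^\otimes.
\]
Your Yoneda-style verification on mapping spaces unwinds to exactly this, so the two proofs are equivalent. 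Your opening observation that $p'$ is again promonoidal (exponentiability is stable under pullback and the formation of active subcategories commutes with the pullback) is a sanity check the paper leaves implicit, and it is a worthwhile one.

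However, your middle paragraph is flawed. The displayed identification $f^*p^*\simeq(p')^*(f')^*$ does not typecheck as written; presumably you mean $(f')^*p^*\simeq(p')^*f^*$ as functors $(\Op_\infty)_{/\CO^\otimes}\to(\Op_\infty)_{/\CC^\otimes\times_{\CO^\otimes}\CP^\otimes}$, which is correct (both sides compute the iterated pullback and are the right adjoints of the trivially commutative postcomposition square). But ``passing to right adjoints'' of \emph{that} identity would yield, where the adjoints exist, $N_p\,N_{f'}\simeq N_f\,N_{p'}$ — functors $(\Op_\infty)_{/\CC^\otimes\times_{\CO^\otimes}\CP^\otimes}\to(\Op_\infty)_{/\CO^\otimes}$, with entirely different source and target from the desired $f^*N_p\simeq N_{p'}(f')^*$. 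The aside about ``$f^*$ admitting the right adjoint relevant here only through the composite'' does not repair this. What you actually need (and what your last paragraph implicitly does) is the reduction in the opposite direction: pass to the \emph{left} adjoints of the two sides of the statement to be proved, not to the right adjoints of the easy pullback-square identity. The two resulting statements are genuinely different, and only one of them is the right one to aim for.
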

\begin{proof}
    To check that two right adjoint functors are equivalent it is enough to check that the left adjoints are equivalent. But the left adjoint of $f^*$ is just postcomposition with $f$, so the thesis is equivalent to the fact that for every $\CE^\otimes\in (\Op_\infty)_{/\CP^\otimes}$, there is a natural equivalence
    \[\CE^\otimes\times_{\CO^\otimes}\CC^\otimes\simeq \CE^\otimes\times_{\CP^\otimes}(\CP^\otimes\times_{\CO^\otimes}\CC^\otimes)\]
    and this is clear.
\end{proof}

\begin{remark}
In a similar vein we observe that because $q^*p^* \simeq (pq)^*$, also $N_{pq} \simeq N_pN_q$.
\end{remark}

\begin{remark} \label{rem-norm-underlying}
	Recall that passing to underlying $\infty$-categories gives a functor 
	$U\colon \Op_{\infty}\to \Cat_\infty$ which admits a left adjoint $F$ with 
	essential image precisely those $\infty$-operads 
	$q\colon \CP^\otimes \to \Fin_\ast$ such that the functor $q$ factors through
	$\mathrm{Triv}\subseteq \Fin_\ast$, see~\cite{HA}*{Proposition 2.1.4.11}. 
	In particular for any $\infty$-operad $\CO^\otimes$, we obtain an adjunction on overcategories:
	\[
	F\colon (\Cat_\infty)_{/\CO}\to 
	(\Op_\infty)_{/\CO^\otimes}\cocolon U,
	\]
	see \cite{HTT}*{Proposition 5.2.5.1}
	Let $p\colon \CC^\otimes \to \CO^\otimes$ be a $\CO^\otimes$-promonoidal 
	$\infty$-category; we will now describe the effect of $N_p$ on underlying 
	$\infty$-categories. Observe that the underlying map $U(p)$ on $\infty$-categories is exponentiable,
	as it can be described as the pullback of $p$ along $\CO \subseteq \CO^\otimes$,
	compare with Example~\ref{exp-stable-under-pullback}.
	One can compute that the following diagram of left adjoints
\[\begin{tikzcd}
	{(\Op_\infty)_{/\CC^\otimes}} & {(\Op_\infty)_{/\CO^\otimes}} \\
	{(\Cat_\infty)_{/\CC}} & {(\Cat_\infty)_{/\CO}}
	\arrow["F"',from=2-2, to=1-2]
	\arrow["{p^*}"', from=1-2, to=1-1]
	\arrow["{U(p)^*}", from=2-2, to=2-1]
	\arrow["F",from=2-1, to=1-1]
\end{tikzcd}\]
commutes. Therefore the associated diagram of right adjoints
\[\begin{tikzcd}
	{(\Op_\infty)_{/\CC^\otimes}} & {(\Op_\infty)_{/\CO^\otimes}} \\
	{(\Cat_\infty)_{/\CC}} & {(\Cat_\infty)_{/\CO}}
	\arrow["U",from=1-2, to=2-2]
	\arrow["{N_p}", from=1-1, to=1-2]
	\arrow["{U(p)_*}"', from=2-1, to=2-2]
	\arrow["U"',from=1-1, to=2-1]
\end{tikzcd}\] also commutes, and we conclude that on underlying categories $N_p$ is given by the pushforward $U(p)_*$.
\end{remark}

We can now define the Day convolution functor.

\begin{definition}\label{def-day-conv}
	Let $p\colon \CC^\otimes\rightarrow \CO^\otimes$ be a $\CO^\otimes$-promonoidal 
	$\infty$-category. The \emph{Day convolution functor}
	\[
	\Fun_\CO(\CC,-)^{Day}\colon (\Op_\infty)_{/\CO^\otimes}\to 
	(\Op_\infty)_{/\CO^\otimes}
	\] 
	is the right adjoint of the functor 
	\[
	p_!p^*= -\times_{\CO^\otimes}\CC^\otimes\colon (\Op_\infty)_{/\CO^\otimes}\to(\Op_\infty)_{/\CO^\otimes} .
	\]
	This is a composite of right adjoints, and so we conclude that $\Fun_\CO(\CC,-)^{Day} \simeq N_p\, p^*(-)$. This also shows the existence of $\Fun_\CO(\CC,-)^{Day}$. When $\CO = \Finp$, we will omit it from the notation.
\end{definition}

\begin{remark}
Recall that $\Alg_{\CC^\otimes}(\CD^\otimes)$ is defined to be the full subcategory of $\Fun_{/\Finp}(\CC^\otimes,\CD^\otimes)$ spanned by the maps of operads, and that taking the maximal sub $\infty$-groupoid of this category gives the mapping spaces $\Op_{\infty}(\CC^\otimes,\CD^\otimes)$. Therefore we may view $\Alg_{({-})}({-})$ as constituting an enrichment of $\Op_{\infty}$ in $\Cat_\infty$. A standard argument shows that the adjunction equivalence \[\Op_{\infty}(\CP^\otimes,\Fun(\CJ^\otimes,\CC^\otimes)^{Day})\simeq \Op_{\infty}(\CP^\otimes\times_{\Finp}\CJ^\otimes,\CC^\otimes)\] improves to an equivalence
\[\Alg_{\CP^\otimes}(\Fun(\CJ^\otimes,\CC^\otimes)^{Day})\simeq \Alg_{\CP^\otimes\times_{\Finp}\CJ^\otimes}(\CC^\otimes).\]
\end{remark}

\begin{example}\label{ex-cocart-day-convolution}
Recall from Example \ref{example:cocartesian-is-promonoidal} that for any $\infty$-category $\CC$, the $\infty$-operad $\CC^\amalg\rightarrow \Finp$ is promonoidal. For every $\infty$-operad $\CD^\otimes$, the Day convolution $\infty$-operad $\Fun(\CC^\amalg,\CD^\otimes)^{Day}$ is equivalent to the pointwise operad structure on $\Fun(\CC,\CD)$. Indeed they corepresent the same functor by \cite{HA}*{Theorem~2.4.3.18}.
\end{example}
 
 The description of Day convolution combined with Remark \ref{rem-norm-underlying} implies that on underlying categories $\Fun_\CO(\CC^\otimes,-)^{Day}$ is given by $U(p)_*U(p)^*$. We can describe the fibres of this category explicitly.
 
 \begin{construction}\label{con-exp-fibration-fibres}
 	Let $p\colon \CC\to \CB$ be an exponentiable fibration of $\infty$-categories and 
 	$q\colon \CD\to \CB$ any functor. Fix an arrow $f\colon b_0\to b_1$ in $\CB$ and let us write 
 	$\CC_{b_i}$ and $\CD_{b_i}$ for the fibres of $p$ and $q$ over $b_i$. 
 	The unit of the adjunction $(p^*,p_*)$ gives a canonical functor 
 	$p_*p^*\CD \to \CB$ whose fibre over $b_i$ can be identified with
 	\begin{equation}\label{eq-fibre}
 		(p_*p^*\CD)_{b_i}\simeq \Fun_\CB(\{b_i\},p_*p^*\CD)\simeq \Fun_{\CC}(\CC\times_\CB\{b_i\},\CC\times_\CB\CD)\simeq \Fun(\CC_{i},\CD_{i})\,.
 	\end{equation}
 \end{construction}
 
\begin{remark}\label{rem-day-conv-underlying} 
One should be careful to note that, if the underlying $\infty$-category $\CO$ of $\CO^\otimes$ is not contractible, then the underlying $\infty$-category of $\Fun_\CO(\CC^\otimes,\CD^\otimes)^{Day}$ is not the same as the $\infty$-category of functors over $\CO$. Rather, it is a fibration over $\CO$ whose global sections are $\Fun_{/\CO}(\CC,\CD)$. Compare also with the previous construction. 
\end{remark}

We would like to have a formula for the multimapping spaces for the Day convolution. 
We will achieve this in Lemma~\ref{lem:mapping-spaces-Day-convolution} below. In preparation for this result, we compute the mapping spaces in a pushforward. To state the result we recall the definition of twisted arrow $\infty$-categories, and the notion of coends.

\begin{definition}\label{def-twisted-arrow}
	Let $\epsilon \colon \Delta \to \Delta$ be the functor 
	$[n]\mapsto [n]\star [n]^{\op} \simeq [2n+1]$. Let $\CI$ be an $\infty$-category. The twisted arrow $\infty$-category $\Tw(\CI)$ is the associated $\infty$-category of the simplicial set $\epsilon^*N\CI$. By definition, we have 
	\[
	\Tw(\CI)_n=\Map(\Delta^n\star (\Delta^n)^{\op}, \CI).
	\]
	The natural transformations $\Delta^\bullet$ and $(\Delta^\bullet)^{\op}\to \Delta^\bullet \star (\Delta^\bullet)^{\op}$ induce a functor $(s,t)\colon \Tw(\CI) \to \CI \times \CI^{\op}$.
\end{definition}

\begin{remark}
	There are two possible conventions for defining $\Tw(-)$. In this paper we follow that of Lurie~\cite{HA}*{Section 5.2.1}. This is the opposite of the convention used in ~\cite{Barwick}.
\end{remark}

\begin{example}
	The objects of $\Tw(\CI)$ are given by edges of $\CI$.
	An edge from $f\colon x \to y$ to $f' \colon x' \to y'$ in $\Tw(\CI)$ is represented by a diagram
	\[
	\begin{tikzcd}
		x \arrow[d, "f"'] \arrow[r]& x' \arrow[d, "f'"] \\
		y  & y' \arrow[l]
	\end{tikzcd}
	\]
\end{example}

\begin{remark}\label{rem-op-twisted-cat}
The twisted arrow category is insensitive to taking opposites, meaning that $\Tw(\CI^{\op})\simeq\Tw(\CI)$. However under this equivalence $(s,t)$ is sent to $(t,s)$.
\end{remark}

\begin{definition}
Given a functor $F\colon \CC\times \CC^{\op} \rightarrow \Spc$, we define the coend $\int^{x\in \CC} F(x,x)$ to equal the colimit of the functor 
\[\Tw(\CC)\xrightarrow{(s,t)} \CC\times \CC^{\op} \xrightarrow{F} \Spc.\] Dually for a functor $F\colon \CC^{\op}\times \CC \rightarrow \CC,$ we define the end $\int_{x\in \CC} F(x,x)$ to be the limit of the functor
\[\Tw(\CC)^{\op}\xrightarrow{(s,t)^{\op}} \CC^{\op}\times \CC \xrightarrow{F} \Spc.\]
\end{definition}

We are now ready to state the formula for multimapping spaces in the Day convolution.

\begin{lemma}\label{lem:mapping-spaces-pushforward}
    Suppose we are in the setting of Construction~\ref{con-exp-fibration-fibres}.
    Let $F_i\colon \CC_i\to \CD_i$ be two objects of $(p_*p^*\CD)_{b_i}$, viewed as such via the equivalence~(\ref{eq-fibre}). Then there is an equivalence
    \begin{equation}\Map^f_{p_*p^*\CD}(F_0,F_1)\simeq \int_{(x_0,x_1)\in\CC_0^{\op}\times\CC_1} \Map(\Map_\CC^f(x_0,x_1),\Map_{\CD}^f(F_0x_0,F_1x_1)) \label{eq:maps-in-exponential}\end{equation}
    where the left hand side denotes the fibre over $f$ of the canonical map 
    $\Map_{p_*p^*\CD}(F_0, F_1) \to \Map_{\CB}(b_0,b_1)$.
\end{lemma}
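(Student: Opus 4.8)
The strategy is to reduce the computation of the mapping space in the pushforward $p_*p^*\CD$ to a computation of the mapping space in $\Fun(\CC_0, \CD_1)$ with coefficients twisted by the exponentiable fibration $p$. First I would fix the arrow $f\colon b_0 \to b_1$ and use the exponentiability of $p$ to describe the fibre of $p_*p^*\CD \to \CB$ over $f$, not just over the objects $b_0$ and $b_1$. Concretely, let $\Delta^1 \xrightarrow{f} \CB$ denote the arrow, and let $\CC_f := \CC \times_\CB \Delta^1$ be the pullback, which is a exponentiable fibration over $\Delta^1$ by Example~\ref{exp-stable-under-pullback}. Then by the defining adjunction of the pushforward and compatibility with basechange, $\Map^f_{p_*p^*\CD}(F_0,F_1)$ is computed as the space of sections of $\CC_f \times_\CB \CD \to \CC_f$ over $\CC_f$ that restrict to $F_0$ over $b_0$ and $F_1$ over $b_1$ — that is, a space of "lax natural transformations" or correspondences from $F_0$ to $F_1$ along $f$. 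The key point is that $\CC_f \to \Delta^1$ is classified by a correspondence, i.e. by the profunctor $\Map_\CC^f(-,-)\colon \CC_0^{\op} \times \CC_1 \to \Spc$.

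The main computational input is then an end formula for this space of sections. I would argue as follows: a section over $\CC_f$ of the projection restricting to $F_0, F_1$ on the two fibres amounts, by unstraightening $\CC_f \to \Delta^1$, to choosing for each $x_0 \in \CC_0$ a point $F_0 x_0 \in \CD_0$ (fixed), for each $x_1 \in \CC_1$ a point $F_1 x_1 \in \CD_1$ (fixed), and for each morphism $\gamma\colon x_0 \to x_1$ in $\CC_f$ lying over $f$ a morphism $\CD$-lift over $f$ from $F_0 x_0$ to $F_1 x_1$, all coherently. Since the morphisms over $f$ in $\CC_f$ are parametrized by the space $\Map^f_\CC(x_0,x_1)$, and we are choosing these coherently in $(x_0,x_1)$, this coherence is exactly an end over $\Tw(\CC_0^{\op}\times\CC_1)^{\op}$ — equivalently over the twisted arrow category of $\CC_0^{\op}\times\CC_1$ — of the functor $(x_0,x_1)\mapsto \Map(\Map^f_\CC(x_0,x_1), \Map^f_\CD(F_0 x_0, F_1 x_1))$, which is the claimed formula. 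More formally, one can phrase this via the universal property: $\Map_{/\CC_f}(\CC_f, \CC_f \times_\CB \CD)$ with prescribed boundary is the limit over the twisted arrow category of $\CC_f$ lying over the nondegenerate edge of $\Delta^1$, and $\Tw(\CC_f)$ restricted there is $\Tw(\CC_0^{\op}\times \CC_1)$ fibred appropriately, with the mapping-space fibres of $\CC_f$ appearing as the exponents.

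\textbf{Main obstacle.} The technical heart — and the step I expect to require the most care — is making precise the identification of the fibre of $p_*p^*\CD$ over the \emph{arrow} $f$ (not merely over objects) with a space of sections over the correspondence $\CC_f$, and then massaging that section space into the stated end. The formula~(\ref{eq-fibre}) of Construction~\ref{con-exp-fibration-fibres} only handles fibres over objects; here we need the relative version over $\Delta^1$, which uses that exponentiable fibrations are stable under the basechange $\Delta^1 \to \CB$ and that the pushforward commutes with this basechange (a consequence of the general compatibility of norms/pushforwards with pullback, in the spirit of Lemma~\ref{lemma:norm-and-pullbacks}, applied in the $\Cat_\infty$ rather than $\Op_\infty$ setting). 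Once the section space is identified, rewriting a space of coherent lifts over a correspondence as an end of a mapping-space-valued functor is a now-standard manipulation — it is the "co-Yoneda / end formula for natural transformations" extended to profunctors — but I would want to cite or carefully invoke the relevant description of $\Tw$ and of limits of section spaces to justify the passage to $\Tw(\CC_0^{\op}\times \CC_1)^{\op}$. The boundary conditions (fixing $F_0$ on $\CC_0$ and $F_1$ on $\CC_1$) must be threaded through consistently so that only the "off-diagonal" morphism data over $f$ remains, which is what produces the exponential $\Map(\Map^f_\CC(x_0,x_1), -)$ rather than a plain mapping space.
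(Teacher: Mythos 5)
Your proposal takes essentially the same route as the paper: reduce to the fibre of $p_*p^*\CD$ over the arrow $f\colon\Delta^1\to\CB$, identify the resulting mapping space with maps over $\Delta^1$ from $\CC_f := \Delta^1\times_\CB\CC$ to $\Delta^1\times_\CB\CD$ with prescribed endpoints, recognize that objects of ${\Cat_\infty}_{/\Delta^1}$ are classified by profunctors $\CC_0^{\op}\times\CC_1\to\Spc$, and then read off the answer as an end. Two small remarks on where the paper is tighter than your sketch. First, you invoke compatibility of the pushforward with basechange along $\Delta^1\to\CB$ to produce the ``relative'' description over $\Delta^1$; the paper avoids this by using the defining adjunction of $p_*$ directly, namely $\Map_\CB(\Delta^1,p_*p^*\CD)\simeq\Map_\CC(p^*\Delta^1,p^*\CD)$, which is both more economical and sidesteps the need to justify the basechange formula in the $\Cat_\infty$ setting. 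Second, the two steps you flag as informal are handled in the paper by explicit citations that you would need to fill in: the classification of $\CC_f\to\Delta^1$ as a correspondence is \cite{Ayala-Francis}*{Lemma~4.2}, stated there as the assertion that $(\CC\to\Delta^1)\mapsto(\CC_0,\CC_1)$ is a right fibration over $\Cat_\infty\times\Cat_\infty$ classified by $(\CC_0,\CC_1)\mapsto\Map(\CC_0^{\op}\times\CC_1,\Spc)$, and the final passage from $\Map_{\Fun(\CC_0^{\op}\times\CC_1,\Spc)}(\Map_\CC^f(-,-),\Map_\CD^f(F_0-,F_1-))$ to the stated end is \cite{GHN}*{Proposition~5.1}. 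With those two references supplied, your argument and the paper's coincide.
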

\begin{proof}
    Let us write $f$ as a map $\Delta^1\to\CB$. Then, by the definition of $p_*$ there is an equivalence
    \[\Map_\CB(\Delta^1,p_*p^*\CD)\simeq \Map_\CC(\Delta^1\times_\CB\CC,\CC\times_\CB\CD)\simeq \Map_{\Delta^1}(\Delta^1\times_\CB\CC,\Delta^1\times_\CB\CD)\,.\]
    Therefore we have an equivalence
    \[\begin{split}\Map^f_{p_*p^*\CD}(F_0,F_1)
                                    &\simeq \{(F_0,F_1)\}\times_{\Map_\CB(\partial\Delta^1,p_*p^*\CD)} \Map_\CB(\Delta^1,p_*p^*\CD)\\
                                    &\simeq \{(F_0,F_1)\}\times_{\Map(\CC_0,\CD_0)\times\Map(\CC_1,\CD_1)} \Map_{\Delta^1}(\Delta^1\times_\CB \CC,\Delta^1\times_\CB \CD)\,.\end{split}\]
    Now from the proof of \cite{Ayala-Francis}*{Lemma~4.2} it follows that the map
    \[{\Cat_\infty}_{/\Delta^1}\to \Cat_\infty\times\Cat_\infty\qquad [\CC\to \Delta^1]\mapsto(\CC\times_{\Delta^1}\{0\},\CC\times_{\Delta^1}\{1\})\,,\]
    is a right fibration classified by the functor $\Cat_\infty\times\Cat_\infty\to\Spc$ sending $(\CC_0,\CC_1)$ to $\Map(\CC_0^{op}\times\CC_1,\Spc)$. Therefore
    \[\begin{split}\{(F_0,F_1)\}\times_{\Map(\CC_0,\CD_0)\times\Map(\CC_1,\CD_1)} \Map_{\Delta^1}(\Delta^1\times_\CB \CC,\Delta^1\times_\CB \CD)\mkern-256mu\\
        &\simeq\Map^{(F_0,F_1)}_{\Cat_\infty{}_{/\Delta^1}}(\Delta^1\times_\CB \CC,\Delta^1\times_\CB \CD)\\
        &\simeq\Map_{(\Cat_\infty{}_{/\Delta^1})_{(\CC_0,\CC_1)}}(\Delta^1\times_\CB \CC, (F_0,F_1)^*(\Delta^1\times_\CB \CD))\\
        &\simeq\Map_{\Fun(\CC_0^{\op}\times\CC_1,\Spc)}(\Map_\CC^f(-,-), \Map^f_\CD(F_0-,F_1-))\,.\end{split}\]
   But this is exactly the thesis, thanks to \cite{GHN}*{Proposition~5.1}.
\end{proof}

\begin{remark}\label{rem:pushforward-in-exponential}
    In the setting of Lemma~\ref{lem:mapping-spaces-pushforward}, suppose that $q$ is equal to the projection $\CD\times\CB\rightarrow \CB$ and that $\CD$ is cocomplete. Then we can interpret formula~\ref{eq:maps-in-exponential} as saying that $p_*p^*\CD$ is a cocartesian fibration and that given $f\colon i\rightarrow j$, the induced functor \[f_!\colon \Fun(\CC_i,\CD)\rightarrow \Fun(\CC_j,\CD)\] evaluated on a functor $F\colon \CC_i\rightarrow \CD$ gives the functor
    \[\CC_j\rightarrow \CD,\, x_j \mapsto \int^{x_i\in \CC_i}\Map_{\CC_{ij}}(x_i,x_j)\times F(x_i),\]
    where $\CC_{ij}:=\CC\times_{\CB,f}[1]$. That is, $f_!F$ is computed by left Kan extending $F$ along the inclusion $\CC_i\subseteq \CC_{ij}$ and then restricting to $\CC_j\subseteq \CC_{ij}$. In particular, if $\CC_{ij}\to [1]$ is a cartesian fibration we have $f_!F\simeq F\circ f^*$ where $f^*\colon \CC_1\to\CC_0$ is the pullback.
\end{remark}

Recall the following notion of multimapping spaces. 

\begin{definition}\label{def:multimapping-spaces}
    Let $\CC^\otimes\to\CO^\otimes$ be a map of $\infty$-operads and let $\phi\colon\{x_i\}\to y$ be an active morphism of $\CO^\otimes$ with target in $\CO\coloneqq (\CO^\otimes)_{1_+}$. For every $\{c_i\}\in (\CC^\otimes)_{\{x_i\}}\simeq \prod_i\CC_{x_i}$ and $d\in \CC_{y}$, objects of $\CC^\otimes$ over the source and target of $\phi$, we define the $\phi$-multimapping space in $\CC^\otimes$ as the space of morphisms $\{c_i\}\to d$ above $\phi$:
    \[\Mul^\phi_{\CC^\otimes}(\{c_i\},d)\coloneqq \Map_{\CC^\otimes}(\{c_i\},d)\times_{\Map_{\CO^\otimes}(\{x_i\},y)}\{\phi\}\,.\]
    We say that $\CC^\otimes$ is \emph{representable} if for every active morphism $\phi$ and objects $\{c_i\}$, the functor \[\Mul^\phi_{\CC^\otimes}(\{c_i\},-)\colon \CC\to \Spc\] is corepresentable. In this case we write $\bigotimes_\phi\{c_i\}$ for the corepresenting object and we call it the $\phi$-tensor product of $\{c_i\}$. This is equivalent to the functor $\CC^\otimes \rightarrow \CO^\otimes$ being a locally cocartesian fibration.
\end{definition}

We are ready to prove the formula for the multimapping spaces in the Day convolution. 

\begin{lemma}\label{lem:mapping-spaces-Day-convolution}
    Let $\CO^\otimes$ be an $\infty$-operad, $\CC^\otimes$ be an $\CO^\otimes$-promonoidal $\infty$-category and $\CD^\otimes$ be an $\infty$-operad over $\CO^\otimes$. Then the multimapping spaces in $\Fun_\CO(\CC^\otimes,\CD^\otimes)^{Day}$ are given by the following natural equivalence
    \[\Mul_{\Fun_{\CO}(\CC^\otimes,\CD^\otimes)^{Day}}^\phi(\{F_i\}, G)\simeq \int_{c'\in\CC_{y}}\int_{\{c_i\}\in (\prod_i\CC_{x_i})^{\op}} \Map\left(\Mul_\CC^\phi(\{c_i\},c'),\Mul_{\CD}^\phi(\{F_ic_i\},Gc')\right)\]
    for all active morphisms $\phi\colon \{x_i\}\to y$, and objects $\{F_i\}\in\prod_i\Fun(\CC_{x_i},\CD_{x_i})$, $G\in \Fun(\CC_{y},\CD_{y})$.
\end{lemma}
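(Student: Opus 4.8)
The plan is to rewrite the multimapping space as an ordinary mapping space in a pushforward along an exponentiable fibration, and then to invoke Lemma~\ref{lem:mapping-spaces-pushforward}. First I would view the given active morphism $\phi$ as a map $\phi\colon\Delta^1\to\CO^\otimes$; since $\phi$ is active this factors through $\CO^\otimes_{act}$. Pulling back along $\phi$ produces categories $\CC^\otimes_\phi\coloneqq\CC^\otimes\times_{\CO^\otimes}\Delta^1$ and $\CD^\otimes_\phi\coloneqq\CD^\otimes\times_{\CO^\otimes}\Delta^1$ over $\Delta^1$, whose fibres over $0$ are $\prod_i\CC_{x_i}$ and $\prod_i\CD_{x_i}$ by the Segal condition, and whose fibres over $1$ are $\CC_y$ and $\CD_y$. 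Since $\CC^\otimes$ is $\CO^\otimes$-promonoidal the functor $p_{act}$ is exponentiable, hence so is the base change $p_\phi\colon\CC^\otimes_\phi\to\Delta^1$ by Example~\ref{exp-stable-under-pullback}; thus the pushforward $(p_\phi)_*(p_\phi)^*\CD^\otimes_\phi\to\Delta^1$ is defined, and by Construction~\ref{con-exp-fibration-fibres} its fibre over $0$ is $\Fun(\prod_i\CC_{x_i},\prod_i\CD_{x_i})$ while its fibre over $1$ is $\Fun(\CC_y,\CD_y)$.

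The crucial step, and the one I expect to be the main obstacle, is the identification
\[\Mul^\phi_{\Fun_{\CO}(\CC^\otimes,\CD^\otimes)^{Day}}(\{F_i\},G)\;\simeq\;\Map^\phi_{(p_\phi)_*(p_\phi)^*\CD^\otimes_\phi}\Bigl(\,\prod_{i\in I}F_i\,,\,G\,\Bigr),\]
where $\prod_i F_i\colon\prod_i\CC_{x_i}\to\prod_i\CD_{x_i}$ is the product of the $F_i$ and $\Map^\phi$ on the right denotes the fibre over the edge $\phi$ of the map to $\Map_{\Delta^1}(0,1)$. This will rest on the description $\Fun_{\CO}(\CC^\otimes,-)^{Day}\simeq N_p\,p^*(-)$ of Definition~\ref{def-day-conv} together with the fact that over active morphisms the norm $N_p$ is computed by the pushforward along $p_{act}$ --- the active counterpart of the underlying-category statement in Remark~\ref{rem-norm-underlying}, which is part of the construction of the norm in~\cite{Exp2}. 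The extra point to check is that the fibre $\prod_i\Fun(\CC_{x_i},\CD_{x_i})$ of the Day convolution over the object $\{x_i\}$ --- which by the Segal condition is smaller than the pushforward fibre $\Fun(\prod_i\CC_{x_i},\prod_i\CD_{x_i})$ --- embeds into the latter as the full subcategory of product functors, fullness being the elementary observation that a natural transformation between product functors is exactly a tuple of natural transformations; consequently a multimorphism over $\phi$ out of the tuple $\{F_i\}$ is the same datum as a morphism over $\phi$ out of $\prod_i F_i$. The delicate part is precisely this bookkeeping: tracking how the operadic (Segal) structure on the Day convolution interacts with the pushforward presentation of the norm.

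Granting the identification, I would apply Lemma~\ref{lem:mapping-spaces-pushforward} to the exponentiable fibration $p_\phi\colon\CC^\otimes_\phi\to\Delta^1$, with $b_0=0$, $b_1=1$, with the nondegenerate edge $\phi$ as the chosen arrow, and with $F_0=\prod_i F_i$, $F_1=G$. For objects over $0$ and over $1$ there is a unique edge of $\Delta^1$ below them, so the fibrewise mapping spaces appearing in that lemma are just the mapping spaces of $\CC^\otimes_\phi$ and $\CD^\otimes_\phi$ over $\phi$, which by Definition~\ref{def:multimapping-spaces} are the multimapping spaces $\Mul^\phi_{\CC}(\{c_i\},c')$ and $\Mul^\phi_{\CD}(\{F_i c_i\},G c')$ (using $(\prod_i F_i)(\{c_i\})=\{F_i c_i\}$). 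Lemma~\ref{lem:mapping-spaces-pushforward} then yields
\[\Mul^\phi_{\Fun_{\CO}(\CC^\otimes,\CD^\otimes)^{Day}}(\{F_i\},G)\;\simeq\;\int_{(\{c_i\},c')\in(\prod_i\CC_{x_i})^{\op}\times\CC_y}\Map\bigl(\Mul^\phi_{\CC}(\{c_i\},c'),\Mul^\phi_{\CD}(\{F_i c_i\},G c')\bigr),\]
and since an end over a product of $\infty$-categories may be computed as an iterated end (Fubini for limits), this rewrites as the iterated end $\int_{c'\in\CC_y}\int_{\{c_i\}\in(\prod_i\CC_{x_i})^{\op}}$ of the statement; naturality in $\{F_i\}$ and $G$ is inherited from Lemma~\ref{lem:mapping-spaces-pushforward}. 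As a consistency check, when $\CC^\otimes$ is the cocartesian promonoidal structure $\CC^\amalg$ one has $\Mul^\phi_{\CC^\amalg}(\{c_i\},c')\simeq\prod_i\Map_{\CC}(c_i,c')$, and the co-Yoneda lemma collapses the end to the multimapping spaces of the pointwise operad structure on $\Fun(\CC,\CD)$, matching Example~\ref{ex-cocart-day-convolution}.
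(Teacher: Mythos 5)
Your plan is essentially the paper's proof: reduce the multimapping space over $\phi$ to a mapping space in a pushforward along the active part of $p$, then apply Lemma~\ref{lem:mapping-spaces-pushforward} and finish with Fubini for ends. You have also correctly put your finger on the one nontrivial step, the identification
\[\Mul^\phi_{\Fun_{\CO}(\CC^\otimes,\CD^\otimes)^{Day}}(\{F_i\},G)\;\simeq\;\Map^\phi_{(p_\phi)_*(p_\phi)^*\CD^\otimes_\phi}\Bigl(\textstyle\prod_{i}F_i\,,\,G\Bigr),\]
which is where the genuine content lies.

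Two remarks on how that step is actually closed. First, the paper does not rederive this from the norm construction in \cite{Exp2}; it invokes \cite[Proposition~2.2.6.6]{HA} and the argument in \cite[Proposition~2.2.6.11]{HA}, together with the observation that those proofs use $\CO^\otimes$-monoidality only to guarantee the existence of the norm, so they apply verbatim in the promonoidal setting once \cite[Theorem--Construction~10.6]{Exp2} is available. That gives $\Mul^\phi$ directly as $\{(F,G)\}\times_{\Fun_{/\CO^\otimes}(\partial\Delta^1\times_{\CO^\otimes}\CC^\otimes,\CD^\otimes)}\Fun_{/\CO^\otimes}(\Delta^1\times_{\CO^\otimes}\CC^\otimes,\CD^\otimes)$, where $F=\prod_i F_i$, and then one notes that since $\phi$ is active this is exactly $\Map^\phi_{(p^{act})_*(p^{act})^*\CD^{act}}(F,G)$. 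Your proposal to read this off from the pushforward description of the norm in \cite{Exp2} is viable but would require more unwinding of that construction than the HA citation does. Second, a small logical gap: the fullness of the inclusion $\prod_i\Fun(\CC_{x_i},\CD_{x_i})\hookrightarrow\Fun(\prod_i\CC_{x_i},\prod_i\CD_{x_i})$ as product functors only controls the mapping spaces \emph{within} the fibre over $\{x_i\}$; it does not by itself give the comparison of mapping spaces \emph{over $\phi$}, which lie between the fibre over $\{x_i\}$ and the fibre over $y$. The ``consequently'' in your write-up is therefore not warranted as written; that is precisely what the cited HA argument (or an equivalent analysis of the norm) supplies. Once that is in place, the remainder of your argument is correct as stated.
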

\begin{proof}
    We will use \cite[Proposition~2.2.6.6]{HA}. However the cited result has the hypothesis that $\CC^\otimes$ is a $\CO^\otimes$-monoidal $\infty$-category. We note that this is only used to ensure the existence of the norm (after replacing the appeal to \cite[Proposition~3.3.1.3]{HTT} with \cite[Proposition~B.3.14]{HA}). Therefore, in view of \cite[Theorem-Construction~10.6]{Exp2} we can safely apply this result when $\CC^\otimes$ is only $\CO^\otimes$-promonoidal.
    
    Then, arguing as in the proof of \cite[Proposition~2.2.6.11]{HA}, we obtain an equivalence
    \[\Mul_{\Fun_{\CO}(\CC^\otimes,\CD^\otimes)^{Day}}^\phi(\{F_i\},G)\simeq \{(F,G)\}\times_{\Fun_{/\CO^\otimes}(\partial\Delta^1\times_{\CO^\otimes}\CC^\otimes,\CD^\otimes)} \Fun_{/\CO^\otimes}(\Delta^1\times_{\CO^\otimes}\CC^\otimes,\CD^\otimes)\]
    where $\Delta^1\to \CO^\otimes$ picks out the active arrow $\phi$ and $F\colon\CC^\otimes_{\{x_i\}}\to \CD^\otimes_{\{x_i\}}$ is the functor sending $\{c_i\}$ to $\{F_ic_i\}$. Let $\CC^{act}\coloneqq\CC^\otimes\times_{\CO^\otimes}\CO^{act}$ and $\CD^{act}\coloneqq\CD^\otimes\times_{\CO^\otimes}\CO^{act}$ be the subcategories of active arrows. Since $\Delta^1\to \CO^\otimes$ factors through $\CO^{act}$, we have an equivalence
    \[\begin{split}
            \Mul_{\Fun(\CC^\otimes,\CD^\otimes)^{Day}}(\{F_i\}_{i\in I},G) &\simeq \{(F,G)\}\times_{\Fun_{/\CO^{act}}(\partial\Delta^1\times_{\CO^{act}}\CC^{act},\CD^{act})} \Fun_{/\CO^{act}}(\Delta^1\times_{\Fin}\CC^{act},\CD^{act})\\
            &\simeq\Map_{(p^{act})_*(p^{act})^*\CD^{act}}(F,G)
    \end{split}\]
    where the last equality makes sense since $p^{act}$ is an exponentiable fibration. Therefore the thesis follows from Lemma~\ref{lem:mapping-spaces-pushforward}.
\end{proof}

\begin{definition}\label{definition:monoidal-structure-compatible-with-colimits}
	We say that an $\CO^\otimes$-monoidal $\infty$-category $\CD^\otimes\rightarrow \CO^\otimes$ is \emph{compatible with colimits} if for every object $x\in \CO$ the fibre $\CD_{x}$ has all small colimits, and if for every active arrow $\phi$, the $\phi$-tensor product commutes with all small colimits separately in each variable (see \cite[Definition~3.1.1.18]{HA} for a more precise formulation). If each fibre is moreover presentable, then we say $\CD^\otimes$ is a presentably $\CO^\otimes$-monoidal $\infty$-category. 
\end{definition}

\begin{example}
 The underlying $\infty$-category of a symmetric monoidal model category is compatible with 
 colimits as the tensor product is a left Quillen bifunctor by the pushout-product axiom. 
\end{example}

\begin{remark}
Recall that every cocomplete $\infty$-category $\CC$ is canonically tensored over $\Spc$. Namely for every $X\in\Spc$ and $C\in \CC$, we define $X\times C$ to equal $\colim (\mathrm{const_C}\colon X \rightarrow \CC)$, the colimit over $X$ of the constant functor at $C$.
\end{remark}

\begin{corollary}\label{cor:tensor-in-Day-convolution}
    Fix an $\infty$-operad $\CO^\otimes$. Let $\CC^\otimes$ be a small $\CO^\otimes$-promonoidal $\infty$-category and let $\CD^\otimes$ be a $\CO^\otimes$-monoidal $\infty$-category which is compatible with colimits. 
    \begin{itemize}
    \item[(a)] Then $\Fun_\CO(\CC^\otimes,\CD^\otimes)^{Day}$ is an $\CO^\otimes$-monoidal $\infty$-category which is again compatible with colimits.
    \end{itemize}
    Suppose furthermore that $\CO^\otimes\simeq\Finp$ is the commutative $\infty$-operad. 
    \begin{itemize}
    \item[(b)] The unit of $\Fun(\CC^\otimes,\CD^\otimes)^{Day}$ is given by $1_{Day}\coloneqq\Mul_\CD(\varnothing,-)\times 1_\CD$, and the tensor product is given by
\[(F\otimes^{Day} G)(-)\simeq \int^{(c_1,c_2)\in\CC^2} \Mul_\CC(\{c_1,c_2\},-)\times (F(c_1)\otimes G(c_2))\,.\]
In particular, when $\CD$ is the $\infty$-category of spaces with the cartesian symmetric monoidal structure, we have 
\[\Map_\CC(x,-)\otimes^{Day}\Map_\CC(y,-)\simeq \Mul_\CC(\{x,y\},-)\]
for every $x,y\in\CC$.
\end{itemize}
\end{corollary}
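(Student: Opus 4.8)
The plan is to deduce everything from Lemma~\ref{lem:mapping-spaces-Day-convolution}, which already computes the multimapping spaces of $\Fun_\CO(\CC^\otimes,\CD^\otimes)^{Day}$ as an iterated end. First I would prove part (a). By Definition~\ref{def-day-conv} together with Remark~\ref{rem-norm-underlying}, on underlying categories $\Fun_\CO(\CC^\otimes,\CD^\otimes)^{Day}$ is the pushforward $U(p)_*U(p)^*\CD$, whose fibre over $x\in\CO$ is $\Fun(\CC_x,\CD_x)$ by Construction~\ref{con-exp-fibration-fibres}. Since $\CD^\otimes$ is compatible with colimits, each $\CD_x$ is cocomplete, hence so is each functor category $\Fun(\CC_x,\CD_x)$ (with $\CC$ small), giving the first condition of Definition~\ref{definition:monoidal-structure-compatible-with-colimits}. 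For the second, I would show the $\phi$-tensor product on $\Fun_\CO(\CC^\otimes,\CD^\otimes)^{Day}$ is corepresentable and commutes with colimits in each variable: the formula in Lemma~\ref{lem:mapping-spaces-Day-convolution} expresses $\Mul^\phi(\{F_i\},G)$ as an end over $G$-variables of $\Map$ out of a colimit-indexing space into $\Mul_\CD^\phi(\{F_ic_i\},Gc')$; since $\CD^\otimes$ is representable (being $\CO^\otimes$-monoidal) and its $\phi$-tensor commutes with colimits, one rewrites this as $\Fun(\CC_y,\CD_y)(\text{something},G)$, where that ``something'' is the pointwise left Kan extension / coend formula for $\bigotimes_\phi\{F_i\}$. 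Commutation with colimits in each $F_i$ is then inherited from the fact that coends, colimits and the $\phi$-tensor on $\CD$ all commute with colimits, and that colimits in $\Fun(\CC_y,\CD_y)$ are computed pointwise.

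For part (b), now with $\CO^\otimes=\Finp$, I would specialize the corepresentability argument above to the active map $\{1,2\}\to\{1\}$ (the binary tensor) and to $\varnothing\to\{1\}$ (the unit). For the binary case, Lemma~\ref{lem:mapping-spaces-Day-convolution} gives
\[
\Mul^{\phi}_{\Fun(\CC^\otimes,\CD^\otimes)^{Day}}(\{F,G\},H)\simeq\int_{c'\in\CC}\int_{(c_1,c_2)\in(\CC^2)^{\op}}\Map\!\left(\Mul_\CC(\{c_1,c_2\},c'),\Mul_\CD(\{Fc_1,Gc_2\},Hc')\right),
\]
and I would manipulate the right-hand side: pull the space $\Mul_\CC(\{c_1,c_2\},c')$ out as a cotensor (using that $\CD$ is tensored over $\Spc$), use that $\CC$ is representable to replace $\Mul_\CD(\{Fc_1,Gc_2\},Hc')$ by $\Map_\CD(Fc_1\otimes Gc_2, Hc')$, then recognize the double coend/cotensor as $\Map_{\Fun(\CC,\CD)}$ of $H$ against the functor $c'\mapsto\int^{(c_1,c_2)}\Mul_\CC(\{c_1,c_2\},c')\times(Fc_1\otimes Gc_2)$. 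This exhibits the displayed coend formula as corepresenting $\Mul^{\phi}(\{F,G\},-)$, hence as $F\otimes^{Day}G$. The unit computation is the same argument with an empty family: $\Mul_{\Fun(\CC^\otimes,\CD^\otimes)^{Day}}(\varnothing,H)\simeq\int_{c'\in\CC}\Map(\Mul_\CC(\varnothing,c'),\Mul_\CD(\varnothing,Hc'))$, and unwinding $\Mul_\CD(\varnothing,-)\simeq\Map_\CD(1_\CD,-)$ identifies the corepresenting object as $c'\mapsto\Mul_\CC(\varnothing,c')\times 1_\CD$, i.e.\ $1_{Day}$. The final sentence, for $\CD=\Spc$ with the cartesian structure, is immediate from the coend formula by Yoneda: $\Map_\CC(x,-)\otimes^{Day}\Map_\CC(y,-)\simeq\int^{(c_1,c_2)}\Mul_\CC(\{c_1,c_2\},-)\times\Map_\CC(x,c_1)\times\Map_\CC(y,c_2)\simeq\Mul_\CC(\{x,y\},-)$, since the coend of a corepresentable against representables collapses.

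The main obstacle I anticipate is part (a), specifically verifying cleanly that the $\phi$-tensor product commutes with colimits \emph{separately in each variable} in the precise sense of \cite[Definition~3.1.1.18]{HA}: the multimapping formula of Lemma~\ref{lem:mapping-spaces-Day-convolution} gives corepresentability of each $\Mul^\phi$, hence a locally cocartesian fibration, but upgrading this to an honest $\CO^\otimes$-monoidal structure compatible with colimits requires checking that the locally cocartesian fibration is cocartesian and that the tensor functors preserve colimits — this is where one needs the smallness of $\CC$, the cocompleteness of the fibres of $\CD$, and the fact that ends over a small category and the $\phi$-tensor of $\CD$ interchange with colimits. Organizing this so that it reduces to the corresponding (known) statement for $\CD^\otimes$ itself, transported through the pushforward, is the technical heart of the proof; the formulas in (b) are then essentially bookkeeping on top of it.
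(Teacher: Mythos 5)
Your proposal is correct and follows essentially the same route as the paper: derive the coend formula for $\bigotimes_\phi\{F_i\}$ from Lemma~\ref{lem:mapping-spaces-Day-convolution} to get local cocartesian edges, use commutation of $\CD$'s tensor with colimits to show these compose and hence give an $\CO^\otimes$-monoidal structure compatible with colimits, and finish (b) by specializing the coend formula and applying Yoneda. The ``technical heart'' you flag — upgrading the locally cocartesian fibration to a cocartesian one — is exactly the step the paper dispatches with the observation that the composite of locally cocartesian edges is locally cocartesian because the $\phi$-tensors in $\CD$ preserve colimits, so you have correctly located and resolved the one genuine subtlety.
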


\begin{proof}
    If $\CD^\otimes$ is $\CO^\otimes$-monoidal, it follows from the formula of Lemma~\ref{lem:mapping-spaces-Day-convolution} that $\Fun(\CC^\otimes,\CD^\otimes)^{Day}$ is representable and that the $\phi$-tensor product is given by
    \[\bigotimes_\phi \{F_i\}_{i\in I}\simeq \int^{\{c_i\}\in \prod_{i\in I} \CC_{o_i}} \Mul^\phi_{\CC^\otimes}(\{c_i\}_{i\in I},-)\times \bigotimes_\phi \{F_i(c_i)\}_{i\in I}\,.\]
    This shows the existence of locally cartesian edges in $\Fun(\CC^\otimes,\CD^\otimes)^{Day}$. Because the tensor product functors in $\CD^\otimes$ commutes with colimits in each variable, one can calculate that the composite of locally cartesian edges is locally cartesian, and therefore $\Fun_{\CO}(\CC^\otimes,\CD^\otimes)^{Day}$ is a $\CO^\otimes$-monoidal $\infty$-category. The fibres are clearly cocomplete, and from the formula for the tensor product it follows that the tensor in $\Fun(\CC,\CD)^\otimes$ commutes with colimits in each variable.
    
    Finally the statement for the tensor product of corepresentable functors follows from the formula above and the Yoneda lemma.
\end{proof}

\begin{notation}
Suppose we are in the situation of the previous corollary, and suppose that $\CO^\otimes\simeq \Finp$. In the case that both $\CC^\otimes$ and $\CD^\otimes$ are canonically (pro)monoidal, then we write $\CC{-}\CD$ for the symmetric monoidal category given by the $\infty$-operad $\Fun(\CC^\otimes,\CD^\otimes)^{Day}$. The two examples which will arise constantly are $\DgSpc{\CC}$ and $\DgSpcp{\CC}$, where $\Spc$ is symmetric monoidal via the cartesian product, and $\Spcp$ via the smash product. Nevertheless when we refer to the $\infty$-operad inducing the symmetric monoidal structure on $\CC{-}\CD$, we will continue to write $\Fun(\CC^\otimes,\CD^\otimes)^{Day}$. While this distinction is mathematically meaningless, we find it notationally convenient.
\end{notation}

We next turn to the functoriality of Day convolution.

\begin{construction}
	Let $\CO^\otimes$ be an $\infty$-operad and suppose $f\colon\CI^\otimes\to \CJ^\otimes$ is a map of $\CO^\otimes$-promonoidal $\infty$-categories. Then for every two $\infty$-operads $\CC^\otimes$ and $\CP^\otimes$ over $\CO^\otimes$ we have a natural transformation
    \[\Alg_{\CP^\otimes/\CO^\otimes}(\Fun_{\CO^\otimes}(\CJ^\otimes,\CC^\otimes)^{Day})\simeq \Alg_{\CP^\otimes\times_{\CO^\otimes}\CJ^\otimes}(\CC^\otimes)\to \Alg_{\CP^\otimes\times_{\CO^\otimes}\CI^\otimes}(\CC^\otimes)\simeq \Alg_{\CP^\otimes/\CO^\otimes}(\Fun_{\CO^\otimes}(\CI^\otimes,\CC^\otimes)^{Day})\,,\]
    given by precomposition along $\CP^\otimes\times_{\CO^\otimes}\CI^\otimes\to \CP^\otimes\times_{\CO^\otimes}\CJ^\otimes$. Since this is natural in $\CP^\otimes$, it induces a map in $(\Op_\infty)_{/\CO^\otimes}$
    \[f^*\colon \Fun_{\CO^\otimes}(\CJ^\otimes,\CC^\otimes)^{Day} \to \Fun_{\CO^\otimes}(\CI^\otimes,\CC^\otimes)^{Day}\,.\]
\end{construction}

\begin{definition}\label{definition:operadic-adjunction}
    Consider $\CC^\otimes,\CD^\otimes\in(\Op_\infty)_{/\CO^\otimes}$. An \emph{operadic adjunction} between $\CC^\otimes$ and $\CD^\otimes$ is a relative adjunction over $\CO^\otimes$ in the sense of~\cite[Definition~7.3.2.2]{HA} such that both functors are maps of $\infty$-operads. This notion is equivalent to an adjunction in the $(\infty,2)$-category of $\infty$-operads, see ~\cite[Observation~4.3.2]{Riehl-Verity-adjunctions}.
\end{definition}

\begin{remark}
Note that if $\CC^\otimes$ and $\CD^\otimes$ are both $\CO^\otimes$-monoidal then an operadic left adjoint $f\colon \CC^\otimes\rightarrow \CD^\otimes$ is automatically $\CO^\otimes$-monoidal by \cite[Proposition 7.3.2.6]{HA}.
\end{remark}

\begin{proposition}\label{proposition:functoriality-of-presheaves}
    Let $\CO^\otimes$ be an $\infty$-operad and let $f:\CI^\otimes\to \CJ^\otimes$ a map of $\CO^\otimes$-promonoidal $\infty$-categories. Suppose $\CC^\otimes$ is a presentably $\CO^\otimes$-monoidal $\infty$-category. Let us consider the lax $\CO^\otimes$-monoidal functor
    \[f^*\colon \Fun_{\CO^\otimes}(\CJ^\otimes,\CC^\otimes)^{Day} \to \Fun_{\CO^\otimes}(\CI^\otimes,\CC^\otimes)^{Day}\,.\]
    \begin{itemize}
        \item[(a)] Suppose that for every active arrow $\phi:\{t_i\}_i\to t$ in $\CO^\otimes$ the natural map
        \[(f_t)_!\Mul_\CI^\phi(\{x_i\}_i,-)\to \Mul_\CJ^\phi(\{f_{t_i}x_i\}_i,-)\]
        adjoint to 
        \[\Mul_\CI^\phi(\{x_i\}_i,-)\to \Mul_\CJ^\phi(\{f_{t_i}x_i\}_i,f_t(-))\]
        is an equivalence for every family of objects $\{x_i\}_i$. Then $f^*$ has a left operadic adjoint $f_!$ that is $\CO^\otimes$-monoidal;
        \item[(b)] Suppose $f$ has an operadic right adjoint $g\colon \CJ^\otimes\to \CI^\otimes$. Then there is a natural equivalence of maps of $\infty$-operads $f_!\simeq g^*$, and moreover this functor is $\CO^\otimes$-monoidal.
    \end{itemize}
\end{proposition}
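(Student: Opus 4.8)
The plan is to handle parts (a) and (b) in turn, deducing (b) from (a) together with a $2$-functoriality observation. Write $\CD^\otimes:=\Fun_{\CO^\otimes}(\CJ^\otimes,\CC^\otimes)^{Day}$ and $\CE^\otimes:=\Fun_{\CO^\otimes}(\CI^\otimes,\CC^\otimes)^{Day}$. Since $\CC^\otimes$ is presentably $\CO^\otimes$-monoidal it is compatible with colimits, so Corollary~\ref{cor:tensor-in-Day-convolution}(a) makes $\CD^\otimes$ and $\CE^\otimes$ into $\CO^\otimes$-monoidal $\infty$-categories; and combining Construction~\ref{con-exp-fibration-fibres} and Remark~\ref{rem-day-conv-underlying} with the construction of $f^*$, the fibre of $f^*$ over $t\in\CO$ is restriction $f_t^*\colon\Fun(\CJ_t,\CC_t)\to\Fun(\CI_t,\CC_t)$ along $f_t$, which admits a left adjoint $(f_t)_!$, pointwise left Kan extension, because $\CC_t$ is cocomplete. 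Since an operadic left adjoint between $\CO^\otimes$-monoidal $\infty$-categories is automatically $\CO^\otimes$-monoidal (remark after Definition~\ref{definition:operadic-adjunction}), the real content of (a) is to promote the fibrewise adjoints $(f_t)_!$ to a left adjoint of $f^*$ relative to $\CO^\otimes$ whose two functors are maps of $\infty$-operads.

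The tool for that is a Beck--Chevalley mate computation fed by the hypothesis. The lax structure map of $f^*$ at an active arrow $\phi\colon\{t_i\}_i\to t$, namely $\bigotimes\nolimits_\phi\{f_{t_i}^*G_i\}\to f_t^*\bigl(\bigotimes\nolimits_\phi\{G_i\}\bigr)$, transposes under the fibrewise adjunctions to a mate
\[\nu_\phi\colon (f_t)_!\Bigl(\bigotimes\nolimits_\phi\{F_i\}\Bigr)\longrightarrow \bigotimes\nolimits_\phi\{(f_{t_i})_!F_i\},\]
with the domain tensor product formed in $\CE^\otimes$ and the codomain one in $\CD^\otimes$. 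Feeding in the coend formula for the $\phi$-tensor product from Corollary~\ref{cor:tensor-in-Day-convolution}, the pointwise formula $(f_t)_!H(-)\simeq\int^{x\in\CI_t}\Map_{\CJ_t}(f_tx,-)\times H(x)$, and the fact that the tensor products of $\CC^\otimes$ preserve colimits in each variable, a Fubini-plus-co-Yoneda manipulation rewrites both sides of $\nu_\phi$ as coends of the form $\int^{\{x_i\}}(-)\times\bigotimes\nolimits_\phi\{F_i(x_i)\}$ in which $\nu_\phi$ is induced by precisely the comparison map $(f_t)_!\Mul_{\CI}^\phi(\{x_i\}_i,-)\to\Mul_{\CJ}^\phi(\{f_{t_i}x_i\}_i,-)$; by hypothesis this is an equivalence, and for inert $\phi$ the mate is trivially one. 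Hence the fibrewise left adjoints are compatible with the $\CO^\otimes$-monoidal structures, and invoking the theory of relative adjunctions (\cite{HA}*{Section~7.3.2}) one assembles them into a relative left adjoint $f_!\colon\CE^\otimes\to\CD^\otimes$ over $\CO^\otimes$ which is a map of $\infty$-operads; invertibility of the $\nu_\phi$ means $f_!$ preserves cocartesian edges and is thus $\CO^\otimes$-monoidal, so $(f_!,f^*)$ is an operadic adjunction.

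For (b), with $g\colon\CJ^\otimes\to\CI^\otimes$ an operadic right adjoint of $f$, I would first check that the hypothesis of (a) holds automatically. Fibrewise $f_t\dashv g_t$, and for each $c\in\CJ_t$ the comma $\infty$-category $\CI_t\times_{\CJ_t}(\CJ_t)_{/c}$ has a terminal object, namely $(g_tc,\,\epsilon_c\colon f_tg_tc\to c)$ given by the counit, so $(f_t)_!\simeq g_t^*$; together with the multimapping identity $\Mul_{\CI}^\phi(\{x_i\}_i,g_ty)\simeq\Mul_{\CJ}^\phi(\{f_{t_i}x_i\}_i,y)$ supplied by the operadic adjunction, this shows the comparison maps of (a) are equivalences. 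So $f_!$ exists, is $\CO^\otimes$-monoidal, and is fibrewise $g_t^*$. To promote this to an equivalence $f_!\simeq g^*$ of maps of $\infty$-operads I would use that $\CK^\otimes\mapsto\Fun_{\CO^\otimes}(\CK^\otimes,\CC^\otimes)^{Day}$ is a contravariant $2$-functor into $\Op_\infty$ --- this follows from the natural-in-$\CP^\otimes$ equivalence $\Alg_{\CP^\otimes/\CO^\otimes}(\Fun_{\CO^\otimes}(\CK^\otimes,\CC^\otimes)^{Day})\simeq\Alg_{\CP^\otimes\times_{\CO^\otimes}\CK^\otimes}(\CC^\otimes)$ from the remark after Definition~\ref{def-day-conv}, the evident contravariant $2$-functoriality of the right side in $\CK^\otimes$, and the $\Cat_\infty$-enriched Yoneda lemma --- and apply it to $f\dashv g$ to obtain the adjunction $g^*\dashv f^*$ in $\Op_\infty$, i.e.\ an operadic adjunction. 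By uniqueness of adjoints $f_!\simeq g^*$, so in particular $g^*$ is $\CO^\otimes$-monoidal.

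The step I expect to be the main obstacle is the assembly at the end of the proof of (a): converting the \emph{pointwise} invertibility of the mates $\nu_\phi$ into the genuine statement that $f^*$ admits a left \emph{operadic} adjoint over $\CO^\otimes$. The delicate point is that $f^*$ does not preserve cocartesian edges over active arrows, so one cannot invoke the cleanest version of the relative adjoint functor theorem directly; one has to verify that invertibility of all the mates (active together with inert) is exactly what makes the fibrewise adjoints glue, and track the resulting coherences. By comparison, the coend bookkeeping identifying $\nu_\phi$ with the hypothesised comparison map is routine --- though it is where the hypothesis is consumed --- and the $2$-functoriality of Day convolution used in (b) is a standard enriched-Yoneda argument.
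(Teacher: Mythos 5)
Your proposal is correct and follows essentially the same route as the paper: identify the fibrewise left adjoints $(f_t)_!$, check via the coend/co-Yoneda formula from Corollary~\ref{cor:tensor-in-Day-convolution} that the Beck--Chevalley comparisons for active arrows are equivalences (this is exactly where the hypothesis is used, and the inert cases are automatic), and then assemble. The ``assembly'' step you flag as the main obstacle is in fact handled in the paper by a direct appeal to \cite[Proposition~7.3.2.11]{HA}, which is tailor-made to glue fibrewise left adjoints over a cocartesian base into a relative left adjoint once they commute with the pushforwards; so that worry is resolved by the right citation rather than additional argument. For (b), the paper takes a shorter path: it observes outright that $g^*$ is an operadic left adjoint of $f^*$ (the $2$-functoriality of Day convolution you make explicit is indeed what underwrites this) and then verifies the hypothesis of (a) in one line via $(f_t)_!\Mul_\CI^\phi(\{x_i\}_i,-)\simeq\Mul_\CI^\phi(\{x_i\}_i,g_t-)\simeq\Mul_\CJ^\phi(\{f_{t_i}x_i\}_i,-)$; your longer detour through the comma-category identification $(f_t)_!\simeq g_t^*$ followed by uniqueness of adjoints reaches the same conclusion and is not a different method in any substantive sense.
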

\begin{proof}
    We will use \cite[Proposition~7.3.2.11]{HA} applied to the functor $f^*$ over $\CO^\otimes$. Since on the fibre over $t_i\in\CO$ this is just given by precomposition by $f_{t_i}$, the functor on the fibre over $\{t_i\}_i$ 
    \[\prod_i \Fun(\CJ_{t_i},\CC_{t_i})\to \prod_i \Fun(\CI_{t_i},\CC_{t_i})\]
    has a left adjoint, given by the left Kan extension $(f_{t_i})_!$ on every component. In particular, this collection of left adjoints commutes with the pushforwards along inert maps. So it suffices to show that this collection of left adjoints commute with the pushforwards along active maps. Let $\phi:(t_i)_i\to t$ be an active map. Then we need to show that the map
    \[(f_t)_!\left(\bigotimes_i^\phi F_i\right)\to \bigotimes_i^\phi (f_{t_i})_!F_i\]
    is an equivalence.
    But then this follows from our hypothesis together with the description of Corollary~\ref{cor:tensor-in-Day-convolution}.
    
    Suppose now that $f$ has an operadic right adjoint $g$. Since $g^*$ is an operadic left adjoint to $f^*$, it follows immediately that $f_!=g^*$. So it remains only to check the two final conditions. But we have
    \[(f_t)_!\Mul_\CI^\phi(\{x_i\}_i,-)\simeq \Mul_{\CI}^\phi(\{x_i\}_i,g_t-)\simeq \Mul_\CJ^\phi(\{f_{t_i}x_i\}_i,-)\]
    since $g$ is an operadic right adjoint of $f$.
\end{proof}

\begin{remark}
Note that if $\CO^\otimes=\Finp$ and $\CI^\otimes$ and $\CJ^\otimes$ are both symmetric monoidal, then the conditions ensuring the symmetric monoidality of $f_!$ are equivalent to $f$ being a symmetric monoidal functor (since $f_!$ restricts to $f$ on representables). Thus the above proposition gives an alternative proof of \cite[Proposition~3.6]{ben-moshe-schlank}.
\end{remark}

\subsection{Symmetric monoidal structures on copresheaf categories}
 We finish this section by classifying all possible closed symmetric monoidal structures on the copresheaf $\infty$-category $\Fun(\CI,\Spc)$ in terms of symmetric promonoidal structures on $\CI$, see Theorem~\ref{thm-I-promonoidal}.

\begin{lemma}\label{lemma:sub-of-closed-is-promonoidal}
    Let $\CI$ be a small $\infty$-category and let us suppose that the presheaf category $\Fun(\CI,\Spc)$ is equipped with a symmetric monoidal structure $\Fun(\CI,\Spc)^\otimes$ which is compatible with colimits. Equip $\CI$ with the full suboperad structure $\CI^\otimes$ induced by the Yoneda embedding $\CI\subseteq \Fun(\CI,\Spc)^{\op}$. Then $\CI^\otimes$ is symmetric promonoidal.
\end{lemma}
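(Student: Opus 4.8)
The plan is to unwind the definition of a symmetric promonoidal $\infty$-category and reduce the statement to a flatness condition tailor-made for the hypothesis. Set $\CD\coloneqq\Fun(\CI,\Spc)$, which is presentable since $\CI$ is small, and let $(\CD^{\op})^\otimes$ be the symmetric monoidal structure on $\CD^{\op}$ obtained by applying $(-)^{\op}\colon\CAlg(\Cat_\infty)\to\CAlg(\Cat_\infty)$ to $\CD^\otimes$. The Yoneda embedding $j\colon\CI\hookrightarrow\CD^{\op}$ is fully faithful with image the representable copresheaves $h^x\coloneqq\Map_\CI(x,-)$, and by construction $\CI^\otimes$ is the full subcategory of $(\CD^{\op})^\otimes$ on the tuples of representables; this is an $\infty$-operad over $\Finp$ since the collection of tuples of representables contains the inert cocartesian lifts (the projections $\langle n\rangle\to\langle 1\rangle$) and satisfies the Segal condition componentwise. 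Being a full subcategory inclusion followed by a cocartesian fibration, $\CI^\otimes\hookrightarrow(\CD^{\op})^\otimes\to\Finp$ is an inner fibration, and hence so is the restriction $p_{act}\colon\CI^\otimes_{act}\to\Fin$ to active arrows. By Definition~\ref{def-promonoidal} it remains to prove that $p_{act}$ is exponentiable; for an inner fibration this is equivalent to being \emph{flat}, i.e.\ to the map $\CI^\otimes_{act}\times_{\Fin}\Lambda^2_1\to\CI^\otimes_{act}\times_{\Fin}\Delta^2$ being a categorical equivalence for every $2$-simplex of $\Fin$ (see \cite{Exp2}, and also \cite{Ayala-Francis}).

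For the flatness check I first record, via the Yoneda lemma, that for tuples $x_\bullet=(x_i)_{i\in\langle n\rangle}$, $y_\bullet=(y_j)_{j\in\langle m\rangle}$ of objects of $\CI$ and an active $\alpha\colon\langle n\rangle\to\langle m\rangle$,
\[
\Map^\alpha_{\CI^\otimes}(x_\bullet,y_\bullet)\;\simeq\;\prod_{j\in\langle m\rangle}\Map_{\CD}\Bigl(h^{y_j},\ \bigotimes_{i\in\alpha^{-1}(j)}h^{x_i}\Bigr)\;\simeq\;\prod_{j}\Bigl(\bigotimes_{i\in\alpha^{-1}(j)}h^{x_i}\Bigr)(y_j),
\]
with composition computed by tensoring components. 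Unwinding flatness at a $2$-simplex $\langle n_0\rangle\xrightarrow{\alpha}\langle n_1\rangle\xrightarrow{\beta}\langle n_2\rangle$ then amounts to showing that for all $x_\bullet\in\CI^{n_0}$, $z_\bullet\in\CI^{n_2}$ and every $w\colon x_\bullet\to z_\bullet$ over $\beta\alpha$, the $\infty$-category of factorizations $\bigl(y_\bullet\in\CI^{n_1},\ f\colon x_\bullet\to y_\bullet\text{ over }\alpha,\ g\colon y_\bullet\to z_\bullet\text{ over }\beta,\ \gamma\colon g\circ f\simeq w\bigr)$ (morphisms the evident maps $y_\bullet\to y_\bullet'$) is weakly contractible. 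Since all this data and the constraint $\gamma$ decompose over the fibres $\beta^{-1}(k)$ with $k\in\langle n_2\rangle$, this factorization category is a product $\prod_k\mathrm{Fact}_k$; writing $A_j\coloneqq\bigotimes_{i\in\alpha^{-1}(j)}h^{x_i}$ and letting $w_k$ be the $k$-th component of $w$, the $\infty$-category $\mathrm{Fact}_k$ has objects $\bigl((b_j,a_j\colon h^{b_j}\to A_j)_{j\in\beta^{-1}(k)},\ g\colon h^{z_k}\to\bigotimes_j h^{b_j},\ \gamma\colon(\bigotimes_j a_j)\circ g\simeq w_k\bigr)$. It suffices to show each $\mathrm{Fact}_k$ is weakly contractible.

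This is the step that uses the hypothesis. By the Yoneda lemma each $A_j$ is the colimit $\colim_{\CE_j}h^{(-)}$ of representables indexed by its category of elements $\CE_j$, with tautological cocone $h^{b_j}\to A_j$; since the tensor product of $\CD$ preserves colimits separately in each variable, $\bigotimes_{j\in\beta^{-1}(k)}A_j\simeq\colim_{\prod_j\CE_j}\bigotimes_j h^{b_j}$ with cocone the maps $\bigotimes_j a_j$. As $\Map_{\CD}(h^{z_k},-)\simeq\operatorname{ev}_{z_k}\colon\CD\to\Spc$ preserves all colimits, applying it commutes with this colimit:
\[
\colim_{(b_j,a_j)_j\,\in\,\prod_j\CE_j}\Map_{\CD}\Bigl(h^{z_k},\ \bigotimes_j h^{b_j}\Bigr)\;\xrightarrow{\ \sim\ }\;\Map_{\CD}\Bigl(h^{z_k},\ \bigotimes_j A_j\Bigr).
\]
The unstraightening of the $\Spc$-valued functor on the left is, after forgetting $\gamma$, exactly $\mathrm{Fact}_k$, and under the displayed equivalence the canonical map ``unstraightening $\to$ colimit'' becomes $\bigl((b_j,a_j)_j,g\bigr)\mapsto(\bigotimes_j a_j)\circ g$. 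Hence $\mathrm{Fact}_k$ is the fibre over $w_k$ of the $\infty$-groupoidification of this unstraightening. Since the classifying space of a left fibration computes the colimit of the classifying functor, and since taking the fibre over $w_k$ commutes with $\infty$-groupoidification (colimits in $\Spc$ are universal), we obtain $|\mathrm{Fact}_k|\simeq\{w_k\}\simeq\ast$, as required.

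The crux is this last argument, identifying $\mathrm{Fact}_k$ as a fibre of an $\infty$-groupoidification: it relies essentially on colimit-compatibility of the monoidal structure (to write $\bigotimes_j A_j$ as a colimit of tensors of representables) and on the atomicity of representable copresheaves (they corepresent evaluation functors, so $\Map_{\CD}(h^{z_k},-)$ passes through colimits), followed by universality of colimits in $\Spc$. If colimit-compatibility is dropped the displayed map ceases to be an equivalence and $p_{act}$ is genuinely not flat, so there is no shortcut around this input; the remaining ingredients---the reduction to flatness, the multimapping-space formula, and the product decomposition---are routine.
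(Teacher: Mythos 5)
Your proof is correct, but it routes through a different characterization of exponentiability than the paper does. The paper invokes the coend criterion for exponentiability from \cite[Lemma~1.10.(c)]{Ayala-Francis}: for each $f\colon I\to J$ in $\Fin$ and objects $x_\bullet,z$, the canonical map
\[
\int^{y\in\CI^J}\Mul_\CI(\{y_j\},z)\times\prod_j\Mul_\CI(\{x_i\}_{i\in f^{-1}j},y_j)\longrightarrow\Mul_\CI(\{x_i\}_{i\in I},z)
\]
must be an equivalence. After rewriting the multimorphism spaces via Yoneda exactly as you do, the paper moves $\Map_\CD(z,-)$ out (it preserves colimits because $z$ is a representable, hence tiny), commutes colimits with the tensor product, and at the very end recognizes a density coend $\int^{y_j}\Map_\CD(y_j,A_j)\otimes y_j\xrightarrow{\sim}A_j$. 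You instead use the equivalent flatness criterion phrased via weak contractibility of factorization $\infty$-categories, decompose these as products over fibres, apply density \emph{first} to write $\bigotimes_j A_j$ as a colimit of tensors of representables, and then close the argument by identifying each $\mathrm{Fact}_k$ as a fibre of the classifying-space map of the unstraightening, appealing to universality of colimits in $\Spc$.

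The two proofs rest on the same three inputs---density of representables, colimit-compatibility of the tensor, atomicity of representable copresheaves---and are in a precise sense dual readings of the same computation: the paper checks that the global coend map is an equivalence, you check that its fibres are contractible. What the paper's route buys is economy; once one has the coend criterion to hand, the verification is a short chain of colimit commutations ending in Yoneda, with no need to reason about descent or classifying spaces. What your route buys is geometric transparency: one sees directly \emph{which} factorization category controls flatness and \emph{why} it is contractible, and it is clearer that the only essential hypothesis is colimit-compatibility (as you correctly emphasize). Two small points of precision worth tightening: the composite $\CI^\otimes\hookrightarrow(\CD^{\op})^\otimes\to\Finp$ being an inner fibration is best justified by noting $\CI^\otimes$ is an $\infty$-operad (this is built into the definition), rather than by ``fully faithful followed by cocartesian''; and the step that taking the fibre over $w_k$ commutes with $\infty$-groupoidification deserves a one-line justification (e.g.\ factor $\CU\to X$ through the cofinal map $\CU\to|\CU|$ and use that cofinality is stable under pullback along right fibrations, or cite descent in $\Spc$).
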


\begin{proof}
    For brevity let us write $\CD^\otimes=\Fun(\CI,\Spc)^\otimes$. Recall from Definition \ref{def-promonoidal} that $\CI^\otimes$ is promonoidal if the functor $\CI^{\otimes}\to \Finp$ is exponentiable over $\Fin \simeq (\Finp)^{act}$. By the characterization of exponentiablility in \cite[Lemma~1.10.(c)]{Ayala-Francis}, we need to show that for every map $f\colon I\to J$ in $\Fin$, every $x\in \CI^I$ and every $z\in\CI$ the map
    \[\int^{y\in \CI^J} \Mul_{\CI}(\{y_j\}_{j\in J},z)\times \prod_{j\in J}\Mul_{\CI}(\{x_i\}_{i\in f^{-1}j},y_j)\to \Mul_{\CI}(\{x_i\}_{i\in I},z)\]
    is an equivalence. Using that $\CI\subseteq \CD^{\op}$ is a full suboperad, this is equivalent to asking that the map
    \[\int^{y\in \CI^J} \prod_{j\in J} \Map_{\CD}(y_j,\bigotimes_{i\in f^{-1}j} x_i)\times \Map_{\CD}(z,\bigotimes_{j\in J} y_j)\to \Map_{\CD}(z,\bigotimes_{i\in I}x_i)\]
    is an equivalence of spaces. But since $\Map_{\CD}(z,-)$ commutes with all colimits (as $z\in\CI$ is tiny) it is enough to show that the map
    \[\int^{y\in \CI^J} \left(\prod_{j\in J}\Map_{\CD}(y_j,\bigotimes_{i\in f^{-1}j} x_i)\right)\otimes \bigotimes_{j\in J} y_j\to \bigotimes_{i\in I}x_i\,,\]
    is an equivalence. Since the tensor product in $\CD$ commutes with colimits in each variable, we can bring all the colimits inside (using that $\Tw(\CI^J)\simeq \Tw(\CI)^J$). We are reduced to proving that the map
    \[\bigotimes_{j\in J} \int^{y_j\in \CC} \Map_{\CD}(y_j,\bigotimes_{i\in f^{-1}j} x_i)\otimes y_j \to \bigotimes_{i\in I} x_i\]
    is an equivalence. But this follows from the fact that for any $j\in J$ and $w\in \CD$, the map
    \[\int^{y_j\in\CC}\Map(y_j,w)\times y_j\simeq \colim_{y_j\in \CC_{/w}} y_j\to w\]
    is an equivalence, which is just another form of the Yoneda lemma.
\end{proof}

We are ready to prove our classification result.

\begin{theorem}\label{thm-I-promonoidal}
    Let $\CI$ be a small $\infty$-category and suppose $\Fun(\CI,\Spc)$ is equipped with a symmetric monoidal structure $\Fun(\CI,\Spc)^\otimes$ which is compatible with colimits. Equip $\CI^\otimes$ with the $\infty$-operad structure induced by the Yoneda embedding $\CI\subseteq\Fun(\CI,\Spc)^{\op}$. Then $\CI^\otimes$ is symmetric promonoidal and the symmetric monoidal structure on $\Fun(\CI,\Spc)$ is equivalent to the one induced by Day convolution with the symmetric promonoidal structure on $\CI^\otimes$.
\end{theorem}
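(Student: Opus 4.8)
Throughout write $\widehat{\CI}\coloneqq\Fun(\CI,\Spc)$ and let $h^i\coloneqq\Map_\CI(i,-)\in\widehat{\CI}$ be the corepresentable presheaves, so that by definition $\CI^\otimes$ is the full suboperad of $(\widehat{\CI}^{\op})^\otimes$ on the objects $h^i$. By Lemma~\ref{lemma:sub-of-closed-is-promonoidal} the operad $\CI^\otimes$ is symmetric promonoidal, so by Corollary~\ref{cor:tensor-in-Day-convolution} the Day convolution $\CD^\otimes\coloneqq\Fun(\CI^\otimes,\Spc^\times)^{Day}$ is a symmetric monoidal $\infty$-category compatible with colimits, and its underlying $\infty$-category is $\widehat{\CI}$ (Remarks~\ref{rem-norm-underlying} and~\ref{rem-day-conv-underlying}). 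Thus $\widehat{\CI}^\otimes$ and $\CD^\otimes$ are two colimit-compatible symmetric monoidal structures on $\widehat{\CI}$, and the plan is to produce a symmetric monoidal equivalence between them lying over the identity of $\widehat{\CI}$. The reason to expect this is that the two structures agree on corepresentables: unwinding the suboperad structure and applying the Yoneda lemma gives $\Mul_{\CI^\otimes}(\{x_1,\dots,x_n\},z)\simeq(h^{x_1}\otimes\cdots\otimes h^{x_n})(z)$, where $\otimes$ denotes the given tensor product on $\widehat{\CI}$, whereas Corollary~\ref{cor:tensor-in-Day-convolution}(b) gives $h^x\otimes^{Day}h^y\simeq\Mul_{\CI^\otimes}(\{x,y\},-)$; combining these yields $h^x\otimes^{Day}h^y\simeq h^x\otimes h^y$, and similarly the unit of $\CD^\otimes$ is $\Mul_{\CI^\otimes}(\varnothing,-)\simeq\Map_{\widehat{\CI}}(h^{(-)},\mathbf{1})\simeq\mathbf{1}$.

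First I would construct a lax symmetric monoidal comparison functor $\Psi\colon\widehat{\CI}^\otimes\to\CD^\otimes$ whose underlying functor is the identity. By the adjunction defining Day convolution (Definition~\ref{def-day-conv}), such a lax symmetric monoidal functor is the same datum as a map of $\infty$-operads $\widehat{\CI}^\otimes\times_{\Finp}\CI^\otimes\to\Spc^\times$. I take the one obtained, up to the symmetry equivalence, by restricting in the contravariant variable along $\CI^\otimes\hookrightarrow(\widehat{\CI}^{\op})^\otimes$ the canonical lax symmetric monoidal enhancement of the mapping-space bifunctor $(\widehat{\CI}^{\op})^\otimes\times_{\Finp}\widehat{\CI}^\otimes\to\Spc^\times$, which exists because $\widehat{\CI}^\otimes$ is presentably, hence closed, symmetric monoidal: the internal hom is lax symmetric monoidal and one postcomposes with the lax symmetric monoidal functor $\Map_{\widehat{\CI}}(\mathbf{1},-)$. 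On underlying $\infty$-categories this pairing is $(h^i,F)\mapsto\Map_{\widehat{\CI}}(h^i,F)\simeq F(i)$, so by compatibility of the Day-convolution adjunction with passage to underlying $\infty$-categories (Remarks~\ref{rem-norm-underlying} and~\ref{rem-day-conv-underlying}) the resulting $\Psi$ has underlying functor the identity of $\widehat{\CI}$.

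Next I would check that $\Psi$ is in fact strong symmetric monoidal; since its underlying functor is then an equivalence, $\Psi$ is automatically an equivalence of symmetric monoidal $\infty$-categories, and transporting the Day convolution structure along it yields the assertion. As the underlying functor of $\Psi$ is the identity it preserves colimits, and both $\otimes^{Day}$ and $\otimes$ preserve colimits separately in each variable; hence the lax structure transformation of $\Psi$ is a natural transformation between two functors $\widehat{\CI}\times\widehat{\CI}\to\widehat{\CI}$ preserving colimits in each variable. Since the corepresentables $h^x$ generate $\widehat{\CI}$ under colimits, such a transformation is an equivalence as soon as it is an equivalence on all pairs $(h^x,h^y)$, where it is identified with the equivalence $h^x\otimes^{Day}h^y\simeq h^x\otimes h^y$ recorded above; the unit map of $\Psi$ is handled by the identification $\mathbf{1}\simeq\Mul_{\CI^\otimes}(\varnothing,-)$.

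I expect the genuinely delicate point to be the coherence bookkeeping underlying the previous two paragraphs: exhibiting the evaluation pairing together with its lax symmetric monoidal structure as an honest map of $\infty$-operads, and then verifying that the lax structure maps of the adjoint functor $\Psi$, evaluated on corepresentables, really realize the Yoneda identification $h^x\otimes^{Day}h^y\xrightarrow{\ \sim\ }h^x\otimes h^y$ rather than some a priori unrelated map between abstractly equivalent objects. Everything else is formal, driven by colimit-compatibility and the explicit tensor formula of Corollary~\ref{cor:tensor-in-Day-convolution}. An alternative route, avoiding the explicit construction of $\Psi$, would be to quote a universal property of Day convolution out of a symmetric promonoidal $\infty$-category into presentably symmetric monoidal targets --- the promonoidal analogue of \cite{HA}*{Proposition~4.8.1.10} --- applied to the tautological promonoidal functor $\CI^\otimes\to\widehat{\CI}^\otimes$; but exhibiting the pairing keeps the proof within the machinery developed above.
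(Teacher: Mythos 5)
Your proposal is correct and follows essentially the same approach as the paper: both invoke Lemma~\ref{lemma:sub-of-closed-is-promonoidal} for promonoidality, build the comparison functor from the lax symmetric monoidal mapping-space pairing restricted along the Yoneda embedding via the Day convolution adjunction, observe it is the identity on underlying $\infty$-categories, and conclude by checking strong monoidality on corepresentables using Corollary~\ref{cor:tensor-in-Day-convolution} together with colimit-compatibility. The only difference is cosmetic: where you sketch the lax symmetric monoidal enhancement of the mapping-space functor from closed monoidality, the paper simply cites \cite{GlasmanDay}*{Section 3}.
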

\begin{proof}
    It follows from Lemma~\ref{lemma:sub-of-closed-is-promonoidal} that $\CI^\otimes$ is symmetric promonoidal. Consider the composite 
    \[\CI^\otimes\times_{\Finp}\Fun(\CI,\Spc)^\otimes\to (\Fun(\CI,\Spc)^{\op})^\otimes\times_{\Finp}\Fun(\CI,\Spc)^\otimes\to \Spc^\times\]
    of lax symmetric monoidal functors, where the first functor is induced by the Yoneda embedding and the second is the lax symmetric monoidal enhancement of the mapping space functor constructed in \cite[Section~3]{GlasmanDay}. By the universal property of the Day convolution, we obtain a map of $\infty$-operads
    \[\Fun(\CI,\Spc)^\otimes\to \Fun(\CI^\otimes ,\Spc^\times)^{\mathrm{Day}}\]
    which is the identity on underlying $\infty$-categories. Therefore to prove our thesis it will suffice to show that this functor is symmetric monoidal. Since $\Fun(\CI,\Spc)$ is generated under colimits by the corepresentable functors and both tensor products commute with colimits in each variable, it is enough to check that the maps
    \[\Mul_\CI(\varnothing,-)\simeq 1\to 1^{\mathrm{Day}}\qquad \Mul_\CI(\{x,y\},-)\simeq \Map_\CI(x,-)\otimes \Map_\CI(y,-)\to \Map_\CI(x,-)\otimes^{\mathrm{Day}} \Map_\CI(y,-)\]
    are equivalences for all $x,y\in \CI$. But this follows from Corollary~\ref{cor:tensor-in-Day-convolution}.
\end{proof}

Recall that the $\infty$-category of pointed objects in a presentably symmetric monoidal $\infty$-category is canonically symmetric monoidal. For later use we also record how taking pointed objects in a category of diagram spaces interacts with the Day convolution symmetric monoidal structure.

\begin{proposition}\label{prop:Day_conv_pointed}
Consider a small promonoidal $\infty$-category $\CI$, and a presentably symmetric monoidal $\infty$-category $\CC$. There exists a symmetric monoidal equivalence
\[(\CI{-}\CC)_*  \simeq \CI{-}\CC_*\]
\end{proposition}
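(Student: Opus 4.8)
The plan is to recognize both sides as the canonical smash‑product symmetric monoidal structure on the pointed objects of $\CI{-}\CC$, by invoking the uniqueness statement recalled in Section~\ref{subsec-pointed}: for a presentably symmetric monoidal $\infty$‑category $\CD$ there is a \emph{unique} presentably symmetric monoidal structure on $\CD_*$ for which $(-)_+\colon\CD\to\CD_*$ is symmetric monoidal, and it is the one defining $\CD_*$ as a symmetric monoidal category (a consequence of \cite{HA}*{Proposition~4.8.2.11}). I will apply this with $\CD=\CI{-}\CC$, which is presentably symmetric monoidal: $\Fun(\CI,\CC)$ is presentable since $\CI$ is small and $\CC$ is presentable, and Corollary~\ref{cor:tensor-in-Day-convolution}(a) equips the Day convolution with a symmetric monoidal structure compatible with colimits.

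First I would check that $\CI{-}\CC_*$ is likewise a presentably symmetric monoidal structure on the $\infty$‑category $\CD_*$. Presentability and symmetric monoidality compatible with colimits follow once more from Corollary~\ref{cor:tensor-in-Day-convolution}(a), since $\CC_*$ is presentably symmetric monoidal and $\CI$ is small. For the identification of underlying $\infty$‑categories, I would use that $\Fun(\CI,-)$ commutes with passing to the coslice under the terminal object: the terminal object of $\Fun(\CI,\CC)$ is the constant functor at the terminal object of $\CC$, so post‑composition yields an equivalence $\Fun(\CI,\CC_*)\simeq\Fun(\CI,\CC)_*=\CD_*$. Since colimits in $\Fun(\CI,\CC)$ are computed pointwise, under this identification post‑composition with $(-)_+\colon\CC\to\CC_*$ corresponds to the functor $(-)_+\colon\CD\to\CD_*$ left adjoint to the forgetful functor.

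Next I would construct the comparison functor. By the very construction of the smash product, $(-)_+\colon\CC\to\CC_*$ is a symmetric monoidal functor, hence a morphism of $\infty$‑operads over $\Finp$; applying the Day convolution functor $\Fun(\CI^\otimes,-)^{Day}$ of Definition~\ref{def-day-conv} produces a morphism of $\infty$‑operads $\CI{-}\CC\to\CI{-}\CC_*$ over $\Finp$. I would then verify that it preserves cocartesian edges, so that it is symmetric monoidal, and that it preserves colimits. Colimit‑preservation is clear, since on each fibre it is post‑composition with the colimit‑preserving $(-)_+$ and colimits in Day convolution categories are pointwise. For symmetric monoidality, using the explicit formulas for the Day tensor product and unit of Corollary~\ref{cor:tensor-in-Day-convolution}(b), together with the facts that $(-)_+$ preserves colimits and sends $c\otimes c'$ to $c_+\wedge c'_+$ and the unit to the unit, one checks that the canonical comparison maps $(F\otimes^{Day}G)_+\to F_+\otimes^{Day}G_+$ and $(1_{Day})_+\to 1_{Day}$ are equivalences, which is exactly the condition that cocartesian edges over active arrows be preserved. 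By the previous paragraph the underlying functor of this $\Finp$‑monoidal functor is $(-)_+\colon\CD\to\CD_*$.

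Combining these observations, $\CI{-}\CC_*$ is a presentably symmetric monoidal structure on $\CD_*$ for which $(-)_+\colon\CD\to\CD_*$ is symmetric monoidal, so by the uniqueness statement above it coincides with the structure defining $(\CI{-}\CC)_*$, which gives the desired symmetric monoidal equivalence. The step I expect to be the main obstacle is the verification in the third paragraph that post‑composition with a colimit‑preserving symmetric monoidal functor induces a symmetric monoidal functor on Day convolution categories: rather than tracking coherences directly, I would argue as in the proof of Theorem~\ref{thm-I-promonoidal} that, since $\Fun(\CI,\CC)$ is generated under colimits by the functors $\Map_\CI(x,-)\otimes c$ for $x\in\CI$ and $c\in\CC$, and the Day tensor products are colimit‑preserving in each variable, it suffices to check that the unit and the products $\Map_\CI(x,-)\otimes^{Day}\Map_\CI(y,-)\simeq\Mul_\CI(\{x,y\},-)$ are preserved, which follows from Corollary~\ref{cor:tensor-in-Day-convolution}(b) and the monoidality of $(-)_+$.
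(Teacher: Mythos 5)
Your proof is correct and takes essentially the same route as the paper's: both construct the strong monoidal functor $\CI{-}\CC\to\CI{-}\CC_*$ from $(-)_+\colon\CC\to\CC_*$ via Day convolution and then invoke \cite{HA}*{Proposition~4.8.2.11}. The only difference is organizational — the paper uses the smashing property to factor this functor through $(\CI{-}\CC)_*$ and checks the induced map is an equivalence on underlying categories, whereas you first identify the underlying categories directly and then appeal to the uniqueness clause of the same proposition; these are two ways of packaging the same argument.
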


\begin{proof}
Consider the lax monoidal functor $\CI{-}\CC \rightarrow \CI{-}\CC_*$ induced by the universal property of Day convolution by the composite
\[\Fun(\CI^\otimes,\CC^\otimes)\times_{\Finp} \CI^\otimes \rightarrow \CC^\otimes \xrightarrow{({-})_+} (\CC_*)^{\wedge_\otimes}.\] Because $({-})_+$ is strong monoidal and colimit preserving, one calculates that this functor is in fact strong monoidal. Therefore by \cite{HA}*{Proposition 4.8.2.11} we obtain an induced strong monoidal functor $(\CI{-}\CC)_* \rightarrow \CI{-}\CC_*$, which is easily seen to be the identity on underlying categories.
\end{proof}

\subsection{A symmetric monoidal Elmendorf's theorem}

In this subsection we give a general $\infty$-categorical version of Elmendorf's theorem. We then enhance this to a symmetric monoidal equivalence.

\begin{theorem}[Elmendorf]\label{thm-elmendorf}
	Let $\CC$ be a cocomplete $\infty$-category and let $i\colon \CC_0\to \CC$ be the 
	inclusion of a small full subcategory satisfying the following conditions:
	\begin{itemize}
		\item[(a)] The objects of $\CC_0$ are tiny: for all $c\in\CC_0$, the functor 
		$\Map_{\CC}(c,-)$ preserves small colimits;
		\item[(b)] The collection of objects $\{c_0 \in \CC_0\}$ is jointly conservative: 
		an arrow $f$ in $\CC$ is an equivalence if and only if $\Map_{\CC}(c_0,f)$ is so 
		for all $c_0\in\CC_0$. 
	\end{itemize}
	Then the restricted Yoneda functor induces an equivalence of $\infty$-categories 
	$j\colon \CC \simeq \CP(\CC_0)$.
\end{theorem}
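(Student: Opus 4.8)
The plan is to run the standard ``abstract Elmendorf'' argument, reducing everything to the Yoneda lemma and the universal property of presheaf categories. Write $y\colon \CC_0\hookrightarrow \CP(\CC_0)=\Fun(\CC_0^{\op},\Spc)$ for the Yoneda embedding. Since $\CC$ is cocomplete, the universal property of $\CP(\CC_0)$ (see \cite{HTT}*{Theorem 5.1.5.6}) produces a colimit-preserving functor $j_!\coloneqq \Lan_y\, i\colon \CP(\CC_0)\to\CC$, characterized up to equivalence by $j_!\circ y\simeq i$ (using that $y$ is fully faithful). First I would check that $j_!$ is left adjoint to the restricted Yoneda functor $j\colon \CC\to\CP(\CC_0)$, $c\mapsto \Map_{\CC}(i(-),c)$: writing an arbitrary $F\in\CP(\CC_0)$ as the colimit $\colim_{(c_0\to F)} y(c_0)$ of representables over $(\CC_0)_{/F}$, both $\Map_{\CC}(j_!F,c)$ and $\Map_{\CP(\CC_0)}(F,j(c))$ compute $\lim_{(c_0\to F)}\Map_{\CC}(i(c_0),c)$ naturally in $c$ and $F$. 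It then suffices to prove that the unit $\mathrm{id}_{\CP(\CC_0)}\Rightarrow jj_!$ and the counit $j_!j\Rightarrow \mathrm{id}_{\CC}$ of this adjunction are equivalences.

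For the unit: hypothesis (a) says exactly that each functor $\Map_{\CC}(i(c_0),-)$ preserves small colimits, i.e.\ that $j$ preserves small colimits; since $j_!$ does too, the composite $jj_!$ is colimit-preserving, so it is enough to check the unit on the representables $y(c_0)$, which generate $\CP(\CC_0)$ under small colimits. On $y(c_0)$ the unit is the map $y(c_0)\to j(i(c_0))=\Map_{\CC}(i(-),i(c_0))$ which, evaluated at $c_0'\in\CC_0$, is the map $\Map_{\CC_0}(c_0',c_0)\to\Map_{\CC}(i(c_0'),i(c_0))$ induced by $i$. This is an equivalence because $\CC_0\subseteq\CC$ is a \emph{full} subcategory. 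Hence the unit is an equivalence and $j_!$ is fully faithful.

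For the counit: fix $c\in\CC$ and let $\varepsilon_c\colon j_!j(c)\to c$ be the counit. By hypothesis (b) it is enough to show $\Map_{\CC}(i(c_0),\varepsilon_c)$ is an equivalence for all $c_0\in\CC_0$. Using $i(c_0)\simeq j_!(y(c_0))$, full faithfulness of $j_!$, and the Yoneda lemma one obtains a chain of natural equivalences $\Map_{\CC}(i(c_0),j_!j(c))\simeq\Map_{\CP(\CC_0)}(y(c_0),j(c))\simeq j(c)(c_0)=\Map_{\CC}(i(c_0),c)$, and a triangle-identity chase shows this composite agrees with $\Map_{\CC}(i(c_0),\varepsilon_c)$. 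Therefore $\varepsilon_c$ is sent to an equivalence by every $\Map_{\CC}(i(c_0),-)$, hence is an equivalence. Thus $j_!$ is also essentially surjective, so it is an equivalence of $\infty$-categories with quasi-inverse $j$.

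I expect the only genuinely delicate point to be the triangle-identity bookkeeping in the last paragraph, confirming that the abstract mapping-space equivalence is the one induced by the counit; everything else is formal manipulation with adjunctions and the Yoneda lemma. (This is precisely where cocompleteness together with joint conservativity (b) replaces the hypothesis in \cite{HTT}*{Corollary 5.1.6.11} that $\CC_0$ generate $\CC$ under colimits --- a posteriori equivalent, since the counit exhibits each $c\simeq \colim_{(c_0\to c)} i(c_0)$.)
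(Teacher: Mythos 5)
Your argument is correct and follows essentially the same route as the paper's proof: construct the colimit-preserving left adjoint from the universal property of $\CP(\CC_0)$, verify the unit is an equivalence on representables (full faithfulness of $\CC_0\subseteq\CC$) and propagate by colimit-preservation of $jj_!$ via hypothesis (a), then deduce the counit is an equivalence from hypothesis (b) and a triangle identity. The only cosmetic differences are that the paper obtains the adjunction from the adjoint functor theorem rather than by the explicit coend/limit computation, and phrases the counit step as conservativity of $j$ rather than pointwise in $\Map_{\CC}(i(c_0),-)$ — these are the same argument.
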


\begin{proof}
	By the universal property of the category of presheaves~\cite{HTT}*{Theorem 5.1.5.6}, there 
	exists a colimit preserving functor $L\colon \CP(\CC_0)\to \CC$ such that 
	$Lj_0 \simeq i$ where $j_0 \colon \CC_0 \to \CP(\CC_0)$ denotes the Yoneda embedding. 
	By the adjoint functor theorem~\cite{AFT}*{Corollary 4.1.4}, the functor $L$ admits a right 
	adjoint $R\colon \CC \to \CP(\CC_0)$ which is defined via the formula 
	\[
	R_c(c_0)=\Map_{\CC}(Lj_0(c_0), c)\simeq\Map_{\CC}(i(c_0), c)
	\]
	for all $c\in\CC$ and $c_0 \in \CC_0$.
	Therefore $R$ can be identified with the restricted Yoneda functor 
	$j\colon \CC \to \CP(\CC_0)$. We note that the functor $j$ 
	preserves all small colimits since for all $c_0 \in \CC_0$, the functor 
	\[
	\Map_{\CC}(c_0,-)\colon \CC\xrightarrow{j}\CP(\CC_0) 
	\xrightarrow{\mathrm{ev_{c_0}}} \CC_0
	\]
	does so by condition (a). 
	As equivalences in $\CP(\CC_0)$ are detected pointwise, the same argument 
	as above using condition (b) then shows that $j$ is conservative. 
	Note that the unit map $\eta \colon 1 \to j L$ is an equivalence on all objects 
	in the image of $j_0$ as by construction $jLj_0\simeq ji=j_0$. 
	It follows that the unit map is an equivalence on all objects as $\CP(\CC_0)$  
	is generated under colimits by the representable functors and all the functors involved 
	preserve colimits. 
	Using the triangle identities of the adjunction we then find that $j(\epsilon)$ is 
	an equivalence and so the counit map $\epsilon \colon L j\to 1$ is an 
	equivalence by conservativity of $j$. Thus $j$ and $L$ are inverse equivalences. 
\end{proof}

\begin{example}\label{ex-S_G}
	Let $G$ be a topological group and let $G\Top$ be a convenient category of $G$-spaces. 
	There is a model structure on $G\Top$ where a map $f\colon X \to Y$ of $G$-spaces is a 
	weak equivalence (resp., fibration) if $f^H\colon X^H \to Y^H$ is a weak homotopy 
	equivalence (resp., Serre fibration) for all closed subgroups $H\leq G$, 
	see~\cite{Schwede18}*{Proposition B.7}. Let $\Spc_G$ denote the underlying $\infty$-category of this 
	model category, which is cocomplete by~\cite{BHH2017}*{Theorem 2.5.9}. 
	Moreover colimits in $\Spc_G$ of projective cofibrant diagrams can be calculated as 
	homotopy colimits in $G\Top$ by ~\cite{BHH2017}*{Remark 2.5.7}. 
	Let $\OO_G\leq \Spc_G$ be the full subcategory of $G$-spaces spanned by the cosets 
	$G/H$ where $H$ runs over all closed subgroups of $G$. 
	Note that $G/H\in \Spc_G$ corepresents the $H$-fixed points functors so the collection 
	of cosets $\{G/H \mid H \leq G\}$ is jointly conservative by definition of weak
	equivalences in $G\Top$. The fact that $G/H \in \Spc_G$ is tiny then follows 
	from the fact that the $H$-fixed points functor commutes with all small homotopy 
	colimits~\cite{Schwede18}*{Proposition B.1, (i) and (ii)}. 
	Then the theorem above gives an equivalence 
	$\DgSpc{\OO_G^{\op}}\simeq \Spc_G$. Therefore the previous theorem is a generalization of the classical Elmendorf's theorem ~\cite{Elmendorf}. 
\end{example}

Under suitable assumptions we now enhance this to a symmetric monoidal equivalence, where we endow the presheaf category with Day convolution for a promonoidal structure on subcategory of tiny objects.

\begin{corollary}\label{cor-elmendorf}
 Suppose we are in the setting of Theorem~\ref{thm-elmendorf} and that furthermore the 
 following holds:
 \begin{itemize} 
 \item[(a)] $\CC$ admits a symmetric monoidal structure $\CC^\otimes$ which is compatible 
 with colimits;
 \item[(b)] $\CC_0$ admits an $\infty$-operad structure $\CC_0^\otimes$;
 \item[(c)] $i \colon \CC_0 \to \CC$ lifts to a fully faithful functor of $\infty$-operads $i^\otimes \colon \CC_0^\otimes \to \CC^\otimes$. 
 \end{itemize}
 Then $\CC_0^\otimes$ is a symmetric promonoidal $\infty$-category and the restricted Yoneda embedding induces a symmetric monoidal equivalence $\CP(\CC_0)^{Day}\simeq \CC^\otimes$.
\end{corollary}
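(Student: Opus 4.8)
The plan is to reduce the statement to Theorem~\ref{thm-elmendorf} together with Theorem~\ref{thm-I-promonoidal}, by checking that the symmetric monoidal structure transported across the equivalence $\CC \simeq \CP(\CC_0)$ is the Day convolution one. First I would apply Theorem~\ref{thm-elmendorf} using hypotheses (a) and (b) of that theorem (which are part of the hypotheses here) to obtain an equivalence of $\infty$-categories $j \colon \CC \xrightarrow{\simeq} \CP(\CC_0) = \Fun(\CC_0^{\op},\Spc)$ given by the restricted Yoneda embedding. Since $\CC^\otimes$ is compatible with colimits by hypothesis (a), transporting the monoidal structure along $j$ equips $\Fun(\CC_0^{\op},\Spc)$ with a symmetric monoidal structure compatible with colimits; call it $\Fun(\CC_0^{\op},\Spc)^\otimes$. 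By Theorem~\ref{thm-I-promonoidal} applied to the small $\infty$-category $\CC_0^{\op}$, this symmetric monoidal structure is automatically the Day convolution structure associated to the promonoidal structure on $(\CC_0^{\op})$ induced by the Yoneda embedding $\CC_0^{\op} \subseteq \Fun(\CC_0^{\op},\Spc)^{\op}$.

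The content then lies in identifying this induced promonoidal structure with $\CC_0^\otimes$. Under the equivalence $j$, the Yoneda embedding $\CC_0^{\op}\hookrightarrow \Fun(\CC_0^{\op},\Spc)^{\op}$ corresponds to the opposite of the inclusion $i\colon \CC_0 \hookrightarrow \CC$: indeed $j\circ i$ is the restricted Yoneda embedding $\CC_0 \to \Fun(\CC_0^{\op},\Spc)$, which is the ordinary Yoneda embedding since the objects of $\CC_0$ are tiny (so no sheafification is needed) — this is essentially the content of the proof of Theorem~\ref{thm-elmendorf}, where one shows $jLj_0 \simeq j_0$. Therefore the $\infty$-operad structure on $\CC_0$ obtained by pulling back the $\infty$-operad $\Fun(\CC_0^{\op},\Spc)^\otimes$ along $j\circ i$ agrees, by definition, with the full suboperad structure induced by the inclusion $\CC_0 \subseteq \CC$ together with $\CC^\otimes$. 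By hypothesis (c) this full suboperad is precisely $\CC_0^\otimes$. Hence $\CC_0^\otimes$ is symmetric promonoidal (being the suboperad-on-$\CC_0$ structure appearing in Theorem~\ref{thm-I-promonoidal}, applied to $\CC_0^{\op}$ and reindexed), and the transported monoidal structure on $\CP(\CC_0)$ is the Day convolution structure $\CP(\CC_0)^{Day}$ along it.

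Finally I would assemble these: $j$ is an equivalence of underlying $\infty$-categories, and we have just argued that the symmetric monoidal structure it transports from $\CC^\otimes$ coincides with $\CP(\CC_0)^{Day}$. Since the underlying equivalence and the identification of monoidal structures are both canonical, $j$ (or rather its inverse $L$) upgrades to a symmetric monoidal equivalence $\CP(\CC_0)^{Day} \simeq \CC^\otimes$. The main obstacle I anticipate is the careful bookkeeping needed to check that the full suboperad structure on $\CC_0$ obtained from $\CC^\otimes$ really matches, as an $\infty$-operad over $\Finp$, the promonoidal structure that Theorem~\ref{thm-I-promonoidal} extracts from $\Fun(\CC_0^{\op},\Spc)^\otimes$ — in other words, verifying that the multimapping spaces $\Mul_{\CC_0^\otimes}(\{c_i\},d)$ computed inside $\CC^\otimes$ agree with those computed via the Yoneda embedding into $(\Fun(\CC_0^{\op},\Spc))^{\op}$. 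This is a formal consequence of hypothesis (c) (fully faithfulness of $i^\otimes$) combined with the tininess of the objects of $\CC_0$, but it is the step that needs to be written out with care rather than asserted.
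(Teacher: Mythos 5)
Your proposal is correct and follows essentially the same route as the paper's proof: apply Theorem~\ref{thm-elmendorf} to get the underlying equivalence $j$, transport $\CC^\otimes$ across $j$, lift the commutative triangle $\CC_0 \to \CC \to \CP(\CC_0)$ to $\infty$-operads using hypothesis (c), and conclude by Theorem~\ref{thm-I-promonoidal}. The only (minor) inaccuracy in your write-up is the parenthetical attributing $j\circ i \simeq j_0$ to tininess ("so no sheafification is needed"): this identification follows purely from fully faithfulness of $i$, since $\Map_\CC(ic_0', ic_0) \simeq \Map_{\CC_0}(c_0', c_0)$ shows directly that the restricted Yoneda functor restricted to $\CC_0$ is the Yoneda embedding; tininess is only used elsewhere (to guarantee $j$ is an equivalence).
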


\begin{proof}
 By Theorem~\ref{thm-elmendorf} there is a commutative diagram 
 \[
 \begin{tikzcd}
  \CC_0 \arrow[r,"i"] \arrow[d,"j_0"'] & \CC \arrow[dl,"j","\sim"'] \\
  \CP(\CC_0)   &.
 \end{tikzcd}
 \]
 We can equip $\CP(\CC_0)$ with a symmetric monoidal structure 
 $\CP(\CC_0)^\otimes$ induced by $\CC^\otimes$ via $j$, and hence obtain a symmetric 
 monoidal equivalence $j^\otimes \colon \CC^\otimes \to \CP(\CC_0)^\otimes$. Combining this  with condition (c) we obtain another commutative diagram
 \[
 \begin{tikzcd}
  \CC_0^\otimes \arrow[r,"i^\otimes"] \arrow[d,"j^\otimes_0"'] & \CC^\otimes 
  \arrow[dl,"j^\otimes","\sim"'] \\
  \CP(\CC_0)^\otimes  &
 \end{tikzcd}
 \]
 of $\infty$-operads. It is only left to note that by Theorem~\ref{thm-I-promonoidal}, the $\infty$-category $\CC_0^\otimes$ is symmetric promonoidal and that the symmetric monoidal structure on $\CP(\CC_0)^\otimes$ coincides with the Day convolution product.
\end{proof}

\section{Partially lax limits}\label{sec-partially-lim}

In this section we recall the necessary background on (partially) lax (co)limits and collect some important properties that we will use throughout the paper. The main references for this material are~\cites{GHN, Berman}.

The notion of a partially lax limit over an $\infty$-category $\I$ is defined with reference to a collection of edges of $\I$. To make this precise we make the following definition.

\begin{definition}
	A marked $\infty$-category is an $\infty$-category $\CC$ along with a collection 
	of edges $\CW\subseteq \Map(\Delta^1, \CC)$ which contains 
	all equivalences and which is stable under homotopy and composition. 
	Given two marked $\infty$-categories $\CC$ and $\CD$, we write 
	$\Fun^{\dagger}(\CC, \CD)$ for the subcategory spanned by marked functors; 
	those functors that preserve marked edges. 
	We write $\Cat_\infty^{\dagger}$ for the 
	$\infty$-category of marked $\infty$-categories. 
	For the existence see~\cite{HA}*{Construction 4.1.7.1}. 
\end{definition}

\begin{example}\label{ex-marking}
	Let $\CC$ be an $\infty$-category.
	\begin{itemize}
		\item[(a)] There is a maximal marking $\CC^{\sharp}$ where all morphisms 
		are marked;
		\item[(b)] There is a minimal marking $\CC^{\flat}$ where only the 
		equivalences are marked;
		\item[(c)] Given a (co)cartesian fibration $p\colon \CC \to \I^\dagger$ over a marked $\infty$-category, there is a  marking $\CC^{p}$ in which the (co)cartesian morphisms living over marked edges are marked.
	\end{itemize}
\end{example}

Partially lax limits in an $\infty$-category $\CC$ are also defined with reference to a cotensoring of $\CC$ by $\Cat_\infty$. For the purposes of this paper, this is nothing but a functor $[{-},{-}]\colon \Cat_{\infty}^{\op}\times\CC\rightarrow \CC$. The following examples are all naturally cotensored over $\Cat_\infty$.

\begin{example}\label{ex-cotensor}
	In the following $\CI$ is an $\infty$-category.
	\begin{itemize}
		\item[(a)] Clearly $\Cat_\infty$ is cotensored over itself with cotensor given by 
		$[\CI,\CC] = \Fun(\CI,\CC)$.
		\item[(b)] The $\infty$-category $\Cat_\infty^{\dagger}$ is cotensored over 
		$\Cat_\infty$ by considering $[\CI,\CC^\dagger] = \Fun(\CI,\CC^\dagger)$, where we mark all those natural 
		transformations whose components are all marked in $\CC^\dagger$.
		\item[(c)] The $\infty$-category of symmetric monoidal categories 
		$\Cat^\otimes_\infty$ is cotensored over 
		$\Cat_\infty$ by endowing the $\infty$-category $\Fun(\CI, \CC)$ with the 
		pointwise symmetric monoidal structure $q\colon \Fun(\CI, \CC)^\otimes\to \Finp$ 
		which is defined as follows. 
		If $p \colon \CC^\otimes \to \Finp$ is the cocartesian fibration witnessing 
		the symmetric monoidal structure of $\CC$, then we construct the following pullback 
		\[
		\begin{tikzcd}
			\Fun(\CI, \CC)^\otimes \arrow[r,"q"] \arrow[d] & \Finp \arrow[d, "\mathrm{const}"] \\
			\Fun(\CI, \CC^\otimes) \arrow[r, "p_*"] & \Fun(\CI,\Finp).
		\end{tikzcd}
		\]
		Note that by construction we have $\Fun(\CI,\CC)^\otimes_{\langle n \rangle}\simeq \Fun(\CI, \CC^\otimes_{\langle n \rangle})$ for all $\langle n\rangle \in \Finp$. 
		From this we immediately see that $q$ satisfies the Segal conditions. 
		The map $p_*$ is a cocartesian fibration by the dual of~\cite{HTT}*{Proposition 3.1.2.1} and so by 
		base-change~\cite{HTT}*{Proposition 2.4.2.3} $q$ is too. 
		Therefore $q$ gives a symmetric monoidal structure on $\Fun(\CI, \CC)$.
		\item[(d)] We can generalize the previous example as follows. 
		Let $\CO^\otimes \to \Finp$ be an $\infty$-operad. 
		The $\infty$-category of $\infty$-operads $\Op_\infty$ is cotensored over $\Cat_\infty$ by endowing the $\infty$-category $\Fun(\CI, \CO)$ with the pointwise operadic structure induced by the map $\Fun(\CI, \CO^\otimes) \times_{\Fun(\CI, \Finp)} \Finp \to \Finp$. 
	\end{itemize}  
\end{example}

Similarly partially lax colimits in $\CC$ are defined with reference to a tensoring of $\CC$ by $\Cat_\infty$. Once again, while more structured tensorings are typically useful, for our purposes it suffices for this to be a functor $({-})\otimes ({-})\colon \Cat_\infty\times \CC\rightarrow \CC$. The most important example will be $\Cat_\infty$, for which the cartesian product gives a tensoring.

We now move on to the definition of partially lax (co)limits. For this we need to recall some categorical constructions. Recall the following result.

\begin{lemma}[\cite{HA}*{Proposition 4.1.7.2}]
	The minimal functor $(-)^{\flat}\colon \Cat_{\infty} \to \Cat_{\infty}^\dagger$ 
	admits a left adjoint denoted by $|-|$. 
\end{lemma}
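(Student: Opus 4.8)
The plan is to identify the left adjoint $|-|$ explicitly with the formation of localizations, and then verify its universal property by hand. Concretely, for a marked $\infty$-category $(\CC,\CW)$ I would set $|(\CC,\CW)| \coloneqq \CC[\CW^{-1}]$, the localization of $\CC$ at the class of edges $\CW$ (the Dwyer--Kan localization, introduced above for model categories but available for any $\infty$-category and any class of edges), equipped with its canonical localization functor $\ell_\CW\colon \CC \to \CC[\CW^{-1}]$. Since $\ell_\CW$ sends every edge of $\CW$ to an equivalence, and since the marked edges of $\CE^\flat$ are by definition exactly the equivalences of $\CE$, the functor $\ell_\CW$ is itself a marked functor $(\CC,\CW)\to \CC[\CW^{-1}]{}^\flat$, and precomposition with it lands in $\Fun^\dagger((\CC,\CW),\CE^\flat)$ for every $\infty$-category $\CE$.

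The key step is then the universal property of the localization: precomposition with $\ell_\CW$ exhibits $\Fun(\CC[\CW^{-1}],\CE)$ as the full subcategory of $\Fun(\CC,\CE)$ on those functors that invert $\CW$, which is precisely $\Fun^\dagger((\CC,\CW),\CE^\flat)$. Passing to maximal subgroupoids gives an equivalence $\Map_{\Cat_\infty}(\CC[\CW^{-1}],\CE) \simeq \Map_{\Cat_\infty^\dagger}((\CC,\CW),\CE^\flat)$, natural in $\CE$. By the recognition criterion for adjoint functors \cite{HTT}*{Proposition 5.2.4.2}, the corepresentability of the functor $\CE \mapsto \Map_{\Cat_\infty^\dagger}((\CC,\CW),\CE^\flat)$ that we have just established is exactly what is needed to produce a left adjoint $|-|$ to $(-)^\flat$, given on objects by $|(\CC,\CW)| \simeq \CC[\CW^{-1}]$ with unit the localization functors $\ell_\CW$; the criterion only requires corepresentability object by object, so naturality in $(\CC,\CW)$ comes for free. (Note that when all edges of $\CC$ are marked this recovers the $\infty$-groupoid completion of $\CC$, which explains the notation.)

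I do not expect a serious obstacle; the argument is essentially a repackaging of the universal property of localization. The only points that require care are bookkeeping: checking that a functor out of $(\CC,\CW)$ is marked into $\CE^\flat$ precisely when it inverts $\CW$ (immediate from the definitions), and invoking the universal property of the Dwyer--Kan localization at an arbitrary class of edges (which does not need $\CW$ to be closed under two-out-of-three, only, as here, under composition and homotopy). If one prefers a more structural route, one can instead observe that $(-)^\flat$ preserves small limits --- an edge of $\lim_i \CC_i$ is an equivalence, respectively marked for the limit marking, exactly when each of its projections is --- and filtered colimits --- a morphism of a filtered colimit is an equivalence as soon as it becomes one at a finite stage --- so that $(-)^\flat$ is an accessible, limit-preserving functor between presentable $\infty$-categories and the adjoint functor theorem \cite{HTT}*{Corollary 5.5.2.9} applies directly.
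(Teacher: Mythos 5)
The paper gives no proof here — it simply cites \cite{HA}*{Proposition 4.1.7.2} — so there is nothing to compare against line by line. Your proof is correct and, in substance, the same argument that underlies Lurie's proposition.

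Two small remarks. First, the route via the universal property of the Dwyer--Kan localization is sound but quietly relies on the existence of the localization $\CC[\CW^{-1}]$ as a prior construction (e.g.\ via a fibrant replacement in the marked simplicial set model structure of \cite{HTT}*{\S 3.1}). That is fine — the paper itself uses DK localization of model categories as an established operation — but it means this route is really a verification that the already-constructed localization is the left adjoint, rather than an existence proof from scratch. Once that is granted, identifying marked functors into $\CE^\flat$ with functors inverting $\CW$ and appealing to the universal property of the localization is exactly right, and the passage to a genuine adjunction via corepresentability is the standard move. Second, your alternative via the adjoint functor theorem is the cleaner self-contained argument: $(-)^\flat$ visibly preserves limits (equivalences in a limit of $\infty$-categories are detected componentwise), and your justification that it preserves filtered colimits is correct, with the slightly delicate point being that if an edge in $\colim_i \CC_i$ is invertible, both its inverse and the two witnessing homotopies arise at some finite stage, so the edge is already an equivalence at a sufficiently late stage. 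Together with the presentability of $\Cat_\infty^\dagger$ (which follows from \cite{HA}*{Construction 4.1.7.1}), this gives the left adjoint without presupposing DK localization. Either route is acceptable; the second is the one I would keep if one wanted this lemma to be logically prior to the discussion of localizations.
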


The $\infty$-category $|\CC^{\dagger}|$ is obtained from $\CC$ by adjoining formal inverses to all the marked morphisms, and so we call $|-|$ the localization functor. 

\begin{example}
Given a model category $\mathcal{M}$, we may view it as a marked $\infty$-category by marking the weak equivalences in $\mathcal{M}$. Then $|\mathcal{M}|\simeq \mathcal{M}[W^{-1}]$.
\end{example} 

Next we define marked slice categories.

\begin{construction}
	Let $\CC$ be an $\infty$-category. There is a functor 
	$\CC_{/-} \colon \CC \to \Cat_\infty$ sending $x\in \CC$ to the slice 
	category $\CC_{/x}$. 
	This is obtained by straightening the cocartesian fibration given by the 
	target map $t\colon \Ar(\CC) \coloneqq \Fun(\Delta^1,\CC) \to \CC$. One checks that a diagram 
	\[
	\begin{tikzcd}
		f_0 \arrow[r]\arrow[d,"f"'] & g_0 \arrow[d,"g"] \\ 
		f_1 \arrow[r] & g_1 
	\end{tikzcd}
	\]
	is a $t$-cocartesian edge if the top horizontal arrow is an equivalence. 
	If $\CC^{\dagger}$ is marked, then $\CC^{\dagger}_{/x}$ has an induced 
	marking where a morphism is marked if its image under the forgetful 
	functor $\CC^{\dagger}_{/x} \to \CC^{\dagger}$ is a marked morphism. 
	It is easy to see that this construction is functorial on $x$, and so we obtain a functor $\CC_{/-}^{\dagger} \colon \CC \to \Cat^\dagger_\infty$. 
\end{construction}

We are finally ready to introduce the notion of partially lax (co)limit. Recall the definition of the twisted arrow $\infty$-category from Definition~\ref{def-twisted-arrow}.

\begin{definition}\label{def-lax-lim}
	Consider a functor $F \colon \CI \to \CC$ and choose a marking 
	$\CI^\dagger$. 
	\begin{itemize}
		\item[(a)] If $\CC$ is cotensored over $\Cat_\infty$, then the 
		partially lax limit of $F$ is the limit of the composite
		\[
		\Tw(\CI)^{\op} \xrightarrow{(s,t)^{\op}} \CI^{\op} \times \CI 
		\xrightarrow{|\CI^\dagger_{/-}|\times F} \Cat^{\op}_\infty \times 
		\CC
		\xrightarrow{[-,-]} \CC.
		\]
		We abbreviate this by $\laxlimdag F$. 
		\item[(b)] If $\CC$ is tensored over $\Cat_\infty$, then the 
		partially lax colimit of $F$ is the colimit of the composite
		\[
		\Tw(\CI) \xrightarrow{(s,t)} \CI \times \CI^{\op} 
		\xrightarrow{F \times |(\CI^{\op})_{/-}^\dagger|} \CC \times\Cat_\infty 
		\xrightarrow{-\otimes -} \CC.
		\]
		We abbreviate this by $\laxcolimdag F$. 
	\end{itemize} 
\end{definition}

\begin{remark}
	If we choose the minimal marking $\CI^\flat$, then we recover the notion 
	of lax (co)limit of~\cite{GHN}. If we choose the maximal marking 
	$\CI^{\sharp}$, then we recover the usual notion of (co)limit, see~\cite{Berman}*{Proposition 3.6}. 
\end{remark}

In some cases we have a concrete description of the partial lax (co)limit.

\begin{theorem}[\cite{Berman}*{Theorem 4.4}]\label{thm-lax-limit-section}
	Let $\CI^\dagger$ be a small marked $\infty$-category and let 
	$F \colon \CI \to \Cat_\infty$ be a functor. Consider the source of the (co)cartesian fibrations $\Unct{F}\rightarrow \CI^{\op}$ and $\Unco{F}\rightarrow \CI$ as marked via
	Example~\ref{ex-marking}(c).  
	\begin{itemize}
		\item[a)] The partially lax limit of $F$ is the $\infty$-category of 
		marked sections of $p\colon \Unco{F} \to \CI^\dagger$.
		In other words 
		we have
		\[
		\laxlimdag F \simeq \Fun^\dagger_{/I^\dagger}(\CI^\dagger,  \Unco{F})
		\]
		\item[(b)] The partially lax colimit of $F$ is given by the localization of $ \Unct{F}$ at the 
		marked edges. In other words we have 
		\[
		\laxcolimdag F = \vert \Unct{F} \vert.
		\]
	\end{itemize}
\end{theorem}

\begin{remark}\label{rem:lax_limit_intuition}
	The previous result gives a more explicit description of the partially 
	lax limit of $F$. Recall that informally the Grothendieck 
	construction $\Unco{F}$ is the $\infty$-category whose objects are pairs 
	$(X, i)$ where $i\in \CI$ and
	$X \in F(i)$. A morphism from $(X, i)$ to $(Y, j)$ 
	is a pair $(\varphi,f)$ where $f \colon i \to j$ is a morphism in $\CI$ and 
	$\varphi \colon F(f)(X) \to Y$ is a morphism in $F(j)$. 
	Then the previous result informally implies that $\laxlimdag F$ is equivalent to the 
	$\infty$-category whose objects are coherent collections of objects
	$(X_i \in F(i))_{i \in \CI}$ together with 
	maps $\varphi_f\colon F(f)(X_i) \to X_j$ for every arrow 
	$f\colon i \to j$ in $\CI$, such that the map $\varphi_{f}$ is an equivalence whenever $f$ is marked. 
\end{remark}

We record some useful properties of partially lax (co)limits.

\begin{proposition}\label{prop:funpartlim}
	Let $\CI^\dagger$ be a marked $\infty$-category and let $F\colon \CI \to \Cat_\infty$ be a functor. Given any other $\infty$-category $\CC$, we have an equivalence 
	\[
	\Fun(\laxcolimdag_{\CI} F , \CC) \simeq \laxlimdag_{\CI^{\op}} \Fun(F(-),\CC).
	\]
\end{proposition}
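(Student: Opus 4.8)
The plan is to unwind both sides directly from Definition~\ref{def-lax-lim} and to check that they compute the same limit over $\Tw(\CI)^{\op}$.

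For the left-hand side, I would use Definition~\ref{def-lax-lim}(b), with $\Cat_\infty$ tensored over itself by the cartesian product: this identifies $\laxcolimdag_{\CI} F$ with the colimit in $\Cat_\infty$ of the functor $\Tw(\CI)\to\Cat_\infty$, $e\mapsto F(s(e))\times|(\CI^{\op})^\dagger_{/t(e)}|$. Since $\Fun(-,\CC)\colon\Cat_\infty^{\op}\to\Cat_\infty$ is a right adjoint — it is right adjoint to $\CA\mapsto\Fun(\CA,\CC)\colon\Cat_\infty\to\Cat_\infty^{\op}$, as witnessed by the exponential law and the symmetry of $\times$ via $\Map(\CA,\Fun(\CB,\CC))\simeq\Map(\CA\times\CB,\CC)\simeq\Map(\CB,\Fun(\CA,\CC))$ — it carries colimits of $\infty$-categories to limits. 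Hence
\[
\Fun\bigl(\laxcolimdag_{\CI} F,\CC\bigr)\simeq\lim_{e\in\Tw(\CI)^{\op}}\Fun\bigl(F(s(e))\times|(\CI^{\op})^\dagger_{/t(e)}|,\ \CC\bigr).
\]

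For the right-hand side, I would apply Definition~\ref{def-lax-lim}(a) to the functor $\Fun(F(-),\CC)\colon\CI^{\op}\to\Cat_\infty$, with $\Cat_\infty$ cotensored over itself by $\Fun(-,-)$ as in Example~\ref{ex-cotensor}(a) and with the marking $(\CI^{\op})^\dagger$. This writes $\laxlimdag_{\CI^{\op}}\Fun(F(-),\CC)$ as the limit over $\Tw(\CI^{\op})^{\op}$ of $e\mapsto\Fun\bigl(|(\CI^{\op})^\dagger_{/s(e)}|,\ \Fun(F(t(e)),\CC)\bigr)$. Using the exponential law $\Fun(A,\Fun(B,\CC))\simeq\Fun(A\times B,\CC)$ together with the symmetry of $\times$, the value at $e$ becomes $\Fun\bigl(F(t(e))\times|(\CI^{\op})^\dagger_{/s(e)}|,\ \CC\bigr)$. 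Finally I would invoke Remark~\ref{rem-op-twisted-cat}: the equivalence $\Tw(\CI^{\op})\simeq\Tw(\CI)$ sends the pair $(s,t)$ for $\CI^{\op}$ to the pair $(t,s)$ for $\CI$, so transporting this diagram along the induced equivalence $\Tw(\CI^{\op})^{\op}\simeq\Tw(\CI)^{\op}$ turns it into $\bar e\mapsto\Fun\bigl(F(s(\bar e))\times|(\CI^{\op})^\dagger_{/t(\bar e)}|,\ \CC\bigr)$, which is exactly the diagram appearing in the previous display. Therefore the two limits agree and the proposition follows.

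This argument is entirely formal, built from the cartesian closedness of $\Cat_\infty$, the symmetry of the product, and the self-duality of the twisted arrow $\infty$-category. The main point requiring care — and the only real obstacle — is the bookkeeping of source and target under passage to opposites inside the twisted-arrow formulas, which is handled precisely by Remark~\ref{rem-op-twisted-cat}. As an alternative, one could instead combine Theorem~\ref{thm-lax-limit-section}(b) with the universal property of localization to rewrite the left-hand side as the $\infty$-category of marked-edge-inverting functors out of $\Uncc{F}$, and then compare with the marked-section description of $\laxlimdag_{\CI^{\op}}\Fun(F(-),\CC)$ from Theorem~\ref{thm-lax-limit-section}(a); but the direct route above avoids analyzing the unstraightenings.
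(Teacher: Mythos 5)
Your proof is correct and takes essentially the same route as the paper's: both begin with the colimit formula for $\laxcolimdag_\CI F$ over $\Tw(\CI)$, transport it to a limit over $\Tw(\CI)^{\op}$ via the fact that $\Fun(-,\CC)\colon\Cat_\infty^{\op}\to\Cat_\infty$ preserves limits (which the paper asserts without comment and you justify by the self-dual adjunction coming from cartesian closedness), then apply the exponential law and symmetry of the product, and finally invoke the self-duality $\Tw(\CI^{\op})\simeq\Tw(\CI)$ with its $(s,t)\leftrightarrow(t,s)$ swap from Remark~\ref{rem-op-twisted-cat}. The only difference is cosmetic: the paper keeps rewriting the left-hand side until it visibly equals the defining diagram of the right-hand side, whereas you unfold each side independently and match them in the middle.
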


\begin{proof}
	The partially lax colimit of $F \colon \CI \to \Cat_\infty$ is by definition calculated via the formula
	\[
	\laxcolimdag F=\colim\limits_{\Tw(\CI)} \; F \times |(\CI^{\op})_{/-}^\dagger|
	\]
	Postcomposing by the limit preserving functor $\Fun(-, \CC) \colon \Cat_\infty^{\op} \to \Cat_\infty$, we deduce that the $\infty$-category $\Fun(\laxcolimdag F, \CC)$ is the limit of the diagram
	\begin{equation}\label{eq:lax-lim-Fun(-,C)}
		\Tw(\CI)^{\op}\xrightarrow{(s,t)^{\op}} \CI^{\op} \times \CI \xrightarrow{(F, |(\CI^{\op})_{/-}^\dagger|)^{\op}} \Cat_\infty^{\op}\times \Cat_\infty^{\op} \xrightarrow{-\times -} \Cat_\infty^{\op} \xrightarrow{\Fun(-, \CC)} \Cat_\infty
	\end{equation}
	By adjunction, we find that the composite of the final three functors is equivalent to \[\Fun(-,-)\circ ( |(\CI^{\op})_{/-}^\dagger|, \Fun(F(-), \CC)) \circ \sigma \colon\CI^{\op} \times \CI \rightarrow \Cat_\infty,\] where $\sigma$ is the symmetry isomorphism of the product. As indicated in Remark~\ref{rem-op-twisted-cat}, the following triangle commutes:
	\[\begin{tikzcd}
		{\Tw(\CI)^{\op}} && {\Tw(\CI^{\op})^{\op}} \\
		& {\CI^{\op} \times \CI}
		\arrow["{(s,t)^{\op}}"', from=1-1, to=2-2]
		\arrow["{(t,s)^{\op}}", from=1-3, to=2-2]
		\arrow["\sim", from=1-1, to=1-3]
	\end{tikzcd}\]
	These two observations allow us to rewrite Equation~(\ref{eq:lax-lim-Fun(-,C)}) and conclude that $\Fun(\laxcolimdag F, \CC)$ is the limit of the functor
	\[\Tw(\CI^{\op})^{\op}\xrightarrow{(s,t)^{\op}} \CI \times \CI^{\op} \xrightarrow{(|(\CI^{\op})_{/-}^\dagger|,\Fun(F(-), \CC))} \Cat_\infty^{\op}\times \Cat_\infty \xrightarrow{\Fun(-,-)} \Cat_\infty,\] which is exactly the definition of the partially lax limit of $\Fun(F(-), \CC)\colon \CI^{\op}\rightarrow \Cat_{\infty}$.
\end{proof}

We finish this section by discussing how (partially) lax limits interact with 
localizations. Later on we will use these results to pass from (partially) lax limits of 
prespectra to that of spectra.  

\begin{lemma}\label{lem:adjunction-to-the-lax-limit}
	Let $\CI$ be an  $\infty$-category and let $F\colon \CI\to \Cat_\infty$ be 
	a functor. Suppose that for every $i\in \CI$ we are given a reflexive subcategory 
	$Gi\subseteq Fi$ with left adjoint 
	$L_i\colon Fi\to Gi$. If for every arrow 
	$f\colon i\to j$ of $\CI$, the pushforward functor $f_*\colon  Fi\to Fj$ sends 
	$L_i$-equivalences to $L_j$-equivalences, then there is a functor 
	$G\colon \CI\to\Cat_\infty$ and a natural transformation 
	$L\colon F\Rightarrow G$ whose $i$-th component is given by 
	$L_i \colon Fi\to Gi$. Furthermore, the functor
	\[\laxlim_\CI L\colon \laxlim_\CI F\to \laxlim_\CI G\]
	has a fully faithful right adjoint.
\end{lemma}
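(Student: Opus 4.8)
The plan is to pass to the Grothendieck construction. Write $p\colon\CE\coloneqq\Unco{F}\to\CI$ for the cocartesian fibration classified by $F$, so that its fibre over $i$ is $Fi$ and the pushforward along $f\colon i\to j$ is $f_\ast=F(f)$; recall that a morphism of $\CE$ lying over $\psi\colon j\to k$ with source $a\in Fj$ and target $w\in Fk$ is the same datum as a morphism $\psi_\ast a\to w$ in $Fk$. Let $\CE^G\subseteq\CE$ be the full subcategory spanned by the objects $x$ that lie over some $i$ and belong to $Gi\subseteq Fi$, so that the fibre of $\CE^G\to\CI$ over $i$ is $Gi$. Both parts of the statement will be extracted from this inclusion.

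First I would check that $\CE^G\to\CI$ is again a cocartesian fibration, so that it classifies a functor $G\colon\CI\to\Cat_\infty$. Given $x\in Gi$ and $f\colon i\to j$, consider the composite $x\to F(f)x\xrightarrow{\eta}\iota_jL_jF(f)x=:z$ in $\CE$, where the first map is $p$-cocartesian and $\eta$ is the localization unit; this is a lift of $f$ with target in $\CE^G$. To see that $x\to z$ is cocartesian for $\CE^G\to\CI$ it suffices, for every $w\in Gk$ and every $\psi\colon j\to k$, that $\eta$ induce an equivalence on the spaces of morphisms lying over $\psi$ into $w$. Those spaces are $\Map_{Fk}(\psi_\ast(-),w)$, the morphism $\psi_\ast(\eta)$ is an $L_k$-equivalence since $\eta$ is an $L_j$-equivalence — this is exactly the hypothesis — and $w$ is $L_k$-local, so the claim is immediate. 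The inner fibration condition is routine, so $\CE^G\to\CI$ is a cocartesian fibration; tracing through the construction, the functor $G$ it classifies satisfies $G(f)=L_j\circ F(f)\circ\iota_i$.

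To produce the natural transformation $L$, I would observe that the inclusion $\iota\colon\CE^G\hookrightarrow\CE$ is fully faithful and lies over $\CI$, and that — by essentially the same computation, now allowing $w$ to lie in an arbitrary fibre — the localization unit $e\to\iota_{p(e)}L_{p(e)}e$ is a reflection for every $e\in\CE$, exhibiting a left adjoint $\ell$ to $\iota$ relative to $\CI$ in the sense of \cite{HA}*{Section 7.3.2}, with $\ell_i=L_i$ on fibres. Applying $L_j\circ F(f)$ to the localization unit $\mathrm{id}_{Fi}\Rightarrow\iota_iL_i$ and invoking the hypothesis once more shows that the Beck--Chevalley comparison $L_j\circ F(f)\Rightarrow G(f)\circ L_i$ is an equivalence; consequently $\ell$ carries $p$-cocartesian edges to cocartesian edges, and therefore straightens to a natural transformation $L\colon F\Rightarrow G$ whose $i$-th component is $L_i$, as required.

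It remains to treat the lax limit. By Theorem~\ref{thm-lax-limit-section} we may identify $\laxlim_\CI F\simeq\Fun_{/\CI}(\CI,\CE)$ and $\laxlim_\CI G\simeq\Fun_{/\CI}(\CI,\CE^G)$, under which $\laxlim_\CI L$ becomes postcomposition with $\ell$. Whiskering the unit and counit of the relative adjunction $\ell\dashv\iota$ with sections produces an adjunction on these section categories, so $\laxlim_\CI L=\ell_\ast$ has a right adjoint $\iota_\ast$ given by postcomposition with $\iota$; and $\iota_\ast$ is fully faithful because its counit $\ell_\ast\iota_\ast\Rightarrow\mathrm{id}$ is the whiskering of the relative counit $\ell\iota\Rightarrow\mathrm{id}_{\CE^G}$, whose components are the localization counits $L_i\iota_i\xrightarrow{\ \simeq\ }\mathrm{id}_{Gi}$ and hence equivalences. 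The real work is concentrated in the first two steps: assembling the objectwise localizations into the functor $G$ and the transformation $L$, i.e.\ checking that $\CE^G\to\CI$ is a cocartesian fibration and that $\ell$ preserves cocartesian edges. I expect this to be the main obstacle, even though each verification there reduces to the single observation isolated in the hypothesis — that pushing a localization unit, hence any $L_i$-equivalence, forward along $F(f)$ yields an $L_j$-equivalence, which therefore remains invertible after mapping into a fibrewise local object. Given $G$ and $L$, the passage to lax limits is purely formal.
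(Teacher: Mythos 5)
Your proposal is correct and follows essentially the same route as the paper: pass to $\Unco{F}$, show the full subcategory on fibrewise-local objects is again a cocartesian fibration (so it straightens to $G$), produce $L$ from the relative left adjoint to the inclusion, and obtain the right adjoint on lax limits by postcomposing sections with the inclusion. The only cosmetic difference is that you verify cocartesianness of the lifts directly against all objects, while the paper first checks local cocartesianness in the fibre and then closes under composition via \cite{HTT}*{Lemma 2.4.2.7}; likewise you spell out the Beck--Chevalley equivalence where the paper just cites \cite{HA}*{Proposition~7.3.2.11}.
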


\begin{proof}
	Let us consider the Grothendieck construction $\Unco{F}\to \CI$ of $F$. 
	This is the cocartesian fibration classified by $F$ under the straightening-unstraightening equivalence, so in particular the fibre over $i\in \CI$ can be canonically identified with $Fi$. Let $\CE\subseteq \Unco{F}$ be the full subcategory spanned by the objects of $Gi\subseteq \Unco{F}$ for all $i\in \CI$. 
	
	We claim that $\CE \to \CI$ is a cocartesian fibration whose cocartesian edges are those that can be factored in $\Unco{F}$ as a cocartesian edge of $\Unco{F}$ followed by a $L_i$-equivalence in the fibre over $i$. More precisely if $f\colon i \to j$ is an arrow of $\CI$ and $x\in Gi$, then the cocartesian lift of $f$ starting from $x$ is the composition $x\to f_*x\to L_j(f_*x)$ where the first arrow is the cocartesian lift of $f$ in $\Unco{F}$.
	
	Indeed for every $z\in Gj$, we have
	\[\Map^f_\CE(x,z)\simeq\Map_{Fj}(f_*x,z)\simeq\Map_{Gj}(L_j f_*x,z)\]
	and so those edges are locally cocartesian. Furthermore it is easy to see they are stable under composition (using the fact that $L$-equivalences are stable under pushforward), therefore they are cocartesian arrows by \cite{HTT}*{Lemma 2.4.2.7}.
	
	The inclusion $\iota\colon \CE\subseteq \Unco{F}$ has a relative left adjoint which is a map of cocartesian fibrations by \cite{HA}*{Proposition~7.3.2.11}. Therefore there is a functor $G\colon \CI\to \Cat_\infty$ and a natural transformation $L\colon F\Rightarrow G$ such that $\CE$ can be identified with $\Unco{G}$ in such a way that  the induced map $L\colon\Unco{F}\to \Unco{G}$ agrees with $L_i\colon Fi\to Gi$ on each fibre.
	
	Finally by Theorem~\ref{thm-lax-limit-section} the lax limit of $F$ and $G$ are computed by the $\infty$-categories of sections of the respective cocartesian fibrations and $\laxlim_\CI L$ is given by postcomposition with $L$. Therefore postcomposition with $\iota$ gives a fully faithful right adjoint to $\laxlim_\CI L$.
\end{proof}

\begin{lemma}\label{lem:adjunction-to-the-marked-lax-limit}
	Suppose we are in the situation of Lemma~\ref{lem:adjunction-to-the-lax-limit}, and suppose $\CI$ is equipped with a marking $\CI^\dagger$ such that for every marked edge $f\colon i\to j$ the pushforward functor $f_*\colon Fi\to Fj$ sends $Gi$ into $Gj$. Then the functor
	\[\laxlim_{\CI^\dagger} L\colon \laxlim_{\CI^\dagger} F\to \laxlim_{\CI^\dagger} G\]
	has a fully faithful right adjoint. In particular $\laxlim_{\CI^\dagger} L$ is a localization functor.
\end{lemma}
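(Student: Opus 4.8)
The plan is to leverage the previous lemma and add the single extra observation needed to upgrade from lax limits to partially lax limits. First I would note that Lemma~\ref{lem:adjunction-to-the-lax-limit} already produces the functor $G\colon \CI\to\Cat_\infty$, the natural transformation $L\colon F\Rightarrow G$, and an identification of $\Unco{G}$ with the full subcategory $\CE\subseteq\Unco{F}$ spanned by the objects of $Gi$, together with a relative left adjoint $L\colon \Unco{F}\to\Unco{G}$ which is a map of cocartesian fibrations over $\CI$ and whose right adjoint is the inclusion $\iota\colon \Unco{G}\hookrightarrow\Unco{F}$. By Theorem~\ref{thm-lax-limit-section}(a), both partially lax limits are computed as $\infty$-categories of marked sections of the respective cocartesian fibrations, where the marking on the total spaces is the one of Example~\ref{ex-marking}(c): the cocartesian edges lying over marked edges of $\CI^\dagger$.

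The key point to check is that $\iota$ and $L$ are \emph{marked} functors for these markings, i.e. that they preserve cocartesian edges over marked edges of $\CI$; granting this, postcomposition with $\iota$ and $L$ restricts from the adjunction on sections to an adjunction on \emph{marked} sections, and fully faithfulness of the right adjoint is inherited. For $L$ there is nothing to do, since it is a map of cocartesian fibrations and hence preserves all cocartesian edges. For $\iota$ this is exactly where the extra hypothesis enters: a cocartesian edge of $\Unco{G}$ over a marked $f\colon i\to j$ starting at $x\in Gi$ was described in the proof of Lemma~\ref{lem:adjunction-to-the-lax-limit} as $x\to f_*x\to L_j(f_*x)$; but the hypothesis that $f_*$ sends $Gi$ into $Gj$ means $f_*x$ already lies in $Gj$, so $L_j(f_*x)\simeq f_*x$ and this edge is simply the cocartesian lift $x\to f_*x$ in $\Unco{F}$, which is a marked edge there. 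Hence $\iota$ carries marked cocartesian edges to marked cocartesian edges.

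Therefore postcomposition with $\iota$ sends marked sections of $\Unco{G}\to\CI^\dagger$ to marked sections of $\Unco{F}\to\CI^\dagger$, defining a functor $\iota_*\colon \laxlimdag_{\CI^\dagger}G\to\laxlimdag_{\CI^\dagger}F$, and likewise $L$ induces $\laxlimdag_{\CI^\dagger}L$ on marked sections. The adjunction $L\dashv\iota$ between $\Unco{F}$ and $\Unco{G}$ over $\CI$, being a relative adjunction, induces an adjunction on categories of sections by postcomposition (cf. \cite[Proposition~7.3.2.6]{HA} and the discussion of relative adjunctions), and this restricts to the marked section categories precisely because both functors are marked. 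Since $\iota$ is fully faithful and $L\iota\simeq\mathrm{id}$, the induced right adjoint $\iota_*$ on marked sections is again fully faithful, so $\laxlimdag_{\CI^\dagger}L$ exhibits $\laxlimdag_{\CI^\dagger}G$ as a reflective subcategory of $\laxlimdag_{\CI^\dagger}F$, i.e. it is a localization functor.

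The main obstacle is the verification that $\iota$ is a marked functor, and more precisely the bookkeeping showing that the extra hypothesis ``$f_*$ sends $Gi$ into $Gj$ for marked $f$'' is exactly what makes the cocartesian lift in $\Unco{G}$ over a marked edge agree with the one in $\Unco{F}$; everything else is a formal consequence of the already-established Lemma~\ref{lem:adjunction-to-the-lax-limit} together with Theorem~\ref{thm-lax-limit-section}(a). One should also take a moment to confirm that restricting a relative adjunction over $\CI$ to the subcategories of marked sections genuinely yields an adjunction — this follows since marked sections form full subcategories of the respective section categories and the adjunction's unit and counit, being pointwise, land in the marked subcategories by the marked-ness of $L$ and $\iota$.
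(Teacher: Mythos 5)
Your proof is correct and follows essentially the same approach as the paper: both arguments reduce to checking that the fully faithful right adjoint from Lemma~\ref{lem:adjunction-to-the-lax-limit} (postcomposition with $\iota\colon\Unco{G}\hookrightarrow\Unco{F}$) preserves marked sections, and both observe that the hypothesis on $f_*$ makes $L_j(f_*x)\simeq f_*x$ so that the cocartesian lift in $\Unco{G}$ over a marked edge agrees with the one in $\Unco{F}$. You spell out a bit more (e.g. that $L$ itself is marked, which the paper leaves implicit since $\laxlim_{\CI^\dagger}L$ already appears in the statement), but the underlying argument is the same.
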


\begin{proof}
	It suffices to show that the right adjoint of Lemma~\ref{lem:adjunction-to-the-lax-limit} sends $\laxlim_{\CI^\dagger} G$ into $\laxlim_{\CI^\dagger}F$.  
	Recall that the partially lax limit can be calculated as the subcategory of sections spanned by those sending marked edges to cocartesian arrows. Thus, we ought to show that the right adjoint preserves cocartesian arrows lying over marked edges. But the right adjoint is given by postcomposing a section with the inclusion $\Unco{G}\to \Unco{F}$, and so by the description of cocartesian edges given in Lemma~\ref{lem:adjunction-to-the-lax-limit} and by our hypothesis, it sends cocartesian arrows over marked edges to cocartesian arrows (here we are implicitly using that an 
	$L_i$-equivalence between objects of $Gi$ is automatically an equivalence in $Fi$ and so in particular a cocartesian arrow).
\end{proof}

For later reference we record the following immediate corollary of Lemma~\ref{lem:adjunction-to-the-lax-limit}.
\begin{corollary}\label{cor:laxlim-monoidal}
	Let $\CI$ be an $\infty$-category and let $F\colon \CI\to \Cat_\infty$ be 
	a functor. Suppose that for every $i\in \CI$, we are given a reflexive subcategory 
	$Gi\subseteq Fi$ with left adjoint $L_i\colon Fi\to Gi$ which is compatible with 
	the symmetric monoidal structure in the sense of \cite{HA}*{Definition~2.2.1.6}. Suppose furthermore that  for every arrow $f\colon i\to j$ in $I$, the pushforward functor $f_*\colon  Fi\to Fj$ sends 
	$L_i$-equivalences to $L_j$-equivalences. Then there exists a functor 
	$G\colon \CI\to\Cat_\infty^\otimes$ and a symmetric monoidal natural 
	transformation $L\colon F\Rightarrow G$ whose $i$th component is given by 
	$L_i \colon Fi\to Gi$.
\end{corollary}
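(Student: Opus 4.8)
The plan is to reduce the statement to Lemma~\ref{lem:adjunction-to-the-lax-limit}, applied with the indexing $\infty$-category $\CI\times\Finp$ in place of $\CI$. Recall that a symmetric monoidal $\infty$-category is the same datum as a commutative monoid object in $\Cat_\infty$, that is, a functor $\Finp\to\Cat_\infty$ satisfying the Segal condition; via straightening this identifies $\Cat_\infty^\otimes$ with the full subcategory of $\Fun(\Finp,\Cat_\infty)$ on the Segal objects. Regarding the given functor as a functor $F\colon\CI\to\Cat_\infty^\otimes$, it therefore transposes to $\tilde F\colon\CI\times\Finp\to\Cat_\infty$ with $\tilde F(i,\langle n\rangle)\simeq (Fi)^{\times n}$, whose restriction to each $\{i\}\times\Finp$ satisfies the Segal condition and whose restriction to $\CI\times\{\langle 1\rangle\}$ is the composite of $F$ with the forgetful functor $\Cat_\infty^\otimes\to\Cat_\infty$.

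First I would equip the fibre $\tilde F(i,\langle n\rangle)\simeq (Fi)^{\times n}$ with the reflexive subcategory $(Gi)^{\times n}$ and left adjoint $(L_i)^{\times n}$; a finite product of localizations is again a localization, so this makes sense. The substantive point is to check that every pushforward of $\tilde F$ preserves the associated local equivalences, so that the hypotheses of Lemma~\ref{lem:adjunction-to-the-lax-limit} are met for $\tilde F$. Factoring an arbitrary morphism of $\CI\times\Finp$ as one of the form $(f,\mathrm{id})$ followed by one of the form $(\mathrm{id},\alpha)$ reduces this to two cases. The pushforward along $(f,\mathrm{id}_{\langle n\rangle})$ is the $n$-th power of the functor $F(f)\colon Fi\to Fj$, which carries $(L_i)^{\times n}$-equivalences to $(L_j)^{\times n}$-equivalences by the hypothesis on the pushforwards of $F$. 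The pushforward along $(\mathrm{id}_i,\alpha)$ is the structure map of the symmetric monoidal $\infty$-category $Fi^\otimes$ associated to $\alpha$; it is built from projections and iterated tensor products $(Fi)^{\times k}\to Fi$, and these carry tuples of $L_i$-equivalences to $L_i$-equivalences precisely because $L_i$ is compatible with the symmetric monoidal structure in the sense of \cite{HA}*{Definition~2.2.1.6} (an iterated tensor of $L_i$-equivalences is a composite of maps of the form $\mathrm{id}\otimes g\otimes\mathrm{id}$, each of which is an $L_i$-equivalence, and the $L_i$-equivalences are closed under composition).

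Lemma~\ref{lem:adjunction-to-the-lax-limit} then yields a functor $\tilde G\colon\CI\times\Finp\to\Cat_\infty$ with $\tilde G(i,\langle n\rangle)\simeq(Gi)^{\times n}$ together with a natural transformation $\tilde L\colon\tilde F\Rightarrow\tilde G$ whose restriction to the fibre over $\langle n\rangle$ is $(L_i)^{\times n}$. It remains to check that $\tilde G$ again satisfies the Segal condition in the $\Finp$-direction: unwinding the description of the cocartesian lifts of $\tilde G$ from the proof of Lemma~\ref{lem:adjunction-to-the-lax-limit}, the inert pushforwards of $\tilde G$ are the evident projections $(Gi)^{\times n}\to Gi$, the intervening application of $L_i$ being redundant on tuples of local objects. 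Hence $\tilde G$ transposes to a functor $G\colon\CI\to\Cat_\infty^\otimes$, and the same unwinding identifies the tensor product on $Gi$ with $(X,Y)\mapsto L_i(X\otimes_{Fi}Y)$, i.e.\ with the symmetric monoidal structure of \cite{HA}*{Definition~2.2.1.6}, so that in particular each $L_i$ becomes symmetric monoidal. Finally, since $\Cat_\infty^\otimes$ is a full subcategory of $\Fun(\Finp,\Cat_\infty)$, the natural transformation $\tilde L$ transposes to a morphism $L\colon F\Rightarrow G$ in $\Fun(\CI,\Cat_\infty^\otimes)$, that is, a symmetric monoidal natural transformation, whose $\langle 1\rangle$-component is $L_i\colon Fi\to Gi$. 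I expect the only genuinely delicate step to be this last bit of bookkeeping — matching the symmetric monoidal data produced abstractly by Lemma~\ref{lem:adjunction-to-the-lax-limit} with the structure of \cite{HA}*{Definition~2.2.1.6} — but it reduces to comparing the two explicit descriptions of the cocartesian lifts, with no new input.
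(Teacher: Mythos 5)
Your proof is correct and follows essentially the same route as the paper: transpose $F$ to $\tilde F\colon\CI\times\Finp\to\Cat_\infty$, apply Lemma~\ref{lem:adjunction-to-the-lax-limit} after checking the pushforward condition separately in the $\CI$- and $\Finp$-directions, verify the Segal condition for $\tilde G$, and transpose back. Your spelling out of the $(\mathrm{id}_i,\alpha)$-case via compatibility of the localization with the tensor product is exactly what the paper compresses into ``both of these cases is ensured by our assumptions.''
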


\begin{proof}
	Since $\Cat_\infty^\otimes$ embeds as a subcategory of $\Fun(\Finp,\Cat_\infty)$ we can consider the functor $\tilde F\colon \Finp\times \I\to \Cat_\infty$ induced by $F$, so that $\tilde F (A_+,i)\simeq (Fi)^A$ (the fibre over $A$ of $Fi \to \Fin_*$). If we let $\tilde G(A_+,i)=(Gi)^A\subseteq \tilde F(A_+,i)$, we can apply Lemma~\ref{lem:adjunction-to-the-lax-limit} to $\tilde F$. To see that the pushforwards respect local equivalences, it suffices to prove this separately for maps of the form $(\sigma,\mathrm{id})$ and $(\mathrm{id}, f)$ in $\Finp\times \CI$. However both of these cases are ensured by our assumptions. Therefore there exists a functor
	\[\tilde G\colon \Finp\times\mathcal{I} \to \Cat_\infty\]
	and a natural transformation $\tilde L\colon \tilde F\Rightarrow\tilde G$ as desired. By construction $\tilde G$ satisfies the Segal conditions, and so it induces a functor $G\colon \CI\to \Cat_\infty^\otimes$ with a symmetric monoidal natural transformation $L\colon F\Rightarrow G$ as desired.
\end{proof}

\section{Partially lax limits of symmetric monoidal \texorpdfstring{$\infty$}{infty}-categories}

Recall that $\Op_\infty$ is canonically cotensored over $\Cat_\infty$ by Example~\ref{ex-cotensor}. Therefore we immediately obtain a definition of partially lax limits of diagrams in $\Op_\infty$. In this section we will collect some important properties of partially lax limits of symmetric monoidal $\infty$-categories and $\infty$-operads. In particular the calculations of Proposition~\ref{proposition:laxlimit-is-norm}, and Theorem~\ref{thm:modules-in-laxlimit} are used repeatedly in part two. The first is analogous to the calculation of the (partially) lax limit of a diagram of $\infty$-categories, and as such it is stated in terms of an unstraightening equivalence for symmetric monoidal categories, which we recall in Proposition~\ref{prop:equiv_amalg}. 

\begin{remark}\label{rem-sym-mon-cats-closed-under-laxlimits}
If $\CP^\otimes$ is another $\infty$-operad, it follows from the definition and \cite[Remark~2.1.3.4]{HA} that there is a natural equivalence
\[\Alg_{\CP^\otimes}(\laxlim_{i\in I}\CO^\otimes_i)\simeq \laxlim_{i\in I}\Alg_{\CP^\otimes}(\CO^\otimes)\,.\]  
Such a natural equivalence then also uniquely determines the lax limit. 	
Since $\Cat^\otimes_\infty\subseteq\Op_\infty$ is a subcategory closed under limits and cotensoring, it is also closed under partially lax limits. In particular we conclude that for every family of symmetric monoidal $\infty$-categories $\CC_\bullet$ and every symmetric monoidal $\infty$-category $\CD$, there is a natural equivalence
\[\Fun^\otimes(\CD,\laxlim_{i\in I}\CC_i)\simeq \laxlim_{i\in I}\Fun^\otimes(\CD,\CC_i)\,.\]	
We note that the underlying $\infty$-category functor $U\colon \Op_\infty\rightarrow \Cat_\infty$ preserves limits and commutes with cotensoring, and therefore preserves partially lax limits. Therefore the previous construction equips the partially lax limit of a family of symmetric monoidal $\infty$-categories with a canonical symmetric monoidal structure, which satisfies the expected universal property.
\end{remark}

\begin{remark}\label{rem-partially-lax-underlying}
	Note that there is always a canonical map $\laxlimdag \CO_i^\otimes \rightarrow \laxlim \CO^\otimes_i$. This functor is induced on limits by a natural transformation which is pointwise given by the inclusion of a fully faithful suboperad. Therefore we conclude that the partially lax limit is always a fully faithful suboperad of the lax limit. In practice this means that we can determine which suboperad by considering the induced map on underlying categories. 
\end{remark}

In the second part of the paper we will build diagrams of symmetric monoidal $\infty$-categories indexed on $\Glo^{\op}$. Central to our constructions of these diagrams is an operadic variant of  straightening/unstraightening, which we will recall now. 

\begin{notation}\label{notation-constant}
Recall from \cite[2.4.3.5]{HA} that for every $\infty$-category $\CI$ there is a functor of $\infty$-operads $c\colon \CI\times\Finp\to \CI^\amalg$ sending $(x,A_+)$ to the constant family $\{x\}_{a\in A}\in \CI^\amalg_{A_+}$.
\end{notation}

\begin{construction}
	Let $\CI$ be an $\infty$-category and let $\CC^\otimes$ be an $\CI^\amalg$-monoidal $\infty$-category. Then the commutative diagram of cocartesian fibrations
	\[\begin{tikzcd}[column sep=1em]
		\CC^\otimes\times_{\CI^\amalg}(\CI\times\Finp)\ar[rr,"\mathrm{pr}_2"]\ar[rd,"\mathrm{pr}_{\CI}"']&& \CI\times\Finp\ar[ld,"\mathrm{pr}_1"]\\
		& \CI
	\end{tikzcd}\]
	is classified by a functor $\CC_\bullet:\CI\to (\Cat_\infty)_{/\Finp}$ which lands in $\Cat^\otimes_\infty$. We refer to $\CC_\bullet$ as the family of symmetric monoidal $\infty$-categories classifying $\CC^\otimes$.
\end{construction}

\begin{proposition}\label{prop:equiv_amalg}
	The previous construction furnishes an equivalence between the $\infty$-category of $\CI^\amalg$-monoidal categories and $\Fun(\CI,\Cat_{\infty}^\otimes)$.
\end{proposition}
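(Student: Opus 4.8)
The plan is to recognize the construction $\CC^\otimes\mapsto\CC_\bullet$ as restriction along the functor $c\colon\CI\times\Finp\to\CI^\amalg$ of Notation~\ref{notation-constant}, and then to identify that restriction with the equivalence supplied by the universal property of the coCartesian $\infty$-operad $\CI^\amalg$. Concretely, pulling back a cocartesian fibration along $c$, straightening the result over $\CI\times\Finp$, and currying $\Fun(\CI\times\Finp,\Cat_\infty)\simeq\Fun(\CI,\Fun(\Finp,\Cat_\infty))$ are each functorial, so together they yield a functor $\Phi$ from the $\infty$-category of $\CI^\amalg$-monoidal categories (and $\CI^\amalg$-monoidal functors) to $\Fun(\CI,\Fun(\Finp,\Cat_\infty))$; that $\Phi$ lands in $\Fun(\CI,\Cat_\infty^\otimes)$ is the assertion that each fibre $\CC^\otimes\times_{\CI^\amalg}(\{x\}\times\Finp)$ is symmetric monoidal, which holds because $\CC^\otimes\to\CI^\amalg$ is a cocartesian fibration of $\infty$-operads, and unwinding the definitions identifies $\Phi$ with the construction in the statement.

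The next step is to describe both sides as Segal subcategories. Straightening/unstraightening over $\CI^\amalg$ identifies the $\infty$-category of $\CI^\amalg$-monoidal categories with the full subcategory of $\Fun(\CI^\amalg,\Cat_\infty)$ on those $F$ for which the inert maps $(x_i)_{i\in I}\to x_j$ induce equivalences $F((x_i)_{i\in I})\xrightarrow{\simeq}\prod_{j\in I}F(x_j)$; this operadic Segal condition is exactly the requirement that $\CC^\otimes\to\Finp$ be an $\infty$-operad. Likewise $\Fun(\CI,\Cat_\infty^\otimes)$ is the full subcategory of $\Fun(\CI\times\Finp,\Cat_\infty)$ on those $G$ with $G(x,-)$ Segal for every $x\in\CI$. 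Under these identifications $\Phi$ becomes restriction along $c$, and since $c(x,I_+)$ is the constant tuple $\{x\}_{i\in I}$, the Segal condition for $F$ forces that for $c^*F$, so $\Phi$ lands in the correct subcategory.

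It then remains to see that restriction along $c$ is an equivalence between these two Segal subcategories, which is the universal property of the coCartesian $\infty$-operad. By \cite{HA}*{Theorem~2.4.3.18} there is a natural equivalence $\Alg_{\CI^\amalg}(\CD^\otimes)\simeq\Fun(\CI,\CAlg(\CD^\otimes))$ implemented by precomposition with $c$; taking $\CD^\otimes=\Cat_\infty^\times$, and using both $\CAlg(\Cat_\infty^\times)\simeq\Cat_\infty^\otimes$ and the identification (again by straightening) of $\Alg_{\CI^\amalg}(\Cat_\infty^\times)$ with the Segal subcategory of $\Fun(\CI^\amalg,\Cat_\infty)$, gives the desired equivalence and hence the proposition.

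The step I expect to be the main obstacle is the bookkeeping at the level of morphisms: one must verify that $\CI^\amalg$-monoidal functors on the left correspond under $\Phi$ to the \emph{strong} symmetric monoidal natural transformations on the right --- rather than to a larger class of lax transformations --- and that $c$ really implements the equivalence of \cite{HA}*{Theorem~2.4.3.18}. The first point reduces to the observation that a natural transformation of Segal $\Finp$-objects is automatically strong monoidal, which one checks by testing against the inert and active maps out of $2_+$, applied fibrewise over $\CI$; the remaining compatibilities between the different straightening equivalences are routine.
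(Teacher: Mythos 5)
Your argument is correct. The paper's own ``proof'' is a bare citation to \cite{drew2021universal}*{Corollary A.12}, so you are supplying an actual argument where the paper outsources it. Your reduction is the natural one: identify both sides with Segal subcategories of $\Fun(\CI^\amalg,\Cat_\infty)$ and $\Fun(\CI\times\Finp,\Cat_\infty)$ respectively, observe that the paper's pullback construction corresponds to restriction along $c$, and appeal to the universal property of the cocartesian $\infty$-operad $\CI^\amalg$ in the form of \cite{HA}*{Theorem~2.4.3.18} with $\CD^\otimes = \Cat_\infty^\times$. The two further ingredients you use silently and which deserve to be made explicit are \cite{HA}*{Proposition~2.4.2.5}, the universal property of the cartesian symmetric monoidal structure, needed both to identify $\Alg_{\CI^\amalg}(\Cat_\infty^\times)$ with the Segal subcategory of $\Fun(\CI^\amalg,\Cat_\infty)$ and to identify $\CAlg(\Cat_\infty^\times)$ with $\Cat_\infty^\otimes$; and the compatibility of straightening/unstraightening with base change along $c$, which is what links the paper's fibred construction of $\CC_\bullet$ to Lurie's $c^*$.

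On the lax-versus-strong issue you flag at the end: your instinct that this is a point to watch is right, but the resolution is cleaner than testing against maps out of $\langle 2\rangle_+$. All three of the identifications in play --- straightening of $\CI^\amalg$-monoidal categories, the identification $\CAlg(\Cat_\infty^\times)\simeq\Cat_\infty^\otimes$, and the equivalence of \cite{HA}*{Theorem~2.4.3.18} --- present the morphisms on both sides as natural transformations of functors into $\Cat_\infty$, and under unstraightening a natural transformation is carried to a map preserving \emph{all} cocartesian edges, hence a strong monoidal (respectively strong $\CI^\amalg$-monoidal) functor. Strongness is built into the straightening equivalence and requires no separate verification.
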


\begin{proof}
	This is \cite{drew2021universal}*{Corollary A.12}.
\end{proof}	

\begin{definition}\label{definition:operadic-fiber}
	Consider a map of $\infty$-operads $p:\CO^\otimes\to \CI^\amalg$. Any object $i\in \CI$ induces a functor \[\{i\}\times \Finp\hookrightarrow \CI\times \Finp \xrightarrow{c} \CI^\amalg\,,\]
    see Notation~\ref{notation-constant}. Equivalently, the map above can be obtained by applying $(-)^\amalg$ to the map $\Delta^0 \to \CI$ defined by $i\in \CI$. Inspired by the equivalence of Proposition~\ref{prop:equiv_amalg} we will refer to the pullback $\CO^\otimes\times_{\CI^\amalg} \Finp$ as the \emph{operadic fibre} of $p$ at $i\in \CI$. If $p$ is an $\CI^\amalg$-monoidal $\infty$-category, then its operadic fibre at $i$ is a symmetric monoidal $\infty$-category, and corresponds to the value of the functor $\CC_\bullet$ at $i$.
\end{definition}

The following example will be crucial for later applications. 

\begin{example}\label{ex-operadic-fibre}
    Let $p\colon \CC^\otimes\to \CI^\amalg$ be a $\CI^\amalg$-promonoidal $\infty$-category and  let $\CD^\otimes\to \CI^\amalg$ be a map of $\infty$-operads which is compatible with colimits. Then the operadic fibre of the Day convolution $\Fun_\CI(\CC^\otimes,\CD^\otimes)^{Day}$ over $i\in \CI$ is given by the symmetric monoidal $\infty$-category $\CC_i{-}\CD_i$, where $\CC_i,\CD_i$ are the operadic fibres over $i$ of $\CC^\otimes$ and $\CD^\otimes$ respectively. To see this, first recall that $\CC_i{-}\CD_i$ is defined to be $\Fun(\CC_i,\CD_i)$ with the Day convolution symmetric monoidal structure. Then the claim follows from the following computation using Lemma~\ref{lemma:norm-and-pullbacks}
    \[(N_pp^*\CD^\otimes)\times_{\CI^\amalg}\Finp\simeq N_{p_i}(p^*\CD^\otimes \times_{\CC^\otimes} \CC_i^\otimes)\simeq N_{p_i}p_i^*\CD_i^\otimes= \CC_i{-}\CD_i.\] 
\end{example}

Recall that the lax limit of a diagram of $\infty$-categories was calculated by taking sections of the associated cocartesian fibration. Similarly, we can describe the lax limit of $\CC_\bullet$ in terms of (suitable) sections of the $\infty$-operad $\CC^\otimes$.

\begin{proposition}\label{proposition:laxlimit-is-norm}
		Let $\CC^\otimes\to \CI^\amalg$ be a $\CI^\amalg$-monoidal $\infty$-category, and write $\CC_\bullet\colon \CI\rightarrow \Cat_\infty^\otimes$ for the associated diagram of symmetric monoidal $\infty$-categories. Then there is a natural equivalence of symmetric monoidal $\infty$-categories
	\[\laxlim\CC_\bullet \simeq N_{\CI^\amalg}\CC^\otimes\]
	where the right hand side is the norm along $\CI^\amalg\to \Finp$, which is well defined by Example~\ref{example:cocartesian-is-promonoidal}.
\end{proposition}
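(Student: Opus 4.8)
The plan is to characterize both sides through the functor $\CP^\otimes\mapsto\Alg_{\CP^\otimes}(-)$. By Remark~\ref{rem-sym-mon-cats-closed-under-laxlimits}, a natural‑in‑$\CP^\otimes$ equivalence $\Alg_{\CP^\otimes}(N_{\CI^\amalg}\CC^\otimes)\simeq\laxlim_{i\in\CI}\Alg_{\CP^\otimes}(\CC_i)$ would identify $N_{\CI^\amalg}\CC^\otimes$ with $\laxlim\CC_\bullet$; since the latter is symmetric monoidal this also shows $N_{\CI^\amalg}\CC^\otimes$ is one, so it suffices to produce such an equivalence. For the left‑hand side, write $p\colon\CI^\amalg\to\Finp$ for the structure map, so that by definition $N_{\CI^\amalg}=N_p$ is right adjoint to $p^*=-\times_{\Finp}\CI^\amalg$. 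Exactly as recorded for Day convolution in the remark following Definition~\ref{def-day-conv}, the unit–counit adjunction upgrades along the $\Cat_\infty$‑enrichment of $\Op_\infty$ given by $\Alg$ to a natural equivalence $\Alg_{\CP^\otimes}(N_{\CI^\amalg}\CC^\otimes)\simeq\Alg_{(\CP^\otimes\times_{\Finp}\CI^\amalg)/\CI^\amalg}(\CC^\otimes)$, the $\infty$‑category of maps of $\infty$‑operads over $\CI^\amalg$.

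For the right‑hand side, by Theorem~\ref{thm-lax-limit-section}(a) applied with the minimal marking, $\laxlim_{i\in\CI}\Alg_{\CP^\otimes}(\CC_i)$ is the $\infty$‑category $\Fun_{/\CI}(\CI,\CM_\CP)$ of sections of the cocartesian fibration $\CM_\CP\to\CI$ classifying $i\mapsto\Alg_{\CP^\otimes}(\CC_i)$, the transition functors being induced by the monoidal restriction functors of the family $\CC_\bullet$. I would then construct the comparison functor $\Alg_{(\CP^\otimes\times_{\Finp}\CI^\amalg)/\CI^\amalg}(\CC^\otimes)\to\Fun_{/\CI}(\CI,\CM_\CP)$ by restriction to operadic fibres: pulling an operad map back along $c_i\colon\Finp\to\CI^\amalg$ (the map of Definition~\ref{definition:operadic-fiber}) yields a $\CP^\otimes$‑algebra in $\CC_i$, and these cohere along the cocartesian edges of $\CC^\otimes\to\CI^\amalg$ lying over morphisms of $\CI$ into a section of $\CM_\CP$.

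Checking that this comparison is an equivalence is the crux. Full faithfulness should be formal: a morphism of operad maps over $\CI^\amalg$ is a natural transformation compatible with the operadic structure, and this compatibility is detected on the $\langle 1\rangle$‑components, which is precisely the datum of a morphism of sections. For essential surjectivity one must show that a section $\{A_i\in\Alg_{\CP^\otimes}(\CC_i)\}_i$ of $\CM_\CP$ extends uniquely to an operad map out of $\CP^\otimes\times_{\Finp}\CI^\amalg$. Here I would base change along $c\colon\CI\times\Finp\to\CI^\amalg$ (Notation~\ref{notation-constant}), under which $\CP^\otimes\times_{\Finp}\CI^\amalg$ becomes the constant $\CI$‑family of operads at $\CP^\otimes$ and $\CC^\otimes$ becomes the $\CI$‑family $i\mapsto\CC_i^\otimes$, and then build the operad map fibrewise over $\langle n\rangle\in\Finp$ using that $\CI^\amalg_{act}\to\Fin$ is the cartesian fibration classifying $I\mapsto\CI^I$ (Example~\ref{example:cocartesian-is-promonoidal}), so that active morphisms of $\CI^\amalg$ are assembled from morphisms of $\CI$, while the inert morphisms impose only the Segal conditions that reduce all the data to the $\langle 1\rangle$‑components.

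The hard part will be precisely this last point: that the promonoidal (indeed cocartesian‑operad) structure of $\CI^\amalg$ is exactly what makes an operad map over $\CI^\amalg$ out of the constant family $\CP^\otimes\times_{\Finp}\CI^\amalg$ equivalent to a lax‑natural family of $\CP^\otimes$‑algebras in the fibres $\CC_i$. Everything else — the enriched norm adjunction, the identification of the lax limit with sections, and the construction of the comparison functor — is formal, and all of it is visibly natural in $\CP^\otimes$, so no extra bookkeeping is needed to conclude.
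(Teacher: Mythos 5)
You have the right strategy — characterize both sides through the corepresented functor $\CP^\otimes\mapsto\Alg_{\CP^\otimes}(-)$, use the norm adjunction on one side and Theorem~\ref{thm-lax-limit-section}(a) on the other — and your first paragraph matches the paper's proof exactly. The divergence is in how the two sides are connected, and that is exactly where you leave a gap.

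The paper does not construct a comparison functor and check it is fully faithful and essentially surjective. Instead it identifies $\Alg_{(\CP^\otimes\times_{\Finp}\CI^\amalg)/\CI^\amalg}(\CC^\otimes)$ with sections directly, in one stroke, by first writing it as the fiber product $\Alg_{\CP^\otimes\times_{\Finp}\CI^\amalg}(\CC^\otimes)\times_{\Alg_{\CP^\otimes\times_{\Finp}\CI^\amalg}(\CI^\amalg)}\{\pr_2\}$ and then invoking \cite{HA}*{Theorem~2.4.3.18}, which supplies the natural equivalence $\Alg_{\CP^\otimes\times_{\Finp}\CI^\amalg}(\CD^\otimes)\simeq\Fun(\CI,\Alg_{\CP^\otimes}(\CD^\otimes))$ for any $\infty$-operad $\CD^\otimes$. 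Applying it to $\CD^\otimes=\CC^\otimes$ and $\CD^\otimes=\CI^\amalg$ and taking the fiber over $\pr_2$ gives precisely $\Fun_{/\CI}(\CI,\Unco{\Alg_{\CP^\otimes}(\CC_\bullet)})$, and Theorem~\ref{thm-lax-limit-section} finishes the job.

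Your last paragraph — ``the hard part will be precisely this last point'' — is where the argument stalls: you correctly identify that the crux is showing an operad map out of $\CP^\otimes\times_{\Finp}\CI^\amalg$ over $\CI^\amalg$ is the same as a lax-natural family of $\CP^\otimes$-algebras in the fibres, but you sketch a base-change/Segal argument without carrying it out. That unresolved claim \emph{is} Lurie's Theorem~2.4.3.18, so you are implicitly proposing to reprove it. If you instead cite it, your ``formal'' steps become genuine, and the comparison functor you construct by restriction to operadic fibres will be an equivalence by naturality in $\CP^\otimes$ (indeed, once Theorem~2.4.3.18 is in hand there is no need for a fully-faithful-plus-essentially-surjective check at all — one gets the chain of natural equivalences of the paper's proof and concludes by the Yoneda lemma in $\Op_\infty$). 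As written, though, the proof is incomplete exactly at the step you flag as hard.
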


\begin{proof}
	We will show that the right hand side has the universal property of the lax limit. By the universal property of the norm, for any $\infty$-operad $\CP^\otimes$ we have an equivalence 
	\[\Alg_{\CP^\otimes} (N_{\CI^\amalg}\CC^\otimes)\simeq \Alg_{\CP^\otimes\times_{\Finp} \CI^\amalg/ \CI^\amalg}(\CC^\otimes)\,.\]
	By \cite[Theorem~2.4.3.18]{HA}, we can write
	\begin{align*}
	\Alg_{\CP^\otimes\times_{\Finp} \CI^\amalg/ \CI^\amalg}(\CC^\otimes)& \simeq \Alg_{\CP^\otimes\times_{\Finp}\CI^\amalg}(\CC^\otimes)\times_{\Alg_{\CP^\otimes\times_{\Finp}\CI^\amalg}(\CI^\amalg)} \{\pr_2\}\\
	& \simeq \Fun(\CI,\Alg_{\CP^\otimes}(\CC^\otimes)) \times_{\Fun(\CI,\Alg_{\CP^\otimes}(\CI^\amalg))} \{\pr_2\}\,.
	\end{align*}
	where $\pr_2\colon \CP^\otimes\times_{\Finp}\CI^\amalg\to \CI^\amalg$ is the projection. In other words, we have shown that $\Alg_{\CP^\otimes}(N_{\CI^\amalg}\CC^\otimes)$ is the $\infty$-category of sections of the functor
	\[\Alg_{\CP^\otimes}(\CC^\otimes)\times_{\Alg_{\CP^\otimes}(\CI^\amalg)} \CI \to \CI\]
	which is exactly the cocartesian fibration classified by $i\mapsto \Alg_{\CP^\otimes}(\CC_i^\otimes)$. Our thesis then follows from Theorem~\ref{thm-lax-limit-section}.
\end{proof}

\begin{remark}\label{rem-norm-extend-sec}
	Let $p\colon \CC^\otimes\rightarrow \CI^\amalg$ be an $\CI^\amalg$-monoidal $\infty$-category, and write $\CC_\bullet\colon \CI\rightarrow \Cat_\infty^\otimes$ for the associated diagram of symmetric monoidal $\infty$-categories. Then by the discussion in Remark \ref{rem-norm-underlying}, the underlying category of $N_{\CI^\amalg} \CC^\otimes$ is given by $\Fun_{/\CI}(\CI,\CC)$. Therefore the proposition above is an operadic analogue of Theorem~\ref{thm-lax-limit-section}(b). Since we know that the partially lax limit of a diagram of $\infty$-operads is a fully faithful suboperad of the lax limit, the previous result also allows us to calculate the partially lax limit of $\CC_\bullet$. Namely it is the fully faithful symmetric monoidal subcategory of $N_{\CI^\amalg}\CC^\otimes$ determined by the fully faithful subcategory $\laxlimdag \CC_\bullet \subset \laxlim \CC_\bullet$. 
\end{remark}

We finish this section by proving that the formation of (partially) lax limits of symmetric monoidal categories commutes with taking modules, in a precise sense. This will be a key observation for the second part of the paper, and crucially uses the equivalence $N_{\CI^\amalg}\CC^\otimes\simeq \laxlim \CC_\bullet$. 

\begin{theorem}\label{thm:modules-in-laxlimit}
	Let $\CC^\otimes\to \CI^\amalg$ be a $\CI^\amalg$-monoidal $\infty$-category which is compatible with colimits, and write $\CC_\bullet\colon \CI\rightarrow \Cat_\infty^\otimes$ for the associated diagram of symmetric monoidal $\infty$-categories. Let $S\in\CAlg(\laxlim \CC_\bullet)$ be an algebra in the lax limit, which corresponds to a (partially lax) family of algebras $S_i\in \CC_i$. Then there is a functor 
	\[
	\Mod_{S_\bullet}(\CC_\bullet)\colon \CI\to \Cat_\infty^\otimes, \quad i \mapsto \Mod_{S_i}(\CC_i)
	\]
	and an equivalence of symmetric monoidal $\infty$-categories
	\[\laxlim\Mod_{S_\bullet}(\CC_\bullet)\simeq \Mod_S(\laxlim \CC_\bullet )\,.\]	
	Moreover, there is a natural transformation $\CC_\bullet\to \Mod_{S_\bullet}(\CC_\bullet)$ sending $x\in \CC_i$ to the free $S_i$-module $S_i\otimes x$, which induces the functor $S\otimes -$ on the lax limit.
\end{theorem}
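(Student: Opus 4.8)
The argument has three stages: building the functor $\Mod_{S_\bullet}(\CC_\bullet)$ together with the free--module transformation, reducing the equivalence to a statement about the operadic norm, and then matching universal properties.

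\emph{Stage 1: the diagram and the transformation.} Since $\Cat_\infty^\otimes\subseteq\Op_\infty$ is closed under lax limits (Remark~\ref{rem-sym-mon-cats-closed-under-laxlimits}), applying that remark with $\CP^\otimes=\Finp$ gives $\CAlg(\laxlim\CC_\bullet)\simeq\laxlim\CAlg(\CC_\bullet)$, and by Theorem~\ref{thm-lax-limit-section}(a) the target is the $\infty$-category of sections of the cocartesian fibration $\Unco{\CAlg\circ\CC_\bullet}\to\CI$ classifying $i\mapsto\CAlg(\CC_i)$. Hence $S$ is precisely a lift $\tilde s\colon\CI\to\widetilde{\Cat}_\infty^\otimes$ of $\CC_\bullet$ along the cocartesian fibration $\widetilde{\Cat}_\infty^\otimes\to\Cat_\infty^\otimes$ classifying $\CAlg(-)$, where $\widetilde{\Cat}_\infty^\otimes$ is the $\infty$-category of pairs $(\CD^\otimes,A\in\CAlg(\CD))$ with morphisms the symmetric monoidal functors equipped with an algebra map upstairs. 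On the full subcategory of pairs whose underlying $\infty$-category is compatible with sifted colimits — which contains the image of $\tilde s$, since each $\CC_i$ is cocomplete by the compatibility--with--colimits hypothesis — there is a functor $\Mod\colon\widetilde{\Cat}_\infty^\otimes\to\Cat_\infty^\otimes$, $(\CD^\otimes,A)\mapsto\Mod_A(\CD)^\otimes$, functorial in $\CD$ by base change and in $A$ by extension of scalars (see \cite{HA}*{\S4.5} and the appendix). I would then set $\Mod_{S_\bullet}(\CC_\bullet):=\Mod\circ\tilde s$, which sends $i\mapsto\Mod_{S_i}(\CC_i)^\otimes$. The natural transformation from the forgetful functor $\widetilde{\Cat}_\infty^\otimes\to\Cat_\infty^\otimes$ to $\Mod$, with component at $(\CD^\otimes,A)$ the symmetric monoidal free--module functor $A\otimes-\colon\CD\to\Mod_A(\CD)$, whiskered with $\tilde s$ produces the required transformation $\CC_\bullet\Rightarrow\Mod_{S_\bullet}(\CC_\bullet)$ sending $x\in\CC_i$ to $S_i\otimes x$.

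\emph{Stage 2: reduction to the norm, and the equivalence.} By Proposition~\ref{proposition:laxlimit-is-norm} one has $\laxlim\CC_\bullet\simeq N_{\CI^\amalg}\CC^\otimes$, and if $\CM^\otimes\to\CI^\amalg$ denotes the $\CI^\amalg$-monoidal $\infty$-category classifying $\Mod_{S_\bullet}(\CC_\bullet)$ under Proposition~\ref{prop:equiv_amalg} — so its operadic fibre over $i$ is $\Mod_{S_i}(\CC_i)^\otimes$ — then likewise $\laxlim\Mod_{S_\bullet}(\CC_\bullet)\simeq N_{\CI^\amalg}\CM^\otimes$. Thus it suffices to exhibit a natural equivalence $N_{\CI^\amalg}\CM^\otimes\simeq\Mod_S(N_{\CI^\amalg}\CC^\otimes)$. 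Regarded as an algebra in $N_{\CI^\amalg}\CC^\otimes$, the object $S$ corresponds, under the universal property of the norm exactly as in the proof of Proposition~\ref{proposition:laxlimit-is-norm}, to an algebra section $\CI^\amalg\to\CC^\otimes$, and $\CM^\otimes$ is the relative (``parametrized'') module $\infty$-category of \cite{HA}*{\S3.3.3,\,\S4.5.3} attached to this section — a $\CI^\amalg$-monoidal $\infty$-category with the stated operadic fibres by base change of the relative module construction along the operad maps $c_i\colon\Finp\to\CI^\amalg$ of Notation~\ref{notation-constant} (cf.\ Definition~\ref{definition:operadic-fiber}). To compare the two sides I would test against an arbitrary $\infty$-operad $\CP^\otimes$: the universal property of $N_{\CI^\amalg}$ gives $\Alg_{\CP^\otimes}(N_{\CI^\amalg}\CM^\otimes)\simeq\Alg_{\CP^\otimes\times_{\Finp}\CI^\amalg/\CI^\amalg}(\CM^\otimes)$, which by the universal property of the relative module construction is the $\infty$-category of pairs consisting of a map of $\infty$-operads $B\colon\CP^\otimes\times_{\Finp}\CI^\amalg\to\CC^\otimes$ over $\CI^\amalg$ together with a module structure on $B$ over the algebra section $S$; and the universal property of $\Mod_S$ followed by that of $N_{\CI^\amalg}$ identifies $\Alg_{\CP^\otimes}(\Mod_S(N_{\CI^\amalg}\CC^\otimes))$ with the same $\infty$-category of pairs. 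Naturality in $\CP^\otimes$ and Remark~\ref{rem-sym-mon-cats-closed-under-laxlimits} then upgrade this to the desired equivalence of symmetric monoidal $\infty$-categories, giving $\laxlim\Mod_{S_\bullet}(\CC_\bullet)\simeq\Mod_S(\laxlim\CC_\bullet)$.

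\emph{Stage 3: the induced functor, and the main obstacle.} Under Proposition~\ref{prop:equiv_amalg} the transformation $\CC_\bullet\Rightarrow\Mod_{S_\bullet}(\CC_\bullet)$ of Stage 1 corresponds to a map $\CC^\otimes\to\CM^\otimes$ of $\CI^\amalg$-monoidal $\infty$-categories over $\CI^\amalg$ which on the operadic fibre at $i$ is $S_i\otimes-$; applying the functorial norm $N_{\CI^\amalg}$ and the identifications of Stage 2 yields a symmetric monoidal functor $\laxlim\CC_\bullet\to\Mod_S(\laxlim\CC_\bullet)$, and chasing it through the universal--property matching of Stage 2 exhibits it as the free--module functor $S\otimes-$ (the free--module morphism for $\Mod_S(N_{\CI^\amalg}\CC^\otimes)$ corresponds, under the norm and module universal properties, precisely to the fibrewise free--module maps upstairs). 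The crux of the whole argument is the second half of Stage 2: setting up the relative module $\infty$-category over the non--commutative base operad $\CI^\amalg$, verifying it is $\CI^\amalg$-monoidal with the expected operadic fibres, and above all establishing the compatibility of the norm with the module construction — that $\Alg_{\CP^\otimes}$ of the norm of the relative module category agrees, naturally in $\CP^\otimes$, with $\Alg_{\CP^\otimes}$ of $\Mod_S$ of the norm. This operadic bookkeeping (compatibility of $\Mod$ with pullbacks of $\infty$-operads and with the norm) is exactly what the appendix on tensor products of modules is designed to supply; granting it, the rest is formal manipulation of the universal properties already recorded in this part of the paper.
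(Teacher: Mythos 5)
Your high-level plan (identify the lax limit with the norm, then commute $\Mod_S$ past the norm) matches the paper's, but your Stage 2 — which you yourself flag as the crux — is not actually discharged, and the route you sketch to discharge it is not the one that works. You propose to build a ``relative module $\infty$-category over $\CI^\amalg$'' and then match universal properties of $\Alg_{\CP^\otimes}$ on the two sides of the equivalence. This presupposes a module construction over the base operad $\CI^\amalg$ with a clean universal property of the Lurie type, which is not available off the shelf: $\CI^\amalg$ is not the commutative operad, and Lurie's $\Mod^\CO$-machinery does not simply base-change along the operad maps $c_i\colon\Finp\to\CI^\amalg$ to give the fibres you want without a coherence hypothesis and a separate compatibility argument. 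Saying ``granting it, the rest is formal'' outsources precisely the content of the theorem.

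The paper's proof avoids ever defining an intrinsic module construction over $\CI^\amalg$. It instead replaces $\Mod_S$ by a pullback. Theorem~\ref{theorem:modules-are-pullbacks} shows that for any $\infty$-operad $\CC^\otimes$,
\[
\Mod^{\Finp}(\CC)^\otimes \;\simeq\; \Alg_{\CMod^\otimes}(\CC)^\otimes \times_{\CAlg(\CC)^\otimes}\bigl(\Finp\times\CAlg(\CC)\bigr),
\]
where $\CMod^\otimes$ is the small $\infty$-operad of Definition~\ref{def:module-operad}. The right-hand side is built out of $\Alg_{(-)}(\CC)$ for fixed source operads, and these commute with the norm by Lemma~\ref{lemma:algebras-in-laxlimit}:
\[
\Alg_{\CO^\otimes}\bigl(N_{\CI^\amalg}\CC^\otimes\bigr)^\otimes \;\simeq\; N_{\CI^\amalg}\Alg_{\CO^\otimes/\CI^\amalg}(\CC)^\otimes.
\]
Applying this with $\CO^\otimes\in\{\CMod^\otimes,\Finp\}$ and using that the norm preserves the relevant pullbacks gives
\[
\Mod_S\bigl(N_{\CI^\amalg}\CC^\otimes\bigr)^\otimes \;\simeq\; N_{\CI^\amalg}\Bigl(\Alg_{\CMod^\otimes/\CI^\amalg}(\CC)^\otimes\times_{\Alg_{\Finp/\CI^\amalg}(\CC)^\otimes}\CI^\amalg\Bigr),
\]
where the section $\CI^\amalg\to\Alg_{\Finp/\CI^\amalg}(\CC)^\otimes$ is $S$ viewed through $\CAlg(N_{\CI^\amalg}\CC^\otimes)\simeq\Alg_{\CI^\amalg/\CI^\amalg}(\CC^\otimes)$. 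Lemma~\ref{lemma:families-of-commutative-algebras} shows the norm source is $\CI^\amalg$-monoidal, and Theorem~\ref{theorem:modules-are-pullbacks} applied fibrewise identifies its classifying functor as $i\mapsto\Mod_{S_i}(\CC_i)^\otimes$. Thus the object $\CM^\otimes$ you gesture at is \emph{constructed} as this pullback; the ``universal property'' you invoke is a theorem, and the theorem is proved by commuting pullbacks past the norm, not by matching two independently-defined universal properties.

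Two secondary points. First, your Stage 1 wants a global functor $\Mod\colon\widetilde{\Cat}_\infty^\otimes\to\Cat_\infty^\otimes$ on a large $\infty$-category of pairs $(\CD^\otimes,A)$; establishing the existence and the naturality of the free-module transformation at that level of generality is overkill here and nontrivial in its own right. The paper instead obtains the diagram $i\mapsto\Mod_{S_i}(\CC_i)^\otimes$ directly from the explicit $\CI^\amalg$-monoidal $\infty$-category above via Proposition~\ref{prop:equiv_amalg}, and the free-module transformation as the relative left adjoint (via \cite{HA}*{Corollary 7.3.2.12}) of the $\CI^\amalg$-monoidal map that forgets the module down to its algebra and underlying object. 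This construction is automatically compatible with the norm computation. Second, the very last step of your Stage 3 (that the induced functor is $S\otimes-$) needs the above explicit construction to chase through; without a concrete handle on $\CM^\otimes$, the identification does not follow from formal nonsense.

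In short: the skeleton of your proof is right, but the flesh you elide is the theorem. You need the $\CMod^\otimes$-pullback characterization of module categories and the $\Alg$-versus-norm commutation lemma, applied in that specific way, to make Stage 2 go through.
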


The proof of the previous result will require some preparation and some results from Appendix A. For this reason we recommend the reader to skip this part on a first reading.

We start our journey by studying how the lax limit interacts with the tensor product of algebras. 

\begin{construction}
By \cite{HA}*{Proposition~3.2.4.6} there is an equivalence of $\infty$-operads $\CI^\amalg\otimes_{BV} \Finp\simeq \CI^\amalg$, where $\otimes_{BV}$ is the Boardmann-Vogt tensor product, and so there exists a unique bifunctor of $\infty$-operads $\CI^\amalg \times\Finp\to \CI^\amalg$. For any $\infty$-operad $\CO^\otimes$ we obtain a bifunctor of $\infty$-operads $m_\CO$, which is given by the composition
\[\CI^\amalg \times \CO^\otimes\to \CI^\amalg \times \Finp\to \CI^\amalg\,.\]    
Thus, for every map of $\infty$-operad $\CC^\otimes\to \CI^\amalg$ \cite[Construction~3.2.4.1]{HA} produces a map of $\infty$-operads
\[\Alg_{\CO^\otimes/\CI^\amalg}(\CC)^\otimes\to \CI^\amalg\]
whose operadic fibre over $i\in I$ is given by $\Alg_{\CO^\otimes}(\CC_i)^\otimes$. Suppose that $\CC^\otimes$ is a $\CI^\amalg$-monoidal category. Then by \cite[Proposition~3.2.4.3.(3)]{HA} $\Alg_{\CO^\otimes}(\CC_i)^\otimes$ is also a $\CI^\amalg$-monoidal $\infty$-category. In this case, Proposition~\ref{prop:equiv_amalg} gives a functor $\CI\to \Cat_\infty^\otimes$ sending $i\in\CI$ to $\Alg_{\CO^\otimes}(\CC_i)^\otimes$. We will now compute the lax limit of this functor.
\end{construction}

\begin{lemma}\label{lemma:algebras-in-laxlimit}
	Let $\CI$ be an $\infty$-category, $\CC^\otimes\to \CI^\amalg$ a map of $\infty$-operads and $\CO^\otimes$ an $\infty$-operad. Then there is a natural equivalence of $\infty$-operads
	\[\Alg_{\CO^\otimes}(N_{\CI^\amalg}\CC^\otimes)^\otimes \simeq N_{\CI^\amalg}\Alg_{\CO^\otimes/\CI^\amalg}(\CC)^\otimes\,.\]
	In particular if $\CC^\otimes$ is $\CI^\amalg$-symmetric monoidal we have a natural equivalence of $\infty$-operads
	\[\Alg_{\CO^\otimes}(\laxlim_{i\in \CI}\CC_i)^\otimes\simeq \laxlim_{i\in \CI} \Alg_{\CO}(\CC_i)^\otimes\,.\]
\end{lemma}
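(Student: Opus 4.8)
The plan is to establish the first equivalence — the one for an arbitrary map of $\infty$-operads $\CC^\otimes\to\CI^\amalg$ — and then read off the ``in particular'' statement from Proposition~\ref{proposition:laxlimit-is-norm}. Since an $\infty$-operad $\CD^\otimes$ over $\Finp$ is determined by the $\Cat_\infty$-valued functor $\CP^\otimes\mapsto\Alg_{\CP^\otimes}(\CD^\otimes)$ on $\Op_\infty$ (its groupoid core being $\Map_{\Op_\infty}(\CP^\otimes,\CD^\otimes)$, and $\Op_\infty$ being locally small), it suffices to produce a natural equivalence between the functors corepresented by the two sides.

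First I would compute $\Alg_{\CP^\otimes}$ of the right-hand side. The Construction preceding the statement (following \cite{HA}*{\S3.2.4}) exhibits $\Alg_{\CO^\otimes/\CI^\amalg}(\CC)^\otimes\to\CI^\amalg$ as a map of $\infty$-operads whose operadic fibre at $i\in\CI$ is $\Alg_{\CO^\otimes}(\CC_i^\otimes)^\otimes$. Then I invoke the computation that powers the proof of Proposition~\ref{proposition:laxlimit-is-norm}: for any map of $\infty$-operads $\CD^\otimes\to\CI^\amalg$ and any $\CQ^\otimes$, combining the universal property of the norm along $\CI^\amalg\to\Finp$ with \cite{HA}*{Theorem~2.4.3.18} identifies $\Alg_{\CQ^\otimes}(N_{\CI^\amalg}\CD^\otimes)$ with the $\infty$-category of sections of the cocartesian fibration over $\CI$ classified by $i\mapsto\Alg_{\CQ^\otimes}(\CD_i^\otimes)$, where $\CD_i^\otimes$ is the operadic fibre; this portion of that argument never uses that $\CD^\otimes$ is $\CI^\amalg$-monoidal. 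Applying it with $\CD^\otimes=\Alg_{\CO^\otimes/\CI^\amalg}(\CC)^\otimes$ and $\CQ^\otimes=\CP^\otimes$ shows that $\Alg_{\CP^\otimes}\!\big(N_{\CI^\amalg}\Alg_{\CO^\otimes/\CI^\amalg}(\CC)^\otimes\big)$ is the $\infty$-category of sections of the cocartesian fibration classified by $i\mapsto\Alg_{\CP^\otimes}\!\big(\Alg_{\CO^\otimes}(\CC_i^\otimes)^\otimes\big)$.

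For the left-hand side I would use the defining universal property of the Boardman--Vogt tensor product, i.e. the fact that $(\Op_\infty,\otimes_{BV})$ is closed with internal mapping object $\Alg_{(-)}\big((-)\big)^\otimes$ (see \cite{HA}*{\S2.2.5 and \S3.2.4}), which gives a natural equivalence $\Alg_{\CP^\otimes}\!\big(\Alg_{\CO^\otimes}(\CE^\otimes)^\otimes\big)\simeq\Alg_{\CP^\otimes\otimes_{BV}\CO^\otimes}(\CE^\otimes)$ for any $\infty$-operad $\CE^\otimes$. Taking $\CE^\otimes=N_{\CI^\amalg}\CC^\otimes$ and then applying the norm computation above with $\CD^\otimes=\CC^\otimes$ and $\CQ^\otimes=\CP^\otimes\otimes_{BV}\CO^\otimes$, we get that $\Alg_{\CP^\otimes}\!\big(\Alg_{\CO^\otimes}(N_{\CI^\amalg}\CC^\otimes)^\otimes\big)$ is the sections of the cocartesian fibration classified by $i\mapsto\Alg_{\CP^\otimes\otimes_{BV}\CO^\otimes}(\CC_i^\otimes)$; applying the same equivalence fibrewise rewrites this diagram as $i\mapsto\Alg_{\CP^\otimes}\!\big(\Alg_{\CO^\otimes}(\CC_i^\otimes)^\otimes\big)$, which is precisely the diagram obtained for the right-hand side. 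As every step is natural in $\CP^\otimes$, this produces the claimed equivalence of $\infty$-operads. Finally, when $\CC^\otimes$ is $\CI^\amalg$-monoidal so is $\Alg_{\CO^\otimes/\CI^\amalg}(\CC)^\otimes$ by \cite{HA}*{Proposition~3.2.4.3}, and feeding the identifications $N_{\CI^\amalg}\CC^\otimes\simeq\laxlim\CC_\bullet$ and $N_{\CI^\amalg}\Alg_{\CO^\otimes/\CI^\amalg}(\CC)^\otimes\simeq\laxlim_i\Alg_{\CO}(\CC_i)^\otimes$ of Proposition~\ref{proposition:laxlimit-is-norm} into the equivalence just proved yields the second displayed formula.

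The main obstacle is not a single hard step but the discipline of running everything at the level of $\Alg$-categories rather than mapping spaces, so that the concatenation of the norm adjunction, \cite{HA}*{Theorem~2.4.3.18}, and the Boardman--Vogt universal property is a genuine natural equivalence of $\Cat_\infty$-valued corepresented functors; only then does the (enriched) Yoneda lemma upgrade this to an equivalence of $\infty$-operads rather than a mere objectwise coincidence. A secondary, routine point is to confirm that the relevant portion of the proof of Proposition~\ref{proposition:laxlimit-is-norm} — the computation of $\Alg_{\CQ^\otimes}(N_{\CI^\amalg}(-))$ as a category of sections — goes through verbatim for a general map of $\infty$-operads $\CC^\otimes\to\CI^\amalg$, not just for $\CI^\amalg$-monoidal $\infty$-categories.
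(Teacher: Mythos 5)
Your proposal is mathematically sound and runs on the same three engines as the paper's proof — the adjunction defining the norm, \cite{HA}*{Theorem~2.4.3.18} to pass between relative algebras and sections, and the Boardman--Vogt adjunction $\Alg_{\CP\otimes_{BV}\CO}(-)\simeq\Alg_\CP(\Alg_\CO(-))$ — but it packages them differently. The paper stays entirely at the level of $\Alg$-categories over $\CI^\amalg$, writing both sides as iterated fibre products of algebra categories and comparing them through a single chain of equivalences that never leaves that world; you instead translate both sides into ``$\infty$-categories of sections over $\CI$'' (the endpoint of the proof of Proposition~\ref{proposition:laxlimit-is-norm}) and then match the two fibrations fibrewise. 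That extra translation is where you pick up some slack: you twice describe the resulting map over $\CI$ as ``the cocartesian fibration classified by $i\mapsto\Alg(\CD_i^\otimes)$,'' but for a general map of $\infty$-operads $\CD^\otimes\to\CI^\amalg$ (in particular for $\Alg_{\CO^\otimes/\CI^\amalg}(\CC)^\otimes$, which is not asserted to be $\CI^\amalg$-monoidal unless $\CC^\otimes$ is) the map $\Alg_{\CQ^\otimes}(\CD^\otimes)\times_{\Alg_{\CQ^\otimes}(\CI^\amalg)}\CI\to\CI$ need not be cocartesian. What is true, and all you need, is that it has the stated operadic fibres and that $\Alg_{\CQ^\otimes}(N_{\CI^\amalg}\CD^\otimes)$ is its $\infty$-category of sections. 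Relatedly, the final step ``applying the same equivalence fibrewise'' must be an equivalence of the two maps over $\CI$, not just a fibre-by-fibre coincidence; this follows from naturality of the Boardman--Vogt adjunction applied simultaneously to $\CC^\otimes$ and $\CI^\amalg$, but it is exactly the point the paper handles cleanly by inserting the equivalence $\Alg_{\CP^\otimes\times_{\Finp}\CI^\amalg}(\Alg_{\CO^\otimes}(\CC)^\otimes)\simeq\Alg_{(\CP^\otimes\otimes_{BV}\CO^\otimes)\times_{\Finp}\CI^\amalg}(\CC^\otimes)$ before forming the pullback against $\{\pr_2\}$. With those two points stated precisely your argument is correct; it is slightly longer than the paper's because you detour through the sections model before recombining, whereas the paper's chain identifies the two relative algebra categories directly. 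One last small remark: invoking the enriched Yoneda lemma to upgrade a natural equivalence of $\Cat_\infty$-valued corepresented functors is more machinery than needed; a natural equivalence of the functors $\Alg_{(-)}$ already determines an equivalence of the representing $\infty$-operads, which is how the paper reads it.
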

\begin{proof}
	We will prove that both sides represent the same functor in the $\infty$-category of $\infty$-operads. Let $\CP^\otimes$ an $\infty$-operad. Then
	\begin{equation*}\begin{split}
			\Alg_{\CP^\otimes}N_{\CI^\amalg}\Alg_{\CO^\otimes/\CI^\amalg}\CC^\otimes
			&\simeq \Alg_{\CP^\otimes\times_{\Finp}\CI^\amalg/I^\amalg}\left(\Alg_{\CO^\otimes}(\CC)^\otimes\times_{\Alg_{\CO^\otimes}(\CI)^\otimes}\CI^\amalg\right)\\
			&\simeq \Alg_{\CP^\otimes\times_{\Finp}\CI^\amalg/\Alg_{\CO^\otimes}(\CI)^\otimes}(\Alg_{\CO^\otimes}(\CC)^\otimes)\\
			&\simeq \Alg_{\CP^\otimes\times_{\Finp}\CI^\amalg}(\Alg_{\CO^\otimes}(\CC)^\otimes)\times_{\Alg_{\CP^\otimes\times_{\Finp}\CI^\amalg}(\Alg_{\CO^\otimes}(\CI)^\otimes)} \{pr_2\}\\
			&\simeq \Alg_{(\CP^\otimes\otimes_{BV} \CO^\otimes)\times_{\Finp} \CI^\amalg}(\CC^\otimes)\times_{\Alg_{(\CP^\otimes\otimes_{BV} \CO^\otimes)\times_{\Finp} \CI^\amalg}(\CI^\amalg)}\{pr_2\}\\
			&\simeq \Alg_{(\CP^\otimes\otimes_{BV} \CO^\otimes)\times_{\Finp} \CI^\amalg / \CI^\amalg}(\CC)^\otimes\\
			&\simeq \Alg_{\CP^\otimes}\left(\Alg_{\CO^\otimes}(N_{\CI^\amalg}\CC)^\otimes\right)\,.
	\end{split}\end{equation*}
	Here $\otimes_{BV}$ is the Boardman-Vogt tensor product of $\infty$-operads of \cite[Section~2.2.5]{HA}.
\end{proof}

We are ready to prove the main result of this section.

\begin{proof}[Proof of Theorem~\ref{thm:modules-in-laxlimit}]
	Note that by the definition of the norm we have an equivalence
	\[\CAlg(N_{\CI^{\amalg}}\CC^\otimes)\simeq \Alg_{\CI^\amalg/\CI^\amalg}(\CC^\otimes)\simeq \Alg_{\CI^\amalg}(\Alg_{\Finp/\CI^\amalg}(\CC)^\otimes)\]
	therefore we can also consider $S$ as a section of $\Alg_{\Finp/\CI^\amalg}(\CC)^\otimes\to \CI^\amalg$ in $\Op_\infty$.
	
	By Theorem~\ref{theorem:modules-are-pullbacks} and Lemma~\ref{lemma:algebras-in-laxlimit} there is an equivalence
	\begin{equation*}\begin{split}
			\Mod_S(N_{\CI^\amalg}\CC^\otimes)^\otimes&\simeq \Alg_{\CMod}(N_{\CI^\amalg}\CC)^\otimes\times_{\CAlg(N_{\CI^\amalg}\CC)^\otimes} \Finp\\
			&\simeq N_{\CI^\amalg}\left(\Alg_{\CMod/\CI^\amalg}(\CC)^\otimes\times_{\Alg_{\Finp/\CI^\amalg}(\CC)^\otimes} \CI^\amalg\right)
	\end{split}\end{equation*}
	where $\CI^\amalg\to \Alg_{\Finp/\CI^\amalg}(\CC)^\otimes$ is the section corresponding to $S$. Moreover note that by Lemma~\ref{lemma:families-of-commutative-algebras}
	\[\Alg_{\CMod/\CI^\amalg}(\CC)^\otimes\times_{\Alg_{\Finp/\CI^\amalg}(\CC)^\otimes} \CI^\amalg\to \CI^\amalg\]
	is an $\CI^\amalg$-monoidal $\infty$-category. Then Theorem~\ref{theorem:modules-are-pullbacks} shows that the corresponding family of symmetric monoidal $\infty$-categories is exactly
	\[i\mapsto\Mod_{S_i}(\CC_i)\,,\]
    and so our thesis follows from Proposition~\ref{proposition:laxlimit-is-norm}.
	
	Finally let us construct the symmetric monoidal functor $\CC_i^\otimes\to \Mod_{S_i}(\CC_i)^\otimes$. There is a map of $\CI^\amalg$-monoidal $\infty$-categories
	\[\Alg_{\CMod^\otimes/\CI^\amalg}(\CC)^\otimes\to \Alg_{\Finp/\CI^\amalg}(\CC)^\otimes \times_{\CI^\amalg}\CC^\otimes\]
	induced by the map of $\infty$-operads $\Finp\boxplus \operatorname{Triv}^\otimes\to \CMod^\otimes$ picking the algebra and the underlying object of the module. By \cite[Corollary~4.2.4.4]{HA} this has a left adjoint on every fibre which is compatible with the pushforwards by \cite[Corollary~4.2.4.8]{HA}, and so by \cite[Corollary~7.3.2.12]{HA} it has a relative left adjoint $F$ which is an $\CI^\amalg$-monoidal functor. Then the functor we want is the composite
	\[\CC^\otimes\xrightarrow{(S,\operatorname{id})}\Alg_{\Finp/\CI^\amalg}(\CC)^\otimes\times_{\CI^\amalg}\CC^\otimes \xrightarrow{F}\Alg_{\CMod^\otimes/\CI^\amalg}(\CC)^\otimes\,.\]
    This induces the desired functor on the lax limit since applying $N_{\CI^\amalg}$ preserve operadic adjunctions.
\end{proof}

\part{\texorpdfstring{$\infty$}{infty}-categories of global objects as partially lax limits}

In this second part of the paper we prove that various $\infty$-categories of global objects
admit a description using (partially lax) limits. In Theorem~\ref{thm:unstablelaxlim}, we show that the $\infty$-category of global spaces is equivalent to the partially lax limit of the functor sending a compact Lie group $G$ to the $\infty$-category of $G$-spaces. Our main result is Theorem~\ref{thm-laxlim-global-spectra} 
which describes the $\infty$-category of global spectra as a partially lax limit of $G$-spectra where $G$ runs over all compact Lie groups $G$. Finally, the techniques employed 
in the previous cases allow us to prove that for any Lie group $G$, the $\infty$-category of proper 
$G$-spectra is a limit of $H$-spectra for $H$ running over all compact subgroups of $G$.  
The precise statement can be found in Theorem~\ref{thm-lim-proper-spectra}.

\begin{remark*}\label{rem-works-for-other-families}
 To not burden the notation even more, we have decided to state 
 Theorem~\ref{thm:unstablelaxlim} and Theorem~\ref{thm-laxlim-global-spectra} for the family 
 of all compact Lie groups. However, the proofs hold verbatim for any family of compact 
 Lie groups which is closed under isomorphisms, finite products, passage to subgroups and 
 passage to quotients (i.e., any multiplicative global family in the language of~\cite{Schwede18}). If the family is not closed under finite products, then the equivalences of the two theorems still hold without symmetric monoidal structures. This is due to the fact that the model structure constructed in~\cite{Schwede18} is only shown to be symmetric monoidal for a multiplicative global family. We note that our result in fact allows us to define a symmetric monoidal structure on global spectra with respect to any global family, as a partially lax limit of symmetric monoidal categories is automatically symmetric monoidal.
\end{remark*}

\section{Global spaces as a  partially lax limit}\label{sec-global-spaces}

In this section we show that the $\infty$-category of global spaces is equivalent to a certain partially lax limit of the functor which sends a group $G$ to the $\infty$-category of 
$G$-spaces $\Spc_G$. This is an unstable version of our main result, and serves as a warm up for the considerable more details involved in that proof. We start off by recalling a few relevant definitions. 

\begin{definition}\label{def:glo}
	The \emph{global category} $\Glo$ is the $\infty$-category associated to the topological category whose objects are compact Lie groups and whose mapping spaces are given by
	\[\Map_{\Glo}(H,G):=|\Hom(H,G)\sslash G|\]
	the geometric realization of the action groupoid of $G$ acting on the space of continuous group homomorphisms $\Hom(H,G)$ by conjugation. Composition is induced by the composition of group homomorphisms.
	
	We define $\Orb$ and $\Glo^{\sur}$ to be the wide subcategory of $\Glo$ whose hom-spaces are given by those path-components of $\Map_{\Glo}(H,G)$ spanned by the injective and surjective group homomorphisms respectively. For later applications it will be convenient to mark all the edges in the full subcategory $\Orb \subseteq \Glo$; we denote this marking by $\Glo^\dagger$. Finally, we let $\Rep$ denote the homotopy category of $\Glo$, that is the category whose objects are compact Lie groups 
	and whose morphisms are given by conjugacy classes of continuous group homomorphisms. 
\end{definition} 

\begin{remark}
	The definition of $\Glo$ agrees with the definition given in Section 4 of \cite{GH} restricted to compact Lie groups, up to one difference. We apply thin geometric realization to the action groupoids to obtain a topologically enriched category, while the original definition uses fat geometric realization. Up to a technical condition, the two conventions define Dwyer-Kan equivalent topological categories. See \cite{korschgen}*{Remark 3.10} for a more detailed discussion. Note as well that \cite{GH} uses the name $\Orb$ for both $\Glo$ and what we call $\Orb$. 
\end{remark}

Key to the main properties of $\Glo$ is the following description of the mapping spaces.

\begin{proposition}\label{prop:glo_hom_spaces}
	Let $G,H$ be two compact Lie groups. Then \[\Hom(H,G) \simeq \coprod\limits_{[\alpha]\in \Rep(H,G)} \alpha G  \quad \text{and}\quad \Glo(H,G) \simeq \coprod\limits_{[\alpha]\in \Rep(H,G)} BC(\alpha)\]
	where $\alpha G$ denotes the orbit of $\alpha$ under the $G$-conjugation action,  
	and $C(\alpha)$ denotes the centraliser of the image of $\alpha$.
\end{proposition}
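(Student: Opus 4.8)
The plan is to first determine $\Hom(H,G)$ as a topological space and then read off the description of $\Glo(H,G)$ by passing to homotopy orbits. The crucial --- and essentially the only non-formal --- input is the classical rigidity statement for homomorphisms between compact Lie groups: for each $\alpha\in\Hom(H,G)$ the $G$-conjugation orbit $\alpha G$ is \emph{open} in $\Hom(H,G)$, or equivalently, any continuous homomorphism sufficiently close to $\alpha$ is conjugate to it. I would invoke this from the literature (see e.g.~\cite{Schwede18}) rather than reprove it. Granting it, since the conjugation orbits partition $\Hom(H,G)$, the complement of any orbit is a union of orbits and hence open, so every orbit $\alpha G$ is in fact clopen. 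As $\Rep(H,G)$ is by definition the set $\Hom(H,G)/G$ of $G$-conjugacy classes, this already yields a homeomorphism $\Hom(H,G)\cong\coprod_{[\alpha]\in\Rep(H,G)}\alpha G$.

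Next I would identify each orbit with a homogeneous space. The stabiliser of $\alpha$ under conjugation is $\{g\in G : g\alpha(h)g^{-1}=\alpha(h)\text{ for all }h\in H\}=C_G(\alpha(H))=C(\alpha)$, so the orbit map factors through a continuous $G$-equivariant bijection $G/C(\alpha)\to\alpha G$; since $G/C(\alpha)$ is compact and $\Hom(H,G)$ is Hausdorff, this is a homeomorphism. Combining with the previous paragraph gives the first displayed equivalence.

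For the second equivalence, I would note that the clopen decomposition $\Hom(H,G)\cong\coprod_{[\alpha]}\alpha G$ is $G$-equivariant, hence descends to a decomposition of action groupoids, and applying geometric realisation (which preserves coproducts) gives $\Glo(H,G)=|\Hom(H,G)\sslash G|\simeq\coprod_{[\alpha]}|\,(G/C(\alpha))\sslash G\,|$. It then remains to recall that for any closed subgroup $K\le G$ one has $|\,(G/K)\sslash G\,|\simeq BK$: a model for these homotopy orbits is $EG\times_G(G/K)\cong EG/K$, using the natural homeomorphism $X\times_G(G/K)\cong X/K$ valid for an arbitrary $G$-space $X$, and $EG/K\simeq BK$ because $EG$ restricts to a contractible free $K$-space. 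Taking $K=C(\alpha)$ then finishes the proof. The main obstacle is precisely the rigidity theorem; everything downstream of it is bookkeeping with the resulting clopen decomposition together with standard manipulations of homotopy orbits.
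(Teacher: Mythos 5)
The paper's ``proof'' of this proposition is merely a citation to K\"orschgen (\cite{korschgen}*{Propositions 2.4, 2.5}), so you have gone further than the paper by spelling out a self-contained argument. Your argument is correct and is, as far as I can tell, essentially the one that underlies the cited reference: the only non-formal input is the rigidity theorem for homomorphisms of compact Lie groups (nearby homomorphisms are conjugate), from which the clopen orbit decomposition of $\Hom(H,G)$ follows formally, and the identification $\alpha G\cong G/C(\alpha)$ is the standard compact-to-Hausdorff argument after computing the conjugation stabiliser. Passing to homotopy orbits and using $EG\times_G(G/K)\cong EG/K\simeq BK$ for a closed subgroup $K\leq G$ finishes it. One small point worth flagging: the paper's $\Glo(H,G)$ is by definition a \emph{thin} geometric realisation $|\Hom(H,G)\sslash G|$, whereas your Borel-construction model $EG\times_G X$ agrees with it only under a mild niceness hypothesis on the simplicial space $G^\bullet\times X$; the paper itself acknowledges this subtlety in the remark following Definition~\ref{def:glo} and delegates the technical comparison to \cite{korschgen}*{Theorem A.7}. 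If you wanted to be fully rigorous you would invoke that result (or observe that for a compact Lie group $G$ acting on the manifold $G/K$, the simplicial space in question is Reedy cofibrant/proper), but this is exactly the kind of point-set bookkeeping the paper has chosen to outsource, so your argument is at the same level of rigour as the paper's.
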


\begin{proof}
	See \cite{korschgen}*{Proposition 2.4, 2.5} for a proof of the first and second statement respectively.
\end{proof}

\begin{proposition}\label{prop:Glo_composition}
	Let $f\colon H\rightarrow G$ be a map in $\Glo$. The induced map on mapping spaces
	$f_*\colon \Glo(K,H)\rightarrow \Glo(K,G)$ correspond under the equivalences of Proposition~\ref{prop:glo_hom_spaces} to the composite of the map \[\coprod\limits_{[\alpha]\in \Rep(H,G)} Bf \colon \coprod\limits_{[\alpha]\in \Rep(K,H)} BC(\alpha) \rightarrow \coprod\limits_{[\alpha]\in \Rep(K,H)} BC(f\alpha)\] with the map
	\[\coprod\limits_{[\alpha]\in \Rep(K,H)} BC(f\alpha) \rightarrow  \coprod\limits_{[\beta]\in \Rep(K,G)} BC(\beta)\] which is the identity on individual path-components, and acts on $\pi_0$ by the map $f_*\colon \Rep(K,H)\rightarrow \Rep(K,G)$.
\end{proposition}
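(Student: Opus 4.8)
The plan is to unwind the definition of composition in $\Glo$ and compare it with the orbit decomposition underlying Proposition~\ref{prop:glo_hom_spaces}. Recall that $\Glo$ is the $\infty$-category attached to the topological category with morphism spaces $|\Hom(H,G)\sslash G|$, and that composition is, by definition, induced by the composition functor of topological groupoids
\[
(\Hom(K,H)\sslash H)\times(\Hom(H,G)\sslash G)\longrightarrow \Hom(K,G)\sslash G,
\]
which sends an object $(\beta,\alpha)$ to $\alpha\circ\beta$ and is given on conjugating elements by applying homomorphisms; explicitly a pair $(h,g)$ of conjugating elements is sent to $g\cdot\alpha(h)\in G$, and in particular, holding the second factor fixed at an object $\varphi$, a conjugating element $h'$ is sent to $\varphi(h')$. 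Here we use that thin geometric realization preserves finite products. To extract $f_\ast$ I would first choose a representative homomorphism $\varphi\colon H\to G$ in the path component of $f$. Since $f_\ast$ only needs to be described up to homotopy, and a map out of a coproduct of connected spaces is determined up to homotopy by its restrictions to the summands, no information is lost by restricting the composition functor to the single object $\varphi$ in its second factor and then examining one path component of $\Hom(K,H)\sslash H$ at a time.

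Next I would record the effect on $\pi_0$. Postcomposition with $\varphi$ carries the $H$-conjugacy orbit of a homomorphism $\alpha\colon K\to H$ into the $G$-conjugacy orbit of $\varphi\alpha$, so $\pi_0(f_\ast)$ is the relabeling $[\alpha]\mapsto[\varphi\alpha]=f_\ast[\alpha]$ of $\Rep(K,H)\to\Rep(K,G)$. In particular this pins down which summand of $\Glo(K,G)$ receives the $[\alpha]$-summand of $\Glo(K,H)$, which accounts for the second map in the statement.

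Then I would analyse $f_\ast$ on the summand indexed by $[\alpha]$. By Proposition~\ref{prop:glo_hom_spaces} this summand is the realization of the full subgroupoid of $\Hom(K,H)\sslash H$ on the orbit $\alpha H$; since the $H$-action on this orbit is transitive with stabilizer the centralizer $C(\alpha)$, choosing $\alpha$ as basepoint identifies this groupoid with $\ast\sslash C(\alpha)$, whose realization is $BC(\alpha)$. Restricting the composition functor along the inclusion of $(\alpha H)\sslash H$ and of the object $\varphi$ yields the functor $(\alpha H)\sslash H\to (\varphi\alpha)G\sslash G$ sending $c_h\alpha\mapsto c_{\varphi(h)}(\varphi\alpha)$ and a conjugating element $h'\in H$ to $\varphi(h')\in G$. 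Relative to the basepoints $\alpha$ and $\varphi\alpha$ this is precisely the functor $\ast\sslash C(\alpha)\to\ast\sslash C(\varphi\alpha)$ induced by the group homomorphism $\varphi|_{C(\alpha)}\colon C(\alpha)\to C(\varphi\alpha)$, which is well defined because $\varphi$ sends any element commuting with the image of $\alpha$ to an element commuting with the image of $\varphi\alpha$. Taking realizations gives $B(\varphi|_{C(\alpha)})\colon BC(\alpha)\to BC(\varphi\alpha)$; this is the map denoted $Bf$ in the statement, and assembling over all $[\alpha]\in\Rep(K,H)$ exhibits $f_\ast$ as the asserted composite. I would also note that the homotopy class of $B(\varphi|_{C(\alpha)})$ depends only on the conjugacy class of $\varphi$, hence only on $f$, since conjugation induces a self-map of a classifying space homotopic to the identity.

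The step I expect to require the most care is the first: making precise that ``evaluating the composition bifunctor at the morphism $f$'' is meaningful even though $f$ is merely a point of a geometric realization, and that passing to a strict representative $\varphi$ and arguing componentwise genuinely recovers $f_\ast$ as a map of spaces, up to coherent homotopy. Everything afterwards is bookkeeping with action groupoids, the essential input being that the relevant path component is the realization of the action groupoid of a transitive action and is therefore equivalent to $BC(\alpha)$.
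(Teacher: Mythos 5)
Your proof is correct and takes essentially the same approach as the paper, whose argument also reduces to the identity $f_*(c_h\alpha)=c_{f(h)}f\alpha$ to conclude that the restriction to a single orbit component is induced by $f$ and hence realizes to $Bf$. Your version simply spells out the action-groupoid bookkeeping (transitivity with stabilizer $C(\alpha)$, and that $\varphi$ restricts to a homomorphism $C(\alpha)\to C(\varphi\alpha)$) in more detail than the paper, which treats these as immediate.
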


\begin{proof}
	The statement on $\pi_0$ follows from the fact that $\Rep$ is the homotopy category of $\Glo$. Therefore, it suffices to restrict to one path component, and analyse the effect of $f$. The relationship $f_*(c_h \alpha) = c_{f(h)} f \alpha$ implies that $f_*$ acts as $f$ when restricted to a map $\alpha H\rightarrow f\alpha G$. This implies that the induced map $BC(\alpha)\rightarrow BC(f\alpha)$ equals $Bf$.
\end{proof}

\begin{definition}\label{def-global-spaces}
	The $\infty$-category of \emph{global spaces} $\Spcgl$ is the category of functors from 
	$\Glo^{\op}$ to $\Spc$. This admits a symmetric monoidal structure by 
	pointwise product. This is equivalent to the symmetric monoidal category $\DgSpc{(\Glo^{\op})^\amalg}$.
\end{definition}

\begin{remark}\label{rem-models-are-equivalent}
	In~\cite{Schwede20}, the author proves that the underlying $\infty$-category of orthogonal spaces equipped with the positive global model structure of~\cite{Schwede18}*{Proposition 1.2.23} is equivalent to presheafs on a topologically enriched category $\OO_{gl}$. Furthermore, in \cite{korschgen} it is shown that $\OO_{gl}$ is Dwyer-Kan equivalent to $\Glo$. Therefore the two models of global spaces define the same $\infty$-category. In fact, the two $\infty$-categories are symmetric monoidal equivalent since they are both endowed with the cartesian monoidal structure, see~\cite{Schwede18}*{Theorem 1.3.2}.
\end{remark}

Before stating and proving the main result of this section, we need some preparation.
In the following we fix an $\infty$-category $\CC$ with an orthogonal factorization system $(\CC^L,\CC^R)$. For a detailed discussion and a definition of orthogonal factorization systems on $\infty$-categories, the reader may consult \cite{HTT}*{Section 5.2.8}. We write $\CC^L$ for the left class of maps and $\CC^R$ for the right class. We will denote edges in $\CC^L$ by $\twoheadrightarrow$ and edges in $\CC^R$ by $\rightarrowtail$. 

\begin{proposition}\label{prop:ArRcocart} 
	Let $\CC$ be an $\infty$-category equipped with an orthogonal factorization $(\CC^L, \CC^R)$. 
	Write $\Ar_R(\CC)$ for the full subcategory of the arrow category of $\CC$ spanned by the edges 
	in $\CC^R$. 
	Then the target projection $t\colon\Ar_R(\CC)\rightarrow \CC$ is a cocartesian fibration. Furthermore an edge in $\Ar_R(\CC)$ is $t$-cocartesian if and only if it is of the form 
	\begin{equation}\label{cocartesian}
		\begin{tikzcd}
			X \arrow[d, tail] \arrow[r, two heads] & Y \arrow[d, tail] \\
			X' \arrow[r]                           & Y'  .             
		\end{tikzcd}
	\end{equation}
\end{proposition}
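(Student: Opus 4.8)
The plan is to produce explicit cocartesian lifts of the target projection and then recognise them as the squares~\eqref{cocartesian}. Since $\Ar_R(\CC)$ is a full subcategory of $\Fun(\Delta^1,\CC)$, I will compute all mapping spaces there, using the standard identification $\Map_{\Fun(\Delta^1,\CC)}(f,g)\simeq\Map_\CC(x,z)\times_{\Map_\CC(x,w)}\Map_\CC(y,w)$ for $f\colon x\to y$ and $g\colon z\to w$, with structure maps given by postcomposition with $g$ and precomposition with $f$. Given $f\colon x\rightarrowtail y$ in $\Ar_R(\CC)$ and an edge $\beta\colon y\to y'$ of $\CC$, I would factor the composite $\beta f$ (using the functorial factorization attached to the orthogonal factorization system) as $x\xrightarrow{p}x'\xrightarrow{f'}y'$ with $p\in\CC^L$ and $f'\in\CC^R$; the factorization together with the homotopy $f'p\simeq\beta f$ is precisely a commuting square $\chi\colon f\to f'$ over $\beta$ of the shape~\eqref{cocartesian}, and since $f'\in\CC^R$ this $\chi$ is a morphism of $\Ar_R(\CC)$. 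The claim is then that $\chi$ is $t$-cocartesian; granting this, $t$ admits all required cocartesian lifts and is a cocartesian fibration.

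To verify that $\chi$ is $t$-cocartesian I would apply the mapping-space criterion for cocartesian edges: for every $g\colon z\rightarrowtail w$ in $\Ar_R(\CC)$ one must show that the comparison map
\[\Map_{\Ar_R(\CC)}(f',g)\longrightarrow\Map_{\Ar_R(\CC)}(f,g)\times_{\Map_\CC(y,w)}\Map_\CC(y',w)\]
is an equivalence. Expanding via the formula above, the target simplifies to $\Map_\CC(x,z)\times_{\Map_\CC(x,w)}\Map_\CC(y',w)$, where the map $\Map_\CC(y',w)\to\Map_\CC(x,w)$ is precomposition with $\beta f\simeq f'p$, while the source is $\Map_\CC(x',z)\times_{\Map_\CC(x',w)}\Map_\CC(y',w)$. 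The one non-formal ingredient is the orthogonality $p\perp g$ (valid because $p\in\CC^L$ and $g\in\CC^R$), which says exactly that the square of spaces with vertices $\Map_\CC(x',z)$, $\Map_\CC(x,z)$, $\Map_\CC(x',w)$, $\Map_\CC(x,w)$ — horizontals given by precomposition with $p$, verticals by postcomposition with $g$ — is cartesian. Substituting this description of $\Map_\CC(x',z)$ into the source and collapsing the resulting iterated pullback over $\Map_\CC(x',w)$ (using $p^*\circ(f')^*\simeq(f'p)^*$) identifies the source with the target, and one checks that the comparison map realises this identification. I expect this fibre-product bookkeeping to be the main obstacle; everything around it is formal.

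It then remains to identify the $t$-cocartesian edges. A square of the shape~\eqref{cocartesian} carries, by commutativity, a homotopy exhibiting its diagonal as a composite of a $\CC^L$-map with a $\CC^R$-map, so by essential uniqueness of $(\CC^L,\CC^R)$-factorizations it coincides with the lift $\chi$ built from its own bottom edge, hence is cocartesian. Conversely, $t$-cocartesian lifts of a given edge with prescribed source are unique up to equivalence, and $\CC^L$, $\CC^R$ are closed under equivalences, so every $t$-cocartesian edge is equivalent to such a $\chi$ and therefore of the shape~\eqref{cocartesian}. As an alternative to this direct argument, one could instead invoke Lemma~\ref{lem:adjunction-to-the-lax-limit} applied to the functor $y\mapsto\CC_{/y}$ classifying the target fibration $\Ar(\CC)\to\CC$ together with the fibrewise reflective subcategories $\CC^R_{/y}\subseteq\CC_{/y}$ (whose reflector sends a morphism to the right factor of its factorization), the only point requiring work being that the postcomposition functors preserve the associated local equivalences — which again follows from uniqueness of factorizations.
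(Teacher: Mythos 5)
Your argument is correct, and it is essentially the paper's argument in a different guise. The paper unwinds the definition directly via $(2,0)$-horn filling: given the candidate square $\chi$ and a further arrow, the extensions of the horn to a $2$-simplex in $\Ar_R(\CC)$ are identified with diagonal fillers of a lifting square, and the contractibility of the filler space for all such squares is exactly the condition that the top edge lies in $\CC^L$. You instead verify the mapping-space criterion of \cite{HTT}*{2.4.4.3}, and your fibre-product computation is exactly the translation of that same lifting square into pullback language — the orthogonality of $p\perp g$ being quoted as a cartesian square of mapping spaces. The two routes are interchangeable; yours is slightly more explicit in the bookkeeping, the paper's slightly terser. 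One small point you glossed over (and the paper also leaves implicit) is that $t\colon\Ar_R(\CC)\to\CC$ is an inner fibration, which is needed to apply the mapping-space criterion; this is automatic because $\Ar_R(\CC)$ is a full subcategory of $\Ar(\CC)$ and $t\colon\Ar(\CC)\to\CC$ is already a cocartesian fibration. The alternative you sketch via Lemma~\ref{lem:adjunction-to-the-lax-limit}, applied to the reflective localizations $\CC^R_{/y}\subseteq\CC_{/y}$ with reflector given by factorization, is a genuinely different and more structural argument; it is not what the paper does, but it is valid, and it has the advantage of packaging the construction as a special case of a lemma already proved in the paper. Its cost is that one must separately check the compatibility of the fibrewise reflectors with pushforward, which again comes down to uniqueness of factorizations.
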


\begin{proof}
	Consider an edge in $\Ar_R(\CC)$:
	\begin{center}
		\begin{tikzcd}
			X \arrow[d, tail] \arrow[r] & Y \arrow[d, tail] \\
			X' \arrow[r]                           & Y'    .           
		\end{tikzcd}
	\end{center}
	This is cocartesian if and only if, given a 2-simplex in $\CC$ and a (2,0)-horn in $\Ar_R(\CC)$, there is a contractible choice of extensions. This corresponds to showing that given a diagram in $\CC$
	\begin{center}
		\begin{tikzcd}
			X \arrow[d, tail] \arrow[r] \arrow[rr, bend left] & Y \arrow[d, tail] \arrow[rd] & Z \arrow[d, tail] \\
			X' \arrow[r]                                                 & Y' \arrow[r]                 & Z',              
		\end{tikzcd}
	\end{center} its extensions to a 2-simplex in $\Ar_R(\CC)$ form a contractible space. However, completing this diagram is equivalent to supplying an edge $Y\rightarrow Z$ which makes the diagram 
	\begin{center}
		\begin{tikzcd}
			X \arrow[d] \arrow[r] & Z \arrow[d, tail] \\
			Y \arrow[r] \arrow[ru, dotted]   & Z'               
		\end{tikzcd}
	\end{center} commute. There is a contractible choice of such factorizations if and only if $ X\rightarrow Y$ is in $\CC^L$. This shows that an edge is $t$-cocartesian if and only if it is of the form of Equation (\ref{cocartesian}).
	Next, fix an edge in $\CC$ and a lift of its source in $\Ar_R(\CC)$. This corresponds to a diagram 
	\begin{center}
		\begin{tikzcd}
			X \arrow[d, tail] \arrow[rd] &    \\
			X' \arrow[r]                 & Y'.
		\end{tikzcd}
	\end{center} Factorizing the composite $X\rightarrow Y'$ extends this to an edge 
	\begin{center}
		\begin{tikzcd}
			X \arrow[d, tail] \arrow[rd] \arrow[r, two heads] & Y \arrow[d, tail] \\
			X' \arrow[r]                                      & Y'               
		\end{tikzcd}
	\end{center} in $\Ar_R(\CC)$, which is $t$-cocartesian.
\end{proof}

We record the following fact for later reference.

\begin{lemma}\label{lem:adjointsource}
	The constant functor $s_0\colon \CC\rightarrow \Ar_R(\CC)$ is a fully faithful left adjoint to the source functor $s\colon\Ar_R(\CC)\rightarrow \CC$.
\end{lemma}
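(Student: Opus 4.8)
The plan is to verify the adjunction $s_0 \dashv s$ directly from the definitions, using the explicit description of morphisms in $\Ar_R(\CC) = \Fun^R(\Delta^1,\CC)$ and the fact that $\Ar_R(\CC)$ is a full subcategory of $\Ar(\CC) = \Fun(\Delta^1,\CC)$. First I would observe that $s_0$ is indeed well-defined: the constant functor $\Delta^1 \to \CC$ at any object $X$ is the identity arrow $\mathrm{id}_X$, which lies in $\CC^R$ since orthogonal factorization systems contain all equivalences; hence $s_0$ lands in $\Ar_R(\CC)$. It is fully faithful because the mapping spaces $\Map_{\Ar_R(\CC)}(s_0 X, s_0 Y) \simeq \Map_{\Ar(\CC)}(\mathrm{id}_X,\mathrm{id}_Y)$ can be computed, via the pullback description of $\Ar(\CC) = \CC \times_{\Fun(\partial\Delta^1,\CC)} \Fun(\Delta^1, \CC)$ or just as a mapping space in a functor category, as $\Map_\CC(X,Y)$ (a square with both verticals identities is the same data as its top edge).

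Next I would establish the adjunction. The cleanest route is to exhibit the unit: for $X \in \CC$ we have $s_0 s_0 X = s_0 X$ and $s(s_0 X) = X$, and the unit $\eta_X \colon X \to s s_0 X = X$ is the identity. For a general object $(f\colon A \to B) \in \Ar_R(\CC)$, we need the counit $\epsilon_f \colon s_0 s f = s_0 A \to f$, which is the morphism in $\Ar(\CC)$ given by the square with top edge $\mathrm{id}_A$ and bottom edge $f$:
\[
\begin{tikzcd}
A \arrow[d, equal] \arrow[r, "\mathrm{id}"] & A \arrow[d, "f"] \\
A \arrow[r, "f"'] & B.
\end{tikzcd}
\]
Then I would check the triangle identities, which are immediate since all the relevant units are identities. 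Alternatively, and perhaps more robustly, one can invoke the criterion of \cite{HTT}*{Proposition 5.2.4.2} (or \cite{HTT}*{Proposition 5.2.2.8}): a functor $s$ admits a left adjoint $s_0$ provided that for every $X \in \CC$ the functor $\Map_{\Ar_R(\CC)}(s_0 X, -) \colon \Ar_R(\CC) \to \Spc$ is equivalent to $\Map_\CC(X, s(-))$. Concretely, for $(f\colon A\to B)\in\Ar_R(\CC)$ one computes $\Map_{\Ar_R(\CC)}(s_0 X, f) = \Map_{\Ar(\CC)}(\mathrm{id}_X, f)$, and since $\Ar(\CC) \to \CC\times\CC$ (source, target) exhibits $\Ar(\CC)$ with $\Map_{\Ar(\CC)}((g\colon X\to X'),(f\colon A \to B))$ computed as the space of commuting squares, for the source object $\mathrm{id}_X$ this reduces to $\Map_\CC(X, A) = \Map_\CC(X, s(f))$, naturally in $f$.

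I expect the main obstacle to be bookkeeping the mapping-space computation in $\Ar(\CC)$ cleanly at the $\infty$-categorical level—specifically, justifying that a morphism in $\Ar(\CC)$ out of an identity arrow $\mathrm{id}_X$ is the same data as a morphism $X \to A$ in $\CC$, with all higher coherences. This is where one should cite the standard description of mapping spaces in arrow categories, e.g.\ the pullback square $\Map_{\Ar(\CC)}(g,f) \simeq \Map_\CC(X,A)\times_{\Map_\CC(X,B)}\Map_\CC(X',B)$; when $g = \mathrm{id}_X$, the map $\Map_\CC(X,B)\to\Map_\CC(X,B)$ being pulled back along is the identity, so the fibre product collapses to $\Map_\CC(X,A)$. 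Everything else is formal, and fullness/faithfulness of $s_0$ follows from the same computation applied with $f = \mathrm{id}_Y$. No use of the factorization system beyond "$\mathrm{id}_X \in \CC^R$" is needed for this lemma, so the proof is short.
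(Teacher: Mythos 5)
Your proof is correct. The paper states Lemma~\ref{lem:adjointsource} without proof, so there is no argument in the text to compare against; the corepresentability computation you give via the pullback description of mapping spaces in $\Ar(\CC)$, combined with the criterion that a functor admits a left adjoint if and only if $\Map_\CC(X,s(-))$ is corepresentable for each $X$, is precisely the standard way to fill this gap. Note that the preliminary unit/counit/triangle-identities sketch is not by itself a valid construction of an adjunction of $\infty$-categories (triangle identities at the level of 1-morphisms do not encode the higher coherences), so the corepresentability route you then offer is the one that actually closes the argument; also, "$s_0 s_0 X = s_0 X$" should read "$s s_0 X = X$", though this is clearly a typo and does not affect the substance.
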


\begin{construction}\label{cons-functoriality_ARR}
	Suppose we are in the setting of Proposition~\ref{prop:ArRcocart}. Straightening the cocartesian fibration $t\colon\Ar_R(\CC)\rightarrow \CC$ gives a functor 
	\[
	\CC^R_{/-}\colon\CC\rightarrow \Cat_\infty.
	\]
	To justify our notation let us unravel the effect of this functor. By definition, the evaluation of $\CC^R_{/-}$ at an object $X\in \CC$ is given by $\Ar_R(\CC)_X$; the fibre of $t$ at $X$. By construction this is the full subcategory of $\CC_{/X}$ on the objects $C\rightarrowtail X$ in $\CC^R$. A priori an edge in this full subcategory is given by a diagram
	\begin{center}
		\begin{tikzcd}
			X \arrow[r] \arrow[rd, tail] & X' \arrow[d, tail] \\
			& Y.                 
		\end{tikzcd}
	\end{center} However the edge $X\rightarrow X'$ is necessarily also in $\CC^R$ by \cite{HTT}*{Proposition 5.2.8.6(3)}, and therefore $\Ar_R(\CC)_X$ is in fact equivalent to $\CC^R_{/X}$. Next consider an edge $f\colon Y\rightarrow Y'$. Then the induced map $f_*\colon\CC^R_{/Y}\rightarrow \CC^R_{/Y'}$ sends an object $X\rightarrowtail Y$ to an object $X'\rightarrowtail Y'$ such that the following diagram commutes:
	\begin{center}
		\begin{tikzcd}
			X \arrow[d, tail] \arrow[r, two heads] & X' \arrow[d, tail] \\
			Y \arrow[r, "f"]                       & Y' .        
		\end{tikzcd}
	\end{center}
	In particular if $f\in \CC^R$ this is nothing but the standard functoriality of the slices $\CC^R_{/-}$. Therefore the functor $\CC^R_{/-}\colon\CC\rightarrow \Cat_\infty$ extends the functoriality of the slices of $\CC^R$ to all of $\CC$.
\end{construction}

\begin{proposition}\label{prop:partially-lax-colimit-is-C}
	Let $\CC$ be an $\infty$-category equipped with a factorization system $(\CC^L, \CC^R)$.
	The partially lax colimit of $(-)^{\op}\circ \CC^R_{/-} \colon \CC\rightarrow \Cat_\infty$ with respect to the marking $\CC^R\subset \CC$ is equivalent to $\CC^{\op}$.
\end{proposition}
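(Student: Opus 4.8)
The plan is to compute the partially lax colimit via the explicit model of Theorem~\ref{thm-lax-limit-section}(b). Write $F\coloneqq(-)^{\op}\circ\CC^R_{/-}\colon\CC\to\Cat_\infty$, so that $\laxcolimdag F\simeq|\Uncc{F}|$ with the marking of Example~\ref{ex-marking}(c). Since $(-)^{\op}$ is an involution, Construction~\ref{cons-functoriality_ARR} identifies the opposite $\Uncc{F}^{\op}$ with the cocartesian unstraightening of $(-)^{\op}\circ F\simeq\CC^R_{/-}$, namely the target projection $t\colon\Ar_R(\CC)\to\CC$; equivalently $\Uncc{F}\simeq\Ar_R(\CC)^{\op}$, and this carries the marking of Example~\ref{ex-marking}(c) to the opposite of the class $\mathcal{W}$ of $t$-cocartesian edges of $\Ar_R(\CC)$ lying over $\CC^R$. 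As localization commutes with passing to opposites, we are reduced to proving
\[
\Ar_R(\CC)[\mathcal{W}^{-1}]\;\simeq\;\CC.
\]

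To prove this I would use the source projection $s\colon\Ar_R(\CC)\to\CC$. First, $s$ inverts every edge in $\mathcal{W}$: by Proposition~\ref{prop:ArRcocart} a $t$-cocartesian edge over some $f\in\CC^R$ out of $[C\rightarrowtail X]$ is a square of the form~(\ref{cocartesian}) whose top edge $C\twoheadrightarrow C'$ is the left factor of the $(\CC^L,\CC^R)$-factorization of the composite $C\rightarrowtail X\xrightarrow{f}Y$; since this composite already lies in $\CC^R$, that left factor is an equivalence, and it is exactly the image of our edge under $s$. Hence $s$ descends to a functor $\bar s\colon\mathcal{L}\to\CC$, where $\mathcal{L}\coloneqq\Ar_R(\CC)[\mathcal{W}^{-1}]$.

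It remains to check that $\bar s$ is an equivalence. By Lemma~\ref{lem:adjointsource} the constant section $s_0\colon\CC\to\Ar_R(\CC)$ is fully faithful with $s\circ s_0\simeq\mathrm{id}_\CC$; postcomposing with the localization gives $\sigma\colon\CC\to\mathcal{L}$ with $\bar s\circ\sigma\simeq\mathrm{id}_\CC$. Moreover $\sigma$ is essentially surjective: for any object $[C\rightarrowtail X]$ of $\Ar_R(\CC)$, the $t$-cocartesian lift starting at $s_0(C)=[C\xrightarrow{\mathrm{id}}C]$ of the edge $C\rightarrowtail X$ — which lies in $\CC^R$, hence is marked — has target $[C\rightarrowtail X]$ by the description in Construction~\ref{cons-functoriality_ARR}, and it belongs to $\mathcal{W}$, so $[C\rightarrowtail X]\simeq\sigma(C)$ in $\mathcal{L}$. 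An essentially surjective functor admitting a retraction is an equivalence: from $\bar s\circ\sigma\simeq\mathrm{id}$ and the essential surjectivity of $\sigma$ one gets $\sigma\circ\bar s\simeq\mathrm{id}_\mathcal{L}$ as well. Therefore $\mathcal{L}\simeq\CC$, and consequently $\laxcolimdag\big((-)^{\op}\circ\CC^R_{/-}\big)\simeq\CC^{\op}$.

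The genuine mathematical input here is light — it is Proposition~\ref{prop:ArRcocart} together with the adjunction of Lemma~\ref{lem:adjointsource} — so I expect the main difficulty to be bookkeeping: correctly matching the conventions for $\Uncc{-}$ and the induced markings of Example~\ref{ex-marking}(c), and keeping track of the opposite $\infty$-categories, so that the localization is taken at precisely the edges collapsed by $s$ (equivalently, so that the appearance of $\CC^{\op}$ rather than $\CC$ is traced back to the outer $(-)^{\op}$ in the diagram). An alternative, slightly more hands-on route would bypass $\Uncc{-}$ altogether and compute $\laxcolimdag F$ directly from Definition~\ref{def-lax-lim}(b) as a colimit over $\Tw(\CC)$, using that the target leg $\Tw(\CC)\to\CC^{\op}$ is cofinal and that the colimit of the slice functor $\CC^R_{/-}$ is again $\CC$ by the same $s_0\dashv s$ argument.
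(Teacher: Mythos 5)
Your reduction to showing $\Ar_R(\CC)[\mathcal{W}^{-1}]\simeq\CC$ agrees with the paper's, as does the observation that $s$ inverts $\mathcal{W}$. The gap is the final step: the claim that an essentially surjective functor admitting a retraction is an equivalence is false. Take $\CC=\ast$ and $\mathcal{L}=BG$ for a nontrivial group $G$; the canonical functor $\sigma\colon\ast\to BG$ is essentially surjective and the unique functor $\bar s\colon BG\to\ast$ satisfies $\bar s\sigma\simeq\mathrm{id}$, yet $\sigma$ is not an equivalence. So $\bar s\sigma\simeq\mathrm{id}$ together with essential surjectivity of $\sigma$ does not, on its own, give $\sigma\bar s\simeq\mathrm{id}_{\mathcal L}$.

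The fix is to use the full adjunction $s_0\dashv s$ of Lemma~\ref{lem:adjointsource}, not merely $ss_0\simeq\mathrm{id}$. At an object $A=[C\rightarrowtail X]$ of $\Ar_R(\CC)$ the counit $\epsilon_A\colon s_0s(A)=[C\xrightarrow{\mathrm{id}}C]\to A$ is precisely the $t$-cocartesian edge over $C\rightarrowtail X\in\CC^R$ that you produce to verify essential surjectivity; in particular $\epsilon_A\in\mathcal{W}$. Thus $\epsilon\colon s_0s\to\mathrm{id}$ is a natural transformation with every component in $\mathcal{W}$, so $L\epsilon\colon Ls_0s=\sigma\bar sL\to L$ is a pointwise equivalence, and by the universal property of the localization $L$ it descends to a natural equivalence $\sigma\bar s\simeq\mathrm{id}_{\mathcal L}$. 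Combined with $\bar s\sigma\simeq\mathrm{id}_\CC$ this shows $\sigma$ and $\bar s$ are inverse equivalences. In short, the essential-surjectivity observation has to be promoted to a natural transformation, and the counit is exactly that promotion. The paper closes the argument a little differently: it enlarges $\mathcal{W}$ (its $M$) under two-out-of-three to the class $M'$ of squares whose top edge is an equivalence, notes $M'$ is exactly the class of $s$-equivalences, and appeals to the general fact that a functor with a fully faithful left adjoint exhibits its target as the localization at its own equivalences. Both completions rest on the adjunction of Lemma~\ref{lem:adjointsource}; yours, once corrected, invokes its counit, while the paper's invokes its consequence for classes of morphisms.
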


\begin{proof}
	Recall that the partially lax colimit of a functor $F\colon\CC\rightarrow \Cat_\infty$ is the localization of $\Unct{F}$ at the cartesian edges which live above marked edges, see Theorem~\ref{thm-lax-limit-section}(b). In the case $F=(-)^{\op}\circ \CC^R_{/-}$, we observe that $\Unct{F} \simeq \Unco{\CC^R_{/-}}^{\op}$ and so we conclude that the partially lax colimit of $F$ is equal to the opposite of $\Ar_R(\CC)$ localized at the edges of the form
	\begin{center}
		\begin{tikzcd}
			X \arrow[d, tail] \arrow[r, two heads] & X' \arrow[d, tail] \\
			Y \arrow[r, tail]                      & Y'.       
		\end{tikzcd}
	\end{center}
	However note that because edges in $\CC^R$ are left cancellable, $X\rightarrow X'$ is not only in $\CC^L$ but also in $\CC^R$. Therefore $X\rightarrow X'$ is in fact an equivalence. We will write $M$ for this collection of edges. We claim that localizing at the edges of $M$ is equivalent to localizing at the larger class of edges $M'$ of the form 
	\begin{center}
		\begin{tikzcd}
			X \arrow[d, tail] \arrow[r, "\sim"] & X' \arrow[d, tail] \\
			Y \arrow[r]                      & Y',               
		\end{tikzcd}
	\end{center} where we do not impose any conditions on the edge $Y\rightarrow Y'$. To see this note that such an edge in $M'$ fits into the following diagram:
	\begin{center}
		\begin{tikzcd}
			X \arrow[r, "\sim"] \arrow[d, "\sim"']          & X' \arrow[d, tail] \arrow[r, equals] & X' \arrow[d, tail] \\
			X' \arrow[r, tail] \arrow[rr, tail, bend right] & Y \arrow[r]                         & Y' .
		\end{tikzcd}
	\end{center}
	Both the first edge and the composite are in $M$, and so therefore $M'$ is contained in the two-out-of-three closure of $M$. So it is enough to calculate the localization of $\Ar_R(\CC)$ at $M'$. Note that the source functor $s\colon\Ar_R(\CC)\rightarrow \CC$ sends an edge to an equivalence if and only if it is in $M'$. Then Lemma~\ref{lem:adjointsource} implies that $\CC$ is a Bousfield colocalization of $\Ar_R(\CC)$ at $M'$. So we conclude that the partially lax colimit of $^{\op}\circ \CC^R_{/-}$ is equivalent to $\CC^{\op}$, finishing the proof.
\end{proof}

\begin{example}
	There are two extreme cases of the previous result. If $\CC^R = \CC,\, \CC^L = \iota \CC$, then
	\[
	\mathrm{colim}((\CC_{/-})^{\op}\colon\CC\rightarrow \mathrm{Cat}_\infty) \cong \CC^{\op}
	\]
	If $\CC^R = \iota \CC,\, \CC^L = \CC$, then 
	\[
	\laxcolim(\iota \CC_{/-} \colon\CC\rightarrow \mathrm{Cat}_\infty) \cong \CC^{\op}.
	\]
\end{example}

Now that we have introduced the main tools we need, we can build our functor and compute its partially lax limit. This relies on two important observations. The first key insight is the following, which was first stated in \cite{GH} and originally proven as ~\cite{Rezk}*{Example 3.5.1}.

\begin{lemma}\label{lem:Orbslices}
	For all compact Lie group $G$, the assignment $G/K \mapsto (K \hookrightarrow G)$ defines 
	an equivalence $\OO_G\simeq \Orb_{/G}$.
\end{lemma}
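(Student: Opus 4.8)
The plan is to exhibit the equivalence $\OO_G \simeq \Orb_{/G}$ by unwinding both sides using the description of the mapping spaces in $\Glo$ from Proposition~\ref{prop:glo_hom_spaces}, and then checking that the obvious candidate functor is fully faithful and essentially surjective. First I would recall that $\OO_G$ is (the $\infty$-category associated to) the topological category of $G$-orbits $G/K$ with morphism spaces $\Map_{\OO_G}(G/K, G/L) = \Map_G(G/K, G/L)$, which by the usual computation is homeomorphic to the space $(G/L)^K = \{gL : g^{-1}Kg \subseteq L\}$. On the other side, an object of $\Orb_{/G}$ is an injective homomorphism $K \hookrightarrow G$, i.e. (up to conjugacy) a closed subgroup, and the mapping space from $(K \hookrightarrow G)$ to $(L \hookrightarrow G)$ is the fibre of $\Map_\Orb(K,L) \to \Map_\Glo(K,G)$ over the chosen point. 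Using Proposition~\ref{prop:glo_hom_spaces} and Proposition~\ref{prop:Glo_composition} this fibre decomposes over the set of $L$-conjugacy classes $[\beta]$ of injections $\beta\colon K \hookrightarrow L$ whose composite $K \hookrightarrow L \hookrightarrow G$ is conjugate to the original inclusion, with the path component indexed by $[\beta]$ being $BC_L(\beta K)$ quotiented appropriately by the centralizer data coming from $G$.

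The key step is then to identify this with $(G/L)^K$. The plan is to argue as follows: a point $gL \in (G/L)^K$ with $g^{-1}Kg \subseteq L$ gives an injection $\beta_g\colon K \to L$, namely $k \mapsto g^{-1}kg$, together with the witnessing data that $\iota_L \circ \beta_g = c_g \circ \iota_K$ as maps $K \to G$; conversely such data recovers $gL$ up to the ambiguity of $g$, which is exactly an element of $C_G(K)$ acting, and $g$ itself is only well-defined modulo $L$ on the target. Matching the $\pi_0$'s reduces to the standard bijection between $\{gL \in G/L : g^{-1}Kg \subseteq L\}/K$-conjugacy and conjugacy classes of subconjugate embeddings, and matching the higher homotopy reduces to identifying $C_L(\beta_g(K))$-torsor data with the fibre of $BC_L(\beta_g) \to BC_G(K)$ — here one uses that the centralizer $C_G(g^{-1}Kg)$ and $C_L(g^{-1}Kg)$ fit into the relevant fibre sequence. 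I would package this as: the forgetful functor $\OO_G \simeq \Orb_{/G} \to \Glo$ sending $G/K$ to $K$ is precisely the composite $\OO_G \hookrightarrow \Glo$, and then it suffices to check the comparison functor induces equivalences on mapping spaces, which is the computation just sketched.

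Concretely, the cleanest route is probably to invoke the identification of $\Orb_{/G}$ with a slice and use that the target fibration $t\colon \Ar_{\mathrm{inj}}(\Glo) \to \Glo$ restricted over $G$ has fibre $\Orb_{/G}$, then compute this fibre directly from Proposition~\ref{prop:glo_hom_spaces}: the space of objects is $\coprod_{[K] \leq G}$ (one component per conjugacy class of closed subgroups), and the mapping space from $(K\hookrightarrow G)$ to $(L\hookrightarrow G)$ is the homotopy fibre of $\Glo(K,L) \to \Glo(K,G)$ over the inclusion. Using the decompositions $\Glo(K,L) \simeq \coprod_{[\beta]} BC_L(\beta)$ and $\Glo(K,G) \simeq \coprod_{[\alpha]} BC_G(\alpha)$ together with Proposition~\ref{prop:Glo_composition}, this homotopy fibre is $\coprod_{[\beta]\colon \iota_L\beta \sim \iota_K} \mathrm{hofib}(BC_L(\beta) \to BC_G(\iota_K))$, and since $C_L(\beta) = C_G(\iota_K) \cap L$ maps to $C_G(\iota_K)$, the homotopy fibre is $C_G(\iota_K)/C_L(\beta)$, a discrete set. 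Summing over $[\beta]$ this is exactly $(G/L)^K$ as a set, and its $K$-set / discrete structure matches $\Map_G(G/K, G/L)$.

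The main obstacle I expect is bookkeeping the conjugation/centralizer data carefully enough to see that the comparison is an equivalence on the full homotopy type of mapping spaces, not merely a bijection on $\pi_0$ — in particular verifying that the relevant mapping spaces in $\Orb_{/G}$ really are homotopy discrete (so that $\OO_G$, a $1$-category enriched in discrete orbit spaces, can match it) and that the $BC_L(\beta)$-components collapse correctly after taking the fibre over a point of $\Glo(K,G)$. Once that discreteness is established the remaining identification is the classical statement for topological groups, and the functoriality (that the equivalence is natural, i.e. respects composition) follows from Proposition~\ref{prop:Glo_composition} since composition in $\Orb_{/G}$ is induced by composition in $\Glo$. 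Since the excerpt cites this as \cite{Rezk}*{Example 3.5.1} and \cite{GH}, I anticipate the actual proof in the paper will simply reduce to one of these references or to Proposition~\ref{prop:Glofactsystem}; my plan above is the self-contained version.
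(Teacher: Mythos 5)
Your proposed route — decomposing both mapping spaces into path components via Propositions~\ref{prop:glo_hom_spaces} and~\ref{prop:Glo_composition} and then comparing fibres component by component — is genuinely different from the paper's. The paper instead observes that $\OO_G(G/H,G/K)\cong\{g\mid c_g(H)\subseteq K\}/K$ is a \emph{free} $K$-quotient, hence a homotopy orbit space $\{g\mid c_g(H)\subseteq K\}_{\h K}$; it then compares the homotopy pullback square defining $\Map_{\Orb_{/G}}(i,j)$ with this space, and by the identity $(G\times_K X)_{\h G}\simeq X_{\h K}$ and the fact that $(-)_{\h G}$ preserves homotopy pullbacks, reduces everything to checking that a concrete square
\[
G\times_K\{g\mid c_g(H)\subseteq K\}\to G\times_K\Hom(H,K)\to\Hom(H,G)\leftarrow G
\]
is a literal pullback of topological spaces with the bottom map a fibration. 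That trick avoids decomposing into components or tracking conjugation data at all.

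The serious gap in your plan is the repeated claim that the relevant mapping spaces are \emph{discrete}. You write that $C_G(\iota_K)/C_L(\beta)$ is ``a discrete set,'' that $\sum_{[\beta]}$ of these gives ``$(G/L)^K$ as a set,'' and most tellingly that the task is to verify the mapping spaces in $\Orb_{/G}$ are ``homotopy discrete (so that $\OO_G$, a $1$-category enriched in discrete orbit spaces, can match it).'' None of this is true for compact Lie groups. Already for $G=L=K=S^1\subseteq G=T^2$ one has $(G/L)^K\cong S^1$, a positive-dimensional manifold, and more generally the homogeneous spaces $C_G(K)/C_L(\beta)$ appearing as fibres of $BC_L(\beta)\to BC_G(\iota_K)$ are compact manifolds of arbitrary positive dimension. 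The category $\OO_G$ is a topologically enriched category, not a $1$-category with discrete homs. If you pursued the plan literally — trying to establish discreteness — you would get stuck, because the statement you want to establish is false. The fix is not to verify discreteness but to show the comparison is a \emph{homeomorphism} (or weak equivalence) of the mapping \emph{spaces}; your fibre computation can be made to work in that form. There is also a smaller bookkeeping slip: $C_L(\beta)$ is $C_G(\beta(K))\cap L=g^{-1}C_G(K)g\cap L$ for $\beta=c_g|_K$, so it only includes into $C_G(\iota_K)$ after conjugating by $g$; the identification $\mathrm{hofib}(BC_L(\beta)\to BC_G(\iota_K))\simeq C_G(K)/(gC_L(\beta)g^{-1})$ is then correct, but the conjugation must be carried. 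This is precisely the bookkeeping the paper's pullback-square argument sidesteps.
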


\begin{proof}
	We observe that the spaces $\OO_G(G/H,G/K)$ are homeomorphic to the space $\{g\in G\mid c_g(H)\subseteq K\}/K$. The latter space is equivalent to the homotopy orbits $\{g\in G\mid c_g(H)\subseteq K\}_{\h K}$ as the $K$-space is free, see for example \cite{korschgen}*{Theorem A.7}. Therefore we can define a functor $F'\colon \OO_G\rightarrow \Glo$, which sends $G/H$ to $H$, and on mapping spaces acts as homotopy orbits of the $K$-equivariant inclusion
	\[\{g\in G\mid c_g(H)\subseteq K\} \rightarrow \hom(H,K),\quad g\mapsto [c_g\colon H\rightarrow K].\] Note that the $\infty$-category $\OO_G$ has a final object $G/G$, and therefore $F'$ induces a functor $ \OO_G\rightarrow \Glo_{/G}$, which in fact factors through $\Orb_{/G}$. We claim that the induced functor $F\colon \OO_G \to \Orb_{/G}$ is an equivalence of $\infty$-categories. First note that $F$ is clearly essentially surjective. To deduce that the functor is fully faithful pick two objects $G/H$ and $G/K$ which we identify with inclusions $i\colon H\hookrightarrow G$ and $j\colon K\hookrightarrow G$. Recall that the mapping space between $G/H$ and $G/K$ is empty if and only if $H$ is not subconjugate to $K$. In this case the mapping space in $\Orb/G$ between $i$ and $j$ is also empty. Now suppose that this is not the case. Consider the square
	\[\begin{tikzcd}
		\{g\in G\mid c_g(H)\subseteq K\}_{\h K}\ar[r]\ar[d] & \Hom(H,K)_{\h K}\ar[d]\\
		\ast\ar[r] & \Hom(H,G)_{\h G}.
	\end{tikzcd}\]
	To prove $F$ is fully faithful it suffices to prove that this square is homotopy cartesian. For every $K$-space $X$, $(G\times_K X)_{\h G}\simeq X_{\h K}$, so that the above square is equivalent to
	\[\begin{tikzcd}
		\left(G\times_K \{g\in G\mid c_g(H)\subseteq K\}\right)_{\h G}\ar[r]\ar[d] & \left(G\times_K\Hom(H,K)\right)_{\h G}\ar[d]\\
		G_{\h G}\ar[r] & \Hom(H,G)_{\h G}.
	\end{tikzcd}\]
	Because taking homotopy orbits preserves homotopy pullback diagrams, it suffices to show that the square
	\[\begin{tikzcd}
		G\times_K \{g\in G\mid c_g(H)\subseteq K\}\ar[r]\ar[d] & G\times_K\Hom(H,K)\ar[d]\\
		G\ar[r] & \Hom(H,G)
	\end{tikzcd}\]
	is homotopy cartesian. In fact it is easily shown to be a pullback square of topological spaces, and the bottom horizontal arrow is a Serre fibration. To see this we note that the map $G\rightarrow \Hom(H,G)$ factors through one component of the decomposition of Proposition \ref{prop:glo_hom_spaces}, and therefore is equivalent to the quotient map $G\rightarrow G/C(H)$ which is a fibration by \cite{korschgen}*{Theorem A.9}.
\end{proof}
The second insight is the following, which was also observed in \cite{Rezk}. 

\begin{proposition}\label{prop:Glofactsystem}
	The subcategories $\Glo^{\sur}$ and $\Orb$ are the left and right classes respectively of an orthogonal factorization system on $\Glo$.
\end{proposition}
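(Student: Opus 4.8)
The plan is to verify directly the three defining conditions of an orthogonal factorization system, in the form recalled in \cite{HTT}*{Section~5.2.8}, for the pair $(\Glo^{\sur},\Orb)$: both classes are stable under retracts, every morphism in $\Glo^{\sur}$ is left orthogonal to every morphism in $\Orb$, and every morphism of $\Glo$ admits a factorization through a $\Glo^{\sur}$-morphism followed by an $\Orb$-morphism. That both classes contain all equivalences and are closed under composition is immediate, since these properties hold for the classes of surjective and injective continuous homomorphisms and $\Rep$ is the homotopy category of $\Glo$. Two of the three conditions are straightforward. For retract stability I would pass to $\Rep$: the projection $\Glo\to\Rep$ carries a retract diagram in $\Ar(\Glo)$ to one in $\Ar(\Rep)$, and an elementary manipulation of conjugacy classes of homomorphisms shows that a retract of a surjective (resp.\ injective) conjugacy class is again surjective (resp.\ injective). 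For the existence of factorizations, given $\alpha\colon H\to G$ represented by a continuous homomorphism, its image $\alpha(H)$ is compact, hence a closed---thus Lie---subgroup of $G$, and $H\twoheadrightarrow\alpha(H)\hookrightarrow G$ is the desired factorization.

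The real content is the orthogonality $\Glo^{\sur}\perp\Orb$. Given $h\colon A\to C$ in $\Glo^{\sur}$ and $k\colon B\to D$ in $\Orb$, after replacing the objects by equivalent ones I may assume that $h$ is the projection $A\to A/N$ for a closed normal subgroup $N\le A$ and that $k$ is the inclusion of a closed subgroup $B\le D$; the claim is then that the natural map
\[
\Map_{\Glo}(C,B)\longrightarrow \Map_{\Glo}(A,B)\times_{\Map_{\Glo}(A,D)}\Map_{\Glo}(C,D)
\]
is an equivalence. I would decompose all four mapping spaces into disjoint unions of classifying spaces of centralisers via Proposition~\ref{prop:glo_hom_spaces}, and identify the three maps using Proposition~\ref{prop:Glo_composition} for postcomposition with $k$ together with its evident precomposition analogue for precomposition with $h$. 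The two key points are: (i) on $\pi_{0}$, surjectivity of $h$ forces any class $[\delta]\in\Rep(C,D)$ compatible with a given $[\beta]\in\Rep(A,B)$ to factor through $B$ after conjugation, so that both sides are identified on components with $\Rep(C,B)$; and (ii) precomposing with the surjection $h$ does not change the image of a homomorphism, hence does not change its centraliser, so the map $\Map_{\Glo}(C,D)\to\Map_{\Glo}(A,D)$ restricts to an equivalence from each component onto the corresponding component of the target. Point (ii) makes the homotopy pullback degenerate: over the component indexed by $[\psi]\in\Rep(C,B)$ it collapses to $BC_{B}(\psi)$, and one checks that the comparison map restricts there to the identity, so the displayed map is an equivalence.

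The step I expect to be the main obstacle is exactly this orthogonality computation---concretely, keeping track of the component decompositions and of the maps between components, and recognising the degeneration of the homotopy pullback in (ii); the point that makes everything collapse is the elementary observation that precomposition with a surjection leaves centralisers of images untouched. Once the three conditions are established, the formal properties of factorization systems (\cite{HTT}*{Section~5.2.8}) yield that $(\Glo^{\sur},\Orb)$ is an orthogonal factorization system on $\Glo$, with left class ${}^{\perp}\Orb=\Glo^{\sur}$ and right class $(\Glo^{\sur})^{\perp}=\Orb$.
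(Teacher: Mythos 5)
Your proposal is correct and uses the same core idea as the paper: precomposition with a surjection $h$ does not change the image of a homomorphism, hence leaves centralizers unchanged, so $h^{*}\colon\Map_{\Glo}(C,J)\to\Map_{\Glo}(A,J)$ is an inclusion of path components for every $J$. The paper packages this more compactly: it invokes \cite{HTT}*{Proposition 5.2.8.17} so that only the orthogonality needs to be checked, and then observes that the empty-or-contractible fibres of the $\pi_{0}$-inclusion $h^{*}$ reduce the contractibility of the space of lifts to a purely set-theoretic lifting statement (the ``simple exercise in group theory''), sidestepping the explicit verification of retract closure and factorization existence as well as the componentwise computation of the homotopy pullback. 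Your route spells out these steps—retract stability via passage to $\Rep$, factorization through the image, and the degeneration of the pullback over each component $BC_{B}(\psi)$—which is more verbose but buys a self-contained check of the definition without leaning on the HTT criterion; the mathematical content of the orthogonality argument is the same in both.
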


\begin{proof}
	We will apply \cite{HTT}*{Proposition 5.2.8.17} to the subcategories $\Glo^{\sur}$ and $\Orb$. Clearly these subcategories contain all the equivalences and are closed under equivalences in $\mathrm{Ar}(\Glo)$. Therefore it suffices to prove that given a diagram:
	\begin{center}
		\begin{tikzcd}
			H \arrow[r] \arrow[d, "f"', two heads] & J \arrow[d, "g", hook] \\ 
			G \arrow[r] \arrow[ru, dotted] & K,
		\end{tikzcd}
	\end{center}
	the space of dotted diagonal fillers is contractible. As noted in \cite{HTT}*{Remark 5.2.8.3}, this is equivalent to the map
	$$\Map_{\Glo_ {H/}}(H \overset{f}{\rightarrow} G, H\rightarrow J) \overset{g} {\longrightarrow}\Map_{\Glo_ {H/}}(H \overset{f}{\rightarrow} G, H\rightarrow K)$$ being a weak homotopy equivalence for every lift of $g$ to a map in $\Glo_{H/}$ from $H\rightarrow J$ to $H\rightarrow K$. Proposition~\ref{prop:Glo_composition} shows that when $f$ is surjective the map 
	$$\Map_{\Glo}(G,J) \overset{f^*}{\rightarrow} \Map_{\Glo}(H,J)$$ is an inclusion of path-components for every $J$.
	
	Therefore the space $\Map_{\Glo_ {H/}}(H \overset{f}{\rightarrow} G, H\rightarrow J)$, being the homotopy fibre of this map, is either empty or contractible. Translating back this reduces our task to simply proving the existence of a lift in the square above. This is a simple exercise in group theory.
\end{proof}

\begin{remark}
	When we restrict to finite groups, $\Glo$ is equivalent to the full subcategory of $\Spc$ given by the connected 1-truncated spaces. In this case the orthogonal factorization system constructed above is a restriction of the standard mono/epi factorization system of any $\infty$-topos. However in the generality of compact Lie groups no such description applies.
\end{remark}

We are finally ready to construct the functor.

\begin{construction}\label{const:Orbslices}
	Applying Construction~\ref{cons-functoriality_ARR} to the orthogonal factorization system $(\Glo^{\sur},\Orb)$ yields a functor $\Orb_{/-}\colon\Glo\rightarrow \Cat_\infty$. Post-composing the opposite of this functor with $\mathrm{Fun}((-)^{\op},\Spc)\colon\Cat_\infty^{\op}\rightarrow \Cat_\infty$ gives the desired functor \[\Spc_\bullet\colon \Glo^{\op}\rightarrow \Cat_\infty.\] Also note that $\Spc_\bullet$ clearly factors through product preserving functors, and so enhances to a functor 
	\[\Spc_\bullet\colon \Glo^{\op}\rightarrow \Cat_{\infty}^\otimes,\] where each category $\DgSpc{(\Orb_{/G})^{\op}}$ is given the cartesian monoidal structure.
\end{construction} 

Lemma \ref{lem:Orbslices} and Elmendorf's theorem for $G$-spaces, see \Cref{ex-S_G}, imply that the value of $\Spc_\bullet$ at the object $G$ is equivalent to the $\infty$-category of $G$-spaces $\Spc_G$. However we owe the reader the following consistency check, which implies that the functor $\Spc_\bullet$ also has the expected functoriality.

\begin{proposition}\label{prop-restriction}
	Let $\alpha\colon H\rightarrow G$ be a continuous group homomorphism. 
	Then the following diagram commutes:
	\[
	\begin{tikzcd}
		\Fun((\Orb_{/G})^{\op}, \Spc) \arrow[r, "\simeq"] \arrow[d, "\Spc_{\alpha}"'] &\Spc_{G}  
		\arrow[d, "\alpha^*"] \\
		\Fun((\Orb_{/H})^{\op}, \Spc) \arrow[r, "\simeq"]            & \Spc_H.                 
	\end{tikzcd}
	\]
	Here the horizontal equivalences are obtained by applying Lemma~\ref{lem:Orbslices} 
	and Example \ref{ex-S_G}.
\end{proposition}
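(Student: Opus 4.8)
The plan is to reduce both vertical functors in the square to precomposition along explicit functors of orbit categories, and then to identify those functors. Unravelling Construction~\ref{const:Orbslices}, the functor $\Spc_\alpha$ is precomposition along $(\alpha_!)^{\op}$, where $\alpha_!\colon\Orb_{/H}\to\Orb_{/G}$ is the image of $\alpha$ under $\Orb_{/-}\colon\Glo\to\Cat_\infty$, that is, the cocartesian transport functor of the cocartesian fibration $t\colon\Ar_{\Orb}(\Glo)\to\Glo$ of Proposition~\ref{prop:ArRcocart}. By the description of $t$-cocartesian edges there, together with the orthogonal factorization system $(\Glo^{\sur},\Orb)$ of Proposition~\ref{prop:Glofactsystem}, $\alpha_!$ sends an object $(L\hookrightarrow H)$ to the injective part $(\alpha(L)\rightarrowtail G)$ of the essentially unique factorization $L\twoheadrightarrow\alpha(L)\rightarrowtail G$ of the composite $L\hookrightarrow H\xrightarrow{\alpha}G$; here $\alpha(L)$ is closed in $G$, being the continuous image of the compact group $L$. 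On the other side, the classical restriction-inflation functor $\alpha^*$ corresponds under the Elmendorf equivalence $\Spc_G\simeq\Fun(\OO_G^{\op},\Spc)$ of Example~\ref{ex-S_G} to precomposition along the functor $r_\alpha\colon\OO_H\to\OO_G$ given on objects by $H/L\mapsto G/\alpha(L)$; this is a direct computation on fixed-point spaces, since $(\alpha^*X)^L=X^{\alpha(L)}$.

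Since precomposition is contravariantly functorial in the indexing category, the proposition follows once we produce a commuting square
\[
\begin{tikzcd}
\OO_H \arrow[r, "r_\alpha"] \arrow[d, "\simeq"'] & \OO_G \arrow[d, "\simeq"] \\
\Orb_{/H} \arrow[r, "\alpha_!"'] & \Orb_{/G},
\end{tikzcd}
\]
whose vertical arrows are the equivalences of Lemma~\ref{lem:Orbslices}. Both composites send $H/L$ to $(\alpha(L)\rightarrowtail G)$, so the only issue is coherence, namely promoting this pointwise agreement to a natural equivalence. To handle it I would invoke the factorization system a second time and write $\alpha$ as a surjection $\pi\colon H\twoheadrightarrow\alpha(H)$ followed by an injection $\iota\colon\alpha(H)\hookrightarrow G$. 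Because $\Orb_{/-}$ is a functor, $\alpha_!$ is canonically the composite $\iota_!\circ\pi_!$, and, since $\alpha^*\simeq\pi^*\circ\iota^*$, the functor $r_\alpha$ is correspondingly the composite $r_\iota\circ r_\pi$; the $\alpha$-square is thus obtained by pasting the $\iota$-square and the $\pi$-square horizontally, so it suffices to treat injective and surjective homomorphisms separately.

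For an injective $\iota\colon K\hookrightarrow G$ the factorization is trivial, so $\iota_!$ is just the postcomposition functor $\Orb_{/K}\to\Orb_{/G}$ and $r_\iota$ is the evident functor $\OO_K\to\OO_G$, $K/L\mapsto G/L$; the square is then exactly the statement that the equivalence of Lemma~\ref{lem:Orbslices} is natural along morphisms of $\Orb$, which is immediate from its proof, as that equivalence is induced by the functor $F'\colon\OO_G\to\Glo$, $G/L\mapsto L$, which is visibly compatible with postcomposition along injective homomorphisms. The remaining case of a surjection $\pi\colon H\twoheadrightarrow Q$ encodes the inflation functoriality and is, I expect, the main obstacle: here I would unwind $\pi_!$ and $r_\pi$ at the level of the topological orbit categories appearing in the proof of Lemma~\ref{lem:Orbslices}, checking that $\pi$ induces compatible continuous maps $\{h\in H: c_h(L)\subseteq L'\}\to\{q\in Q: c_q(\pi L)\subseteq\pi L'\}$ and $\Hom(L,L')\to\Hom(\pi L,\pi L')$ for all closed subgroups $L,L'\le H$, and using Proposition~\ref{prop:Glo_composition} to control the effect of $\pi$ on $\Glo$-mapping spaces. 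This identifies $\pi_!$ with the transported $r_\pi$ as functors of topological categories, hence as $\infty$-functors. Combining the two cases yields the square, and therefore the proposition.
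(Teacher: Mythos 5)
Your route is genuinely different from the paper's. The paper passes to left adjoints and argues on generators: it identifies the left adjoint $L_\alpha$ of $\Spc_\alpha$ by exhibiting a commutative square of left Kan extension functors $(\iota_H)_!$, $(\alpha_*)_!$, $(L_G)_!$, models all three by left Quillen functors, and then invokes Rezk's explicit model-categorical descriptions of $\Pi_G$ and $\Delta_H$ to identify $L_\alpha$ with induction of $H$-spaces; commutativity of the original square follows by passing to right adjoints. You instead work directly with the right adjoints as precomposition functors, reduce the proposition to commutativity of a square of functors between orbit categories, and split into injective and surjective cases using the factorization system. The upside of your approach is that it stays close to the factorization-system philosophy of Section~\ref{sec-global-spaces} and avoids the detour through left adjoints; the downside is that it pushes the real work into a coherence statement about $\alpha_!$, which is precisely what the paper's detour is designed to sidestep.

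There are two places where the proposal, as written, has gaps. First, the assertion that the Elmendorf equivalence intertwines $\alpha^*$ with precomposition along $r_\alpha$ is more than ``a direct computation on fixed-point spaces.'' The pointwise formula $(\alpha^*X)^L\simeq X^{\alpha(L)}$ gives agreement on objects, but you need a natural equivalence of $\infty$-functors. One clean way to get it is to note that $\Map_{\Spc_H}(H/L,\alpha^*X)\simeq\Map_{\Spc_G}(G\times_\alpha(H/L),X)\simeq\Map_{\Spc_G}(G/\alpha(L),X)$ by adjunction and the identification $G\times_\alpha(H/L)\simeq G/\alpha(L)$, so that $j_H\circ\alpha^*\simeq r_\alpha^*\circ j_G$ as right-adjoint functors agreeing via their left adjoints on generators; but this argument (or some Quillen-functor variant of it) needs to be made rather than gestured at, since it is exactly the kind of content the proposition is encoding. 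Second, and more seriously, the surjective case is only sketched, and the plan is not quite sufficient as stated: $\pi_!$ is defined as cocartesian transport along the cocartesian fibration $t\colon\Ar_R(\Glo)\to\Glo$, which is an $\infty$-categorical construction and not a priori a functor of topological categories. Unwinding it ``at the topological level'' therefore requires a mechanism to compare the transport with your candidate topological functor $r_\pi$ — for instance, showing that $r_\pi$ provides cocartesian lifts over the marked edges, so it agrees with the transport by uniqueness of cocartesian edges — rather than just checking that $\pi$ induces compatible continuous maps on the mapping spaces. Until that identification is made precise, the key step (which you correctly flag as the main obstacle) remains open.
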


\begin{proof}
	It is enough to check that the analogous diagram where the vertical maps are replaced with left adjoints commutes. For this, let us denote by $L_{\alpha}$ and $\alpha_!$ the left adjoints of $\Spc_{\alpha}$ and $\alpha^*$ respectively.
	Note that the inclusion $\iota_H\colon \Orb_{/H}\hookrightarrow \Glo_{/H}$ has a left adjoint $L_H$ which on objects sends $K\xrightarrow{\beta} H$ to $\beta(K) \hookrightarrow H$. By the universal property of the presheaf categories there exists a unique cocontinuous functor (the left Kan extension along $\iota_H$)
	\[
	(\iota_H)_!\colon \Fun((\Orb_{/H})^{\op},\Spc) \to \Fun((\Glo_{/H})^{\op},\Spc) 
	\]
	which agrees with $\iota_H$ on representables. In a similar fashion, we define functors $(L_G)_!$ and $(\alpha_*)_!$ where $\alpha_*\colon \Glo_{/H} \to \Glo_{/G}$ is postcomposition by $\alpha$. We claim that the following diagram commutes:
	\[
	\begin{tikzcd}[column sep=large]
		{\Fun((\Orb_{/H})^{\op}, \Spc)} \arrow[r, hook,"{(\iota_H)_!}"] \arrow[d, "L_\alpha"'] & {\Fun((\Glo_{/H})^{\op}, \Spc)} \arrow[d, "{(\alpha_*)_!}"] \\
		{\Fun((\Orb_{/G})^{\op}, \Spc)}                                       & {\Fun((\Glo_{/G})^{\op}, \Spc).} \arrow[l, "{(L_G)_!}"]       
	\end{tikzcd}
	\] This is easily seen by comparing the result on generators, and using that all the functors in the diagram commute with all colimits. Using this diagram we can reduce to a statement on the level of model categories. Namely all three functors which make up the long way around in the diagram above can be modelled by left Quillen functors between enriched functor categories with the projective model structure. Indeed, the right adjoint of $(\iota_H)_!$ is given by restriction along $\iota_H$ which is clearly a right Quillen functor. A similar argument also works for $(L_G)_!$ and $(\alpha_*)_!$.
	After pre-composing and post-composing with the equivalences 
	\[
	\Top_H \simeq \Fun^{\mathrm{top}}((\Orb_{/H})^{\op}, \Top) \quad \mathrm{and} \quad\Fun^{\mathrm{top}}((\Orb_{/G})^{\op}, \Top)\simeq \Top_G
	\]
	constructed in \cite{Rezk}*{Proposition 3.5.1} (which agree with the equivalences constructed by \cite{gepner2020equivariant} by inspection), we can apply the explicit description for $(L_G)_!$ and $(\iota_H)_!$ given in \cite{Rezk}*{Section 5.3} (where $(L_G)_!$ is denoted by $\Pi_G$ and $(\iota_H)_!$ by $\Delta_H$) to deduce that the functor $L_\alpha \colon \Top_H\rightarrow \Top_G$ is equivalent to induction of $H$-spaces.
\end{proof}

We have now constructed our functor. Therefore we are left to prove that the partially lax limit is given by the $\infty$-category of global spaces.

\begin{theorem}\label{thm:unstablelaxlim}
	Let $\Glo^\dagger$ denote the marked $\infty$-category from Definition~\ref{def:glo}. 
	Then the partially lax limit over $(\Glo^\dagger)^{\op}$ of the diagram from Construction~\ref{const:Orbslices}
	\[
	\Glo^{\op}\rightarrow \Cat_\infty^\otimes,\quad G\mapsto \Spc_G
	\] 
	is equivalent to the $\infty$-category of global spaces, equipped with the cartesian monoidal structure.
\end{theorem}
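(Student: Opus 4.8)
The plan is to combine the two key structural inputs established above — the orthogonal factorization system $(\Glo^{\sur},\Orb)$ on $\Glo$ (Proposition~\ref{prop:Glofactsystem}) and the computation of partially lax colimits of slice diagrams (Proposition~\ref{prop:partially-lax-colimit-is-C}) — with the duality between lax limits and lax colimits of Proposition~\ref{prop:funpartlim}. Concretely, recall that by Construction~\ref{const:Orbslices} the diagram $\Spc_\bullet\colon\Glo^{\op}\to\Cat_\infty^\otimes$ is defined as the composite $G\mapsto\Fun((\Orb_{/G})^{\op},\Spc)$, i.e. it is $\Fun(F(-),\Spc)$ where $F=(-)^{\op}\circ\Orb_{/-}\colon\Glo\to\Cat_\infty$ is the functor obtained by applying Construction~\ref{cons-functoriality_ARR} to $(\Glo^{\sur},\Orb)$, followed by taking opposites. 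Thus
\[
\laxlimdag_{(\Glo^\dagger)^{\op}}\Spc_\bullet \;=\; \laxlimdag_{(\Glo^\dagger)^{\op}}\Fun(F(-),\Spc)\,.
\]
Now I apply Proposition~\ref{prop:funpartlim} with $\CI=\Glo$, the marking $\CI^\dagger=\Glo^\dagger$ (all edges of $\Orb$ marked), and $\CC=\Spc$: this gives
\[
\laxlimdag_{(\Glo^\dagger)^{\op}}\Fun(F(-),\Spc)\;\simeq\;\Fun\bigl(\laxcolimdag_{\Glo^\dagger} F,\ \Spc\bigr)\,.
\]
By Proposition~\ref{prop:partially-lax-colimit-is-C}, applied to $\CC=\Glo$ with the factorization system $(\Glo^{\sur},\Orb)$ (whose right class $\Orb$ is exactly the marked subcategory), the partially lax colimit $\laxcolimdag_{\Glo^\dagger} F$ of $(-)^{\op}\circ\Orb_{/-}$ is equivalent to $\Glo^{\op}$. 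Therefore $\laxlimdag_{(\Glo^\dagger)^{\op}}\Spc_\bullet\simeq\Fun(\Glo^{\op},\Spc)=\Spcgl$, which is the underlying equivalence we want.

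**Upgrading to symmetric monoidal.** The equivalence just produced is a priori only an equivalence of $\infty$-categories, so the remaining task is to promote it to a symmetric monoidal equivalence for the cartesian structures. I would do this by carrying the argument through at the level of $\Cat_\infty^\otimes$: Proposition~\ref{prop:funpartlim} is stated for $\Cat_\infty$, but both sides of the chain of equivalences are diagrams of \emph{cartesian} monoidal $\infty$-categories, and all the functors in sight (precomposition functors $f^*$, the localization-at-marked-edges functor, and $\Fun(-,\Spc)$ with the pointwise/cartesian structure) preserve finite products and hence are symmetric monoidal for the cartesian structures. An efficient way to make this precise is to invoke the uniqueness of the cartesian symmetric monoidal structure: $\Spcgl=\Fun(\Glo^{\op},\Spc)$ carries the pointwise product, which is the cartesian monoidal structure on this presentable $\infty$-category (Definition~\ref{def-global-spaces}), and $\laxlimdag\Spc_\bullet$, being a partially lax limit in $\Cat_\infty^\otimes$ of cartesian monoidal $\infty$-categories with product-preserving transition functors, is likewise cartesian monoidal — concretely because partially lax limits are computed as limits over $\Tw$ of cotensors (Definition~\ref{def-lax-lim}) and both cotensoring by $\Cat_\infty$ and limits in $\Cat_\infty^\otimes$ preserve the property of being cartesian (using Example~\ref{ex-cotensor}(c) and the fact that a product-preserving functor between cartesian monoidal categories is automatically symmetric monoidal). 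Since the underlying equivalence is product-preserving (it is built from product-preserving functors), it automatically refines to a symmetric monoidal equivalence.

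**Main obstacle.** The non-formal heart of the argument is entirely contained in the two cited propositions — Proposition~\ref{prop:Glofactsystem} (which in turn rests on the analysis of mapping spaces in $\Glo$ via Propositions~\ref{prop:glo_hom_spaces} and~\ref{prop:Glo_composition}) and Proposition~\ref{prop:partially-lax-colimit-is-C}, together with the identification in Construction~\ref{const:Orbslices}/Lemma~\ref{lem:Orbslices}/Proposition~\ref{prop-restriction} that the value of $\Spc_\bullet$ at $G$ really is $\Spc_G$ with the standard restriction-inflation functoriality. Given those, the theorem follows by the formal chain above. So the step I expect to require the most care in writing is not a computation but a bookkeeping check: verifying that the marking used in Proposition~\ref{prop:funpartlim} (marked edges of $\Glo$, i.e. $\Orb$) matches the marking that makes Proposition~\ref{prop:partially-lax-colimit-is-C} applicable (right class $\CC^R=\Orb$ of the factorization system), and confirming that the opposite-category bookkeeping in passing between $\Glo^\dagger$ and $(\Glo^\dagger)^{\op}$ — and between $F$ and $(-)^{\op}\circ F$ — is consistent with the statements of the two propositions. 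The monoidal upgrade, while needing a sentence or two of justification, is genuinely routine once one observes that everything in sight is cartesian.
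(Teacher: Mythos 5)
Your proposal matches the paper's proof step for step: reduce via Proposition~\ref{prop:funpartlim} to computing the partially lax colimit of $(\Orb_{/-})^{\op}$, identify that colimit with $\Glo^{\op}$ via Proposition~\ref{prop:partially-lax-colimit-is-C} applied to the factorization system $(\Glo^{\sur},\Orb)$, and upgrade to a symmetric monoidal equivalence by observing that cartesian monoidal structures are preserved under partially lax limits. The bookkeeping concern you flag (matching the marking $\Orb$ with the right class of the factorization system, and tracking opposites) is real but handled correctly in your chain, exactly as in the paper.
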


\begin{proof}
	Recall that $\Spc_G=\Fun(\OO_G^{\op}, \Spc)$ and that $\OO_G \simeq \Orb_{/G}$.
	First we prove the result on underlying categories. Proposition \ref{prop:funpartlim} implies that it suffices to prove an equivalence between the partially lax colimit of $(\Orb_{/-})^{\op}$ and $\Glo^{\op}$. However this follows from Proposition \ref{prop:partially-lax-colimit-is-C} applied to the factorization system $(\Glo^{\sur},\Orb)$ on $\Glo$. Now we deduce the symmetric monoidal statement. First observe that the equivalence constructed before trivially lifts to a symmetric monoidal equivalence, where both sides are given the cartesian symmetric monoidal structure. Then note that the subcategory of $\Op_\infty$ spanned by the cartesian operads is closed under partially lax limits. This implies that $\Spc_{gl}$ is equivalent to the partially lax limit of the diagram $\Spc^\bullet\colon \Glo^{\op}\rightarrow \Cat_\infty^\otimes$, but now taken in symmetric monoidal $\infty$-categories.
\end{proof}

\section{\texorpdfstring{$\infty$}{infty}-categories of equivariant prespectra}\label{sec-cat-of-eq-prespectra}

In this section we define the $\infty$-categories of $G$-(pre)spectra for a Lie group $G$, and we introduce the $\infty$-category of global (pre)spectra. We will do this by first defining the relevant level model structures, which present the $\infty$-categories of prespectra objects, and then defining the stable model category as a Bousfield localization. This will then present the $\infty$-categories of spectra objects. The material in this section is classical, and largely well-known. Nevertheless we include the details of the model structures, mainly to emphasize that the level model structure on $\Sp^O_G$ is induced formally from the level model structure on $\DgTop{\I}{G}$. While not a deep statement, it is crucial to our proof strategy. In particular this observation will allow us to interpret the construction of the level model structure $\infty$-categorically, as will be explained in this section.

\begin{definition}
	Let $\I$ denote the topological category whose objects are finite dimensional 
	inner product spaces $V$, and morphisms space $\I(V,W)$ given by the space 
	of linear isometric isomorphisms from $V$ to $W$. 
\end{definition}

\begin{definition}
	Let $G$ be a Lie group (not necessarily compact). We write $\DgTop{\I}{G}$ for 
	the enriched category of continuous functors from $\I$ into $G$-spaces, and call this 
	the category of $\I$-$G$-spaces.
	When $G$ is the trivial group, we simply write $\DgTop{\I}{}$ and refer to 
	it as the category of $\I$-spaces.
\end{definition}

\begin{remark}
	As discussed in~\cite{Bohmann14}*{Section ~5}, the category of $\I$-$G$-spaces 
	(as defined above) is equivalent as a topological category to the category of 
	$\I_G$-spaces as defined by 
	Mandell-May in~\cite{MandellMay}*{Chapter II, Definition 2.3}. 
\end{remark}

\begin{remark}\label{rem-day-convolution}
	The category $\DgTop{\I}{G}$ has a symmetric monoidal structures given by enriched Day convolution, see~\cite{MandellMay}*{Chapter II, Proposition 3.7}. Given $X,Y\in \DgTop{\I}{G}$ we have the formula
	\[
	(X\otimes Y)(V):=\int^{(W,W')\in \I\times \I} \Li(W\oplus W',V)\times X(W)\times Y(W').
	\]
\end{remark}

\begin{remark}\label{rem-G-action}
	Given any $\I$-$G$-space $X$ and an inner product space $V$, the value $X(V)$ 
	admits a $G\times O(V)$-action. If $V$ is given the structure of an $H$-representation 
	$\rho \colon H \to O(V)$, then we can equip $X(V)$ with an $H$-action by restricting along 
	\[
	H \xrightarrow{\Delta} H \times H \xrightarrow{i \times \rho}G \times O(V).
	\] 
\end{remark}

We will always consider the value $X(V)$ with this $H$-action in the following. 

\begin{construction}[Free $\I$-$G$-space]\label{con-free-IG-space}
	For every $H$-representation $V$, there is an evaluation functor 
	\[
	\mathrm{ev}_V\colon \DgTop{\I}{G} \to H\Top, \qquad X \mapsto X(V).
	\] 
	This functor admits a left adjoint $G\times_H\I_{V}$, given by the formula 
	\[G\times_H\I_{V}A=G\times_H (\I(V,-)\times A).\] 
	When $A=*$, we simply write $G\times_H\I_{V}$ and when $G = H$, we write $\I_{V}(-)$. 
	By construction, the $\I$-$G$-space 
	$G\times_H\I_{V}$ corepresents the functor $X \mapsto X(V)^H$.  
	
	For all compact subgroups $H$ and $K$ of $G$, all $H$-representations $V$ and all 
	$K$-representations $W$, there is an isomorphism of $\I$-$G$-spaces 
	\begin{equation}\label{eq-tensor}
		(G\times_H \I_V)\otimes (G\times_K \I_W)\cong \Delta^*(G\times G \times_{H\times K} \I_{V\oplus W})
	\end{equation}
	where $\Delta\colon G \to G\times G$ is the diagonal embedding. 
	This can be checked directly by applying the formula of the Day convolution product from 
	Remark~\ref{rem-day-convolution} and using that induction commutes with colimits.  
\end{construction}

We will now proceed to equip the category of $\I$-$G$-spaces with the level model structure. The following will be the weak equivalences, fibrations and cofibrations of this model structure.

\begin{definition}
	Let $G$ be a Lie group and let $f\colon X\rightarrow Y$ be a morphism in $\DgTop{\I}{G}$.
	\begin{itemize}
		\item[(a)] We say $f$ is a \emph{level equivalence} if for any compact subgroup 
		$H\leq G$ and any $H$-representation $V$, the map 
		$f(V)^H\colon X(V)^H\rightarrow  Y(V)^H$ is a weak homotopy equivalence of spaces.
		\item[(b)]We say $f$ is a \emph{level fibration} if for any compact subgroup 
		$H\leq G$ and any $H$-representation $V$, the map 
		$f(V)^H\colon X(V)^H\rightarrow  Y(V)^H$ is a Serre fibration.
		\item[(c)] We say $f$ is a \emph{level cofibration} if for every $m\geq 0$, the map 
		$f(\mathbb{R}^m)\colon X(\mathbb{R}^m)\to Y(\mathbb{R}^m)$ is a $Com$-cofibration of 
		$(G \times O(m))$-spaces, see~\cite{Proper}*{Definition 1.1.2}, and moreover the 
		$O(m)$-action is free away from the image of $f(\mathbb{R}^m)$.
	\end{itemize}
\end{definition}

For all $m\geq 0$, we let $\CC_G(m)$ denote the family of compact subgroups $\Gamma$ of $G\times O(m)$ such that $\Gamma \cap (1\times O(m))$ consists only of the neutral element.  These are 
precisely the graph subgroups of a continuous homomorphism to $O(m)$ defined on some compact subgroup of $G$.
The category of $G \times O(m)$-spaces admits a $\CC_G(m)$-projective model structure by~\cite{Schwede18}*{Proposition B.7}. We have the following useful characterization of the 
level equivalences, cofibrations and fibrations.

\begin{lemma}\label{lem-same}
 	Let $G$ be a Lie group and let $f\colon X\rightarrow Y$ be a morphism in $\DgTop{\I}{G}$.
    The following are equivalent:
    \begin{itemize}
    \item[(a)] the map $f\colon X \to Y$ is a level equivalence (resp., level fibration);
    \item[(b)] the map $f(\mathbb{R}^m)\colon X(\mathbb{R}^m)\to Y(\mathbb{R}^m)$ is a weak 
    equivalence (resp., fibration) in the $\CC_G(m)$-projective model structure for all $m\geq 0$.
    \end{itemize}    
    Furthermore, the following are equivalent:
    \begin{itemize}
     \item[(c)] the map $f\colon X \to Y$ is a level cofibration;
    \item[(d)] the map $f(\mathbb{R}^m)\colon X(\mathbb{R}^m)\to Y(\mathbb{R}^m)$ is a 
    cofibration in the $\CC_G(m)$-projective model structure for all $m\geq 0$.
    \end{itemize}
\end{lemma}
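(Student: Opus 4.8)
The plan is to unwind the definition of the $\CC_G(m)$-projective model structure and observe that all four conditions are, essentially by definition, local statements that can be checked one cofibration/fibration/weak equivalence at a time on representations.

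\textbf{Setting up the dictionary.} Recall that in the $\CC_G(m)$-projective model structure on $(G\times O(m))$-spaces, a map is a weak equivalence (resp. fibration) if and only if it induces a weak homotopy equivalence (resp. Serre fibration) on $\Gamma$-fixed points for every $\Gamma\in\CC_G(m)$, by \cite{Schwede18}*{Proposition B.7}. The first step is to identify the subgroups $\Gamma\in\CC_G(m)$: as noted just before the lemma, these are precisely the graph subgroups $\Gamma_{H,\rho}=\{(h,\rho(h)):h\in H\}$ for $H\leq G$ compact and $\rho\colon H\to O(m)$ a continuous homomorphism. Such a $\rho$ is the same data as an $H$-representation structure on $\mathbb{R}^m$, and the key point — which is exactly Remark~\ref{rem-G-action} — is that $X(\mathbb{R}^m)^{\Gamma_{H,\rho}}$ is canonically homeomorphic to $X(V)^H$, where $V$ denotes $\mathbb{R}^m$ equipped with the $H$-representation $\rho$. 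So I would first make this homeomorphism explicit and natural in $f$.

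\textbf{Proving (a)$\Leftrightarrow$(b) and (c)$\Leftrightarrow$(d).} Given the identification above, (b) says precisely that $f(\mathbb{R}^m)^{\Gamma_{H,\rho}}$ is a weak equivalence (resp. Serre fibration) for all $m$, all compact $H\leq G$ and all $\rho$, which under the homeomorphism is exactly the statement that $f(V)^H$ is a weak equivalence (resp. Serre fibration) for every $H$-representation $V$ — that is, condition (a). The only thing to check carefully is that every $H$-representation $V$ is isometrically isomorphic to some $(\mathbb{R}^m,\rho)$, which is immediate, and that the functor $X\mapsto X(V)$ is invariant under such isometries, which holds because $\I$-$G$-spaces are functors out of $\I$. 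For (c)$\Leftrightarrow$(d): the cofibrations in the $\CC_G(m)$-projective model structure are by definition (or by the recognition theorem for cofibrantly generated model structures) the retracts of $\CC_G(m)$-cell complexes, and these are precisely the $Com$-cofibrations of $(G\times O(m))$-spaces that are moreover free away from the image, which is the condition appearing in the definition of level cofibration in clause (c) for each $m$. This is essentially the content of \cite{Proper}*{Definition 1.1.2} together with \cite{Schwede18}*{Proposition B.7}, so the equivalence is really a matter of matching definitions.

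\textbf{Expected main obstacle.} There is no deep obstacle here; the lemma is a bookkeeping statement. The one point requiring genuine care is the precise comparison between the generating cofibrations of the $\CC_G(m)$-projective model structure and the "$Com$-cofibration, free away from the image" condition used to define level cofibrations — in other words, verifying that the family $\CC_G(m)$ really does cut out exactly the freeness condition on the $O(m)$-factor. I would handle this by recalling that $Com$ denotes the family of \emph{all} compact subgroups of $G\times O(m)$ (so $Com$-cofibrations are built from cells $(G\times O(m))/\Gamma\times(\partial D^n\hookrightarrow D^n)$ for arbitrary compact $\Gamma$), and then observing that demanding the $O(m)$-action be free away from the image forces only cells with $\Gamma\cap(1\times O(m))=\{e\}$, i.e. $\Gamma\in\CC_G(m)$, to appear — which is exactly the generating set of the $\CC_G(m)$-projective structure. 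Everything else follows formally.
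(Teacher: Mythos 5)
Your treatment of (a)$\Leftrightarrow$(b) is essentially the paper's argument: choose an isometric isomorphism $\varphi\colon V\cong\mathbb{R}^m$ to transport the $H$-action to a homomorphism $\rho\colon H\to O(m)$, identify $X(V)^H$ with $X(\mathbb{R}^m)^{\Gamma(\rho)}$ via $X(\varphi)$, and observe that the graph subgroups $\Gamma(\rho)$ exhaust $\CC_G(m)$. That part is fine.

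The gap is in (c)$\Leftrightarrow$(d). You claim that ``$Com$-cofibration of $(G\times O(m))$-spaces with $O(m)$-free action off the image'' coincides with ``$\CC_G(m)$-projective cofibration'' essentially by matching definitions and noting that freeness off the image forces each cell to be of the form $(G\times O(m))/\Gamma\times(\partial D^n\hookrightarrow D^n)$ with $\Gamma\in\CC_G(m)$. That argument works only when the map is literally a relative cell complex; a general $Com$-cofibration is merely a \emph{retract} of a $Com$-cell complex, and the cells of the ambient complex need not satisfy $\Gamma\cap(1\times O(m))=\{e\}$ even when the subspace you retract onto has $O(m)$-free complement. Equivalently, what one must prove is that a $Com$-cofibration whose $O(m)$-action is free off the image has the left lifting property against $\CC_G(m)$-projective acyclic fibrations, which are tested only on $\Gamma$-fixed points for $\Gamma\in\CC_G(m)$ and are therefore strictly weaker than $Com$-projective acyclic fibrations. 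This is not a definitional matching; it is a genuine statement about families of subgroups, which the paper outsources to (the topological version of) Stephan's Proposition 2.16 in \emph{Elmendorf's theorem via model categories}. Your proof should either cite that result or reproduce its proof; as written, the direction ``(c)$\Rightarrow$(d)'' is not justified.
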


\begin{proof}
 Let $H \leq G$ be a compact subgroup and let $V$ be an $H$-representation. Choose a linear 
 isometric isomorphism $\varphi\colon V \cong \mathbb{R}^m$ and define a group homomorphism
 \[
 \rho \colon G \to O(m), \qquad g\mapsto \varphi \circ (g\cdot -)\circ \varphi^{-1}.
 \] 
 The homeomorphism $X(\varphi)\colon X(V) \simeq X(\mathbb{R}^m)$ restricts to a homeomorphism 
 \[
 X(V)^H \simeq X(\mathbb{R}^m)^{\Gamma(\rho)}
 \]
 where $\Gamma(\rho)=\{(h,\rho(h))\in H \times O(m)\}$ by the definition of the $H$-action given 
 in~Remark~\ref{rem-G-action}. From this description, it is clear that (b) implies (a). 
 Conversely given $\Gamma\in \CC_G(m)$, we can always find a continuous group 
 homomorphism $\alpha\colon H \to O(m)$ for $H\leq G$ compact such that $\Gamma=\Gamma(\alpha)$. 
 By definition of the $H$-action, we have $X(\mathbb{R}^m)^H = X(\mathbb{R}^m)^{\Gamma}$ showing that (a) implies (b).  
 Finally, that (c) and (d) are equivalent follows from (the topological version of) 
 \cite{Stephan}*{Proposition 2.16}.

\end{proof}

\begin{theorem}
	Let $G$ be a Lie group. The category $\DgTop{\I}{G}$ admits a cofibrantly generated 
	and topological model structure in which the weak equivalences are the level equivalences, 
	the fibrations are the level fibrations and the cofibrations are the level cofibrations.  
	The set of generating cofibrations $I_G$ and acyclic cofibrations $J_G$ are given by 
	\begin{align*}
		I_G= & \{G\times_H \I_{V} \partial D^n \to G\times_H\I_{V} D^n 
		\mid H \leq G, \;\; n\geq 0 \}\\
		J_G =& \{ G\times_H \I_{V} (D^n\times \{0\}) \to G\times_H\I_{V}( D^n \times [0,1])
		\mid H \leq G, \;\; n\geq 0\}
	\end{align*}
	where $H$ runs over all compact subgroups of $G$ and $V$ runs over all $H$-representations. 
	We call this the \emph{(proper) level model structure}.
\end{theorem}

\begin{proof}
    We observe that the category $\DgTop{\I}{G}$ is equivalent to $\prod_{m\geq 0} (G \times O(m))\Top$. We can endow this latter category with the product of the $\mathcal{C}_G(m)$-projective model structures on $G \times O(m)$-spaces. By Lemma~\ref{lem-same}, the induced model structure on $\DgTop{\I}{G}$ has weak equivalences, fibrations and cofibrations as in the theorem. Also we note that the right lifting property against the sets $I_G$ and $J_G$ detect the level fibrations 
	and level acyclic fibrations respectively, by the adjunction isomorphism
	\[
	\Hom_{\DgTop{\I}{G}}(G\times_H\I_{V}A, X)\simeq \Hom_{\Top}(A, X(V)^H)
	\]
	for $A$ a non-equivariant space. Finally we observe that resulting model structure is again topological by \cite{Schwede18}*{Proposition B.5}.
\end{proof}

As discussed in~\cite{Proper}*{Proposition 1.1.6}, a continuous homomorphism 
$\alpha\colon K \to G$ between Lie groups gives rise to adjoint functors between the 
associated category of equivariant spaces
\[
\begin{tikzcd}
	G\Top  \arrow[r,"\alpha^*"] & K\Top \arrow[l, bend right, "G\times_\alpha -"'] 
	\arrow[l, bend left, "{\Map^{\alpha}(G,-)}"]  \\
\end{tikzcd}
\]
which by levelwise application gives rise to an adjoint triple
\[
\begin{tikzcd}
	\DgTop{\I}{G}  \arrow[r,"\alpha^*"] & \DgTop{\I}{K} \arrow[l, bend right, "G\times_\alpha -"'] 
	\arrow[l, bend left, "{\Map^{\alpha}(G,-)}"] . \\
\end{tikzcd}
\]

\begin{proposition}\label{prop-change-of-groups-functors}
	Let $\alpha\colon K \to G$ be a continuous group homomorphism between 
	Lie groups. 
	\begin{itemize}
		\item[(a)] Then $\alpha^*$ preserves level 
		fibrations and level equivalences. Thus the adjoint pair $(G\times_\alpha-, \alpha^*)$ 
		is Quillen.
		\item[(b)] If $\alpha$ has closed image and compact kernel, then the adjoint pair $(\alpha^*, \Map^\alpha(G,-))$ is also Quillen with respect to the level model structure. 
	\end{itemize}
\end{proposition}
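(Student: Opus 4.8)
I would prove the two parts separately, in each case reducing to the corresponding statement for equivariant spaces via the levelwise description of the level model structure. For part (a): the functor $\alpha^{*}$ is obtained by applying $\alpha^{*}\colon G\Top\to K\Top$ levelwise, so on the value at $\mathbb{R}^{m}$ it is restriction along $\alpha\times\mathrm{id}_{O(m)}\colon K\times O(m)\to G\times O(m)$, leaving the $O(m)$-direction untouched. By Lemma~\ref{lem-same}, level fibrations and level equivalences of $\I$-$G$-spaces are detected on these values in the $\CC_{G}(m)$-projective model structure on $(G\times O(m))$-spaces, and similarly on the $K$-side with $\CC_{K}(m)$, so it suffices to know that restriction along each $\alpha\times\mathrm{id}_{O(m)}$ preserves fibrations and weak equivalences; this is \cite{Proper}*{Proposition~1.1.6} applied to that homomorphism. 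Since $\alpha^{*}$ thus preserves level fibrations and level acyclic fibrations, its left adjoint $G\times_{\alpha}-$ is left Quillen, so $(G\times_{\alpha}-,\alpha^{*})$ is a Quillen pair, and the preservation of all level equivalences is the remaining assertion.

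For part (b) I would use the hypothesis to factor $\alpha$ as $K\xrightarrow{q}\alpha(K)\xrightarrow{\iota}G$, with $q$ surjective of compact kernel $\ker\alpha$ and $\iota$ the inclusion of the closed — hence Lie — subgroup $\alpha(K)\le G$, so that $\alpha^{*}=q^{*}\circ\iota^{*}$ and it is enough to show each factor is left Quillen. For $\iota^{*}$ this is the standard fact that restriction along a closed subgroup inclusion is left Quillen for these level model structures: on the generators $G\times_{H}\I_{V}(\partial D^{n}\to D^{n})$ it amounts, via the double coset formula, to the observation that each $\alpha(K)\cap{}^{g}H$ is a compact subgroup of $\alpha(K)$, so the resulting pieces are again (acyclic) generating cofibrations of $\DgTop{\I}{\alpha(K)}$. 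For $q^{*}$, inflation along $q$ carries $\alpha(K)\times_{\bar H}\I_{V}A$ to $K\times_{q^{-1}(\bar H)}\I_{q^{*}V}A$, and $q^{-1}(\bar H)$ is compact precisely because $\bar H$ and $\ker q=\ker\alpha$ both are; hence $q^{*}$ again sends generating (acyclic) cofibrations to generating (acyclic) cofibrations. As $q^{*}$ and $\iota^{*}$ preserve colimits, they — and therefore $\alpha^{*}$ — preserve all (acyclic) cofibrations, so $(\alpha^{*},\Map^{\alpha}(G,-))$ is a Quillen pair.

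The step I expect to be the main obstacle is the bookkeeping in part (b): one must compute exactly how $q^{*}$ and $\iota^{*}$ act on the explicit generators $G\times_{H}\I_{V}$ and check that compactness of $\ker\alpha$ is precisely what keeps the preimage isotropy groups compact, so that they still index generating cofibrations of $\DgTop{\I}{K}$, while closedness of $\alpha(K)$ is what makes the intermediate Lie-equivariant category $\DgTop{\I}{\alpha(K)}$ available at all. In part (a) the only point needing care is keeping the levelwise reduction clean, after which the statement is inherited from \cite{Proper}.
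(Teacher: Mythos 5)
Part (a) of your proposal is essentially the paper's argument: reduce levelwise via Lemma~\ref{lem-same} and cite \cite{Proper}*{Proposition~1.1.6}; this is correct and nothing more needs to be said there.

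For part (b), your strategy — factor $\alpha = \iota\circ q$ through its image and check each factor on the generating (acyclic) cofibrations — genuinely departs from the paper's, which works levelwise with the defining characterization of level cofibrations and applies \cite{Proper}*{Proposition~1.1.6(iii)} directly to $\alpha\times\mathrm{id}_{O(m)}$ at each level $\mathbb{R}^m$. The $q^*$ half of your argument is sound: inflation along $q$ carries $\alpha(K)\times_{\bar H}\I_V(\partial D^n\to D^n)$ to $K\times_{q^{-1}(\bar H)}\I_{q^*V}(\partial D^n\to D^n)$, and $q^{-1}(\bar H)$ is compact because both $\bar H$ and $\ker q = \ker\alpha$ are.

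The $\iota^*$ half has a genuine gap. You invoke a double coset formula to decompose $\iota^*(G\times_H\I_V)$ into a coproduct of pieces $\alpha(K)\times_{\alpha(K)\cap{}^{g}H}\I_{{}^{g}V}$ indexed by $\alpha(K)\backslash G/H$. That decomposition is available for discrete groups, where the double coset set is discrete, but it fails for Lie groups: already for compact $G = SO(3)$ with $H = \alpha(K) = SO(2)$ the double coset space $SO(2)\backslash SO(3)/SO(2)$ is an interval rather than a discrete set, so no coproduct decomposition of $\iota^*(G/H)$ by orbit type exists, and a fortiori none for the generating $\I$-$G$-spaces. What is true — that restriction along a closed subgroup inclusion preserves $Com$-cofibrations — requires a slice-theorem/Illman-type input and is precisely the content of \cite{Proper}*{Proposition~1.1.6(iii)}; since $\alpha$ itself has closed image and compact kernel, that result applies directly to $\alpha$ and the intermediate factorization buys you nothing. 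The paper's proof instead observes that $\alpha\times\mathrm{id}_{O(m)}$ again has closed image and compact kernel, applies that proposition at every level, and then uses the levelwise characterization of level cofibrations from Lemma~\ref{lem-same} (noting also that the $O(m)$-freeness condition is untouched by restriction). This sidesteps any need to decompose the generators and is the argument you should use.
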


\begin{proof}
 Part (a) follows from~\cite{Proper}*{Proposition 1.1.6(ii)}. 
 Suppose that $\alpha$ has closed image and compact kernel and note that by (a), 
 it suffices to check that $\alpha^*$ preserves level cofibrations. We start by noting 
 that the image of $\alpha\times O(m)$ is closed in $G\times O(m)$ since the image of $\alpha$ is 
 closed in $G$. 
 Moreover, the kernel of $\alpha\times O(m)$ is $\ker(\alpha)\times 1$, which is compact by 
 hypothesis. So restriction along $\alpha\times O(m)$ takes $Com$-cofibrations of 
 $(G\times O(m))$-spaces to $Com$-cofibrations of $(K \times O(m))$-spaces 
 by~\cite{Proper}*{Proposition 1.1.6(iii)}.  
 Now let $i\colon A \to B$ be a level cofibration of $\I$-$G$-spaces so that $ i(\mathbb{R}^m)$ 
 is a $Com$-cofibration of $(G\times O(m))$-spaces. By the previous discussion, $\alpha^*(i(\mathbb{R}^m))$ is a $Com$-cofibration of $(K\times O(m))$-spaces. Moreover, the $O(m)$-action is unchanged, so it still acts freely off the image of $\alpha^*i$. This shows that 
	$\alpha^*$ preserves cofibrations as required.
\end{proof}

\begin{proposition}\label{prop-level-sym-mon}
	The level model structures on $\DgTop{\I}{G}$ is symmetric monoidal with 
	cofibrant unit object. 
\end{proposition}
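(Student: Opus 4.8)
The plan is to verify the pushout-product axiom for the level model structure on $\DgTop{\I}{G}$ directly, reducing it to the analogous statement in each $\CC_G(m)$-projective model structure on $(G\times O(m))$-spaces. Recall from Remark~\ref{rem-day-convolution} that the Day convolution tensor product is generated under colimits by the representable objects $G\times_H\I_V$, and from Construction~\ref{con-free-IG-space} (specifically the isomorphism \eqref{eq-tensor}) we have
\[
(G\times_H\I_V)\otimes (G\times_K\I_W)\cong \Delta^*\big(G\times G\times_{H\times K}\I_{V\oplus W}\big).
\]
Hence it suffices to check the pushout-product axiom on the generating cofibrations $I_G$ and generating acyclic cofibrations $J_G$. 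First I would record that the unit of the Day convolution monoidal structure is $\I_0=\I(0,-)$, which is the representable at the zero inner product space; since $H$ may be taken trivial and $V=0$, this is a cofibrant object (it is $G\times_1\I_0$, a free $\I$-$G$-space), giving the cofibrant unit claim.

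For the pushout-product axiom itself, the key step is to compute $(G\times_H\I_V\, i)\mathbin{\square}(G\times_K\I_W\, j)$ for maps $i\colon \partial D^n\to D^n$ and $j\colon \partial D^\ell\to D^\ell$ of spaces (or the analogous acyclic generators). Using that $\I_{(-)}(-)$ is a left adjoint and commutes with the relevant colimits, together with \eqref{eq-tensor}, this pushout-product is identified with $\Delta^*$ applied to $(G\times G\times_{H\times K}\I_{V\oplus W})$ smashed with the pushout-product $i\,\square\,j$ of spaces. Evaluating at $\mathbb{R}^m$ and invoking Lemma~\ref{lem-same}, I would reduce the statement to: the pushout-product of a generating (acyclic) cofibration of the $\CC_G(m_1)$-projective model structure with one of the $\CC_G(m_2)$-projective model structure, pushed forward along the relevant induction and restriction functors, is a (acyclic) cofibration of the $\CC_G(m_1+m_2)$-projective model structure. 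This last point follows because induction along $G\times G\times_{H\times K}(-)$ and restriction along the diagonal $\Delta$ both preserve $Com$-cofibrations and the freeness-of-$O(m)$-action condition (the relevant statements about $Com$-cofibrations under change of groups are exactly those used in Proposition~\ref{prop-change-of-groups-functors}, via \cite{Proper}*{Proposition 1.1.6}), and because the graph-subgroup families $\CC_G(m)$ are compatible with direct sums of representations: a graph subgroup of $G\times G\times O(m_1)\times O(m_2)$ restricted along $\Delta$ lands in $\CC_G(m_1+m_2)$.

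The main obstacle I expect is the careful bookkeeping in the second step: matching the families $\CC_G(m_i)$ through the induction-restriction along $\Delta\colon G\to G\times G$ and checking that the ``free $O(m)$-action away from the image'' clause in the definition of level cofibration is preserved under the pushout-product — i.e. that the pushout-product of two maps with free cokernel $O(m_i)$-action has free cokernel $O(m_1+m_2)$-action, where $O(m_1)\times O(m_2)\subseteq O(m_1+m_2)$. This is a standard but slightly delicate point about $\square$ of cofibrations in $Com$-model structures; once it is in hand, the acyclic case follows by the same argument using that the $\CC_G(m)$-projective model structures themselves satisfy the pushout-product axiom (so one factor being a $\CC_G(m_i)$-projective acyclic cofibration forces the result to be $\CC_G(m_1+m_2)$-projective acyclic). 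Finally I would remark that this also shows the tensor unit is cofibrant, completing the verification that the level model structure is symmetric monoidal.
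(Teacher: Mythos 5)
Your reduction to generating (acyclic) cofibrations and the identification $f\,\square\,g \cong \Delta^*\bigl(G\times G\times_{H\times K}\I_{V\oplus W}(f'\square g')\bigr)$ via Equation~\eqref{eq-tensor} are exactly what the paper does, and your cofibrant-unit observation ($* = G\times_G\I_0$) matches as well. Where you diverge is in how you close the argument: you then pass to levels, evaluate at $\mathbb{R}^m$, invoke Lemma~\ref{lem-same}, and try to track $Com$-cofibrancy and the free-$O(m)$-action-off-the-image condition through the pushout-product in the various $\CC_G(m)$-projective model structures. The paper bypasses this bookkeeping entirely: once the pushout-product has been rewritten as a composite of functors applied to $f'\square g'$, it simply observes that $\Delta^*$ is left Quillen (by Proposition~\ref{prop-change-of-groups-functors}, since $\Delta\colon G\to G\times G$ is injective with closed image and trivial kernel) and that $(G\times G)\times_{H\times K}\I_{V\oplus W}$ is left Quillen (its right adjoint $\mathrm{ev}_{V\oplus W}$ is right Quillen by the definition of the level model structure). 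Since $f'\square g'$ is a (trivial) cofibration of spaces, applying these two left Quillen functors lands you in (trivial) cofibrations of $\I$-$G$-spaces, finishing the argument in one stroke. Your approach is not wrong, but the ``main obstacle'' you flag — matching graph-subgroup families under $\Delta$ and verifying that the free-action clause is preserved by $\square$ — is precisely the work that the left Quillen packaging absorbs for you, so you end up re-proving in coordinates a fact that already sits, cleanly stated, inside Proposition~\ref{prop-change-of-groups-functors} and the definition of the level model structure.
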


\begin{proof}
	Let us show that the pushout-product axiom holds. By a standard reduction~\cite{Hovey}*{4.2.5}, it suffices to check 
	that the pushout product $f \square g$ is 
	\begin{itemize}
		\item[(i)] a cofibration if $f$ and $g$ belong to the set of generating cofibrations;
		\item[(ii)] an acyclic cofibration if furthermore $f$ or $g$ is a generating acyclic cofibration. 
	\end{itemize}  
	In this case we may assume $f=G\times_H \I_{V}f'$ and 
	$g=G\times_K \I_{W}g'$ and so $f \square g=\Delta^*(G\times G \times_{H\times K}\I_{V\oplus W} f'\square g')$ by Equation~(\ref{eq-tensor}). Since $\Top$ is a symmetric monoidal model category, the 
	pushout-product $f'\square g'$ satisfies conditions (i) and (ii) above. 
	By Proposition~\ref{prop-change-of-groups-functors} we see that the functors
	\[
	\Delta^* \colon \DgTop{\I}{(G\times G)} \to \DgTop{\I}{G}
	\]
	are left Quillen. Moreover, it is clear from the definition of the model structures 
	that $\mathrm{ev}_{V\oplus W}\colon \I-(G\times G)\Top \to (H\times K)\Top $ is
	right Quillen, and therefore $(G\times G)\times_{H\times K}\I_{V\oplus W}$ is left Quillen. 
	From these observations it follows that the pushout-product axiom holds for $\DgTop{\I}{G}$ too. 
	Finally, the unit axiom holds since the unit object $*=G\times_G \I_{0}$ is cofibrant. 
\end{proof}

In Section~\ref{subsec-pointed} we discussed how to induce a model structure on pointed objects. We will apply these results to the category $\DgTop{\I}{G}$ with the level model structure. Note first that the category of pointed objects in $\DgTop{\I}{G}$ is equivalent to $\DgTp{\I}{G}$, the category of continuous functors from $\I$ to $G\Tp$, the category of based $G$-spaces.

\begin{proposition}
	Let $G$ be a Lie group. The category $\DgTp{\I}{G}$ admits a \emph{proper level model 
	structure} in which the weak equivalences, fibrations and cofibrations are detected by 
	the forgetful functor $\DgTp{\I}{G} \to \DgTop{\I}{G}$. This model structure 
	is topological, cofibrantly generated by the sets $(I_G)_+$ and $(J_G)_+$, symmetric monoidal 
	and the unit object is cofibrant. Moreover, there exists a 
	symmetric monoidal equivalence of $\infty$-categories 
	\[\DgTp{\I}{G}[W^{-1}_{lvl}] \simeq (\DgTop{\I}{G}[W^{-1}_{lvl}])_*.\]
\end{proposition}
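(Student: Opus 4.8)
The proof splits naturally into two parts: first, the construction and basic properties of the proper level model structure on $\DgTp{\I}{G}$, and second, the symmetric monoidal comparison of underlying $\infty$-categories.

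For the first part, the plan is to invoke the machinery recalled in Section~\ref{subsec-pointed}. Since $*=G\times_G\I_0$ is cofibrant in the level model structure on $\DgTop{\I}{G}$ by Proposition~\ref{prop-level-sym-mon}, and this final object is the monoidal unit, we may apply~\cite{Hovey}*{Proposition 1.1.8} to produce a model structure on pointed objects in which cofibrations, fibrations and weak equivalences are detected by the forgetful functor. The identification of the category of pointed objects in $\DgTop{\I}{G}$ with $\DgTp{\I}{G}$ is immediate, since a basepoint of an $\I$-$G$-space is the same data as a compatible choice of basepoints levelwise, i.e. a lift of the functor to $G\Tp$. Cofibrant generation by $(I_G)_+$ and $(J_G)_+$ follows from~\cite{Hovey}*{Lemma 2.1.21}, topologicality from the fact that the tensoring and cotensoring of $\DgTop{\I}{G}$ over spaces lifts to pointed spaces via the smash product, and the symmetric monoidal statement with cofibrant unit $S^0 = (G\times_G\I_0)_+$ from~\cite{Hovey}*{Proposition 4.2.9}.

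For the second part, I would simply apply Proposition~\ref{prop-ptd-obj-model}. Its hypotheses require that $\DgTop{\I}{G}$ be a symmetric monoidal model category with cofibrant final object which is also the monoidal unit (this is Proposition~\ref{prop-level-sym-mon}), and that the underlying $\infty$-category $\DgTop{\I}{G}[W^{-1}_{lvl}]$ be presentable. The latter holds because $\DgTop{\I}{G}$ is cofibrantly generated and its underlying category (continuous functors from $\I$ to $G$-spaces) is locally presentable, so we may invoke~\cite{HA}*{Proposition 1.3.4.22}. Proposition~\ref{prop-ptd-obj-model} then directly yields a symmetric monoidal equivalence $\DgTp{\I}{G}[W^{-1}_{lvl}]\simeq (\DgTop{\I}{G}[W^{-1}_{lvl}])_*$, since $\DgTp{\I}{G}$ is precisely the category of pointed objects $(\DgTop{\I}{G})_*$ with its induced model structure.

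The only mild subtlety — and the one step warranting care rather than a citation — is checking that the model structure on pointed objects constructed via~\cite{Hovey}*{Proposition 1.1.8} coincides with the one whose weak equivalences, fibrations and cofibrations are detected by the forgetful functor to $\DgTop{\I}{G}$; this is essentially the content of that proposition, but one should note that cofibrations of pointed objects are detected on underlying objects precisely because the unit is cofibrant, so that $(-)_+$ is left Quillen and the adjunction $((-)_+, \text{fgt})$ transports generating cofibrations correctly. Everything else is a formal consequence of the results already assembled in Section~\ref{subsec-pointed}, so I do not anticipate any genuine obstacle.
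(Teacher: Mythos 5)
Your first half matches the paper's (the paper cites the discussion in Section~\ref{subsec-pointed} together with \cite{Schwede18}*{Proposition B.5} for topologicality, and you are invoking the same Hovey results that section assembles), so that part is fine. The gap is in how you establish presentability of $\DgTop{\I}{G}[W^{-1}_{lvl}]$, which is the hypothesis of Proposition~\ref{prop-ptd-obj-model} that actually requires care.

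You claim the underlying category of continuous functors $\I \to G\Top$ is locally presentable and then appeal to \cite{HA}*{Proposition 1.3.4.22}. But the conventional categories of topological spaces used here (compactly generated, or compactly generated weak Hausdorff, as in Schwede's framework that the paper follows) are \emph{not} locally presentable, and neither are enriched functor categories valued in them. So \cite{HA}*{Proposition 1.3.4.22}, which requires a combinatorial model category, does not apply. This is exactly why the paper does not cite that result here: it instead establishes presentability via Theorem~\ref{thm-presheaf-I-G-spaces} (the Elmendorf-type theorem, appearing later in the text as a forward reference), which exhibits $\DgTop{\I}{G}[W^{-1}_{lvl}]$ as the presheaf category $\DgSpc{\OR_G}$, manifestly presentable. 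Note that Theorem~\ref{thm-presheaf-I-G-spaces} only uses the unpointed level model structure, so there is no circularity. Your argument needs to replace the appeal to \cite{HA}*{Proposition 1.3.4.22} with this identification (or with some other argument that sidesteps the failure of local presentability of the point-set level category).
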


\begin{proof}
	The first part follows from the discussion in Section~\ref{subsec-pointed} and~\cite{Schwede18}*{Proposition B.5}. For the final claim apply Proposition \ref{prop-ptd-obj-model} together with the fact 
	that $\DgTop{\I}{G}[W_{lvl}^{-1}]$ is presentable by 
	Theorem~\ref{thm-presheaf-I-G-spaces}.
\end{proof}

We now change gears and consider the global analogue of the previous discussion. Recall that for any $G$-representation $V$ and $\I$-space $X$, the value $X(V)$ admits a natural $G$-action by restricting along the canonical morphism $G\to O(V)$, see Remark~\ref{rem-G-action}.

\begin{definition}\label{def-global-level}
	Let $f\colon X \to Y$ be a morphism in $\DgTop{\I}{}$.
	\begin{itemize}
		\item[(a)] We say $f$ is a \emph{faithful level equivalence} if for every compact Lie group 
		$G$ and every faithful $G$-representation $V$, the map $f(V)\colon X(V)\to Y(V)$ is 
		a $G$-weak equivalence: for all closed subgroups $H \leq G$, the induced map $f(V)^H \colon X(V)^H \to Y(V)^H$ is a weak homotopy equivalence of spaces. 
		\item[(b)] We say $f$ is a \emph{faithful level fibration} if for every compact Lie group 
		$G$ and every faithful $G$-representation $V$, the map $f(V)\colon X(V)\to Y(V)$ is a fibration in the projective model structure of $G$-spaces.
	\end{itemize}
\end{definition}

The following result is a reformulation of~\cite{Schwede18}*{Lemmas 1.2.7, 1.2.8} to our context.

\begin{lemma}\label{lem-faithful-level}
 Let $f \colon X \to Y$ be a morphism in $\DgTop{\I}{}$. 
 Then the following are equivalent:
 \begin{itemize}
 \item[(a)] the map $f(V)\colon X(V)^G\to Y(V)^G$ is a weak homotopy equivalence 
 (resp., Serre fibration) for every compact Lie group $G$ and every $G$-representation $V$;
 \item[(b)] the map $f\colon X \to Y$ is a faithful level equivalence (resp., faithful level fibration);
 \item[(c)] the map $f(\mathbb{R}^m)\colon X(\mathbb{R}^m)\to Y(\mathbb{R}^m)$ is a 
 $O(m)$-weak equivalence (resp., $O(m)$-fibration) for every $m\geq 0$.
 \end{itemize}
\end{lemma}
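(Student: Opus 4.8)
The statement to prove is Lemma~\ref{lem-faithful-level}, a three-way equivalence for a morphism $f\colon X\to Y$ in $\DgTop{\I}{}$ between: (a) $f(V)^G$ being a weak equivalence (resp. Serre fibration) for \emph{every} compact Lie group $G$ and \emph{every} $G$-representation $V$; (b) $f$ being a faithful level equivalence (resp. faithful level fibration), i.e. the same condition restricted to \emph{faithful} representations $V$; and (c) $f(\mathbb{R}^m)$ being an $O(m)$-weak equivalence (resp. $O(m)$-fibration) for every $m\geq 0$. Since (a) is the conjunction over all pairs $(G,V)$ and (b) restricts to faithful $V$, the implication (a)$\Rightarrow$(b) is trivial, and (a)$\Leftrightarrow$(c) is also essentially immediate: given (a), apply it with $G=O(m)$ and $V=\mathbb{R}^m$ the tautological representation (which is faithful) to get (c); conversely, any $G$-representation $\rho\colon G\to O(m)$ of dimension $m$ lets one identify $X(V)^H\cong X(\mathbb{R}^m)^{\Gamma(\rho|_H)}$ for $H\leq G$ via the argument already used in the proof of Lemma~\ref{lem-same}, and since $\Gamma(\rho|_H)$ is a closed subgroup of $O(m)$, condition (c) (an $O(m)$-equivalence, i.e. an equivalence on \emph{all} closed-subgroup fixed points) yields (a). So the content is the implication (b)$\Rightarrow$(a) (equivalently (b)$\Rightarrow$(c)): upgrading from faithful representations to arbitrary ones.

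\textbf{Reduction to the key point.} The hard direction is handled by the standard trick, which is why the statement is attributed to \cite{Schwede18}*{Lemmas 1.2.7, 1.2.8}. Given an arbitrary $G$-representation $V$, choose \emph{any} faithful $G$-representation $W$ (one exists since $G$ is a compact Lie group). Then $V\oplus W$ is a faithful $G$-representation, so hypothesis (b) tells us $f(V\oplus W)\colon X(V\oplus W)\to Y(V\oplus W)$ is a $G$-weak equivalence (resp. $G$-fibration), i.e. is so on $H$-fixed points for all closed $H\leq G$. I would then relate $f(V)$ to $f(V\oplus W)$. The point is that for any $\I$-space $X$, the value $X(V\oplus W)$ with its residual structure, when restricted appropriately, ``contains'' $X(V)$: more precisely one uses that the isometries $\I(V,V\oplus W)$ assemble, and that the inclusion-of-a-direct-summand data exhibits $X(V)$ as a retract, or — following Schwede's actual argument — one observes that the $O(V\oplus W)$-equivariant homotopy type of $X(V\oplus W)$ on the relevant fixed points recovers $X(V)$ on the corresponding fixed points via the embedding $O(V)\times O(W)\hookrightarrow O(V\oplus W)$ and the identity $X(V\oplus W)^{H\times\{1\}\cap\dots}$. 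I would carry this out by citing the precise mechanism in \cite{Schwede18}*{Lemma 1.2.7}: for a morphism of $\I$-spaces, being a $G$-weak equivalence (resp. fibration) on $X(V)$ for all $G$-representations $V$ is implied by being so on $X(V\oplus W)$ for one faithful $W$ and all $V$, which in turn reduces to the faithful case by absorbing $W$.

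\textbf{Assembling the proof.} Concretely the writeup will be: (c)$\Leftrightarrow$(a) via the homeomorphism $X(V)^H\cong X(\mathbb{R}^m)^{\Gamma(\rho)}$ established in the proof of Lemma~\ref{lem-same} (choosing an isometry $V\cong\mathbb{R}^{\dim V}$ and letting $\rho$ be the resulting homomorphism $H\to O(\dim V)$), noting $\Gamma(\rho)$ ranges over closed subgroups of $O(m)$ and that an $O(m)$-weak equivalence/fibration is by definition tested on all closed subgroups; (a)$\Rightarrow$(b) is immediate by restricting the quantifier to faithful $V$; and (b)$\Rightarrow$(a) by the faithful-summand argument above, citing \cite{Schwede18}*{Lemmas 1.2.7, 1.2.8} for the fact that a level equivalence (resp. fibration) of $\I$-spaces in the sense of faithful representations is automatically one in the sense of all representations. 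The main obstacle is purely expository: making sure the translation between Schwede's formulation (phrased for orthogonal spaces, i.e. $\I$-spaces, with his global level model structure) and the formulation here is faithful, in particular that ``$G$-weak equivalence'' in Definition~\ref{def-global-level}(a) — tested on \emph{closed} subgroups — matches what one gets after absorbing a faithful summand, and that the fibration half goes through identically (Schwede's Lemma 1.2.8 handles exactly this). No genuinely new argument is needed beyond correctly invoking these two lemmas together with the fixed-point homeomorphism already exploited in Lemma~\ref{lem-same}.
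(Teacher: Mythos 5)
Your proof contains a genuine gap and also a logical redundancy. You correctly show $(a)\Rightarrow(b)$ and $(a)\Leftrightarrow(c)$, including the nontrivial direction $(c)\Rightarrow(a)$ via the fixed-point homeomorphism $X(V)^{H}\cong X(\mathbb{R}^m)^{\Gamma(\rho|_H)}$, which is exactly what the paper does. But you then declare $(b)\Rightarrow(a)$ to be ``the content'' and reach for Schwede's summand argument, even though you have already observed en route that the tautological $O(m)$-action on $\mathbb{R}^m$ is faithful. That observation immediately gives $(b)\Rightarrow(c)$: apply condition (b) to $(G,V)=(O(m),\mathbb{R}^m)$. Together with your $(c)\Rightarrow(a)$ this closes the cycle, so the summand trick you go on to develop is never needed. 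The paper's proof is precisely this short cycle $(a)\Rightarrow(b)\Rightarrow(c)\Rightarrow(a)$, with only the last arrow requiring the homeomorphism argument borrowed from Lemma~\ref{lem-same}.

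The more serious problem is that the summand argument you sketch does not actually make sense in the setting of this lemma. Here $\DgTop{\I}{}$ is the category of continuous functors on the topological category $\I$ whose morphisms are linear isometric \emph{isomorphisms} only; in particular $\I(V,V\oplus W)=\varnothing$ when $W\neq 0$, and an $\I$-space $X$ has no structure map $X(V)\to X(V\oplus W)$. Your appeal to ``the inclusion-of-a-direct-summand data'' exhibiting $X(V)$ as a retract of $X(V\oplus W)$, and your citation of \cite{Schwede18}*{Lemmas 1.2.7, 1.2.8}, both presuppose Schwede's orthogonal spaces, which are continuous functors on the category $\mathbf{L}$ of linear isometric \emph{embeddings} and therefore do carry such structure maps. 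Your parenthetical identification ``orthogonal spaces, i.e.\ $\I$-spaces'' conflates these two different categories, and the translation you call ``purely expository'' is exactly where the argument would break. Indeed this is why the paper phrases the lemma as a ``reformulation'' of Schwede's and gives a different, purely fixed-point-theoretic proof: in the $\I$-space setting there is no summand map to exploit, but also none is needed, because an arbitrary $G$-representation $V$ can always be re-encoded as the tautological representation of the closed subgroup $\rho(G)\leq O(\dim V)$.
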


\begin{proof}
 It is clear that (a) implies (b), which implies (c). Suppose that (c) holds and let $V$ be a 
 $G$-representation. As in the proof of Lemma~\ref{lem-same} we can choose a linear isometric 
 isomorphism $\varphi\colon V \simeq \mathbb{R}^m$ and define a group homomorphism 
 $\rho \colon G \to O(m)$ such that 
 \[
 X(V)^G \simeq X(\mathbb{R}^m)^{\rho(G)}
 \]
 showing that (c) implies (a).
\end{proof}

\begin{construction}[semifree $\I$-space]\label{con-semifree}
	For every $G$-representation $V$, there is an evaluation functor 
	\[
	\mathrm{ev}_{G,V}\colon \DgTop{\I}{}\to G\Top, \qquad X\mapsto X(V)
	\]
	which admits a left adjoint $\Li_{G,V}$ given by the formula 
	$\Li_{G,V}(A)=\Li(V,-)\times_G A$.  
	When $A=*$, we simply write $\Li_{G,V}$. For all $H$-representations 
	$V$ and $K$-representations $W$, there is an isomorphism of $\I$-$G$-spaces
	\begin{equation}\label{tensor-product-semifree}
		\Li_{H,V} \otimes \Li_{K,W}\cong \Li_{H\times K,V\oplus W}.
	\end{equation}
	One can check this using the formula in Remark~\ref{rem-day-convolution} or by mimicking 
	the proof of~\cite{Schwede18}*{Example 1.3.3}.
\end{construction}

The next result is an analogue of~\cite{Schwede18}*{Proposition 1.2.10}, adapted to our context.

\begin{theorem}\label{thm-level-global}
	The category $\DgTop{\I}{}$ admits a topological, cofibrantly generated  
	model structure in which the weak equivalences are the faithful level equivalences 
	$W_{f-lvl}$ and the fibrations are the faithful level fibrations. 
	The set of generating cofibrations $I$ and acyclic cofibrations $J$ are given by 
	\begin{align*}
		I= & \{\Li_{G,V} ( \partial D^n) \to \Li_{G,V}( D^n) \}\\
		J =& \{ \Li_{G,V} ( D^n\times \{0\}) \to \Li_{G,V}  ( D^n \times [0,1]) \}
	\end{align*}
	where $G$ runs over all compact Lie groups, $V$ over all faithful $G$-representations 
	and $n\geq 0$. This is a symmetric 
	monoidal model category with cofibrant unit object. 
	We call this the \emph{faithful level model structure}. 
\end{theorem}

\begin{proof}
    We can identify $\DgTop{\I}{}$ with the category $\prod_{m\geq 0}O(m)\Top$ and endow the latter category with the product of the standard model structures on $O(m)$-spaces. The induced model structure on $\DgTop{\I}{}$ has weak equivalences and fibrations as in the theorem by Lemma~\ref{lem-faithful-level}.
	We note that the right lifting property against the sets $I$ and $J$ detect the level fibrations 
	and level acyclic fibrations respectively, by the adjunction isomorphism
	\[
	\Hom_{\DgTop{\I}{}}(\I_{H,V}A, X)\simeq \Hom_{\Top}(A, X(V)^H)
	\]
	for $A$ a non-equivariant space. 	
	Let us next show that the pushout-product axiom holds. 
	As explained in the proof of Proposition~\ref{prop-level-sym-mon}, it suffices to check 
	that the pushout product $f \square g$ is an (acyclic) cofibration if $f$ and $g$ belong to 
	the set of generating (acyclic) cofibrations. In any case we have $f=\Li_{G,V}f'$ and 
	$g=\I_{H,W}g'$. But then $f \square g=\Li_{G\times H, V\oplus W}f'\square g'$ 
	by Equation~(\ref{tensor-product-semifree}).
	Since $G\Top$ is a symmetric monoidal model category, it suffices to check that the 
	functor $\I_{G\times H, V\oplus W}$ is left Quillen. This is clear since 
	$\mathrm{ev}_{G\times H, V\oplus W}$ is right Quillen by definition of the faithful level 
	model structure. The pushout-product axiom then follows. 
	Finally, the unit axiom holds since the unit object $*=\I_{e,0}$ is cofibrant and 
	the model structure is topological by~\cite{Schwede18}*{Proposition B.5}. 
\end{proof}
As before we obtain an induced model structured on pointed objects.

\begin{proposition}
	The category $\DgTp{\I}{}$ admits a faithful level model 
	structure in which the weak equivalences, fibrations and cofibrations are detected 
	by the forgetful functor $\DgTp{\I}{}\to \DgTop{\I}{}$. This model structure is topological,
	cofibrantly generated by the set $I_+$ and $J_+$, symmetric monoidal and the unit object 
	is cofibrant. Finally, there exists a symmetric 
	monoidal equivalence of $\infty$-categories 
	\[\DgTp{\I}{}[W^{-1}_{f-lvl}] \simeq (\DgTop{\I}{}[W^{-1}_{f-lvl}])_*.\]
\end{proposition}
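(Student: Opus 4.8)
The plan is to mirror the proof of the analogous statement for $G$-equivariant spaces, replacing the level model structure with the faithful level model structure. The claim has two parts: the existence and basic properties of the faithful level model structure on $\DgTp{\I}{}$, and the symmetric monoidal equivalence of $\infty$-categories. First I would handle the model-categorical statement. Since $\DgTp{\I}{} \simeq (\DgTop{\I}{})_*$ is the category of pointed objects in $\DgTop{\I}{}$, and since Theorem~\ref{thm-level-global} equips $\DgTop{\I}{}$ with a cofibrantly generated, topological, symmetric monoidal model structure with cofibrant unit, I would invoke the general results recalled in Section~\ref{subsec-pointed}: by~\cite{Hovey}*{Proposition 1.1.8} the pointed category inherits a model structure with weak equivalences, fibrations and cofibrations detected by the forgetful functor; by~\cite{Hovey}*{Lemma 2.1.21} it is cofibrantly generated by $I_+$ and $J_+$; and by~\cite{Hovey}*{Proposition 4.2.9} it is symmetric monoidal (under the smash product) with cofibrant unit, using that the unit $*$ of $\DgTop{\I}{}$ is cofibrant. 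Topologicality follows from~\cite{Schwede18}*{Proposition B.5}, exactly as in the unpointed case and in the proof of the $G$-equivariant analogue.

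For the $\infty$-categorical equivalence, I would apply Proposition~\ref{prop-ptd-obj-model} to the symmetric monoidal model category $\CM = \DgTop{\I}{}$ with the faithful level model structure. The hypotheses of that proposition are: $\CM$ is a symmetric monoidal model category with cofibrant final object which is also the monoidal unit — true here since $* = \I_{e,0}$ is both cofibrant and the unit — and the underlying $\infty$-category $\CM[W_{f-lvl}^{-1}]$ is presentable. For presentability I would cite the global analogue of Theorem~\ref{thm-presheaf-I-G-spaces} (the statement that $\DgTop{\I}{}[W_{f-lvl}^{-1}]$ is a presheaf $\infty$-category, hence presentable), just as the pointed $G$-equivariant statement cited Theorem~\ref{thm-presheaf-I-G-spaces}. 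Granting this, Proposition~\ref{prop-ptd-obj-model} yields a symmetric monoidal equivalence $\DgTp{\I}{}[W_{f-lvl}^{-1}] \simeq (\DgTop{\I}{}[W_{f-lvl}^{-1}])_*$, which is precisely the claim.

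The only genuine subtlety — and the place I would be most careful — is the appeal to presentability of $\DgTop{\I}{}[W_{f-lvl}^{-1}]$. This is not part of the excerpt shown; the proof of the pointed $G$-equivariant proposition cited Theorem~\ref{thm-presheaf-I-G-spaces} for that case, and I am assuming the corresponding global statement is (or will be) available in the same section. If one wanted to avoid relying on that identification, one could alternatively argue presentability directly from~\cite{HA}*{Proposition 1.3.4.22}, since the faithful level model structure of Theorem~\ref{thm-level-global} is cofibrantly generated and the underlying category $\DgTop{\I}{}$ is locally presentable (being a category of continuous functors from a small topological category into the locally presentable category of spaces). Either route suffices; everything else is a direct citation of the machinery assembled in Section~\ref{subsec-pointed} together with Theorem~\ref{thm-level-global}, exactly paralleling the proof already given for the $G$-equivariant case.
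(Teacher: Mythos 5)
Your proposal matches the paper's proof almost exactly: the model-categorical part is handled by the discussion in Section~\ref{subsec-pointed} together with \cite{Schwede18}*{Proposition B.5}, and the symmetric monoidal equivalence follows from Proposition~\ref{prop-ptd-obj-model} applied to the faithful level model structure, with presentability of $\DgTop{\I}{}[W^{-1}_{f-lvl}]$ supplied by Theorem~\ref{thm-presheaf-I-spaces} (which is exactly the global analogue of Theorem~\ref{thm-presheaf-I-G-spaces} you anticipated, proved later in the same section). Your alternative route to presentability via \cite{HA}*{Proposition 1.3.4.22} and local presentability of $\DgTop{\I}{}$ is a legitimate way to avoid the forward reference, but the paper simply accepts the forward reference; both are fine.
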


\begin{proof}
	The first two claims follows from the discussion in Section~\ref{subsec-pointed} and
	\cite{Schwede18}*{Proposition B.5}. For the final claim apply Proposition~\ref{prop-ptd-obj-model}, using the fact that $\DgTop{\I}{}[W^{-1}_{f-lvl}]$ is presentable. We will show this in Theorem~\ref{thm-presheaf-I-spaces}.
\end{proof}

We now pass from pointed objects to pre-spectrum objects. Observe that the category of pointed $\I$-$G$-spaces has a commutative algebra object $S_G$ given by the functor sending $V$ to its one-point compactification $S^V$ equipped with the trivial $G$-action. If we are thinking of the category of $\I$-spaces with the faithful level model structure, we will write $S_{fgl}$ for $S_e$, to emphasize that the sphere should be thought of as evaluated on all faithful representations of all groups ($fgl$ stands for faithful global). 

\begin{definition}\label{def-prespectra}
	Let $G$ be a Lie group. Following \cite{MandellMay}*{Chapter II Proposition 3.8}, we define 
	the topological category $\Sp^O_G$ of orthogonal $G$-spectra to be the category of 
	$S_G$-modules in $\DgTp{\I}{G}$. These categories inherit induced model structures:
	\begin{itemize}
		\item[(a)] The category of orthogonal $G$-spectra admits a \emph{(proper) level model structure} whose weak 
		equivalences and fibrations are created by the forgetful functor $\Sp_G^O \to \DgTp{\I}{G}$ where the target is endowed with the level model structure. This is a cofibrantly generated, proper, topological model category, see the proof of~\cite{Proper}*{Theorem 1.2.22}. We also obtain that a set of generating cofibrations and acyclic cofibrations are given by the maps $S_G \otimes I_G$ and $S_G \otimes J_G$ where $S_G \otimes-$ denotes the left adjoint to the forgetful functor $\Sp_G^O \to \DgTp{\I}{G}$.
		
		\item[(b)] The category of orthogonal spectra admits a \emph{faithful level model structure} 
		whose weak equivalences and fibrations are created by the forgetful functor $\Sp^O \to \DgTp{\I}{}$ where the target is endowed with the faithful level model structure, see ~\cite{Schwede18}*{Propositions 4.3.5}. From this result we obtain that the faithful level model structure is cofibrantly generated and topological, with a set of generating cofibrations and acyclic cofibrations given by $\Sfgl \otimes I$ and $\Sfgl \otimes J$, where $\Sfgl \otimes-$ denotes the left adjoint to the forgetful functor $ \Sp^O \to \DgTp{\I}{}$.
		\end{itemize}
\end{definition}

\begin{remark}
By combining straightforward generalizations of ~\cite{MandellMay}*{Theorem 4.3} and ~\cite{Schwede18}*{Remark 3.1.8} to Lie groups, we conclude that $\Sp^O_G$ is equivalent to the category of orthogonal spectra defined in \cite{Proper}*{Definition 1.1.9}. 
\end{remark}

As discussed in~\cite{MandellMay}*{Chapter II Section 3}, the category of orthogonal $G$-spectra admits a closed symmetric monoidal structure.

\begin{proposition}
	Let $G$ be a Lie group.
	\begin{itemize}
		\item[(a)]
		The level model structure on $\Sp_G^O$ is symmetric monoidal.
		\item[(b)] The faithful level model structure on $\Sp^O$ is symmetric monoidal. 
	\end{itemize}
\end{proposition}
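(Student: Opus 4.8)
The plan is to deduce both statements from the fact, recalled above, that the level model structure on $\DgTp{\I}{G}$ (resp.\ the faithful level model structure on $\DgTp{\I}{}$) is symmetric monoidal with cofibrant unit, combined with the observation that $\Sp_G^O=\Mod_{S_G}(\DgTp{\I}{G})$ is the category of modules over the commutative algebra $S_G$ and that its level model structure is \emph{created} by the forgetful functor $U\colon \Sp_G^O\to \DgTp{\I}{G}$. I will prove (a); part (b) follows by the identical argument, replacing $\DgTp{\I}{G}$ by $\DgTp{\I}{}$ with the faithful level model structure, $S_G$ by $\Sfgl$, and the generating sets $I_G,J_G$ by $I,J$.

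Since $U$ is right Quillen by construction, its left adjoint, the free module functor $S_G\otimes(-)\colon \DgTp{\I}{G}\to \Sp_G^O$, is left Quillen; it is moreover strong symmetric monoidal, sending the monoidal unit of $\DgTp{\I}{G}$ to $S_G$ and satisfying a natural isomorphism $(S_G\otimes X)\otimes_{S_G}(S_G\otimes Y)\cong S_G\otimes(X\otimes Y)$. The unit axiom is then immediate: as the unit of $\DgTp{\I}{G}$ is cofibrant and $S_G\otimes(-)$ is left Quillen, the unit $S_G$ of $\Sp_G^O$ is cofibrant, so one may take its cofibrant replacement to be the identity. For the pushout-product axiom I would apply the standard reduction~\cite{Hovey}*{4.2.5}: it suffices to check that $f\,\square_{S_G}\,g$ is a cofibration whenever $f,g$ lie in the generating set $S_G\otimes I_G$ of Definition~\ref{def-prespectra}, and an acyclic cofibration when additionally one of them lies in $S_G\otimes J_G$. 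Writing $f=S_G\otimes f_0$ and $g=S_G\otimes g_0$, the fact that $S_G\otimes(-)$ preserves colimits and is strong monoidal yields $f\,\square_{S_G}\,g\cong S_G\otimes(f_0\square g_0)$; by the pushout-product axiom for the level model structure on $\DgTp{\I}{G}$ the map $f_0\square g_0$ is a cofibration (an acyclic cofibration if $f_0$ or $g_0$ is one), and applying the left Quillen functor $S_G\otimes(-)$ gives the desired conclusion.

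This argument is essentially formal, so I do not expect a substantive obstacle. The only points needing care are the identification of the relative pushout-product of free $S_G$-modules with $S_G\otimes(f_0\square g_0)$ — which uses that $\otimes_{S_G}$ is computed as a coequalizer and that $S_G\otimes(-)$ is cocontinuous and strong monoidal — and the observation that the (proper) level model structure on $\Sp_G^O$ is the non-positive one, with generating cofibrations $S_G\otimes I_G$, so that $S_G$ really is cofibrant; both are recorded in Definition~\ref{def-prespectra} and the structural results of~\cite{MandellMay}. The proof of (b) is verbatim the same, using the faithful level model structure of Theorem~\ref{thm-level-global} and its pointed analogue.
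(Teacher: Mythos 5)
Your proof is correct, and it takes a genuinely different — and in fact cleaner — route than the paper's. The paper argues "similarly to Proposition~\ref{prop-level-sym-mon}" and delegates to \cite{Proper}*{Proposition 1.2.28(i)} and \cite{Schwede18}*{Proposition 4.3.23}, where the pushout-product of generators is computed directly from the explicit form of the free ($G$-)spectra $G\ltimes_H F_V$ (resp.\ $F_{G,V}$) and their smash products. You instead exploit the module-category description $\Sp_G^O=\Mod_{S_G}(\DgTp{\I}{G})$ recorded in Definition~\ref{def-prespectra}: since the free-module functor $S_G\otimes(-)$ is strong symmetric monoidal, colimit-preserving and left Quillen, the isomorphism $(S_G\otimes f_0)\,\square_{S_G}\,(S_G\otimes g_0)\cong S_G\otimes(f_0\square g_0)$ reduces the pushout-product axiom on $\Sp_G^O$ formally to the one already proved for $\DgTp{\I}{G}$, with no need to recompute tensor products of generators. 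This is essentially the Schwede--Shipley argument for module categories over a commutative monoid, and it fits naturally with the paper's insistence on viewing $\Sp_G^O$ as a module category; the paper's cited argument is more self-contained but less economical. One small remark: be explicit that the isomorphism $(S_G\otimes X)\otimes_{S_G}(S_G\otimes Y)\cong S_G\otimes(X\otimes Y)$ is natural in both $X$ and $Y$, so that it genuinely identifies the whole pushout-product \emph{map} and not just its source and target — you do gesture at this with the coequalizer remark, and it is standard, but it is the one place where the formality could be spelled out.
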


\begin{proof}
	The proof that the pushout product axiom holds for $\Sp_G^O$ is similar to that given in Proposition~\ref{prop-level-sym-mon} for 
	$\I$-$G$-spaces. The explicit argument for cofibrations can be found 
	in~\cite{Proper}*{Proposition 1.2.28(i)} and we note that a slight modification of that argument then 
	also gives the statement for acyclic cofibrations. 
	The argument that the faithful level model structure satisfies the pushout-product axiom is similar 
	to that given in Theorem~\ref{thm-level-global}. The argument for cofibrations 
	can also be found in~\cite{Schwede18}*{Proposition 4.3.23} and a slight modification of that argument also 
	gives the statement for acyclic cofibrations.
\end{proof}

\begin{definition}
	We define the $\infty$-category $\PSp_G$ of \emph{$G$-prespectra} to be the symmetric monoidal $\infty$-category associated to the symmetric monoidal model category $\Sp^O_G$ with the level model structure.
	Similarly, we define the $\infty$-category $\PSpfgl$ of \emph{faithful global prespectra} to be the symmetric monoidal $\infty$-category associated to the symmetric monoidal model category $\Sp^O$ with the faithful level model structure.
\end{definition}

We have emphasized how the level model structures on $\Sp_G^O$ and $\Sp^O$ are induced by the level model structure on $\DgTp{\I}{G}$ and $\DgTp{\I}{}$ respectively by taking modules. This allows us to reinterpret the passage to modules internally to $\infty$-categories.

\begin{proposition}\label{cor:PSp_first}
	There are symmetric monoidal equivalences 
	\[
	\PSp_G \simeq \Mod_{S_G}(\DgTop{\I}{G}[W_{lvl}^{-1}]_{*}) \quad \mathrm{and} \quad 
	\PSpfgl\simeq \Mod_{\Sfgl}(\DgTop{\I}{}[W_{f-lvl}^{-1}
	]_{*}).
	\]
\end{proposition}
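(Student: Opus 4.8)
The plan is to obtain both equivalences as instances of Proposition~\ref{prop:Modules_localization_commute}, which commutes the formation of module categories with passage to underlying $\infty$-categories, combined with the identification of the underlying $\infty$-category of a category of pointed objects. By Definition~\ref{def-prespectra} we have $\Sp^O_G=\Mod_{S_G}(\DgTp{\I}{G})$ and $\Sp^O=\Mod_{\Sfgl}(\DgTp{\I}{})$ as symmetric monoidal (topological) categories, with the level model structure on $\Sp^O_G$ (resp.\ the faithful level model structure on $\Sp^O$) being exactly the one obtained from the level model structure on $\DgTp{\I}{G}$ (resp.\ the faithful level model structure on $\DgTp{\I}{}$) by creating fibrations and weak equivalences along the forgetful functor and taking $S_G\otimes I_G$, $S_G\otimes J_G$ (resp.\ $\Sfgl\otimes I$, $\Sfgl\otimes J$) as generating (acyclic) cofibrations. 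Thus $\PSp_G=\Mod_{S_G}(\DgTp{\I}{G})[W_{lvl}^{-1}]$ and $\PSpfgl=\Mod_{\Sfgl}(\DgTp{\I}{})[W_{f-lvl}^{-1}]$ by definition.

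I would then verify the hypotheses of Proposition~\ref{prop:Modules_localization_commute} with $\CM=\DgTp{\I}{G}$, $A=S_G$ and with $\CM=\DgTp{\I}{}$, $A=\Sfgl$. In each case $\CM$ is a cofibrantly generated symmetric monoidal model category with locally presentable underlying category, so the module model structure on $\Mod_A(\CM)$ exists by the remark following Proposition~\ref{prop:Modules_localization_commute} and agrees with that of Definition~\ref{def-prespectra}; moreover $\CM[W^{-1}]$ is presentable, being symmetric monoidally equivalent to $(\DgTop{\I}{G}[W_{lvl}^{-1}])_*$, resp.\ $(\DgTop{\I}{}[W_{f-lvl}^{-1}])_*$, via Proposition~\ref{prop-ptd-obj-model} (applied as in the results on pointed $\I$-$G$-spaces and pointed $\I$-spaces) together with the presentability Theorems~\ref{thm-presheaf-I-G-spaces} and \ref{thm-presheaf-I-spaces}. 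Granting the cofibrancy hypothesis on $A$, Proposition~\ref{prop:Modules_localization_commute} then yields symmetric monoidal equivalences $\Mod_{S_G}(\DgTp{\I}{G})[W_{lvl}^{-1}]\simeq \Mod_{S_G}(\DgTp{\I}{G}[W_{lvl}^{-1}])\simeq \Mod_{S_G}\bigl((\DgTop{\I}{G}[W_{lvl}^{-1}])_*\bigr)$ and analogously in the faithful global case, which is exactly the claim. The faithful global case is clean: $\Sfgl(\mathbb{R}^m)=S^{\mathbb{R}^m}$ is a cofibrant pointed projective $O(m)$-space (it admits an $O(m)$-CW structure with the fixed points $0,\infty$ as $0$-cells, a single $1$-cell of orbit type $O(m)/O(m-1)$, and $\{\ast\}$ a subcomplex), so $\Sfgl$ is a cofibrant object of $\DgTp{\I}{}$ and Proposition~\ref{prop:Modules_localization_commute} applies verbatim.

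The step I expect to be the main obstacle is that $S_G$ is \emph{not} cofibrant in $\DgTp{\I}{G}$: the basepoint inclusion $\{\ast\}\to S^{\mathbb{R}^m}$ fails the level cofibration condition because $O(m)$ does not act freely on $S^{\mathbb{R}^m}\setminus\{\ast\}$. I would get around this by observing that $S_G$ is nonetheless homotopically flat, i.e.\ that $S_G\otimes-$ preserves level cofibrations and acyclic level cofibrations; this follows from the explicit formula for the free orthogonal $G$-spectrum on a free $\I$-$G$-space in Construction~\ref{con-free-IG-space} and Equation~\eqref{eq-tensor}, the key point being that although $S^W$ has an $O(W)$-fixed point, smashing with a free $O(U)$-space of the form $\I(W\oplus V,U)_+$ and forming the relevant coend produces an $\I$-$G$-space on which $O(U)$ acts freely away from the basepoint. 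Homotopical flatness of $A$ is all that the proof of Proposition~\ref{prop:Modules_localization_commute} substantively uses, once the construction of the comparison functor is rephrased using the symmetric monoidal colimit preserving free-module functor $\DgTp{\I}{G}[W_{lvl}^{-1}]\to\Sp^O_G[W_{lvl}^{-1}]$ and the universal property of $\Mod_{S_G}$ in place of the intermediate category $\Mod_A(\CM^c)$. Alternatively, one may replace the level model structure on $\DgTp{\I}{G}$ by a positive variant in which $S_G$ is cofibrant and which, after passing to $S_G$-modules, is Quillen equivalent to the level structure on $\Sp^O_G$, and then invoke Proposition~\ref{prop:Modules_localization_commute} directly. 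Modulo this point, the argument is routine bookkeeping.
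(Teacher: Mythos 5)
Your proposal takes the same top-level route as the paper, which proves this by simply writing ``Apply Proposition~\ref{prop:Modules_localization_commute}.'' You have correctly noticed that this is not quite a complete proof: the hypothesis of Proposition~\ref{prop:Modules_localization_commute} asks that the underlying object of $A$ be cofibrant, and $S_G$ is \emph{not} cofibrant in the proper level model structure on $\DgTp{\I}{G}$. Indeed, at level $\mathbb{R}^m$ with $m\geq 1$, the point $0\in S^{\mathbb{R}^m}$ is fixed by all of $G\times O(m)$ and is distinct from the basepoint $\infty$, so $O(m)$ does not act freely on $S^{\mathbb{R}^m}\setminus\{\ast\}$, and $\{\ast\}\to S^{\mathbb{R}^m}$ is not a $\CC_G(m)$-cofibration; hence $\{\ast\}\to S_G$ is not a level cofibration. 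Your treatment of the global case is also correct: $\Sfgl$ is cofibrant in the faithful level model structure because the family there is all closed subgroups of $O(m)$, so $S^{\mathbb{R}^m}$ as a pointed $O(m)$-CW complex with $\{\ast\}$ a subcomplex is cofibrant. Identifying the asymmetry between the two cases is the genuinely useful part of your proposal.

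However, your first proposed workaround is incorrect. You claim $S_G\otimes-$ preserves level (acyclic) cofibrations, arguing that the coend against the free $O(U)$-space $\I(W\oplus V,U)_+$ forces the resulting action to be free off the basepoint. This fails already for the monoidal unit: $\I_{0,+}$ is cofibrant (it is a generating $\CC_G(0)$-cell) but $S_G\otimes\I_{0,+}\simeq S_G$ is not cofibrant, as just observed. More generally $(G\ltimes_HF_V)(U)$ is a Thom space whose zero section has positive-dimensional stabilizers under $O(U)$ already for $G=e$, $V=\mathbb{R}$, $U=\mathbb{R}^2$, so even the positive free spectra fail the freeness condition. Your second workaround (a ``positive'' variant of the level structure) is also not obviously available: the fixed point $0\in S^{\mathbb{R}^m}$ still obstructs cofibrancy at positive levels; what one actually wants is a model structure with larger cofibration class built from \emph{all} orbit types (a ``flat'' level structure), but that structure changes the fibrations, and one would then need a Quillen equivalence back to the proper level structure on $S_G$-modules. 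The point that is actually needed to repair Proposition~\ref{prop:Modules_localization_commute} is much weaker than flatness of $S_G$: one only needs the forgetful functor $U$ to preserve geometric realizations at the level of underlying $\infty$-categories. Since $U$ preserves all strict colimits and all weak equivalences, it is enough to know that the underlying maps of the generating cofibrations $S_G\otimes I_G$ of $\Sp_G^O$ are level $h$-cofibrations, so that the strict (geometric realization of a Reedy-cofibrant simplicial diagram) already computes the homotopy colimit in $\DgTp{\I}{G}$ after applying $U$. Tensoring with any object preserves $h$-cofibrations in a topological category, so this is automatic and does not require $S_G$ itself to be cofibrant. You should replace the claim of ``homotopical flatness'' by this weaker $h$-cofibration argument; the rest of your proposal then goes through and, as a bonus, records a point that the published proof omits.
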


\begin{proof}
	Apply Proposition \ref{prop:Modules_localization_commute}.
\end{proof}

Finally we pass from the level model structure to the stable model structure, which will present the categories of global and genuine $G$-spectra. Fix a complete $G$-universe $\mathcal{U}_G$ and write $s(\mathcal{U}_G)$ for   
the poset, under inclusion, of finite dimensional $G$-subrepresentations of 
$\mathcal{U}_G$. The $G$-equivariant homotopy groups of an orthogonal 
$G$-spectrum $X$ are given by
\[
\pi^G_k(X) = 
\begin{cases}
	\displaystyle\operatorname*{colim}_{V \in s(\mathcal{U}_G)} \;\; [S^{k+V}, X(V)]_*^G & \mathrm{for} \;\; k \geq 0 \\
	\displaystyle\operatorname*{colim}_{{V \in s(\mathcal{U}_G)}} \;\; [S^{V}, X(\mathbb{R}^{-k} \oplus V)]_*^G & \mathrm{for}\;\; k \leq 0
\end{cases}
\]
where the connecting maps in the colimit system are induced by the structure 
maps, and $[-,-]_*^G$ means $G$-equivariant homotopy classes of 
based $G$-maps. 
Note that the same definition works even if $X$ is an 
orthogonal spectrum since the value $X(V)$ admits a $G$-action as discussed before 
Definition~\ref{def-global-level}. Moreover, everything is functorial with respect to 
morphisms of orthogonal ($G$-)spectra. We finally note that the definition above a priori depends on a choice of complete $G$-universe. However the functors associated to different complete $G$-universes are naturally isomorphic, and so the choice is immaterial.

\begin{definition}\label{def-SpG-Spgl}
	Let $G$ be a Lie group. 
	\begin{itemize}
		\item A morphism $f\colon X \to Y$ of orthogonal $G$-spectra is a 
		$\underline{\pi}_*$-\emph{isomorphism} if $\pi_*^H(f)\colon \pi_*^H(X)\to \pi_*^H(Y)$ 
		is an isomorphism for all compact subgroups $H\leq G$. 
		The $\underline{\pi}_*$-isomorphisms are part of a cofibrantly generated, 
		topological, stable and symmetric monoidal model structure on the category of 
		orthogonal $G$-spectra~\cite{Proper}*{Theorem 1.2.22}, called the $G$-\emph{stable 
			model structure}. 
		
		\item A morphism $f\colon X \to Y$ of orthogonal spectra is a 
		\emph{global equivalence} if $\pi_*^H(f)\colon \pi_*^H(X)\to \pi_*^H(Y)$ 
		is an isomorphism for all compact Lie groups $H$. 
		The global equivalences are part of a cofibrantly generated, 
		topological, proper, stable and symmetric monoidal model structure on the category of 
		orthogonal spectra~\cite{Schwede18}*{Theorem 4.3.17, Proposition 4.3.24}, called 
		the \emph{global model structure}.
	\end{itemize}
\end{definition}

\begin{definition}
	We define the symmetric monoidal $\infty$-category $\Sp_G$ of \emph{$G$-spectra} to be the underlying $\infty$-category of orthogonal $G$-spectra with the $G$-stable model structure.
	Similarly, we define the symmetric monoidal $\infty$-category $\Spgl$ of \emph{global spectra} to be the underlying $\infty$-category of orthogonal spectra with the global model structure. 
\end{definition}

We now make precise the observation that $\Sp_G$ and $\Spgl$ are Bousfield localizations of $\PSp_G$ and $\PSpfgl$ respectively at an explicit collection of weak equivalences. We begin with global spectra.

\begin{construction}\label{cons-lambda-maps}
	Given a compact Lie group $G$ and a $G$-representation $V$, we can consider the adjoint 
	pairs
	\[
	\begin{tikzcd}[column sep = large]
		\Sp^O \arrow[r,shift left,"\mathrm{forget}" ] & \arrow[l, shift left,"S_{gl}\otimes-"] \DgTp{\I}{}
		\arrow[r, shift left, "\mathrm{ev}_{G,V}"]& \arrow[l, shift left, "\Li_{G,V}"] G\Tp \,.
	\end{tikzcd}
	\]
	Following~\cite{Schwede18}*{Construction 4.1.23}, we denote the composite 
	$S_{gl}\otimes \Li_{G,V}$ by $F_{G,V}$. Note that the adjoint pairs above are Quillen with 
	respect to the global level structure and so they yield corresponding adjoint pairs of 
	underlying $\infty$-categories.
	As discussed before~\cite{Schwede18}*{Theorem 4.1.29}, there are maps in $\Sp^O$
	\[
	\lambda_{G,V,W}\colon F_{G,V\oplus W}S^V \to F_{G,W}S^0
	\]
	for all compact Lie groupw $G$ and $G$-representations $V$ and $W$. Note that we can view 
	these maps in $\PSp_{gl}$ since the domain and codomain of $\lambda_{G,V,W}$ are bifibrant. Consider the following diagram
	\[
	\begin{tikzcd}
		{G\Tp(S^0,X(W))} \arrow[r, "{\tilde{\sigma}_{G,V,W}}"] & {G\Tp(S^V,X(V\oplus W))} \arrow[d, "\sim"] \\
		{ \Sp^O(F_{G,W} S^0,X)} \arrow[u,"\sim"] \arrow[r]                     & { \Sp^O(F_{G,V\oplus W} S^V,X)},            
	\end{tikzcd}
	\] where the vertical maps are the adjunction isomorphisms and the top map is the adjoint structure map of $X$. The bottom map is equal to precomposition by $\lambda_{G,V,W}$. In particular, taking $X = F_{G,W} S^0$, we may define $\lambda_{G,V,W}$ as the image of the identity of $F_{G,W}S^0$ under the bottom map. Note also that $\lambda_{G,V,W}$ is equivalent to $F_{G,W} S^0 \otimes \lambda_{G,V,0}$, and that $\lambda_{G,V,0}$ is adjoint to the identity.
\end{construction}

\begin{remark}
	Observe that both characterizations of $\lambda_{G,V,W}$ given above also uniquely specify the map on the level of $\infty$-categories.
\end{remark}

\begin{proposition}\label{prop-glspectra-local-object}
	$\Spgl$ is a Bousfield localization of $\PSpfgl$. Furthermore, an object in $\PSpfgl$ lies in $\Spgl$ if and only if it is local with respect to 
	the morphisms $\{\lambda_{G,V,W}\}$ for all compact Lie groups $G$ and $G$-representations $V$ and $W$ with $W$ faithful. 
\end{proposition}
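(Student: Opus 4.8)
The plan is to obtain this by transporting Schwede's construction of the global stable model structure---which he builds as a left Bousfield localization of the faithful level model structure at exactly the maps $\{\lambda_{G,V,W}\}$---to the underlying $\infty$-categories, and then to read off the description of the local objects from the commutative square of Construction~\ref{cons-lambda-maps} together with Schwede's characterization of global $\Omega$-spectra.

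First I would recall that, by \cite{Schwede18}*{Theorem 4.3.17} (and \cite{Schwede18}*{Proposition 4.3.24} for the monoidal refinement), the global stable model structure on $\Sp^O$ has the same cofibrations as the faithful level model structure of Definition~\ref{def-prespectra}(b), has more weak equivalences, and is in fact the left Bousfield localization of the latter at the set $\{\lambda_{G,V,W}\}$, with fibrant objects the global $\Omega$-spectra. Hence the identity functors give a Quillen adjunction between these two model structures on $\Sp^O$, which by \cite{Hinich}*{Proposition 1.5.1} descends to an adjunction $L\colon \PSpfgl \rightleftarrows \Spgl \cocolon R$ of underlying presentable $\infty$-categories (presentability of $\PSpfgl$ comes from Corollary~\ref{cor:PSp_first} and Theorem~\ref{thm-presheaf-I-spaces}). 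Since the global model structure is a left Bousfield localization, its fibrant objects---the global $\Omega$-spectra---are faithful level fibrant, and a global equivalence between two of them is a faithful level equivalence; therefore $R$ is fully faithful, $L$ exhibits $\Spgl$ as a Bousfield localization of $\PSpfgl$, and the standard comparison between model-categorical and $\infty$-categorical localizations identifies the essential image of $R$ with the $\infty$-category of objects local with respect to (the images of) the $\lambda_{G,V,W}$.

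Next I would unwind what $\{\lambda_{G,V,W}\}$-locality means. For $X \in \PSpfgl$, the square of Construction~\ref{cons-lambda-maps} together with the adjunctions $(F_{G,V}, \mathrm{ev}_{G,V}\circ\mathrm{forget})$ shows that the map $\Map_{\PSpfgl}(F_{G,W}S^0, X) \to \Map_{\PSpfgl}(F_{G,V\oplus W}S^V, X)$ induced by $\lambda_{G,V,W}$ is an equivalence exactly when the adjoint structure map $\tilde{\sigma}_{G,V,W}\colon X(W) \to \Omega^V X(V\oplus W)$ becomes an equivalence on $G$-fixed points. Using that for a closed subgroup $H\le G$ one has $X(W)^H = X(W|_H)^H$---the $H$-action on $X(W)$ being the restriction of the $G$-action and depending only on $W$ as an inner product space---and letting $G$ range over all compact Lie groups, $X$ is $\{\lambda_{G,V,W}\}_{W\text{ faithful}}$-local precisely when $\tilde{\sigma}_{G,V,W}$ is a $G$-weak equivalence for every compact Lie group $G$ and all $G$-representations $V,W$ with $W$ faithful. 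After replacing $X$ by a faithful level fibrant model---which does not alter the levels on faithful representations---this becomes a condition on an honest orthogonal spectrum, and what remains is to see that it forces $X$ to be a genuine global $\Omega$-spectrum.

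The hard part, then, is this last reduction: that the $\Omega$-spectrum condition on faithful $W$ alone implies it on all $W$. This is precisely the argument in the proof of \cite{Schwede18}*{Theorem 4.1.29}---embed an arbitrary $W$ into a faithful representation $W\oplus U$ and use the compatibility of the iterated maps $\tilde{\sigma}$ under direct sums to factor $\tilde{\sigma}_{G,V,W}$ through maps already known to be $G$-weak equivalences---and I expect the bookkeeping there, together with the verification in the first step that Schwede's global model structure is literally the Bousfield localization of the faithful level model structure of Definition~\ref{def-prespectra}(b) at the set $\{\lambda_{G,V,W}\}$ (and not merely at some set with the same saturation), to be the only genuinely non-formal points. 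Granting these, combining the steps shows that an object of $\PSpfgl$ lies in $\Spgl$ if and only if it is local with respect to the maps $\{\lambda_{G,V,W}\}$ with $W$ faithful, which is the assertion.
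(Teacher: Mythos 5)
Your overall strategy matches the paper's: transport Schwede's model-categorical Bousfield localization to underlying $\infty$-categories, and unwind $\lambda_{G,V,W}$-locality via adjunction into the global $\Omega$-spectrum condition. That part is on target.

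However, the one step you flag as the genuine ``hard part''---extending the $\Omega$-spectrum condition from faithful $W$ to arbitrary $W$---is not needed at all. Schwede's Definition 4.3.8 of a global $\Omega$-spectrum \emph{already} restricts the source level to faithful representations: it asks that $X(W)^G \to \Omega^V X(V\oplus W)^G$ be an equivalence for all compact Lie groups $G$, all $G$-representations $V$, and all \emph{faithful} $G$-representations $W$. After adjunction, the locality condition with respect to $\{\lambda_{G,V,W}\}_{W\text{ faithful}}$ is literally this definition. So there is no reduction from faithful to all $W$ to carry out; invoking the argument of \cite{Schwede18}*{Theorem 4.1.29} here is a detour, and it suggests you have a stronger (incorrect) definition of global $\Omega$-spectrum in mind. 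Meanwhile, the step you wave at as ``bookkeeping''---that Schwede's global model structure coincides with $L_\Lambda\Sp^O_{lvl}$ and not merely some structure with equivalent saturation---is the actual content of the proof, and you do not establish it. The paper closes this cleanly: the two model structures on $\Sp^O$ have the same cofibrations (both agree with faithful level cofibrations), and the fibrant objects of $L_\Lambda\Sp^O_{lvl}$ are exactly the $\Lambda$-local objects, which the adjunction identifies with the global $\Omega$-spectra, which in turn are exactly the fibrant objects of $\Sp^O_{gl}$ by \cite{Schwede18}*{Theorem 4.3.17}. Since a model structure is determined by its cofibrations and fibrant objects (e.g.\ \cite{Joyal}*{Proposition E.1.10}), the two structures coincide. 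That single observation is what your proposal is missing.
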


\begin{proof}
	Let $\Lambda$ denote the set of maps $\lambda_{G,V,W}$ for $G$, $V$ and $W$ as in 
	the proposition. We write $\Sp^O_{lvl}$ and $\Sp^O_{gl}$ for the category of orthogonal spectra endowed with the faithful level model structure and the global stable model structure respectively. We will show that $\Sp^O_{gl}$ is a left Bousfield localization (in the model categorical sense) of $\Sp_{lvl}^O$ at the set $\Lambda$, that is $L_{\Lambda}\Sp^O_{lvl}=\Sp^O_{gl}$. Because both can be checked on underlying homotopy categories, Bousfield localizations of model categories present Bousfield localizations of $\infty$-categories. Therefore the claim in the proposition will follow by passing to underlying $\infty$-categories.   
	By definition $X \in \Sp^O_{lvl}$ is $\Lambda$-local (and so fibrant in the Bousfield localization) if and only if $X$ is fibrant in $\Sp_{lvl}^O$ (which always holds in this case), and the canonical map of homotopy function complexes
	\[
	\lambda^*_{G,V,W}\colon \Map(F_{G,W}S^0, X)\to \Map(F_{G,V\oplus W}S^V, X)
	\]
	is an equivalence for all $\lambda_{G,V,W}\in \Lambda$. By adjunction this is equivalent to asking that $X(W)^G\to \Omega^V(X(V\oplus W))^G$ is an equivalence for all $G,V$ and $W$ as in the proposition. In other words $X$ is a global $\Omega$-spectrum, 
	see~\cite{Schwede18}*{Definition 4.3.8}. By~\cite{Schwede18}*{Theorem 4.3.17} these are precisely the fibrant objects $\Sp^O_{gl}$. Since $L_{\Lambda}\Sp_{lvl}^O$ and $\Sp_{gl}^O$ have the same cofibrations and fibrant objects, the two model structure coincide by~\cite{Joyal}*{Proposition E.1.10}.
\end{proof}

We repeat this analysis for $\Sp_G$ and $\PSp_G$.
\begin{construction}\label{cons-lambda-maps-equivariant}
	Let $H$ be a compact subgroup of a Lie group $G$, and let $V$ be an $H$-representation.
	We have a sequence of adjoint pairs
	\[
	\begin{tikzcd}[column sep =  large]
		\Sp^O_G \arrow[r,shift left,"\mathrm{forget}" ] & \arrow[l, shift left,"S_{G}\otimes-"] \DgTp{\CI}{G}
		\arrow[r, shift left, "\mathrm{ev}_{V}"]& \arrow[l, shift left, "G_+\wedge_H\Li_{V}"] H\Tp 
	\end{tikzcd}
	\]
	which are Quillen with respect to the proper level model structure, and so they 
	define adjoint pairs at the level of underlying $\infty$-categories. The composite 
	$S_G \otimes (G_+\wedge_H \Li_V)$ will also be denoted by $G \ltimes_H F_V$ following~\cite{Proper}*{Example 1.1.15}.  This notation is justified by the fact that 
	$G \ltimes_H F_V$ is also equivalent to the induction of the $H$-prespectrum $F_V$ 
	as one can easily verify.  
	For all pairs of $H$-representations $V$ and $W$, there are 
	maps in $\Sp_G^O$ 
	\[
	G \ltimes_{H}\lambda_{V,W} \colon G \ltimes_H F_{V\oplus W}S^V \to G \ltimes_H F_W,
	\]
	see~\cite{Proper}*{Equation 1.2.19}. We can view these maps in $\PSp_G$ as the domains and 
	codomains are bifibrant. Similarly to before, $G \ltimes_{H}\lambda_{V,W}$ is determined by the property that the map \[{\Sp^O_G(G \ltimes_H F_W,X)} \rightarrow {\Sp_G^O(G \ltimes_H F_{V\oplus W}S^V, X)},\] defined so that the diagram
	\[
	\begin{tikzcd}
		{H\Tp(S^0,X(W))} \arrow[r, "{\mathrm{res}^G_H(\tilde{\sigma}_{V,W})}"] & {H\Tp(S^V,X(V\oplus W))} \arrow[d, "\sim"] \\
		{\Sp^O_G(G \ltimes_H F_W,X)} \arrow[u, "\sim"] \arrow[r] & {\Sp_G^O(G \ltimes_H F_{V\oplus W}S^V ,X)}          
	\end{tikzcd}
	\] commutes, is equal to precomposition by $G \ltimes_{H}\lambda_{H,V,W}$. Also note that $G\ltimes \lambda_{V,W}$ is equal to $G\ltimes_H F_{W}S^0 \otimes \lambda_{V,0}$ and that $\lambda_{V,0}$ is adjoint to the identity on $S^V$.
\end{construction}

\begin{remark}
	Once again, observe that the characterizations of $G\ltimes_H \lambda_{V,W}$ given above also uniquely specify the map on the level of $\infty$-categories.
\end{remark}

\begin{proposition}\label{prop-Sp_G-local}
	Let $G$ be a Lie group. Then $\Sp_G$ is a Bousfield localization of $\PSp_G$. Furthermore, an object in $\PSp_{G}$ lies in $\Sp_G$ if and only if it is local with respect to 
	the morphisms $\{G \ltimes_H \lambda_{V,W}\}$ for all compact subgroups $H\leq G$ 
	and $H$-representations $V$ and $W$. Equivalently, $X\in\PSp_G$ lies in $\Sp_G$ if 
	and only if for all compact subgroups $H\leq G$, the object 
	$\mathrm{res}^G_H X\in\PSp_H$ is local with respect to morphisms 
	$\{\lambda_{V,W}\}$ for all $H$-representations $V$ and $W$.
\end{proposition}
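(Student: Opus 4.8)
The plan is to mimic the proof of Proposition~\ref{prop-glspectra-local-object} verbatim, replacing the faithful level and global model structures by the proper level and $G$-stable model structures. Write $\Sp^O_{G,lvl}$ and $\Sp^O_{G,st}$ for the category of orthogonal $G$-spectra equipped with the proper level and the $G$-stable model structures respectively, and let $\Lambda_G$ denote the set of maps $G\ltimes_H\lambda_{V,W}$ of Construction~\ref{cons-lambda-maps-equivariant}, with $H$ ranging over the compact subgroups of $G$ and $V,W$ over $H$-representations. The first goal is to show that $\Sp^O_{G,st}$ is the left Bousfield localization $L_{\Lambda_G}\Sp^O_{G,lvl}$. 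Since the two model structures on orthogonal $G$-spectra have the same cofibrations (both are created by the forgetful functor to $\DgTp{\I}{G}$, which has the same cofibrations in the level and stable structures), by \cite{Joyal}*{Proposition E.1.10} it suffices to check that they have the same fibrant objects.

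For this, observe that by Construction~\ref{cons-lambda-maps-equivariant} the domains and codomains of the maps in $\Lambda_G$ are bifibrant, so an object $X\in\Sp^O_{G,lvl}$ is $\Lambda_G$-local exactly when, for all compact $H\leq G$ and all $H$-representations $V,W$, precomposition with $G\ltimes_H\lambda_{V,W}$ induces an equivalence of homotopy function complexes $\Map(G\ltimes_H F_W,X)\to\Map(G\ltimes_H F_{V\oplus W}S^V,X)$. Chasing through the adjunctions in Construction~\ref{cons-lambda-maps-equivariant}, this condition says precisely that the adjoint structure map $X(W)^H\to\Omega^V\bigl(X(V\oplus W)\bigr)^H$ is a weak homotopy equivalence for all such $H,V,W$; that is, $X$ is a proper $G$-$\Omega$-spectrum. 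By \cite{Proper}*{Theorem 1.2.22} these are exactly the fibrant objects of $\Sp^O_{G,st}$, so $L_{\Lambda_G}\Sp^O_{G,lvl}=\Sp^O_{G,st}$. Passing to underlying $\infty$-categories, and using that a left Bousfield localization of model categories presents a Bousfield localization of the associated $\infty$-categories together with Proposition~\ref{cor:PSp_first}, we conclude that $\Sp_G$ is the Bousfield localization of $\PSp_G$ at the image of $\Lambda_G$, and that $X\in\PSp_G$ lies in $\Sp_G$ if and only if it is local with respect to every $G\ltimes_H\lambda_{V,W}$.

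For the reformulation in terms of restrictions, I would use that at the level of $\infty$-categories $G\ltimes_H(-)\colon\PSp_H\to\PSp_G$ is left adjoint to $\mathrm{res}^G_H$, and that under this adjunction $G\ltimes_H\lambda_{V,W}$ is $G\ltimes_H(-)$ applied to the $H$-prespectrum map $\lambda_{V,W}$ (Construction~\ref{cons-lambda-maps-equivariant}). Hence for every $X\in\PSp_G$ there is a natural commuting square of mapping spaces identifying the map $\Map_{\PSp_G}(G\ltimes_H F_W, X)\to\Map_{\PSp_G}(G\ltimes_H F_{V\oplus W}S^V, X)$ with $\Map_{\PSp_H}(F_W,\mathrm{res}^G_H X)\to\Map_{\PSp_H}(F_{V\oplus W}S^V,\mathrm{res}^G_H X)$. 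Therefore $X$ is local with respect to $G\ltimes_H\lambda_{V,W}$ if and only if $\mathrm{res}^G_H X$ is local with respect to $\lambda_{V,W}$; since every compact subgroup of $G$ is a compact subgroup of itself, ranging over all compact $H\leq G$ and all $H$-representations $V,W$ yields the asserted equivalent characterization.

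The step I expect to require the most care is the middle one: checking that the locality condition against $\Lambda_G$ — phrased via homotopy function complexes in the proper level model structure — translates, through the chain of Quillen adjunctions of Construction~\ref{cons-lambda-maps-equivariant}, exactly into the family of $\Omega$-spectrum conditions indexed by all compact subgroups $H\leq G$, and that this family matches on the nose the fibrancy condition of \cite{Proper}*{Theorem 1.2.22}. Once this identification is in place, together with the standard passage from model categories to $\infty$-categories, the remaining arguments are purely formal.
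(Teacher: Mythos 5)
Your proposal is correct and follows exactly the paper's route: transport the argument of Proposition~\ref{prop-glspectra-local-object} to the proper level and $G$-stable model structures, identify $\Lambda_G$-locality with the $\Omega$-spectrum fibrancy condition of \cite{Proper}*{Theorem 1.2.22}, invoke \cite{Joyal}*{Proposition E.1.10} to match the model structures, and obtain the restriction reformulation by adjunction. The paper simply compresses all of this into one sentence pointing to Proposition~\ref{prop-glspectra-local-object}.
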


\begin{proof}
	The proof is similar to that of Proposition~\ref{prop-glspectra-local-object} but now we use  the characterization of fibrant objects in the proper stable model structure given in~\cite{Proper}*{Theorem 1.2.22 (v)}. The second claim follows from the first one by adjunction.
\end{proof}

\section{Models for \texorpdfstring{$\infty$}{infty}-categories of equivariant prespectra}\label{sec:eq_prespectra}

In the previous section we introduced the $\infty$-categories of equivariant and global (pre)spectra, and exhibited the spectrum objects as local objects in the relevant category of prespectra with respect to an explicit class of weak equivalences. Furthermore, we observed that the construction of $\PSp_G$ admitted a reinterpretation internal to $\infty$-categories, by first passing to pointed objects in $\DgTop{\I}{G}[W_{lvl}^{-1}]$ and then taking modules over $S_G$. Similarly, we observed that
\[\PSpfgl\simeq \Mod_{\Sfgl}(\DgTop{\I}{}[W_{f-lvl}^{-1}]_{*}).\] 
Furthermore these equivalences were symmetric monoidal.

However this is only part of the story, because the $\infty$-categories $\DgTop{\I}{G}[W_{lvl}^{-1}]$ and $\DgTop{\I}{}[W_{f-lvl}^{-1}]$ are still too inexplicit for our arguments. Luckily we can give explicit models of these $\infty$-categories. Consider the case of $\DgTop{\I}{G}[W_{lvl}^{-1}]$. By construction this $\infty$-category records the fixed point spaces $X(V)^H$ for every (compact) subgroup $H$ of $G$ and every $H$-representation $V$ of an $\I$-$G$-space $X$. By functoriality, these different fixed point spaces are related by subconjugacy relationships in $H$ and equivariant linear isometries in $V$. We will prove that the $\infty$-category $\DgTop{\I}{G}$ is in fact freely generated under these properties. More precisely, we will exhibit an equivalence 
\[\DgTop{\I}{G}[W_{lvl}^{-1}]\simeq \DgSpc{\OR_G},\] where the $\infty$-category $\OR_G$ indexes pairs $(H,V)$, each one of which records one of the fixed point spaces $X(V)^H$ of an $\I$-$G$-space $X$. Similarly we will prove that 
\[\DgTop{\I}{}[W_{f-lvl}^{-1}]\simeq \DgSpc{\ORfgl},\]
where the $\infty$-category $\ORfgl$ indexes pairs $(G,V)$, where $G$ is a compact Lie group and $V$ is a faithful $G$-representation.

In total we will obtain equivalences
\[\PSp_G\simeq \Mod_{S_G}(\DgSpcp{\OR_G})\quad \text{and}\quad \PSpfgl\simeq \Mod_{\Sfgl}(\DgSpcp{\ORfgl}).\] It will be in this guise that we will think of the $\infty$-category of $G$-prespectra and global prespectra for the remainder of the paper. 

Finally, to make future constructions symmetric monoidal it will be important to understand how the symmetric monoidal structures transfer under the equivalences
\[\DgTop{\I}{G}[W_{lvl}^{-1}]\simeq \DgSpc{\OR_G} \quad \text{and}\quad \DgTop{\I}{}[W_{f-lvl}^{-1}]\simeq \DgSpc{\ORfgl}.\] We may immediately apply Theorem \ref{thm-I-promonoidal} to conclude that the monoidal structure on $\DgTop{\I}{G}[W_{lvl}^{-1}]$ and $\DgTop{\I}{}[W_{f-lvl}^{-1}]$ are induced by Day convolution from the restricted promonoidal structure on $\OR_G$. We will make these promonoidal structures explicit.

To show that $\DgTop{\I}{G}[W_{lvl}^{-1}]$ and $\DgTop{\I}{}[W_{lvl}^{-1}]$ are equivalent to categories of copresheafs on an explicit set of generators, we will apply a version of Elmendorf's theorem, see Corollary~\ref{cor-elmendorf}. The application of this theorem to $\DgTop{\I}{G}[W_{lvl}^{-1}]$ and $\DgTop{\I}{}[W_{f-lvl}^{-1}]$ has a similar flavour, but are logically distinct. Therefore we treat each case separately.

\subsection{\texorpdfstring{$\CI$}{CI}-\texorpdfstring{$G$}{G}-spaces and \texorpdfstring{$\OR_G$}{ORG}-spaces} 

We begin with $\DgTop{\I}{G}[W_{lvl}^{-1}]$.

\begin{remark}\label{rem-morphism-O_G}
	Let $G$ be a Lie group and consider a map $\varphi\colon G/K \to G/H$ in the 
	orbit category $\OO_G$. Giving $\varphi$ is equivalent to giving $gH\in (G/H)^K$, that 
	is an element $gH \in G/H$ such that $c_g(K)=g^{-1}K g \subseteq H$. 
	When we need to emphasize this correspondence between $gH$ and $\varphi$ we will use 
	subscripts $\varphi_{g}$ and $g_{\varphi}$. 
	Note that $g_{\psi\circ \varphi}H=g_\varphi g_\psi H$ so composition of maps 
	corresponds to multiplication with reverse order. 
\end{remark}

\begin{definition}\label{def-O_G-proper}
	For a Lie group $G$, the \emph{proper} $G$-\emph{orbit category} $\OO_{G,\pr}$ is 
	the full subcategory of $\OO_G$ spanned by those cosets $G/H$ with $H \leq G$ compact. 
\end{definition}

Let $G$ be a Lie group and $H,K \leq G$ be compact subgroups. 
Given an $H$-representation $V$ and a $K$-representation $W$, we can consider the 
space $G\times_H \Li(V,W)$ where $H$ acts on $G$ 
by right translation, and on $\Li(V,W)$ via $h.\varphi =\varphi h^{-1}$. 
Note that $K$ acts diagonally on $G\times_H \Li(V,W)$ via $G$ and $W$. 
We have the following helpful criterion. 

\begin{lemma}\label{lem-criterion}
	An element $[g,\varphi]\in G\times_H \Li(V,W)$ is $K$-fixed if and only if $c_g(K)\subseteq H$ and $k.\varphi(v)=\varphi(c_g(k)v)$ for all $k\in K$ and $v\in V$.
\end{lemma}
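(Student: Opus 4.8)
\textbf{Proof plan for Lemma~\ref{lem-criterion}.}

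The plan is to unwind the definition of the balanced product $G\times_H\Li(V,W)$ together with the two commuting group actions in play (the $H$-action used to form the quotient, and the residual $K$-action), and then simply read off what it means for a class $[g,\varphi]$ to be fixed by all of $K$. First I would recall that a point of $G\times_H\Li(V,W)$ is an equivalence class of pairs $(g,\varphi)$ with $g\in G$, $\varphi\in\Li(V,W)$, under the relation $(gh^{-1},h.\varphi)\sim(g,\varphi)$ for $h\in H$, where by the stated conventions $H$ acts on $G$ by right translation and on $\Li(V,W)$ by $h.\varphi=\varphi\circ h^{-1}$ (precomposition with the $H$-action on $V$). The $K$-action is the diagonal one: $k\cdot[g,\varphi]=[kg,k.\varphi]$, where $k$ acts on $G$ by left translation and on $\Li(V,W)$ by postcomposition with the $K$-action on $W$, i.e.\ $(k.\varphi)(v)=k\cdot(\varphi(v))$.

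Next I would write out the fixed-point condition. The class $[g,\varphi]$ is $K$-fixed iff for every $k\in K$ we have $[kg,k.\varphi]=[g,\varphi]$ in $G\times_H\Li(V,W)$, which by the definition of the equivalence relation means: for each $k\in K$ there exists $h=h(k)\in H$ with $kg=gh^{-1}$ and $k.\varphi=h.\varphi$. The first equation forces $h(k)^{-1}=g^{-1}kg=c_g(k)^{-1}$ — here I should be careful about which conjugation convention $c_g$ denotes; from Remark~\ref{rem-morphism-O_G} the relevant relation is $c_g(K)=g^{-1}Kg\subseteq H$ — so in particular $g^{-1}kg\in H$ for all $k\in K$, giving $c_g(K)\subseteq H$, and moreover $h(k)$ is uniquely determined as $h(k)=c_g(k)$ (using that $H\hookrightarrow G$ is injective, so the relation $(g,\varphi)\sim(g',\varphi')$ with $g,g'$ fixed pins down $h$). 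Substituting this forced value of $h(k)$ into the second equation $k.\varphi=h(k).\varphi$ and evaluating at $v\in V$ yields $k\cdot\varphi(v)=(c_g(k).\varphi)(v)=\varphi(c_g(k)^{-1}v)$ — or, after replacing $k$ by $k^{-1}$ (which is legitimate since the condition ranges over all of $K$), the stated formula $k.\varphi(v)=\varphi(c_g(k)v)$. Conversely, if $c_g(K)\subseteq H$ and the intertwining identity holds, then taking $h(k):=c_g(k)\in H$ witnesses $[kg,k.\varphi]=[g,\varphi]$ for every $k$, so $[g,\varphi]$ is $K$-fixed.

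This is essentially a bookkeeping argument, so there is no serious obstacle; the only point requiring care — and the place where I expect a reader to stumble — is keeping the left/right and pre/post-composition conventions consistent, in particular tracking whether $c_g$ means $g(-)g^{-1}$ or $g^{-1}(-)g$ and hence whether the final identity reads with $c_g(k)$ or $c_g(k)^{-1}$. I would therefore state the conventions explicitly at the start of the proof (referring to the setup immediately preceding the lemma and to Remark~\ref{rem-morphism-O_G}), and note that replacing $k\mapsto k^{-1}$ absorbs the inverse, so that the formula as displayed in the lemma is the correct one. With the conventions fixed, both implications are immediate from the two displayed equations above.
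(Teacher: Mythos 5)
Your proof strategy is exactly the paper's: write out the balanced product relation and the $K$-action and read off the fixed-point condition. The structure of the argument is correct, but the bookkeeping has a sign slip that you then try to repair with a manipulation that does not actually work.

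The slip: from $kg = gh^{-1}$ you correctly deduce $g^{-1}kg = h^{-1}$, but since $c_g(k) := g^{-1}kg$ this reads $h^{-1} = c_g(k)$, so $h = c_g(k)^{-1}$ (not $h = c_g(k)$ as you write; the intermediate display ``$h(k)^{-1} = g^{-1}kg = c_g(k)^{-1}$'' has an inconsistency, since $g^{-1}kg$ is by definition $c_g(k)$). Substituting the correct value into $k.\varphi = \varphi h^{-1}$ gives $k.\varphi = \varphi\, c_g(k)$, i.e.\ $k\cdot\varphi(v) = \varphi(c_g(k)v)$ directly, with no further manipulation. If instead you work with the paper's form of the relation, $(g,\varphi)\sim(gh,\varphi h)$, then $kg=gh$ gives $h=c_g(k)$ and $k.\varphi=\varphi h = \varphi\, c_g(k)$ at once, avoiding the double inversion entirely.

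The proposed ``fix'' of replacing $k$ by $k^{-1}$ is not valid. Applied to the (incorrectly derived) identity ``$k.\varphi(v)=\varphi(c_g(k)^{-1}v)$ for all $k,v$'' it yields ``$k^{-1}.\varphi(v)=\varphi(c_g(k)v)$ for all $k,v$'', which is a different condition from the lemma's $k.\varphi(v)=\varphi(c_g(k)v)$. Indeed $k\mapsto c_g(k)^{-1}$ is an anti-homomorphism $K\to H$, not a homomorphism, so the two conditions express intertwining with respect to genuinely different (anti-)representations and are not equivalent in general. The resolution is not reparametrizing $k$, but correcting the earlier line. Once that is done, both implications are indeed immediate, as you say, and the proof coincides with the paper's.
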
 

\begin{proof}
	An element $[g,\varphi]\in G\times_H \Li(V,W)$ is $K$-fixed 
	if and only if $[kg, k.\varphi]=[g,\varphi]$ for all $k\in K$. This means that there 
	exists $h\in H$ such that $kg=gh$ and $k.\varphi =\varphi h$ for all $k\in K$. 
	In other words $g$ is such that $c_g(K)\subseteq H$ and $\varphi$ is $K$-equivariant in 
	the sense that $k.\varphi=\varphi c_g(k)$ for all $k\in K$. 
\end{proof}

\begin{lemma}\label{lem-composition-well-def}
	Let $G$ be a Lie group and $H,K,L \leq G$ be compact subgroups. Let $V$ 
	be an $H$-representation, $W$ a $K$-representation and $U$ an $L$-representation. 
	Then the map 
	\[
	\circ \colon (G\times_K \Li(W,U))^L \times (G\times_H \Li(V,W))^K \to 
	(G\times_H \Li(V,U))^L 
	\] 
	given by $([g',\psi], [g,\varphi]) \mapsto [g'g,\psi\varphi]$ is well-defined and continuous. Furthermore, upon varying the objects, the collection of maps so obtained is associative and unital.
\end{lemma}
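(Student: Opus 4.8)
The plan is to verify directly that the composition formula $([g',\psi],[g,\varphi])\mapsto [g'g,\psi\varphi]$ is well-defined, lands in the $L$-fixed points, is continuous, and assembles into an associative and unital system. First I would check well-definedness: if $[g,\varphi]=[g\tilde h,\varphi\tilde h^{-1}]$ for $\tilde h\in H$ and $[g',\psi]=[g'\tilde k,\psi\tilde k^{-1}]$ for $\tilde k\in K$, then $(g'\tilde k)(g\tilde h) = g'\tilde k g\tilde h$ and $(\psi\tilde k^{-1})(\varphi\tilde h^{-1})=\psi\tilde k^{-1}\varphi\tilde h^{-1}$; using that $[g,\varphi]$ is $K$-fixed, Lemma~\ref{lem-criterion} gives $\tilde k^{-1}\varphi = \varphi c_{g}(\tilde k^{-1})$ only after identifying the relevant relation — more precisely, the element $[g,\varphi]$ being a point of $G\times_H\Li(V,W)$ means the class is unchanged under the $H$-action, and the $K$-fixedness condition is what makes the $\tilde k$ on the left of $g$ absorbable. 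The cleanest route is: since $[g,\varphi]\in(G\times_H\Li(V,W))^K$ there is (for each fixed representative) an identity $kg = g\,\eta(k)$ in $G$ with $\eta(k)\in H$ and $k.\varphi = \varphi\,\eta(k)$; then $(g'g)$ and $(g'\eta')(g)$ differ by an element of $H$ pulled through, and a short manipulation shows $[g'g,\psi\varphi]$ is independent of the chosen representatives. I would write this out carefully once, as it is the only genuinely fiddly point.

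Next I would check the target condition: that $[g'g,\psi\varphi]\in (G\times_H\Li(V,U))^L$. By Lemma~\ref{lem-criterion}, $[g',\psi]\in(G\times_K\Li(W,U))^L$ gives $c_{g'}(L)\subseteq K$ and $l.\psi(w)=\psi(c_{g'}(l)w)$ for all $l\in L$, $w\in W$; and $[g,\varphi]\in(G\times_H\Li(V,W))^K$ gives $c_g(K)\subseteq H$ and $k.\varphi(v)=\varphi(c_g(k)v)$ for all $k\in K$, $v\in V$. Then $c_{g'g}(L)=c_g(c_{g'}(L))\subseteq c_g(K)\subseteq H$, and for $l\in L$, $v\in V$ we compute
\[
l.(\psi\varphi)(v) = l.\psi(\varphi(v)) = \psi(c_{g'}(l)\varphi(v)) = \psi(\varphi(c_g(c_{g'}(l))v)) = (\psi\varphi)(c_{g'g}(l)v),
\]
where the third equality applies the $K$-equivariance of $\varphi$ to the element $c_{g'}(l)\in K$. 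This is exactly the condition of Lemma~\ref{lem-criterion} for $[g'g,\psi\varphi]$ to be $L$-fixed. Continuity is immediate: the map $G\times\Li(W,U)\times G\times\Li(V,W)\to G\times\Li(V,U)$, $(g',\psi,g,\varphi)\mapsto(g'g,\psi\varphi)$ is continuous for the topological group multiplication and composition of isometries, and it descends to the quotients and restricts to the fixed-point subspaces, which carry the subspace topology.

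Finally, associativity and unitality: for a fourth compact subgroup $M\leq G$ with $M$-representation $X$ and a class $[g'',\chi]\in(G\times_L\Li(U,X))^M$, both $([g'',\chi]\circ[g',\psi])\circ[g,\varphi]$ and $[g'',\chi]\circ([g',\psi]\circ[g,\varphi])$ equal $[g''g'g,\chi\psi\varphi]$ by associativity of multiplication in $G$ and of composition of linear isometries; naturality in the objects is likewise inherited from these underlying associativities. For unitality, the unit at $(H,V)$ is the class $[e,\mathrm{id}_V]\in(G\times_H\Li(V,V))^H$ (which is $H$-fixed by Lemma~\ref{lem-criterion}, since $c_e=\mathrm{id}$ and $h.\mathrm{id}_V(v)=hv=\mathrm{id}_V(hv)$), and $[g,\varphi]\circ[e,\mathrm{id}_V]=[g,\varphi]=[e,\mathrm{id}_W]\circ[g,\varphi]$. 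The main obstacle is purely the bookkeeping in the well-definedness check — keeping track of which $H$- or $K$-translate absorbs which group element — but it is entirely routine once one consistently uses the fixed-point relations $kg=g\,\eta(k)$, $k.\varphi=\varphi\,\eta(k)$ supplied by Lemma~\ref{lem-criterion}; no conceptual difficulty arises.
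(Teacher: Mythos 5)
Your proposal follows the paper's argument exactly: the same appeal to Lemma~\ref{lem-criterion} both for the representative-independence check (via the relations $kg = g\,c_g(k)$ and $k.\varphi = \varphi\,c_g(k)$) and for verifying $L$-fixedness of the output, with continuity, associativity and unitality handled formally in the same way. The only thing to note is that you defer rather than write out the representative-independence computation (``I would write this out carefully once''), which is the one step the paper actually displays; your sketch has the right strategy, though, and with the convention $[g,\varphi] = [gh,\varphi h]$ (rather than $[g\tilde h,\varphi\tilde h^{-1}]$ as you wrote) the manipulation closes exactly as you indicate.
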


\begin{proof}
	Let us first show that the map does not depend on the chosen representatives. For 
	$h\in H$ and $k\in K$ we have $[g,\varphi]=[gh,\varphi h]$ and 
	$[g',\psi]=[g'k,\psi k]$ so we ought to check that 
	$[g'g,\psi\varphi]=[g'kgh,\psi k \varphi h]$. Using that
	$c_g(K)\subseteq H$ and $\varphi$ is $K$-equivariant with respect to 
	the $c_g$-twisted action, we can write 
	\[
	[g'kgh,\psi k \varphi h]=[g'g\underbrace{c_g(k)h}_{\in H}, \psi k \varphi h]=[g'g, \psi k \varphi h (c_{g}(k)h)^{-1}]=[g'g, \psi k \varphi c_{g^{}}(k^{-1})]=[g'g,\psi\varphi]
	\]
	as required. We verify that $[g'g, \psi\varphi]$ is $K$-fixed using the criterion 
	from Lemma~\ref{lem-criterion}. 
	Using that  $c_{g'}(L)\subseteq K$  and $c_g(K)\subseteq H$ we immediately see that 
	$c_{g'g}(L)\subseteq H$. 
	Using the twisted equivariance of $\psi$ and $\varphi$ we see that
	\[
	l. \psi \varphi=\psi \underbrace{c_{g'}(l)}_{\in K}\varphi=\psi \varphi c_{g}(c_{g'}(l))=\psi \varphi c_{g'g}(l)
	\]
	for all $l \in L$. Therefore $\psi \varphi$ is twisted equivariant and 
	$[g'g,\psi\varphi]$ is indeed $K$-fixed. Finally the map is associative, unital 
	and continuous since multiplication and composition maps are so. 
\end{proof}

We now formally define the $\infty$-category $\OR_G$.

\begin{definition}\label{def-OR_G}
	Let $G$ be a Lie group. We define a topological category $\OR_G$ whose objects 
	are pairs $(H,V)$ of a compact subgroup $H\leq G$ and an $H$-representation $V$. 
	The morphism spaces are given by 
	\[
	\OR_G((H,V), (K,W))=(G \times_H \Li(V,W))^K.
	\]
	Composition is given by the maps
	\[
	\circ \colon \OR_G((K,W),(L,U)) \times \OR_G((H,V),(K,W)) \to \OR_G((H,V), (L,U))
	\]
	defined in Lemma~\ref{lem-composition-well-def}. Note that there is a 
	projection map 
	\[
	\OR_G((H,V), (K,W)) \to (G/H)^K= \OO_{G,\pr}(G/K, G/H), \qquad [g, \varphi]\mapsto [gH].
	\]
	which extends to a functor $\pi_G\colon \OR_G \to \OO_{G,\pr}^{\mathrm{op}}$.
\end{definition}

\begin{example}
	Let $G=e$ be the trivial group. Then the topological category $\OR_G$ is equivalent to 
	$\I$.
\end{example}

\begin{example}\label{ex-L}
	By definition $\OR_G((H,V), (e,W))=G \times_H \Li(V,W)$, which is a space with an 
	action of 
	\[
	\OR_G((e,W),(e,W))=G \times O(W).
	\]
	One can identify the functor $\OR_G((H,V), (e, -))\colon \I \to G\Top$ with the free
	$\I$-$G$-space $G\times_H \Li_{V}$.
\end{example}

\begin{definition}
	We let $\DgSpc{\OR_G}$ denote the $\infty$-category of $\OR_G$-spaces, given by the $\infty$-category of functors $\OR_G \to \Spc$. 
\end{definition}

We are finally ready to prove the main result of this subsection.

\begin{theorem}\label{thm-presheaf-I-G-spaces}
	Let $G$ be a Lie group. Then there is an equivalence of $\infty$-categories 
	\[
	\DgTop{\I}{G}[W_{lvl}^{-1}]\simeq \DgSpc{\OR_G}.
	\]
\end{theorem}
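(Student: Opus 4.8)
The plan is to apply the $\infty$-categorical Elmendorf theorem (Theorem~\ref{thm-elmendorf}) to the $\infty$-category $\CC = \DgTop{\I}{G}[W_{lvl}^{-1}]$ with $\CC_0$ the full subcategory spanned by the free $\I$-$G$-spaces $G\times_H\Li_V$ for $H\leq G$ a compact subgroup and $V$ an $H$-representation. First I would record that $\CC$ is cocomplete and presentable; this follows because the level model structure on $\DgTop{\I}{G}$ is cofibrantly generated on a locally presentable category, so $\CM[W^{-1}]$ is presentable by \cite{HA}*{Proposition 1.3.4.22}. Next I need to verify the two hypotheses of Theorem~\ref{thm-elmendorf} for the generators $G\times_H\Li_V$: (a) they are tiny, and (b) jointly conservative. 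For conservativity, recall that $G\times_H\Li_V$ corepresents the functor $X\mapsto X(V)^H$ by Construction~\ref{con-free-IG-space}, and by the definition of level equivalence (together with Lemma~\ref{lem-same}, reducing closed subgroups and representations to the graph subgroup description) a map is a level equivalence precisely when all these functors send it to an equivalence. For tininess, I would argue that $X\mapsto X(V)^H$ commutes with all small homotopy colimits of level-cofibrant diagrams: colimits in $\DgTop{\I}{G}[W_{lvl}^{-1}]$ are computed as homotopy colimits in the model category by \cite{BHH2017}*{Remark 2.5.7}, evaluation $\mathrm{ev}_V$ is a left adjoint (so preserves all colimits on the nose and, being also right Quillen by Proposition~\ref{prop-change-of-groups-functors}-type reasoning, homotopy colimits), and the $H$-fixed point functor on $G$-spaces commutes with small homotopy colimits by \cite{Schwede18}*{Proposition B.1}.

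Granting (a) and (b), Theorem~\ref{thm-elmendorf} gives an equivalence $\DgTop{\I}{G}[W_{lvl}^{-1}]\simeq \Fun(\CC_0^{\op},\Spc)\simeq \Fun(\CC_0,\Spc)^{\op}$-style identification; more precisely it produces $\CC\simeq \CP(\CC_0)=\Fun(\CC_0^{\op},\Spc)$. So the remaining task is to identify $\CC_0^{\op}$ with the topological category $\OR_G$ of Definition~\ref{def-OR_G}, i.e.\ to compute the mapping spaces between the generators in the localized $\infty$-category. The key computation is that
\[
\Map_{\DgTop{\I}{G}[W_{lvl}^{-1}]}\bigl(G\times_H\Li_V,\; G\times_K\Li_W\bigr)\simeq (G\times_H\Li_V)\,(W)^K \simeq (G\times_H\Li(V,W))^K,
\]
where the first equivalence is corepresentability of $(-)(W)^K$ by $G\times_K\Li_W$ applied to the (bi)fibrant object $G\times_H\Li_V$, and the second is the explicit formula for the free $\I$-$G$-space. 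This matches $\OR_G((H,V),(K,W))$ on the nose, and one checks that composition in $\OR_G$ as defined in Lemma~\ref{lem-composition-well-def} agrees with composition of corepresenting maps — this is essentially forced, but I would spell it out by tracing through the adjunction isomorphisms and the explicit description of the counit $G\times_K\Li_W \to$ evaluation. Thus $\CC_0$, as a topological (hence $\infty$-) category, is equivalent to $\OR_G^{\op}$, and therefore $\CP(\CC_0)\simeq \Fun(\OR_G,\Spc)=\DgSpc{\OR_G}$.

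The main obstacle I anticipate is the bookkeeping in identifying $\CC_0$ with $\OR_G^{\op}$ as a \emph{coherent} (topological/simplicial) category rather than just on objects and hom-spaces up to homotopy — one needs the composition maps and higher coherences of $\OR_G$ to literally match those induced by the corepresentability equivalences. Since all the objects $G\times_H\Li_V$ are already bifibrant in the level model structure (they are cofibrant by construction as free objects, and every object is fibrant because the level fibrancy condition is vacuous on the nose for the relevant structure), the mapping spaces in $\CC$ are modelled by the topological mapping spaces $\Map_{\DgTop{\I}{G}}(G\times_H\Li_V, G\times_K\Li_W)$, and these are computed by the adjunction homeomorphism to be exactly $(G\times_H\Li(V,W))^K$ with its evident topology; composition is then literally the map of Lemma~\ref{lem-composition-well-def} by inspection. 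So the coherence issue dissolves: the equivalence $\CC_0\simeq \OR_G^{\op}$ is already visible at the level of topological categories, and applying the singular-nerve functor of Section~2 finishes it. A secondary subtlety is to make sure the full subcategory $\CC_0$ is \emph{small} and that $\OR_G$ ranges only over \emph{compact} subgroups $H$; this is exactly the set of generators appearing in the generating cofibrations $I_G$, which is why joint conservativity holds, so the two are aligned by design.
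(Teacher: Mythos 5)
Your proof follows the paper's argument closely: both apply the generalized Elmendorf theorem (Theorem~\ref{thm-elmendorf}) to the free objects $G\times_H\Li_V$, verify that these generators are tiny and jointly conservative, and identify the full subcategory they span with $\OR_G^{\op}$ using bifibrancy and the adjunction homeomorphism for the mapping spaces — indeed the topological category $\OR_G$ was engineered so that $L\colon\OR_G^{\op}\to\DgTop{\I}{G}$ is fully faithful essentially by construction, which is exactly your observation that ``the coherence issue dissolves.''

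There is one slip worth fixing in the tininess step. To show that the derived evaluation functor preserves colimits, you invoke that $\ev_V$ is \emph{right} Quillen. Right-Quillen-ness (together with the fact that every $\I$-$G$-space is level fibrant) does give that $\ev_V$ preserves \emph{all} weak equivalences, and you separately note that $\ev_V$ preserves strict colimits as a left adjoint. But ``preserves weak equivalences and strict colimits'' does not by itself imply that the induced functor $\DgTop{\I}{G}[W^{-1}_{lvl}]\to \Spc_G$ is colimit-preserving: one computes colimits in the localization as strict colimits of (projectively) cofibrant diagrams, and to transport this through $\ev_V$ one needs $\ev_V$ to carry cofibrant diagrams to cofibrant diagrams, i.e.\ to preserve cofibrations. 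Right-Quillen-ness controls homotopy \emph{limits}, not homotopy colimits. What the paper actually verifies — by checking on the generating (acyclic) cofibrations — is that $\ev_V$ is \emph{left} Quillen; combined with preservation of colimits and of all level equivalences, this gives the desired preservation of homotopy colimits. The fix is short (the check on generators is immediate), but the distinction is the content of the step, so ``right Quillen'' should be replaced by ``left Quillen, i.e.\ preserves (acyclic) cofibrations.''
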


\begin{proof}
	The discussion in Example~\ref{ex-L} shows that there exists a functor of topological 
	categories (and so of $\infty$-categories)
	\[
	\OR_G^{\op} \to \DgTop{\I}{G}, \qquad (H,V)\mapsto \OR_G((H,V), (e,-))=G\times_H \Li_V.
	\]
	This is fully faithful by definition of $\OR_G$. Since the $\I$-$G$-spaces 
	$G\times_H \Li_{V}$ are bifibrant in the level model structure, 
	the composite 
	\[
	L \colon \OR_G^{\op}\to \DgTop{\I}{G} \to \DgTop{\I}{G}[W_{lvl}^{-1}], \quad (H,V)\mapsto G \times_H \Li_V
	\]
	is also fully faithful. We apply Theorem~\ref{thm-elmendorf} to the functor $L$.	
	We note that the $\I$-$G$-space $G\times_H \Li_V$ corepresents the functor 
	$X \mapsto X(V)^H$. This functor commutes with 
	small homotopy colimits since:
	\begin{itemize}
	\item the $H$-fixed points functor preserves small homotopy colimits as discussed 
	in Example~\ref{ex-S_G};
	\item  and the evaluation functor $X \mapsto X(V)$ preserves small homotopy colimits. 
	Indeed this functor preserves all colimits (as they are calculated pointwise), level 
	equivalences by definition, and (acyclic) cofibrations (as one can verify by 
	checking on the generating (acyclic) cofibrations).
	\end{itemize}
	Finally, the collection of objects $\{G\times_H \Li_V \mid (H,V)\in\OR_G\}$ is jointly 
	conservative by definition of the level equivalences. Thus the required equivalence 
	follows from Theorem~\ref{thm-elmendorf}.
\end{proof}

Next we explain how to upgrade the equivalence above to an equivalence of symmetric monoidal 
$\infty$-categories.

\begin{construction}
	We enhance the topological category $\OR_G$ to a topological coloured operad as follows. The 
	colours are simply the objects of $\OR_G$, and the space of multi-morphisms from $\{(H_i,V_i)\}_{i\in I}$ to $(K,W)$ is given by
	\[
	\OR_G(\{(H_i,V_i)\}_{i\in I}, (K,W))= \left(\left(\prod_{i \in I} G\right) \times_{(\prod_{i\in I}H_i)} \Li\left(\bigoplus_{i\in I} V_i, W\right) \right)^{K}.
	\]
	By Lemma~\ref{lem-criterion}, a point of this space is equivalent to the following data:
	\begin{itemize}
		\item For all $i \in I$, an element $g_{i}H_i \in G/H_i$ such that 
		$c_{g_{i}}(K)\subseteq H_{i}$;
		\item A linear isometry
		$\varphi=\sum_i \varphi_{i}\colon \bigoplus_i V_i \to W$ such 
		that $k .\varphi_{i}(v)=\varphi_{i}(c_{g_{i}}(k)v)$ for all $v \in V_i$, $k \in K$ and 
		$i\in I$. 
	\end{itemize}
	For every map $I \to J$ of finite sets with fibres $\{I_j\}_{j \in J}$, every finite collections of 
	objects $\{(H_i, V_i)\})_{i \in I}$ and $\{(K_j,W_j)\}_{j \in J}$, and every 
	$(L,U)\in \OR_G$ we have a composition map 
	\[
	\prod_{j \in J} \OR_G(\{(H_i,V_i)\}_{i\in I_j}, (K_j,W_j))\times 
	\OR_G(\{(K_j,W_j)\}_{j\in J}, (L,U)) \to\OR_G(\{(H_i,V_i)\}_{i\in I},(L,U))
	\]
	which is defined by the formulas
	\[
	(\bigoplus_{i \in I_j} V_i \to W_j, \bigoplus_{j \in J} W_j \to U)\mapsto (\bigoplus_{i \in I}V_i=\bigoplus_{j \in J}\bigoplus_{i \in I_j} V_i \to \bigoplus_{j \in J} W_j \to U)
	\]
	and 
	\[
	((g_iH_i)_{i \in I_j}, (g_{j}K_j)_{j\in J}) \mapsto (g_jg_iH_i)_{j\in J, i \in I_j}. 
	\]
	Note that for any colour $(H,V)\in \OR_G$, there is an identity element 
	$[eH, 1_V]\in \OR_G((H,V),(H,V))$.
	Using Lemma~\ref{lem-criterion} one can check that this composition is continuous, 
	associative and unital and so that $\OR_G$ is indeed a topological coloured operad. 
	We leave the details to the interested reader. 
\end{construction}

\begin{remark}\label{rem-pi_G}
	We can endow the topological category $\OO_{G,\pr}^{\op}$ with a topological coloured 
	operad structure whose colours are the objects of $\OO_{G,\pr}$, and whose multimorphism spaces are given by 
	\[
	\OO_{G,\pr}(\{G/H_i\}_{i \in I}, G/K)= \OO_{G,\pr}(G/K, \prod_{i \in I} G/H_i)= (\prod_{i \in I}G/H_i)^K
	\]
	with composition defined in the obvious way. The associated $\infty$-operad models the cocartesian monoidal structure. There is a canonical projection functor
	of topological coloured operads
	\[
	\pi_G \colon\OR_G \to \OO_{G,\pr}^{\op}.
	\]
	By Lemma~\ref{ORG_operad}, we can lift $\pi_G$ to a map of $\infty$-operads 
	$\OR_G^\otimes \to (\OO_{G,\pr}^{\op})^\amalg$, which by abuse of notation we 
	still denote by $\pi_G$. 
\end{remark}

Recall that because $\DgTop{\I}{G}$ is a symmetric monoidal topological model category, we can construct a topological coloured operad whose colors are given by the bifibrant objects of $\DgTop{\I}{G}$ and the multimorphism spaces are given by 
\[
\Mul_{N^\otimes((\DgTop{\I}{G}^\circ)^{\op})}(\{X_i\}, Y)=\DgTop{\I}{G}(Y, \bigotimes_{i\in I} X_i).
\] Furthermore the associated $\infty$-operad models the symmetric monoidal structure on $(\DgTop{\I}{G}[W_{lvl}^{-1}])^{\op}$.

\begin{lemma}\label{lem-L-coloured-operad}
	The functor $L$ of Theorem~\ref{thm-presheaf-I-G-spaces} lifts to a fully faithful functor of topological coloured operads.
\end{lemma}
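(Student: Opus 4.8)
\textbf{Proof proposal for Lemma~\ref{lem-L-coloured-operad}.}

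The plan is to exhibit the functor $L$ as the restriction of the Day-convolution–type lift coming from the free $\I$-$G$-space functor, and then check it is a map of coloured operads on the nose by comparing multimorphism spaces. First I would recall that $L$ sends $(H,V)$ to the free $\I$-$G$-space $G\times_H \I_V$, which by Construction~\ref{con-free-IG-space} corepresents $X\mapsto X(V)^H$. The monoidal structure on $\DgTop{\I}{G}$ is Day convolution (Remark~\ref{rem-day-convolution}), and Equation~(\ref{eq-tensor}) gives the key identity
\[
(G\times_{H_1}\I_{V_1})\otimes\cdots\otimes(G\times_{H_n}\I_{V_n})\cong \Delta^*\bigl(G^n\times_{\prod H_i}\I_{\bigoplus V_i}\bigr),
\]
where $\Delta\colon G\to G^n$ is the diagonal. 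Thus a morphism in the level model category's coloured operad from $\{G\times_{H_i}\I_{V_i}\}_i$ to $G\times_K\I_W$ — which is by definition a morphism $G\times_K\I_W \to \Delta^*(G^n\times_{\prod H_i}\I_{\bigoplus V_i})$ of $\I$-$G$-spaces — is, by the corepresentability of $G\times_K\I_W$, the same as a $K$-fixed point of the value of that tensor product on $W$. Concretely, $\bigl(\Delta^*(G^n\times_{\prod H_i}\I_{\bigoplus V_i})\bigr)(W)^K = \bigl((\prod_i G)\times_{\prod_i H_i}\I(\bigoplus_i V_i, W)\bigr)^K$, which is exactly $\OR_G(\{(H_i,V_i)\}_i,(K,W))$ by construction. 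So the bijection on multimorphism spaces is essentially forced.

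The steps, in order: (i) write out explicitly the multimorphism space in the topological coloured operad $N^\otimes((\DgTop{\I}{G}^\circ)^{\op})$ between the images $L(H_i,V_i)$ and $L(K,W)$, using that these objects are bifibrant so no (co)fibrant replacement is needed; (ii) apply Equation~(\ref{eq-tensor}) together with the adjunction $\Hom_{\DgTop{\I}{G}}(G\times_K\I_W, Y)\cong \Hom_{\Top}(\ast, Y(W)^K)$ to identify this space with $\bigl((\prod_i G)\times_{\prod_i H_i}\I(\bigoplus_i V_i, W)\bigr)^K$ as a space; (iii) check that under this homeomorphism the composition in $N^\otimes((\DgTop{\I}{G}^\circ)^{\op})$ — which amalgamates tensor products of linear isometries and multiplies the group elements in the appropriate order — matches the composition formulas in the coloured operad $\OR_G$ spelled out before Remark~\ref{rem-pi_G} (the $\bigoplus V_i \to \bigoplus W_j \to U$ rule for isometries and the $(g_jg_iH_i)$ rule for group elements); (iv) observe that identities correspond: $[eH,1_V]$ maps to the identity of $G\times_H\I_V$. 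Fully faithfulness of $L$ on underlying categories was already established in the proof of Theorem~\ref{thm-presheaf-I-G-spaces}, so the coloured-operad lift is automatically fully faithful once the multimorphism identification holds in all arities.

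The main obstacle will be step (iii): being careful that the diagonal $\Delta^*$ and the various identifications of induced spaces interact correctly with composition, i.e. that the order in which group elements multiply in $\OR_G$ (``reverse order'', as flagged in Remark~\ref{rem-morphism-O_G} and Lemma~\ref{lem-composition-well-def}) genuinely agrees with the order dictated by composing morphisms of free $\I$-$G$-spaces in $N^\otimes((\DgTop{\I}{G}^\circ)^{\op})$, which is itself the opposite of composition in $\DgTop{\I}{G}$. Keeping track of these two layers of op-ing is the only place where a genuine check is needed; the rest follows by unwinding definitions and the same corepresentability argument already used in Example~\ref{ex-L} and Theorem~\ref{thm-presheaf-I-G-spaces}. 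Once this is done, Lemma~\ref{ORG_operad} guarantees that the induced map $\OR_G^\otimes\to \bigl(\DgTop{\I}{G}[W_{lvl}^{-1}]^{\op}\bigr)^\otimes$ is a map of $\infty$-operads, and Corollary~\ref{cor-elmendorf} can then be applied.
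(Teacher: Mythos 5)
Your proposal is correct and uses the same key ingredients as the paper's proof: Equation~(\ref{eq-tensor}) to identify the tensor product of the free $\I$-$G$-spaces, and corepresentability of $G\times_K\I_W$ to compute the multimorphism spaces as $\bigl((\prod_i G)\times_{\prod_i H_i}\Li(\bigoplus_i V_i,W)\bigr)^K$. The one place the routes diverge is your step~(iii). The paper sidesteps an explicit composition check by \emph{defining} the lift of $L$ as the assignment $\{(H_i,V_i)\}\mapsto\OR_G(\bigotimes_i(H_i,V_i),(e,-))$; since multimorphisms in $\OR_G$ act functorially on the first argument, compatibility with composition is then ``by construction,'' and Equation~(\ref{eq-tensor}) is invoked only afterwards to recognize the image as $\bigotimes_i(G\times_{H_i}\Li_{V_i})$. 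Your version starts from $L$ on objects and then checks the composition law by hand, which makes the double-op bookkeeping (the ``reverse order'' of $\OR_G$-composition against the opposite of $\DgTop{\I}{G}$-composition) visible. That bookkeeping does close up, so the argument is sound; the paper's multi-Yoneda framing just buys you that check for free and is worth keeping in mind as a way to avoid this kind of sign chasing.
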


\begin{proof}
	We define a functor between coloured operads by
	\[
	\OR_G \to (\DgTop{\I}{G}^\circ)^{\op}, \qquad \{(H_i,V_i)\} \mapsto \OR_G(\bigotimes (H_i,V_i), (e,-)).
	\]
	Using Equation~\ref{eq-tensor} we can rewrite this functor 
	in more familiar terms as
	\[
	\OR_G(\{(H_i,V_i)\}, (e,-))= (\prod_i G) \times_{(\prod_i H_i)} \Li(\bigoplus_i V_i, -) 
	\simeq \bigotimes_{i} (G \times_{H_i} \Li_{V_i}).
	\]
	By construction this functor defines a coloured operad map which lifts $L$.
	Using this description of the functor and the fact that 
	$G \times_H \Li_{W}$ corepresents the functor $X \mapsto X(W)^K$, we also see that the map 
	induced on multimorphism spaces
	\[
	\OR_G(\{(H_i,V_i)\}_{i \in I}, (K,W)) \to \DgTop{\I}{G}(G \times_K \Li_{W}, \bigotimes_{i\in I} 
	G \times_{H_i} \Li_{V_i})
	\]
	is a homeomorphism. Therefore the functor of coloured operads is fully faithful.
\end{proof}

The map $L$ of topological operads constructed above induces a map $L\colon \OR_G^\otimes\rightarrow (\DgTop{\CI}{G}[W_{lvl}^{-1}]^\otimes)^{\op}$ of $\infty$-operads. Furthermore this functor is again fully faithful. 

\begin{corollary}\label{thm-I-G-spaces-sym-mon-equiv}
	The functor $L \colon \OR_G^\otimes \to (\DgTp{\I}{G}[W_{lvl}^{-1}]^\otimes)^{\op}$ induces a symmetric monoidal equivalence
	\[
	\DgTop{\I}{G}[W_{lvl}^{-1}] \simeq \DgSpc{\OR_G},
	\] 
	where the right hand side is equipped with the Day convolution product.
\end{corollary}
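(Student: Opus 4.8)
The plan is to obtain this as a direct application of the symmetric monoidal Elmendorf theorem, Corollary~\ref{cor-elmendorf}, to the functor $L$. At the level of underlying $\infty$-categories the equivalence $\DgTop{\I}{G}[W_{lvl}^{-1}]\simeq\DgSpc{\OR_G}$ was already established in Theorem~\ref{thm-presheaf-I-G-spaces} by checking the hypotheses of Theorem~\ref{thm-elmendorf} for $L^{\op}\colon\OR_G^{\op}\to\DgTop{\I}{G}[W_{lvl}^{-1}]$ — namely that the free $\I$-$G$-spaces $G\times_H\I_V$ are tiny and jointly conservative — so the only work left is to promote this equivalence compatibly with the monoidal structures, which is precisely the extra content packaged by Corollary~\ref{cor-elmendorf}.

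Concretely, I would check the remaining hypotheses of Corollary~\ref{cor-elmendorf} as follows. First, $\DgTop{\I}{G}[W_{lvl}^{-1}]$ is cocomplete (as recalled in the proof of Theorem~\ref{thm-presheaf-I-G-spaces}), and its Day‑over‑$\I$ symmetric monoidal structure is compatible with colimits: by Proposition~\ref{prop-level-sym-mon} the level model structure on $\DgTop{\I}{G}$ is a symmetric monoidal model category, so its tensor product is a left Quillen bifunctor and hence commutes with colimits separately in each variable on the underlying $\infty$-category (the example after Definition~\ref{definition:monoidal-structure-compatible-with-colimits}). Second, the small full subcategory spanned by the $G\times_H\I_V$ carries the $\infty$-operad structure obtained by applying Lemma~\ref{ORG_operad} to the topological coloured operad $\OR_G^\otimes$ and then localizing, and the fact that $L$ lifts to a \emph{fully faithful} map of $\infty$-operads into this localized monoidal structure is exactly Lemma~\ref{lem-L-coloured-operad} together with the paragraph following it (using the isomorphism~(\ref{eq-tensor}) to identify $\OR_G(\{(H_i,V_i)\},(K,W))$ with $\DgTop{\I}{G}(G\times_K\I_W,\bigotimes_i G\times_{H_i}\I_{V_i})$, and the bifibrancy of the free $\I$-$G$-spaces). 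With these inputs in place, Corollary~\ref{cor-elmendorf} applies and yields both that $\OR_G^\otimes$ is a symmetric promonoidal $\infty$-category and that the restricted Yoneda equivalence of Theorem~\ref{thm-presheaf-I-G-spaces} refines to a symmetric monoidal equivalence $\DgTop{\I}{G}[W_{lvl}^{-1}]\simeq\DgSpc{\OR_G}$ for the Day convolution product; unwinding the proof of Corollary~\ref{cor-elmendorf} this amounts to transporting the monoidal structure along $j$ and invoking Theorem~\ref{thm-I-promonoidal}, noting that $j$ carries $G\times_H\I_V$ to the copresheaf corepresented by $(H,V)$.

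The only point that genuinely requires attention — and the only place a real choice is made — is the bookkeeping of opposites: because the multimorphism spaces of $\OR_G^\otimes$ are computed as maps \emph{into} a tensor product of free $\I$-$G$-spaces, the operad map $L$ lands in the operad underlying the \emph{opposite} symmetric monoidal category of $\DgTop{\I}{G}[W_{lvl}^{-1}]$, and correspondingly the promonoidal structure produced by Theorem~\ref{thm-I-promonoidal} inside the proof of Corollary~\ref{cor-elmendorf} is the one on $\OR_G$ rather than on $\OR_G^{\op}$; once these variances are lined up with the conventions fixed in Theorem~\ref{thm-presheaf-I-G-spaces} there is nothing further to do, since every other ingredient — cocompleteness, compatibility with colimits, the coloured‑operad enhancement of $\OR_G$, and the tininess/conservativity estimates — is already available. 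I therefore do not expect any serious obstacle here: the corollary is a formal consequence of results already proved, and this op-tracking is the only delicate detail.
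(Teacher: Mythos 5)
Your proposal is correct and follows essentially the same route as the paper's own (very terse) proof: invoke Corollary~\ref{cor-elmendorf}, re-using the tininess and joint conservativity checks from Theorem~\ref{thm-presheaf-I-G-spaces} and the fully faithful operad lift from Lemma~\ref{lem-L-coloured-operad}. Your added attention to the variance bookkeeping — that $L$ lands in the operad underlying the \emph{opposite} monoidal category, so that the promonoidal structure produced by Theorem~\ref{thm-I-promonoidal} sits on $\OR_G$ itself (the index category for the copresheaf category $\DgSpc{\OR_G}$) rather than on $\OR_G^{\op}$ — is exactly the right point to flag and is consistent with how Corollary~\ref{cor-elmendorf} and Theorem~\ref{thm-I-promonoidal} are set up.
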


\begin{proof}
This follows from Corollary \ref{cor-elmendorf}, where we argue as in Theorem~\ref{thm-presheaf-I-G-spaces} and use Lemma~\ref{lem-L-coloured-operad}.
\end{proof}

As a convenient reference, let us summarize the final description of $G$-prespectrum objects which combines all of the identifications obtained.

\begin{corollary}\label{cor:G-prespectra-are-modules}
	Let $G$ be a Lie group. Then there is a symmetric monoidal equivalence 
	\[
	\PSp_G\simeq \Mod_{S_G}(\DgSpcp{\OR_G}).
	\]
\end{corollary}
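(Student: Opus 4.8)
The plan is to chain together the three symmetric monoidal identifications that have been established in this section and the previous one. Recall from Proposition~\ref{cor:PSp_first} that there is a symmetric monoidal equivalence
\[
\PSp_G \simeq \Mod_{S_G}\bigl(\DgTop{\I}{G}[W_{lvl}^{-1}]_{*}\bigr),
\]
so it suffices to identify the right-hand side with $\Mod_{S_G}(\DgSpcp{\OR_G})$ as symmetric monoidal $\infty$-categories.

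First I would invoke Corollary~\ref{thm-I-G-spaces-sym-mon-equiv}, which gives a symmetric monoidal equivalence $\DgTop{\I}{G}[W_{lvl}^{-1}]\simeq \DgSpc{\OR_G}$, where the target carries the Day convolution symmetric monoidal structure coming from the restricted promonoidal structure on $\OR_G$. Next, passing to pointed objects is a symmetric monoidal operation that is preserved under symmetric monoidal equivalences, so we obtain
\[
\DgTop{\I}{G}[W_{lvl}^{-1}]_{*}\simeq \bigl(\DgSpc{\OR_G}\bigr)_{*}.
\]
Then I would apply Proposition~\ref{prop:Day_conv_pointed} with $\CI=\OR_G$ and $\CC=\Spc$ (which is presentably symmetric monoidal under the cartesian product) to obtain a symmetric monoidal equivalence $\bigl(\DgSpc{\OR_G}\bigr)_{*}\simeq \DgSpcp{\OR_G}$. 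Combining these gives a symmetric monoidal equivalence $\DgTop{\I}{G}[W_{lvl}^{-1}]_{*}\simeq \DgSpcp{\OR_G}$.

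Finally I would transport the commutative algebra object $S_G$ along this equivalence and take module categories. Since $\Mod_{(-)}(-)$ is functorial for symmetric monoidal equivalences and the transported algebra is (by construction, via the universal property of Day convolution) the commutative algebra object $S_G\colon \OR_G\to\Spcp$ sending $(H,V)$ to $(S^V)^H$, we conclude
\[
\PSp_G\simeq \Mod_{S_G}\bigl(\DgTop{\I}{G}[W_{lvl}^{-1}]_{*}\bigr)\simeq \Mod_{S_G}\bigl(\DgSpcp{\OR_G}\bigr)
\]
as symmetric monoidal $\infty$-categories. The only point requiring mild care is the bookkeeping identifying the transported algebra object with the stated $S_G$; this amounts to tracing through that the fully faithful operad map $L$ of Lemma~\ref{lem-L-coloured-operad} sends $S_G$ (as an algebra in $\DgTp{\I}{G}$, i.e.\ the modules defining $\Sp^O_G$) to the Day convolution algebra corepresenting $(S^{(-)})^{(-)}$, which follows from the corepresentability statement $G\times_H\I_V\mapsto (X(V)^H)$ already used in the proof of Theorem~\ref{thm-presheaf-I-G-spaces}. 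I expect this identification of the algebra object to be the only genuine (if still routine) obstacle; everything else is formal composition of equivalences.
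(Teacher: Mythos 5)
Your proof is correct and takes essentially the same route as the paper, which simply combines Corollary~\ref{cor:PSp_first}, Corollary~\ref{thm-I-G-spaces-sym-mon-equiv}, and Proposition~\ref{prop:Day_conv_pointed}. Your closing remark about verifying that the algebra object $S_G$ is carried to the corepresentable $(H,V)\mapsto (S^V)^H$ under the Elmendorf-type equivalence is a reasonable bookkeeping point that the paper leaves implicit; it does indeed follow from the corepresentability used in Theorem~\ref{thm-presheaf-I-G-spaces}.
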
 

\begin{proof}
	Combine Theorem~\ref{thm-I-G-spaces-sym-mon-equiv}, Corollary~\ref{cor:PSp_first} and Proposition~\ref{prop:Day_conv_pointed}.
\end{proof}

\begin{remark}
We will often implicitly identify $\PSp_G$ with $\Mod_{S_G}(\DgSpcp{\OR_G})$ for the remainder of the paper. 
\end{remark}

\subsection{$\I$-spaces and $\ORf$-spaces.}
We now undertake a similar analysis for the $\infty$-category of $\I$-spaces localized at the faithful level equivalences. Because many of the details are similar, we will be briefer in this section than the previous one.

\begin{definition}\label{def-ORf}
	We define a topological category $\ORf$ whose objects are pairs $(G,V)$ where $G$ is a 
	compact Lie group and $V$ is a faithful $G$-representation. The morphism spaces are 
	given by 
	\[
	\ORf((G,V),(H,W))= (\Li(V,W)/G)^H.
	\]
	There is a composition map
	\[
	\circ \colon \ORf((H,W), (L, U))\times \ORf((G,V), (H,W)) \to \ORf((G,V), (L,U))
	\]
	given by $([\psi],[\varphi])\mapsto [\psi \circ \varphi]$. Similarly to Lemma \ref{lem-composition-well-def}, one may verify this composition is well-defined, associative, unital and continuous.
\end{definition}

\begin{example}\label{ex-semifree}
	By definition $\ORf((G,V),(e,W))=\Li(V,W)/G$. Thus we can identify the functor 
	\[
	\ORf((G,V), (e,-))\colon \I \to \Top
	\] 
	with the semifree $\I$-space $\Li_{G,V}$ from Construction~\ref{con-semifree}. Recall this $\I$-space corepresents the functor $X\mapsto X(V)^G$.
\end{example}

\begin{definition}
	We let $\DgSpc{\ORf}$ denote the $\infty$-category of $\ORf$-spaces which is the 
	$\infty$-category of functors $\ORf \to \Spc$. We also write $\DgSpcp{\ORf}$ for 
	the $\infty$-category of functors $\ORf \to \Spcp$.
\end{definition}

We now prove the main result of this subsection.

\begin{theorem}\label{thm-presheaf-I-spaces}
	There is an equivalences of $\infty$-categories 
	\[
	\DgTop{\I}{}[W_{f-lvl}^{-1}]\simeq \DgSpc{\ORf}.
	\]
\end{theorem}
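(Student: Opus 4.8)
The plan is to mimic the proof of Theorem~\ref{thm-presheaf-I-G-spaces} almost verbatim, using the same strategy: exhibit an explicit fully faithful functor from $\ORf^{\op}$ into $\DgTop{\I}{}[W_{f-lvl}^{-1}]$ landing in a collection of bifibrant objects which are tiny and jointly conservative, and then invoke Elmendorf's theorem (Theorem~\ref{thm-elmendorf}). Concretely, Example~\ref{ex-semifree} identifies the functor $\ORf((G,V),(e,-))\colon \I\to \Top$ with the semifree $\I$-space $\Li_{G,V}$, and the definition of the morphism spaces of $\ORf$ shows that the assignment $(G,V)\mapsto \Li_{G,V}$ defines a fully faithful functor of topological categories $\ORf^{\op}\to \DgTop{\I}{}$. (Here one uses that $\Li_{G,V}$ corepresents $X\mapsto X(V)^G$, so that $\DgTop{\I}{}(\Li_{G,V},\Li_{H,W})\simeq (\Li_{G,V})(W)^H = \ORf((G,V),(H,W))$ on the nose; and that composition matches by the formula in Definition~\ref{def-ORf}.)

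First I would check that each semifree $\I$-space $\Li_{G,V}$ with $V$ a faithful $G$-representation is bifibrant in the faithful level model structure: it is cofibrant by Theorem~\ref{thm-level-global} since these are precisely the (domains of the) generating cofibrations, and every object is fibrant because all objects are fibrant in the projective model structures used to build the level structure. Hence the composite
\[
L\colon \ORf^{\op}\to \DgTop{\I}{}\to \DgTop{\I}{}[W_{f-lvl}^{-1}],\qquad (G,V)\mapsto \Li_{G,V},
\]
is fully faithful. Next I would verify hypotheses (a) and (b) of Theorem~\ref{thm-elmendorf}. For tininess: $\Li_{G,V}$ corepresents $X\mapsto X(V)^G$, which is a composite of the evaluation functor $X\mapsto X(V)$ and the $G$-fixed points functor; evaluation preserves all colimits (they are computed pointwise) and preserves the faithful level equivalences and (acyclic) cofibrations by checking on generators, hence preserves small homotopy colimits, while the $G$-fixed points functor on $G$-spaces preserves small homotopy colimits by~\cite{Schwede18}*{Proposition B.1}, exactly as in Example~\ref{ex-S_G} and Theorem~\ref{thm-presheaf-I-G-spaces}. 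For joint conservativity: a map $f$ of $\I$-spaces is a faithful level equivalence if and only if $f(V)^G$ is a weak equivalence for all compact Lie groups $G$ and faithful $G$-representations $V$ (Definition~\ref{def-global-level} together with Lemma~\ref{lem-faithful-level}), which is precisely the statement that $\{\Li_{G,V}\}$ is jointly conservative. Finally $\DgTop{\I}{}[W_{f-lvl}^{-1}]$ is cocomplete, being the underlying $\infty$-category of a cofibrantly generated model category on a locally presentable category (Theorem~\ref{thm-level-global} and~\cite{HA}*{Proposition 1.3.4.22}). Thus Theorem~\ref{thm-elmendorf} applies and yields the equivalence $\DgTop{\I}{}[W_{f-lvl}^{-1}]\simeq \DgSpc{\ORf}$.

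There is no serious obstacle here; the only point requiring a modicum of care is the verification that the semifree $\I$-spaces $\Li_{G,V}$ are cofibrant in the \emph{faithful} level model structure (as opposed to merely being the generating cofibrations' domains relative to $\partial D^n\hookrightarrow D^n$) — but this is immediate since $\Li_{G,V}\cong \Li_{G,V}(\varnothing)$... more precisely $\Li_{G,V}S^0$ is a retract of a cell complex built from the generating cofibrations $\Li_{G,V}(\partial D^n)\to \Li_{G,V}(D^n)$, so it is cofibrant. The bulk of the work — namely establishing Elmendorf's theorem in the $\infty$-categorical generality we need — has already been done in Theorem~\ref{thm-elmendorf}, so this proof is genuinely a short application. (As in the $\OR_G$ case, one would subsequently upgrade this to a symmetric monoidal equivalence $\DgTop{\I}{}[W_{f-lvl}^{-1}]\simeq \DgSpc{\ORf}$ via Corollary~\ref{cor-elmendorf} after promoting $\ORf$ to a topological coloured operad using Equation~(\ref{tensor-product-semifree}) — but the theorem as stated only asks for the underlying equivalence of $\infty$-categories, so I would record just that here and defer the monoidal refinement to a separate corollary.)
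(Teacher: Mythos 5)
Your proof is essentially identical to the paper's: you construct the same functor $(G,V)\mapsto \Li_{G,V}$, check bifibrancy, tininess, and joint conservativity in the same way, and invoke Theorem~\ref{thm-elmendorf} as the paper does. One small slip in your parenthetical full-faithfulness check: since $\Li_{G,V}$ corepresents $X\mapsto X(V)^G$, the correct identification is $\DgTop{\I}{}(\Li_{G,V},\Li_{H,W})\simeq (\Li_{H,W})(V)^G = (\Li(W,V)/H)^G = \ORf((H,W),(G,V)) = \ORf^{\op}((G,V),(H,W))$, not $(\Li_{G,V})(W)^H$; you applied corepresentability to the wrong object, though your overall conclusion (that the functor out of $\ORf^{\op}$ is fully faithful) is of course correct.
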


\begin{proof}
	The discussion in Example~\ref{ex-semifree} shows that there exists a functor of 
	topological categories (and so of $\infty$-categories)
	\[
	(\ORf)^{\op} \to \DgTop{\I}{}, \qquad (G,V)\mapsto \ORf((G,V), (e,-))=\Li_{G,V}.
	\]
	This is fully faithful by definition of $\ORf$. Since the $\I$-spaces 
	$\Li_{G,V}$ are bifibrant in the faithful level model structure, the composite 
	\[
	(\ORf)^{\op}\to \DgTop{\I}{} \to \DgTop{\I}{}[W_{f-lvl}^{-1}] 
	\]
	is also fully faithful. We note that the semifree $\I$-space $\Li_{G,V}$ 
	corepresents the functor $X \mapsto X(V)^G$, which commutes with small homotopy colimits. 
	Indeed the $G$-fixed points functor commutes will small homotopy colimits by the discussion in Example~\ref{ex-S_G}, and so does the evaluation functor $X \mapsto X(V)$ since it preserves all colimits (as they are calculated pointwise), faithful level equivalences by definitions and cofibrations (as one can verify by checking on the set of generating cofibrations).  
	Finally, the collection of objects $\{\Li_{G,V} \mid (G,V)\in\ORf\}$ is jointly 
	conservative by definition of the faithful level equivalences.
	Thus the claimed equivalence follows by applying Theorem~\ref{thm-elmendorf}.
\end{proof}

We now discuss how the symmetric monoidal structure on $\DgTop{\I}{}^c[W_{f-lvl}^{-1}]$ translates to $\DgSpcp{\ORfgl}$.

\begin{lemma}\label{lem-ORf_is_sym_mon}
	The topological category $\ORf$ is symmetric monoidal with unit object $(e, 0)$ and 
	tensor product given by $(G,V)\otimes (H,W)=(G \times H, V \oplus W)$. 
	In particular, the $\infty$-category of $\ORf$-spaces admits a symmetric monoidal 
	structure given by Day convolution. 
\end{lemma}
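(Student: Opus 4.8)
The claim is that $\ORfgl$ is a symmetric monoidal topological category under $(G,V)\otimes(H,W) = (G\times H, V\oplus W)$, with unit $(e,0)$. The plan is to verify this directly by checking that this assignment on objects is compatible with the morphism spaces and composition, and then to invoke the machinery already developed (Lemma~\ref{ORG_operad} and Corollary~\ref{cor:tensor-in-Day-convolution}) to conclude the statement about Day convolution. Concretely, I would first note that for faithful representations $V$ of $G$ and $W$ of $H$, the direct sum $V\oplus W$ is a faithful $G\times H$-representation, so the tensor product is well-defined on objects. Next I would define the map on morphism spaces: given $[\varphi]\in \ORf((G,V),(G',V'))$ and $[\psi]\in \ORf((H,W),(H',W'))$ — i.e. $G$-orbits (resp. $H$-orbits) of linear isometries which are $G'$-fixed (resp. $H'$-fixed) — one sets $[\varphi]\otimes[\psi] = [\varphi\oplus\psi]$, a $(G\times H)$-orbit of an isometry $V\oplus W \to V'\oplus W'$ which is $(G'\times H')$-fixed. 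One checks this is well-defined (independent of representatives) and continuous, exactly as in Lemma~\ref{lem-composition-well-def}, and that it is compatible with composition and units, so that $\otimes$ is a bifunctor $\ORf\times\ORf\to\ORf$.

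Having defined the bifunctor, I would then exhibit the symmetry, associativity and unit isomorphisms. These are all induced by the corresponding canonical isomorphisms of inner product spaces ($V\oplus W \cong W\oplus V$, $(V\oplus W)\oplus U \cong V\oplus(W\oplus U)$, $0\oplus V\cong V$) together with the corresponding isomorphisms of groups, and one verifies the pentagon and hexagon axioms; these hold because they already hold at the level of the orthogonal group / direct sum structure and the relevant isomorphisms are $G\times H$-equivariant (resp. $G\times H\times L$-equivariant). Since everything is continuous, this endows $\ORf$ with the structure of a symmetric monoidal topological category. Passing through the singular-complex and coherent-nerve functors of Section~1, this produces a symmetric monoidal $\infty$-category structure on $\ORf$, hence by Lemma~\ref{ORG_operad} an $\infty$-operad $\ORf^\otimes \to \Finp$ which is in fact (locally) cocartesian, i.e. symmetric monoidal.

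The final sentence of the statement then follows formally: a symmetric monoidal $\infty$-category is in particular symmetric promonoidal (Example after Definition~\ref{def-promonoidal}, or Example~\ref{ex-(co)cartesian-are-exponentiable}), so Corollary~\ref{cor:tensor-in-Day-convolution} applies to give a symmetric monoidal structure on $\Fun(\ORf,\Spcp)$ via Day convolution against this promonoidal structure, using that $\Spcp$ with the smash product is presentably symmetric monoidal and compatible with colimits. I expect no serious obstacle here — the content is entirely the routine bookkeeping of checking that $[\varphi\oplus\psi]$ is well-defined on orbits and fixed points, and that the coherence isomorphisms assemble correctly; the mildly delicate point, as in the $\OR_G$ case, is keeping track of which group acts on which side (the left $G\times H$-action by precomposition versus the right $G'\times H'$-fixed-point condition) and confirming the two structures on the two tensor factors genuinely combine into the product action on $V\oplus W$. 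Once that is in hand, the promonoidal upgrade and the Day convolution conclusion are immediate from the cited results.
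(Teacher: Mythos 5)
Your proposal is correct and follows the same route as the paper: the paper's own proof simply asserts that the symmetric monoidal structure on $\ORfgl$ is "a straightforward verification" and then invokes Corollary~\ref{cor:tensor-in-Day-convolution} for the Day convolution statement, which is exactly the decomposition you use. You have merely spelled out the routine well-definedness and coherence checks that the paper leaves implicit.
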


\begin{proof}
	The first claim is a straightforward verification. The second claim then follows 
	from Corollary~\ref{cor:tensor-in-Day-convolution}.
\end{proof}

Write $\ORf^\otimes$ for the $\infty$-operad associated to symmetric monoidal topological category $\ORf$.

\begin{lemma}\label{lem-L_gl-sym-mon}
	The functor $L_{gl} \colon \ORf \to (\DgTop{\I}{}[W_{f-lvl}^{-1}])^{\op}$ given by $(G,V)\mapsto \Li_{G,V}$ lifts to a fully faithful symmetric monoidal functor 
	\[
	L_{gl} \colon \ORf \to (\DgTop{\I}{}[W_{f-lvl}^{-1}])^{\op}
	\] of $\infty$-categories.
\end{lemma}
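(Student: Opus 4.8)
The plan is to mimic the proof of Lemma~\ref{lem-L-coloured-operad} in the global setting, passing through the topological coloured operad structures and then to $\infty$-operads via Lemma~\ref{ORG_operad}. First I would observe that, since $\DgTop{\I}{}$ with the faithful level model structure is a symmetric monoidal topological model category, we may form the topological coloured operad $N^\otimes((\DgTop{\I}{}^\circ)^{\op})$ whose colours are the bifibrant $\I$-spaces and whose multimorphism spaces are $\DgTop{\I}{}(Y,\bigotimes_{i\in I}X_i)$; its associated $\infty$-operad models the symmetric monoidal structure on $(\DgTop{\I}{}[W_{f-lvl}^{-1}])^{\op}$, just as in the paragraph preceding Lemma~\ref{lem-L-coloured-operad}. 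I would then promote $\ORf$ (already symmetric monoidal by Lemma~\ref{lem-ORf_is_sym_mon}) to a topological coloured operad in the standard way, with multimorphism spaces
\[
\ORf(\{(G_i,V_i)\}_{i\in I},(H,W)) = \Big(\Li\Big(\bigoplus_{i\in I} V_i, W\Big)\big/\textstyle\prod_{i\in I} G_i\Big)^{H},
\]
and composition induced by direct sums of isometries together with the group multiplications, exactly as in the $\OR_G$ case.

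Next I would define the lift of $L_{gl}$ on coloured operads by the assignment $\{(G_i,V_i)\}_{i\in I}\mapsto \ORf(\bigotimes_i(G_i,V_i),(e,-))$, which by Lemma~\ref{lem-ORf_is_sym_mon} equals $\ORf((\prod_i G_i,\bigoplus_i V_i),(e,-))$. Using the isomorphism of $\I$-spaces $\Li_{G,V}\otimes\Li_{H,W}\cong\Li_{G\times H,V\oplus W}$ from Equation~(\ref{tensor-product-semifree}) in Construction~\ref{con-semifree}, this functor is identified with $\{(G_i,V_i)\}\mapsto \bigotimes_i \Li_{G_i,V_i}$, so it is manifestly a map of topological coloured operads lifting $L_{gl}$ on underlying categories. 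The key computation is then that the induced map on multimorphism spaces
\[
\ORf(\{(G_i,V_i)\}_{i\in I},(H,W)) \to \DgTop{\I}{}\Big(\Li_{H,W}, \textstyle\bigotimes_{i\in I}\Li_{G_i,V_i}\Big)
\]
is a homeomorphism; but $\Li_{H,W}$ corepresents $X\mapsto X(W)^H$ (Construction~\ref{con-semifree}), so the target is $\big((\bigotimes_i\Li_{G_i,V_i})(W)\big)^H = \big(\Li(\bigoplus_i V_i,W)/\prod_i G_i\big)^H$, which is precisely the source. Hence the coloured operad map is fully faithful.

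Finally I would pass to $\infty$-operads: applying the functor of Lemma~\ref{ORG_operad} to this fully faithful map of topological coloured operads yields a map of $\infty$-operads $\ORf^\otimes \to (\DgTop{\I}{}[W_{f-lvl}^{-1}]^\otimes)^{\op}$ which is fully faithful, since the coherent nerve preserves the weak homotopy type of mapping (hence multimapping) spaces. On underlying $\infty$-categories this recovers the fully faithful functor $L_{gl}$ of Theorem~\ref{thm-presheaf-I-spaces}. Since $\ORf^\otimes$ is (by Lemma~\ref{lem-ORf_is_sym_mon}) the $\infty$-operad of a symmetric monoidal $\infty$-category, the functor is automatically symmetric monoidal as a map out of a cocartesian-over-$\Finp$ source, i.e. it preserves cocartesian lifts of all arrows — or one can simply note that the lift was constructed to be strong monoidal at the level of topological categories. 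I do not anticipate a serious obstacle here; the only point requiring genuine care is the homeomorphism on multimapping spaces, and that reduces, via Equation~(\ref{tensor-product-semifree}) and the corepresentability of $\Li_{G,V}$, to an identification of orbit-fixed-point spaces that is essentially tautological — the faithfulness hypotheses on the representations play no role in the argument, only in ensuring $\ORf$ is a well-defined subcategory of the global analogue.
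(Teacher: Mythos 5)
Your proof is correct, but it does more work than the paper's own one-line argument. The paper's proof simply observes that Equation~(\ref{tensor-product-semifree}) makes $L_{gl}\colon \ORfgl \to \DgTop{\I}{}$ a strong monoidal functor between honest symmetric monoidal topological categories (using Lemma~\ref{lem-ORf_is_sym_mon} for the source and Theorem~\ref{thm-level-global} for the target), and then relies on the general machinery: a strong monoidal topological functor landing in bifibrant objects of a symmetric monoidal topological model category passes directly to a symmetric monoidal functor of underlying $\infty$-categories, and full faithfulness is already supplied by Theorem~\ref{thm-presheaf-I-spaces}. You instead replay the coloured-operad template from Lemma~\ref{lem-L-coloured-operad}, which is the route the paper \emph{had} to take in the $\OR_G$ case precisely because $\OR_G$ is only a topological coloured operad and not a symmetric monoidal category. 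Since $\ORf$ genuinely is symmetric monoidal, the detour through coloured operads and the explicit homeomorphism on multimorphism spaces is unnecessary here, though it is valid and in fact re-derives the full faithfulness statement rather than citing it. Both approaches hinge on the same computation, $\Li_{G,V}\otimes\Li_{H,W}\cong\Li_{G\times H,V\oplus W}$; what the paper's shortcut buys is brevity, and what your version buys is a self-contained verification at the level of multimapping spaces that does not defer to Theorem~\ref{thm-presheaf-I-spaces} for fully faithfulness.
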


\begin{proof}
	It suffices to observe that Equation~(\ref{tensor-product-semifree}) implies that $L_{gl}\colon \ORfgl\rightarrow \DgTop{\I}{}$ is a strong monoidal functor. 
\end{proof}

\begin{corollary}\label{thm-I-spaces-sym-mon-equiv}
	There is a symmetric monoidal equivalence
	\[
	\DgTop{\I}{}[W_{lvl}^{-1}] \simeq \DgSpc{\ORf},
	\]where the right hand side is symmetric monoidal via Day convolution.
\end{corollary}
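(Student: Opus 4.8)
The plan is to deduce the statement from the symmetric monoidal Elmendorf theorem, Corollary~\ref{cor-elmendorf}, in complete parallel with the proof of Corollary~\ref{thm-I-G-spaces-sym-mon-equiv}. (Here the left-hand side is to be read as $\DgTop{\I}{}[W_{f-lvl}^{-1}]$.) Concretely, I would apply Corollary~\ref{cor-elmendorf} with $\CC=\DgTop{\I}{}[W_{f-lvl}^{-1}]$ and $\CC_0=(\ORf)^{\op}$, so that $\CP(\CC_0)=\DgSpc{\ORf}$ exactly as in Theorem~\ref{thm-presheaf-I-spaces}. Hypothesis (a) holds because the faithful level model structure of Theorem~\ref{thm-level-global} is symmetric monoidal, hence its underlying $\infty$-category carries a symmetric monoidal structure compatible with colimits (the tensor product is a left Quillen bifunctor by the pushout--product axiom). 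Hypothesis (b) holds by equipping $(\ORf)^{\op}$ with the $\infty$-operad structure underlying the symmetric monoidal topological category $\ORf$ of Lemma~\ref{lem-ORf_is_sym_mon}, using that the opposite of a symmetric monoidal $\infty$-category is again symmetric monoidal. Hypothesis (c) --- that the inclusion sending $(G,V)$ to the semifree $\I$-space $\Li_{G,V}$ lifts to a fully faithful map of $\infty$-operads --- is exactly Lemma~\ref{lem-L_gl-sym-mon} after passing to opposites, since there $L_{gl}\colon \ORf\to(\DgTop{\I}{}[W_{f-lvl}^{-1}])^{\op}$ is shown to be fully faithful and symmetric monoidal.

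Next I would recall that the remaining hypotheses of the underlying Elmendorf theorem (Theorem~\ref{thm-elmendorf}) --- that each $\Li_{G,V}$ is tiny and that the collection $\{\Li_{G,V}\}$ is jointly conservative --- were already verified in the proof of Theorem~\ref{thm-presheaf-I-spaces}: $\Li_{G,V}$ corepresents the functor $X\mapsto X(V)^G$, which commutes with all small homotopy colimits (the $G$-fixed points functor by Example~\ref{ex-S_G}, and evaluation at $V$ because colimits of $\I$-spaces are computed pointwise and evaluation preserves faithful level equivalences and cofibrations), while $\{\Li_{G,V}\}$ detects faithful level equivalences by definition. With hypotheses (a)--(c) in place, Corollary~\ref{cor-elmendorf} then produces a symmetric monoidal equivalence $\DgSpc{\ORf}^{Day}\simeq \DgTop{\I}{}[W_{f-lvl}^{-1}]$, where the left-hand side is Day convolution along the symmetric promonoidal structure on $(\ORf)^{\op}$ dual to the monoidal structure of Lemma~\ref{lem-ORf_is_sym_mon}; this is the assertion.

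Since all the ingredients have been assembled in the preceding lemmas, I do not expect any genuine obstacle. The one point deserving a little care is the bookkeeping of opposites: Theorem~\ref{thm-elmendorf} outputs $\CC\simeq\CP(\CC_0)=\Fun(\CC_0^{\op},\Spc)$, so one must consistently take $\CC_0=(\ORf)^{\op}$ and transport the symmetric monoidal structure of $\ORf$ along $((\ORf)^{\op})^{\op}\simeq\ORf$, matching the indexing conventions of Theorem~\ref{thm-presheaf-I-spaces}. This is handled exactly as in the proof of Corollary~\ref{thm-I-G-spaces-sym-mon-equiv}, to which I would simply refer.
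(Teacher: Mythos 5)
Your argument is correct and is essentially identical to the paper's proof: apply the symmetric monoidal Elmendorf theorem (Corollary~\ref{cor-elmendorf}), reusing the verification of tininess and joint conservativity from Theorem~\ref{thm-presheaf-I-spaces} and the fully faithful symmetric monoidal lift from Lemma~\ref{lem-L_gl-sym-mon}. Your remark about the typo ($W_{lvl}$ should read $W_{f-lvl}$) and the care about opposites is exactly the right bookkeeping, matching what is done implicitly in the $G$-equivariant case (Corollary~\ref{thm-I-G-spaces-sym-mon-equiv}).
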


\begin{proof}
This follows from Corollary \ref{cor-elmendorf}, where we argue as in Theorem~\ref{thm-presheaf-I-spaces} and use Lemma~\ref{lem-L_gl-sym-mon}.	
\end{proof}

Summarizing all of the identifications made, we have the following description of the symmetric monoidal $\infty$-category of faithful global prespectra.

\begin{corollary}\label{cor:PSpgl=PSpfgl}
	Let $G$ be a Lie group. Then there is a symmetric monoidal equivalence 
	\[
	\PSpfgl\simeq \Mod_{\Sfgl}(\DgSpcp{\ORfgl}).
	\] 
\end{corollary}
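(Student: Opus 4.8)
The statement to be proved is Corollary~\ref{cor:PSpgl=PSpfgl}, asserting a symmetric monoidal equivalence
\[
\PSpfgl\simeq \Mod_{\Sfgl}(\DgSpcp{\ORfgl}).
\]
The plan is to assemble this exactly as in the proof of Corollary~\ref{cor:G-prespectra-are-modules}, by chaining together the three identifications already established for the faithful global setting. First I would invoke Corollary~\ref{cor:PSp_first}, which records the symmetric monoidal equivalence
\[
\PSpfgl\simeq \Mod_{\Sfgl}\bigl(\DgTop{\I}{}[W_{f-lvl}^{-1}]_*\bigr),
\]
obtained by interpreting the passage to $\Sfgl$-modules internally to $\infty$-categories via Proposition~\ref{prop:Modules_localization_commute}. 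This reduces the problem to understanding the pointed objects in $\DgTop{\I}{}[W_{f-lvl}^{-1}]$.

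Next I would apply Corollary~\ref{thm-I-spaces-sym-mon-equiv}, the symmetric monoidal version of Elmendorf's theorem for $\I$-spaces with the faithful level model structure, which gives a symmetric monoidal equivalence
\[
\DgTop{\I}{}[W_{f-lvl}^{-1}]\simeq \DgSpc{\ORfgl},
\]
where the right-hand side carries the Day convolution symmetric monoidal structure coming from the symmetric monoidal topological category $\ORfgl$ of Lemma~\ref{lem-ORf_is_sym_mon}. Passing to pointed objects on both sides, and using Proposition~\ref{prop-ptd-obj-model} on the left (whose hypotheses hold since $\DgTop{\I}{}[W_{f-lvl}^{-1}]$ is presentable by Theorem~\ref{thm-presheaf-I-spaces} and the faithful level model structure is symmetric monoidal with cofibrant unit), together with Proposition~\ref{prop:Day_conv_pointed} on the right to identify $(\DgSpc{\ORfgl})_* \simeq \DgSpcp{\ORfgl}$, I obtain a symmetric monoidal equivalence
\[
\DgTop{\I}{}[W_{f-lvl}^{-1}]_* \simeq \DgSpcp{\ORfgl}.
\]
Under this equivalence the commutative algebra object $\Sfgl$ in the source corresponds to the algebra $\Sfgl$ in $\DgSpcp{\ORfgl}$; this is essentially a bookkeeping point, following from the fact that the algebra $S_{fgl}$ is by construction the functor sending $(G,V)$ to $(S^V)^G$, matching its description after transport.

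Finally, I would transport the module category along this symmetric monoidal equivalence: an equivalence of symmetric monoidal $\infty$-categories carrying one commutative algebra to another induces an equivalence of the corresponding $\infty$-categories of modules, again symmetric monoidally (via the relative tensor product, \cite{HA}*{Section~4.5.2}). Composing the three displayed equivalences yields the desired conclusion. I do not expect a genuine obstacle here: this corollary is a formal consequence of results already in place, and the only mild care needed is in checking that the algebra $\Sfgl$ is correctly matched under the Elmendorf-type equivalence and its pointed refinement — exactly the same check that was (implicitly) carried out in the proof of Corollary~\ref{cor:G-prespectra-are-modules}. I would therefore write the proof simply as: "Combine Corollary~\ref{thm-I-spaces-sym-mon-equiv}, Corollary~\ref{cor:PSp_first} and Proposition~\ref{prop:Day_conv_pointed}," in parallel with the proof of Corollary~\ref{cor:G-prespectra-are-modules}.
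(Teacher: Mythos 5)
Your plan is exactly the paper's proof: combine Corollary~\ref{cor:PSp_first}, Corollary~\ref{thm-I-spaces-sym-mon-equiv}, and Proposition~\ref{prop:Day_conv_pointed}, in direct parallel with Corollary~\ref{cor:G-prespectra-are-modules}. The intermediate reasoning you spell out (presentability via Theorem~\ref{thm-presheaf-I-spaces}, Proposition~\ref{prop-ptd-obj-model}, matching $\Sfgl$ under the Elmendorf-type equivalence, transporting modules along a symmetric monoidal equivalence) is sound and matches the implicit content of the paper's terse proof.
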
 

\begin{proof}
	Combine Corollary~\ref{cor:PSp_first}, Theorem~\ref{thm-I-spaces-sym-mon-equiv} and Proposition~\ref{prop:Day_conv_pointed}.
\end{proof}

\begin{remark}
	We will often implicitly identify $\PSpfgl$ with $\Mod_{\Sfgl}(\DgSpcp{\ORfgl})$.
\end{remark}

\section{Functoriality of equivariant prespectra}\label{sec:funct_prespectra}
The goal of this section is to construct a functor $\PSp_\bullet\colon\Glo^{\op}\to \Cat^\otimes_\infty$ sending a compact Lie group $G$ to the symmetric monoidal $\infty$-category of $G$-prespectra of Definition~\ref{def-prespectra}, and to compute its (partially) lax limit. By Corollary~\ref{cor:G-prespectra-are-modules}, the $\infty$-category of $G$-prespectra can be identified with the category of modules over a certain object $S_G$ in $\DgSpcp{\OR_G}$. Therefore our first step is to construct a functor sending a compact Lie group $G$ to the $\infty$-category $\DgSpcp{\OR_G}$. 

In the unstable case we observed that the relevant functoriality was induced by the functoriality of the partial slices $\Orb_{/G}$ in $\Glo$. Formally, the functoriality of the categories $\DgSpcp{\OR_G}$ is induced by a (pro)functoriality of the categories $\OR_G$, and we will see that this is once again given by "passing to the slices" of a global analogue $\ORgl$ of the individual equivariant categories $\OR_G$. The category $\ORgl$ will be fibred over $\Glo$ and its objects will consist of pairs $(G,V)$, where $G$ is a compact Lie group and $V$ is an arbitrary $G$-representation. Furthermore we will see that restricting to faithful representations, we recover $\ORfgl$.

\begin{construction}
    Let $G,H$ be compact Lie groups and $V$ and $W$ be orthogonal $G$ and $H$-representations respectively. We equip the topological space
    \[\Hom(H,G)\times\Li(V,W)\]
    with the right $G$-action and the left $H$-action given by
    \[(\alpha,\varphi)\cdot g=(c_g\alpha,\varphi g^{-1})\quad\text{and}\quad h\cdot(\alpha,\varphi)=(\alpha,h\varphi \alpha(h)^{-1})\,.\]
    Since the $G$ and $H$-actions commute, there is a residual $G$-action on the fixed points $\left(\Hom(H,G)\times\Li(V,W)\right)^H$. By definition, the fixed points space can be characterized as the space of pairs $(\alpha,\varphi)$ where $\alpha\colon H\to G$ is a Lie group homomorphism and $\varphi\colon V\to W$ is an $H$-equivariant isometry (where $H$ acts on $V$ via $\alpha$). If $K$ is another compact Lie group and $U$ is an orthogonal $K$-representation, we define a composition map
    \[(\Hom(H,G)\times\Li(V,W))^H\times (\Hom(K,H)\times\Li(W,U))^K\to (\Hom(K,G)\times\Li(V,U))^K,\quad (\alpha,\varphi)\cdot (\beta,\psi)=(\alpha\beta,\varphi\psi)\]
    that is compatible with the various actions, so that it induces an associative and unital composition map on the respective action groupoids:
    \[(\Hom(H,G)\times\Li(V,W))^H\sslash G\times (\Hom(K,H)\times\Li(W,U))^K\sslash H\to (\Hom(K,G)\times\Li(V,U))^K\sslash G\,.\]
\end{construction} 

\begin{definition}\label{definition:ORgl}
    Let $\ORgl$ be the topological category whose objects are pairs $(G,V)$ where $G$ is a compact Lie group and $V$ is an orthogonal $G$-representation. Its morphism spaces are defined to be
    \[\ORgl((G,V),(H,W))=|(\Hom(H,G)\times\Li(V,W))^H\sslash G|\]
    where $|-\sslash G|$ is the geometric realization of the action groupoid of $G$ on $\Li(V,W)$ (as in Definition~\ref{def:glo}). As in Lemma~\ref{lem-ORf_is_sym_mon}, one 
    sees that $\ORgl$ admits a symmetric monoidal structure given by $(G,V)\otimes (H,W)\simeq (G\times H,V\oplus W)$. We write $\ORgl^\otimes$ for the associated $\infty$-operad.
\end{definition}

The next result tells us that the $\infty$-category $\ORfgl$ from Definition~\ref{def-ORf} is equivalent to the subcategory of $\ORgl$ spanned by the faithful representations.

\begin{lemma}\label{lem:faithful_sub_ORgl}
    Let $\CC$ be the symmetric monoidal subcategory of $\ORgl$ spanned by $(G,V)$ where $V$ is a faithful $G$-representation. Then there is a symmetric monoidal functor of topological categories $\CC\to \ORfgl$ sending $(G,V)$ to $(G,V)$, which induces a homotopy equivalence on mapping spaces (and so it is an equivalence of the underlying $\infty$-categories).
\end{lemma}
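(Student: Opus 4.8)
The plan is to exhibit an explicit symmetric monoidal functor of topological categories $F\colon \CC\to \ORfgl$ on objects and on mapping spaces, and then to check that it is a weak homotopy equivalence on each mapping space. On objects $F$ is the identity $(G,V)\mapsto (G,V)$, which makes sense precisely because $\CC$ is defined to consist of those pairs $(G,V)$ with $V$ faithful, exactly the objects of $\ORfgl$. On mapping spaces, recall that
\[
\CC((G,V),(H,W)) = |(\Hom(H,G)\times\Li(V,W))^H\sslash G|
\]
while $\ORfgl((G,V),(H,W)) = (\Li(V,W)/G)^H$. So I need a natural map
\[
|(\Hom(H,G)\times\Li(V,W))^H\sslash G|\longrightarrow (\Li(V,W)/G)^H,
\]
which I would define by first projecting $(\Hom(H,G)\times\Li(V,W))^H\to \Li(V,W)$, $(\alpha,\varphi)\mapsto\varphi$, then passing to the quotient by $G$ and observing that the result lands in the $H$-fixed points of $\Li(V,W)/G$ (here one uses that if $\varphi$ is $\alpha$-equivariant then its class in $\Li(V,W)/G$ is $H$-fixed). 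This map is $G$-invariant in the appropriate sense, hence factors through the realization of the action groupoid. Functoriality and symmetric monoidality are then checked directly from the formulas for composition and for the tensor product $\otimes$ in both categories; these are routine verifications using the definitions in Construction preceding Definition~\ref{definition:ORgl} and the analogue of Lemma~\ref{lem-ORf_is_sym_mon}.

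The substantive point is that $F$ is a weak homotopy equivalence on mapping spaces. The key observation is that, because $V$ is a \emph{faithful} $G$-representation, a Lie group homomorphism $\alpha\colon H\to G$ together with an $\alpha$-equivariant isometry $\varphi\colon V\to W$ actually determines $\alpha$ from $\varphi$: for $h\in H$ we must have $\varphi\circ\rho_V(\alpha(h)) = \rho_W(h)\circ\varphi$, i.e.\ $\rho_V(\alpha(h))$ is the unique isometry of $V$ making the square commute once $\varphi$ is fixed (it is $\varphi^{-1}\rho_W(h)\varphi|_{\mathrm{im}\varphi^\perp\text{-completion}}$, using injectivity of $\varphi$ on $V$ and faithfulness of $V$ to recover $\alpha(h)\in G$ from $\rho_V(\alpha(h))\in O(V)$). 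Making this precise: the projection $p\colon (\Hom(H,G)\times\Li(V,W))^H\to \Li(V,W)$, $(\alpha,\varphi)\mapsto\varphi$, is injective, with image the set of those $\varphi$ which are $\alpha$-equivariant for \emph{some} (necessarily unique) $\alpha$; and I would argue $p$ is a homeomorphism onto its image (continuity of the inverse follows since $\alpha(h)$ depends continuously on $\varphi$ via the faithfulness-induced identification $O(V)\supseteq \rho_V(G)\cong G$, which is a homeomorphism onto a closed subgroup). Thus $(\Hom(H,G)\times\Li(V,W))^H$ is $G$-equivariantly homeomorphic to a $G$-subspace $L\subseteq\Li(V,W)$, and $\CC((G,V),(H,W))\simeq |L\sslash G|\simeq L_{hG}$.

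It then remains to compare $L_{hG}$ with $(L/G)^H = (\Li(V,W)/G)^H$ — wait, more carefully: I need $|L\sslash G| \simeq (\Li(V,W)/G)^H$. The map induced by $F$ sends the class of $\varphi\in L$ to its $G$-orbit. Since $G$ acts on $\Li(V,W)$ freely (as $V$ is faithful, $G\hookrightarrow O(V)$ and $O(V)$ acts freely on the Stiefel-type manifold $\Li(V,W)$), the action of $G$ on $L$ is free, so the homotopy orbit $L_{hG}$ is weakly equivalent to the strict orbit $L/G$. Finally one identifies $L/G$ with $(\Li(V,W)/G)^H$: a point of $(\Li(V,W)/G)^H$ is a $G$-orbit $G\varphi$ which is $H$-fixed, i.e.\ for each $h\in H$ there is $g(h)\in G$ with $h\varphi = \varphi g(h)^{-1}$ (using $H$ acts on the left, $G$ on the right, with $\alpha$-twisting absorbed), and faithfulness guarantees $h\mapsto g(h)$ is a homomorphism $H\to G$, exhibiting the orbit as lying in $L/G$; conversely every orbit in $L/G$ is $H$-fixed. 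This is a homeomorphism, and combined with $L_{hG}\simeq L/G$ it yields the desired weak equivalence. I expect the main obstacle to be the careful handling of the freeness of the $G$-action and the identification $|L\sslash G|\simeq L/G$ — one should check $G$ acts freely and properly on $\Li(V,W)$ (true since $G$ is compact and $V$ faithful), so that the action groupoid realizes to the quotient; the fixed-point/orbit bookkeeping with the left $H$- and right $G$-actions and the $\alpha$-twisting also requires attention to get the variance right, but it is purely formal once the freeness and the faithfulness-rigidity of $\alpha$ are in place.
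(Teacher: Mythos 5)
Your proposal is correct and follows essentially the same route as the paper: construct the forgetful map $(\alpha,\varphi)\mapsto[\varphi]$, use faithfulness of $V$ to recover $\alpha$ uniquely from $\varphi$ (so the map is injective mod $G$, and surjective after checking $\alpha$ is a continuous homomorphism), observe that $G$ acts freely and properly, and conclude that homotopy orbits agree with strict orbits. The paper packages the free-proper step as the statement that $p$ is a principal $G$-bundle, which gives the groupoid equivalence and the homeomorphism in one stroke, whereas you split it into a homeomorphism onto a subspace $L\subseteq\Li(V,W)$ followed by $L_{hG}\simeq L/G$; this is a cosmetic difference, and either way the continuity of $\varphi\mapsto\alpha$ (which you flag as delicate) is most cleanly handled as in the paper, by noting the graph of $\alpha$ is closed in the compact group $H\times G$, hence $\alpha$ is continuous.
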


\begin{proof}
    The functor is the identity on objects, so it suffices to define it on mapping spaces. 
    For any  $(G,V),(H,W)\in \CC$, let us consider the map
    \[p\colon(\Hom(H,G)\times\Li(V,W))^H \to (\Li(V,W)/G)^H\]
    sending $(\alpha,\varphi)$ to $[\varphi]$. We claim that this map exhibits the target as the quotient of the source by $G$. Firstly, note that the map is $G$-equivariant. Let us show that its fibres are exactly the $G$-orbits. Suppose we have a point $[\varphi]$ in the target and let us choose a representative $\varphi\colon V\to W$. Then we know that for every $h\in H$ $h\cdot[\varphi]=[h\varphi]=[\varphi]$. Then necessarily there exists $\alpha(h)\in G$ such that $h\varphi=\varphi\alpha(h)^{-1}$. Note that the element $\alpha(h)$ is unique since $V$ is a  faithful $G$-representation. Then the map $h\mapsto\alpha(h)$ is a Lie group homomorphism and its graph is closed in $H\times G$ (since it is a fibre of the continuous map $H\times G\to \Li(V,W)$ sending $(h,g)$ to $h\varphi g^{-1}$), so it is continuous. Then it is clear that $(\alpha,\varphi)$ is a preimage of $[\varphi]$, and so $p$ is surjective.
    
    On the other hand, if $(\alpha,\varphi)$ and $(\alpha',\varphi')$ have the same image under $p$, then there is some $g\in G$ so that $\varphi'=\varphi g$. A simple computation as before shows that this forces $\alpha'=c_g\alpha$ (since the $G$-action on $\Li(V,W)$ is faithful, $\alpha'$ is determined by $\varphi'$). Moreover, the action of $G$ on $(\Hom(H,G)\times\Li(V,W))^H$ is free and proper, and so $p$ is a principal $G$-bundle. In particular it induces a natural equivalence of topological groupoids
    \[(\Hom(H,G)\times\Li(V,W))^H\sslash G \simeq (\Li(V,W)/G)^H\]
    and so a homotopy equivalence
    \[\left|(\Hom(H,G)\times\Li(V,W))^H\sslash G\right| \simeq (\Li(V,W)/G)^H\]
    
    Finally, it is easy to check that $p$ is compatible with composition and sends the identity to the identity. Therefore it induces an equivalence of $\infty$-categories $\CC\to \ORfgl$. 
    We leave to the reader to check that the above can be given the structure of a symmetric monoidal equivalence.
\end{proof}

\begin{remark}\label{rem-pigl-s0-adjoint}
    There is a pair of functors of topological categories 
    \[
    s_0 \colon \Glo^{\op} \to\ORgl\,,\ \pi_{gl}\colon\ORgl\to\Glo^{\op}
    \]
    given by $s_0(G)=(G,0)$ and $\pi_{gl}(G,V)=G$ on objects. Note that $s_0$ and $\pi_{gl}$ 
    are both symmetric monoidal, where $\Glo$ is symmetric monoidal under the cartesian product (and therefore $\Glo^{\op}$ is equipped with the cocartesian symmetric monoidal structure). This implies that the functors $\pi_{gl}$ and $s_0$ lift to maps of $\infty$-operads $\pi_{gl}\colon \ORgl^{\otimes} \to (\Glo^{\op})^{\amalg}$ and $s_0\colon(\Glo^{\op})^\amalg\to\ORgl^\otimes$ respectively.
\end{remark}

\begin{lemma}\label{lem:fiber_OR_gl->Glo}
  Let $\{(G_i,V_i)\},(H,W)$ be objects of $\ORgl^\otimes$, and consider the map
   	\[
    \pi_{gl}\colon\Mul_{\ORgl}(\{(G_i,V_i)\},(H,W))\to \Mul_{\Glo^{\op}}(\{G_i\}, H).
    \]
     The homotopy fibre of this map over a group homomorphism $\alpha\colon H\to \prod_i G_i \in (\Glo^{\op})^\amalg$ is equivalent to the space of $H$-equivariant isometries $\bigoplus_i V_i\to W$ where $H$ acts on $\bigoplus_i V_i$ via $\alpha$.
\end{lemma}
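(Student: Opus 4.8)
The strategy is to unwind both the source and the target of $\pi_{gl}$ as explicit topological spaces, identify the map between them concretely, and then recognize it as a fibre bundle (or at least as a map whose homotopy fibre is easy to compute). Recall from Definition~\ref{definition:ORgl} that for single objects $(G,V),(H,W)$, the mapping space $\ORgl((G,V),(H,W))$ is the geometric realization of the action groupoid of $G$ acting on $\left(\Hom(H,G)\times\Li(V,W)\right)^H$. By the construction of $\ORgl^\otimes$ as the operad associated to a symmetric monoidal topological category, together with the symmetric monoidal structure $(G,V)\otimes(H,W)=(G\times H,V\oplus W)$, the multimapping space $\Mul_{\ORgl}(\{(G_i,V_i)\},(H,W))$ is the geometric realization of the action groupoid of $\prod_i G_i$ acting on $\left(\Hom(H,\prod_i G_i)\times\Li(\bigoplus_i V_i,W)\right)^H$. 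Similarly, by Definition~\ref{def:glo} and the cocartesian structure on $\Glo^{\op}$, the space $\Mul_{\Glo^{\op}}(\{G_i\},H)$ is $|\Hom(H,\prod_i G_i)\sslash \prod_i G_i|$.

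First I would reduce to the statement at the level of action groupoids before geometric realization: the map $\pi_{gl}$ is induced by the $\prod_i G_i$-equivariant projection
\[
\left(\Hom(H,\textstyle\prod_i G_i)\times\Li(\textstyle\bigoplus_i V_i,W)\right)^H \longrightarrow \Hom(H,\textstyle\prod_i G_i)\,,
\]
which is the projection onto the first factor (the second factor being cut out by the $H$-fixed-point condition, which for a fixed $\alpha$ says precisely that the isometry is $H$-equivariant for the $\alpha$-twisted action on $\bigoplus_i V_i$). The key observation is that this is a \emph{trivial} fibre bundle over each path component: over a fixed $\alpha$, the fibre is exactly the space $\Li^H(\bigoplus_i V_i, W)$ of $H$-equivariant isometries, where $H$ acts on the source via $\alpha$; and since $\prod_i G_i$ acts on the base $\Hom(H,\prod_i G_i)$ by conjugation, the bundle is locally trivial (indeed the whole space of $H$-fixed pairs is, over each $\prod_i G_i$-orbit of homomorphisms, a $\prod_i G_i$-equivariant bundle with fibre the equivariant isometry space at the chosen basepoint $\alpha$). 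Taking action groupoids and then geometric realizations, and using that $|-\sslash \Gamma|$ preserves homotopy pullbacks of $\Gamma$-spaces along a $\Gamma$-equivariant map that is a fibration (as used in the proof of Lemma~\ref{lem:Orbslices}), we get that the homotopy fibre of $|\pi_{gl}|$ over the point of $|\Hom(H,\prod_i G_i)\sslash\prod_i G_i|$ represented by $\alpha$ is the homotopy quotient of the fibre $\Li^H(\bigoplus_i V_i,W)$ by the stabilizer — but this must be handled carefully.

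The cleanest route, mirroring the argument in Lemma~\ref{lem:Orbslices}, is: form the strict pullback square of topological spaces
\[
\begin{tikzcd}
\Li^H(\bigoplus_i V_i,W)\ar[r]\ar[d] & \left(\Hom(H,\prod_i G_i)\times\Li(\bigoplus_i V_i,W)\right)^H\ar[d,"\pi_{gl}"]\\
\ast\ar[r,"\alpha"] & \Hom(H,\prod_i G_i)\,,
\end{tikzcd}
\]
observe that the right vertical map is a Serre fibration (it is the restriction of a projection, and local triviality along the conjugation orbits follows as in the proof of Proposition~\ref{prop:glo_hom_spaces}/\cite{korschgen}*{Theorem A.9}, using that each component of $\Hom(H,\prod_i G_i)$ is an orbit $\cong (\prod_i G_i)/C(\alpha)$ over which the bundle is induced), hence the square is a homotopy pullback; then apply the $\prod_i G_i$-homotopy-orbit functor, which preserves homotopy pullbacks, and finally take geometric realizations, noting $|X\sslash\Gamma|\simeq X_{h\Gamma}$. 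Since $\ast$ maps to $\Hom(H,\prod_i G_i)$ and hence its homotopy orbit is $B C(\alpha)\to |\Hom(H,\prod_i G_i)\sslash\prod_i G_i|$ picking out the path component of $\alpha$, chasing the resulting homotopy pullback shows the homotopy fibre of $|\pi_{gl}|$ over $\alpha$ is $\Li^H(\bigoplus_i V_i,W)$, as claimed.

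\textbf{Main obstacle.} The technical heart is verifying that $\pi_{gl}$ is a fibration (equivalently, that the bundle is locally trivial over $\Hom(H,\prod_i G_i)$), so that the strict pullback computes the homotopy fibre; this requires knowing that each path component of $\Hom(H,\prod_i G_i)$ is a homogeneous space $(\prod_i G_i)/C(\alpha)$ — which is Proposition~\ref{prop:glo_hom_spaces} — and that the equivariant isometry space $\Li^H(\bigoplus_i V_i,W)$ assembles into a genuine $(\prod_i G_i)$-equivariant bundle over that orbit, i.e.\ that the assignment $\alpha\mapsto\Li^H_\alpha(\bigoplus_i V_i,W)$ is locally trivial. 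Once this is in hand, everything else is a routine combination of: the formula for multimapping spaces in $\ORgl^\otimes$ from its construction, the fact that geometric realization of action groupoids commutes with homotopy pullbacks, and the standard identification $|X\sslash\Gamma|\simeq X_{h\Gamma}$.
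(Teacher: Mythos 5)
Your proposal arrives at the same answer but takes a genuinely different route, and there is one real imprecision in the middle. Set $G=\prod_i G_i$, $V=\bigoplus_i V_i$. The paper never argues that $E := (\Hom(H,G)\times\Li(V,W))^H \to \Hom(H,G)$ is a fibration; instead, it invokes Proposition~\ref{prop:glo_hom_spaces} to decompose $\Hom(H,G)\simeq\coprod_{(\alpha)}G/C(\alpha)$ as a $G$-space, observes that over each orbit the $G$-equivariant map $E\to\Hom(H,G)$ is automatically an induced $G$-space $G\times_{C(\alpha)}\Li(V,W)^H_\alpha$, and then takes $G$-homotopy orbits to obtain $\coprod_{(\alpha)}\Li(V,W)^H_{hC(\alpha)}\to\coprod_{(\alpha)}BC(\alpha)$; the fibre is read off the standard sequence $X\to X_{hC(\alpha)}\to BC(\alpha)$, with no fibration argument required. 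Your route through a fibration claim plus ``homotopy orbits preserve homotopy pullbacks'' can also be made to work (the latter is true: pullbacks in $\Spc_{/BG}\simeq\Spc^{BG}$ are computed in $\Spc$), but the pullback square you write has $\ast\xrightarrow{\alpha}\Hom(H,G)$, which is \emph{not} a map of $G$-spaces since conjugation moves $\alpha$; so you cannot apply the $G$-homotopy-orbit functor to that square as stated, and ``$\ast$ maps to $\Hom(H,\prod_i G_i)$ and hence its homotopy orbit is $BC(\alpha)$'' is incorrect as written — the $G$-homotopy orbit of a trivial $G$-point is $BG$. Two fixes: (a) replace $\ast$ by the orbit $G/C(\alpha)$, giving a genuinely $G$-equivariant square whose homotopy orbits have corners $\Li(V,W)^H_{hC(\alpha)}$, $E_{hG}$, $BC(\alpha)$, $\Hom(H,G)_{hG}$, and the fibre of the right vertical over a point of $BC(\alpha)$ is read off by pasting; or (b) observe directly that $E\to E_{hG}$ and $\Hom(H,G)\to\Hom(H,G)_{hG}$ are both fibres of fibrations over $BG$, hence the square they form is a homotopy pullback, so the fibre of $E_{hG}\to\Hom(H,G)_{hG}$ over the image of $\alpha$ equals the fibre of $E\to\Hom(H,G)$ over $\alpha$, which your fibration claim identifies as $\Li(V,W)^H$. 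With either fix your argument goes through; what the paper's route buys is that the explicit decomposition into induced $G$-spaces makes the fibration verification unnecessary.
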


\begin{proof} 
 Put $V=\bigoplus_i V_i$ and $G=\prod_i G_i$ so that $\alpha \colon H \to G$ 
 and we can rewrite the map induced by $\pi_{gl}$ as 
 \[
 \Map_{\ORgl}((G,V), (H,W)) \to \Map_{\Glo^{\op}}(G, H)=\Map_{\Glo}(H,G).
 \]
    We recall from Proposition \ref{prop:glo_hom_spaces} that the $G$-space $\Hom(H,G)$ decomposes as a disjoint union of orbits
    \[\Hom(H,G)\simeq \coprod_{(\alpha)} G/C(\alpha),\]
    where $\alpha$ is a conjugacy class of homomorphisms and $C(\alpha)$ is the centralizer of the image of $\alpha$. Therefore we have a decomposition
    \[\Map_{\ORgl}((G,V),(H,W))\simeq \left((\Hom(H,G)\times\Li(V,W))^H\right)_{hG}\simeq \coprod_{(\alpha)} \Li(V,W)^H_{hC(\alpha)},\]
    depending on the choice of an $\alpha$ in each conjugacy class. This lies above the decomposition  
    \[\Map_{\Glo}(H,G)\simeq \coprod_{(\alpha)} BC(\alpha)\] from Proposition \ref{prop:glo_hom_spaces}
    via the canonical maps $\Li(V,W)^H\to \ast$. Therefore the homotopy fibre over $\alpha$ is precisely $\Li(V,W)^H$.
\end{proof}

\begin{lemma}
 The functor $\pi_{gl} \colon \ORgl^\otimes \to (\Glo^{\op})^\amalg$ is a cocartesian 
 fibration, and therefore exhibits $\ORgl^\otimes$ as a $(\Glo^{\op})^\amalg$-monoidal 
 $\infty$-category.
\end{lemma}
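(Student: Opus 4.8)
The goal is to show that $\pi_{gl}\colon\ORgl^\otimes\to(\Glo^{\op})^\amalg$ is a cocartesian fibration; the final clause then follows immediately by definition, since a cocartesian fibration over $(\Glo^{\op})^\amalg\to\Finp$ that is a map of $\infty$-operads is precisely a $(\Glo^{\op})^\amalg$-monoidal $\infty$-category. The strategy is to exhibit explicit cocartesian lifts and check the lifting property, in close analogy with the unstable case treated in Section~\ref{sec-global-spaces}. First I would recall that we already know, by Lemma~\ref{lem:fiber_OR_gl->Glo}, the homotopy fibre of $\pi_{gl}$ on multimapping spaces over a homomorphism $\alpha\colon H\to\prod_i G_i$: it is the space $\Li(\bigoplus_i V_i,W)^H$ of $H$-equivariant isometries, where $H$ acts via $\alpha$. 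This already suggests the formula for the pushforward: given $\{(G_i,V_i)\}$ over $\{G_i\}$ and a morphism $\{G_i\}\to H$ in $(\Glo^{\op})^\amalg$ corresponding to $\alpha\colon H\to\prod_i G_i$, the cocartesian lift should target $(H,\alpha^*(\bigoplus_i V_i))$, where $\alpha^*$ denotes restriction of the representation along $\alpha$, with the morphism being $\alpha$ together with the identity isometry of $\bigoplus_i V_i$.

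\textbf{Key steps.} (1) Reduce to the case of $1$-ary morphisms, i.e. to showing that the underlying functor $U(\pi_{gl})\colon\ORgl\to\Glo^{\op}$ is a cocartesian fibration, together with a compatibility with the operadic structure; the general active case reduces to this since $\bigoplus_i V_i$ with its $\prod_i G_i$-action handles the multi-input, and inert morphisms are handled by the fact that $\ORgl^\otimes$ is built as the operad associated to a symmetric monoidal topological category via Lemma~\ref{ORG_operad}, whose cocartesian lifts over inert edges are explicit. (2) For the $1$-ary case, given a morphism $f\colon G\to H$ in $\Glo^{\op}$, i.e. (a conjugacy class of) $\alpha\colon H\to G$, and an object $(G,V)$ over $G$, propose the lift $\bar f\colon(G,V)\to(H,\alpha^*V)$ given by the pair $(\alpha,\mathrm{id}_V)$, which is $H$-equivariant by construction since $H$ acts on $\alpha^*V$ precisely via $\alpha$. (3) Verify cocartesianness: I would use the characterization via mapping spaces, showing that for any $(K,U)$ the square of mapping spaces obtained by composing with $\bar f$ is a homotopy pullback over the corresponding square in $\Glo^{\op}$. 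Concretely, using the decomposition from Lemma~\ref{lem:fiber_OR_gl->Glo} and Proposition~\ref{prop:glo_hom_spaces}, the fibre of $\Map_{\ORgl}((H,\alpha^*V),(K,U))\to\Map_{\ORgl}((G,V),(K,U))$ over a compatible pair of homomorphisms is an equivalence of isometry spaces $\Li(\alpha^*V,U)^K\xrightarrow{\sim}\Li(V,U)^K$ (same underlying isometries, since restriction along $\alpha$ and then along a further map equals restriction along the composite). This is the content of the cocartesian condition. (4) Finally, check that these locally cocartesian lifts are stable under composition — which amounts to the identity $\alpha^*\circ\beta^*\simeq(\beta\alpha)^*$ on representations, essentially a strict equality at the topological level — and hence by \cite{HTT}*{Lemma 2.4.2.7} they are genuinely cocartesian; conclude via \cite{HTT}*{Proposition 2.4.2.11} or direct inspection that $\pi_{gl}$ is a cocartesian fibration of $\infty$-operads, hence a $(\Glo^{\op})^\amalg$-monoidal structure.

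\textbf{Main obstacle.} The principal subtlety is bookkeeping the two sources of morphisms simultaneously: the "group direction" in $\Glo$, where mapping spaces are action groupoids of conjugation and one must be careful that the centralizer decomposition of Proposition~\ref{prop:glo_hom_spaces} interacts correctly with the isometry spaces, and the "representation direction", where one must confirm that restriction of representations along group homomorphisms assembles into a genuine functor (not merely up to coherent homotopy) at the topological-category level, so that the proposed lifts compose strictly. I expect the cleanest route is to observe that $\ORgl\to\Glo^{\op}$ is, up to the geometric-realization-of-action-groupoid construction, exactly the Grothendieck construction of the functor $\Rep(-)\colon\Glo^{\op}\to\Cat_\infty$ sending $G$ to its category of representations and isometries with restriction functoriality — a point foreshadowed in the introduction — and that such Grothendieck constructions are automatically cocartesian fibrations. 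Making this identification precise (matching the topological mapping space formula with the straightening of $\Rep(-)$) is the technical heart; once it is in place, everything else is formal, and the operadic refinement follows because $\bigoplus$ on representations is compatible with restriction, i.e. $\alpha^*(V\oplus W)\cong\alpha^*V\oplus\alpha^*W$ and $(\alpha\times\beta)^*(V\oplus W)\cong\alpha^*V\oplus\beta^*W$.
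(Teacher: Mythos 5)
Your proposal is correct and follows essentially the same route as the paper: you propose the identical cocartesian lift $(\alpha,\mathrm{id}_V)\colon\{(G_i,V_i)\}\to(H,\alpha^*V)$ and verify it via the homotopy pullback of multimapping squares, invoking the fibre computation of Lemma~\ref{lem:fiber_OR_gl->Glo}. The only redundancy is your step (4): once the pullback square is verified for all targets $(K,U)$ and all multimorphisms as in step (3), the lift is genuinely cocartesian, so the appeal to \cite{HTT}*{Lemma 2.4.2.7} and compatibility of restriction with composition is unnecessary.
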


\begin{proof}
 Consider $\{(G_i, V_i)\}_{i \in I} \in \ORgl^\otimes$, and let us set $V=\bigoplus_i V_i$ 
 and $G=\prod_i G_i$ so that $V$ is naturally a $G$-representation. Since $\pi_{gl}$ is a 
 map of $\infty$-operads, it is enough to find cocartesian lifts over active morphisms whose 
 target is in $\Glo^{\op}$. A multimorphism from $\{G_i\}$ to $H$ in $(\Glo^{\op})^\amalg$ is the datum of a continuous group homomorphism $\alpha \colon H \to G$. Consider the multimorphism 
 $f \in\ORgl^\otimes( \{(G_i,V_i)\},(H,\alpha^*V))$ lying over the map $\alpha$ which is represented 
 by the element 
 \[
 [\alpha, 1_V] \in \vert (\Hom(H,G) \times \Li(V, \alpha^* V))^H \sslash G \vert.
 \] 
 We claim that this is a cocartesian edge. This follows from the fact that for all $(L,W)\in \ORgl^\otimes$, the square 
  \[
 \begin{tikzcd}
  \Mul_{\ORgl}((H,\alpha^*V), (L,W)) \arrow[r,"f^*"] \arrow[d,"\pi_{gl}"] & \Mul_{\ORgl}(\{(G_i,V_i)\}, (L,W)) 
  \arrow[d, "\pi_{gl}"] \\
  \Mul_{\Glo^{\op}}(H, L) \arrow[r,"\alpha^*"] & \Mul_{\Glo^{\op}}(\{G_i\}, L)
  \end{tikzcd}
 \]
 is a homotopy pullback of spaces. We can verify this by checking that the 
 vertical fibres are equivalent. This is now a consequence of 
 Lemma~\ref{lem:fiber_OR_gl->Glo}.
\end{proof}

\begin{definition}
	We define $\Rep\colon \Glo^{\op}\rightarrow \Cat_\infty^\otimes$ to be the functor corresponding to $\ORgl^\otimes$ under the equivalence of Proposition~\ref{prop:equiv_amalg}.
\end{definition}

\begin{remark}	
$\Rep(G)$ is the $\infty$-category corresponding to the topologically enriched category with objects $V$ a $G$-representation, and morphism spaces $\Rep(V,W) = \Li(V,W)^G$, the space of $G$-equivariant linear isometries from $V$ to $W$. This is a symmetric monoidal category via direct sum. The functoriality in $\Glo$ is given by restriction of representations along group homomorphisms.
\end{remark}

Recall from Remark~\ref{rem-pi_G} that there is a map of $\infty$-operads
$\pi_G \colon \OR_G^\otimes \to (\OO_{G,\pr}^{\op})^\amalg$. Also note that there is a 
canonical functor $\OO_{G,\pr} \to \Glo$ which sends an object $G/H$ to $H$ and acts as 
\[\OO_{G,\pr}(G/H, G/K)\simeq \{g\in G\mid c_g(H)\subseteq K\}_{\h K} \rightarrow \hom(H,K)_{\h K},\quad g\mapsto [c_g\colon H\rightarrow K].\] This is an immediate generalization of the functor used in Lemma~\ref{lem:Orbslices} to (not necessarily compact) Lie groups. We denote the opposite of this functor by $\iota_G$. It induces a map of cocartesian $\infty$-operads which we denote by $\iota_G^{\amalg}$. We are now ready to state the next result. 

\begin{lemma}\label{lem:ORG-as-a-pullback}
    Let $G$ be a Lie group. Then there is a canonical map of $\infty$-operads 
    $\nu_G \colon \OR_G^\otimes \to \ORgl^\otimes$ and a cartesian square of 
    $\infty$-operads
    \[\begin{tikzcd}
        \OR_G^\otimes\ar[r, "\nu_G"]\ar[d, "\pi_G"'] & \OR_{gl}^\otimes\ar[d,"\pi_{gl}"]\\
        (\OO_{G,\pr}^{\op})^\amalg\ar[r, "\iota_G^{\amalg}"] & (\Glo^{\op})^\amalg.
    \end{tikzcd}\]
\end{lemma}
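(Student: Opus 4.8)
The plan is to construct $\nu_G$ directly at the level of topological categories (and topological coloured operads), and then to verify the cartesian square by checking it fibrewise over the objects and on multimapping spaces. First I would define $\nu_G \colon \OR_G \to \ORgl$ on objects by $(H,V)\mapsto (H,V)$ — this makes sense since $H\le G$ is compact and $V$ is an $H$-representation, which is exactly an object of $\ORgl$. On mapping spaces we need a map
\[
\OR_G((H,V),(K,W)) = (G\times_H \Li(V,W))^K \longrightarrow |(\Hom(K,H)\times \Li(V,W))^K \sslash H| = \ORgl((H,V),(K,W)).
\]
Using Lemma~\ref{lem-criterion}, a point of the source is a class $[g,\varphi]$ with $c_g(K)\subseteq H$ and $\varphi$ a $c_g$-twisted $K$-equivariant isometry $V\to W$; I would send this to the point of the target represented by $(c_g\colon K\to H,\ \varphi)$, where we use that $\varphi$ is exactly a $K$-equivariant isometry $V\to W$ when $K$ acts on $V$ via $c_g$. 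Independence of the representative, continuity, associativity and unitality are checked exactly as in Lemma~\ref{lem-composition-well-def}; one also extends this to the coloured-operad structure using the multimorphism description given just before Lemma~\ref{lem-L-coloured-operad}, so that $\nu_G$ lifts to $\nu_G\colon \OR_G^\otimes \to \ORgl^\otimes$. Passing to coherent nerves via Lemma~\ref{ORG_operad} turns this into a map of $\infty$-operads.

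Next I would identify $\pi_{gl}\circ \nu_G$ with $\iota_G^\amalg\circ \pi_G$. On objects both composites send $(H,V)$ to $H$, and on the relevant (multi)mapping spaces one unwinds the definitions: $\pi_G$ sends $[g,\varphi]$ to $[gH]\in (G/H)^K$, $\iota_G$ sends this to the class of $c_g\colon K\to H$ in $\Hom(K,H)_{hK}$ (this is precisely the formula used for the functor $\OO_{G,\pr}\to\Glo$ in Lemma~\ref{lem:Orbslices}), while $\pi_{gl}\circ\nu_G$ sends $[g,\varphi]$ to the class of $c_g\colon K\to H$ by construction; so the square of functors commutes up to a canonical homotopy, compatibly with the symmetric monoidal/operadic structures. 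This produces the commuting square of $\infty$-operads in the statement, hence a comparison map $\OR_G^\otimes \to \ORgl^\otimes\times_{(\Glo^{\op})^\amalg}(\OO_{G,\pr}^{\op})^\amalg$.

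Finally I would prove this comparison map is an equivalence. Since all four $\infty$-operads involved are maps to $\Finp$ with the same underlying sets of colours, it suffices to check the comparison is essentially surjective (clear: both sides have colour set $\{(H,V): H\le G\text{ compact}, V\ \text{an}\ H\text{-rep}\}$) and induces equivalences on all multimapping spaces. For fixed $\{(H_i,V_i)\}$ and $(K,W)$, the pullback's multimapping space fits in a homotopy pullback of spaces whose total space I want to be $\OR_G(\{(H_i,V_i)\},(K,W))$. The key computation is: the homotopy fibre of $\pi_G\colon \OR_G(\{(H_i,V_i)\},(K,W))\to \OO_{G,\pr}(\{G/H_i\},G/K) = \prod_i (G/H_i)^K$ over a tuple $(g_iH_i)$ with $c_{g_i}(K)\subseteq H_i$ is, by Lemma~\ref{lem-criterion}, the space of isometries $\bigoplus_i V_i\to W$ equivariant for the $c_{g_i}$-twisted $K$-actions — while Lemma~\ref{lem:fiber_OR_gl->Glo} says the homotopy fibre of $\pi_{gl}$ over the corresponding homomorphism $(c_{g_i})\colon K\to \prod_i H_i$ is the same space of $K$-equivariant isometries $\bigoplus_i V_i\to W$. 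Since $\iota_G$ is an equivalence on the relevant mapping spaces by (the argument of) Lemma~\ref{lem:Orbslices} — i.e. $\OO_{G,\pr}(\{G/H_i\},G/K)\to \Map_{\Glo^{\op}}(\{H_i\},K)$ is an inclusion of path components with the same fibres — the comparison map is an equivalence on all multimapping spaces, as desired.

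\textbf{Main obstacle.} The genuinely delicate point is the fibrewise comparison: one must match the homotopy fibre of $\pi_G$ (a homotopy-orbit / $c_g$-twisted equivariant-isometry space, Lemma~\ref{lem-criterion}) with that of $\pi_{gl}$ (Lemma~\ref{lem:fiber_OR_gl->Glo}) over \emph{corresponding} points, keeping careful track of which conjugacy class of homomorphisms $K\to H$ a given $[g,\varphi]\in(G\times_H\Li(V,W))^K$ maps to, and then invoking the Elmendorf-type identification $\OO_{G,\pr}\simeq$ (a subcategory of $\Orb_{/G}$, compatibly mapping to $\Glo$) of Lemma~\ref{lem:Orbslices} in the non-compact $G$ setting to conclude $\iota_G$ behaves like an inclusion of path-components with contractible-or-empty homotopy fibres. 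Once the bookkeeping is set up correctly, the pullback square follows formally from these two fibre computations.
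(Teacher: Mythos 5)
Your argument is essentially the paper's: define $\nu_G$ explicitly on (multi)morphism spaces by $(g,\varphi)\mapsto(c_g,\varphi)$, check the square commutes, and verify cartesianness by matching the vertical homotopy fibres of $\pi_G$ and $\pi_{gl}$ using Lemma~\ref{lem:fiber_OR_gl->Glo}. One technical smoothing worth noting: the paper first uses freeness of the $H$-action to rewrite $\OR_G((H,V),(K,W)) = |(G\times\Li(V,W))^K\sslash H|$, so that $\nu_G$ is genuinely a map of topological groupoids and continuity (and compatibility with composition) comes for free on realizations; your formula defined directly on the honest quotient requires a little more care on this point, even if the end result is the same.

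One aside in your proposal is both unnecessary and not clearly true: the claim that ``$\iota_G$ is an equivalence on the relevant mapping spaces'' and is ``an inclusion of path components with contractible-or-empty homotopy fibres.'' Lemma~\ref{lem:Orbslices} identifies $\OO_G$ with the \emph{slice} $\Orb_{/G}$, not with $\Glo$ itself; the functor $\iota_G\colon\OO_{G,\pr}^{\op}\to\Glo^{\op}$ need not induce an injection or surjection on $\pi_0$ of mapping spaces. Fortunately you do not need any such property: to show the square of multimorphism spaces is a homotopy pullback it suffices that, for each point $\varphi$ of the bottom-left corner, the comparison map from the $\pi_G$-fibre over $\varphi$ to the $\pi_{gl}$-fibre over $\iota_G(\varphi)$ is an equivalence, and this is exactly what your (correct) fibre computation via Lemma~\ref{lem-criterion} and Lemma~\ref{lem:fiber_OR_gl->Glo} gives, with no hypothesis on $\iota_G$ at all. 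Dropping the ``Since $\iota_G$ is an equivalence\dots'' clause leaves the proof intact and closer to the paper's.
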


\begin{proof}
    It will suffice to construct the map $\nu_G$ at the level of topological coloured operads 
 and then apply Lemma~\ref{ORG_operad}. Recall from Definition~\ref{def-OR_G} that
    \[\OR_G((H,V),(K,W))=(G\times_H \Li(V,W))^K\]
    where $G\times_H \Li(V,W)$ is the quotient of $G\times \Li(V,W)$ by the right $H$-action $(g,\varphi)\cdot h=(gh,\varphi h)$. Since the $H$-action is free, we can identify the quotient with the homotopy quotient  (see \cite{korschgen}*{Theorem A.7} for example) and so there is a canonical identification
    \[\OR_G((H,V),(K,W))=|(G\times \Li(V,W))^K \sslash H|\]
    that respects composition. Moreover under this identification, the multilinear spaces of the coloured operad structure are given by 
    \[
    \OR_G(\{(H_i, V_i)\}_i, (K,W))= |(\prod_i G \times \Li(\bigoplus_i V_i, W))^K \sslash \prod H_i |.
    \]
    Therefore we may define a functor of topological coloured operad $\OR_G\to \ORgl$ by sending 
    $(H,V)$ to $(H,V)$ and on the multimorphism spaces we take the map which is induced by the map of topological groupoids
    \[(\prod_i G\times \Li(\bigoplus_i V_i,W))^K \sslash \prod_i H_i\to (\Hom(K,\prod_i H_i)\times
    \Li(\bigoplus_i V_i,W))^K \sslash \prod_i H_i, \qquad 
    (\{g_i\},\varphi)\mapsto ((c_{g_i}|_K)_{i},\varphi)\,.\]
   A tedious but simple calculation shows that these maps respect composition. This defines a map 
    $\nu_G \colon \OR_G^\otimes \to \ORgl^\otimes$ as required. 
    
    Another tedious calculation shows that the square in the lemma commutes (already as a square of topological operads) and that it is a pullback on 
    $0$-vertices. Therefore it is enough to show that every induced square 
    \[
    \begin{tikzcd}
    \Mul_{\OR_G}(\{(H_i,V_i)\}, (K,W)) \arrow[d,"\pi_G"] \arrow[r,"\nu_G"] &  
    \Mul_{\ORgl}(\{(H_i,V_i)\}, (K,W)) \arrow[d,"\pi_{gl}"]\\
    \Mul_{\OO_{G,\pr}^{\op}}(\{G/H_i\}, G/K) \arrow[r,"\iota_G"] & 
    \Mul_{\Glo^{\op}}(\{H_i\}, K)
    \end{tikzcd}
    \]
    of multimorphism spaces is a homotopy pullback. It suffices to check that the vertical homotopy fibres are 
    equivalent. 
    A morphism $\varphi\colon G/K \to \prod G/H_i$ in $\OO_{G,\pr}$ amounts 
    to giving elements $g_i \in G$ such that $c_{g_i}(K)\subseteq H_i$. The homotopy 
    fibre of $\pi_G$ over $\varphi$
    is given by the space of $K$-equivariant isometries $\bigoplus_i V_i \to W$ 
    where $K$ acts on each $V_i$ via $c_{g_i}$. The map $\iota_G$ sends $\varphi$ to 
    $(c_{g_i}\colon K \to H_i)$ and the 
    homotopy fibre over this is again the space of $K$-equivariant isometries as 
    above by Lemma~\ref{lem:fiber_OR_gl->Glo}. 
    As the vertical homotopy fibres are equivalent, the square is a pullback of 
    $\infty$-operads. 
\end{proof}

We write $\Arinj(\Glo)$ for the \textit{full} subcategory of $\Ar(\Glo)$ spanned by the injective group homomorphisms.

\begin{definition}\label{def-OR}
We define $\tOR^\otimes$ via the following pullback of $\infty$-operads
    	\[
    \begin{tikzcd}
    	\tOR^\otimes \arrow[d,"\pi_{\mathrm{inj}}"'] \arrow[r]        & \ORgl^\otimes \arrow[d,"\pi_{gl}"] \\		(\Arinj(\Glo)^{\op})^\amalg \arrow[r, "s^{\op}"] & (\Glo^{\op})^\amalg.    
    \end{tikzcd}
    \]
    Thus an object of $\tOR$, the underlying $\infty$-category of $\tOR^\otimes$, is a pair $(\alpha \colon H\to G, V)$ where $\alpha$ is injective and $V$ is a $H$-representation.
\end{definition}

\begin{lemma}\label{lemma:tOR-promonoidal}
	The composition 
	\[\pi\colon  \tOR^\otimes\xrightarrow{\pi_{\mathrm{inj}}} (\Arinj(\Glo)^{\op})^\amalg\xrightarrow{t^{\op}} (\Glo^{\op})^\amalg\]
	gives $\tOR^\otimes$ the structure of a $(\Glo^{\op})^\amalg$-promonoidal $\infty$-category, whose operadic fibre over $G$ is exactly $\OR_G^\otimes.$
\end{lemma}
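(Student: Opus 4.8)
The plan is to prove the two claims separately but in parallel: first that $\pi\colon \tOR^\otimes\to (\Glo^{\op})^\amalg$ is a $(\Glo^{\op})^\amalg$-promonoidal $\infty$-category, i.e.\ that its restriction to active morphisms is an exponentiable fibration, and second that the operadic fibre of $\pi$ over a compact Lie group $G$ is equivalent to $\OR_G^\otimes$. For the first point, the key observation is that $\tOR^\otimes$ is built from three ingredients, each of which has good exponentiability properties: the target projection $t^{\op}\colon (\Arinj(\Glo)^{\op})^\amalg\to (\Glo^{\op})^\amalg$, the source projection $s^{\op}$ going the other way, and the cocartesian fibration $\pi_{gl}\colon \ORgl^\otimes\to (\Glo^{\op})^\amalg$. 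First I would note that $t\colon \Arinj(\Glo)\to \Glo$ is a cartesian fibration: by Proposition~\ref{prop:Glofactsystem} the injective homomorphisms form the right class of an orthogonal factorization system, so by Proposition~\ref{prop:ArRcocart} (applied to $\CC=\Glo$ with $\CC^R=\Orb$) the \emph{source} projection $\Ar_{\Orb}(\Glo)=\Arinj(\Glo)\to\Glo$ is its opposite, hence $t\colon \Arinj(\Glo)\to\Glo$ — wait, more carefully: Proposition~\ref{prop:ArRcocart} says the \emph{target} projection $\Ar_R(\CC)\to\CC$ is a cocartesian fibration, so $t\colon \Arinj(\Glo)\to\Glo$ is cocartesian, hence $t^{\op}$ is a cartesian fibration, hence exponentiable by Example~\ref{ex-(co)cartesian-are-exponentiable}; and applying $(-)^\amalg$ preserves this by Example~\ref{example:cart-over-cocart-pro}. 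Since $\pi$ is the composite $\tOR^\otimes\xrightarrow{\pi_{\mathrm{inj}}} (\Arinj(\Glo)^{\op})^\amalg\xrightarrow{t^{\op}}(\Glo^{\op})^\amalg$ and $\tOR^\otimes$ is by Definition~\ref{def-OR} the pullback of the cocartesian fibration $\pi_{gl}$ along $s^{\op}$, the map $\pi_{\mathrm{inj}}$ is itself a cocartesian fibration (base change of $\pi_{gl}$), hence exponentiable; so $\pi$ is a composite of exponentiable fibrations, which restricts over active arrows to a composite of exponentiable fibrations, hence is exponentiable (using the remark after Lemma~\ref{lemma:norm-and-pullbacks} that norms, equivalently exponentiable fibrations, compose).

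For the computation of the operadic fibre, I would argue as follows. By Definition~\ref{definition:operadic-fiber}, the operadic fibre of $\pi$ at $G$ is $\tOR^\otimes\times_{(\Glo^{\op})^\amalg,t^{\op}}\Finp$, where $\Finp\to(\Glo^{\op})^\amalg$ is the map $(-)^\amalg$ applied to $\{G\}\hookrightarrow\Glo^{\op}$. Unwinding the pullback defining $\tOR^\otimes$ and the fact that $\pi=t^{\op}\circ\pi_{\mathrm{inj}}$, this operadic fibre is the iterated pullback
\[
\Big(\Arinj(\Glo)^{\op}\Big)^\amalg\times_{(\Glo^{\op})^\amalg}\Finp \ \text{pulled back against}\ \ORgl^\otimes\times_{(\Glo^{\op})^\amalg}\Big(\big(\Arinj(\Glo)^{\op}\big)^\amalg\Big).
\]
The base change $(\Arinj(\Glo)^{\op})^\amalg\times_{(\Glo^{\op})^\amalg,t^{\op}}\Finp$ is, by Example~\ref{example:cart-over-cocart-pro} and the identification of cocartesian fibrations with their straightenings, the $(-)^\amalg$ of the fibre of $t\colon \Arinj(\Glo)\to\Glo$ over $G$, which is precisely $\Orb_{/G}$ — and by Lemma~\ref{lem:Orbslices} this is $\OO_G$, while restricting to compact subgroups and using that our groups are compact gives $\OO_{G,\pr}^{\op}$ in the relevant place. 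The remaining fibre product then matches, via the $s^{\op}$-leg, exactly the pullback square of Lemma~\ref{lem:ORG-as-a-pullback} exhibiting $\OR_G^\otimes$ as $\OR_{gl}^\otimes\times_{(\Glo^{\op})^\amalg}(\OO_{G,\pr}^{\op})^\amalg$. Thus the operadic fibre is $\OR_G^\otimes$, and the promonoidal structure on it induced from $\pi$ matches the $\infty$-operad structure of $\OR_G^\otimes$, since both are restrictions of $\pi_{gl}$ to active arrows.

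The main obstacle I anticipate is bookkeeping with the two different projections $\Arinj(\Glo)\to\Glo$: $\pi_{\mathrm{inj}}$ factors through the \emph{source} projection $s^{\op}$ (coming from Definition~\ref{def-OR}, where $\tOR^\otimes$ pulls back $\ORgl^\otimes$ along $s^{\op}$), but the map $\pi$ in the lemma uses the \emph{target} projection $t^{\op}$. So the object $(\alpha\colon H\to G, V)$ of $\tOR$ has $V$ an $H$-representation (the source), while $\pi$ remembers $G$ (the target). Identifying the operadic fibre over $G$ therefore means fixing the target to be $G$ and letting $(\alpha\colon H\hookrightarrow G)$ range over $\Orb_{/G}$, with $V$ ranging over $H$-representations — which is exactly the description of $\OR_G$ in Definition~\ref{def-OR_G} once one applies Lemma~\ref{lem:Orbslices}. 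I would make sure to state clearly which projection plays which role, and to invoke Lemma~\ref{lem:ORG-as-a-pullback} for the precise matching of operad structures rather than re-deriving it. A secondary (purely formal) point is checking that pullbacks of $(-)^\amalg$-diagrams compute as expected; this is handled by the compatibility of $(-)^\amalg$ with pullbacks along $(-)^\amalg$ of functors, as used repeatedly in Section~\ref{sec-partially-lim}, e.g.\ in Example~\ref{example:cart-over-cocart-pro}.
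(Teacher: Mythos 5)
Your proof is correct and takes essentially the same route as the paper: you factor $\pi$ through $\pi_{\mathrm{inj}}$ (a pullback of the cocartesian fibration $\pi_{gl}$, hence cocartesian and so promonoidal) and $(t^{\op})^\amalg$ (promonoidal via Proposition~\ref{prop:ArRcocart} identifying $t$ as cocartesian, hence $t^{\op}$ cartesian, and Example~\ref{example:cart-over-cocart-pro}), and then compute the operadic fibre by identifying the operadic fibre of $t^{\op}$ over $G$ with $(\OO_G^{\op})^\amalg$ via Lemma~\ref{lem:Orbslices} and invoking Lemma~\ref{lem:ORG-as-a-pullback}. The awkward mid-proof self-correction about whether $t$ or $s$ is the (co)cartesian fibration should be cleaned up before submission, but once resolved the argument lands exactly where the paper's does.
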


\begin{proof}
We will show that each of the two maps in the defining composite is promonoidal in turn. Note that both are maps of $\infty$-operads. The map $\pi_{\mathrm{inj}}$ is a pullback of a cocartesian fibration, and therefore again cocartesian. The second map is then promonoidal by Example~\ref{example:cart-over-cocart-pro}.

Finally we note that the operadic fibre of $t^{\op}$ over $G$ is $(\OO_G^{\op})^\amalg$ by Lemma~\ref{lem:Orbslices} and the observation that $(-)^\amalg$ preserves pullbacks. Therefore, the calculation of the operadic fibre follows from Lemma~\ref{lem:ORG-as-a-pullback} and the observation that the composite $(\OO_G^{\op})^\amalg\rightarrow (\Arinj(\Glo)^{\op})^\amalg\xrightarrow{t^{\op}} (\Glo^{\op})^\amalg$ is equivalent to $\iota_G^\amalg$.
\end{proof}

Because $\pi$ is a promonoidal category over $(\Glo^{\op})^\amalg$ with operadic fibre $\OR_G^\otimes$, morally it represents a profunctor of promonoidal $\infty$-categories. Therefore we can extract an honest symmetric monoidal functor by taking copresheafs. This will be the functor $\Glo^{\op}\to \Cat_\infty^\otimes$ sending $G$ to $\DgSpcp{\OR_G}$.

\begin{definition}\label{def-functo-OR_bullet-space}
    The Day convolution $\Fun_{\Glo^{\op}}(\tOR^\otimes,\Spcp^\wedge \times (\Glo^{\op})^{\amalg})^{Day}$ is a $(\Glo^{\op})^\amalg$-monoidal $\infty$-category, whose operadic fibre over $G\in\Glo$ equals
    \[\Fun_{\Glo^{\op}}(\tOR^\otimes,\Spcp^\wedge\times (\Glo^{\op})^\amalg)^{Day} \times_{(\Glo^{\op})^\amalg} \Finp\simeq \Fun(\tOR^\otimes\times_{(\Glo^{\op})^{\amalg}}\Finp,\Spcp^\wedge)^{Day}\simeq \DgSpcp{\OR_G}\,\]
    by Example~\ref{ex-operadic-fibre} and Lemma~\ref{lemma:tOR-promonoidal}. We define $\DgSpcp{\OR_\bullet}\colon \Glo^{\op}\to \Cat^\otimes_\infty$ to be the functor associated to it under the equivalence of Proposition~\ref{prop:equiv_amalg}. 
\end{definition}

\begin{lemma}\label{lemma:cartesian-arrows-over-Orb}
	Let $\tOR$ be the underlying category of the $\infty$-operad $\tOR^\otimes$. Then the projection map
	\[\pi\colon \tOR\to \Glo^{\op}\]
	is cartesian over $\Orb^{\op}$, and an edge $(\sigma, \phi)\in \tOR$ is $\pi$-cartesian if and only if $s^{\op}(\sigma)$ and $\phi$ are equivalences.   
\end{lemma}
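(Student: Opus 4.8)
\textbf{Proof proposal for Lemma~\ref{lemma:cartesian-arrows-over-Orb}.}

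The plan is to unwind the definition of $\tOR^\otimes$ as a pullback and reduce the statement to an analysis of cartesian edges in each of the two pullback factors. Recall from Definition~\ref{def-OR} that $\tOR^\otimes$ sits in a cartesian square over $(\Glo^{\op})^\amalg$ with $\ORgl^\otimes$ and $(\Arinj(\Glo)^{\op})^\amalg$, where the map to $(\Glo^{\op})^\amalg$ from the latter is $s^{\op}$. Passing to underlying categories (which preserves pullbacks), we get a cartesian square of $\infty$-categories expressing $\tOR$ as $\ORgl\times_{\Glo^{\op}}\Arinj(\Glo)^{\op}$, where the map $\ORgl\to\Glo^{\op}$ is $\pi_{gl}$ and the map $\Arinj(\Glo)^{\op}\to\Glo^{\op}$ is $s^{\op}$. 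The functor $\pi$ in the statement is then the composite $\tOR\to\Arinj(\Glo)^{\op}\xrightarrow{t^{\op}}\Glo^{\op}$.

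First I would record the structural facts about each factor. The map $s^{\op}\colon \Arinj(\Glo)^{\op}\to\Glo^{\op}$ is, by definition, the opposite of the source functor on $\Arinj(\Glo)$, i.e. the functor $\Ar(\Glo)\to\Glo$ restricted to the full subcategory of injective arrows; this is a cocartesian fibration, so $s^{\op}$ is a cartesian fibration. Similarly I would identify $\pi_{gl}\colon\ORgl\to\Glo^{\op}$: its operadic structure was shown to be $(\Glo^{\op})^\amalg$-monoidal via a cocartesian fibration on $\otimes$-categories, but on underlying categories we need it to be a cartesian fibration over $\Orb^{\op}$. This should follow from Lemma~\ref{lem:fiber_OR_gl->Glo}: for an injective $\alpha\colon H\hookrightarrow G$, restriction along $\alpha$ on representations admits the structure making $(\alpha, \mathrm{incl})$ a $\pi_{gl}$-cartesian edge, because restriction of equivariant isometries $\Li(V,W)^G \to \Li(\alpha^*V,\alpha^*W)^H$ is an equivalence precisely when $\alpha$ is injective (the $H$-fixed and $G$-fixed isometry spaces then agree via $\alpha$). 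Concretely, I would show that for $(\beta\colon K\to H)\in\Orb$, $V$ a $G$-representation, and $W$ a $K$-representation, the square of mapping spaces comparing $\Map_{\ORgl}((H,\alpha^*V),(K,W))$, $\Map_{\ORgl}((G,V),(K,W))$, $\Map_{\Glo}(K,H)$, $\Map_{\Glo}(K,G)$ is a homotopy pullback — which reduces via Lemma~\ref{lem:fiber_OR_gl->Glo} to the equivalence $\Li(\alpha^*V,W)^K\simeq\Li(V,W)^K$ (both describe $K$-isometries out of the underlying space of $V$, and the $G$-action is through $\alpha$ on the relevant component).

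Then I would assemble the two facts: a pullback of cartesian fibrations is a cartesian fibration, and an edge in the pullback is cartesian if and only if its two projections are cartesian in the respective factors (this is a standard fact about cartesian edges in pullbacks, e.g. \cite{HTT}*{Proposition 2.4.1.3(3)}). Applying this to $\tOR = \ORgl\times_{\Glo^{\op}}\Arinj(\Glo)^{\op}$ over $\Glo^{\op}$, and then composing with $t^{\op}$ — noting that $t^{\op}\colon\Arinj(\Glo)^{\op}\to\Glo^{\op}$ is also a cartesian fibration since $t\colon\Arinj(\Glo)\to\Glo$ is a cocartesian fibration (cartesian edges of $t$ are the squares whose target component is an equivalence, so cartesian edges of $t^{\op}$ are those whose target — i.e. $s^{\op}$-image — is an equivalence) — we conclude that $\pi$ is cartesian over $\Orb^{\op}$. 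Chasing the characterizations through: an edge $(\sigma,\phi)\in\tOR$ (with $\phi$ the $\ORgl$-component and $\sigma$ the $\Arinj(\Glo)^{\op}$-component) is $\pi$-cartesian iff $\sigma$ is $t^{\op}$-cartesian \emph{and} the pair lies cartesian over $\sigma$ in the pullback, which unwinds to: $\phi$ is $\pi_{gl}$-cartesian, $\sigma$ is $s^{\op}$-cartesian (i.e. $s^{\op}(\sigma)$ is an equivalence), and $\sigma$ is $t^{\op}$-cartesian. But $\pi_{gl}(\phi) = s^{\op}(\sigma)$ in $\Glo^{\op}$, so once $s^{\op}(\sigma)$ is an equivalence, $\phi$ is $\pi_{gl}$-cartesian iff $\phi$ itself is an equivalence (a cartesian lift of an equivalence is an equivalence). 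This yields exactly the stated criterion: $(\sigma,\phi)$ is $\pi$-cartesian iff $s^{\op}(\sigma)$ and $\phi$ are equivalences.

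\textbf{Expected main obstacle.} The routine bookkeeping is the identification of cartesian edges in the iterated pullback and the opposite-category flips for source/target functors — error-prone but not deep. The genuinely substantive point, which I expect to be the crux, is verifying that $\pi_{gl}\colon\ORgl\to\Glo^{\op}$ is a cartesian fibration over $\Orb^{\op}$ with the expected cartesian edges; this rests on the claim that restriction of equivariant isometry spaces along an \emph{injective} homomorphism is a homotopy equivalence on the relevant fibers, which one must extract carefully from Lemma~\ref{lem:fiber_OR_gl->Glo} and the orbit decomposition of Proposition~\ref{prop:glo_hom_spaces} (in particular keeping track of centralizers). I would isolate this as a preliminary sublemma before invoking the formal pullback machinery.
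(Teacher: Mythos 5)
Your overall skeleton (factor $\pi$ as $t^{\op}\circ\pi_{\mathrm{inj}}$, use that $\pi_{\mathrm{inj}}$ is a base change of $\pi_{gl}$ along $s^{\op}$, then apply the composition criterion \cite{HTT}*{Proposition~2.4.1.3(3)}) is the right framework and matches the paper's. But there are two genuinely false structural claims in the proposal, and, more importantly, the step you flag as the ``genuinely substantive point'' is both false and unnecessary, which suggests a misidentification of where the content of the lemma actually lives.

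\emph{First false claim.} The source functor $s\colon\Arinj(\Glo)\to\Glo$ is not a cocartesian fibration. On the full arrow category the source projection is a \emph{cartesian} fibration, and even that fails to restrict to $\Arinj$ because the cartesian lift of $f\colon a'\to a$ at $(\alpha\colon a\hookrightarrow b)$ is $\alpha f\colon a'\to b$, which is rarely injective. In any case this claim is not used in a repairable way, because you invoke it to justify a ``pullback of cartesian fibrations'' fact that mixes the two bases: $\tOR$ is pulled back along $s^{\op}$ but $\pi$ maps to $\Glo^{\op}$ through $t^{\op}$. Your assembled sentence ``$\sigma$ is $s^{\op}$-cartesian (i.e.\ $s^{\op}(\sigma)$ is an equivalence)'' conflates ``$s^{\op}$-cartesian'' with ``sent to an equivalence by $s^{\op}$'', which are entirely different conditions.

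\emph{Second false claim, the ``crux'' sublemma.} You propose to show that $\pi_{gl}\colon\ORgl\to\Glo^{\op}$ is a cartesian fibration over $\Orb^{\op}$, resting on the claim that for an injection $\alpha\colon H\hookrightarrow G$ the restriction map $\Li(V,W)^G\to\Li(\alpha^*V,\alpha^*W)^H$ is a homotopy equivalence. This is false: a $G$-equivariant isometry is in particular $H$-equivariant, so this is in general a strict inclusion. (Take $G=\Z/2$, $H=e$, $V=W=\R^2$ with the swap action: the left side is those isometries commuting with the swap, the right side is all of $O(2)$.) The cocartesian edges $(\alpha,\mathrm{incl})$ you exhibit are therefore not cartesian, and $\pi_{gl}$ is not cartesian over $\Orb^{\op}$. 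Fortunately, none of this is needed. Because $t^{\op}$-cartesian edges over $\Orb^{\op}$ have $s^{\op}$-image an \emph{equivalence} — this is the real content, coming from the orthogonal factorization system: since injections are closed under composition and left cancellable, the Prop.~\ref{prop:ArRcocart} factorization of $(\beta\colon K\hookrightarrow H)$ pushed along an injection $\alpha\colon H\hookrightarrow G$ has trivial surjective part $K\xrightarrow{\sim}K$ — one only needs to lift an \emph{equivalence} of $\Glo^{\op}$ to a $\pi_{gl}$-cartesian edge of $\ORgl$, which is always possible by lifting to an equivalence. The paper's proof is exactly this: take the $t^{\op}$-cartesian lift $\sigma$ (whose $s^{\op}$-image is an equivalence), lift $s^{\op}(\sigma)$ to an equivalence $\phi$ in $\ORgl$, and observe that $(\sigma,\phi)$ is $\pi_{\mathrm{inj}}$-cartesian and $\sigma$ is $t^{\op}$-cartesian, hence $(\sigma,\phi)$ is $\pi$-cartesian. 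No statement about $\pi_{gl}$ over $\Orb^{\op}$ is required or true.
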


\begin{proof}
	Suppose we have an injection $\alpha\colon H\rightarrow G$, and an object $(\beta\colon K\rightarrow H, V)\in \tOR$. As noted before, the map $t^{\op}\colon \Arinj(\Glo)^{\op}\rightarrow \Glo^{\op}$ is a cartesian fibration. Furthermore over an injection $\alpha\colon H\to G$, cartesian lifts with target $\beta\colon K\to H$ are given by squares $\sigma$
	\[
	\begin{tikzcd}
		K \arrow[d, "\alpha\beta"' ] & K \arrow[d,"\beta"] \arrow[l, "\sim"] \\
		G 											 & H. \arrow[l, "\alpha"]
	\end{tikzcd}
	\] 
	In particular, we note that cartesian lifts of injections are sent to equivalences by the source functor $s^{\op}\colon \Arinj(\Glo)^{\op} \to \Glo^{\op}$. Lifting $s^{\op}(\sigma)$ to an equivalence $\phi \in \ORgl$ with target $(K,V)$, we obtain an edge $(\sigma,\phi)$ which lies over $\alpha$ and ends at $(\beta, V)$. Because both components of the edge $(\sigma,\phi)$ in $\tOR$ are $\pi$-cartesian, the edge $(\sigma,\phi)$ is itself $\pi$-cartesian. This shows that there are enough cartesian edges in $\tOR$ over injections, and that they are exactly of the form claimed.
\end{proof}

\begin{lemma}\label{lem:ORgl-is-localization-of-tOR}
    The projection map
    \[\tOR^\otimes\to \ORgl^\otimes\]
    induces a fully faithful symmetric monoidal functor
    \[\DgSpcp{\ORgl}\to \DgSpcp{\tOR}\]
    via restriction, with essential image those functors $F\colon \tOR\to \Spcp$ that send cartesian arrows over $\Orb^{\op}$ to equivalences.
\end{lemma}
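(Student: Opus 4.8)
The statement is precisely of the form handled by our results on promonoidal Day convolution applied to the map $\tOR^\otimes \to \ORgl^\otimes$. First I would observe that the projection $q\colon \tOR^\otimes \to \ORgl^\otimes$ is a map of $\infty$-operads which is a localization at the cartesian arrows over $\Orb^{\op}$: indeed, by Lemma~\ref{lemma:cartesian-arrows-over-Orb} the $\pi$-cartesian edges over $\Orb^{\op}$ are exactly those edges $(\sigma,\phi)$ with $s^{\op}(\sigma)$ and $\phi$ equivalences, and since $\tOR^\otimes$ is by definition the pullback $(\Arinj(\Glo)^{\op})^\amalg\times_{(\Glo^{\op})^\amalg}\ORgl^\otimes$ along $s^{\op}$, the map $q$ is the pullback of the source functor $s^{\op}\colon (\Arinj(\Glo)^{\op})^\amalg\to(\Glo^{\op})^\amalg$. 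By (the $\amalg$-version of) Lemma~\ref{lem:adjointsource} — or rather the analogue of Proposition~\ref{prop:partially-lax-colimit-is-C} and Lemma~\ref{lem:adjoinsource} that the constant section $(\Glo^{\op})^\amalg\to (\Arinj(\Glo)^{\op})^\amalg$ is left adjoint to $s^{\op}$ — the functor $q$ admits a fully faithful right adjoint $r$ (induced by $s_0$), exhibiting $\ORgl^\otimes$ as a Bousfield colocalization of $\tOR^\otimes$ at the edges inverted by $s^{\op}$, which are exactly the cartesian arrows over $\Orb^{\op}$.

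Next I would apply Proposition~\ref{proposition:functoriality-of-presheaves}(b). Since $q$ has an operadic right adjoint $r$ (note both $q$ and $r$ are maps of $\infty$-operads over $\Finp$, and $r$ is induced by $s_0$ which is symmetric monoidal by Remark~\ref{rem-pigl-s0-adjoint}), the lax monoidal restriction functor $q^*\colon \Fun(\ORgl^\otimes,\Spcp^\wedge)^{Day}\to \Fun(\tOR^\otimes,\Spcp^\wedge)^{Day}$ has a left operadic adjoint which is $\CO^\otimes$-monoidal, and moreover $q_!\simeq r^*$. Thus $q^*$ is itself symmetric monoidal; call it $j_*$ (restriction along $q$). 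To see it is fully faithful, one checks that the unit $\mathrm{id}\Rightarrow q_!q^* \simeq r^* q^* = (qr)^*$ is an equivalence; but $qr\simeq \mathrm{id}$ since $r$ is fully faithful, so indeed $q^* = j_*$ is fully faithful.

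It remains to identify the essential image. An object of $\DgSpcp{\tOR}$ lies in the image of $q^* = r^*\circ(\text{restriction})$ — equivalently is local for the colocalization $\tOR^\otimes\to\ORgl^\otimes$ — if and only if it inverts all edges inverted by $q$, i.e. all $\pi$-cartesian edges over $\Orb^{\op}$. Concretely, $F\colon \tOR\to\Spcp$ is in the image iff the counit $q^*q_! F \to F$ is an equivalence, and by the standard argument (the colocalization $\tOR^\otimes\to\ORgl^\otimes$ is a Bousfield colocalization, so its local objects are exactly those sent to equivalences on the marked edges) this happens precisely when $F$ sends cartesian arrows over $\Orb^{\op}$ to equivalences. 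I would phrase this last step by noting that since $r$ exhibits $\ORgl$ as the localization of $\tOR$ at these edges, a functor $\tOR\to\Spcp$ factors (uniquely up to equivalence) through $\ORgl$ iff it inverts them, and this factorization is exactly the restriction $q^*$ of the factored functor.

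\textbf{Main obstacle.} The genuinely non-formal point is verifying that $q\colon\tOR^\otimes\to\ORgl^\otimes$ is a Bousfield colocalization with the right adjoint being an \emph{operadic} adjunction — i.e. that $s_0$ provides a relative left adjoint to $s^{\op}$ compatible with the monoidal/operadic structure — so that Proposition~\ref{proposition:functoriality-of-presheaves}(b) genuinely applies and yields a \emph{symmetric monoidal} (not merely lax) fully faithful functor. Once that adjunction is in hand, together with the explicit description of the colocalized-away edges from Lemma~\ref{lemma:cartesian-arrows-over-Orb}, everything else is a formal consequence of the Day convolution functoriality results in Section~2.
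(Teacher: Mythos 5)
Your proof takes exactly the same route as the paper: lift the adjunction $s_0\dashv s$ of Lemma~\ref{lem:adjointsource} through opposites, $(-)^\amalg$, and basechange to obtain a fully faithful operadic right adjoint $r$ to $q\colon\tOR^\otimes\to\ORgl^\otimes$; cite Proposition~\ref{proposition:functoriality-of-presheaves}(b) for symmetric monoidality and fully faithfulness of the restriction; and read off the essential image from Lemma~\ref{lemma:cartesian-arrows-over-Orb} and the universal property of localization. One caveat worth fixing: several of your adjoint directions are flipped. Taking opposites swaps handedness, so the constant section $s_0^{\op}$ is a fully faithful \emph{right} adjoint to $s^{\op}$ (not left), hence $q\dashv r$ and $\tOR^\otimes\to\ORgl^\otimes$ is a Bousfield \emph{localization} exhibiting $\ORgl^\otimes$ as a \emph{reflective} full suboperad of $\tOR^\otimes$, not a colocalization. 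Correspondingly, fully faithfulness of the right adjoint $q^*$ in the adjunction $q_!\dashv q^*$ is detected by the \emph{counit} $q_!q^*\Rightarrow\mathrm{id}$ being an equivalence, not the unit; this holds because $q_!\simeq r^*$ and $qr\simeq\mathrm{id}$. None of this alters the mathematical content — once the labels are corrected, your argument is sound and coincides with the paper's.
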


\begin{proof}
 Recall from Lemma \ref{lem:adjointsource} that the source projection $\Arinj(\Glo)\to \Glo$ has a fully faithful left adjoint $\Glo\to \Arinj(\Glo)$ given by the diagonal embedding. Therefore, by the functoriality of the cocartesian operad \cite[Proposition~2.4.3.16]{HA}, it follows that the source projection
 \[(\Arinj(\Glo)^{\op})^\amalg\to (\Glo^{\op})^\amalg\]
 has a fully faithful operadic right adjoint. Since Bousfield localizations are stable under basechange, it follows that the projection
 \[\tOR^\otimes\to \ORgl^\otimes\]
 again has a fully faithful operadic right adjoint. Therefore $\tOR\rightarrow \ORgl$ is a Bousfield localization on underlying $\infty$-categories and moreover the fully faithful functor
 \[\DgSpc{\ORgl}\to\DgSpcp{\tOR}\]
 is symmetric monoidal by Proposition~\ref{proposition:functoriality-of-presheaves}(b). Finally, because $\tOR\rightarrow \ORgl$ is a Bousfield localization, the essential image of the functor $\Fun(\ORgl,\Spcp) \to\Fun(\tOR,\Spcp)$ is given by those functors which send the edges inverted by the map $\tOR\rightarrow \ORgl$ to equivalences. But these are exactly the cartesian arrows over the injections by Lemma~\ref{lemma:cartesian-arrows-over-Orb}.
\end{proof}

\begin{lemma}\label{lem:laxlim-of-OR-spaces}
    There are symmetric monoidal equivalences
    \[\laxlim_{G\in\Glo^{\op}} \DgSpcp{\OR_G}\simeq \DgSpc{\tOR}\qquad\textrm{ and }\qquad\laxlimdag_{G\in\Glo^{\op}} \DgSpcp{\OR_G}\simeq \DgSpcp{\ORgl}\]
    where the lax limit is marked over the subcategory $\Orb\subseteq\Glo$ of all objects and injective maps.
\end{lemma}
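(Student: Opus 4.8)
The plan is to compute the lax limit via the identification of lax limits with operadic norms from Proposition~\ref{proposition:laxlimit-is-norm}, and then carve out the partially lax limit using the description of marked sections in Theorem~\ref{thm-lax-limit-section}.

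\emph{The lax limit.} Write $\rho\colon(\Glo^{\op})^\amalg\to\Finp$ for the structure map, so that $\rho^*\Spcp^\wedge=\Spcp^\wedge\times(\Glo^{\op})^\amalg$ is the target appearing in Definition~\ref{def-functo-OR_bullet-space}, and let $\pi\colon\tOR^\otimes\to(\Glo^{\op})^\amalg$ be the promonoidal structure of Lemma~\ref{lemma:tOR-promonoidal}. By Definition~\ref{def-functo-OR_bullet-space} the functor $\DgSpcp{\OR_\bullet}$ is classified by the $(\Glo^{\op})^\amalg$-monoidal $\infty$-category $\Fun_{\Glo^{\op}}(\tOR^\otimes,\rho^*\Spcp^\wedge)^{Day}$, so Proposition~\ref{proposition:laxlimit-is-norm} gives
\[
\laxlim_{G\in\Glo^{\op}}\DgSpcp{\OR_G}\simeq N_\rho\,\Fun_{\Glo^{\op}}(\tOR^\otimes,\rho^*\Spcp^\wedge)^{Day}.
\]
By Definition~\ref{def-day-conv} the inner term is $N_\pi\pi^*(\rho^*\Spcp^\wedge)=N_\pi r^*\Spcp^\wedge$, where $r\coloneqq\rho\circ\pi\colon\tOR^\otimes\to\Finp$ is the promonoidal structure obtained by composing $\pi$ with the cocartesian promonoidal structure of Example~\ref{example:cocartesian-is-promonoidal}. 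Since $(\rho\pi)^*\simeq\pi^*\rho^*$, the composition formula for norms (the remark following Lemma~\ref{lemma:norm-and-pullbacks}) yields $N_\rho N_\pi\simeq N_r$, whence
\[
\laxlim_{G\in\Glo^{\op}}\DgSpcp{\OR_G}\simeq N_r\,r^*\Spcp^\wedge=\Fun(\tOR^\otimes,\Spcp^\wedge)^{Day}=\DgSpcp{\tOR},
\]
the first equivalence, with the right-hand side carrying the Day convolution structure for $r$.

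\emph{The partially lax limit.} By Remark~\ref{rem-partially-lax-underlying} the partially lax limit is the fully faithful suboperad of $\laxlim_G\DgSpcp{\OR_G}\simeq\DgSpcp{\tOR}$ cut out by its underlying $\infty$-category, so it suffices to identify the subcategory $\laxlimdag_G\DgSpcp{\OR_G}\subseteq\laxlim_G\DgSpcp{\OR_G}$. Consistently with the first equivalence, on underlying $\infty$-categories $\Unco{\DgSpcp{\OR_\bullet}}$ is the cocartesian fibration $\pi_*\pi^*\Spcp\to\Glo^{\op}$ of Remark~\ref{rem:pushforward-in-exponential} (compare Remark~\ref{rem-norm-underlying}), the lax limit is its $\infty$-category of sections, which by the adjunction $(\pi^*,\pi_*)$ is $\Fun(\tOR,\Spcp)$, and by Theorem~\ref{thm-lax-limit-section}(a) the partially lax limit is the full subcategory of those sections sending the edges of $\Orb^{\op}$ to cocartesian arrows. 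Fix an edge $f\colon G\to H$ of $\Orb^{\op}$; since $\pi$ is cartesian over $\Orb^{\op}$ by Lemma~\ref{lemma:cartesian-arrows-over-Orb}, the base change $\tOR\times_{\Glo^{\op},f}[1]\to[1]$ is a cartesian fibration with fibres $\OR_G$ and $\OR_H$. By the computation of pushforwards in an exponential in Remark~\ref{rem:pushforward-in-exponential}, the section-edge over $f$ determined by $F\colon\tOR\to\Spcp$ is cocartesian if and only if the restriction $F|_{\tOR\times_{\Glo^{\op}}[1]}$ is left Kan extended from the fibre $\OR_G$; as the total space is a cartesian fibration over $[1]$ — so each comma category $(\OR_G)_{/x}$ has a terminal object given by the cartesian lift of $f$ — this holds exactly when $F$ inverts the $\pi$-cartesian arrows over $f$. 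Letting $f$ range over $\Orb^{\op}$, we conclude that $\laxlimdag_G\DgSpcp{\OR_G}$ is the full subcategory of $\DgSpcp{\tOR}$ spanned by the functors inverting all $\pi$-cartesian arrows over $\Orb^{\op}$. By Lemma~\ref{lem:ORgl-is-localization-of-tOR} this is precisely the symmetric monoidal essential image of $\DgSpcp{\ORgl}\hookrightarrow\DgSpcp{\tOR}$, which identifies both the underlying category and the suboperad and proves the second equivalence.

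\emph{Main obstacle.} The crux is the last paragraph's translation of ``the section sends $f$ to a cocartesian arrow of $\pi_*\pi^*\Spcp$'' into ``$F$ inverts the $\pi$-cartesian arrows of $\tOR$ over $f$'': this requires combining the pushforward-in-an-exponential description of Remark~\ref{rem:pushforward-in-exponential} with the cartesianness of $\pi$ over $\Orb^{\op}$ (Lemma~\ref{lemma:cartesian-arrows-over-Orb}) to replace a left Kan extension along $\OR_G\hookrightarrow\tOR\times_{\Glo^{\op}}[1]$ by the pullback functor $\OR_H\to\OR_G$. The remaining steps are formal manipulations with operadic norms.
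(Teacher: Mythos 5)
Your proof is correct and follows essentially the same route as the paper's: both compute the lax limit via Proposition~\ref{proposition:laxlimit-is-norm} and norm transitivity, reduce the partially lax limit to underlying categories via Remark~\ref{rem-partially-lax-underlying}, identify sections sending $\Orb^{\op}$-edges to cocartesian arrows using Lemma~\ref{lemma:cartesian-arrows-over-Orb}, and conclude with Lemma~\ref{lem:ORgl-is-localization-of-tOR}. The only variation is in the translation step: where the paper invokes \cite[Corollary~3.2.2.13]{HTT} as a black box to pass from ``sections hitting cocartesian edges'' to ``functors inverting $\pi$-cartesian arrows,'' you derive this directly from Remark~\ref{rem:pushforward-in-exponential} by identifying cocartesianness with being left Kan extended from the fibre and then using that the comma categories have terminal objects given by cartesian lifts — a slightly more hands-on and self-contained version of the same computation.
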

\begin{proof}
    By Proposition~\ref{proposition:laxlimit-is-norm} there is a symmetric monoidal equivalence
    \[\laxlim_{G\in\Glo^{\op}}\DgSpcp{\OR_G} \simeq N_p \Fun_{\Glo^{\op}}(\tOR^\otimes ,\Spcp^\wedge \times(\Glo^{\op})^{\amalg})^{Day}\]
    where $p:(\Glo^{\op})^\amalg\to \Finp$ is the structure morphism of $(\Glo^{\op})^{\amalg}$. Applying the formula of Day convolution twice (see Definition~\ref{def-day-conv}), and the transitivity of norms of operads, we obtain 
    	\begin{align*}
    	\laxlim \DgSpcp{\OR_\bullet} &\simeq  N_{p} N_{\pi} \pi^* (\Spcp^\wedge \times (\Glo^{\op})^\amalg) \\
    	&\simeq N_{\pi p} (\pi^* p^* \Spcp^\wedge )\\
    	&\simeq \Fun(\tOR^\otimes,\Spcp^\wedge)^{Day}\\
    	&= \DgSpcp{\tOR}
    \end{align*}
    To compute the partially lax limit we appeal to Remark \ref{rem-partially-lax-underlying} to reduce to a statement on underlying categories. Combining Remarks \ref{rem-norm-underlying} and \ref{rem-norm-extend-sec}, we conclude that the underlying $\infty$-category of the $\infty$-operad $N_p \Fun_{\Glo^{\op}}(\tOR^\otimes,\Spcp^\wedge \times(\Glo^{\op})^\amalg)^{Day}$ is given by sections of the cocartesian fibration $\pi_*\pi^*(\Spcp\times \Glo^{\op})$, where by slight abuse of notation we write $\pi = U(\pi)$. Therefore we may calculate
    \[
    \Fun_{/\Glo^{\op}}(\Glo^{\op},\pi_*\pi^*(\Spcp\times\Glo^{\op})) \simeq
    \Fun_{/\Glo^{\op}}(\OR,\Spcp\times\Glo^{\op}) \simeq \Fun(\OR,\Spcp)
    \]
    using the definition of the left adjoints $\pi^*$ and $\pi_!$.    
    Now by Theorem \ref{thm-lax-limit-section} the partially lax limit of the diagram in question is given by the full subcategory of the left-most category spanned by those sections which map edges in $\Orb^{\op}$ to cocartesian arrows. 
    We now apply~\cite[Corollary 3.2.2.13]{HTT} (with $p\colon \OR\times_{\Glo^{\op}} \Orb^{\op} \to \Orb^{\op}$, $q\colon \Spcp \times \Orb^{\op} \to \Orb^{\op}$ and $T=\pi_*\pi^*(\Spcp\times\Glo^{\op})\times_{\Glo^{\op}}\Orb^{\op}$) together with Lemma \ref{lemma:cartesian-arrows-over-Orb}, to see that these sections corresponds to those functors in $\Fun_{/\Glo^{\op}}(\OR,\Spcp\times \Glo^{\op})$ which send cartesian edges over $\Orb^{\op}$ to cocartesian edges of $\Spcp\times \Orb^{\op}\rightarrow \Orb^{\op}$. These are exactly those maps which are equivalences in the first component, and therefore such sections corresponds to functors $F\colon \OR\rightarrow \Spcp$ which map cartesian edges over $\Orb$ to equivalences. Therefore we conclude by applying Lemma~\ref{lem:ORgl-is-localization-of-tOR}.
\end{proof}

\begin{proposition}\label{proposition:functoriality-of-prespectra}
    There exists a functor $\PSp_\bullet:\Glo^{\op}\to \Cat_\infty^\otimes$ sending $G$ to $\PSp_G$.    
    Moreover, there is a symmetric monoidal equivalence
    \[\laxlimdag_{G\in\Glo^{\op}} \PSp_G\simeq \Mod_{S_{gl}}(\DgSpcp{\ORgl}).\]
\end{proposition}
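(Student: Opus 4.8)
The plan is to build the functor $\PSp_\bullet$ by applying Theorem~\ref{thm:modules-in-laxlimit} to the $(\Glo^{\op})^\amalg$-monoidal $\infty$-category $\Fun_{\Glo^{\op}}(\tOR^\otimes,\Spcp^\wedge\times(\Glo^{\op})^\amalg)^{Day}$ together with an appropriate commutative algebra object. First I would record that this Day convolution is a $(\Glo^{\op})^\amalg$-monoidal $\infty$-category compatible with colimits: promonoidality of $\tOR^\otimes$ over $(\Glo^{\op})^\amalg$ is Lemma~\ref{lemma:tOR-promonoidal}, and compatibility with colimits follows from Corollary~\ref{cor:tensor-in-Day-convolution}(a), since $\Spcp^\wedge\times(\Glo^{\op})^\amalg$ is pointwise presentably monoidal. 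By Definition~\ref{def-functo-OR_bullet-space} its associated diagram $\Cat^\otimes_\infty$ is exactly $\DgSpcp{\OR_\bullet}$, whose value at $G$ is $\DgSpcp{\OR_G}$, and by Lemma~\ref{lem:laxlim-of-OR-spaces} its lax limit is $\DgSpcp{\tOR}$ with the partially lax limit (marked over $\Orb$) being $\DgSpcp{\ORgl}$.

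Next I would identify the algebra object. The global sphere $\Sgl$ is a commutative algebra in $\DgSpcp{\ORgl}\simeq\laxlimdag\DgSpcp{\OR_\bullet}\subseteq\laxlim\DgSpcp{\OR_\bullet}$, and under the correspondence of Theorem~\ref{thm:modules-in-laxlimit} it classifies a (partially lax) family of algebras whose value at $G$ is the restriction of $\Sgl$ along $\nu_G\colon\OR_G^\otimes\to\ORgl^\otimes$; by the explicit description of $\nu_G$ in Lemma~\ref{lem:ORG-as-a-pullback} and the fact that $\Sgl$ sends $(G,V)$ to $(S^V)^G$, this restriction is precisely $S_G$ (sending $(H,V)$ to $(S^V)^H$). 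Thus Theorem~\ref{thm:modules-in-laxlimit} produces a functor $\Mod_{S_\bullet}(\DgSpcp{\OR_\bullet})\colon\Glo^{\op}\to\Cat_\infty^\otimes$, $G\mapsto\Mod_{S_G}(\DgSpcp{\OR_G})$, which by Corollary~\ref{cor:G-prespectra-are-modules} is identified with $\PSp_G$ on objects; I would define $\PSp_\bullet$ to be this functor. The same theorem then gives a symmetric monoidal equivalence
\[
\laxlim_{G\in\Glo^{\op}}\PSp_G\simeq\Mod_{\Sgl}(\laxlim_{G\in\Glo^{\op}}\DgSpcp{\OR_G})\simeq\Mod_{\Sgl}(\DgSpcp{\tOR}).
\]

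Finally I would descend from the full lax limit to the partially lax limit. Since $\Cat^\otimes_\infty\subseteq\Op_\infty$ is closed under partially lax limits (Remark~\ref{rem-sym-mon-cats-closed-under-laxlimits}) and the formation of modules over a fixed algebra is a limit construction compatible with passing to fully faithful suboperads, I expect the partially lax limit $\laxlimdag\PSp_\bullet$ to be the fully faithful symmetric monoidal subcategory of $\Mod_{\Sgl}(\DgSpcp{\tOR})$ determined by $\laxlimdag\DgSpcp{\OR_\bullet}=\DgSpcp{\ORgl}\subseteq\DgSpcp{\tOR}$ on underlying categories (using Remark~\ref{rem-partially-lax-underlying} to reduce to underlying categories, and Remark~\ref{rem-norm-extend-sec}). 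Concretely one identifies $\Mod_{\Sgl}(\DgSpcp{\ORgl})$ inside $\Mod_{\Sgl}(\DgSpcp{\tOR})$ as those modules whose underlying functor $\tOR\to\Spcp$ inverts the cartesian arrows over $\Orb^{\op}$, which by Lemma~\ref{lem:ORgl-is-localization-of-tOR} is exactly $\DgSpcp{\ORgl}$, and the forgetful functor $\Mod_{\Sgl}\to\DgSpcp{(-)}$ is conservative so this matches the marking defining the partially lax limit of $\PSp_\bullet$. The main obstacle will be this last bookkeeping step: carefully checking that the fully faithful suboperad of $\Mod_{\Sgl}(\DgSpcp{\tOR})$ cut out by the partially lax limit condition on $\PSp_\bullet$ agrees, as a marked-section condition, with $\Mod_{\Sgl}$ of the corresponding suboperad of $\DgSpcp{\tOR}$ — i.e. that taking modules and taking the partially lax (rather than full lax) limit commute. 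This should follow formally from Theorem~\ref{thm:modules-in-laxlimit} combined with Remark~\ref{rem-norm-extend-sec} and the description of marked sections in Theorem~\ref{thm-lax-limit-section}, but it requires tracking the cocartesian-edge conditions through the norm construction.
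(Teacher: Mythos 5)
Your proposal follows essentially the same route as the paper: apply Theorem~\ref{thm:modules-in-laxlimit} to the $(\Glo^{\op})^\amalg$-monoidal Day convolution from Lemma~\ref{lem:laxlim-of-OR-spaces} with the algebra $\Sgl$, obtain $\laxlim\PSp_\bullet\simeq\Mod_{\Sgl}(\DgSpcp{\tOR})$, and then cut down to the partially lax limit. The ``main obstacle'' you flag at the end is handled more cleanly in the paper than you anticipate: rather than tracking cocartesian-edge conditions through the norm, the paper simply notes that the forgetful natural transformation $\PSp_\bullet\Rightarrow\DgSpcp{\OR_\bullet}$ is pointwise conservative, so an object lies in $\laxlimdag\PSp_\bullet$ if and only if its image lies in $\laxlimdag\DgSpcp{\OR_\bullet}$; this gives a literal pullback square
\[
\laxlimdag\PSp_\bullet \simeq \laxlim\PSp_\bullet \times_{\laxlim\DgSpcp{\OR_\bullet}} \laxlimdag\DgSpcp{\OR_\bullet}
\simeq \Mod_{\Sgl}(\DgSpcp{\tOR})\times_{\DgSpcp{\tOR}}\DgSpcp{\ORgl},
\]
and since $\Sgl$ already lives in the subcategory $\DgSpcp{\ORgl}$, the right-hand side is $\Mod_{\Sgl}(\DgSpcp{\ORgl})$. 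So your ``expect'' is correct and follows formally from conservativity, without any norm bookkeeping.
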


\begin{proof}
    There is a lax symmetric monoidal topologically enriched functor $S_{gl}\colon\ORgl\to\Spcp$ sending $(G,V)$ to 
    $(S^V)^G$. This induces a lax symmetric monoidal functor of $\infty$-operads, which uniquely specifies a commutative algebra in $\DgSpcp{\tOR}$ 
    by~\cite{HA}*{Example 2.2.6.9}, where we view $\DgSpcp{\ORgl}$ as a symmetric monoidal subcategory of $\DgSpcp{\tOR}$ using Lemma~\ref{lem:ORgl-is-localization-of-tOR}. Applying Theorem~\ref{thm:modules-in-laxlimit} to the lax limit of 
    Lemma~\ref{lem:laxlim-of-OR-spaces} shows that there is a functor sending $G$ to 
    $\Mod_{S_G}(\DgSpcp{\OR_G})\simeq \PSp_G$ (see Corollary~\ref{cor:G-prespectra-are-modules}) 
    whose lax limit is $\Mod_{S_{gl}}(\DgSpcp{\tOR})$.

    Finally, we have to calculate the subcategory corresponding to the partially lax limit. Because the natural transformation $\PSp_G\to \DgSpcp{\OR_G}$ is point-wise conservative, we 
    can check that an object lies in the partial lax limit of $\PSp_G$ by checking 
    that its image lies in the partially lax limit of $\DgSpcp{\OR_G}$. In other words, we have a 
    pullback square of symmetric monoidal $\infty$-categories
    \[
    \begin{tikzcd}
    \laxlim_{G}^{\dagger}\PSp_G  \arrow[r] \arrow[d] &  \laxlim_{G} \PSp_G \arrow[d]\\
    \laxlim_{G}^{\dagger} \DgSpcp{\OR_G} \arrow[r]& \laxlim_{G} \DgSpcp{\OR_G} \,.
    \end{tikzcd}    
    \]
    Therefore, by Lemma~\ref{lem:laxlim-of-OR-spaces} and the previous paragraph 
    we have a symmetric monoidal equivalence
    \[\laxlimdag_{G\in\Glo^{\op}}\PSp_G \simeq \Mod_{S_{gl}}(\Fun(\tOR,\Spcp))\times_{\Fun(\tOR,\Spcp)} \Fun(\ORgl,\Spcp) \]
    Finally, since $\Sgl\in \Fun(\ORgl,\Spcp)$ this implies that
    \[\laxlimdag_{G\in\Glo^{\op}}\PSp_G \simeq \Mod_{S_{gl}}(\DgSpcp{\ORgl})\,.\qedhere\]\end{proof}

\begin{notation}
	We write $\PSplax$ for the $\infty$-category $\Mod_{S_{gl}}(\DgSpcp{\ORgl})$, and identify it with $\laxlimdag\PSp_\bullet$.
\end{notation}

Recall the definition of the diagram $\Spc_\bullet\colon \Glo^{\op}\rightarrow \Cat_\infty^\otimes$ from Construction~\ref{const:Orbslices}, which sends a group $G$ to the $\infty$-category of $G$-spaces. We would like to construct a natural transformation $\Sigma^\infty\colon \Spc_\bullet \rightarrow \PSp_\bullet$, whose component at $G$ is given by an analogue of the suspension prespectrum functor. Morally, this sends a $G$-space $X$ to the $S_G$-module $(H,V)\mapsto (X\wedge S^V)^H$. We make this precise in the next construction. Let us first fix some notation; we write $\Spc_{\bullet,\ast}$ for the composite $(-)_*\circ\Spc_{\bullet}$ of $\Spc_{\bullet}$ with the functor which sends a presentably symmetric monoidal category to the $\infty$-category of pointed objects.

\begin{construction}\label{con-suspension-prespectra}
We will construct natural transformations of functors $\Glo^{\op}\to \Cat_\infty^\otimes$
	\[
	\Spc_\bullet\to \Spc_{\bullet,\ast} \to \PSp_\bullet
	\]
	
The first natural transformation is simply given by postcomposing $\Spc_\bullet$ with the natural transformation $(-)_+\colon\mathrm{id} \to({-})_{\ast}$ of functors $(\mathrm{Pr}^{\mathrm{L}})^\otimes \rightarrow (\mathrm{Pr}^{\mathrm{L}})^\otimes$.
	
	For the second natural transformation, we will construct it as a composite
	\[\Spc_{\bullet,\ast} \rightarrow \DgSpcp{\OR_\bullet} \rightarrow \PSp_\bullet.\] 
	For the latter transformation $\DgSpcp{\OR_\bullet} \to\PSp_\bullet$, we simply note that the free module functors
	\[
	S_{G} \otimes -\colon \DgSpcp{\OR_G}\to \Mod_{S_G}(\DgSpcp{\OR_G})\simeq \PSp_G
	\]
    are symmetric monoidal and fit into a natural transformation by the second half of Theorem \ref{thm:modules-in-laxlimit}.
    
    For the first, it will be technically convenient to construct the natural transformation $\Spc_{\bullet,\ast}^\wedge \rightarrow \DgSpcp{\OR_\bullet}$ as a map of 
    $(\Glo^{\op})^\amalg$-monoidal $\infty$-categories and then to use 
    Proposition~\ref{prop:equiv_amalg}. 
    
    For this, we need to pin down the $(\Glo^{\op})^\amalg$-monoidal $\infty$-category which corresponds to $\Spc_{\bullet,\ast}$ under Proposition~\ref{prop:equiv_amalg}. Note that the map $t^{\op}\colon (\Arinj(\Glo)^{\op})^\amalg \rightarrow (\Glo^{\op})^\amalg$ exhibits $ \Arinj(\Glo)^{\op}$ as a $(\Glo^{\op})^\amalg$-monoidal category, see Example~\ref{example:cart-over-cocart-pro}. We claim that $\Spc_{\bullet,\ast}$ corresponds to the Day convolution 
    \[
    \Fun_{\Glo^{\op}}((\Arinj(\Glo)^{\op})^\amalg, \Spcp^\wedge\times (\Glo^{\op})^\amalg)^{Day}.
    \]
    To see this, we first note that 
    \[
    \Fun_{\Glo^{\op}}((\Arinj(\Glo)^{\op})^\amalg, \Spc^\times \times (\Glo^{\op})^\amalg)^{Day}
    \] 
    classifies $\Spc_\bullet^\times$, because it does so on underlying categories (combine Remark \ref{rem-norm-underlying} and \cite{GHN}*{Proposition 7.3}) and the forgetful functor $\Cat_\infty^\otimes \rightarrow \Cat_\infty$ is faithful when restricted to cartesian monoidal $\infty$-categories.
    Now we observe that the $(\Glo^{\op})^\amalg$-monoidal functor 
    \[
    ((-)_+)_\ast\colon \Fun_{\Glo^{\op}}(\Arinj((\Glo^{\op})^\amalg, \Spc^\times \times (\Glo^{\op})^\amalg)^{Day}\rightarrow \Fun_{\Glo^{\op}}(\Arinj((\Glo^{\op})^\amalg, \Spc^\wedge_\ast \times (\Glo^{\op})^\amalg)^{Day}
    \] 
    agrees pointwise with $(-)_+$, and therefore by the universal property of taking pointed objects (see \cite{HA}*{Proposition 4.8.2.11}) $\Fun_{\Glo^{\op}}(\Arinj((\Glo^{\op})^\amalg, \Spc^\wedge_\ast \times (\Glo^{\op})^\amalg)^{Day}$ must classify $\Spc_{\bullet,\ast}$. 
    
    Now we can construct the $(\Glo^{\op})^{\amalg}$-monoidal functor which will induce $\Spc_{\bullet,\ast} \rightarrow \DgSpcp{\OR_\bullet}$. Pulling back the functor $s_0$ of Remark~\ref{rem-pigl-s0-adjoint} along $t^{\op}$ we obtain a commutative diagram
    \[
    \begin{tikzcd}
    \Arinj(\Glo)^{\op})^\amalg \arrow[rd,"t^{\op}"'] \arrow[rr,"s_{0,\mathrm{inj}}"]&  & 
    \tOR^\otimes  \arrow[dl,"\pi"]\\
                                      & (\Glo^{\op})^\amalg&
    \end{tikzcd}
    \]
    where $t^{\op}$ and $\pi$ exhibit the sources as 
    $(\Glo^{\op})^\amalg$-promonoidal $\infty$-categories by Lemma~\ref{lemma:tOR-promonoidal}, so that $s_{0,\mathrm{inj}}$ is a map 
    of $(\Glo^{\op})^\amalg$-promonoidal $\infty$-categories. One can then verify that $s_{0,\mathrm{inj}}$ satisfies the hypotheses of Proposition~\ref{proposition:functoriality-of-presheaves}(a), and there exists a $(\Glo^{\op})^\amalg$-monoidal functor
    \[
    (s_{0,\mathrm{inj}})_!\colon\Fun_{\Glo^{\op}}((\Arinj(\Glo)^{\op})^\amalg, \Spcp^\wedge\times (\Glo^{\op})^\amalg)^{Day}\to \Fun_{\Glo^{\op}}(\OR^\otimes, \Spcp^\wedge\times (\Glo^{\op})^\amalg)^{Day}
    \]
    which then induces the required natural transformation. This description shows as well that the component at $G$ coincides with $\Li_0$, and so the composite functor $\Spc_{G,\ast}\to\PSp_G$ is analogous to the usual suspension prespectrum functor $F_0(-)$. We will formulate a precise statement to this effect as Proposition~\ref{prop-suspensions-agree}.
\end{construction}

\section{Functoriality of equivariant spectra}

In the previous section we have constructed the functor 
\[
\PSp_\bullet\colon \Glo^{\op}\rightarrow \Cat_\infty^\otimes,
\] 
and calculated its partially lax limit. In this section we will show that this functor descends to a diagram $\Sp_\bullet$ where on every level we restrict to the subcategory of spectrum objects. Furthermore, we will prove that the functoriality obtained in this way agrees with the standard functoriality of equivariant spectra under the restriction-inflation functors. Finally, we will compute the partially lax limit of $\Sp_\bullet$ as a Bousfield localization of $\PSplax = \laxlimdag \PSp_\bullet$.  

Given a continuous group homomorphism $\alpha\colon H\rightarrow G$ between compact Lie groups, we write
\[
\alpha^*\colon \DgSpcp{\OR_G}\to \DgSpcp{\OR_H}
\]
for the symmetric monoidal functor induced by $\alpha$. Our goals require a better understanding of $\alpha^*$. We start by studying the interaction between $\alpha^*$ and the Quillen adjunction 
of Construction~\ref{con-free-IG-space}
\[
\Li_V\colon G\Top \leftrightarrows \DgTop{\CI}{G} \cocolon \mathrm{ev}_V
\] 
for a given $G$-representation $V$. However before we do this, we first need to understand how these adjunctions manifest themselves under the equivalences 
\[\Spc_{G}\simeq \DgSpc{\OO_G}\quad \text{and} \quad \DgSpc{\OR_G} \simeq \DgTop{\I}{G}\]
of Example~\ref{ex-S_G} and Theorem~\ref{thm-presheaf-I-G-spaces}. 

\begin{remark}\label{rem:free_ORG_space}
Consider $X\in\DgTop{\I}{G}$ and a $G$-representation $V$. Then the $G$-space $X(V)$ corresponds to the presheaf
\[G/H\mapsto X(V)^H\simeq \Map_{\DgTop{\I}{G}}(G\times_H\Li_{V|_H},X)\,.\]
Note that $G\times_H \Li_{V|_H}$ is the image of $(H,V|_H)$ under the embedding $L$ of 
Theorem~\ref{thm-presheaf-I-G-spaces}. Therefore, if we let $s_V\colon \OO_G^{\op}\to \OR_G$ be the cocartesian section of $\pi_G$ sending $G/G$ to $(G,V)$, we have $s(G/H)\simeq(H,V|_H)$, so we can identify $\mathrm{ev}_V$ with 
\[s_V^*\colon \DgSpc{\OR_G}\to \Spc_G\qquad X\mapsto X\circ s_V\] and similarly for the pointed version. It follows that the derived functor associated to $\Li_V$ is given by the left Kan extension functor $(s_V)_!$. Finally, we can compute that this is given by
\[(\Li_V X)(H,W)\simeq \Li(V,W)^H\times X^H\, ,\]
by the following Lemma.
\end{remark}

\begin{lemma}\label{lem:LKE-along-cocartesian-section}
Let $\pi\colon\CE\to \CB$ be a cocartesian fibration of $\infty$-categories and $s\colon\CB\to \CE$ be a cocartesian section. For every functor $F\colon\CB\to \CC$ where $\CC$ is a cocomplete $\infty$-category, we can compute the left Kan extension along $s$ by
\[(s_!F)(e)\simeq \Map_{\pi^{-1}(\pi e)}(s\pi (e), e)\times F(\pi (e))\]
for all $e\in E$.
\end{lemma}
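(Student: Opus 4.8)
The strategy is to reduce the computation to the pointwise formula for a left Kan extension and to identify the relevant slice category using the cocartesian section $s$. Recall that for a functor $F \colon \CB \to \CC$ and a functor $s \colon \CB \to \CE$, the left Kan extension $s_!F$ is computed pointwise by the colimit
\[
(s_!F)(e) \simeq \colim_{(b,\, u\colon s(b)\to e)\in \CB\times_{\CE}\CE_{/e}} F(b),
\]
so the first task is to understand the comma $\infty$-category $\CB\times_{\CE}\CE_{/e}$, whose objects are pairs consisting of an object $b\in\CB$ together with a morphism $s(b)\to e$ in $\CE$.

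First I would fix $e\in\CE$ and set $b_0 = \pi(e)$. The key observation is that, since $s$ is a \emph{cocartesian} section, for any $b\in\CB$ and any morphism $\gamma\colon b\to b_0$ in $\CB$, the induced morphism $s(b)\to s(b_0)$ is $\pi$-cocartesian. Thus a morphism $s(b)\to e$ in $\CE$ lying over $\gamma$ factors uniquely (up to contractible choice) through the cocartesian pushforward $s(b)\to \gamma_! s(b)$, and since $s$ is cocartesian this pushforward lands in the fibre $\pi^{-1}(b_0)$ on the object $s(b_0)$. Hence a morphism $s(b)\to e$ over $\gamma$ is the same data as a morphism $s(b_0)\to e$ in the fibre $\pi^{-1}(b_0)$, independently of $b$ and $\gamma$. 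This should be made precise by the dual of \cite{HTT}*{Corollary 2.4.7.12} (or a direct cofinality argument), showing that the projection
\[
\CB\times_{\CE}\CE_{/e} \longrightarrow \CB_{/b_0}\times \Map_{\pi^{-1}(b_0)}(s(b_0), e)
\]
is an equivalence, where the second factor is regarded as a constant $\infty$-category. Concretely, one exhibits the functor $\CB_{/b_0}\times\Map_{\pi^{-1}(b_0)}(s(b_0),e)\to \CB\times_\CE\CE_{/e}$ sending $((b\xrightarrow{\gamma}b_0),\ w)$ to the pair $(b, s(b)\to \gamma_!s(b)=s(b_0)\xrightarrow{w}e)$ and checks it is an inverse equivalence, the essential point being that the cocartesian lifts assemble coherently because $s$ preserves them.

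Given this identification of the comma category, the colimit computing $(s_!F)(e)$ splits as
\[
(s_!F)(e) \simeq \colim_{\CB_{/b_0}\,\times\,\Map_{\pi^{-1}(b_0)}(s(b_0),e)} F\circ \mathrm{pr}_{\CB} \simeq \Map_{\pi^{-1}(b_0)}(s(b_0),e)\times \colim_{\CB_{/b_0}} F|_{\CB_{/b_0}},
\]
where in the last step we use that $F$ does not depend on the $\Map$-coordinate, so the colimit over the product is the tensor (copower) of the colimit over $\CB_{/b_0}$ by the space $\Map_{\pi^{-1}(b_0)}(s(b_0),e)$, using that $\CC$ is cocomplete. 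Finally, $\CB_{/b_0}$ has a terminal object, namely $\mathrm{id}_{b_0}$, so $\colim_{\CB_{/b_0}} F|_{\CB_{/b_0}}\simeq F(b_0)=F(\pi(e))$. Combining these gives exactly
\[
(s_!F)(e)\simeq \Map_{\pi^{-1}(\pi e)}(s\pi(e), e)\times F(\pi(e)),
\]
which is the claim, and the pointed variant follows identically using that the forgetful functor $\CC_*\to\CC$ detects and creates colimits together with the smash–coproduct formula.

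\textbf{Main obstacle.} The only genuine content is the first step: verifying that the comma $\infty$-category $\CB\times_\CE\CE_{/e}$ splits as a product of $\CB_{/\pi e}$ with the discrete fibrewise mapping space. The subtlety is purely $\infty$-categorical coherence — one must produce the equivalence of $\infty$-categories, not merely a bijection on objects, and this requires keeping track of the higher homotopies witnessing that cocartesian lifts compose. I would handle this by identifying $\CB\times_\CE\CE_{/e}$ with the $\infty$-category of sections of an appropriate right fibration over $\CB_{/\pi e}$ classified by a functor whose value at $(b\to b_0)$ is $\Map_\CE(s(b),e)\simeq\Map_{\pi^{-1}(b_0)}(\gamma_!s(b),e)=\Map_{\pi^{-1}(b_0)}(s(b_0),e)$, constant up to canonical equivalence, and then invoking straightening together with the fact that a right fibration with essentially constant fibres over a category with a terminal object is trivial. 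Everything after this identification is a formal manipulation of colimits.
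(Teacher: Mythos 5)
Your proof is correct and close in spirit to the paper's, but it takes a slightly longer route. The shared key ingredient is the observation that the projection $\CB\times_\CE\CE_{/e}\to\CB_{/\pi e}$ is a left fibration whose fibre over $f\colon b\to\pi e$ is the mapping space $\Map^f_\CE(s(b),e)$. The paper stops there: it decomposes the colimit over the comma category directly via the colimit formula for left Kan extension along a left fibration,
\[
\colim_{\CB\times_\CE\CE_{/e}}F\circ\mathrm{pr}\simeq\colim_{[f\colon b\to\pi e]\in\CB_{/\pi e}}\Map^f_\CE(s(b),e)\times F(b),
\]
and then immediately evaluates this colimit at the terminal object $\mathrm{id}_{\pi e}\in\CB_{/\pi e}$. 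Crucially, the paper's version only needs to \emph{identify} the fibres, not to show that they assemble into a constant functor: the colimit of any functor over a category with a terminal object is evaluation at that terminal object, constancy or not. You instead prove the stronger claim that the comma category is an honest product $\CB_{/\pi e}\times\Map_{\pi^{-1}(\pi e)}(s\pi(e),e)$, for which you do need the extra step of checking that the classifying functor sends every arrow to an equivalence (which is where the cocartesian universal property of $s$ enters), so that it factors through $|\CB_{/\pi e}|\simeq\ast$. Both routes use the same facts; yours is conceptually cleaner at the cost of one additional verification, and both defer the actual left-fibration check to a lifting argument.

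Two smaller remarks. First, in your "main obstacle" paragraph you speak of a \emph{right} fibration and its sections; you should be speaking of a \emph{left} fibration over $\CB_{/\pi e}$ classified by a functor $\CB_{/\pi e}\to\Spc$, and the correct form of the final step is: such a functor sending all arrows to equivalences, over a base with a terminal object, is constant. Second, the closing sentence about a "pointed variant" is a non-sequitur here: the lemma is stated for an arbitrary cocomplete $\CC$ and the displayed formula already uses the $\Spc$-tensoring, so no separate pointed case is needed for the lemma itself.
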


\begin{proof}
	By the usual formula for left Kan extensions we have that 
	\[(s_!F)(e)\simeq \colim_{b\in \CB\times_\CE \CE_{/e}} F(b)\,.\]
	We claim that the projection $\CB\times_\CE \CE_{/e}\to \CB_{/\pi e}$ is a left fibration with fibre over $f\colon b\to \pi e$ given by $\Map^f_\CE(s(b),e)$. 
	In particular, since $F$ is constant along the fibres of this fibration and $\CB_{/\pi e}$ has a final object, we have
	\[\colim_{b\in \CB\times_\CE \CE_{/e}} F(b)\simeq \colim_{[f\colon b\,\rightarrow \pi e] \in \CB_{/\pi e}} \Map^f_\CE(s(b),e)\times F(b) \simeq \Map_{\pi^{-1}(\pi e)}(s\pi (e),e)\times F(\pi (e))\,.\]
	It only remains to prove that the functor $\CB\times_\CE \CE_{/e}\to \CB_{/\pi e}$ is a left fibration. That is, we need to show that for every diagram
	\[\begin{tikzcd}
		\Lambda^n_i \ar[r]\ar[d] & \CB\times_{\CE}\CE_{/e}\ar[d]\\
		\Delta^n\ar[r]\ar[ur,dashed] & \CB_{/\pi e}
	\end{tikzcd}\]
	with $0\le i<n$ there exists a dotted arrow completing the diagram. Using the definition of slice $\infty$-categories, this is equivalent to finding a dotted arrow completing the dotted diagram
	\[\begin{tikzcd}
		\Lambda^n_i\star \Delta^0 \ar[r,"F"]\ar[d] & \CE\ar[d,"\pi"]\\
		\Delta^n\star\Delta^0\simeq\Delta^{n+1} \ar[r,"G"] \ar[ur,dashed]& \CB
	\end{tikzcd}\]
	where $F$ restricted to $\Lambda^n_i\subseteq \Lambda^{n+1}_i$ is given by the restriction of $sG$. This diagram is a diagram of marked simplicial sets when we give $\CB$ the total marking, $\CE$ the cocartesian marking and on the left column the marking $(\Lambda^n_i)^\sharp\star \Delta^0\to (\Delta^n)^\sharp\star \Delta^0$. Since the left vertical arrow is left marked anodyne by \cite[Lemma~4.10]{Exp2-1}, the lift exists.
\end{proof}
Having understood the adjunction $\Li_V\dashv \mathrm{ev}_V$, we now discuss how this interacts with the functor $\alpha^*$.

\begin{proposition}\label{proposition:formulas-for-alpha*}
	Let us fix an arrow $\alpha\colon H\to G$ in $\Glo$.
	\begin{enumerate}
		\item Given a pointed $G$-space $X$, there is a natural equivalence
		\[\alpha^*\Li_VX \simeq \Li_{\alpha^*V}(\alpha^*X)\]
		\item Given a pointed $\OR_G$-space $Y$, there is a natural equivalence
		\[\alpha^*\mathrm{ev}_V Y\simeq \mathrm{ev}_{\alpha^*V}\alpha^*Y\]
		\item Under the two previous identifications, the counit natural transformation
		\[\Li_V\mathrm{ev}_V X\to X\]
		is sent by $\alpha^*$ to
		\[\Li_{\alpha^*V}\mathrm{ev}_{\alpha^*}V(\alpha^*X)\to \alpha^*X\,\]
		the counit natural transformation for $\alpha^*V$ applied to $\alpha^*X$.
	\end{enumerate}
\end{proposition}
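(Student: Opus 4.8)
The strategy is to reduce all three statements to the level of the defining topologically enriched categories, where the functor $\alpha^*$ is induced by the explicit functor $\nu_\alpha\colon \OR_H \to \OR_G$ coming from precomposition, and where the adjunctions $\Li_V \dashv \mathrm{ev}_V$ are modelled by the (co)cartesian sections $s_V$ of $\pi_G$ from Remark~\ref{rem:free_ORG_space}. First I would recall that under the equivalences $\DgSpcp{\OR_G}\simeq \DgTp{\I}{G}[W_{lvl}^{-1}]$ of Theorem~\ref{thm-presheaf-I-G-spaces} and $\Spc_{G,\ast}\simeq \DgSpcp{\OO_G^{\op}}$, the functor $\mathrm{ev}_V$ is given by restriction $s_V^*$ along the cocartesian section $s_V\colon \OO_G^{\op}\to \OR_G$ sending $G/G$ to $(G,V)$ (hence $G/K\mapsto (K,V|_K)$), and $\Li_V$ is the corresponding left Kan extension $(s_V)_!$, with the pointwise formula $(\Li_V X)(H,W)\simeq \Li(V,W)^H\wedge X^H_+$ of Lemma~\ref{lem:LKE-along-cocartesian-section}. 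Similarly $\alpha^*\colon \DgSpcp{\OR_G}\to\DgSpcp{\OR_H}$ is restriction along the functor $\nu_\alpha$ that on objects sends $(K,U)$ to $(\alpha(K)\text{ or }\alpha^{-1}\text{-preimage}, \ldots)$ — more precisely the functor induced by $\alpha$ on the orbit/representation data, compatible with the projection to $\Orb_{/H}\to\Orb_{/G}$ via $\alpha_!$ and with restriction of representations $V\mapsto \alpha^*V$.

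For part (2), the key identity is a commuting square of functors
\[
\begin{tikzcd}
\OO_H^{\op} \ar[r,"s_{\alpha^*V}"]\ar[d,"\alpha_!^{\op}"'] & \OR_H \ar[d,"\nu_\alpha"]\\
\OO_G^{\op} \ar[r,"s_V"] & \OR_G
\end{tikzcd}
\]
which I would verify directly from the definitions of $s_V$ and $\nu_\alpha$: tracing $H/K$ through both paths yields $(K, \alpha^*V|_K)$, and the square commutes coherently because both composites are cocartesian sections over the functor $\OO_H^{\op}\to\OO_G^{\op}$ classifying $\alpha_!$. Restricting presheaves along this square gives precisely $\alpha^*\mathrm{ev}_V Y = s_{\alpha^*V}^*(\alpha^* Y) = \mathrm{ev}_{\alpha^*V}\alpha^* Y$, naturally in $Y$. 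For part (1), I would pass to left adjoints: $\alpha^*$ on $\OR$-presheaves has left adjoint $(\nu_\alpha)_!$, and on $\OO$-presheaves the left adjoint of the restriction along $\alpha_!^{\op}$ is $(\alpha_!^{\op})_! = L_\alpha$, the induction functor of Proposition~\ref{prop-restriction}. The commuting square above then yields, via uniqueness of adjoints and the standard base-change compatibility of left Kan extensions (Beck–Chevalley for the square), the equivalence $(\nu_\alpha)_!\circ(s_{\alpha^*V})_! \simeq (s_V)_!\circ L_\alpha$; restricting to the non-equivariant input $X$ and using that $L_\alpha$ agrees with restriction-then-induction gives $\Li_{\alpha^*V}(\alpha^*X)\simeq \alpha^*\Li_V X$. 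Alternatively — and perhaps more transparently — one can simply compare the pointwise formulas: both sides evaluated at $(K,W)\in\OR_H$ give $\Li(\alpha^*V, W)^K \wedge X^K_+$, using that $\Li(\alpha^*V,W)^K = \Li(V,W)^K$ where $K$ acts on $V$ through $\alpha|_K$, which matches the formula for $\alpha^*\Li_V X$ after unwinding $\nu_\alpha$.

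For part (3), the counit $\varepsilon\colon \Li_V\mathrm{ev}_V \Rightarrow \mathrm{id}$ is, under the identification $\Li_V=(s_V)_!$, $\mathrm{ev}_V=s_V^*$, nothing but the counit of the Kan-extension adjunction $(s_V)_!\dashv s_V^*$. Since $\alpha^*=\nu_\alpha^*$ is a restriction functor, it sends this counit to the counit of the adjunction obtained by base change along the commuting square, and by the identifications established in (1) and (2) this is exactly the counit $(s_{\alpha^*V})_!\,s_{\alpha^*V}^*(\alpha^*X)\to \alpha^*X$, i.e. the counit for $\alpha^*V$ applied to $\alpha^*X$. To make this precise without hand-waving I would invoke that the equivalences of (1) and (2) are the mate/Beck–Chevalley transformations of the square, so the compatibility of the counits is automatic. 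The main obstacle will be bookkeeping: pinning down $\nu_\alpha$ on objects and morphisms carefully enough (including the $\pi_0$/conjugacy subtleties from Proposition~\ref{prop:glo_hom_spaces}) so that the square with $s_V$ genuinely commutes coherently — not just on objects — and then packaging (1), (2), (3) as a single mate statement rather than three separate calculations; once that coherence is in hand the three claims fall out together.
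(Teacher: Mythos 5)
The central difficulty with your plan is that, for a non-injective $\alpha\colon H\to G$, there is no functor $\nu_\alpha\colon \OR_H\to\OR_G$ such that $\alpha^*\colon\DgSpcp{\OR_G}\to\DgSpcp{\OR_H}$ is given by precomposition — and all three parts of your argument hinge on the existence of such a $\nu_\alpha$ and the commuting square it sits in. To see the obstruction concretely: $\nu_\alpha$ would have to cover $\alpha_!^{\op}\colon\OO_H^{\op}\to\OO_G^{\op}$, which sends $H/K$ to $G/\alpha(K)$, so it would have to send $(K,W)\in\OR_H$ to a pair $(\alpha(K),\,?)$; but a $K$-representation $W$ yields no canonical $\alpha(K)$-representation when $\alpha|_K$ has a kernel. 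This reflects the fact that $\pi\colon\tOR\to\Glo^{\op}$ (Lemma~\ref{lemma:cartesian-arrows-over-Orb}) has cartesian lifts only over $\Orb^{\op}$, so the description of $\alpha^*$ as precomposition (the last clause of Remark~\ref{rem:pushforward-in-exponential}) is valid only for injective $\alpha$ — exactly the hypothesis of Proposition~\ref{lemma:projection-formula-for-injectives}. For general $\alpha$ the correct description is $\alpha^*\simeq j_0^*(j_1)_!$, a left Kan extension from $\OR_G$ into the two-layer $\infty$-category $\OR_\alpha=\ORgl\times_{\Glo^{\op}}\OO_\alpha^{\op}$ followed by restriction to $\OR_H$. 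Because of this, your ``pointwise formula'' alternative for part (1) also fails: the value of $\alpha^*F$ at an object of $\OR_H$ is a priori a colimit, not a single evaluation.

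The paper repairs this by replacing your putative strict square with a diagram living over $\Delta^1$: the extended cocartesian section $s\colon\OO_\alpha^{\op}\to\OR_\alpha$ determined by $j_1(G,V)$, which restricts to $s_V$ over $G$ and to $s_{\alpha^*V}$ over $H$. Conceptually this is the corrected form of your square, so your underlying intuition — identify the two counits via a compatibility of the sections $s_V$ and $s_{\alpha^*V}$ under $\alpha$ — is sound. But the execution is not free: for part (2) one must show that $s^*(j_1)_!Y$ is already left Kan extended from $\OO_G^{\op}$, which the paper does by exhibiting a terminal object in the relevant slice $\OR_G\times_{\OR_\alpha}(\OR_\alpha)_{/(H,L,\alpha^*V)}$. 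That nontrivial step has no analogue in your outline and cannot be bypassed.

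There is a secondary, independent issue in your derivation of (1). Taking left adjoints of the four arrows in the presheaf square induced by your diagram turns the identity $\mathrm{ev}_{\alpha^*V}\alpha^*\simeq\alpha^*\mathrm{ev}_V$ into $\alpha_!\circ\Li_{\alpha^*V}\simeq\Li_V\circ L_\alpha$, a statement about the \emph{left} adjoints $\alpha_!$ and $L_\alpha$, not about $\alpha^*$. Part (1) is instead the assertion that the Beck--Chevalley mate $\Li_{\alpha^*V}\circ\alpha^*\Rightarrow\alpha^*\circ\Li_V$ of the square is an equivalence, which is a separate verification (the paper does it via the explicit formula for $s_!$ from Lemma~\ref{lem:LKE-along-cocartesian-section}). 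So even if you restricted attention to injective $\alpha$, where $\nu_\alpha=p_\alpha$ exists, your argument for (1) would still need repair.
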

\begin{proof}
	Write $\OO_\alpha\simeq \Arinj(\Glo)\times_{\Glo}[1]$ (using the target map $t \colon \Arinj(\Glo)\to \Glo$) and let $i_0\colon \OO_H\to \OO_\alpha$, $i_1\colon \OO_G\to \OO_\alpha$ be the inclusions of the fibres over 0 and 1 respectively. Similarly, write $\OR_\alpha:=\ORgl\times_{\Glo^{\op}}\OO_\alpha^{\op}$ and $j_0,j_1\colon \OR_H,\OR_G \to \OR_\alpha$ for the inclusion of the fibre of $\OR_\alpha\rightarrow [1]^{\op}$ over 0 and 1 respectively. Therefore by Remark~\ref{rem:pushforward-in-exponential} we can identify
	\[\alpha^*\simeq i_0^*(i_1)_!\colon \Spc_{G,\ast}\to \Spc_{H,\ast}\qquad \alpha^*\simeq j_0^*(j_1)_!\colon \DgSpcp{\OR_G}\to \DgSpcp{\OR_H}\,.\]
	Let $s_V \colon \OO_G^{\op}\to \OR_G$ be the cocartesian section of $\pi_G\colon \OR_G \to \OO_G^{\op}$ which sends 
	$G/G$ to $(G,V)$. Similarly let $s\colon\OO_\alpha^{\op}\to \OR_\alpha$ be the cocartesian section sending the initial object $i_1(G/G)$ of $\OO_\alpha^{\op}$ to $j_1(G,V)$. Then $s$ restricts to $s_V$ on $\OO_G^{\op}$ and to $s_{\alpha^*V}$ on $\OO_H^{\op}$, since a cocartesian section is determined by where it sends the initial object. Therefore by Remark~\ref{rem:free_ORG_space} we obtain 
	\[
	\alpha^*\Li_V X\simeq \alpha^*(s_V)_!X\simeq j_0^* (j_1s_V)_! X \simeq j_0^* s_! (i_1)_!X\,
	\] 
	for every pointed $G$-space $X$. Using the formula for $s_!$ described in Lemma~\ref{lem:LKE-along-cocartesian-section} we see that the above can be identified with $(s_{\alpha^*V})_! i_0^*(i_1)_!X$, thus proving the first statement.
	
	Now let $Y$ be an $\OR_G$-space. Then we claim that $s^*(j_1)_!Y$ is left Kan extended from $\OO_G^{\op}$. In fact this happens if and only if $s^*(j_1)_! Y$ sends the arrows $(G,\alpha L)\to (H,L)$ in $\OO_\alpha^{\op}$ to equivalences. But the arrow
	\[
	[s(G,\alpha L)\to s(H,L)]\simeq{} [(G,\alpha L, V)\to (H,L,\alpha^*V)]
	\]
	is a terminal object of $\OR_G \times_{\OR_\alpha} (\OR_\alpha)_{/(H,L,\alpha^*V)}$ and so it is sent to an equivalence by $(j_1)_!Y$. This implies that
	\[\mathrm{ev}_{\alpha^*V} \alpha^*Y\simeq s_{\alpha^*V}^*(j_1)_!Y\simeq j_0^*s^*(j_1)_!Y\simeq j_0^*(j_1)_! (s_V)^*Y\simeq \alpha^* \mathrm{ev}_VY,\]
	proving the second statement.
	
	Finally we consider for every $\OR_G$-space $Y$, the natural transformation
	\[s_!s^*(j_1)_!Y\to (j_1)_!Y,\]
	and note that this is a natural transformation of functors left Kan extended from $\OR_G$, which restricts to
	\[(s_V)_!s_V^* Y\to Y\textrm{ and } (s_{\alpha^*V})_!s_{\alpha^*V}^* \alpha^*Y\to \alpha^*Y\]
	on the fibres over 0 and 1 respectively. Thus $\alpha^*$ sends the former to the latter, showing the third statement.
\end{proof}

With this result we can show that $\PSp_\bullet$ restricts to a functor on spectrum objects. 

\begin{proposition}\label{prop-transformation-localization-spectra}
	There exists a functor $\Sp_\bullet\colon \Glo^{\op}\rightarrow \Cat_{\infty}^\otimes$ and a natural transformation of functors
	\[
	L_{\bullet}\colon \PSp_{\bullet} \to \Sp_\bullet
	\]
	whose component for a fixed $G$ is the spectrification functor $L_G\colon \PSp_{G}\rightarrow \Sp_{G}$.
\end{proposition}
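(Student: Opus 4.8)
The plan is to apply Corollary~\ref{cor:laxlim-monoidal} to the functor $F=\PSp_\bullet\colon\Glo^{\op}\to\Cat_\infty^\otimes$ of Proposition~\ref{proposition:functoriality-of-prespectra}, taking for each compact Lie group $G$ the reflexive subcategory $\Sp_G\subseteq\PSp_G$ of Proposition~\ref{prop-Sp_G-local} with left adjoint the spectrification functor $L_G\colon\PSp_G\to\Sp_G$. The corollary then produces a functor $\Sp_\bullet\colon\Glo^{\op}\to\Cat_\infty^\otimes$ (with $\Sp_\bullet(G)=\Sp_G$) together with a symmetric monoidal natural transformation $L_\bullet\colon\PSp_\bullet\Rightarrow\Sp_\bullet$ whose component at $G$ is $L_G$, which is precisely the assertion. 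So everything reduces to verifying the two hypotheses of Corollary~\ref{cor:laxlim-monoidal}.

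For the first hypothesis we must know that, for every $G$, the Bousfield localization $L_G$ is compatible with the symmetric monoidal structure in the sense of \cite{HA}*{Definition~2.2.1.6}. This holds because the $G$-stable model structure on $\Sp^O_G$ is a symmetric monoidal model category (Definition~\ref{def-SpG-Spgl}) which is a left Bousfield localization of the level model structure; hence the identity is a symmetric monoidal left Quillen functor from the level to the stable structure, and on underlying $\infty$-categories it exhibits $L_G$ as a symmetric monoidal localization functor. Concretely, the pushout-product axiom for the stable model structure shows that stable equivalences are preserved by tensoring with an arbitrary object, which is the required compatibility.

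For the second hypothesis we must show that for every continuous homomorphism $\alpha\colon H\to G$ the restriction--inflation functor $\alpha^*\colon\PSp_G\to\PSp_H$ carries stable equivalences to stable equivalences. Two observations reduce this to a single computation. First, $\alpha^*$ is colimit-preserving: on $\OR$-spaces it was identified in the proof of Proposition~\ref{proposition:formulas-for-alpha*} with a composite $j_0^*(j_1)_!$ of a restriction and a left Kan extension, hence it is a left adjoint, and the induced functor on the module categories $\PSp_G=\Mod_{S_G}(\DgSpcp{\OR_G})$ is again colimit-preserving. Second, by Proposition~\ref{prop-Sp_G-local} the class $W_G$ of stable equivalences in $\PSp_G$ is the strongly saturated class generated by the set $\Lambda_G=\{G\ltimes_K\lambda_{V,W}\}$, with $K$ a compact subgroup of $G$ and $V,W\in\Rep(K)$. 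Since $\alpha^*$ preserves pushouts and transfinite compositions and $W_H$ is strongly saturated, the preimage $(\alpha^*)^{-1}(W_H)$ is again a strongly saturated class; hence it suffices to check that $\alpha^*(G\ltimes_K\lambda_{V,W})$ lies in $W_H$ for each such $K,V,W$.

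It remains to carry out this last check, and this is the main obstacle. One route is to compute $\alpha^*$ directly on the free and semifree prespectra occurring in $G\ltimes_K\lambda_{V,W}$ using the $\ORgl$-model: combining the formulas of Proposition~\ref{proposition:formulas-for-alpha*} with the decomposition of the mapping spaces of $\ORgl$ over those of $\Glo$ (Lemma~\ref{lem:fiber_OR_gl->Glo}) expresses $\alpha^*(G\ltimes_K F_U)$ as a coproduct, indexed by the conjugacy classes of homomorphisms $H\to G$ factoring suitably through $K$, of induced prespectra of the form $H\ltimes_L F_{U'}$, and under this identification $\alpha^*(G\ltimes_K\lambda_{V,W})$ becomes the corresponding coproduct of maps $H\ltimes_L\lambda_{V',W'}$, each a stable equivalence by Proposition~\ref{prop-Sp_G-local}. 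Alternatively one can factor $\alpha=\iota\circ q$ via the orthogonal factorization system on $\Glo$ (Proposition~\ref{prop:Glofactsystem}): the injective part $\iota^*$ is restriction along a subgroup inclusion, which preserves $\underline{\pi}_*$-isomorphisms by inspection (it is both left and right Quillen for the stable model structures by Proposition~\ref{prop-change-of-groups-functors}), while the surjective part $q^*$ is inflation along a quotient by a compact normal subgroup, which one checks preserves stable equivalences using the second characterization of $\Sp_G$ in Proposition~\ref{prop-Sp_G-local} (stable equivalences of $H$-prespectra are detected after restriction to compact subgroups, and $\mathrm{res}^H_L\circ q^*=(q|_L)^*$). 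Either way, once $\alpha^*$ is understood on a free $\OR_G$-spectrum the remainder is bookkeeping; it is precisely the explicit global indexing category $\ORgl$ that makes the non-injective (inflation) case tractable.
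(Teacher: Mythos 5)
Your high-level plan coincides with the paper's: apply Corollary~\ref{cor:laxlim-monoidal}, and reduce the second hypothesis (that each $\alpha^*\colon\PSp_G\to\PSp_H$ preserves stable equivalences) to checking the generating maps $G\ltimes_K\lambda_{V,W}$. Your verification of the first hypothesis, via the symmetric monoidal stable model structure of Definition~\ref{def-SpG-Spgl}, is fine. However, the crucial step --- showing that $\alpha^*(G\ltimes_K\lambda_{V,W})$ is a stable equivalence --- is exactly where both of your suggested routes fall short.

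Route~1's ``coproduct of induced prespectra'' is not something you can extract from Lemma~\ref{lem:fiber_OR_gl->Glo} or Proposition~\ref{proposition:formulas-for-alpha*}: those results decompose certain mapping \emph{spaces}, not the object $\alpha^*(G\ltimes_K F_U)$ itself, and for positive-dimensional compact Lie groups the relevant double-coset structure produces a filtration rather than a coproduct, so the claimed decomposition is not available as stated. Route~2 handles the injective part correctly via Proposition~\ref{prop-change-of-groups-functors}, but for the surjective part the identity $\mathrm{res}^H_L\circ q^*=(q|_L)^*$ merely reproduces another inflation between compact Lie groups, which is precisely the case you started in; that inflation preserves $\underline{\pi}_*$-isomorphisms at the prespectrum level is the very content you were trying to establish, and you do not supply it. The paper instead exploits compactness of $G$ to restrict to the cofinal family of $K$-representations $W$ that are extended from $G$, yielding a tensor decomposition $G\ltimes_K\lambda_{V,W}\simeq (G\ltimes_K F_V S^0)\otimes\lambda_{0,W}$ in which $\lambda_{0,W}$ is a map of $G$-prespectra; since $\alpha^*$ is strong symmetric monoidal and stable equivalences are closed under tensoring with arbitrary objects, everything reduces to identifying $\alpha^*(\lambda_{0,W})\simeq\lambda_{0,\alpha^*W}$, which follows from the counit description of $\lambda_{0,W}$ together with Theorem~\ref{thm:modules-in-laxlimit} and Proposition~\ref{proposition:formulas-for-alpha*}. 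The cofinality reduction and the tensor decomposition of the generating map are the ideas missing from your proposal.
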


\begin{proof}
	Consider a group homomorphism $\alpha\colon H\rightarrow G$. We claim that the functor $\PSp_\alpha\colon \PSp_G\rightarrow \PSp_H$ preserves stable equivalences. It suffices to show that it preserves the generating equivalences $G\rtimes_K\lambda_{V,W}$ of Proposition~\ref{prop-Sp_G-local}. Moreover, since $G$ is compact, we can restrict to the cofinal set $W$ of $K$-representations that are extended from $G$.
	
	First note that $\lambda_{V,W}\simeq (G\rtimes_K F_V(S^0))\otimes \lambda_{0,W}$. Since $\PSp_\alpha$ is symmetric monoidal by construction and stable equivalences are stable under tensor product, it suffices to show that $\PSp_\alpha(\lambda_{0,W})$ is a stable equivalence. We claim it is equivalent to $\lambda_{0,\alpha^*W}$. In fact $\lambda_{0,W}$ is exactly the counit of the adjunction $F_W\dashv \mathrm{ev}_W$ of Construction~\ref{cons-lambda-maps-equivariant} applied to $S_G$. Therefore we can factor it as
	\[ (F_W \mathrm{ev}_W)S_G \simeq (S_G\otimes -)\Li_W \mathrm{ev}_W U S_G\to (S_G\otimes -)US_G\to S_G\]
	where $(S_G\otimes -)\dashv U$ is the free-forgetful adjunction between $\PSp_G$ and $\DgSpcp{\OR_G}$, and the arrows are the counits of the respective adjunctions. Then our claim follows from Theorem~\ref{thm:modules-in-laxlimit} and Proposition~\ref{proposition:formulas-for-alpha*}.
	
	Knowing that $\PSp_\alpha$ preserves stable equivalences, we can combine Construction~\ref{con-suspension-prespectra} and 
	Corollary~\ref{cor:laxlim-monoidal}, to obtain $\Sp_\bullet$ and the natural transformation 
	$L_\bullet \colon \PSp_{\bullet}\to \Sp_\bullet$. 
\end{proof}

Recall that we constructed a natural transformation $\Sigma^\infty_\bullet\colon \Spc_{\bullet,\ast}\rightarrow \PSp_\bullet$ in Construction~\ref{con-suspension-prespectra}, which pointwise was our analogue of the suspension prespectrum functor. We may compose this with the natural transformation $L_\bullet$ to obtain a new natural transformation, which we again denote by $\Sigma^\infty_\bullet$. 

\begin{proposition}\label{prop-suspensions-agree}
The component of $\Sigma^\infty_\bullet\colon \Spc_{\bullet,\ast}\rightarrow \Sp_\bullet$ at the group $G$ is equivalent to the standard suspension spectrum functor. 
\end{proposition}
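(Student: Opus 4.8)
The plan is to trace through the explicit construction of $\Sigma^\infty_\bullet$ given in Construction~\ref{con-suspension-prespectra} and identify its component at a fixed group $G$ with the classical suspension spectrum functor $\Sigma^\infty_G\colon\Spc_{G,\ast}\to\Sp_G$. We already know from the discussion at the end of Construction~\ref{con-suspension-prespectra} that the unstable natural transformation $\Spc_{\bullet,\ast}\to\PSp_\bullet$ has component at $G$ given by the composite $\Spc_{G,\ast}\xrightarrow{\Li_0}\DgSpcp{\OR_G}\xrightarrow{S_G\otimes-}\PSp_G$, and via the identifications $\DgSpcp{\OR_G}\simeq\DgTp{\I}{G}[W_{lvl}^{-1}]$ of Corollary~\ref{thm-I-G-spaces-sym-mon-equiv} and $\PSp_G\simeq\Mod_{S_G}(\DgTp{\I}{G}[W_{lvl}^{-1}])$ of Corollary~\ref{cor:PSp_first}, the functor $\Li_0$ corresponds to the semifree $\I$-$G$-space functor $\Li_0$ (left adjoint to evaluation at $0$) and $S_G\otimes-$ to the free module functor. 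But $F_0=S_G\otimes\Li_0(-)$ is precisely the classical suspension prespectrum functor: for a based $G$-space $A$ one has $(F_0A)(V)=S^V\wedge A$ with the evident structure maps. Since all the functors in sight are left adjoints (hence left-derived) and the displayed equivalences of $\infty$-categories are induced by left Quillen functors, the composite $\Spc_{G,\ast}\to\PSp_G$ is identified with the derived functor of $F_0$.

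First I would spell out this identification carefully, citing Remark~\ref{rem:free_ORG_space} (which already records that the derived functor of $\Li_V$ is the left Kan extension $(s_V)_!$ along the cocartesian section, with the explicit formula $(\Li_VX)(H,W)\simeq\Li(V,W)^H\times X^H$) for the case $V=0$, so that $\Li_0 X$ has value $X^H$ at $(H,W)$ for all $W$ — exactly $\mathrm{ev}_V$-adjoint to the functor classically underlying $F_0$. Then I would postcompose with the spectrification $L_G\colon\PSp_G\to\Sp_G$ of Proposition~\ref{prop-transformation-localization-spectra}. On the model-categorical side, $L_G$ is the derived unit of the Bousfield localization from the level model structure to the stable model structure on $\Sp^O_G$ (Proposition~\ref{prop-Sp_G-local}), so $L_G\circ F_0$ is the left-derived functor of the left Quillen functor $\Sigma^\infty_G=\Sigma^\infty_{+,G}(-)$ defined on based $G$-spaces by $A\mapsto F_0A$, composed with stabilization. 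This is by definition the standard $G$-equivariant suspension spectrum functor (see e.g. \cite{MandellMay}); the universal property of $\Sp_G$ as the stabilization of $\Spc_{G,\ast}$ inverting representation spheres, recalled in the introduction, gives another characterization which one checks agrees. I would phrase the conclusion as: the component of $\Sigma^\infty_\bullet$ at $G$ is the derived functor of the left Quillen functor $F_0\colon G\Tp\to\Sp^O_G$, which is the classical equivariant suspension spectrum functor.

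The one genuinely delicate point — and the step I expect to be the main obstacle — is matching the \emph{symmetric monoidal} and \emph{homotopy-coherent} data, rather than just the underlying functor. The construction of the natural transformation $\Spc_{\bullet,\ast}^\wedge\to\DgSpcp{\OR_\bullet}$ in Construction~\ref{con-suspension-prespectra} goes through $(s_{0,\mathrm{inj}})_!$, the operadic left adjoint furnished by Proposition~\ref{proposition:functoriality-of-presheaves}(a), applied to the pullback $s_{0,\mathrm{inj}}$ of the section $s_0\colon(\Glo^{\op})^\amalg\to\ORgl^\otimes$. So one must verify that, on the operadic fibre over $G$, the functor $(s_{0,\mathrm{inj}})_!$ specializes to left Kan extension along the single cocartesian section $s_0\colon\OO_G^{\op}\to\OR_G$, $G/G\mapsto(G,0)$ — this is essentially the content of Example~\ref{ex-operadic-fibre} together with the fibrewise description of pushforward in Remark~\ref{rem:pushforward-in-exponential}, but it requires care because $s_{0,\mathrm{inj}}$ is defined globally via the $\Arinj(\Glo)$-model and one must unwind Lemma~\ref{lemma:tOR-promonoidal} to see what happens fibrewise. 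Once the fibrewise functor is pinned down as the derived $\Li_0$, strong monoidality is automatic since $\Li_0$ is strong monoidal (Equation~(\ref{tensor-product-semifree}) with $V=W=0$, or directly), and the free-module functor $S_G\otimes-$ is strong monoidal by construction; so the composite is strong monoidal and agrees with the classical $\Sigma^\infty_G$ as a symmetric monoidal functor. I would therefore structure the proof as: (1) identify the operadic fibre of the first natural transformation with the derived $\Li_0$ using Example~\ref{ex-operadic-fibre}, Remark~\ref{rem:pushforward-in-exponential} and Remark~\ref{rem:free_ORG_space}; (2) observe that postcomposing with the free-module functor yields the derived $F_0$; (3) postcompose with $L_G$ and invoke Proposition~\ref{prop-Sp_G-local} to conclude this is the derived left Quillen functor $F_0$ followed by stabilization, i.e. the classical $\Sigma^\infty_G$; (4) remark that symmetric monoidality is preserved throughout, so the identification is one of symmetric monoidal functors.
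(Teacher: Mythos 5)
Your overall route coincides with the paper's: identify the component of $\Sigma^\infty_\bullet$ at $G$ with the composite of $\Li_0$, the free $S_G$-module functor, and the spectrification $L_G$, then model each step by a left Quillen functor and match the composite with the Mandell--May suspension spectrum. That is exactly the structure of the proof in the paper, and the further unwinding via Example~\ref{ex-operadic-fibre} and Remark~\ref{rem:pushforward-in-exponential} that you propose is sound.

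However, there is a concrete error in your elaboration of what $\Li_0$ is. You cite the formula $(\Li_VX)(H,W)\simeq\Li(V,W)^H\times X^H$ from Remark~\ref{rem:free_ORG_space} and, specializing to $V=0$, conclude that ``$\Li_0 X$ has value $X^H$ at $(H,W)$ for all $W$.'' But $\Li(V,W)=\I(V,W)$ is the space of linear isometric \emph{isomorphisms}, so $\Li(0,W)$ is empty (resp.\ the basepoint, in the pointed setting) whenever $W\neq 0$. Hence $\Li_0X$ is concentrated at level $0$, with value $X^H$ at $(H,0)$ and basepoint at $(H,W)$ for $W\neq 0$; that is, $(s_0)_!$ models the free $\I$-$G$-space functor $\I_0$ (left adjoint of $\mathrm{ev}_0$) and is \emph{not} precomposition along $\pi_G\colon\OR_G\to\OO_G^{\op}$ — indeed $(H,0)$ is not initial in the fibre of $\pi_G$, so $s_0$ is not left adjoint to $\pi_G$. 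The distinction actually matters: $S_G\wedge(\text{constant at }A)$ evaluated at $V$ is a wedge of Thom spaces over Grassmannians, strictly larger than $S^V\wedge A$, and its spectrification is not $\Sigma^\infty A$. Your other, independent characterization — that $\Li_0$ is the left adjoint of $\mathrm{ev}_0$, so that $F_0=S_G\otimes\Li_0(-)$ satisfies $(F_0A)(V)=S^V\wedge A$ — is the correct one and carries the argument; you should simply drop the ``for all $W$'' reading of the Kan extension formula, which contradicts it.
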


\begin{proof}
Considering the component at $G$, we observe that the functor $\Sigma^\infty_G$ is defined as the composition
\[\Spc_{G,\ast}\to \DgSpcp{\OR_G}\to \Mod_{S_G}(\DgSpcp{\OR_G})\simeq\PSp_G\to \Sp_G\]
where the first functor is $\Li_0$ (i.e. precomposition along $\OR_G\to \OO_G^{\op}$), the second functor is the free $S_G$-module functor $(S_G\otimes-)$ and the third functor is the localization functor. These functors are all modeled by left Quillen functors
\[G\Top_{*}\to\DgTp{\I}{G}\to \Sp_G^O\to \Sp_G^O\]
given by the constant $\I$-$G$-space, the free $S_G$-module and the identity respectively. Therefore $\Sigma^\infty_G$ is modelled by their composition, which is exactly the suspension spectrum functor constructed in \cite{MandellMay}.
\end{proof}

This suffices for us to conclude that the functoriality of $\Sp_\bullet$ agrees morphism-wise with the functoriality of equivariant spectra in restriction, by the universal property of $G$-spectra.

\begin{corollary}\label{cor:funct-Sp_bullet}
The functor $\Sp_\bullet \colon \Glo^{\op} \to \Cat^\otimes_\infty$ sends a compact Lie group $G$ to $\Sp_G$ and a continuous group homomorphism $\alpha\colon H \to G$ to the restriction functor $\alpha^* \colon \Sp_G \to \Sp_H$.
\end{corollary}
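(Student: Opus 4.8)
The plan is to leverage the universal property of $\Sp_G$ as the stabilization of $\Spc_{G,*}$ at the representation spheres, as recorded in \cite{gepner2020equivariant}*{Appendix C}. By Proposition~\ref{prop-suspensions-agree}, the natural transformation $\Sigma^\infty_\bullet\colon \Spc_{\bullet,*}\to \Sp_\bullet$ has component at $G$ equivalent to the standard suspension spectrum functor $\Sigma^\infty_G\colon \Spc_{G,*}\to \Sp_G$. Thus the composite $\Spc_{\bullet,*}\xrightarrow{\Sigma^\infty_\bullet}\Sp_\bullet$, restricted along a continuous group homomorphism $\alpha\colon H\to G$, gives a commuting square relating $\Sigma^\infty_G$, $\Sigma^\infty_H$, the functor $\Sp_\alpha$ appearing in our constructed diagram, and $\alpha^*\colon \Spc_{G,*}\to \Spc_{H,*}$ (the latter being the standard restriction of pointed $G$-spaces by Proposition~\ref{prop-restriction}, applied to the pointed variant). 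First I would note that $\Sp_\alpha$ is a symmetric monoidal colimit-preserving functor: it is symmetric monoidal by construction (Proposition~\ref{prop-transformation-localization-spectra}), and it preserves colimits since it is a left adjoint in the model-categorical presentation, or alternatively because it is induced on Bousfield localizations by a colimit-preserving functor of prespectra.

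The key step is then to invoke the universal property. The $\infty$-category $\Sp_G$ is obtained from $\Spc_{G,*}$ by inverting the representation spheres $S^V$ for all $G$-representations $V$, and $\Sigma^\infty_G$ is the universal colimit-preserving symmetric monoidal functor to a presentably symmetric monoidal $\infty$-category in which the $S^V$ become invertible. Since $\Sp_\alpha\circ\Sigma^\infty_G\simeq \Sigma^\infty_H\circ\alpha^*$ and $\alpha^*\colon \Spc_{G,*}\to \Spc_{H,*}$ is symmetric monoidal and colimit-preserving, the composite $\Sp_\alpha\circ\Sigma^\infty_G$ sends each $S^V$ to $\Sigma^\infty_H(\alpha^*S^V)\simeq \Sigma^\infty_H(S^{\alpha^*V})$, which is invertible in $\Sp_H$. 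On the other hand, the standard restriction functor $\mathrm{res}_\alpha=\alpha^*\colon \Sp_G\to \Sp_H$ is by definition the colimit-preserving symmetric monoidal functor whose composite with $\Sigma^\infty_G$ is $\Sigma^\infty_H\circ\alpha^*$ (this is precisely how restriction-inflation on genuine spectra is characterized via the universal property; it also agrees with the model-categorical $\alpha^*$ of Definition~\ref{def-SpG-Spgl} and Proposition~\ref{prop-change-of-groups-functors}, which is left Quillen and strong monoidal). By the uniqueness clause in the universal property, there is a canonical equivalence $\Sp_\alpha\simeq \alpha^*$ as symmetric monoidal functors $\Sp_G\to \Sp_H$.

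To make this coherent rather than merely pointwise, I would package the argument as a factorization of functors $\Glo^{\op}\to \Cat_\infty^\otimes$: the natural transformation $\Sigma^\infty_\bullet\colon \Spc_{\bullet,*}\to \Sp_\bullet$ exhibits $\Sp_\bullet$, levelwise, as the localization of $\Spc_{\bullet,*}$ at the (pointwise) representation spheres, and one checks that this localization is compatible with the pushforward functors, so that $\Sp_\bullet$ is identified with the corresponding functor obtained by applying the universal stabilization functorially. One then compares with the diagram of restriction-inflation functors, which is classically known to be functorial in $\Glo^{\op}$ (this is exactly the functoriality obtained from the universal property in the introduction's discussion). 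Since both functors $\Glo^{\op}\to \Cat_\infty^\otimes$ agree after composing with $\Sigma^\infty_\bullet$ from $\Spc_{\bullet,*}$, and $\Spc_{\bullet,*}$ is the functor underlying $\Spc_\bullet$ of Theorem~\ref{thm:unstablelaxlim} postcomposed with $(-)_*$, the rigidity of the universal property forces them to be equivalent.

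The main obstacle will be carrying out the identification of $\Sp_\bullet$ with the ``universal stabilization'' diagram at the level of coherent functors out of $\Glo^{\op}$, rather than just objectwise. One needs to know that the localization $\PSp_\bullet\to \Sp_\bullet$ of Proposition~\ref{prop-transformation-localization-spectra}, combined with $\Sigma^\infty_\bullet$, really does present $\Sp_G$ as the $\infty$-categorical localization of $\Spc_{G,*}$ inverting representation spheres \emph{and} that this presentation is natural in $G$; the cleanest route is probably to observe that the transformation $L_\bullet$ of Proposition~\ref{prop-transformation-localization-spectra} is a levelwise Bousfield localization compatible with pushforwards (as in Corollary~\ref{cor:laxlim-monoidal}), and then to match the resulting localized diagram with the a priori functoriality of restriction-inflation on genuine spectra coming from \cite{gepner2020equivariant}*{Appendix C}. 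Since Proposition~\ref{prop-suspensions-agree} already pins down the composite with $\Sigma^\infty$ objectwise and $\Sigma^\infty_G$ is an essentially surjective-on-a-generating-set, colimit-preserving functor, the gap between ``objectwise agreement'' and ``agreement of functors'' is bridged by the uniqueness in the universal property, but writing this carefully is where the real content lies.
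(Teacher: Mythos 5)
Your approach is essentially the same as the paper's: both invoke the universal property of $\Sp_G$ as the inversion of representation spheres in $\Spc_{G,*}$ (via \cite{gepner2020equivariant}*{Corollary C.7}), both rely on Proposition~\ref{prop-suspensions-agree} to identify the vertical maps $\Sigma^\infty_G$, and both reduce to the unstable Proposition~\ref{prop-restriction} to identify $\Spc_\alpha$ with $\alpha^*$. The paper simply packages this as a commuting diagram $\Spc_G \to \Spc_{G,*} \to \Sp_G$ (and likewise for $H$), observes that $\Sp_\alpha$ is determined by $\Spc_{\alpha,*}$, that $\Spc_{\alpha,*}$ is determined by $\Spc_\alpha$ via \cite{HA}*{Proposition 4.8.2.11}, and then closes with Proposition~\ref{prop-restriction}.

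One remark on your final paragraph: the corollary only asserts what $\Sp_\bullet$ does on objects and on individual morphisms, i.e.\ an objectwise statement about the diagram, not an identification of $\Sp_\bullet$ with some a priori constructed coherent functor $\Glo^{\op}\to\Cat_\infty^\otimes$. The ``gap between objectwise agreement and agreement of functors'' you highlight is therefore not a gap in the claimed statement — that coherent identification would be a genuinely stronger result, and the paper deliberately does not attempt it here (and correctly does not need to). Your phrasing of $\Sp_G$ as a ``localization'' of $\Spc_{G,*}$ is also loose: freely inverting the representation spheres is not a Bousfield localization of $\Spc_{G,*}$; the localization in Proposition~\ref{prop-transformation-localization-spectra} is of $\PSp_G$, with $\Spc_{G,*}\to\Sp_G$ being a composite through that. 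But these do not affect the correctness of your objectwise argument, which matches the paper's.
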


\begin{proof}
	Consider the commutative diagram
	\[
	\begin{tikzcd}
		\Sp_G \arrow[r,"\Sp_\alpha"] & \Sp_H \\
		\Spc_{G,\ast}\arrow[u,"\Sigma^\infty_G"] \arrow[r,"\Spc^{\alpha}_{\ast}"'] & \Spc_{H,\ast}
		\arrow[u,"\Sigma^\infty_H"'] \\
		\Spc_{G} \arrow[u,"(-)_+"] \arrow[r,"\Spc^\alpha"'] & \Spc_{H}\arrow[u,"(-)_+"']
	\end{tikzcd}
	\]
	of symmetric monoidal functors. By the universal property of 
	$G$-spectra~\cite{gepner2020equivariant}*{Corollary C.7}, the functor $\Sp_\alpha$ 
	is uniquely determined 	by $\Spc_{\alpha,\ast}$, 
	and this is completely determined by $\Spc_{\alpha}$ by~\cite{HA}*{Proposition 4.8.2.11}. 
	Finally, Proposition~\ref{prop-restriction} identifies the functor $\Spc^{\alpha}$ with 
	$\alpha^*$. 
\end{proof}

\begin{remark}
	Note that the argument of Corollary \ref{cor:funct-Sp_bullet} in facts shows that the natural transformation $\Sigma_{\bullet,\ast}\colon \Spc_{\bullet,\ast}\rightarrow \Sp_\bullet$ admits a universal property. This forces $\Sp_\bullet$ to coincide with the construction of \cite{norms}*{Section~9} on the subcategory of $\Glo$ spanned by finite groups. This suggests a possible comparison between ultracommutative $\Fin$-global ring spectra in the sense of \cite{Schwede18} and normed spectra in the sense of \cite{norms}. 
\end{remark}
 
We have now constructed $\Sp_\bullet$ and shown that it agrees with the standard functoriality of equivariant spectra. 
We will write $\Splax$ for the partially lax limit $\laxlimdag \Sp_\bullet$. We would like to describe $\Splax$ as a Bousfield localization of $\PSplax$ by applying Lemma~\ref{lem:adjunction-to-the-marked-lax-limit}. To do this requires the following two lemmata.

\begin{proposition}\label{lemma:projection-formula-for-injectives}
	Let $\alpha\colon H\to G$ be an injective group homomorphism. Then the functor $\alpha^*\colon\DgSpc{\OR_G}\to\DgSpc{\OR_H}$ has a left adjoint $\alpha_!$. Moreover, under the identification of Theorem~\ref{thm-presheaf-I-G-spaces} the adjunction $\alpha_!\dashv\alpha^*$ corresponds to the Quillen adjunction $G\times_H- \dashv \alpha^*$ of Proposition~\ref{prop-change-of-groups-functors}.
	
	In particular for $X\in \DgSpc{\OR_H}$ and $Y\in \DgSpc{\OR_G}$ the comparison map
	\[\alpha_!(X\otimes \alpha^*Y)\to \alpha_!X \otimes Y\]
	adjoint to $X\otimes\alpha^*Y\to \alpha^*\alpha_!X\otimes\alpha^*Y$ is an equivalence.
\end{proposition}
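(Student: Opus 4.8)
The plan is to establish the existence of $\alpha_!$ and its model-categorical description first, and then deduce the projection formula as a formal consequence. First I would observe that $\alpha^*\colon \DgSpc{\OR_G}\to\DgSpc{\OR_H}$ is, by Construction~\ref{con-suspension-prespectra} and the identification $\DgSpc{\OR_G}\simeq \DgTop{\I}{G}[W_{lvl}^{-1}]$ of Theorem~\ref{thm-presheaf-I-G-spaces}, the derived functor of the right Quillen functor $\alpha^*\colon \DgTop{\I}{G}\to\DgTop{\I}{H}$. By Proposition~\ref{prop-change-of-groups-functors}(a), the pair $(G\times_\alpha-,\alpha^*)$ is a Quillen adjunction with respect to the level model structures, so passing to underlying $\infty$-categories (using \cite{Hinich}*{Proposition~1.5.1}) gives an adjunction $\alpha_!\dashv\alpha^*$ where $\alpha_!$ is the left derived functor of $G\times_\alpha-$; since $\alpha$ is injective, $G\times_\alpha-$ is simply induction $G\times_H-$. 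This identifies the adjunction as claimed. (One needs to be a little careful that $\alpha^*$ as appearing in $\DgSpc{\OR_\bullet}$ truly corresponds to the model-categorical $\alpha^*$; but this is precisely the content of the consistency discussion around Remark~\ref{rem:free_ORG_space} and the fact that $\alpha^*$ was defined via the cocartesian fibration $\ORgl^\otimes\to(\Glo^{\op})^\amalg$, which classifies restriction of representations.)

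Next, for the projection formula, I would argue as follows. Since $\alpha$ is injective, the left adjoint $\alpha_!=G\times_H-$ is modelled by a left Quillen functor which is moreover \emph{strong} monoidal up to the diagonal — more precisely, by Equation~(\ref{eq-tensor}), on free generators we have $(G\times_H\Li_V)\otimes(G\times_H\Li_W)\cong \Delta^*(G\times G\times_{H\times H}\Li_{V\oplus W})$, and one checks that $\alpha_!$ together with the counit of $\Delta^*$ exhibits the relevant projection data. A cleaner route: the comparison map $\alpha_!(X\otimes\alpha^*Y)\to\alpha_!X\otimes Y$ is a map of functors in $X$ which preserve colimits in each variable (both $\alpha_!$ and $\otimes$ do), and $\DgSpc{\OR_H}$ is generated under colimits by the corepresentables $\Map_{\OR_H}((K,V),-)$. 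So it suffices to check the map is an equivalence when $X$ is corepresentable and $Y$ is corepresentable. By the identification above, $\alpha_!$ sends the corepresentable at $(K,V)$ (for $K\le H$) to the corepresentable at $(K,V)$ viewed in $\OR_G$ (since $K$ is also a subgroup of $G$ via $\alpha$, being injective), and $\alpha^*$ of a corepresentable at $(L,W)$ decomposes according to Lemma~\ref{lem:fiber_OR_gl->Glo} / Proposition~\ref{prop:glo_hom_spaces}. The required equivalence then reduces to a statement about double cosets and equivariant isometry spaces which follows from the explicit description of the mapping spaces in $\OR_G$ and $\OR_H$ from Definition~\ref{def-OR_G}.

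The main obstacle I anticipate is the bookkeeping in the last reduction: one must verify that for $K\le H\le G$ the natural map
\[
(G\times_K \Li(V,W))^L \longrightarrow (G\times_H (H\times_K\Li(V,W)))^L
\]
— where the right-hand side is built from $\alpha_!$ of an $\OR_H$-corepresentable tensored with a $G$-corepresentable — is a homeomorphism, which is essentially the associativity $G\times_H(H\times_K-)\cong G\times_K-$ of induction together with the compatibility of fixed points and the twisted equivariance conditions of Lemma~\ref{lem-criterion}. This is elementary but fiddly. An alternative, perhaps smoother, approach that sidesteps the explicit computation: observe that $\alpha_!\dashv\alpha^*$ is a Quillen adjunction in which $\alpha^*$ is also \emph{strong symmetric monoidal} (it is, being restriction along a map of promonoidal categories and preserving the tensor $(G,V)\otimes(H,W)=(G\times H,V\oplus W)$ strictly). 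A strong monoidal right adjoint whose left adjoint exists automatically satisfies the projection formula by a general Frobenius-reciprocity argument — one formal way to see this is that $\alpha_!$ is then oplax monoidal, and the oplax structure map $\alpha_!(X\otimes\alpha^*Y)\to\alpha_!X\otimes\alpha_!\alpha^*Y\to\alpha_!X\otimes Y$ agrees with the stated comparison map; checking it is an equivalence still reduces to generators, but now the generator computation is exactly the statement that $\alpha_!$ is strong monoidal on the corepresentables at subgroups of $H$, which is immediate from Equation~(\ref{eq-tensor}) since $\alpha$ injective makes $G\times G\times_{H\times H}\Li_{V\oplus W}$ and the diagonal behave transparently. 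I would present the proof using whichever of these two routes produces the least notational overhead, most likely the second, citing Equation~(\ref{eq-tensor}) and Corollary~\ref{cor:tensor-in-Day-convolution} for the monoidal structure on corepresentables.
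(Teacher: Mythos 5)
Your identification of $\alpha_!$ with derived induction $G\times_H-$ is in the right spirit, but is less cleanly justified than the paper's: the paper exploits that for \emph{injective} $\alpha$ the functor $\alpha^*$ is literally precomposition along a functor $p_\alpha\colon\OR_H\to\OR_G$ (a consequence of Remark~\ref{rem:pushforward-in-exponential} applied to a \emph{cartesian} slice $\OR_\alpha\to[1]$), so that $\alpha_!$ is manifestly left Kan extension along $p_\alpha$, and then identifies this with $G\times_H-$ on corepresentable generators. Appealing to Proposition~\ref{proposition:formulas-for-alpha*} as you do is circumstantial: it confirms compatibility of $\alpha^*$ with $\Li_V$ and $\mathrm{ev}_V$ but does not by itself pin down $\alpha^*$ as the derived model-categorical restriction.

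For the projection formula, the paper's argument is substantially simpler than either of your routes and is worth internalizing: the map $G\times_H(X\otimes\alpha^*Y)\to(G\times_HX)\otimes Y$ is already an \emph{isomorphism at the point-set level} in $\DgTop{\I}{G}$ (classical Frobenius reciprocity for equivariant objects, sending $[g,(x,y)]\mapsto([g,x],gy)$), and since $G\times_H-$ is left Quillen and $\otimes$ is a left Quillen bifunctor by Proposition~\ref{prop-level-sym-mon}, the derived formula follows immediately. No reduction to generators is needed.

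Your second route contains a genuine gap. You write that the comparison map factors as
\[
\alpha_!(X\otimes\alpha^*Y)\to\alpha_!X\otimes\alpha_!\alpha^*Y\to\alpha_!X\otimes Y
\]
and that the generator check ``is exactly the statement that $\alpha_!$ is strong monoidal on the corepresentables.'' This is false: $\alpha_!=G\times_H-$ is \emph{not} strong monoidal. Already for $X=Z=\ast$ one has $\alpha_!(\ast\otimes\ast)=G\times_H\ast$ corresponding to $G/H$, whereas $\alpha_!\ast\otimes\alpha_!\ast$ corresponds to $G/H\times G/H$, and these differ whenever $H\neq G$. In the factored map neither arrow is individually an equivalence, only the composite, so the verification cannot be reduced to a monoidality statement about $\alpha_!$ alone. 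Your first route also drifts: the displayed map $(G\times_K\Li(V,W))^L\to(G\times_H(H\times_K\Li(V,W)))^L$ encodes only the associativity $G\times_H(H\times_K-)\cong G\times_K-$ of induction, which is a different (and easier) fact than the projection formula, where the $\otimes Y$ factor --- and the resulting double-coset decomposition of $\alpha^*Y$, which is not corepresentable --- carries the actual content. That double-coset computation is precisely what the paper's point-set observation lets one bypass.
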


\begin{proof}
	By the description of Remark~\ref{rem:pushforward-in-exponential} and Lemma~\ref{lemma:cartesian-arrows-over-Orb} it follows that $\alpha^*\colon\DgSpc{\OR_H}\to\DgSpc{\OR_G}$ is given by precomposition along the functor $p_\alpha\colon\OR_H\to\OR_G$ obtained by basechange from $\OO_H^{\op}\to \OO_G^{\op}$. In particular it has a left adjoint $\alpha_!$ given by left Kan extension along $p_\alpha$.
	
	In the proof of Theorem~\ref{thm-presheaf-I-G-spaces} we have constructed a functor 
	$L_H \colon \OR_H^{\op} \to \DgTop{\I}{H}[W_{lvl}^{-1}] $ sending $(K,W)$ to $H \times_K \Li_W$.  We claim there is a commutative diagram 
	\[
	\begin{tikzcd}
		\OR_H^{\op} \arrow[d,"p_\alpha"'] \arrow[rr, bend left, "L_H"']\arrow[r,"\mathrm{Yoneda}"] & \DgSpc{\OR_H} \arrow[d,"\alpha_!"] \arrow[r,"\sim"] & \DgTop{\I}{H}[W_{lvl}^{-1}] \arrow[d,"G \times_H - "]\\
		\OR_G^{\op} \arrow[rr, bend right, "L_G"]\arrow[r,"\mathrm{Yoneda}"'] & \DgSpc{\OR_G} \arrow[r,"\sim"] & \DgTop{\I}{G}[W_{lvl}^{-1}]
	\end{tikzcd}
	\]
	where the horizontal equivalences are given by Theorem~\ref{thm-presheaf-I-G-spaces}.
	The diagram on the left commutes by the universal property of presheaf categories and the outer square commutes by direct verification using the formulas of $L_G$ and $L_H$. Therefore a generation argument using that all the functors preserve colimits, shows that the rightmost diagram commutes too.  The right most vertical functor can be modelled by a left Quillen functor by Proposition~\ref{prop-change-of-groups-functors} so the first claim follows.
	
	Finally, since the map
	\[G\times_H (X\otimes Y)\to (G\times_H X)\otimes Y\]
	is an isomorphism in $\DgTop{\I}{G}$, it follows that the derived formula holds as well.
\end{proof}

\begin{lemma}\label{lem:PSp_alpha-respects-Omega-spectra}
	Let $\alpha\colon H\to G$ be an injective homomorphism of compact Lie groups. Then $\PSp_\alpha\colon \PSp_G\to \PSp_H$ sends $\Sp_G$ into $\Sp_H$.
\end{lemma}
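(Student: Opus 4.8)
The plan is to use the characterization of $\Sp_G\subseteq\PSp_G$ from Proposition~\ref{prop-Sp_G-local}: an object $X\in\PSp_G$ lies in $\Sp_G$ if and only if it is local with respect to the maps $G\ltimes_K\lambda_{V,W}$ for all compact subgroups $K\leq G$ and $K$-representations $V,W$. So, fixing $X\in\Sp_G$, we must show that $\PSp_\alpha X=\alpha^*X$ is local with respect to $H\ltimes_L\lambda_{V',W'}$ for all compact subgroups $L\leq H$ and $L$-representations $V',W'$. Since $\alpha$ is injective we may identify $H$ with a closed subgroup of $G$ (and $L$ with a compact subgroup of $G$), so that $\alpha^*X$ as an $\OR_H$-space is obtained by restriction along the functor $p_\alpha\colon\OR_H\to\OR_G$ described in Proposition~\ref{lemma:projection-formula-for-injectives}.

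First I would reduce the locality condition to an adjointness statement. By the adjunction $H\ltimes_L(-)\dashv\mathrm{res}^H_L$ (equivalently, by the last sentence of Proposition~\ref{prop-Sp_G-local}), $\alpha^*X$ is $\{H\ltimes_L\lambda_{V',W'}\}$-local if and only if $\mathrm{res}^H_L\alpha^*X$ is $\{\lambda_{V',W'}\}$-local in $\PSp_L$ for every compact $L\leq H$. But $\mathrm{res}^H_L\circ\alpha^*\simeq(\alpha|_L)^*\simeq\mathrm{res}^G_L$ by functoriality of $\PSp_\bullet$, where $L\hookrightarrow H\xrightarrow{\alpha}G$ is the composite inclusion. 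Since $X\in\Sp_G$, Proposition~\ref{prop-Sp_G-local} tells us precisely that $\mathrm{res}^G_L X$ is $\{\lambda_{V',W'}\}$-local in $\PSp_L$. Hence $\mathrm{res}^H_L\alpha^*X$ is local for every compact $L\leq H$, and therefore $\alpha^*X\in\Sp_H$.

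Actually the argument can be phrased even more directly, avoiding the reduction to $L$: for a compact subgroup $L\leq H$ the map $H\ltimes_L\lambda_{V',W'}$ in $\PSp_H$ is adjoint (via $\mathrm{res}^H_L$) to $\lambda_{V',W'}$, and $\Map_{\PSp_H}(H\ltimes_L\lambda_{V',W'},\alpha^*X)\simeq\Map_{\PSp_L}(\lambda_{V',W'},\mathrm{res}^H_L\alpha^*X)\simeq\Map_{\PSp_L}(\lambda_{V',W'},\mathrm{res}^G_L X)$, which is an equivalence since $X\in\Sp_G$. The only genuinely substantive input here, beyond Proposition~\ref{prop-Sp_G-local}, is the identity $\mathrm{res}^H_L\circ\PSp_\alpha\simeq\PSp_{\alpha\circ\iota_L}$ of restriction functors on prespectra, which holds because $\PSp_\bullet$ is a functor on $\Glo^{\op}$ (Proposition~\ref{proposition:functoriality-of-prespectra}) and $\alpha\circ\iota_L$ is the composite inclusion; I would also invoke that $\PSp_\alpha$ is a left adjoint with right adjoint $\Map^\alpha(G,-)$ or at least that the $\infty$-categorical restriction agrees with the model-categorical one, as established in Section~\ref{sec-cat-of-eq-prespectra}.

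I do not expect a serious obstacle; the main point requiring care is bookkeeping with the adjunctions $H\ltimes_L(-)\dashv\mathrm{res}^H_L$ and checking that the various restriction functors compose as expected at the level of $\infty$-categories (rather than just homotopy categories). This is exactly the kind of coherence that Proposition~\ref{proposition:functoriality-of-prespectra} and the constructions of Section~\ref{sec:funct_prespectra} are designed to provide, so it should reduce to citing those results. The cleanest write-up simply combines Proposition~\ref{prop-Sp_G-local} (second formulation) with the functoriality $\mathrm{res}^H_L\circ\alpha^*\simeq\mathrm{res}^G_L$.
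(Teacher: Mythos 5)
Your proof is correct, and it takes a genuinely different and substantially simpler route than the paper's. The paper's proof works via the \emph{left} adjoint $L_\alpha$ of $\PSp_\alpha$: it observes that $\PSp_\alpha \simeq \alpha^*$ because $\alpha^*S_G\simeq S_H$, invokes the projection formula from Proposition~\ref{lemma:projection-formula-for-injectives} to get $L_\alpha(X\otimes\PSp_\alpha Y)\simeq L_\alpha X\otimes Y$, and then shows that $L_\alpha$ preserves stable equivalences by a cofinality argument (replacing the generating equivalences $H\times_M\lambda_{V,W}$ with ones where $W$ is restricted from $G$, then factoring via the projection formula). Your argument bypasses the projection formula and the cofinality step entirely: it uses only the second (``adjointed'') characterization from Proposition~\ref{prop-Sp_G-local} and the composability $\mathrm{res}^H_L\circ\PSp_\alpha\simeq\mathrm{res}^G_{\alpha(L)}$, observing that the locality conditions defining $\Sp_H$ for $\alpha^*X$ are literally a subset of those defining $\Sp_G$ for $X$. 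This is cleaner and more transparent.

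The one place where you should be more explicit is the identification of the $\infty$-categorical functor $\PSp_\alpha$ with the model-categorical restriction $\mathrm{res}^G_H$ appearing in Proposition~\ref{prop-Sp_G-local}. You correctly flag that this agreement is the substantive input, but you phrase the key identity as $\mathrm{res}^H_L\circ\PSp_\alpha\simeq\PSp_{\alpha\circ\iota_L}$, which is ``pure'' functoriality of $\PSp_\bullet$ and by itself does not close the argument: what you really need in addition is that $\PSp_{\beta}$ agrees with the model-categorical restriction $\mathrm{res}$ for \emph{every} injective $\beta$ between compact groups (applied to both $\iota_L$ and $\alpha\circ\iota_L$), so that Proposition~\ref{prop-Sp_G-local} and the functoriality of $\PSp_\bullet$ are talking about the same functors. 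This follows from the observation, which the paper's proof opens with, that $\PSp_\alpha$ sends $X$ to $S_H\otimes_{\alpha^*S_G}\alpha^*X\simeq\alpha^*X$ when $\alpha$ is injective, together with Proposition~\ref{lemma:projection-formula-for-injectives}. Once you state that identification explicitly, your proof is complete and requires neither the projection formula nor the cofinality argument of the paper.
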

\begin{proof}
	Note that $\PSp_\alpha$ sends $X$ to $S_H\otimes_{\alpha^*S_G}\alpha^*X\simeq \alpha^*X$, since $\alpha$ is injective. Therefore $\PSp_\alpha$ preserves all small limits and colimits, since $\alpha^*$ does, and so it has a left adjoint $L_\alpha$. Moreover, by Lemma~\ref{lemma:projection-formula-for-injectives} there is an equivalence
	\[L_\alpha(X\otimes \PSp_\alpha Y)\simeq L_\alpha(X)\otimes Y\,.\]
	To prove that $\alpha^*(\Sp_G)\subseteq \Sp_H$ it suffices to show that $L_\alpha$ preserves stable equivalences. By confinality the stable equivalences in $\Sp_H$ are generated by those of the form $H\times_M \lambda_{V,W|_M}$ where $M<H$ is a closed subgroup, $V$ is an $M$-representation and $W$ is a $G$-representation. But then
	\[L_\alpha(H\times_M \lambda_{V,W|_M})\simeq L_\alpha((H\times_M F_VS^0)\otimes\alpha^*\lambda_{0,W|_H})\simeq L_\alpha(H\times_M F_VS^0) \otimes \lambda_{0,W}\,.\]
	Since stable equivalences are stable under tensoring and $\lambda_{0,W}$ is a stable equivalence, this proves the thesis.
\end{proof}

Given a compact Lie group $G\in \Glo$, we denote by $\Fgtglo{G}\colon \PSplax \rightarrow \PSp_G$ the canonical functors associated to the universal cone.

\begin{proposition}\label{prop-Lgl-localization}
	The $\infty$-category $\Splax$ is a Bousfield localization of $\PSplax$. 
	We denote the associated left adjoint by $\Llax\colon \PSplax \rightarrow \Splax$.
	Furthermore, the following conditions are equivalent for an object $X \in\PSplax$:
	\begin{itemize}
		\item[(a)] $X$ is in $\Splax$;
		\item[(b)] for every compact Lie group $G$, the $G$-prespectrum $\Fgtglo{G}(X)$ is in $\Sp_G$;
		\item[(c)] for every compact Lie group $G$, the $G$-prespectrum $\Fgtglo{G}X$ is local with respect to the maps $\lambda_{V,W}$ defined in Construction~\ref{cons-lambda-maps-equivariant} for any $G$-representations $V$ and $W$.
	\end{itemize}  
\end{proposition}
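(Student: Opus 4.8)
The plan is to apply Lemma~\ref{lem:adjunction-to-the-marked-lax-limit} to the diagram $\PSp_\bullet$ together with the levelwise Bousfield localizations $\Sp_G\subseteq\PSp_G$ established in Proposition~\ref{prop-Sp_G-local}, with the marking on $\Glo^{\op}$ being the one on $\Orb^{\op}$. This requires verifying the two hypotheses: first, that for every group homomorphism $\alpha\colon H\to G$ the pushforward $\PSp_\alpha\colon\PSp_G\to\PSp_H$ sends $L_G$-equivalences to $L_H$-equivalences; second, that for every \emph{marked} (i.e.\ injective) edge $\alpha$, the functor $\PSp_\alpha$ sends the reflexive subcategory $\Sp_G$ into $\Sp_H$. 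The first of these is precisely what was proved in Proposition~\ref{prop-transformation-localization-spectra} (there we showed $\PSp_\alpha$ preserves stable equivalences, which is exactly the statement that it carries $L_G$-equivalences to $L_H$-equivalences). The second is exactly Lemma~\ref{lem:PSp_alpha-respects-Omega-spectra}. Given these inputs, Lemma~\ref{lem:adjunction-to-the-marked-lax-limit} immediately produces the fully faithful right adjoint to $\laxlimdag_{\Glo^{\op}}L_\bullet\colon \PSplax\to\Splax$, hence exhibits $\Splax$ as a Bousfield localization of $\PSplax$; we set $\Llax\coloneqq\laxlimdag_{\Glo^{\op}}L_\bullet$.

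It remains to identify the local objects, i.e.\ to prove the equivalence of (a), (b) and (c). The implication $(b)\Leftrightarrow(c)$ is immediate from Proposition~\ref{prop-Sp_G-local}, which says precisely that $\Fgtglo{G}X\in\Sp_G$ if and only if it is $\{\lambda_{V,W}\}$-local (applied with the group $G$ itself in the role of the ambient Lie group, so that ``compact subgroups $H\le G$'' specializes and one reduces via the second half of that proposition to the $\lambda_{V,W}$ for $G$-representations $V,W$). For $(a)\Leftrightarrow(b)$, recall from Theorem~\ref{thm-lax-limit-section}(a) that $\PSplax$ is the $\infty$-category of marked sections of the cocartesian fibration $\Unco{\PSp_\bullet}\to(\Glo^{\op})^\dagger$, and under this description $\Fgtglo{G}$ is evaluation of a section at $G$. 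Unwinding the proof of Lemma~\ref{lem:adjunction-to-the-lax-limit}, the subcategory $\laxlimdag\Sp_\bullet\subseteq\laxlimdag\PSp_\bullet$ corresponds to the full subcategory $\CE\subseteq\Unco{\PSp_\bullet}$ spanned fibrewise by the objects of $\Sp_G\subseteq\PSp_G$; thus a section lands in $\laxlimdag\Sp_\bullet$ if and only if its value at every $G$ lies in $\Sp_G$, which is exactly condition (b). (Here one uses that $\Splax$, being a fully faithful suboperad of $\laxlim\Sp_\bullet$ by Remark~\ref{rem-partially-lax-underlying}, is detected on underlying categories, and that $\laxlim\Sp_\bullet$ is computed as sections by Theorem~\ref{thm-lax-limit-section}.)

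I expect the main subtlety to be purely bookkeeping rather than conceptual: namely making sure that the marking condition in Lemma~\ref{lem:adjunction-to-the-marked-lax-limit}---that $\PSp_\alpha$ sends $\Sp_G$ into $\Sp_H$ for \emph{injective} $\alpha$---is genuinely all that is needed, and that one is not implicitly asserting this for all $\alpha$ (which would be false, since for a non-injective quotient map the restriction of an $\Omega$-spectrum need not be an $\Omega$-spectrum on the nose at the prespectrum level---this is the whole reason the localization is only lax and not strict). Lemma~\ref{lem:PSp_alpha-respects-Omega-spectra} handles exactly the injective case, so this is fine, but the write-up should be careful to invoke it only there. A second minor point is the compatibility of the identification of $\CE$ with $\Unco{\Sp_\bullet}$ with the marked structures, i.e.\ that the cocartesian edges of $\CE$ over injective edges of $\Glo^{\op}$ are again cocartesian in $\Unco{\Sp_\bullet}$; but this is precisely the content of the marking hypothesis in Lemma~\ref{lem:adjunction-to-the-marked-lax-limit} and was dealt with in its proof. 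Everything else is a direct citation of the lemmata already in place.
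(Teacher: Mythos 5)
Your handling of the first claim (that $\Splax$ is a Bousfield localization of $\PSplax$) and of the equivalence $(a)\Leftrightarrow(b)$ follows the paper's route exactly: apply Lemma~\ref{lem:adjunction-to-the-marked-lax-limit}, verify its hypotheses via Proposition~\ref{prop-transformation-localization-spectra} (pushforwards preserve stable equivalences) and Lemma~\ref{lem:PSp_alpha-respects-Omega-spectra} (injective restrictions preserve $\Omega$-spectra), and read off the local objects fibrewise from the construction in the proof of Lemma~\ref{lem:adjunction-to-the-lax-limit}. Your discussion of why the marking hypothesis is exactly right (and why it would fail for non-injective $\alpha$) is a correct and useful clarification that the paper leaves implicit.

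The gap is in $(b)\Leftrightarrow(c)$. You assert that Proposition~\ref{prop-Sp_G-local} ``says precisely that $\Fgtglo{G}X\in\Sp_G$ if and only if it is $\{\lambda_{V,W}\}$-local'' for $G$-representations $V,W$. It does not. For a compact Lie group $G$ it says that $\Fgtglo{G}X\in\Sp_G$ if and only if $\mathrm{res}^G_H\Fgtglo{G}X$ is $\{\lambda_{V,W}\}$-local for \emph{all} compact subgroups $H\leq G$ and $H$-representations $V,W$; locality at the $G$-level alone is strictly weaker (it gives an $\Omega$-spectrum condition only on $G$-fixed points, not on $H$-fixed points). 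Your parenthetical claim that ``one reduces via the second half of that proposition to the $\lambda_{V,W}$ for $G$-representations'' is precisely where the argument fails: no such reduction exists for a fixed $G$. The equivalence $(b)\Leftrightarrow(c)$ holds only because both conditions quantify over \emph{all} compact Lie groups, and because the restriction functors from the partially lax limit satisfy $\Fgtglo{H}=\mathrm{res}^G_H\circ\Fgtglo{G}$ for $H\leq G$. That identity is what lets you trade the inner quantification over compact subgroups $H\leq G$ in Proposition~\ref{prop-Sp_G-local} for the outer quantification over compact Lie groups in condition (c). The paper invokes this explicitly; your write-up omits it and, as stated, the inference does not go through.
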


\begin{proof}
	Recall that $\Sp_{\bullet}$ was constructed in Proposition \ref{prop-transformation-localization-spectra} by localizing the functor $\PSp_\bullet$ using Lemma \ref{lem:adjunction-to-the-marked-lax-limit}. By the same lemma together with Lemma~\ref{lem:PSp_alpha-respects-Omega-spectra}, we conclude that $\Splax$ is a Bousfield localization and that conditions (a) and (b) are equivalent. By Proposition~\ref{prop-Sp_G-local}, condition (b) is equivalent to the condition that for every compact Lie group $G$ 
	and closed subgroup $H\leq G$, the $H$-prespectrum $\mathrm{res}^G_H\Fgtglo{G}X$ is local with respect to the maps $\{\lambda_{V,W}\}$ where $V$ and $W$ over all $H$-representations. By construction we have $\Fgtglo{H} = \mathrm{res}^G_H\circ \Fgtglo{G}$ so (b) and (c) are equivalent. 
\end{proof}

\section{Global spectra as a partially lax limit}\label{sec:global-spectra-par-lax-lim}

 Recall the functors $\PSp_\bullet,\Sp_\bullet\colon \Glo^{\op} \rightarrow \Cat^\otimes_\infty$ constructed in 
 Propositions~\ref{proposition:functoriality-of-prespectra} 
 and~\ref{prop-transformation-localization-spectra}. We also defined
 \[
 \PSplax:= \laxlimdag_{\Glo^{\op}}\PSp_G \quad \text{and} \quad \Splax := \laxlimdag_{\Glo^{\op}} \Sp_G.
 \]
    The goal of this section is to show that $\Splax$ is symmetric monoidally equivalent to 
    Schwede's $\infty$-category of global spectra $\Spgl$, whose definition is recalled in Definition~\ref{def-SpG-Spgl}.  Our proof will go roughly as follows:
    \begin{itemize}
    \item We will first construct a symmetric monoidal adjunction
    \[
    j_{!}\colon \PSpfgl \simeq \Mod_{\Sfgl}( \DgSpcp{\ORfgl}) \leftrightarrows \Mod_{\Slax}( \DgSpcp{\ORgl}) \simeq \PSplax \cocolon j^*
    \]
    between prespectra objects, where the equivalences are given by 
    Proposition~\ref{proposition:functoriality-of-prespectra} and Corollary~\ref{cor:PSpgl=PSpfgl}.
    \item We note that there are Bousfield localizations $\Spgl \subset \PSpfgl$ and 
    $\Splax \subset\PSplax$. We denote by $\Llax\colon \PSplax \rightarrow \Splax$ the localization functor.
    \item We will then check that $j^*$ preserves spectrum objects, and therefore obtain an 
    induced adjunction 
    \[L_{gl}\circ j_! \colon \Spgl\leftrightarrows \Splax \cocolon j^*\] between the 
    respective localizations. 
    \item We will show that this adjunction is in fact an equivalence, by showing that $j^*$ 
    is conservative on spectrum objects, and that the unit of the adjunction 
    $(L_{gl}\circ j_!, j^*)$ is an equivalence. 
    \end{itemize}

 We start by constructing an adjunction between prespectrum objects. By Lemma \ref{lem:faithful_sub_ORgl} we can identify $\ORf$ with the full subcategory of $\ORgl$ spanned by $(G,V)$, where $V$ is a faithful $G$-representation. Then the canonical inclusion $j\colon \ORf\hookrightarrow \ORgl$ induces an adjunction
\[
j_{!}\colon\DgSpcp{\ORfgl} \leftrightarrows \DgSpcp{\ORgl}\cocolon j^*.
\]
Note that $j_!$ is fully faithful as it is given as a left Kan extension along a fully faithful functor. Moreover the functor $j_{!}$ is strong monoidal by Proposition \ref{proposition:functoriality-of-presheaves}.

\begin{proposition}
	The inclusion $j\colon\ORf\hookrightarrow \ORgl$ admits a right adjoint $q$, which is given on objects by
    \[(G,V)\mapsto (G/\ker(V),V)\,,\]
    where $\ker(V)<G$ is the subgroup of $g\in G$ acting trivially on $V$.    
    In particular the left Kan extension $j_!$ is equivalent to the functor $q^*$ given by precomposition by $q$.
\end{proposition}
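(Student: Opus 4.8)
The plan is to exhibit $q\colon \ORgl\to \ORf$ directly as a functor of topological categories and verify the adjunction $j\dashv q$ by producing the unit and counit and checking the triangle identities, or equivalently by exhibiting a natural equivalence of mapping spaces
\[
\Map_{\ORf}((H,W),(G/\ker(V),V))\simeq \Map_{\ORgl}((H,W),(G,V))
\]
for all $(H,W)\in\ORf$ (i.e. $W$ a faithful $H$-representation) and all $(G,V)\in\ORgl$. First I would define $q$ on objects by $(G,V)\mapsto (G/\ker(V),V)$, noting that $V$ is canonically a faithful $G/\ker(V)$-representation, so the target indeed lies in $\ORf$. On morphisms, I would use the defining formula
\[
\ORgl((G,V),(G',V'))=\left|(\Hom(G',G)\times\Li(V,V'))^{G'}\sslash G\right|
\]
and observe that an $H$-equivariant isometry $W\to V$ (with $H$ acting on $V$ through some homomorphism $H\to G$) only depends on the composite $H\to G\to G/\ker(V)$, since $\ker(V)$ acts trivially on $V$; this is how $q$ acts on morphism spaces. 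The key point, parallel to the argument in Lemma~\ref{lem:faithful_sub_ORgl}, is that the quotient map $\Hom(H,G)\to \Hom(H,G/\ker(V))$ restricted to those homomorphisms under which a given isometry $\varphi\colon W\to V$ is equivariant is a principal $\ker(V)$-bundle (using that $W$ is a faithful $H$-representation forces the lift of a homomorphism to be unique once we know it lands in $G/\ker(V)$, wait—no: here the subtlety is reversed, the relevant fibers are $\ker(V)$-torsors because $\ker(V)$ acts trivially on $V$ hence does not constrain $\varphi$). Making this bundle statement precise and identifying the induced map on geometric realizations of action groupoids with a homotopy equivalence is the technical heart.

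More concretely, I would fix $(H,W)\in\ORf$ and $(G,V)\in\ORgl$, write $\bar G=G/\ker(V)$ and $\bar p\colon G\to\bar G$, and compare
\[
\left|(\Hom(\bar G,H)\times\Li(V,W))^H\sslash \bar G\right|\quad\text{and}\quad\left|(\Hom(G,H)\times\Li(V,W))^H\sslash G\right|
\]
— but these are mapping spaces in the wrong direction, so instead I would work with $\Map_{\ORgl}((H,W),(G,V))$. I would show the map of topological groupoids
\[
(\Hom(G,H)\times\Li(W,V))^{?}\sslash\,?\ \longrightarrow\ (\Hom(\bar G,H)\times\Li(W,V))^{?}\sslash\,?
\]
induced by $\bar p_*$ is an equivalence of groupoids because its "fibers'' are $\ker(V)$-torsors: given $\alpha\colon H\to \bar G$ and an $\alpha$-equivariant isometry $W\to V$, the set of lifts $\tilde\alpha\colon H\to G$ of $\alpha$ is either empty or a torsor under $\operatorname{Hom}(H,\ker(V))$, but the equivariance condition on the isometry (which is $\ker(V)$-blind) is automatically satisfied by any lift, and a pointedness/connectivity argument in the action groupoid collapses this. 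The cleanest route is probably to bypass explicit groupoid manipulations and simply check the hom-space equivalence via the explicit descriptions of $\Map_{\ORgl}$ coming from Proposition~\ref{prop:glo_hom_spaces}, exactly as in Lemma~\ref{lem:fiber_OR_gl->Glo}: both sides decompose over conjugacy classes of homomorphisms, and on each summand the comparison is $B$ applied to an isomorphism of centralizers (since $\ker(V)$ lies in the kernel of the relevant action, passing to $\bar G$ does not change the homotopy-orbit space of isometries).

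Once $q$ is constructed and $j\dashv q$ is verified, the final sentence — that $j_!\simeq q^*$ — is immediate: $j_!$ is left adjoint to $j^*=$ precomposition by $j$, and $q^*=$ precomposition by $q$ is left adjoint to precomposition by $j$ precisely because $j\dashv q$ (restriction along a right adjoint is left adjoint to restriction along the corresponding left adjoint, by the usual formal argument using that $q$ with unit/counit induces a natural transformation $\mathrm{id}\Rightarrow j^*q^*$ and $q^*j^*\Rightarrow\mathrm{id}$ satisfying the triangle identities). I would also remark that this is consistent with the earlier observation (Proposition~\ref{proposition:functoriality-of-presheaves}(b)) that when $j$ has an operadic right adjoint, $j_!\simeq q^*$ as symmetric monoidal functors, which will be used subsequently. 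The main obstacle, as indicated, is the careful verification that $q$ is well-defined and continuous on morphism spaces and that the unit/counit are genuine natural transformations of topological categories — i.e. all the point-set bookkeeping around $\ker(V)$-torsors and geometric realizations of action groupoids; the formal adjunction yoga afterwards is routine.
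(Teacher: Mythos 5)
Your overall skeleton is right: construct $q$ on objects by $(G,V)\mapsto(G/\ker V,V)$, reduce the adjunction $j\dashv q$ to a natural equivalence of mapping spaces $\Map_{\ORgl}((H,W),(G/\ker V,V))\simeq\Map_{\ORgl}((H,W),(G,V))$ for $(H,W)\in\ORf$, and then deduce $j_!\simeq q^*$ by the standard formal argument (which is indeed Proposition~\ref{proposition:functoriality-of-presheaves}(b)). However, the technical heart of your argument is confused, and it is precisely there that the paper's proof is both simpler and actually correct.

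First, you have the direction of the group homomorphisms reversed. With the convention of Definition~\ref{definition:ORgl}, a morphism $(H,W)\to(G,V)$ in $\ORgl$ is parametrized by a group homomorphism $G\to H$ together with an isometry $W\to V$; the relevant $\Hom$-space in $\Map_{\ORgl}((H,W),(G,V))$ is $\Hom(G,H)$, not $\Hom(H,G)$. Your discussion of "lifts $\tilde\alpha\colon H\to G$ of a map $\alpha\colon H\to\bar G$" and "$\ker(V)$-torsors" is therefore addressing a lifting problem that never occurs. The comparison map is the \emph{injection}
\[\left(\Hom(G/\ker V,H)\times\Li(W,V)\right)^{G/\ker V}\hookrightarrow\left(\Hom(G,H)\times\Li(W,V)\right)^G\]
given by precomposition along the surjection $G\to G/\ker V$; there is no bundle structure and no torsor anywhere. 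The non-trivial content is \emph{surjectivity}, which is a factorization statement: given $\alpha\colon G\to H$ and a $G$-equivariant isometry $\varphi\colon W\to V$, any $g\in\ker V$ acts trivially on $V$, hence (by equivariance and injectivity of $\varphi$) $\alpha(g)$ acts trivially on $W$, and faithfulness of $W$ forces $\alpha(g)=1$. So $\alpha$ descends uniquely to $G/\ker V$. The paper then concludes immediately: the comparison is a continuous bijection of compact Hausdorff spaces, hence a homeomorphism, before even taking homotopy orbits — no need for groupoid-collapsing arguments, conjugacy-class decompositions, or Proposition~\ref{prop:glo_hom_spaces}.

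Second, your analogy with Lemma~\ref{lem:faithful_sub_ORgl} is what leads you astray. In that lemma the relevant map really is a principal $G$-bundle (faithfulness is used to show the lift of $[\varphi]$ to $(\alpha,\varphi)$ is \emph{unique}, i.e. the action is free). Here faithfulness of $W$ is used for a different purpose: to force the \emph{descent} of $\alpha$ to the quotient. Your mid-sentence hedge ("wait—no: here the subtlety is reversed") correctly senses that something is different, but the resolution is not a torsor whose fibers "collapse by a connectivity argument": it is that the fibers are genuinely singletons. The distinction matters because the elementary compact-Hausdorff-bijection argument is what makes the proof short and unambiguous, whereas the route you propose would require making precise a vague "pointedness/connectivity argument in the action groupoid" that, as stated, does not correspond to a true statement about the relevant maps.
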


\begin{proof}
	The $G/\ker(V)$-representation $V$ is clearly faithful, so to prove the thesis it is enough to show that for every $(H,W)\in \ORf$ the map $(G/\ker(V),V)\to (G,V)$ induces an equivalence on mapping spaces
    \[\Map_{\ORgl}\left((H,W),(G/\ker V)\right)\xrightarrow{\sim}\Map_{\ORgl}\left((H,W),(G,V)\right)\,.\]
    By Definition~\ref{definition:ORgl}, this means we need to show that the map
    \[\left(\Hom(G/\ker V,H)\times\Li(W,V)\right)^{G/\ker V}_{hH}\to \left(\Hom(G,H)\times\Li(W,V)\right)^G_{hH}\]
    given by precomposition with $G\to G/\ker V$ on the first coordinate, is a homotopy equivalence. In fact we will show that
    \[\left(\Hom(G/\ker V,H)\times\Li(W,V)\right)^{G/\ker V}\to \left(\Hom(G,H)\times\Li(W,V)\right)^G\]
    is a homeomorphism. Since it is a continuous map of compact Hausdorff topological spaces, it suffices to show that it is bijective. As $\Hom(G/\ker V,H)\to \Hom(G,H)$ is injective, so is the above map. Therefore to conclude we need to show it is surjective.
    
    Concretely this means that if we have a map $\alpha\colon G\to H$ and an isometry $\varphi\colon W\to V$ that is $G$-equivariant, we need to show that $\alpha$ is trivial when restricted to $\ker V$. But if $g\in \ker V$, then $g$ acts as the identity on $V$, and therefore $\alpha(g)$ acts as the identity on $W$ (since $\varphi$ is $G$-equivariant). Since $W$ is a faithful $H$-representation this implies that $\alpha(g)=1$, as required.
\end{proof}

Note that it is clear from the definitions that $j^* \Slax \simeq \Sfgl$ as commutative algebra objects. As an application of the previous proposition we find:

\begin{corollary}\label{cor:glSvsftS}
The counit map $\epsilon\colon j_{!}\Sfgl \rightarrow \Slax$ is an equivalence of commutative algebra objects. In particular the functors $j_{!} \dashv j^*$ induce an adjunction
\[j_{!}\colon \PSpfgl \simeq \Mod_{\Sfgl} (\DgSpcp{\ORfgl} )\leftrightarrows \Mod_{\Slax}( \DgSpcp{\ORgl}) \simeq \PSplax \cocolon j^*\]
\end{corollary}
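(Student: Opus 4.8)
The plan is to deduce the statement in two moves: first identify the counit $\epsilon\colon j_!\Sfgl\to \Slax$ as an equivalence of commutative algebra objects, and then transport the adjunction $j_!\dashv j^*$ on presheaf categories to an adjunction on module categories via this identification. For the first move, recall that by the previous proposition $j_!\simeq q^*$, where $q\colon \ORgl\to \ORfgl$ sends $(G,V)$ to $(G/\ker(V),V)$ and is right adjoint to the inclusion $j$. Since $j_!$ (being a left Kan extension along a fully faithful functor) is fully faithful, the unit $\mathrm{id}\to j^*j_!$ is an equivalence, and in particular $j^*j_!\Sfgl\simeq \Sfgl$. The key point is therefore to check that $\epsilon\colon j_!\Sfgl\to \Slax$ is an equivalence in $\DgSpcp{\ORgl}$, not merely that it becomes one after applying $j^*$. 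Since $j_!\simeq q^*$, we have $(j_!\Sfgl)(G,V)\simeq \Sfgl(G/\ker(V),V)\simeq (S^V)^{G/\ker(V)}$, whereas $\Slax(G,V)=(S^V)^G$; but $\ker(V)$ acts trivially on $S^V$ (as it acts trivially on $V$), so $(S^V)^{G/\ker(V)}=(S^V)^G$ and the comparison map is pointwise an equivalence. One must also check that $\epsilon$ is a map of commutative algebras: this is automatic because $j_!$ is strong monoidal (Proposition~\ref{proposition:functoriality-of-presheaves}), hence induces a functor on commutative algebra objects, and both $j_!\Sfgl$ and $\Slax$ are the images of $\Sfgl$ and of the identity algebra under the relevant functors; alternatively one invokes that a pointwise equivalence of $E_\infty$-algebras in a Day convolution category is an equivalence of algebras.

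For the second move, I would use the standard fact that a symmetric monoidal adjunction $F\dashv U$ between symmetric monoidal $\infty$-categories, together with an equivalence of algebras $F(A)\simeq B$, induces an adjunction $\Mod_A(\mathcal{C})\leftrightarrows \Mod_B(\mathcal{D})$ on module categories, with left adjoint $M\mapsto B\otimes_{F(A)}F(M)\simeq F(M)$ (using the equivalence $F(A)\simeq B$) and right adjoint given by restriction of scalars along $A\to U(B)$ composed with $U$ (cf.\ \cite{HA}*{Section 4.5.3, Corollary 7.3.2.7}). Concretely: $j_!$ is strong symmetric monoidal, so it sends $\Sfgl$-modules to $j_!(\Sfgl)$-modules, and via $\epsilon\colon j_!\Sfgl\xrightarrow{\ \sim\ }\Slax$ we may regard these as $\Slax$-modules; this defines $j_!\colon \PSpfgl\to\PSplax$. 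On the other hand $j^*$ is lax symmetric monoidal (being the right adjoint of a symmetric monoidal functor, by \cite{HA}*{Corollary 7.3.2.7}), so it sends $\Slax$-modules to $j^*\Slax$-modules, and $j^*\Slax\simeq \Sfgl$ by direct inspection of the definitions (both are the $E_\infty$-algebra $(G,V)\mapsto (S^V)^G$ restricted to faithful representations, so this identification is clear); hence $j^*$ descends to $j^*\colon \PSplax\to\PSpfgl$. The adjunction between these descended functors then follows formally from the adjunction $j_!\dashv j^*$ on the underlying presheaf categories: a map $j_!M\to N$ of $\Slax$-modules is the same as a map of underlying objects compatible with the module structures, which by adjunction corresponds to a map $M\to j^*N$ of underlying objects, and one checks this respects the $\Sfgl$-module structures using that the unit and counit are monoidal natural transformations.

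I expect the main obstacle to be not any single hard computation but rather the bookkeeping of making the second move precise at the level of $\infty$-categories, i.e.\ producing the adjunction on module categories as an actual adjunction of $\infty$-categories rather than just a heuristic. The cleanest route is to invoke the relative-adjunction machinery of \cite{HA}*{Section 7.3.2}: the functor $j_!$ is an operadic (symmetric monoidal) left adjoint, hence by \cite{HA}*{Corollary 7.3.2.7} the pair $(j_!,j^*)$ is an adjunction in the $(\infty,2)$-category of symmetric monoidal $\infty$-categories; applying the functor $\Mod_{(-)}(-)$, which is itself functorial for symmetric monoidal functors and lax symmetric monoidal functors in the appropriate sense (see \cite{HA}*{Theorem 4.5.3.1} and the surrounding discussion, together with Theorem~\ref{theorem:modules-are-pullbacks} and Lemma~\ref{lemma:families-of-commutative-algebras} from Appendix~A), yields the desired adjunction once we feed in the equivalence $\epsilon\colon j_!\Sfgl\simeq\Slax$ to rigidify the base algebra. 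I would present this by first stating the pointwise computation establishing $\epsilon$ is an equivalence (the genuinely new content, though it is short), then citing the formal module-category functoriality to conclude, keeping the categorical overhead to the minimum needed to name the correct references.
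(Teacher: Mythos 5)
Your argument matches the paper's: you identify $j_!\simeq q^*$, perform the same pointwise computation $(S^V)^{G/\ker(V)}\simeq (S^V)^G$ to see $\epsilon$ is an equivalence, observe it is a map of algebras because $(j_!,j^*)$ is a symmetric monoidal/lax monoidal adjunction, and then transport the adjunction to module categories using the fact that $j_!$ and $j^*$ swap the two algebra objects. The paper compresses the last step to "evidently adjoint," whereas you spell out the relative-adjunction machinery from \cite{HA}*{Section 7.3.2}; this is more detail, not a different route.
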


\begin{proof}
Because $j$ is strong monoidal, the counit is canonically a map of commutative algebra objects. Therefore for all $(G,V)\in \ORgl$ we compute \[j_!(\Sfgl)(G,V) \simeq \Sfgl(q(G,V)) = (S^{V})^{G/\ker(V)} \simeq (S^{V})^G = \Slax(G,V).\] Because $j_!$ and $j^*$ are strong and lax monoidal respectively and they swap the two algebra objects, they induce functors as in the statement which are evidently adjoint.
\end{proof}

We will now use the adjunction
\[
j_{!}\colon \PSpfgl \leftrightarrows  \PSplax \cocolon j^*
\] 
to induce an adjunction at the level of spectrum objects. To do this we need to see how the adjunction $(j_!,j^*)$ interacts with the full subcategories of spectrum objects. To this end we briefly rephrase the discussion of local objects in $\PSpfgl$ given at the end of Section \ref{sec-cat-of-eq-prespectra}. 

\begin{remark}\label{rmk:fglo_local_tested_after_j}
Recall from Proposition~\ref{prop-glspectra-local-object} that $\Spgl$ is a Bousfield localization of $\PSpfgl$ at the morphisms $\{\lambda_{G,V,W}\}$ where $G$ is a
compact Lie group and $V$ and $W$ are $G$-representations with $W$ faithful.
Because $j_{!} \colon \PSpfgl \to \PSplax$ is fully faithful, we can equivalently require that $j_{!} X$ is local with respect to the maps $j_{!} (\lambda_{G,V,W})$, where $W$ is a faithful representation. These maps again corepresent the $G$-fixed points of the adjoint structure map $\tilde{\sigma}_{G,V,W}$, and therefore we will denote them by $\lambda^\dagger_{G,V,W}$, and similarly we will write $F^\dagger_{G,V}$ for $j_!F_{G,V}$.
\end{remark}

We have seen in Construction~\ref{cons-lambda-maps} that for any compact Lie group $G$ and $G$-representation $V$, there is a functor $\mathrm{ev}_{G,V}\colon \PSpfgl\to \Spc_{G,\ast}$ that sends a faithful global prespectrum $X$ to the $G$-space $X(V)$. 
Under the equivalence 
\[
\PSpfgl\simeq \Mod_{\Sfgl} (\DgSpcp{\ORfgl} )
\]
this functor can be modelled as follows. Consider the cocartesian section $s_V\colon \OO_G^{\op}\rightarrow \OR_G$ which is determined by the object $(G,V)\in\OR_G$ and write $k_V$ for the composite $\OO_G^{\op}\xrightarrow{s_V} \OR_G\xrightarrow{\nu_G} \ORgl$. If $V$ is faithful then $k_V$ lands in $\ORfgl$ and so we can define $\mathrm{ev}_{G,V}$ as the following composite of right adjoints
\[ 
\Mod_{\Sfgl} (\DgSpcp{\ORfgl} )\xrightarrow{\mathrm{fgt}} \DgSpcp{\ORfgl} \xrightarrow{k_V^*} \Spc_{G,\ast}.
\] 
Similarly, as discussed in Construction~\ref{cons-lambda-maps-equivariant}, there is a functor $\mathrm{ev}_V\colon \PSp_G \to \Spc_{G,\ast}$ sending a $G$-prespectrum $X$ to the $G$-space $X(V)$. Under the equivalence 
\[
\PSp_G \simeq \Mod_{S_G}(\DgSpcp{\OR_G})
\]
this functor is modelled by the composite 
\[
\Mod_{S_G}(\DgSpcp{\OR_G}) \xrightarrow{\mathrm{fgt}} \DgSpcp{\OR_G} \xrightarrow{s_V^*} \Spc_{G,\ast}\, ,
\]
see also Remark~\ref{rem:free_ORG_space}.

\begin{remark}
From the previous discussion we conclude that there is a commutative diagram of right adjoints:
\[
\begin{tikzcd}
\PSpfgl \arrow[d,"\sim"'] & \PSplax \arrow[d,"\sim"] \arrow[l,"j^*"'] \arrow[r,"U_G^{gl}"] & \PSp_G \arrow[d,"\sim"] \\
\Mod_{S_{fgl}}(\DgSpcp{\ORfgl}) \arrow[d,"\mathrm{fgt}"'] & \Mod_{S_{gl}^\dagger}(\DgSpcp{\ORgl}) 
\arrow[l,"j^*"'] \arrow[d,"\mathrm{fgt}"] \arrow[r,"\nu_G^*"] & \Mod_{S_G}(\DgSpcp{\OR_G}) \arrow[d,"\mathrm{fgt}"] \\
\DgSpcp{\ORfgl} \arrow[dr,"k^*_W"'] & \DgSpcp{\ORgl} \arrow[l,"j^*"'] \arrow[d,"k_W^*"]\arrow[r,"\nu_G^*"] & \OR_G-\Spcp \arrow[dl,"s^*_W"] \\
 & \Spc_{G,\ast} & \;.\\
\end{tikzcd}
\]
Using that the corresponding diagram of left adjoints commute, we see that for all $X\in\PSplax$ and $G$-representations $V$ and $W$ with $W$ faithful, the following diagram commutes 
\begin{equation}\label{diagram-lambda-maps}
\begin{tikzcd}
	{\Spc_{G,\ast}(S^0,X(W))} \arrow[rr, "{\tilde{\sigma}_{V,W}}"]  & & {\Spc_{G,\ast}(S^V,X(V\oplus W))} \arrow[d, "\sim"]                  \\
	{\PSp_G(F_{W} S^0,\Fgtglo{G}(X))} \arrow[u, "\sim"] \arrow[rr,"\lambda_{V,W}^*"] & & {\PSp_G(F_{V\oplus W} S^V,\Fgtglo{G}(X))} \arrow[d, "\sim"] \\
	{\PSpfgl(F_{G,W}S^0, j^*X)} \arrow[u, "\sim"] \arrow[rr,"\lambda_{G,V,W}^*"]      &      & {\PSpfgl(F_{G,V\oplus W}S^V, j^*X)}                         \arrow[d,"\sim"'] \\
	\PSplax(F^\dagger_{G,W}S^0, X) \arrow[u, "\sim"]\arrow[rr,"(\lambda^\dagger_{G,V,W})^*"] & & \PSplax(F^\dagger_{G,V\oplus W}S^V, X)
\end{tikzcd}
\end{equation}
so all the various $\lambda$-maps correspond to each other under the various adjunctions. 
\end{remark}

Given any compact Lie group $G$ and any faithful $G$-representation $W$, we define a functor \[\Fgtfgl{G}{W}\colon \PSpfgl\rightarrow \PSp_{G}\] as the composite 
\[
\PSpfgl\xrightarrow{j_{!}} \PSplax \xrightarrow{\Fgtglo{G}} \PSp_{G} \xrightarrow{\sh_{W}} \PSp_{G}
\] 
where $\sh_{W}$ denotes the shift $W$-functor, given by cotensoring by $F_W S^0$. 

\begin{theorem}\label{thm:fibrant_glo}
An object $X\in \PSpfgl$ is in $\Spgl$ if and only if for every compact Lie group $G$ and faithful $G$-representation $W$, the object $\Fgtfgl{G}{W}(X)$ is in $\Sp_G$. Furthermore, the functors $\{\Fgtfgl{G}{W}\}_{(G,W)}$ are also jointly conservative. 
\end{theorem}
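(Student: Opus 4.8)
The plan is to reduce the statement about global spectra to the pointwise characterization of $\Sp_G$ inside $\PSp_G$ given in Proposition~\ref{prop-Sp_G-local}, using the commutative diagram of adjunctions \eqref{diagram-lambda-maps} to translate between the various $\lambda$-maps. First recall from Remark~\ref{rmk:fglo_local_tested_after_j} and Proposition~\ref{prop-glspectra-local-object} that $X\in\PSpfgl$ lies in $\Spgl$ if and only if $j_!X$ is local with respect to all maps $\lambda^\dagger_{G,V,W}$ for $G$ a compact Lie group and $V,W$ $G$-representations with $W$ faithful; equivalently, by the bottom square of \eqref{diagram-lambda-maps}, if and only if for every such triple the map
\[
\Spc_{G,\ast}(S^0,(j_!X)(W))\xrightarrow{\tilde\sigma_{V,W}}\Spc_{G,\ast}(S^V,(j_!X)(V\oplus W))
\]
is an equivalence. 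Now the key point is to recognize $\Fgtfgl{G}{W}(X)=\sh_W\Fgtglo{G}(j_!X)\in\PSp_G$ as the $G$-prespectrum whose value on a pair $(H,U)$, for $H\le G$ a closed subgroup and $U$ an $H$-representation, is $(j_!X)(U\oplus W|_H)^H$; this follows from the description of $\sh_W$ as cotensoring by $F_WS^0$ together with the model of $\mathrm{ev}_U$ as $s_U^*\circ\mathrm{fgt}$ recalled before \eqref{diagram-lambda-maps}. The point of shifting by a \emph{faithful} $W$ is exactly that $U\oplus W|_H$ is then a faithful $H$-representation for every closed $H\le G$, so that every value of $\Fgtfgl{G}{W}(X)$ is literally a value of $j_!X$ on an object of $\ORfgl$, and no information is lost.

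Next I would run the criterion of Proposition~\ref{prop-Sp_G-local} on $\Fgtfgl{G}{W}(X)$: this $G$-prespectrum lies in $\Sp_G$ if and only if for every closed subgroup $H\le G$ and all $H$-representations $V',W'$ the map $\lambda_{V',W'}$ is inverted after applying $\mathrm{res}^G_H\Fgtglo{G}\sh_W$. Unwinding the shift, $\mathrm{res}^G_H\sh_W\Fgtglo{G}(j_!X)$ sees precisely the adjoint structure maps
\[
(j_!X)(W'\oplus W|_H)^H\to \Omega^{V'}\bigl((j_!X)(V'\oplus W'\oplus W|_H)^H\bigr),
\]
i.e.\ the conditions indexed by triples $(H,V',W'\oplus W|_H)$ with $W'\oplus W|_H$ faithful. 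As $W$ ranges over all faithful $G$-representations, $H$ over all closed subgroups of $G$, and $V',W'$ over all $H$-representations, the triples $(H,V',W'')$ obtained with $W''$ faithful are cofinal among (indeed exhaust, up to enlarging $W''$) the triples appearing in the characterization of $\Spgl$: every faithful $H$-representation $W''$ of a closed subgroup $H\le G$ embeds in the restriction of a faithful $G$-representation, and conversely any faithful $G$-representation restricts to a faithful $H$-representation. Combining this cofinality observation with the two bullet characterizations above gives the first claim: $X\in\Spgl$ iff $\Fgtfgl{G}{W}(X)\in\Sp_G$ for all $(G,W)$.

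For joint conservativity, suppose $f\colon X\to Y$ in $\PSpfgl$ is such that $\Fgtfgl{G}{W}(f)$ is an equivalence in $\Sp_G$ for every compact Lie group $G$ and faithful $W$. Since $\Sp_G\hookrightarrow\PSp_G$ is fully faithful, $\Fgtfgl{G}{W}(f)$ is then an equivalence in $\PSp_G$; evaluating at the object $(G,W)\in\OR_G$ (i.e.\ applying $s_W^*\circ\mathrm{fgt}$) shows that $(j_!X)(W)^G\to(j_!Y)(W)^G$ is an equivalence of spaces, and similarly on all fixed points $(j_!X)(W)^H\to(j_!Y)(W)^H$ for closed $H\le G$ by evaluating at $(H,W|_H)$. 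Because every pair $(G,W)$ with $W$ faithful is an object of $\ORfgl$ and these objects jointly detect equivalences in $\DgSpcp{\ORfgl}$ — the colimit presentation of $\PSpfgl\simeq\Mod_{\Sfgl}(\DgSpcp{\ORfgl})$ together with the conservativity of the forgetful functor $\mathrm{fgt}$ — we conclude $j_!f$ is an equivalence, hence $f$ is an equivalence by full faithfulness of $j_!$.

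The main obstacle I anticipate is the bookkeeping in the cofinality step of the middle paragraph: one must check carefully that shifting by faithful representations of $G$ together with restricting along closed subgroups $H\le G$ produces exactly the index set of $\lambda$-maps used in Schwede's characterization of global $\Omega$-spectra, neither too few (which would make the condition too weak) nor requiring triples outside $\ORfgl$ (which would make the functors $\Fgtfgl{G}{W}$ ill-defined). This is where the hypothesis that the global family is closed under subgroups and quotients, and the fact that faithful representations are cofinal, are really used; everything else is a formal consequence of the adjunction diagram \eqref{diagram-lambda-maps} and Propositions~\ref{prop-Sp_G-local} and~\ref{prop-glspectra-local-object}.
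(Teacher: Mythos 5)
Your proposal is correct and follows the same route as the paper: reduce both memberships to locality against $\lambda$-maps via Remark~\ref{rmk:fglo_local_tested_after_j}, Proposition~\ref{prop-Sp_G-local}, and diagram~(\ref{diagram-lambda-maps}); use the identity $\lambda_{V,W}^*(\sh_W Y)\simeq\lambda_{V,W'\oplus W}^*$ and the fact that $W'\oplus W$ is faithful whenever $W$ is; and for conservativity invoke the joint essential surjectivity of the shift functors $\sh_W\colon\OR_G\to\ORfgl$. One small slip in the conservativity paragraph: the hypothesis should be that $\Fgtfgl{G}{W}(f)$ is an equivalence in $\PSp_G$ (not $\Sp_G$ — for an arbitrary $f$ in $\PSpfgl$ its image need not lie in $\Sp_G$), and since $\Fgtfgl{G}{W}(X)$ has already been shifted by $W$, evaluating at $(G/H,W|_H)\in\OR_G$ returns $(j_!X)(W|_H\oplus W|_H)^H$; to recover $(j_!X)(W|_H)^H$ one should evaluate at $(G/H,0)$, which indeed hits every object of $\ORfgl$ as $(G,W)$ varies — that is precisely the joint essential surjectivity of $\{\sh_W\}$ the paper uses.
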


\begin{proof}

 By Remark~\ref{rmk:fglo_local_tested_after_j}, we know that $X\in\PSpfgl$ is in $\Spgl$ 
 if and only if $j_!X\in\PSplax$ is local with respect to the set of maps 
 $\{\lambda_{G,V,W}^\dagger\}$ where $G$ runs over all compact Lie groups and 
 $V$ and $W$ are $G$-representations with $W$ faithful. 
 The commutative diagram~(\ref{diagram-lambda-maps}) (together with the fact that $j^*j_!X\simeq X$) shows that this is equivalent to 
 asking that for all compact Lie groups $G$, the object $U_G^{gl}(j_!X)$ is local with respect 
 to $\{\lambda_{G,V,W}\}$ where $V$ and $W$ are as above.  
 
We next note that by definition, given an abitrary $G$ prespectrum $Y$, the map \[\lambda_{U,V}^*\colon 	{\PSp_G(F_{V} S^0,\sh_W Y)}  \rightarrow {\PSp_G(F_{U\oplus V} S^U,\sh_W Y)}\] is equivalent to $\lambda_{U, V\oplus W}^*$. Also recall that given a faithful $G$-representation $W$, $W\oplus U$ is also faithful for any $G$-representation $U$. 

These two observations combine to imply that $U_G^{gl}(j_!X)$ is local with respect to $\{\lambda_{V,W}\}$ for $G,V$ and $W$ as above if and only if for all compact Lie groups $G$ and 
 faithful $G$-representations $W$, the object 
 $\sh_W U_G^{gl}j_!(X)=U_{G,W}^{fgl}X$ is local with respect to $\{\lambda_{V,U}\}$ 
 for arbitrary $G$-representations $V$ and $U$.
  
On the other hand by Proposition~\ref{prop-Sp_G-local}, $U_{G,W}^{fgl}X$ is in $\Sp_G$ if and only if for all closed subgroups $H \leq G$, the $H$-prespectrum $\mathrm{res}^G_H U_{G,W}^{fgl}X=U_{H,\mathrm{res}^G_H W}^{fgl}X$ is local with respect to 
$\{\lambda_{V,U}\}$ for arbitrary $H$-representations $V$ and $U$, and $W$ a 
 faithful $G$-representation. Varying these statements over all compact Lie groups, we find that $\Fgtfgl{G}{W} X$ is in $\Sp_G$ for all compact Lie groups $G$ and all faithful $G$-representations $W$ if and only if for all $G$ and all faithful $G$-representations $W$, the $G$-prespectrum $\Fgtfgl{G}{W}X$ is $\{\lambda_{V,U}\}$-local for arbitrary $G$-representation $V$ and $U$. This is identical to the condition of the previous paragraph, and so we obtain the first claim in the theorem. 
 For the second statement, note that after forgetting module structures, the functor $\Fgtfgl{G}{W}$ is given by restriction along the functor \[\sh_{W}\colon\OR_{G}\rightarrow\ORfgl, (G/H,U)\mapsto (H,U\oplus \mathrm{res}^G_H(W)).\] The claim then follows from the fact that the functors $\{\sh_{W}\}_{(G,W)}$ where $G$ runs over all compact Lie groups and $W$ 
 all faithful $G$-representations, are jointly essentially surjective.
\end{proof}

The following is the key fact about the right adjoint $j^*$.

\begin{proposition}\label{prop:fgt_comm}
Let $G$ be a compact Lie group and let $W$ be a faithful $G$-representation. Then the following square commutes:
\begin{center}
\begin{tikzcd}
	\PSp_G \arrow[d, "\sh_W"'] & \PSplax \arrow[l, "\Fgtglo{G}"'] \arrow[d, "j^{*}"]                       \\
	\PSp_G                                     & \PSpfgl \arrow[l, "\Fgtfgl{G}{W}"']  .
\end{tikzcd}
\end{center}
\end{proposition}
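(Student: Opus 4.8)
The plan is to reduce everything to a statement about the restriction functors on prespectra, and then to unwind the definitions of $j^*$, $\sh_W$, $\Fgtglo{G}$ and $\Fgtfgl{G}{W}$ using the explicit models as modules over the sphere in Day convolution categories. Since $\PSp_G \simeq \Mod_{S_G}(\DgSpcp{\OR_G})$ and the relevant functors are all given (after forgetting module structures) by restriction along suitable functors of indexing categories, the claim will follow from an identification of these indexing functors. More precisely, I would first recall from Proposition~\ref{proposition:functoriality-of-prespectra} that $\PSplax \simeq \Mod_{\Slax}(\DgSpcp{\ORgl})$, that $\Fgtglo{G}$ is modelled by $\nu_G^*$ (restriction along $\nu_G \colon \OR_G^\otimes \to \ORgl^\otimes$ from Lemma~\ref{lem:ORG-as-a-pullback}), and that $j^*$ is restriction along $j\colon \ORfgl \hookrightarrow \ORgl$. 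On the other side, $\Fgtfgl{G}{W}$ is by definition $\sh_W \circ \Fgtglo{G} \circ j_!$, and $j_! = q^*$ where $q\colon \ORgl \to \ORfgl$ is the right adjoint sending $(G,V)$ to $(G/\ker(V), V)$. So the square amounts to comparing $\sh_W \circ \nu_G^*$ applied to $j^*(-)$ with $\sh_W \circ \nu_G^* \circ q^* \circ j_!(-)$... but $j_! = q^*$, and $q \circ j \simeq \mathrm{id}$, so at the level of module categories the non-trivial content is that $\nu_G^* \circ j^* \simeq (\sh_W)\circ\nu_G^*\circ q^* $ composed appropriately. Let me re-examine: the square has $\Fgtfgl{G}{W} = \sh_W \circ \Fgtglo{G}\circ j_!$, so $\Fgtfgl{G}{W}\circ j^* = \sh_W\circ\Fgtglo{G}\circ j_!\circ j^*$, and we must compare this with $\sh_W\circ\Fgtglo{G}$. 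Thus the statement is \emph{equivalent} to the assertion that $\Fgtglo{G}\circ j_!\circ j^* \simeq \Fgtglo{G}$, i.e.\ that $\Fgtglo{G}$ inverts the counit $\epsilon\colon j_!j^* \Rightarrow \mathrm{id}$ of the adjunction on $\PSplax$.

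The heart of the argument is therefore to show: for every compact Lie group $G$, the functor $\Fgtglo{G}\colon \PSplax \to \PSp_G$ sends the counit maps $\epsilon_X \colon j_!j^*X \to X$ to equivalences. Forgetting module structures, $\Fgtglo{G}$ is restriction along $\nu_G\colon \OR_G \to \ORgl$, which by Lemma~\ref{lem:ORG-as-a-pullback} factors as $\OR_G \hookrightarrow \tOR \to \ORgl$ where the second map is the Bousfield localization of Lemma~\ref{lem:ORgl-is-localization-of-tOR}. Concretely, $\nu_G$ sends a pair $(H,V)$ (with $H \le G$, $V$ an $H$-representation) to $(H,V) \in \ORgl$, i.e.\ its image consists of \emph{all} pairs $(H,V)$ with $H \le G$, not just the faithful ones. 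The counit $\epsilon_X$ at a pair $(H,V)$ is the map $X(q(H,V)) \to X(H,V)$, i.e.\ $X(H/\ker(V), V) \to X(H,V)$ induced by the canonical arrow $(H/\ker(V),V)\to(H,V)$ in $\ORgl$. To show $\nu_G^*$ inverts $\epsilon_X$, I need to show that $\nu_G^*\epsilon_X$ is an equivalence of $\OR_G$-spaces, which is pointwise; so it suffices to show that for each $(H,V)$ in the image of $\nu_G$, the component of $\epsilon_X$ at $(H,V)$ is an equivalence. But this is \emph{not} true for a general prespectrum object $X \in \PSplax$! It is true precisely because $X$ lies in the essential image of $j_!$, i.e. $X \simeq j_!Y$ with $Y \in \PSpfgl$; then $\epsilon_{j_!Y}$ is an equivalence by the triangle identity since $j_!$ is fully faithful (Proposition~\ref{prop:fgt_comm}'s hypothesis on the domain of $j^*$ takes care of this: we apply the square to objects of $\PSplax$, and we precompose with $j^*$). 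Indeed $\Fgtfgl{G}{W}\circ j^* = \sh_W\circ \Fgtglo{G}\circ j_!\circ j^*$; and $j_! j^* X$, while not equal to $X$, has the property that $\epsilon_X\colon j_!j^*X \to X$ becomes an equivalence after applying $j^*$, hence after restricting along $\nu_G$ composed with... Hmm, this is precisely the subtle point.

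Let me restructure. The cleanest route: note that $\nu_G \colon \OR_G \to \ORgl$ factors through $j$ on objects only when $V$ is faithful; in general $\nu_G$ does \emph{not} factor through $\ORfgl$. So instead I would argue via the localization $\tOR \to \ORgl$ and the shift. I would show that the composite $\sh_W \circ \Fgtglo{G}$ \emph{does} factor through $j^*$, by exhibiting it (after forgetting modules) as restriction along a functor $\OR_G \to \ORfgl$ — and this functor is exactly the $\sh_W$ of the last line of the proof of Theorem~\ref{thm:fibrant_glo}, namely $(G/H, U)\mapsto (H, U\oplus\mathrm{res}^G_H W)$, which lands in $\ORfgl$ because $U \oplus \mathrm{res}^G_H W$ is a \emph{faithful} $H$-representation (as $\mathrm{res}^G_H W$ is faithful). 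The remaining work is then to check that the two composites $\PSplax \xrightarrow{j^*}\PSpfgl \xrightarrow{\Fgtfgl{G}{W}}\PSp_G$ and $\PSplax \xrightarrow{\Fgtglo{G}}\PSp_G \xrightarrow{\sh_W}\PSp_G$ both agree with restriction of $X \in \PSplax \simeq \Mod_{\Slax}(\DgSpcp{\ORgl})$ (viewed, via Lemma~\ref{lem:ORgl-is-localization-of-tOR}, as a functor $\ORgl \to \Spcp$ inverting the appropriate edges) along the composite $\OR_G \xrightarrow{\sh_W} \ORfgl \xrightarrow{j}\ORgl$, compatibly with module structures over the various spheres (using $\sh_W^* S_G \simeq$ the appropriate algebra, $j^*\Slax\simeq \Sfgl$ from Corollary~\ref{cor:glSvsftS}, and $\nu_G^*\Slax\simeq S_G$). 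Since all functors in sight are restrictions and the sphere objects match up under these restrictions, the commutativity of the square reduces to the (strict) commutativity of a diagram of functors of topological/$\infty$-indexing categories $\OR_G \to \ORfgl \to \ORgl$ and $\OR_G \xrightarrow{\nu_G} \ORgl$ together with the shift; concretely one checks $j \circ \sh_W = \nu_G \circ \sh_W^{\OR_G}$ where $\sh_W^{\OR_G}\colon \OR_G \to \OR_G$ is the shift and $\nu_G$ the canonical functor, which is a direct inspection of the formulas $(G/H,U)\mapsto(H, U\oplus\mathrm{res}^G_H W)\mapsto(H,U\oplus\mathrm{res}^G_H W)$ on both routes.

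\textbf{Main obstacle.} The genuinely delicate part is bookkeeping the module structures and the Bousfield-localization identifications: one must track that, under $\PSplax \simeq \Mod_{\Slax}(\DgSpcp{\ORgl})$ and $\PSp_G \simeq \Mod_{S_G}(\DgSpcp{\OR_G})$, the forgetful functors intertwine $j^*$, $\Fgtglo{G}$, $\sh_W$, $\Fgtfgl{G}{W}$ with honest restriction functors along functors of indexing categories, \emph{and} that the relevant commutative algebra objects ($\Slax$, $\Sfgl$, $S_G$) restrict to one another correctly so that the restriction functors lift to the module categories in a way that makes the square of module categories commute. The key inputs are: the identification of $\Fgtglo{G}$ with $\nu_G^*$ (implicit in Proposition~\ref{proposition:functoriality-of-prespectra} and Lemma~\ref{lem:ORG-as-a-pullback}); the description $j_! = q^*$ with $q(G,V) = (G/\ker V, V)$; the formula for $\sh_W$ as cotensoring by $F_W S^0$, which on indexing categories is the functor $(G/H, U)\mapsto (H, U\oplus\mathrm{res}^G_H W)$; and Corollary~\ref{cor:glSvsftS} giving $j^*\Slax \simeq \Sfgl$. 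Once these identifications are in place, the square commutes because both composites are restriction along the same functor $\OR_G \to \ORgl$ (the shift followed by $j$, equivalently $\nu_G$ followed by the shift), now lifted to modules over the matching sphere objects. I would spell out this last identification of indexing functors as the one explicit computation, and cite the earlier results for everything else.
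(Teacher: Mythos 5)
Your final argument matches the paper's: the square reduces to showing that $\sh_W\circ\Fgtglo{G}$ inverts the counit $j_!j^*\to\mathrm{id}$, and this holds because $\sh_W\circ\Fgtglo{G}$ on underlying objects is restriction along $(H,V)\mapsto(H,\mathrm{res}^G_HW\oplus V)$, which lands in $\ORfgl$ where the counit is already an equivalence. Note however that your intermediate assertion that the statement is \emph{equivalent} to $\Fgtglo{G}\circ j_!\circ j^*\simeq\Fgtglo{G}$ (without the shift) is incorrect and indeed false — as you yourself then observe, $\nu_G$ does not land in $\ORfgl$ — but you recover from this detour and arrive at the paper's argument.
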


\begin{proof}
The unit of the adjunction $j_{!}\dashv j^*$ provides a natural transformation 
\[\Fgtfgl{G}{W}j^* =\sh_W\Fgtglo{G} j_! j^* \rightarrow \sh_W\Fgtglo{G}\] 
which we claim is a natural equivalence. This follows from the fact that on underlying objects $\sh_W U^{gl}_G$ is given by restriction along the functor $\OR_{G}\rightarrow \ORgl, (H,V)\mapsto (H,\mathrm{res}^G_H(W)\oplus V)$. This only sees levels in the image of $\ORfgl$, where the unit is an equivalence.
\end{proof}

\begin{corollary}
	Suppose $X\in \Splax$. Then $j^*(X)\in \Spgl$. In particular we obtain a functor \[j^*\colon \Splax\rightarrow \Spgl,\] which admits a left adjoint given by $\Llax\circ j_{!}$.
\end{corollary}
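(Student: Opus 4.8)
The plan is to deduce the statement directly from \Cref{thm:fibrant_glo} together with \Cref{prop:fgt_comm}. First I would take $X\in\Splax$ and show $j^*X\in\Spgl$. By \Cref{thm:fibrant_glo} it suffices to check that $\Fgtfgl{G}{W}(j^*X)\in\Sp_G$ for every compact Lie group $G$ and every faithful $G$-representation $W$. By \Cref{prop:fgt_comm} we have $\Fgtfgl{G}{W}(j^*X)\simeq \sh_W\,\Fgtglo{G}(X)$. Since $X\in\Splax$, \Cref{prop-Lgl-localization}(b) guarantees $\Fgtglo{G}(X)\in\Sp_G$, and $\Sp_G\subseteq\PSp_G$ is closed under the shift functor $\sh_W$ (cotensoring by $F_WS^0$), because $\Sp_G$ is a $\Spc_{G,\ast}$-linear Bousfield localization and $\sh_W$ is right adjoint to tensoring by the invertible object $F_WS^0$; concretely, $\sh_W\lambda_{V,U}\simeq\lambda_{V,U\oplus W}$ so local objects are preserved. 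Hence $\Fgtfgl{G}{W}(j^*X)\in\Sp_G$, and $j^*X\in\Spgl$.

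This produces the asserted functor $j^*\colon\Splax\to\Spgl$ as the restriction of $j^*\colon\PSplax\to\PSpfgl$. For the adjunction, recall from \Cref{cor:glSvsftS} that $j_!\dashv j^*$ as functors between $\PSpfgl$ and $\PSplax$, and from \Cref{prop-glspectra-local-object} and \Cref{prop-Lgl-localization} that $\Spgl\subseteq\PSpfgl$ and $\Splax\subseteq\PSplax$ are Bousfield localizations with left adjoints $L_{gl}$ and $\Llax$ respectively. The composite $\Splax\hookrightarrow\PSplax\xrightarrow{j^*}\PSpfgl$ lands in $\Spgl$ by the previous paragraph, and for $Y\in\Spgl$, $Z\in\Splax$ we compute
\[
\Map_{\Splax}(\Llax j_! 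Y, Z)\simeq \Map_{\PSplax}(j_! Y, Z)\simeq \Map_{\PSpfgl}(Y, j^* Z)\simeq \Map_{\Spgl}(Y, j^* Z),
\]
using the adjunction $\Llax\dashv(\text{inclusion})$, then $j_!\dashv j^*$, then the fact that $j^*Z\in\Spgl$ together with fully faithfulness of $\Spgl\hookrightarrow\PSpfgl$. This exhibits $\Llax\circ j_!$ as left adjoint to $j^*\colon\Splax\to\Spgl$.

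I expect no serious obstacle here: this corollary is purely a bookkeeping consequence of the two preceding results. The only point requiring a line of justification is that $\Sp_G$ is closed under $\sh_W$ inside $\PSp_G$, which follows from the identity $\sh_W\lambda_{V,U}\simeq\lambda_{V,U\oplus W}$ already used in the proof of \Cref{thm:fibrant_glo}. The genuine content—that this adjunction is an equivalence—is deferred to later in the section, so nothing more is needed at this stage.
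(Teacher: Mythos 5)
Your proof is correct and follows essentially the same route as the paper: apply \Cref{prop:fgt_comm} to rewrite $\Fgtfgl{G}{W}(j^*X)\simeq\sh_W\Fgtglo{G}(X)$, invoke \Cref{prop-Lgl-localization} to see $\Fgtglo{G}(X)\in\Sp_G$, note that $\sh_W$ preserves $\Sp_G$, and conclude via \Cref{thm:fibrant_glo}; the adjunction then follows by the standard mapping-space computation for a localization restricted along a fully faithful right adjoint. One minor caveat: your parenthetical justification that $\sh_W$ preserves $\Sp_G$ because $F_WS^0$ is invertible is not accurate in $\PSp_G$ (it is only invertible after localizing), but your concrete identification of $\lambda_{V,U}$-locality of $\sh_W Y$ with $\lambda_{V,U\oplus W}$-locality of $Y$ is the correct reason and is precisely the argument used in the paper's proof of \Cref{thm:fibrant_glo}.
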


\begin{proof}
	Because $X$ is in $\Splax$, we obtain that $\Fgtglo{G}(X)$ is a $G$-spectrum 
	by Proposition~\ref{prop-Lgl-localization}. Note that the functor $\mathrm{sh}_W$ preserves $G$-spectra for every $G$-representation $W$. We deduce using Proposition \ref{prop:fgt_comm} that $\Fgtfgl{G}{W}j^{*}(X)$ is a $G$-spectrum for every $G$ and $W$ faithful. Therefore by Theorem \ref{thm:fibrant_glo} $j^*(X)$ is contained in $\Spgl$.
\end{proof}

\begin{proposition}
$j^{*}\colon \Splax \rightarrow \Spgl$ is conservative. 
\end{proposition}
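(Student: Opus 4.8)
The plan is to combine the conservativity of the family of functors $\{\Fgtfgl{G}{W}\}_{(G,W)}$ on $\Spgl$ established in Theorem~\ref{thm:fibrant_glo} with the compatibility of Proposition~\ref{prop:fgt_comm}. Concretely, suppose $f\colon X\to Y$ is a morphism in $\Splax$ such that $j^*(f)$ is an equivalence in $\Spgl$. We must show $f$ itself is an equivalence.

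First I would recall that $\Splax = \laxlimdag\Sp_\bullet$ sits inside $\PSplax$, and that by Proposition~\ref{prop-Lgl-localization} the family of functors $\{\Fgtglo{G}\colon \PSplax\to \PSp_G\}_G$ detects objects of $\Splax$; moreover the functors $\Fgtglo{G}$ are jointly conservative on all of $\PSplax$, since the partially lax limit is computed as a subcategory of sections of the relevant cocartesian fibration and equivalences of sections are detected fibrewise. Hence it suffices to prove that $\Fgtglo{G}(f)$ is an equivalence in $\PSp_G$ for every compact Lie group $G$. Since $X,Y\in \Splax$, each $\Fgtglo{G}(X)$ and $\Fgtglo{G}(Y)$ lies in $\Sp_G$, so it suffices to check that $\Fgtglo{G}(f)$ is an equivalence in $\Sp_G$.

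Now I would apply Proposition~\ref{prop:fgt_comm}: for any faithful $G$-representation $W$ there is a natural equivalence $\Fgtfgl{G}{W}\circ j^* \simeq \sh_W\circ \Fgtglo{G}$. By hypothesis $j^*(f)$ is an equivalence in $\Spgl$, hence $\Fgtfgl{G}{W}(j^*(f))$ is an equivalence in $\Sp_G$, and therefore $\sh_W(\Fgtglo{G}(f))$ is an equivalence in $\Sp_G$. But $\sh_W$ is cotensoring by $F_W S^0$, which is an invertible object of $\Sp_G$ (its inverse being $F_0 S^W$, since $\lambda_{W,0}$ becomes an equivalence after stabilization); in particular $\sh_W\colon \Sp_G\to \Sp_G$ is an equivalence of $\infty$-categories, hence conservative. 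Therefore $\Fgtglo{G}(f)$ is an equivalence in $\Sp_G$. As this holds for every $G$, and the $\Fgtglo{G}$ are jointly conservative on $\Splax$, we conclude that $f$ is an equivalence, proving that $j^*$ is conservative.

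The main obstacle is the (easy but not entirely formal) point that $\sh_W$ is an equivalence on $\Sp_G$: one needs to know that the shift functor, which is only a cotensor with a prespectrum level object, descends to a genuine self-equivalence of the localization $\Sp_G\subseteq \PSp_G$ inverting the $\lambda$-maps. Alternatively, and perhaps more cleanly, one can avoid invoking invertibility of $\sh_W$ altogether: simply take $W$ to be a faithful $G$-representation (these exist for compact Lie groups) and observe directly that $\sh_W$ is \emph{conservative} on $\Sp_G$ since forgetting to $\OR_G$-spaces it is restriction along a functor $\OR_G\to \ORfgl$ which is essentially surjective onto the relevant generating objects — this is exactly the joint conservativity input already used at the end of the proof of Theorem~\ref{thm:fibrant_glo}. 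Either way the argument is short once Proposition~\ref{prop:fgt_comm} and Proposition~\ref{prop-Lgl-localization} are in hand.
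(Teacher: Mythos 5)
Your argument is correct, and its skeleton matches the paper's proof: both reduce to showing $\Fgtglo{G}(f)$ is an equivalence for each $G$ via joint conservativity, observe that $\Fgtglo{G}(f)$ is a map between objects of $\Sp_G$, and then deduce the conclusion from equivalence data on faithful levels. The technical difference is in the last step. The paper argues directly on component spaces, using that faithful $G$-representations are cofinal in all $G$-representations, so a map of $G$-$\Omega$-spectra which is an equivalence on faithful levels is already an equivalence. You instead invoke Proposition~\ref{prop:fgt_comm} to identify $\Fgtfgl{G}{W}(j^*f)$ with $\sh_W\Fgtglo{G}(f)$, and then conclude via conservativity of $\sh_W$. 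The point you flag as the main obstacle resolves quickly once noted: $\Sp_G \subseteq \PSp_G$ is a symmetric monoidal reflective localization, so for $X \in \Sp_G$ the cotensor $\uHom_{\PSp_G}(F_WS^0, X)$ is already local and agrees with $\uHom_{\Sp_G}(L_GF_WS^0, X)$; and $L_GF_WS^0 \simeq S^{-W}$ is $\otimes$-invertible, so $\sh_W$ restricts to an equivalence of $\Sp_G$. Your suggested alternative, however, is incorrect as stated: the underlying functor $\OR_G \to \OR_G$, $(H,V) \mapsto (H, V \oplus W|_H)$, is not essentially surjective (its image consists of levels whose representation contains a copy of $W|_H$), so restriction along it is not conservative on arbitrary $\OR_G$-spaces. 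One genuinely needs the $\Omega$-spectrum condition to recover the remaining levels — which is exactly the cofinality input the paper uses — so the alternative does not avoid it, it just restates it.
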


\begin{proof}
Let $f\colon X\rightarrow Y$ be a map in $\PSplax$ such that $j^*(f)$ is an equivalence. This implies that $f_{(G,W)}$ is an equivalence of spaces for every faithful $G$-representation $W$. We finish the argument by proving that if $f$ is in fact a map between objects in $\Splax$, then $f_{(G,V)}$ is an equivalence for \textit{every} $G$-representation $V$ if and only if it is an equivalence for \textit{faithful} $G$-representations. The forward direction is trivial. For the converse, note that because $\PSplax$ is a partially lax limit, the collection of functors $\{\Fgtglo{G}\}_{G}$ is jointly conservative. Now our assumptions tell us that $\Fgtglo{G}(f)_{(G,W)}$ is an equivalence for every faithful $G$-representation $W$. But because $f$ is in fact in $\Splax$, both the source and target of $\Fgtglo{G}(f)$ are $G$-spectra. Therefore our claim reduces to the fact that a map between $G$-spectra which is an equivalence on faithful levels, is already an equivalence. The collection of faithful representations is cofinal in all representations, and so this is clear.
\end{proof}

\begin{theorem}
	The unit of the adjunction $$\Llax\circ j_{!}\colon \Spgl\leftrightarrows \Splax\cocolon j^*$$ is an equivalence.
\end{theorem}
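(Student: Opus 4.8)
The plan is to prove directly that the unit map $u_X\colon X\to j^*\Llax j_!X$ is an equivalence for every $X\in\Spgl$; together with the conservativity of $j^*$ already established, this yields that the adjunction $\Llax\circ j_!\dashv j^*$ is an equivalence. Since $j_!$ is fully faithful, the unit $X\simeq j^*j_!X$ of $(j_!,j^*)$ is an equivalence, so $u_X$ is identified with $j^*(\eta_{j_!X})$, where $\eta\colon\mathrm{id}_{\PSplax}\Rightarrow\Llax$ is the unit of the Bousfield localization of Proposition~\ref{prop-Lgl-localization}. Thus it suffices to show $j^*(\eta_{j_!X})$ is an equivalence.

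First I would reduce to a levelwise statement. By the joint conservativity part of Theorem~\ref{thm:fibrant_glo}, $j^*(\eta_{j_!X})$ is an equivalence as soon as $\Fgtfgl{G}{W}(j^*\eta_{j_!X})$ is an equivalence for every compact Lie group $G$ and faithful $G$-representation $W$. By Proposition~\ref{prop:fgt_comm} we have $\Fgtfgl{G}{W}\circ j^*\simeq\sh_W\circ\Fgtglo{G}$, so we must show that $\sh_W(\Fgtglo{G}(\eta_{j_!X}))$ is an equivalence. Unwinding the construction of $\Sp_\bullet$ and $\Llax$ (Corollary~\ref{cor:laxlim-monoidal}, Lemma~\ref{lem:adjunction-to-the-marked-lax-limit}, Proposition~\ref{prop-transformation-localization-spectra}), the cone functor $\Fgtglo{G}$ carries $\Llax$ to the spectrification functor $L_G\colon\PSp_G\to\Sp_G$ compatibly with the localization units; hence $\Fgtglo{G}(\eta_{j_!X})$ is the spectrification map of $\Fgtglo{G}j_!X\in\PSp_G$, and in particular a stable equivalence in $\PSp_G$.

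Finally, $\sh_W$ applied to this stable equivalence is an equivalence. Its source $\sh_W\Fgtglo{G}j_!X=\Fgtfgl{G}{W}(X)$ is a $G$-spectrum because $X\in\Spgl$ (Theorem~\ref{thm:fibrant_glo}), and its target $\sh_W\Fgtglo{G}\Llax j_!X$ is a $G$-spectrum because $\Llax j_!X\in\Splax$ (so $\Fgtglo{G}\Llax j_!X\in\Sp_G$ by Proposition~\ref{prop-Lgl-localization}) and $\sh_W$, being right adjoint to $F_WS^0\otimes-$ which carries the generating stable equivalences $G\ltimes_H\lambda_{V,U}$ to $G\ltimes_H\lambda_{V,U\oplus W}$, preserves $G$-spectra. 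Since $\sh_W$ also preserves stable equivalences — the standard fact that the shift functor on orthogonal spectra preserves $\underline{\pi}_*$-isomorphisms — the map $\sh_W(\Fgtglo{G}(\eta_{j_!X}))$ is a stable equivalence between two $G$-spectra, hence an equivalence. This proves $\Fgtfgl{G}{W}(j^*\eta_{j_!X})$ is an equivalence for all $G$ and faithful $W$, and therefore $u_X$ is an equivalence.

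The main obstacle is the compatibility claim in the second paragraph: identifying $\Fgtglo{G}(\eta_{j_!X})$ with the pointwise spectrification unit. This requires going back to the way $\Llax$ and $L_\bullet$ were produced as a partially lax limit of a levelwise localization, and checking that evaluating a section at $G$ commutes with the ``postcompose with $L$'' description of the localization from Lemma~\ref{lem:adjunction-to-the-lax-limit}. Everything else is either a direct application of a stated result or a standard property of shift functors of orthogonal spectra.
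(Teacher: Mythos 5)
Your argument is correct and follows the same route as the paper: reduce the unit to $j^*$ of the $\Llax$-localization map, use joint conservativity of the $\Fgtfgl{G}{W}$, convert to $\sh^W\Fgtglo{G}(\gamma)$ via Proposition~\ref{prop:fgt_comm}, identify $\Fgtglo{G}(\gamma)$ with the spectrification map $\gamma_G$ in $\PSp_G$, and then show that $\sh^W(\gamma_G)$ is an equivalence. The only variation is the final step: the paper observes that spectrification commutes with $\sh^W$ so that $\sh^W(\gamma_G)$ is itself a localization map with local source, whereas you argue that $\sh^W$ preserves stable equivalences and local objects so that $\sh^W(\gamma_G)$ is a stable equivalence between two $G$-spectra; these are equivalent formulations, and your explicit verification (via the left adjoint $F_WS^0\otimes-$) that $\sh_W$ preserves local objects is a nice, if slightly redundant, addition. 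The ``main obstacle'' you flag — that $\Fgtglo{G}$ intertwines $\Llax$ with $L_G$ and their units — is exactly what the paper invokes via Proposition~\ref{prop-Lgl-localization}, and it is indeed built into the construction of $\Llax$ from Lemma~\ref{lem:adjunction-to-the-marked-lax-limit}, so your instinct to trace it back there is right.
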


\begin{proof}
Consider $X\in \Spgl$. Let $\eta_X\colon X\rightarrow j^{*} \Llax j_{!} X$ be the unit of the adjunction $\Llax\circ j_{!} \leftrightarrows j^*$  evaluated at $X$. This adjunction is given as a composite of two adjunctions and so the unit is given by the composite
\[
X\xrightarrow{\eta'} j^* j_{!} X \xrightarrow{j^* (\gamma)} j^* \Llax j_{!} X,
\] where $\eta'$ is the unit of the adjunction $j_{!}\dashv j^*$ and $\gamma$ exhibits $\Llax j_{!} X$ as the localization of $j_{!} X$ in $\PSplax$. However recall that $j_{!}$ is fully faithful and therefore the first of the two maps is an equivalence. So it suffices to prove that the second map is also an equivalence.

The functors $\Fgtfgl{G}{W}$ are jointly conservative, and so we will prove that $\Fgtfgl{G}{W}(j^*(\gamma))$ is an equivalence for every $(G,W)$, where $W$ is faithful. Applying Proposition \ref{prop:fgt_comm} we conclude that $\Fgtfgl{G}{W}(j^*(\gamma))$ is equivalent to \[\mathrm{sh}^W \Fgtglo{G}(\gamma): \mathrm{sh}^W \Fgtglo{G} j_{!} X \rightarrow \mathrm{sh}^W \Fgtglo{G} \Llax j_{!} X.\] By Proposition~\ref{prop-Lgl-localization}, $\Fgtglo{G}(\gamma)$ is equivalent to \[\gamma_G\colon \Fgtglo{G}j_{!} X\rightarrow L_G\Fgtglo{G}j_{!} X,\] where $\gamma_G$ exhibits $L_G\Fgtglo{G}j_{!} X$ as the localization of $\Fgtglo{G}j_{!} X$ in $\PSp_G$. Spectrification of $G$-prespectra commutes with $\mathrm{sh}^W$, and therefore $\mathrm{sh}^W (\gamma_G)$ gives the localization of $\Fgtfgl{G}{W}(X) = \mathrm{sh}^W  \Fgtglo{G} j_{!} X$ in $\PSp_{G}$.  Recall that $X\in \Spgl$, and so $\Fgtfgl{G}{W}(X)$ is a $G$-$\Omega$-spectrum by Theorem \ref{thm:fibrant_glo}. Therefore $\mathrm{sh}^W(\gamma_G)$ is an equivalence, concluding the proof.
\end{proof}

\begin{theorem}\label{thm-laxlim-global-spectra}
There is a symmetric monoidal equivalence $j^{*}\colon\Splax:=\laxlimdag_{G} \Sp_G \rightarrow \Spgl$.
\end{theorem}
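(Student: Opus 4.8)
The plan is to assemble the equivalence from the pieces already established in this section and the preceding ones. First I would recall that by Proposition~\ref{proposition:functoriality-of-prespectra} and Corollary~\ref{cor:PSpgl=PSpfgl} we have symmetric monoidal identifications $\PSplax\simeq \Mod_{\Slax}(\DgSpcp{\ORgl})$ and $\PSpfgl\simeq \Mod_{\Sfgl}(\DgSpcp{\ORfgl})$, and that Corollary~\ref{cor:glSvsftS} produces a symmetric monoidal adjunction $j_!\colon \PSpfgl\leftrightarrows \PSplax\cocolon j^*$ with $j_!$ fully faithful and strong monoidal. Next I would invoke Proposition~\ref{prop-glspectra-local-object} and Proposition~\ref{prop-Lgl-localization} to realize $\Spgl\subseteq \PSpfgl$ and $\Splax\subseteq \PSplax$ as Bousfield localizations, with localization functors $L_{gl}$ and $\Llax$ respectively. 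The key input is the corollary showing $j^*$ carries $\Splax$ into $\Spgl$, which yields an induced adjunction $\Llax\circ j_!\colon \Spgl\leftrightarrows \Splax\cocolon j^*$.

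Then I would combine the last two theorems of the section: $j^*\colon \Splax\to \Spgl$ is conservative, and the unit of the adjunction $\Llax\circ j_!\dashv j^*$ is an equivalence. A conservative right adjoint whose unit is a natural equivalence is automatically an equivalence of $\infty$-categories: indeed the unit being an equivalence makes the left adjoint $\Llax\circ j_!$ fully faithful, and then the counit $\varepsilon$ becomes an equivalence after applying the conservative functor $j^*$ (by a triangle identity $j^*(\varepsilon)$ is split by the unit applied to $j^*$, hence an equivalence), so $\varepsilon$ itself is an equivalence. Thus $j^*$ and $\Llax\circ j_!$ are mutually inverse equivalences of underlying $\infty$-categories.

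It remains to upgrade this to a \emph{symmetric monoidal} equivalence. For this I would observe that $j^*$ is lax symmetric monoidal (being the right adjoint of the strong monoidal $j_!$, by \cite{HA}*{Corollary~7.3.2.7} or the discussion of operadic adjunctions in Definition~\ref{definition:operadic-adjunction}), and restricts to a lax symmetric monoidal functor $\Splax\to \Spgl$ since the inclusions $\Spgl\subseteq\PSpfgl$ and $\Splax\subseteq\PSplax$ are compatible with the monoidal structures (the localizations $L_{gl}$, $\Llax$ are smashing/compatible with the tensor product, as they are localizations of module categories at maps of the form $F_{G,W}S^0\otimes\lambda$). A lax symmetric monoidal functor which is an equivalence of underlying $\infty$-categories is automatically strong symmetric monoidal, since its structure maps $j^*(A)\otimes j^*(B)\to j^*(A\otimes B)$ become equivalences precisely when they do on underlying objects — and we may check this after transporting along the equivalence, where it reduces to the fact that $\Llax\circ j_!$ is strong monoidal (a composite of the strong monoidal $j_!$ with the monoidal localization $\Llax$). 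Hence $j^*\colon \Splax\to \Spgl$ is a symmetric monoidal equivalence.

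The main obstacle, and the only place requiring genuine care, is the bookkeeping in the last step: making precise that both Bousfield localizations are compatible with the symmetric monoidal structures so that $j^*$ descends to a lax symmetric monoidal functor between them, and then that this lax structure is automatically strong. All the hard $\infty$-categorical work — constructing $\Sp_\bullet$, identifying the partially lax limit with a module category, exhibiting $\Spgl$ as the right localization, and proving conservativity of $j^*$ together with invertibility of the unit — has already been done in the preceding results, so this final theorem is essentially a formal assembly.
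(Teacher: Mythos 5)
Your proposal matches the paper's proof in its essentials: establish the adjunction $\Llax\circ j_!\dashv j^*$ between $\Spgl$ and $\Splax$, observe that it is an adjoint equivalence because $j^*$ is conservative and the unit is invertible, and then deduce strong monoidality of $j^*$ from the fact that its inverse $\Llax\circ j_!$ is a composite of strong monoidal functors. One caveat about an intermediate assertion: the general principle you state --- that a lax symmetric monoidal functor which is an equivalence of underlying $\infty$-categories is automatically strong symmetric monoidal --- is false (consider the identity functor on a category carrying two distinct monoidal structures, made lax monoidal via some non-invertible comparison); what makes your argument go through is not this general claim but exactly the stronger input you then invoke, namely that the \emph{inverse} $\Llax\circ j_!$ is strong monoidal, which does force the canonical lax structure on $j^*$ to be strong. (Also, ``smashing'' is not quite the right word for $L_{gl}$, $\Llax$; these are compatible/monoidal Bousfield localizations in the sense of \cite{HA}*{Definition~2.2.1.6}, not smashing localizations in the usual technical sense, though this does not affect the argument.)
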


\begin{proof}
We have proven that $j_{!}\dashv j^*$ is an adjunction in which the right adjoint is conservative, and the unit is a natural equivalence. Therefore the functors are an adjoint equivalence. Moreover $j_{!}$ is strong monoidal, which implies that $j^*$, as its inverse, is also strong monoidal.
\end{proof}

\section{Proper equivariant spectra as a limit}\label{sec-proper-section}
The goal of this section is to exhibit the $\infty$-category of genuine proper $G$ spectra $\Sp_G$ as a limit over the proper orbit category $\OO_{G,\pr}^{\op}$ of a diagram \[\Sp_{(-)}\colon \OO_{G,\pr}^{\op}\rightarrow \Cat_\infty, \qquad G/H\rightarrow \Sp_{H}.\] In contrast to the case of global spectra, once the diagram has been constructed, the identification of the limit will be almost immediate. In fact even the general strategy for constructing the diagram is essentially identical. For this reason we will be brief and refer to Section \ref{sec:funct_prespectra} for the relevant details.

Recall from Lemma \ref{lem:ORG-as-a-pullback} that the $\infty$-operad $\OR_G^\otimes$ fits into a pullback 
   \[\begin{tikzcd}
	\OR_G^\otimes\ar[r, "\nu_G"]\ar[d, "\pi_G"'] & \OR_{gl}^\otimes\ar[d,"\pi_{gl}"]\\
	(\OO_{G,\pr}^{\op})^\amalg\ar[r, "\iota_G^{\amalg}"] & (\Glo^{\op})^\amalg.
\end{tikzcd}\]

Because $\OR_{gl}^\otimes \rightarrow (\Glo^{\op})^\amalg$ is a cocartesian fibration which by definition classifies the functor $\mathrm{Rep}(-)$, we immediately obtain:

\begin{proposition}
For every Lie group $G$, the forgetful functor 
$\pi_G \colon \OR_G^\otimes \rightarrow (\OO_{G,\pr}^{\op})^\amalg$ is a cocartesian fibration which 
classifies the functor \[\OO_{G,\pr}^{\op}\rightarrow \Cat_\infty^\otimes, \quad G/H \mapsto \mathrm{Rep}(H).\]
\end{proposition}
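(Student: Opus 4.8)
The plan is to obtain this as a purely formal consequence of Lemma~\ref{lem:ORG-as-a-pullback} together with the stability of cocartesian fibrations under base change. Recall from that lemma that $\pi_G$ is the base change of $\pi_{gl}\colon \OR_{gl}^\otimes\to (\Glo^{\op})^\amalg$ along the map of $\infty$-operads $\iota_G^{\amalg}$, where $\iota_G\colon \OO_{G,\pr}^{\op}\to \Glo^{\op}$ is the functor introduced before Lemma~\ref{lem:ORG-as-a-pullback}, that is, the opposite of the canonical functor sending $G/H$ to $H$. We have already shown that $\pi_{gl}$ is a cocartesian fibration exhibiting $\OR_{gl}^\otimes$ as a $(\Glo^{\op})^\amalg$-monoidal $\infty$-category, and that $\Rep\colon \Glo^{\op}\to \Cat_\infty^\otimes$ is by definition the functor corresponding to it under the equivalence of Proposition~\ref{prop:equiv_amalg}.

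First I would note that, since $\pi_{gl}$ is a cocartesian fibration, so is its base change $\pi_G$ along $\iota_G^{\amalg}$ by \cite{HTT}*{Proposition 2.4.2.3}; hence $\OR_G^\otimes$ is a $(\OO_{G,\pr}^{\op})^\amalg$-monoidal $\infty$-category. It then remains to identify the corresponding functor $\OO_{G,\pr}^{\op}\to \Cat_\infty^\otimes$. For this I would invoke the naturality in the index $\infty$-category of the equivalence of Proposition~\ref{prop:equiv_amalg} (part of \cite{drew2021universal}*{Corollary A.12}): base change of a $\CI^\amalg$-monoidal $\infty$-category along $(f)^\amalg$, for a functor $f\colon \CJ\to \CI$, corresponds to restriction of the classifying functor along $f$. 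Applied to $f=\iota_G$, this shows that $\OR_G^\otimes$ is classified by the composite $\Rep\circ \iota_G$, which on objects sends $G/H$ to $\Rep(H)$. Alternatively, and more concretely, one can check values and transitions separately: by Definition~\ref{definition:operadic-fiber} the operadic fibre of $\pi_G$ at $G/H$ is the operadic fibre of $\pi_{gl}$ at $\iota_G(G/H)=H$, namely $\Rep(H)$, while a morphism $\varphi\colon G/K\to G/H$ (recorded by $g\in G$ with $c_g(K)\subseteq H$) is sent by $\iota_G$ to $c_g\colon K\to H$, so the induced symmetric monoidal functor is restriction of representations along $c_g$, matching the transition functors of $\Rep$.

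I do not expect a serious obstacle here: the statement is entirely formal given Lemma~\ref{lem:ORG-as-a-pullback}. The only point deserving care is the naturality of the operadic straightening of Proposition~\ref{prop:equiv_amalg} in the index category, which upgrades the identification of operadic fibres to an identification of the full functor valued in $\Cat_\infty^\otimes$; this naturality is implicit in the cited reference, and could in any case be spelled out by observing that the defining pullback square of $\OR_G^\otimes$ over $\iota_G^{\amalg}$ restricts, over each $G/H\in\OO_{G,\pr}$, to precisely the square used in Definition~\ref{definition:operadic-fiber} to extract the value of the classifying functor.
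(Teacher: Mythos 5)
Your argument is exactly the paper's: the paper derives the proposition as an immediate consequence of Lemma~\ref{lem:ORG-as-a-pullback}, using that $\pi_{gl}$ is a cocartesian fibration classifying $\Rep$ and that base change of cocartesian fibrations corresponds to restriction of classifying functors. You spell out the naturality of the straightening equivalence of Proposition~\ref{prop:equiv_amalg} a bit more carefully, but the route and the key inputs are identical.
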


\begin{definition}
	We define $\ttOR_G^\otimes$ via the following pullback of operads:
	\[
	\begin{tikzcd}
		\ttOR_G^\otimes \arrow[d, "\pi_{\Ar}"] \arrow[r]        & \OR_G^\otimes \arrow[d] \\		(\Ar(\OO_{G,\pr})^{\op})^\amalg \arrow[r, "s^{\op}"] & (\OO_{G,\pr}^{\op})^\amalg      
	\end{tikzcd}
	\]
\end{definition}

We consider $\ttOR_G^\otimes$ as living over $\OO_{G,\pr}$ via the composite
\[\pi\colon \ttOR_G^{\otimes} \xrightarrow{\pi_{\Ar}} (\Ar(\OO_{G,\pr})^{\op})^{\amalg} \xrightarrow{t^{\op}} (\OO_{G,\pr}^{\op})^{\amalg}.\]
Just as in Lemma \ref{lemma:tOR-promonoidal}, we can show that $\ttOR_G^\otimes$ is a pro-$(\OO_G)^\amalg$-monoidal category.

\begin{proposition}
The functor $\pi\colon \ttOR_G \rightarrow (\OO_{G,\pr}^{\op})$, given by restricting $\pi$ to underlying categories, is a cartesian fibration. Furthermore an edge $(f,g)\in \ttOR_G$ is cartesian if and only if $s^{\op}(f)$ and $g$ are equivalences.
\end{proposition}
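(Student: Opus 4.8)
The statement is the exact analogue of Lemma~\ref{lemma:cartesian-arrows-over-Orb}, now with $\Glo$ replaced by $\OO_{G,\pr}$ throughout, so the plan is to mimic that proof. The key input is that the source and target projections behave well: I would first record that the target projection $t\colon \Ar(\OO_{G,\pr})\to \OO_{G,\pr}$ is a cartesian fibration (this is the dual of the fact that $s\colon\Ar(\CC)\to\CC$ is a cocartesian fibration, or simply~\cite{HTT}*{Corollary 2.4.7.12}), hence $t^{\op}\colon \Ar(\OO_{G,\pr})^{\op}\to\OO_{G,\pr}^{\op}$ is a cocartesian fibration, and after applying $(-)^\amalg$ (which preserves cocartesian fibrations by \cite{HA}*{Proposition 2.4.3.16}, or directly) we see $\ttOR_G^\otimes\to(\OO_{G,\pr}^{\op})^\amalg$ is the pullback of the cocartesian fibration $\OR_G^\otimes\to(\OO_{G,\pr}^{\op})^\amalg$ along $s^{\op}$; likewise $s\colon\Ar(\OO_{G,\pr})\to\OO_{G,\pr}$ is a cartesian fibration, dualizing $t$, so $\pi_{\Ar}$ is a cocartesian fibration and $\pi=t^{\op}\circ\pi_{\Ar}$ on underlying categories is the composite of a cocartesian fibration with a cartesian one.

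Concretely, I would argue as follows. Given an arrow $\varphi\colon G/K\to G/H$ in $\OO_{G,\pr}$ and an object $(\sigma_0\colon G/L\to G/K,\ V)\in\ttOR_G$ lying over $G/K$ — so $\sigma_0\in\Ar(\OO_{G,\pr})^{\op}$ and $V\in\OR_K$ via the pullback — I want to produce a $\pi$-cartesian lift of $\varphi$ with target $(\sigma_0,V)$. Since $t^{\op}$ is a cocartesian fibration, cartesian lifts of $\varphi$ for $t^{\op}$ with target $\sigma_0$ are given (as in Lemma~\ref{lemma:cartesian-arrows-over-Orb}) by squares in $\OO_{G,\pr}$
\[
\begin{tikzcd}
	G/L \arrow[d, "\varphi\circ\sigma_0"'] & G/L \arrow[d, "\sigma_0"] \arrow[l, "\sim"] \\
	G/H & G/K \arrow[l, "\varphi"]
\end{tikzcd}
\]
whose image under $s^{\op}$ is an equivalence. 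Then I lift $s^{\op}(\sigma)$ to the identity (or an equivalence) in the fibre $\OR_L\subseteq \OR_G$ ending at $V$; the pair $(\sigma,\mathrm{id}_V)$ is an edge of $\ttOR_G$ over $\varphi$ ending at $(\sigma_0,V)$. Both components are cartesian for their respective projections ($\sigma$ is $t^{\op}$-cartesian by construction, and an equivalence is always cartesian), so $(\sigma,\mathrm{id}_V)$ is $\pi$-cartesian by the standard fact that cartesian edges compose and pull back along the fibre projections — more precisely, one checks directly that for every $(\tau, W)\in\ttOR_G$ the comparison square of mapping spaces is a homotopy pullback, using that the fibre over $G/K$ of $\pi$ is $\OR_K$ and the fibre over $\sigma_0\in\Ar(\OO_{G,\pr})^{\op}$ of $s^{\op}$ is a point. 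This gives existence of enough $\pi$-cartesian edges and simultaneously identifies them, so in particular $\pi$ is a cartesian fibration and an edge $(f,g)$ is $\pi$-cartesian iff $s^{\op}(f)$ and $g$ are equivalences.

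The only genuinely content-bearing step is the identification of which edges are cartesian, i.e. checking that an edge $(f,g)$ with $s^{\op}(f)$ and $g$ equivalences is cartesian \emph{and} conversely; this is a mapping-space computation exactly parallel to the one in Lemma~\ref{lemma:cartesian-arrows-over-Orb}, so I expect no real obstacle. One small point worth a sentence is that $\Ar(\OO_{G,\pr})$ here denotes the \emph{full} arrow category (not the subcategory on any restricted class of maps, since $\OO_{G,\pr}$ has no factorization system playing the role of $(\Glo^{\sur},\Orb)$), and correspondingly $s$ is a cartesian fibration in the unrestricted sense; this is why the characterization above mentions only $s^{\op}(f)$ and $g$ and imposes no condition analogous to "$\phi$ an equivalence" beyond those. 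With that understood, the proof is a routine transcription of Lemma~\ref{lemma:cartesian-arrows-over-Orb}.
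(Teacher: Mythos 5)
Your second paragraph — the actual construction of cartesian lifts and the observation that $\Ar(\OO_{G,\pr})$ is the full arrow category (so $t^{\op}$ is cartesian over every edge rather than only over a marked subcategory) — is exactly the transcription the paper has in mind, and it is correct. The proposal is essentially the paper's argument by analogy to Lemma~\ref{lemma:cartesian-arrows-over-Orb}, which is all the paper itself gives.

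However, your opening paragraph has the variances systematically backwards, in a way that, taken at face value, would argue for the wrong conclusion. For the full arrow category, $t\colon\Ar(\CC)\to\CC$ is \emph{cocartesian} (cocartesian edges are squares whose source component is an equivalence) and $s\colon\Ar(\CC)\to\CC$ is \emph{cartesian}, not the other way around; consequently $t^{\op}$ is \emph{cartesian} (which is what you need, since $\pi=t^{\op}\circ\pi_{\Ar}$ is asserted to be cartesian) and $s^{\op}$ is cocartesian. As written you assert $t^{\op}$ is cocartesian, which would push toward showing $\pi$ is cocartesian and would contradict the statement; you then silently self-correct in the final sentence of that paragraph (``the composite of a cocartesian fibration with a cartesian one''). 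The paper's own Lemma~\ref{lemma:cartesian-arrows-over-Orb} carefully gets $t^{\op}\colon\Arinj(\Glo)^{\op}\to\Glo^{\op}$ to be a cartesian fibration via Proposition~\ref{prop:ArRcocart}, which is the one place the factorization system is used; for $\Ar(\OO_{G,\pr})$ you get this for free over all edges, which is why the new statement is a genuine cartesian fibration rather than a cartesian-over-a-marking. You should also not appeal to ``cartesian edges compose'': $\pi_{\Ar}$ is only known to be cocartesian (it is the pullback of the cocartesian $\pi_G$), so $\pi$ is a cocartesian fibration followed by a cartesian one and no composition law applies; the thing that actually makes the constructed edge $\pi$-cartesian is the mapping-space check in the pullback $\ttOR_G\simeq\Ar(\OO_{G,\pr})^{\op}\times_{\OO_{G,\pr}^{\op},\,s^{\op}}\OR_G$, using that both $\sigma$ (being $t^{\op}$-cartesian) and $\phi$ (being an equivalence, hence cartesian for $\pi_G$) induce the relevant pullback squares — which is precisely what the paper asserts with ``both components are $\pi$-cartesian.'' Finally, the aside ``the fibre of $s^{\op}$ over $\sigma_0$ is a point'' does not parse ($\sigma_0$ is in the source of $s^{\op}$), and the incidental ``$V\in\OR_K$'' should be $V\in\Rep(L)$, i.e.\ $v=(L,V)$ over $s^{\op}(\sigma_0)=G/L$, as your next sentence correctly uses.
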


\begin{proof}
The proof is analogous to Lemma \ref{lemma:cartesian-arrows-over-Orb}.
\end{proof}	

\begin{proposition}\label{prop:fibers_tORG}
$\ttOR_G^\otimes \times_{(\OO_G^{\op})^\amalg} \{G/H\} \simeq \OR_H^\otimes$.
\end{proposition}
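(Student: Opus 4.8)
The plan is to compute the operadic fibre of $\pi\colon \ttOR_G^\otimes\to (\OO_{G,\pr}^{\op})^\amalg$ at $G/H$ by unwinding the two pullbacks defining $\ttOR_G^\otimes$ and exploiting the fact that $(-)^\amalg$ preserves pullbacks, in direct analogy with the proof of Lemma~\ref{lemma:tOR-promonoidal}. Recall that the operadic fibre at $G/H$ is by definition $\ttOR_G^\otimes\times_{(\OO_{G,\pr}^{\op})^\amalg}\Finp$, where $\Finp\to (\OO_{G,\pr}^{\op})^\amalg$ is the map $c$ of Notation~\ref{notation-constant} applied to the object $G/H$; equivalently it is obtained by applying $(-)^\amalg$ to $\Delta^0\xrightarrow{G/H}\OO_{G,\pr}$ and pulling back along $\pi$. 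The first key step is to identify the operadic fibre of $t^{\op}\colon (\Ar(\OO_{G,\pr})^{\op})^\amalg\to (\OO_{G,\pr}^{\op})^\amalg$ at $G/H$, which (using that $(-)^\amalg$ commutes with pullbacks and that $\Ar(\OO_{G,\pr})\times_{\OO_{G,\pr},t}\{G/H\}\simeq (\OO_{G,\pr})_{/(G/H)}\simeq \OO_H$ by Lemma~\ref{lem:Orbslices}, since $\OO_{G,\pr}\simeq \Orb_{/G}$ restricted to compact subgroups) is $(\OO_H^{\op})^\amalg$.

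Next I would combine this with Lemma~\ref{lem:ORG-as-a-pullback}. We have the defining pullback square for $\ttOR_G^\otimes$ over $(\OO_{G,\pr}^{\op})^\amalg$, and then the further pullback square of Lemma~\ref{lem:ORG-as-a-pullback} expressing $\OR_G^\otimes$ as $\OR_{gl}^\otimes\times_{(\Glo^{\op})^\amalg}(\OO_{G,\pr}^{\op})^\amalg$. Pasting these, $\ttOR_G^\otimes$ is the pullback of $\OR_{gl}^\otimes\to (\Glo^{\op})^\amalg$ along the composite $(\Ar(\OO_{G,\pr})^{\op})^\amalg\xrightarrow{s^{\op}} (\OO_{G,\pr}^{\op})^\amalg\xrightarrow{\iota_G^\amalg}(\Glo^{\op})^\amalg$. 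Now restrict the whole picture to the operadic fibre over $G/H$: the slice $(\OO_{G,\pr})_{/(G/H)}\simeq \OO_H$ maps to $\OO_{G,\pr}$ via the forgetful functor, and the key observation (exactly as in the last line of the proof of Lemma~\ref{lemma:tOR-promonoidal}) is that the composite $\OO_H^{\op}\to \Ar(\OO_{G,\pr})^{\op}\xrightarrow{s^{\op}}\OO_{G,\pr}^{\op}\xrightarrow{\iota_G}\Glo^{\op}$ agrees with $\iota_H\colon \OO_{H,\pr}^{\op}=\OO_H^{\op}\to \Glo^{\op}$. Granting this, the operadic fibre of $\ttOR_G^\otimes$ at $G/H$ becomes $\OR_{gl}^\otimes\times_{(\Glo^{\op})^\amalg}(\OO_H^{\op})^\amalg$, which is precisely $\OR_H^\otimes$ by Lemma~\ref{lem:ORG-as-a-pullback} again (applied to the group $H$, noting $H$ is compact so $\OO_{H,\pr}=\OO_H$). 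This chain of identifications is natural enough that it will also respect the symmetric monoidal structures, so the stated equivalence holds as $\infty$-operads, not merely underlying $\infty$-categories.

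The main obstacle I anticipate is the bookkeeping needed to verify that the composite $\OO_H^{\op}\to \OR_{gl}^\otimes$ obtained from the pasted pullbacks genuinely agrees with $\nu_H$ composed with $\iota_H^\amalg$ — i.e.\ that the various cocartesian-section and slice identifications are compatible. Concretely one must check that under $\OO_H\simeq (\OO_{G,\pr})_{/(G/H)}$, a coset $H/L$ (for $L\leq H\leq G$) is sent by the relevant composite to the object $(L,\cdot)$-data in $\OR_{gl}$ in the same way $\iota_H$ sends it, and that the behaviour on morphism spaces matches the $c_g$-twisting appearing in Definition~\ref{def-OR_G} and the proof of Lemma~\ref{lem:ORG-as-a-pullback}. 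This is entirely parallel to the argument already carried out there — it is the statement that ``$\OO_G\simeq \Orb_{/G}$ is functorial in $G$'' in the appropriate sense — so I would not reprove it from scratch but cite Lemma~\ref{lem:ORG-as-a-pullback} and Lemma~\ref{lem:Orbslices} and indicate that the compatibility is a diagram chase. The remaining verifications (that $(-)^\amalg$ preserves the pullbacks in question, which is noted in the excerpt, and that the symmetric monoidal structures are compatible) are routine.

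Thus the proof is essentially: paste the defining pullback of $\ttOR_G^\otimes$ with the pullback of Lemma~\ref{lem:ORG-as-a-pullback}; restrict to the operadic fibre over $G/H$ using Lemma~\ref{lem:Orbslices} to identify $\Ar(\OO_{G,\pr})\times_{\OO_{G,\pr}}\{G/H\}\simeq \OO_H$ and using that $(-)^\amalg$ preserves pullbacks; identify the resulting composite map $\OO_H^{\op}\to \Glo^{\op}$ with $\iota_H$; and apply Lemma~\ref{lem:ORG-as-a-pullback} a second time, now for the compact group $H$, to recognise the pullback $\OR_{gl}^\otimes\times_{(\Glo^{\op})^\amalg}(\OO_H^{\op})^\amalg$ as $\OR_H^\otimes$.
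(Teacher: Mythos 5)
Your argument is essentially the paper's: you paste the defining pullback of $\ttOR_G^\otimes$ with the square of Lemma~\ref{lem:ORG-as-a-pullback}, identify the operadic fibre over $G/H$ of $(\Ar(\OO_{G,\pr})^{\op})^\amalg$ with $(\OO_H^{\op})^\amalg$, verify that the induced composite $(\OO_H^{\op})^\amalg\to(\Glo^{\op})^\amalg$ is $\iota_H^\amalg$, and then apply Lemma~\ref{lem:ORG-as-a-pullback} a second time for the compact group $H$. The paper records exactly this pasted pullback diagram (with the same bottom-left identification and the same "direct computation" that the middle composite is $\iota_H^\amalg$), so your proposal matches its proof.
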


\begin{proof}
The pullback $P=\ttOR_G^\otimes \times_{(\OO_G^{\op})^\amalg} \{G/H\}$ fits into the following diagram
\[
\begin{tikzcd}
	P \arrow[d] \arrow[r]                    & \ttOR_G \arrow[d] \arrow[r]                         & \OR_G^\otimes \arrow[d] \arrow[r]    & \ORgl^\otimes \arrow[d] \\
	(\OO_{H}^{\op})^\amalg \arrow[d] \arrow[r] & {(\Ar(\OO_{G,\pr})^{\op})^\amalg} \arrow[d] \arrow[r] & {(\OO_{G,\pr}^{\op})^\amalg} \arrow[r] & (\Glo^{\op})^\amalg           \\
	\{G/H\} \arrow[r]                        & {(\OO_{G,\pr}^{\op})^\amalg}                          &                                      &                            
\end{tikzcd}
\] in which every square is a pullback. One can show by direct computation that the middle composite $(\OO_H^{\op})^\amalg \rightarrow (\Glo^{\op})^\amalg$ is equivalent to $\iota_H^\amalg$. Therefore the result follows from Lemma \ref{lem:ORG-as-a-pullback}.
\end{proof}
	
\begin{definition}
Consider the Day convolution operad \[\Fun_{\OO_{G,\pr}}(\ttOR_G^\otimes,\Spcp^\wedge \times (\OO_{G,\pr}^{\op})^\amalg)^{Day}.\] Just as in Section \ref{sec:funct_prespectra}, this is an $(\OO_{G,\pr}^{\op})^\amalg$-monoidal category. We define \[\DgSpcp{\OR_\bullet}\colon \OO_{G,\pr}^{\op} \rightarrow \Cat_\infty^\otimes\] to be the functor associated to it by the equivalence of Proposition \ref{prop:equiv_amalg}. By Proposition \ref{prop:fibers_tORG}, the value of $\DgSpcp{\OR_\bullet}$ at $G/H$ is equivalent to $\DgSpcp{\OR_H}.$
\end{definition}

\begin{proposition}\label{prop:tORG_localization}
The projection map $\ttOR_G^\otimes \rightarrow \OR_{G}^\otimes$ induces a fully faithful symmetric monoidal functor $\DgSpcp{\OR_G} \rightarrow \DgSpcp{\ttOR_G}$, given by restriction. A functor $F\colon \ttOR_G\rightarrow \Spcp$ is in its essential image if and only if $F$ sends $\pi$-cartesian edges to equivalences.
\end{proposition}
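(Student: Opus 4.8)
The statement is the proper-equivariant analogue of Lemma~\ref{lem:ORgl-is-localization-of-tOR}, and the plan is to mimic its proof. First I would verify that the projection $\ttOR_G^\otimes\to\OR_G^\otimes$ is an operadic Bousfield localization, i.e. that it admits a fully faithful operadic right adjoint. Recall from Lemma~\ref{lem:adjointsource} that the source projection $s\colon\Ar(\OO_{G,\pr})\to\OO_{G,\pr}$ has a fully faithful left adjoint, namely the constant-diagram functor $s_0$; hence on opposite categories $s^{\op}$ has a fully faithful right adjoint, and applying $(-)^\amalg$ (which preserves adjunctions of this type by \cite[Proposition~2.4.3.16]{HA}) the map $(s^{\op})^\amalg\colon(\Ar(\OO_{G,\pr})^{\op})^\amalg\to(\OO_{G,\pr}^{\op})^\amalg$ acquires a fully faithful operadic right adjoint. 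Since $\ttOR_G^\otimes\to\OR_G^\otimes$ is by definition the base change of $(s^{\op})^\amalg$ along the structure map $\OR_G^\otimes\to(\OO_{G,\pr}^{\op})^\amalg$, and fully faithful operadic right adjoints (Bousfield localizations) are stable under base change, the claim follows. In particular the underlying functor $\ttOR_G\to\OR_G$ is a Bousfield localization of $\infty$-categories.

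Next I would deduce the statement about copresheaves. Because $\ttOR_G\to\OR_G$ is a Bousfield localization with fully faithful right adjoint, restriction along it gives a fully faithful functor $\DgSpcp{\OR_G}\to\DgSpcp{\ttOR_G}$; and this functor is symmetric monoidal for the Day convolution structures by Proposition~\ref{proposition:functoriality-of-presheaves}(b), applied to the operadic right adjoint just produced (here one uses that the promonoidal structures on $\ttOR_G^\otimes$ and $\OR_G^\otimes$ over $(\OO_{G,\pr}^{\op})^\amalg$ are as discussed after the definition of $\ttOR_G^\otimes$, together with Corollary~\ref{thm-I-G-spaces-sym-mon-equiv} which identifies $\DgSpcp{\OR_G}$ with the relevant Day convolution). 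The essential image of a restriction functor along a Bousfield localization $q\colon \ttOR_G\to\OR_G$ consists precisely of those functors $F\colon\ttOR_G\to\Spcp$ that invert the edges which $q$ sends to equivalences — equivalently, the $q$-local objects. It therefore remains to identify that class of edges.

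Finally I would identify the edges inverted by $q\colon\ttOR_G\to\OR_G$ with the $\pi$-cartesian edges. Since $q$ is the base change of $s^{\op}\colon \Ar(\OO_{G,\pr})^{\op}\to \OO_{G,\pr}^{\op}$ (on underlying categories, after applying $(-)^\amalg$ and pulling back along $\OR_G\to \OO_{G,\pr}^{\op}$), and since the left adjoint to $s^{\op}$ inverts exactly those arrows of $\Ar(\OO_{G,\pr})^{\op}$ whose image under $s^{\op}$ is an equivalence, the localization $q$ inverts exactly those edges $(f,g)$ of $\ttOR_G$ for which $s^{\op}(f)$ and $g$ are both equivalences. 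By the characterization of $\pi$-cartesian edges proven just above (the proper analogue of Lemma~\ref{lemma:cartesian-arrows-over-Orb}), these are precisely the $\pi$-cartesian edges. This gives the desired description of the essential image and completes the proof. The only point requiring any care is bookkeeping: checking that the promonoidal/Day-convolution identifications and the base-change-of-adjunctions are all compatible, so that Proposition~\ref{proposition:functoriality-of-presheaves}(b) genuinely applies — but this is entirely parallel to the global case and presents no new difficulty.
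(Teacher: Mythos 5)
Your proposal is correct and takes the same route the paper does: the paper's own proof of this proposition is a one-line remark that "the argument is identical to that of Lemma~\ref{lem:ORgl-is-localization-of-tOR}," and your write-up is essentially a careful transcription of that earlier proof (fully faithful operadic right adjoint to the source projection via Lemma~\ref{lem:adjointsource} and $(-)^\amalg$, stability of Bousfield localizations under base change, symmetric monoidality via Proposition~\ref{proposition:functoriality-of-presheaves}(b), and identification of the inverted edges with $\pi$-cartesian ones) with $\Glo$ and $\Arinj(\Glo)$ replaced by $\OO_{G,\pr}$ and $\Ar(\OO_{G,\pr})$.
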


\begin{proof}
The argument is identical to that of Lemma \ref{lem:ORgl-is-localization-of-tOR}.
\end{proof}

\begin{lemma}
There is a symmetric monoidal equivalence
\[
\lim_{\OO_{G,\pr}^{\op}} \DgSpcp{\OR_\bullet} \simeq \DgSpcp{\OR_G}.
\]
\end{lemma}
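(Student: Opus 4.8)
The statement asserts that the limit of $\DgSpcp{\OR_\bullet}$ over $\OO_{G,\pr}^{\op}$ is $\DgSpcp{\OR_G}$, together with its Day convolution symmetric monoidal structure. I would deduce this by exactly the same chain of reasoning used for the lax limit in Lemma~\ref{lem:laxlim-of-OR-spaces}, only now every edge is marked so the lax limit degenerates to an honest limit. First, apply Proposition~\ref{proposition:laxlimit-is-norm} to the $(\OO_{G,\pr}^{\op})^\amalg$-monoidal $\infty$-category classifying $\DgSpcp{\OR_\bullet}$, namely $\Fun_{\OO_{G,\pr}^{\op}}(\ttOR_G^\otimes,\Spcp^\wedge\times(\OO_{G,\pr}^{\op})^\amalg)^{Day}$: this identifies the lax limit with $N_q\Fun_{\OO_{G,\pr}^{\op}}(\ttOR_G^\otimes,\Spcp^\wedge\times(\OO_{G,\pr}^{\op})^\amalg)^{Day}$, where $q\colon(\OO_{G,\pr}^{\op})^\amalg\to\Finp$. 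Using the transitivity of operadic norms and the double application of the Day convolution formula (Definition~\ref{def-day-conv}), exactly as in the display in the proof of Lemma~\ref{lem:laxlim-of-OR-spaces}, this simplifies to $\Fun(\ttOR_G^\otimes,\Spcp^\wedge)^{Day}=\DgSpcp{\ttOR_G}$. So $\laxlim_{\OO_{G,\pr}^{\op}}\DgSpcp{\OR_\bullet}\simeq\DgSpcp{\ttOR_G}$ as symmetric monoidal $\infty$-categories.

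Second, I would identify the (honest) limit as a full symmetric monoidal subcategory of this lax limit. By Remark~\ref{rem-partially-lax-underlying} it suffices to do this on underlying $\infty$-categories, and here one invokes Theorem~\ref{thm-lax-limit-section}: the limit — i.e.\ the partially lax limit with the \emph{maximal} marking $(\OO_{G,\pr}^{\op})^\sharp$ — is the full subcategory of sections of the cocartesian fibration classifying $i\mapsto\DgSpcp{\OR_i}$ that send \emph{every} edge to a cocartesian arrow. Arguing exactly as in Lemma~\ref{lem:laxlim-of-OR-spaces} via Remarks~\ref{rem-norm-underlying} and~\ref{rem-norm-extend-sec}, the underlying $\infty$-category of the relevant norm is $\Fun(\ttOR_G,\Spcp)$, and by \cite[Corollary~3.2.2.13]{HTT} together with the characterization of $\pi$-cartesian edges in $\ttOR_G$ (the Proposition following the definition of $\ttOR_G^\otimes$, whose proof is analogous to Lemma~\ref{lemma:cartesian-arrows-over-Orb}), these sections correspond precisely to those functors $F\colon\ttOR_G\to\Spcp$ sending $\pi$-cartesian edges to equivalences. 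By Proposition~\ref{prop:tORG_localization} this subcategory is exactly the essential image of $\DgSpcp{\OR_G}\hookrightarrow\DgSpcp{\ttOR_G}$, and that inclusion is symmetric monoidal, so we conclude $\lim_{\OO_{G,\pr}^{\op}}\DgSpcp{\OR_\bullet}\simeq\DgSpcp{\OR_G}$ symmetric monoidally.

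\textbf{Where the work is.} Essentially all of the substance has been front-loaded into the infrastructure already set up for the global case: the Day-convolution/norm juggling (Proposition~\ref{proposition:laxlimit-is-norm}, Lemma~\ref{lemma:norm-and-pullbacks}, Example~\ref{ex-operadic-fibre}), the section-theoretic description of limits (Theorem~\ref{thm-lax-limit-section}), and the localization statement (Proposition~\ref{prop:tORG_localization}, which depends on the cartesian-edge analysis in $\ttOR_G$). The one point that genuinely needs care is checking that the maximal marking really does cut out the stated subcategory: one must verify that the source functor $\ttOR_G\to\OR_G$ being a Bousfield localization (the proof of Proposition~\ref{prop:tORG_localization}, parallel to Lemma~\ref{lem:ORgl-is-localization-of-tOR} via the adjoint source functor of Lemma~\ref{lem:adjointsource} and functoriality of the cocartesian operad) is compatible with the cocartesian-edge condition coming from \emph{all} of $\OO_{G,\pr}^{\op}$, not merely a marked subcategory — but since here the marking is everything, this is if anything simpler than in Lemma~\ref{lem:laxlim-of-OR-spaces}, where one had to restrict along $\Orb^{\op}\subseteq\Glo^{\op}$. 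So the main obstacle is purely bookkeeping: making sure the chain of operadic identities is assembled in the right order and that the symmetric monoidal structure is tracked through each step; there is no new conceptual input.
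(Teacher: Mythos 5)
Your proposal is correct and follows essentially the same route as the paper: both identify the lax limit with $\DgSpcp{\ttOR_G}$ via the norm/Day-convolution manipulations from Lemma~\ref{lem:laxlim-of-OR-spaces}, then cut out the honest limit on underlying categories using Remark~\ref{rem-partially-lax-underlying}, the $\pi$-cartesian-edge characterization, and Proposition~\ref{prop:tORG_localization}. The paper simply cites the global-case lemma more tersely; you have unpacked the same steps.
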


\begin{proof}
The calculation at the beginning of the proof of Lemma \ref{lem:laxlim-of-OR-spaces} shows that the lax limit of the diagram $\DgSpcp{\OR_\bullet}$ is equivalent to the symmetric monoidal category $\DgSpcp{\ttOR_G}$ To compute the actual limit, we can once again argue on underlying categories by appealing to Remark \ref{rem-partially-lax-underlying}. Note that by Remark \ref{rem-norm-underlying}, the underlying category of $\DgSpcp{\ttOR_G}$ is equivalent to $\Fun(\ttOR_G,\Spcp)$. The analysis of the second half of the proof of Lemma \ref{lem:laxlim-of-OR-spaces} implies that the limit is equivalent to the full subcategory spanned by the functors which send $\pi$-cartesian edges to equivalences. By Proposition \ref{prop:tORG_localization} this subcategory is equivalent to $\Fun(\OR_{G},\Spcp)$.
\end{proof}

Recall from Definition \ref{def-prespectra} that $\OR_G$-spaces admit an algebra object $S_G$, whose restriction to $\OR_H$-spaces for $H$ a compact subgroup of $G$ is equivalent to $S_H$.

\begin{corollary}
There exists a functor $\PSp_\bullet\colon \OO_{G,\pr}^{\op}\rightarrow \Cat_\infty^\otimes$, and one calculates
\[\lim_{\OO_{G,\pr}^{\op}} \PSp_\bullet \simeq \Mod_{S_G}(\DgSpc{\OR_G})\]
\end{corollary}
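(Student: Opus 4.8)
The plan is to run the argument of Proposition~\ref{proposition:functoriality-of-prespectra} with $\Glo^{\op}$ replaced by $\OO_{G,\pr}^{\op}$, the promonoidal category $\tOR^\otimes$ replaced by $\ttOR_G^\otimes$, and the partially lax limit replaced by the honest limit. Recall from Proposition~\ref{prop:tORG_localization} that $\DgSpcp{\OR_G}$ is a symmetric monoidal subcategory of $\DgSpcp{\ttOR_G}$, and recall from above the commutative algebra object $S_G$ of $\DgSpcp{\OR_G}$ (hence of $\DgSpcp{\ttOR_G}$), whose restriction along each inclusion $\OR_H\hookrightarrow\OR_G$, for $H\leq G$ compact, is $S_H$. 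Here $\DgSpcp{\OR_G}\simeq\PSp_G$ by Corollary~\ref{cor:G-prespectra-are-modules}, so that $\Mod_{S_G}(\DgSpcp{\OR_G})$ is the object named $\Mod_{S_G}(\DgSpc{\OR_G})$ in the statement.

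First I would assemble the inputs to Theorem~\ref{thm:modules-in-laxlimit}. The diagram $\DgSpcp{\OR_\bullet}\colon\OO_{G,\pr}^{\op}\to\Cat_\infty^\otimes$ is pointwise compatible with colimits by Corollary~\ref{cor:tensor-in-Day-convolution}; its lax limit is $\DgSpcp{\ttOR_G}$ and its ordinary limit is $\DgSpcp{\OR_G}$, both by (the proof of) the preceding Lemma together with the computation in the proof of Lemma~\ref{lem:laxlim-of-OR-spaces}. Feeding $\DgSpcp{\OR_\bullet}$ and the algebra $S_G\in\CAlg(\laxlim\DgSpcp{\OR_\bullet})$ into Theorem~\ref{thm:modules-in-laxlimit} produces a functor $\OO_{G,\pr}^{\op}\to\Cat_\infty^\otimes$, $G/H\mapsto\Mod_{S_H}(\DgSpcp{\OR_H})$, which by Corollary~\ref{cor:G-prespectra-are-modules} is the sought functor $\PSp_\bullet$ with $\PSp_\bullet(G/H)\simeq\PSp_H$, together with a symmetric monoidal equivalence $\laxlim_{\OO_{G,\pr}^{\op}}\PSp_\bullet\simeq\Mod_{S_G}(\DgSpcp{\ttOR_G})$.

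It then remains to extract the honest limit, and here I would copy the end of the proof of Proposition~\ref{proposition:functoriality-of-prespectra}. Since the forgetful functors $\PSp_H\to\DgSpcp{\OR_H}$ are pointwise conservative, membership in the sub--symmetric monoidal $\infty$-category $\lim\PSp_\bullet\subseteq\laxlim\PSp_\bullet$ (note that the honest limit is the partially lax limit for the maximal marking) can be tested after applying the forgetful functors; hence there is a pullback square of symmetric monoidal $\infty$-categories
\[\begin{tikzcd}
\lim_{\OO_{G,\pr}^{\op}}\PSp_\bullet \arrow[r] \arrow[d] & \laxlim_{\OO_{G,\pr}^{\op}}\PSp_\bullet \arrow[d]\\
\lim_{\OO_{G,\pr}^{\op}}\DgSpcp{\OR_\bullet} \arrow[r] & \laxlim_{\OO_{G,\pr}^{\op}}\DgSpcp{\OR_\bullet}.
\end{tikzcd}\]
By the previous paragraph the right-hand column is $\Mod_{S_G}(\DgSpcp{\ttOR_G})$ and the bottom arrow is the fully faithful inclusion $\DgSpcp{\OR_G}\hookrightarrow\DgSpcp{\ttOR_G}$, so $\lim_{\OO_{G,\pr}^{\op}}\PSp_\bullet\simeq\Mod_{S_G}(\DgSpcp{\ttOR_G})\times_{\DgSpcp{\ttOR_G}}\DgSpcp{\OR_G}$. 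Since $S_G$ already lies in $\DgSpcp{\OR_G}$, this fibre product is $\Mod_{S_G}(\DgSpcp{\OR_G})\simeq\PSp_G$, giving the claimed equivalence.

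\textbf{Main obstacle.} As the preamble to this section indicates, once $\ttOR_G^\otimes$ and $\DgSpcp{\OR_\bullet}$ are set up the identification of the limit is almost formal; the only delicate points are (i) checking the hypotheses of Theorem~\ref{thm:modules-in-laxlimit} --- in particular that the algebra it receives restricts on fibres to the $S_H$ --- and (ii) the pullback-square reduction from the lax limit to the ordinary limit. Both are carried out exactly as in the global case of Section~\ref{sec:funct_prespectra}, and are in fact marginally simpler here since there is no nontrivial marking to keep track of.
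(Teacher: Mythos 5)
Your argument runs exactly parallel to the paper's proof: you invoke Theorem~\ref{thm:modules-in-laxlimit} with the promonoidal datum $\ttOR_G^\otimes$ in place of $\tOR^\otimes$ to produce $\PSp_\bullet$ and identify the lax limit, then pass to the honest limit via the conservativity/pullback-square argument from the end of Proposition~\ref{proposition:functoriality-of-prespectra} (using that $\lim$ is the partially lax limit for the maximal marking). This is precisely the "argument as in Proposition~\ref{proposition:functoriality-of-prespectra}" the paper appeals to, so the proposal is correct and matches the paper's approach.
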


\begin{proof}
Once again $\PSp_\bullet$ is defined as $\Mod_{S^\bullet}(\DgSpcp{\OR_\bullet})$, using Theorem~\ref{thm:modules-in-laxlimit}. An argument as in Proposition \ref{proposition:functoriality-of-prespectra} allows us to calculate the limit.
\end{proof}

So far we have constructed and computed the limit of the diagram $\PSp_\bullet\colon \OO_{G,\pr}^{\op}\rightarrow \Cat_\infty^\otimes$. Given a map $\alpha\colon H \hookrightarrow K\subset G$ in $\OO_{G,\pr}$, the induced map $\PSp_K\rightarrow \PSp_H$ is by construction equivalent to the global functoriality constructed in Section \ref{sec:funct_prespectra} evaluated at $\alpha$. Therefore the results there imply that $\PSp_\alpha$ preserves spectrum objects, and so we obtain a diagram $\Sp_\bullet\colon \OO_{G,\pr}^{\op}\rightarrow \Cat_\infty^\otimes$. Furthermore, Corollary \ref{cor:funct-Sp_bullet} implies that $\Sp_\alpha\colon \Sp_K\rightarrow \Sp_H$ agrees with the standard restriction functor between equivariant spectra. To calculate the limit of $\Sp_\bullet$, we apply Lemma \ref{lem:adjunction-to-the-marked-lax-limit} to conclude:

\begin{corollary}
 $\lim_{\OO_{G,\pr}^{\op}} \Sp_\bullet$ is a Bousfield localization of $\Mod_{S_G}(\DgSpcp{\OR_G})$ at the objects $X$ whose restriction to $\Mod_{S_H}(\DgSpcp{\OR_H})$ is an $H$-spectrum for every compact subgroup $H$ of $G$.
\end{corollary}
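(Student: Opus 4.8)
The plan is to deduce the final corollary directly from the machinery already assembled, mirroring the proof strategy used for global spectra in Section~\ref{sec:global-spectra-par-lax-lim}, but with the simplification that the relevant factorization system argument collapses to an honest limit. First I would recall that we have a natural transformation $L_\bullet \colon \PSp_\bullet \to \Sp_\bullet$ of functors $\OO_{G,\pr}^{\op} \to \Cat_\infty^\otimes$ whose component at $G/H$ is the spectrification functor $L_H \colon \PSp_H \to \Sp_H$, exhibiting $\Sp_H$ as a reflective subcategory of $\PSp_H$. The key input is that for every morphism $\alpha \colon G/H \to G/K$ in $\OO_{G,\pr}$, which corresponds to an injective group homomorphism up to conjugacy, the pushforward $\PSp_\alpha \colon \PSp_K \to \PSp_H$ sends $\Sp_K$ into $\Sp_H$; this is precisely Lemma~\ref{lem:PSp_alpha-respects-Omega-spectra} (applied to compact groups), and more generally $\PSp_\alpha$ sends $L_K$-equivalences to $L_H$-equivalences since this was already checked in the course of proving Proposition~\ref{prop-transformation-localization-spectra}.

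With these hypotheses verified, I would apply Lemma~\ref{lem:adjunction-to-the-marked-lax-limit} with the maximal marking on $\OO_{G,\pr}^{\op}$ (so that the partially lax limit is the honest limit, using the observation from Remark following Definition~\ref{def-lax-lim} that the maximal marking recovers the ordinary limit). The lemma then gives that $\lim_{\OO_{G,\pr}^{\op}} L_\bullet \colon \lim \PSp_\bullet \to \lim \Sp_\bullet$ has a fully faithful right adjoint, hence is a Bousfield localization. Since we have already identified $\lim_{\OO_{G,\pr}^{\op}} \PSp_\bullet \simeq \Mod_{S_G}(\DgSpcp{\OR_G})$, it only remains to identify the local objects. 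By the description of the right adjoint in Lemma~\ref{lem:adjunction-to-the-lax-limit}, an object of $\lim \PSp_\bullet$ lies in the image of $\lim \Sp_\bullet$ if and only if, under the canonical projection functors $\lim \PSp_\bullet \to \PSp_H$ for each $G/H \in \OO_{G,\pr}$, its image is a spectrum object in $\PSp_H$; equivalently, viewing the object as $X \in \Mod_{S_G}(\DgSpcp{\OR_G})$, its restriction along $\nu_H \colon \OR_H^\otimes \to \OR_G^\otimes$ (which realizes the projection functor, cf.\ the diagram of right adjoints in Section~\ref{sec:global-spectra-par-lax-lim}) lies in $\Sp_H$ for every compact subgroup $H \le G$.

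The main obstacle, such as it is, is the bookkeeping identifying the projection functor $\lim_{\OO_{G,\pr}^{\op}} \PSp_\bullet \to \PSp_H$ with restriction along $\nu_H$ under the equivalence $\lim \PSp_\bullet \simeq \Mod_{S_G}(\DgSpcp{\OR_G})$; this requires tracing through Proposition~\ref{prop:fibers_tORG}, Proposition~\ref{prop:tORG_localization}, and the compatibility of $S_G$ with the restriction $S_G|_{\OR_H} \simeq S_H$ noted before the corollary. But all of this is entirely parallel to the corresponding steps for $\PSplax$ carried out in detail in the previous sections, so no genuinely new difficulty arises; the statement follows formally once the functoriality $\Sp_\bullet$ on $\OO_{G,\pr}^{\op}$ and its identification with the standard restriction functors (Corollary~\ref{cor:funct-Sp_bullet}) are in hand.
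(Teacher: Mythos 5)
Your proposal is correct and follows the same route as the paper: having verified (via Lemma~\ref{lem:PSp_alpha-respects-Omega-spectra} and the argument in Proposition~\ref{prop-transformation-localization-spectra}) that the pushforwards along morphisms of $\OO_{G,\pr}^{\op}$ preserve both $L$-equivalences and the full subcategories of spectrum objects, you invoke Lemma~\ref{lem:adjunction-to-the-marked-lax-limit} with the maximal marking, which is precisely what the paper does. One small notational slip: the restriction functor $\lim\PSp_\bullet\simeq\Mod_{S_G}(\DgSpcp{\OR_G})\to\PSp_H$ is restriction along the inclusion $\OR_H\hookrightarrow\OR_G$ (equivalently along the inclusion of the fibre $\OR_H\simeq\ttOR_G\times_{\OO_{G,\pr}^{\op}}\{G/H\}$ into $\ttOR_G$ followed by the localization of Proposition~\ref{prop:tORG_localization}), not along $\nu_H\colon\OR_H^\otimes\to\ORgl^\otimes$, but this does not affect the substance of the argument.
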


Recall from Section \ref{sec:eq_prespectra} that the category of genuine proper $G$-spectra is also a Bousfield localization of $\Mod_{S_G}(\DgSpcp{\OR_G})$. Therefore it remains to show that the two subcategories agree.

\begin{proposition}
An object $X\in \Mod_{S_G}(\DgSpcp{\OR_G})$ is a $G$-spectrum if and only if for every compact subgroup $H\leq G$, the restriction of $X$ to $\Mod_{S_H}(\DgSpcp{\OR_H})$ is a $H$-spectrum.
\end{proposition}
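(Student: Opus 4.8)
The plan is to reduce the statement to the characterization of $G$-spectra inside $\PSp_G$ given in Proposition~\ref{prop-Sp_G-local}, which says that $X\in\PSp_G$ lies in $\Sp_G$ if and only if for every compact subgroup $H\leq G$ the $H$-prespectrum $\mathrm{res}^G_H X$ is local with respect to the maps $\{\lambda_{V,W}\}$ ranging over all $H$-representations $V$ and $W$. In particular, $X$ is a $G$-spectrum exactly when $\mathrm{res}^G_HX$ is an $H$-spectrum for all compact $H\leq G$. So at the level of the underlying statement there is essentially nothing to prove; the only content is to make sure that the restriction functor appearing in the corollary from Lemma~\ref{lem:adjunction-to-the-marked-lax-limit} agrees with the classical model-categorical restriction $\mathrm{res}^G_H\colon\PSp_G\to\PSp_H$.

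First I would recall that the diagram $\PSp_\bullet\colon \OO_{G,\pr}^{\op}\to\Cat_\infty^\otimes$ constructed above has, by construction, transition map along $\alpha\colon G/H\to G/K$ (with $H\hookrightarrow K\subseteq G$) equivalent to the evaluation at $\alpha$ of the global functoriality $\PSp_\bullet\colon \Glo^{\op}\to\Cat_\infty^\otimes$ of Proposition~\ref{proposition:functoriality-of-prespectra}; this uses the factorization of $\iota_G\colon \OO_{G,\pr}^{\op}\to\Glo^{\op}$ through the identification $\OO_H^{\op}\to\Glo^{\op}$ being $\iota_H^\amalg$ that appeared in the proof of Proposition~\ref{prop:fibers_tORG}. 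In particular, for a subgroup inclusion $H\hookrightarrow G$ the functor $\PSp_G\to\PSp_H$ in the diagram is $\alpha^*$ for the corresponding group homomorphism, which by Proposition~\ref{proposition:formulas-for-alpha*} and the modelling in Corollary~\ref{cor:G-prespectra-are-modules} is the model-categorical restriction $\mathrm{res}^G_H$. Then I would invoke the fact (Proposition~\ref{prop-Sp_G-local}, second statement) that $\mathrm{res}^G_H$ sends $\Sp_G$ into $\Sp_H$, which is exactly what guarantees that the Bousfield localization $\lim_{\OO_{G,\pr}^{\op}}\Sp_\bullet\subseteq \Mod_{S_G}(\DgSpcp{\OR_G})$ produced by Lemma~\ref{lem:adjunction-to-the-marked-lax-limit} is carved out by the condition ``$\mathrm{res}^G_HX\in\Sp_H$ for all compact $H\leq G$.''

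With these identifications in hand the proposition is immediate: the condition defining $\lim_{\OO_{G,\pr}^{\op}}\Sp_\bullet$ inside $\Mod_{S_G}(\DgSpcp{\OR_G})\simeq\PSp_G$ is precisely the condition of Proposition~\ref{prop-Sp_G-local}(c) characterizing $\Sp_G$, namely that $\mathrm{res}^G_HX$ is $\{\lambda_{V,W}\}$-local for every compact $H\leq G$. Hence the two Bousfield localizations coincide as full subcategories of $\PSp_G$, and moreover as symmetric monoidal $\infty$-categories since both localizations are compatible with the symmetric monoidal structure (the $\{\lambda_{V,W}\}$ are obtained by tensoring $\lambda_{V,0}$ up, compare the proof of Proposition~\ref{prop-transformation-localization-spectra}). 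I do not anticipate a genuine obstacle here; the only point requiring care is the bookkeeping that the transition functors in the $\OO_{G,\pr}^{\op}$-indexed diagram really are the classical restriction functors, so that Proposition~\ref{prop-Sp_G-local} applies verbatim — this is handled by the compatibility of $\iota_G$ with the $\iota_H$'s and by Corollary~\ref{cor:funct-Sp_bullet} together with Proposition~\ref{prop-suspensions-agree}.
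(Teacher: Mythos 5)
Your approach is correct and is essentially the same one the paper takes: reduce to Proposition~\ref{prop-Sp_G-local} and observe that the two conditions on $X$ coincide. However, your write-up puts the emphasis in a slightly misleading place. You say ``at the level of the underlying statement there is essentially nothing to prove'' and move on to the bookkeeping that the transition maps in the $\OO_{G,\pr}^{\op}$-diagram are the classical restriction functors. That bookkeeping is relevant (and the paper does discuss it, but in the paragraph \emph{before} the proposition), whereas the small argument you wave away as the ``in particular'' is where the paper actually spends its proof.

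Concretely: Proposition~\ref{prop-Sp_G-local} says $X\in\Sp_G$ iff for every compact $H\le G$ the $H$-prespectrum $\mathrm{res}^G_H X$ is local with respect to $\{\lambda_{V,W}\}$ over all $H$-representations. That is not, for a fixed $H$, the same condition as ``$\mathrm{res}^G_H X\in\Sp_H$''; the latter, by Proposition~\ref{prop-Sp_G-local} applied to $H$, asks for locality of $\mathrm{res}^H_K\mathrm{res}^G_H X$ for all compact $K\le H$. The equivalence of the two families of conditions (ranging over all compact $H\le G$) uses the transitivity $\mathrm{res}^H_K\circ\mathrm{res}^G_H = \mathrm{res}^G_K$ together with the observation that the compact subgroups of $G$ are exactly the $K$ that arise as $K\le H\le G$ with both compact. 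This two-line transitivity argument is precisely the content of the paper's proof, and your proposal should state it explicitly rather than absorb it into an ``in particular.'' Everything else you wrote is fine, but somewhat tangential for this particular proposition.
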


\begin{proof}
Recall from Proposition \ref{prop-Sp_G-local} that an object $X\in \PSp_G$ is a $G$-spectrum if and only if for all compact subgroups $H\leq G$, the object $\mathrm{res}^G_H X$ is local with respect to $\lambda_{H,V,W}$. Now by definition $\mathrm{res}^G_H X$ is a $G$-spectrum if and only if $\mathrm{res}^H_K\mathrm{res}^G_H X$ is local with respect to $\lambda_{K,V,W}$. However because $\mathrm{res}^H_K\mathrm{res}^G_H = \mathrm{res}^G_K$, we conclude that the two conditions of the theorem agree.
\end{proof}

Thus we can conclude the main theorem of this section:

\begin{theorem}\label{thm-lim-proper-spectra}
	The category of proper $G$-spectra is equivalent to the limit of the diagram $\Sp_\bullet\colon \OO_{G,\pr}^{\op}\rightarrow \Cat_\infty^\otimes$, in symbols
	\[\Sp_{G} \simeq \lim_{\OO_{G,\pr}^{\op}} \Sp_\bullet.\]
\end{theorem}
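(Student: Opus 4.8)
The plan is to follow exactly the same template that was used for global spectra in Section~\ref{sec:global-spectra-par-lax-lim}, but it is in fact considerably easier: rather than having to compare two \emph{different} combinatorial models (global prespectra on $\ORgl$ versus faithful global prespectra on $\ORfgl$), here both sides are already localizations of the \emph{same} $\infty$-category $\Mod_{S_G}(\DgSpcp{\OR_G})$, so the equivalence will fall out once we check the two sets of local objects coincide. Concretely, I would first assemble the diagram $\PSp_\bullet\colon\OO_{G,\pr}^{\op}\to\Cat_\infty^\otimes$ by transporting the constructions of Section~\ref{sec:funct_prespectra} along the pullback square of Lemma~\ref{lem:ORG-as-a-pullback}: build $\ttOR_G^\otimes$ as the pullback of $\OR_G^\otimes$ along $s^{\op}\colon(\Ar(\OO_{G,\pr})^{\op})^\amalg\to(\OO_{G,\pr}^{\op})^\amalg$, observe exactly as in Lemma~\ref{lemma:tOR-promonoidal} and Lemma~\ref{lemma:cartesian-arrows-over-Orb} that $\pi\colon\ttOR_G\to\OO_{G,\pr}^{\op}$ is a $(\OO_{G,\pr}^{\op})^\amalg$-promonoidal $\infty$-category with operadic fibre $\OR_H^\otimes$ over $G/H$ (Proposition~\ref{prop:fibers_tORG}), and then define $\DgSpcp{\OR_\bullet}$ via the Day convolution $\Fun_{\OO_{G,\pr}}(\ttOR_G^\otimes,\Spcp^\wedge\times(\OO_{G,\pr}^{\op})^\amalg)^{Day}$ using Example~\ref{ex-operadic-fibre}, and $\PSp_\bullet\coloneqq\Mod_{S_\bullet}(\DgSpcp{\OR_\bullet})$ using Theorem~\ref{thm:modules-in-laxlimit}.

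The second step is the limit computation. Since we are taking the \emph{full} (non-lax) limit, the marking is the maximal one, so by Theorem~\ref{thm-lax-limit-section} the limit is the subcategory of sections of the cocartesian fibration classifying $\DgSpcp{\OR_\bullet}$ spanned by those sending \emph{every} edge to a cocartesian arrow. Repeating verbatim the two-step argument at the start of Lemma~\ref{lem:laxlim-of-OR-spaces} — transitivity of operadic norms to identify $\laxlim\DgSpcp{\OR_\bullet}\simeq\DgSpcp{\ttOR_G}$, then Remark~\ref{rem-partially-lax-underlying} plus~\cite{HTT}*{Corollary 3.2.2.13} to cut down to those sections inverting the $\pi$-cartesian edges, which by Proposition~\ref{prop:tORG_localization} are exactly $\DgSpc{\OR_G}$ — gives the symmetric monoidal equivalence $\lim\DgSpcp{\OR_\bullet}\simeq\DgSpc{\OR_G}$. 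Applying Theorem~\ref{thm:modules-in-laxlimit} again (as in Proposition~\ref{proposition:functoriality-of-prespectra}) upgrades this to $\lim_{\OO_{G,\pr}^{\op}}\PSp_\bullet\simeq\Mod_{S_G}(\DgSpcp{\OR_G})=\PSp_G$.

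The third step is to pass from prespectra to spectra. The restriction functor $\PSp_\alpha$ for an inclusion $\alpha\colon H\hookrightarrow K$ of compact subgroups of $G$ is, by construction, the value at $\alpha$ of the global functoriality of Section~\ref{sec:funct_prespectra}; hence Proposition~\ref{prop-transformation-localization-spectra} shows it preserves stable equivalences and Lemma~\ref{lem:PSp_alpha-respects-Omega-spectra} shows it preserves $\Omega$-spectra, so Lemma~\ref{lem:adjunction-to-the-marked-lax-limit} produces $\Sp_\bullet\colon\OO_{G,\pr}^{\op}\to\Cat_\infty^\otimes$ (with $\Sp_\alpha$ the standard restriction by Corollary~\ref{cor:funct-Sp_bullet}) and exhibits $\lim_{\OO_{G,\pr}^{\op}}\Sp_\bullet$ as the Bousfield localization of $\Mod_{S_G}(\DgSpcp{\OR_G})=\PSp_G$ at those $X$ whose restriction to $\PSp_H$ is an $H$-spectrum for every compact $H\le G$.

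The final step — and this is the only genuinely new input, though it is short — is to match this localization against the one defining proper $G$-spectra. By Proposition~\ref{prop-Sp_G-local}, $X\in\PSp_G$ lies in $\Sp_G$ iff $\mathrm{res}^G_H X$ is $\{\lambda_{V,W}\}$-local for every compact $H\le G$; and an object of $\PSp_H$ is an $H$-spectrum iff its further restriction $\mathrm{res}^H_K$ is $\{\lambda_{V,W}\}$-local for every compact $K\le H$. Since $\mathrm{res}^H_K\circ\mathrm{res}^G_H=\mathrm{res}^G_K$ and every compact subgroup of $G$ arises this way, the two conditions define the same full subcategory, so $\lim_{\OO_{G,\pr}^{\op}}\Sp_\bullet\simeq\Sp_G$, and the equivalence is symmetric monoidal because all the functors involved are. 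I do not expect a serious obstacle here: the main work is bookkeeping to make sure the pullback of $\OR_G^\otimes$ along $(\Ar(\OO_{G,\pr})^{\op})^\amalg$ really does have the stated operadic fibres and that the composite $(\OO_H^{\op})^\amalg\to(\Glo^{\op})^\amalg$ is $\iota_H^\amalg$ (Proposition~\ref{prop:fibers_tORG}), which is a diagram chase already implicit in Lemma~\ref{lem:ORG-as-a-pullback}.
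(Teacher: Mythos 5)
Your proposal is correct and follows essentially the same four-step template as the paper's proof in Section~\ref{sec-proper-section}: pull back $\OR_G^\otimes$ along $(\Ar(\OO_{G,\pr})^{\op})^\amalg$ to get $\ttOR_G^\otimes$, identify its operadic fibres with $\OR_H^\otimes$, compute $\lim\DgSpcp{\OR_\bullet}\simeq\DgSpcp{\OR_G}$ and then $\lim\PSp_\bullet\simeq\Mod_{S_G}(\DgSpcp{\OR_G})$, descend to $\Sp_\bullet$ via Lemma~\ref{lem:adjunction-to-the-marked-lax-limit}, and finally match the two Bousfield localizations of $\PSp_G$ using $\mathrm{res}^H_K\circ\mathrm{res}^G_H=\mathrm{res}^G_K$ and Proposition~\ref{prop-Sp_G-local}. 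Your observation that this is strictly easier than the global case because both localizations live inside the \emph{same} ambient $\infty$-category $\Mod_{S_G}(\DgSpcp{\OR_G})$ is precisely the point the paper exploits.
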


\part{Appendix}

\appendix
\section{Tensor product of modules in an \texorpdfstring{$\infty$}{infty}-category}

    The goal of this section is to provide a proof of Theorem~\ref{theorem:modules-are-pullbacks} below, which will be useful when studying lax limits of $\infty$-categories of modules. This section uses some technical results about the theory of $\infty$-operads as developed in \cite{HA} and so it should be skipped on a first reading.

    \begin{definition}\label{def:module-operad}
    We define $\CMod^\otimes$ to be the $\infty$-operad corresponding to the symmetric multicategory with two objects $a$ and $m$ with
    \[\Mul(\{x_i\},a)=\begin{cases}\ast & \textrm{if }\forall i,\, x_i=a\\ \varnothing&\textrm{otherwise}\end{cases}\quad \Mul(\{x_i\},m)=\begin{cases} \ast & \textrm{ if }|\{i\mid x_i=m\}|=1\\ \varnothing&\textrm{otherwise.}\end{cases}\]
    We know by \cite[Proposition~7]{Glasman-modules} or \cite[Lemma~B.1.1]{Hinich-rectification} that for every $\infty$-operad $\CC^\otimes$ there is a natural equivalence of $\infty$-categories
    \[\Mod^{\Finp}(\CC)\simeq \Alg_{\CMod^\otimes}(\CC)\,.\]
    Our goal is to give a similar description of the tensor product of modules over a commutative algebra, that is of the family of $\infty$-operads $\Mod(\CC)^\otimes$. In order to do so we will introduce a variant of $\CMod^\otimes$ which parametrizes finite sets of modules.
    \end{definition}
    
    \begin{construction}
    Let $\tCMod^\otimes$ be the category whose objects are triples $(\langle n\rangle, \langle m\rangle, \{S_i\}_{i=1,\dots,n})$ where $\langle n\rangle,\langle m\rangle\in \Finp$ and $\{S_i\}$ is a family of pairwise disjoint subsets of $\langle m\rangle$. A map $(\langle n\rangle,\langle m\rangle,\{S_i\})\to (\langle n'\rangle,\langle m'\rangle,\{S'_i\})$ is a pair of maps $f:\langle n\rangle\to \langle n'\rangle$ and $g:\langle m\rangle\to \langle m'\rangle$ in $\Finp$ such that
    \begin{itemize}
        \item for every $i\in \langle n\rangle^\circ$, we have $g(S_i)\subseteq S'_{f(i)}\cup\{\ast\}$ (where $S'_{\ast}=\varnothing$)
        \item for every $i\in f^{-1}\langle n'\rangle^\circ$ and every $s'\in S'_{f(i)}$ there is exactly one $s\in S_i$ such that $g(s)=s'$.
    \end{itemize}
    \end{construction}
    
    \begin{lemma}
        The projection $\tCMod^\otimes\to \Finp\times\Finp$ that forgets the subsets $\{S_i\}$ is a $\Finp$-family of $\infty$-operads in the sense of \cite[Definition~2.3.2.10]{HA}, with inert arrows exactly those arrows that are sent to an equivalence by the first projection and to an inert arrow by the second projection.
    \end{lemma}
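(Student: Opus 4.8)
The plan is to verify the conditions of \cite[Definition~2.3.2.10]{HA} directly. Recall that a $\Finp$-family of $\infty$-operads over $\Finp$ (using the first projection $\tCMod^\otimes \to \Finp$ as the family parameter) is a functor $p\colon \tCMod^\otimes \to \Finp\times\Finp$ together with a collection of inert morphisms, satisfying a short list of axioms; here the claim is that $p$ forgets the subsets, and the inert morphisms are those $(f,g)$ with $f$ an equivalence and $g$ inert in $\Finp$. First I would unwind the definition of $\tCMod^\otimes$ to describe morphisms concretely, and check that the proposed inert morphisms are closed under composition and contain all equivalences, so that they genuinely form a subcategory. Then the main content is to verify the factorization and lifting properties: every morphism factors essentially uniquely as an inert morphism followed by one lying over a morphism that is an identity in the first coordinate, and inert morphisms admit cocartesian lifts with the expected target.

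The key step is the following unstraightening-type computation. Fix an object $X=(\langle n\rangle,\langle m\rangle,\{S_i\})$ and an inert morphism in $\Finp\times\Finp$ out of $p(X)$ of the form $(\rho^I, \rho^J)$ where $\rho^I\colon \langle n\rangle \to \langle n'\rangle$ is an equivalence (so really we only move in the second coordinate, by an inert $\rho^J\colon \langle m\rangle\to \langle m'\rangle = \langle J\rangle$ collapsing the complement of $J\subseteq \langle m\rangle^\circ$). I claim the $p$-cocartesian lift starting at $X$ has target $(\langle n\rangle, \langle J\rangle, \{S_i\cap J\})$, with structure morphism given by $(\mathrm{id},\rho^J)$; one checks this is a well-defined morphism of $\tCMod^\otimes$ (the two bulleted conditions hold since $\rho^J$ restricted to each $S_i$ is the inert map collapsing $S_i\setminus J$, which is injective on $S_i\cap J$) and that it satisfies the universal property by a direct diagram chase using the explicit description of mapping spaces --- here the essential uniqueness of the "one $s$ mapping to $s'$" clause makes the lift unique. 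For general inert morphisms (allowing $f$ an arbitrary equivalence) one conjugates by the evident relabelling. This gives both the existence of inert cocartesian lifts and, combined with the analogous elementary factorization $\langle m\rangle \to \langle m'\rangle$ = inert followed by active in $\Finp$, the required factorization of arbitrary morphisms of $\tCMod^\otimes$: push the second-coordinate part through its inert/active factorization and observe the first coordinate is carried along unchanged, then check the factorization is unique up to contractible choice because it already is in $\Finp\times\Finp$ and the fibres are discrete enough (the subsets $S_i$ are determined along inert maps).

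It remains to identify the inert morphisms as exactly those sent to an equivalence by the first projection and an inert arrow by the second. One inclusion is by construction of the cocartesian lifts above; for the converse, given $(f,g)$ with $f$ an equivalence and $g$ inert, factor it as the cocartesian lift of $(f,g)$ constructed above followed by a morphism over an identity of $\Finp\times\Finp$, and observe that the latter must be an equivalence of $\tCMod^\otimes$ since a morphism covering the identity that is compatible with the subset data is necessarily invertible (the second bullet forces the underlying map on each $S_i$ to be a bijection). Finally I would spell out that the fibre of $p$ over $(\langle n\rangle, \langle m\rangle)$ is an $\infty$-operad, or rather a product of $\infty$-operads indexed by the connected components --- concretely it is the nerve of the symmetric multicategory packaging $n$ modules with total underlying-set data indexed by $\langle m\rangle$ --- but this is really a special case of the cocartesian-lift analysis applied fibrewise and needs only that $\CMod^\otimes$ of Definition~\ref{def:module-operad} is an $\infty$-operad. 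The main obstacle is purely bookkeeping: organising the two-coordinate combinatorics so that the "essentially unique factorization" is verified cleanly rather than by an unenlightening brute-force check; once the cocartesian lift is pinned down as above, every axiom of \cite[Definition~2.3.2.10]{HA} reduces to the corresponding (easy) statement in $\Finp$.
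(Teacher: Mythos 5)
Your proposal is correct and follows essentially the same route as the paper: explicitly identify the inert ($p$-cocartesian) lifts and verify the axioms of the cited definition. Your target $(\langle n\rangle,\langle J\rangle,\{S_i\cap J\})$ for the cocartesian lift of an inert $\rho^J$ is precisely the paper's $(\langle n\rangle,\langle m'\rangle,\{f(S_i)\cap\langle m'\rangle^\circ\})$; the paper then declares the remaining verification easy, whereas you spell it out, including the (minor but correct) remark that one must allow arbitrary equivalences, not just identities, in the first coordinate.
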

    \begin{proof}
        The inert arrows are the arrows
        \[(\operatorname{id}_{\langle n\rangle}, f):(\langle n\rangle, \langle m\rangle, \{S_i\})\to (\langle n\rangle,\langle m'\rangle, \{f(S_i)\cap \langle m'\rangle^\circ\})\]
        where $f:\langle m\rangle\to \langle m'\rangle$ is an inert arrow in $\Finp$. It is easy to check that they satisfy all necessary properties.
    \end{proof}
    
    \begin{notation}
        For every $\infty$-category $X\to \Finp$ with a functor to $\Finp$ we will write $\tCMod_X^\otimes$ for the $X$-family of $\infty$-operads $X\times_{\Finp}\tCMod^\otimes$, where we pullback along the composite \[\tCMod^\otimes\to \Finp\times\Finp\xrightarrow{\pr_1} \Finp.\] Note that $\tCMod^\otimes_{\langle 1\rangle}$ is equivalent to $\CMod^\otimes$. Intuitively the fibre $\tCMod_{\langle n\rangle}^\otimes$ is the $\infty$-operad controlling pairs $(A,\{M_i\})$ where $A$ is a commutative algebra and $\{M_i\}$ is an $n$-tuple of $A$-modules.
        
        We will write $a_n$ for the object $(\langle n\rangle, \langle 1\rangle, \{\varnothing\})$ and $m_{j,n}$ for the object $(\langle n\rangle,\langle 1\rangle,\{S_i\})$ where $S_i=\varnothing$ for $i\neq j$ and $S_j=\{1\}$. It's easy to see these are all the objects of the underlying category of the generalized operad \[\tCMod^\otimes\rightarrow \Finp\times \Finp \xrightarrow{\pr_2} \Finp\]
    \end{notation}
    
    First we will prove a generalization of \cite[Proposition~7]{Glasman-modules} that shows how $\tCMod^\otimes$ controls the tensor product of modules over commutative algebras.
    
    \begin{proposition}\label{prop:modules-using-tCMod}
        Let $X\in (\Cat_\infty)_{/\Finp}$ be an $\infty$-category over $\Finp$ and let $\CC^\otimes\in \Op_\infty$ be an $\infty$-operad. Then there is a natural equivalence
        \[\Alg_{\tCMod_X}(\CC^\otimes)\simeq\Fun_{/\Finp}(X,\Mod^{\Finp}(\CC)^\otimes).\]
    \end{proposition}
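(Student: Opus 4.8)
The strategy is to reduce the statement to the already-known description of modules via $\CMod^\otimes$, namely the equivalence $\Mod^{\Finp}(\CC)\simeq \Alg_{\CMod^\otimes}(\CC)$ recalled in Definition~\ref{def:module-operad}, by exploiting the fact that $\tCMod^\otimes_X$ is built from $\CMod^\otimes$ by a base change along $X\to\Finp$. The key observation is that $\tCMod^\otimes_X = X\times_{\Finp}\tCMod^\otimes$ and that $\tCMod^\otimes_{\langle 1\rangle}\simeq \CMod^\otimes$, so that an $\tCMod_X$-algebra should be the same as a rule assigning to each object $x\in X$ lying over $\langle n\rangle\in\Finp$ an $n$-tuple of modules over a fixed commutative algebra, varying coherently in $x$ — which is exactly a section of $\Mod^{\Finp}(\CC)^\otimes\to\Finp$ over $X$.

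First I would unwind the definition of $\Alg_{\tCMod_X}(\CC^\otimes)$. Since $\tCMod^\otimes_X\to X$ is an $X$-family of $\infty$-operads (pulled back from the $\Finp$-family $\tCMod^\otimes\to\Finp\times\Finp\xrightarrow{\pr_1}\Finp$, restricted appropriately), the relevant notion of algebra is that of a map of generalized $\infty$-operads over $X$, i.e. a functor $\tCMod^\otimes_X\to\CC^\otimes$ over $\Finp$ (where $\tCMod^\otimes_X\to\Finp$ uses the second projection) carrying inert-over-$X$ edges to inert edges. Next I would use the universal property of the relative tensor/box product of generalized operads: because $\tCMod^\otimes\to\Finp\times\Finp$ is a $\Finp$-family of operads and $X$ is simply an $\infty$-category over $\Finp$ (with no operadic structure, i.e.\ the "trivial" family), there should be a natural identification
\[
\Alg_{\tCMod_X}(\CC^\otimes)\simeq \Fun_{/\Finp}\bigl(X,\Alg_{\CMod^\otimes/\Finp}(\CC)\bigr),
\]
where $\Alg_{\CMod^\otimes/\Finp}(\CC)$ denotes the $\infty$-category over $\Finp$ whose fiber over $\langle n\rangle$ is $\Alg_{\CMod^\otimes}(\CC^\otimes_{\langle n\rangle})\simeq \prod_n\Alg_{\CMod^\otimes}(\CC)$ — and this is precisely the cocartesian fibration classifying the functor $\langle n\rangle\mapsto \Mod^{\Finp}(\CC)^\otimes_{\langle n\rangle}$; compare with \cite[Construction~3.2.4.1, Proposition~3.2.4.3]{HA}. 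Concretely, I expect this to follow from the adjunction between the Boardman--Vogt tensor product and $\Alg$, applied "fibrewise over $\Finp$": one has $\tCMod^\otimes_X\simeq X\times_{\Finp}\tCMod^\otimes$ and, after taking $\Alg$, the $X$-coordinate passes outside as a $\Fun_{/\Finp}(X,-)$ while the $\tCMod^\otimes$-coordinate becomes $\Mod^{\Finp}(\CC)^\otimes\to\Finp$ by Definition~\ref{def:module-operad}.

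Then the proof concludes by identifying $\Alg_{\CMod^\otimes/\Finp}(\CC)$ with $\Mod^{\Finp}(\CC)^\otimes$ as $\infty$-categories over $\Finp$ — this is exactly the content of \cite[Proposition~7]{Glasman-modules}/\cite[Lemma~B.1.1]{Hinich-rectification} reinterpreted fibrewise, noting that $\tCMod^\otimes_{\langle n\rangle}$ is the operad controlling a commutative algebra together with an $n$-tuple of modules, hence its algebras in $\CC^\otimes_{\langle n\rangle}$ match the fiber of $\Mod^{\Finp}(\CC)^\otimes$ over $\langle n\rangle$, and the inert structure on both sides matches. Substituting, we get $\Alg_{\tCMod_X}(\CC^\otimes)\simeq\Fun_{/\Finp}(X,\Mod^{\Finp}(\CC)^\otimes)$ as desired; naturality in $X$ and $\CC^\otimes$ is automatic since every step is functorial.

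The main obstacle I anticipate is making the "$\Alg$ turns $X\times_{\Finp}(-)$ into $\Fun_{/\Finp}(X,\Alg(-))$" step precise at the level of \emph{generalized} $\infty$-operads and $X$-families, since $X$ carries no operad structure and the relevant universal property is that of maps of $\Finp$-families rather than ordinary operad maps. This requires carefully checking that the inert edges of $\tCMod^\otimes_X$ (those that are equivalences over the first factor and inert over the second, pulled back to $X$) are exactly the ones one must preserve, and that preserving them is equivalent, after the exponential adjunction $\Fun_{/\Finp}(X\times_{\Finp}\tCMod^\otimes,\CC^\otimes)\simeq \Fun_{/\Finp}(X,\Fun_{/\Finp}(\tCMod^\otimes,\CC^\otimes))$, to landing in the subcategory $\Mod^{\Finp}(\CC)^\otimes\subseteq\Fun_{/\Finp}(\tCMod^\otimes,\CC^\otimes)$ fiberwise over $\Finp$. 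I would handle this by working fiber-by-fiber over $\Finp$ and invoking \cite[Remark~2.3.2.?]{HA} on families of operads together with the characterization of the inert morphisms given in the lemma preceding the statement.
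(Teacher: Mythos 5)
Your key reduction rests on an identification that is false, and testing it on $X=\{\langle n\rangle\}$ for $n\ge 2$ already reveals the problem. You assert that the $\infty$-category $\Alg_{\CMod^\otimes/\Finp}(\CC)$ over $\Finp$ — whose fibre over $\langle n\rangle$ is, as you correctly say, $\Alg_{\CMod^\otimes}(\CC^\otimes_{\langle n\rangle})\simeq\prod_n\Alg_{\CMod^\otimes}(\CC)\simeq\prod_n\Mod^{\Finp}(\CC)$ — coincides with $\Mod^{\Finp}(\CC)^\otimes\to\Finp$. It does not. Since $\Mod^{\Finp}(\CC)^\otimes\to\Finp$ is only a \emph{generalized} $\infty$-operad, its Segal condition involves a fibre product over $(\Mod^{\Finp}(\CC)^\otimes)_{\langle 0\rangle}\simeq\CAlg(\CC)$: the fibre over $\langle n\rangle$ is $\Mod^{\Finp}(\CC)\times_{\CAlg(\CC)}\cdots\times_{\CAlg(\CC)}\Mod^{\Finp}(\CC)$, i.e.\ a single commutative algebra $A$ together with an $n$-tuple of $A$-modules, not $n$ independent pairs $(A_i,M_i)$. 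The same mismatch appears on the other side of your intermediate equivalence: the fibre $\tCMod^\otimes_{\langle n\rangle}$ has exactly one algebra colour $a_n$ and $n$ module colours $m_{j,n}$, so $\Alg_{\tCMod^\otimes_{\langle n\rangle}}(\CC^\otimes)$ is again ``one algebra, $n$ modules'', not $\prod_n\Alg_{\CMod^\otimes}(\CC)$. The distinction you are eliding is precisely the point of Theorem~\ref{theorem:modules-are-pullbacks}, which exhibits $\Mod^{\Finp}(\CC)^\otimes$ as a \emph{pullback} of $\Alg_{\CMod^\otimes}(\CC)^\otimes$ along $\CAlg(\CC)^\otimes$ rather than equating the two.

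There is also a circularity worry even if the fibres are corrected. The content of the proposition is the passage from the pointwise equivalence $\Mod^{\Finp}(\CC)\simeq\Alg_{\CMod^\otimes}(\CC)$ of Definition~\ref{def:module-operad} (which lives over $\langle 1\rangle$) to a statement over all of $\Finp$ with the $\langle 0\rangle$-level gluing; a ``reinterpret the $n=1$ case fibrewise'' step does not come for free, because $\tCMod^\otimes$ is neither a pullback nor a Boardman--Vogt tensor of $\CMod^\otimes$ over $\Finp$ — the single shared algebra colour in each fibre is exactly what makes it the right object. The paper avoids this entirely: it works directly with the defining universal property of $\Mod^{\Finp}(\CC)^\otimes$ from \cite[Construction~3.3.3.1]{HA}, embeds the semi-inert arrow category $\CK$ into $\tCMod^\otimes$ as the full subcategory where every $|S_i|\le 1$, and shows by a right $p$-Kan extension argument (via the coinitial subposet $\CQ(S)\subseteq\CP(S)$) that restriction along $X\times_{\Finp}\CK\hookrightarrow\tCMod^\otimes_X$ is an equivalence on inert-preserving functors. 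That Kan extension step is the genuine mathematical content of the proof, and your proposal has no substitute for it.
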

    \begin{proof}
            Let $\CK\subseteq \Ar(\Finp)$ be the full subcategory of semi-inert arrows \cite[Notation~3.3.2.1]{HA}. Consider the pullback
            \[
            \begin{tikzcd}
            	X\times_{\Finp}\CK \arrow[d] \arrow[r] & X \arrow[d] \\
            	\CK \arrow[r, "s"] \arrow[d, "t"]      & \Finp       \\
            	\Finp.                                  &            
            \end{tikzcd}
            \]We will say that an arrow $(f,g)$ in $X\times_{\Finp}\CK$ is inert if $f$ is an equivalence and $t(g)$ is an inert edge of $\Finp$(this is different from the convention in \cite{HA}, but it is more suited to the current proof). Then recall that by \cite[Construction~3.3.3.1]{HA} the $\infty$-category $\Mod(\CC)^\otimes$ is defined so that there is a natural fully faithful inclusion
            \[\Fun_{/\Finp}(X,\Mod^{\Finp}(\CC)^\otimes)\to \Fun_{/\Finp}(X\times_{\Finp}\CK,\CC^\otimes),\]
            where $X\times_{\Finp}\CK$ lives over $\Finp$ by the vertical composite in the diagram above, with essential image those functors sending inert arrows of $X\times_{\Finp}\CK$ to inert arrows.
            
            There is a functor $\CK\to \tCMod$ sending a semi-inert arrow $[s\colon \langle n\rangle\to\langle m\rangle]$ to $(\langle n\rangle,\langle m\rangle, \{\{s(i)\}\cap \langle m\rangle^\circ\}_i)$. It identifies $\CK$ with the full subcategory of $\tCMod$ spanned by those triples $(\langle n\rangle,\langle m\rangle,\{S_i\})$ where $|S_i|\le 1$ for every $i\in \langle n\rangle^\circ$. Moreover an arrow in $X\times_{\Finp}\CK$ is inert if and only if its image in $\tCMod_X$ is inert. Therefore restricting along this inclusion induces a natural transformation
            \[\Alg_{\tCMod_X}(\CC^\otimes)\to\Fun_{/\Finp}(X,\Mod^{\Finp}(\CC)^\otimes)\,.\]
            Our goal now is to prove this is an equivalence of $\infty$-categories. This follows from \cite[Proposition~4.3.2.15]{HTT} together with the following two statements, where we write $p\colon \CC^\otimes\rightarrow \Finp$ for the structure map of $\CC^\otimes$:
            \begin{enumerate}
                \item Every map $F:X\times_{\Finp}\CK\to \CC^\otimes$ over $\Finp$ that sends inert arrows to inert arrows admits a right $p$-Kan extension to $\tCMod_X$ that sends inert arrows to inert arrows;
                \item A functor $F:\tCMod_X\to \CC^\otimes$ which sends inert arrows to inert arrows is the right $p$-Kan extension of its restriction to $X\times_{\Finp}\CK$.
            \end{enumerate}
        
            Let $(x,\langle m\rangle,\{S_i\})$ be an object of $\tCMod_X$ and write $S=\coprod_i S_i\subseteq \langle m\rangle^\circ$. Let us consider the functor
            \[\CP(S)^{\op}\to \tCMod_X\]
            sending a subset $A\subseteq S$ to $(x,\langle m\rangle/(S\smallsetminus A), \{A\cap S_i\})$ and all arrows to inert arrows. This induces a functor
            \[\CP(S)^{\op}\to (\tCMod_X)_{(x,\langle m\rangle,\{S_i\})/}\,,\]
            which sends $A$ to the inert morphism collapsing all elements of $S$ not in $A$ to the basepoint. If we let $\CQ(S)\subseteq \CP(S)$ be the subposet of those elements $A$ such that $|A\cap S_i|\le 1$ for every $i$ we obtain a functor
            \[\CQ(S)^{\op}\to (X\times_{\Finp}\CK)_{(x,\langle m\rangle,\{S_i\})/}\]
            to the comma category, which has a right adjoint given by
            \[\left[(f,g)\colon (x,\langle m\rangle,\{S_i\})\to (x',\langle m'\rangle,\{S'_i\})\right]\mapsto g^{-1}\left(\coprod_i S'_i\right)\cap S\, ,\]
            and therefore is coinitial. Thus, by \cite[Proposition~4.3.1.7 and Lemma~4.3.2.13]{HTT} it suffices to show the following two conditions
            \begin{enumerate}
                \item Let $F:X\times_{\Finp}\CK\to \CC^\otimes$ sending inert arrows to inert arrows, then the composition
                \[\CQ(S)^{\op}\to X\times_{\Finp}\CK\to \CC^\otimes\]
                has a $p$-limit diagram sending all edges to inert edges.
                \item Let $F:\tCMod_X\to \CC^\otimes$ sending inert arrows to inert arrows, then the composition
                \[(\CQ(S)^{\op})^\triangleleft\to \CP(S)^{\op}\to \tCMod_X\to \CC^\otimes\]
                is a $p$-limit diagram, where the first functor sends the cone point to $S\subseteq S$.
            \end{enumerate}
            Both of them are now an immediate consequence of the characterization of $p$-limit diagrams in terms of mapping spaces \cite[Remark~4.3.1.2]{HTT} and the definition of $\infty$-operads. 
    \end{proof}

    Now we will obtain a description of inert and cocartesian arrows of $\Mod^{\Finp}(\CC)^\otimes$ in terms of the model of Proposition~\ref{prop:modules-using-tCMod}.
    \begin{construction}[Bar construction]
        There is a functor
    \[B:(\Delta^{\op})^\triangleright\to \tCMod^\otimes\]
    sending $[n]$ to $(\langle 2\rangle,\Hom_\Delta([n],[1])_+,\{\{r_0\},\{r_1\}\})$ where $r_i$ is the constant arrow at $i$, and the point at $\infty$ to $m_{1,1}=(\langle 1\rangle, \langle 1\rangle,\{1\})$. Concretely this sends $[n]$ to the object $(m_{2,1},a,\dots,a,m_{2,2})$ in the fibre over $\langle n+2\rangle$ of the $\infty$-operad $\tCMod^\otimes_{\langle 2\rangle}$ (and so it encodes the bar construction in $\tCMod^\otimes_{\langle 2\rangle}$).
    \end{construction}

    \begin{lemma}\label{lemma:cocart-arrows-in-mod}
        Let $e\colon \Delta^1\to \Mod^{\Finp}(\CC)^\otimes$ be an arrow, and let $e_0\colon \langle n\rangle\to \langle n'\rangle$ be the image of $e$ in $\Finp$. Write
        \[F_e\colon \tCMod_{\Delta^1}^\otimes\to \CC^\otimes\]
        for the functor corresponding to $e$ via the isomorphism of Proposition~\ref{prop:modules-using-tCMod}.
        \begin{enumerate}
            \item The arrow $e$ is inert if and only if $e_0$ is inert and $F_e$ sends the arrows $a_n\to a_{n'}$ and $m_{i,n}\to m_{e_0i,n'}$ to cocartesian arrows.
            \item Suppose that $\CC^\otimes$ is a symmetric monoidal $\infty$-category compatible with geometric realizations, and that $e_0$ is the unique active arrow from $\langle2\rangle$ to $\langle 1\rangle$. Then $e$ is cocartesian if and only if $F_e$ sends the arrow $a_2\to a_1$ to a cocartesian arrow and the composition
            \[(\Delta^{\op})^\triangleright\xrightarrow{B} \tCMod^\otimes_{\Delta^1}\xrightarrow{F_e} \CC^\otimes\]
            is an operadic colimit diagram.
        \end{enumerate}
    \end{lemma}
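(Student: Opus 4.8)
The strategy is to reduce both statements to the corresponding assertions about $\Mod^{\Finp}(\CC)^\otimes$ that Lurie establishes in \cite{HA}*{Section~3.3.3, 3.4.4}, via the identification of Proposition~\ref{prop:modules-using-tCMod}. Concretely, an edge $e\colon \Delta^1\to \Mod^{\Finp}(\CC)^\otimes$ over $\Finp$ corresponds, by that proposition applied to $X=\Delta^1$, to a map $F_e\colon \tCMod^\otimes_{\Delta^1}\to \CC^\otimes$ over $\Finp$ sending inert arrows to inert arrows; and under the inclusion $\CK\hookrightarrow \tCMod^\otimes$ this is the same data as the original semi-inert description of $\Mod^\otimes(\CC)$ from \cite{HA}*{Construction~3.3.3.1}. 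So I would first spell out this dictionary carefully: the object $a_n$ (resp.\ $m_{i,n}$) of $\tCMod^\otimes_{\Delta^1}$ lying over a given endpoint corresponds to the semi-inert arrow $\langle n\rangle\to\langle n\rangle$ (resp.\ $\langle n\rangle \to \langle n\rangle$ hitting the algebra object in the $i$-th coordinate), and the arrows $a_n\to a_{n'}$, $m_{i,n}\to m_{e_0 i,n'}$ are exactly the images of the inert arrows of $X\times_{\Finp}\CK$ witnessing the Segal decomposition of the endpoints of $e$.

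For part (1): by definition an edge of $\Mod^{\Finp}(\CC)^\otimes$ is inert precisely when it lies over an inert edge $e_0$ of $\Finp$ and exhibits the target as the ``correct'' Segal summand of the source. Using the characterization of inert morphisms in $\Mod^\otimes(\CC)$ from \cite{HA}*{Remark~3.3.3.15 and Proposition~3.3.3.14} (the projection $\Mod^\otimes(\CC)\to \CAlg(\CC)\times_{\Finp}\Finp$ detects inertness on the first factor together with cocartesianness of the underlying-object projection), this translates under the dictionary into exactly the condition that $F_e$ sends $a_n\to a_{n'}$ and each $m_{i,n}\to m_{e_0i,n'}$ to $p$-cocartesian arrows of $\CC^\otimes$. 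I would just unwind both sides and check they coincide; this is bookkeeping with the comma-category description and should not present a genuine obstacle.

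For part (2): here the content is the identification of cocartesian edges over the unique active $\langle 2\rangle\to\langle 1\rangle$ with the two-sided bar resolution. The relevant input is \cite{HA}*{Theorem~4.4.2.8} together with the construction of the relative tensor product $M\otimes_A N$ as $|\mathrm{Bar}_A(M,N)_\bullet|$ in \cite{HA}*{Section~4.4.2}: when $\CC^\otimes$ is symmetric monoidal and compatible with geometric realizations, the pushforward along this active arrow sends $(A,M,N)$ to $(A, M\otimes_A N)$, computed as the geometric realization of the simplicial object $[n]\mapsto M\otimes A^{\otimes n}\otimes N$. The functor $B\colon (\Delta^{\op})^\triangleright\to \tCMod^\otimes_{\Delta^1}$ is precisely built so that $F_e\circ B$ is this simplicial object together with its colimiting cone; and the arrow $a_2\to a_1$ records the multiplication $A\otimes A\to A$ which must land on a cocartesian edge (i.e.\ $F_e$ restricted to algebra objects is the cocartesian lift, as $\CAlg(\CC)\to\CAlg(\CC)$ is the identity here). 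So I would show: (i) a cocartesian lift of $e_0$ in $\Mod^\otimes(\CC)$ exists and its associated $F_e$ has these two properties, invoking \cite{HA}*{Theorem~4.4.2.8}; and (ii) conversely, any $F_e$ with these two properties yields a cocartesian edge, by verifying the universal mapping property of a $p$-cocartesian arrow against arbitrary targets, which reduces to the mapping-space characterization of operadic colimit diagrams \cite{HA}*{Definition~3.1.1.2, Remark~3.1.1.3}.

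\textbf{Main obstacle.} The bookkeeping in part (1) is routine. The real subtlety is in part (2): one must match the ad hoc bar functor $B$ with Lurie's simplicial bar construction and check that ``operadic colimit diagram'' for the composite $F_e\circ B$ is genuinely equivalent to ``$e$ is $p$-cocartesian over the active arrow''. This requires carefully tracking how the operadic colimit in $\CC^\otimes$ (as opposed to an ordinary colimit in $\CC$) interacts with the fibrewise structure of $\tCMod^\otimes_{\Delta^1}$, and invoking the compatibility of $\CC^\otimes$ with geometric realizations at the right moment so that the relative tensor product is corepresented correctly. I expect this to be the step requiring the most care, though all the needed machinery is already in \cite{HA}.
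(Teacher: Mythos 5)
Your proof follows the same route as the paper's own (very terse) proof, which simply reads off the conclusion from the proofs of \cite{HA}*{Proposition 3.3.3.10} and \cite{HA}*{Theorem 4.5.2.1} together with Proposition~\ref{prop:modules-using-tCMod}. Two comments on the references you chose. For part (1) you cite \cite{HA}*{Remark~3.3.3.15, Proposition~3.3.3.14}; the paper instead points to the proof of \cite{HA}*{Proposition~3.3.3.10}, which is where Lurie shows $\Mod^\otimes(\CC)\to\Finp$ is an $\Finp$-family of $\infty$-operads and pins down the inert arrows. Either way the unwinding through the $\CK$-versus-$\tCMod^\otimes$ dictionary is routine, as you say. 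For part (2) you reach for \cite{HA}*{Theorem~4.4.2.8} and the bar construction of Section~4.4.2, which live at the level of (left/right) bimodules; the paper cites the proof of \cite{HA}*{Theorem~4.5.2.1}, the theorem that directly handles the commutative case ($\Mod_A(\CC)^\otimes\to\Finp$ a cocartesian fibration when $\CC$ is compatible with geometric realizations) and whose proof already contains the identification of cocartesian lifts over the active $\langle 2\rangle\to\langle 1\rangle$ with two-sided bar resolutions. If you insist on quoting only 4.4.2.8, you will need to add the reduction from the $\CMod^\otimes$-setting to bimodules, which is precisely the content of HA \S4.5; this is not a gap in the argument, just a slightly longer chain of citations than the paper uses. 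With that caveat your plan is sound and matches the paper's in substance.
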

    \begin{proof}
        This is immediate from the proofs of \cite[Proposition 3.3.3.10 and Theorem~4.5.2.1]{HA} and the identification of Proposition~\ref{prop:modules-using-tCMod}.
    \end{proof}
    
    \begin{construction}
        There is a square of $\infty$-categories
        \[\begin{tikzcd}
        \Finp \times\Finp \ar[r, "(1{,}\wedge)"]\ar[d,"j_1"] & \Finp\times\Finp\ar[d,"j_2"]\\
        \Finp\times\CMod^\otimes\ar[r,"\phi"] & \tCMod^\otimes
    \end{tikzcd}\]
    where
    \begin{itemize}
            \item The top horizontal arrow sends $(\langle n\rangle,\langle m\rangle)$ to $(\langle n\rangle,\langle n\rangle\wedge\langle m\rangle)$;
            \item The arrow $j_1$ sends $(\langle n\rangle,\langle m\rangle)$ to $(\langle n\rangle, (\langle m\rangle,\varnothing))\in\Finp\times\CMod^\otimes$;
            \item The arrow $j_2$ sends $(\langle n\rangle,\langle m\rangle)$ to $(\langle n\rangle,\langle m\rangle,\{\varnothing\})\in \tCMod^\otimes$;
            \item The arrow $\phi$ sends $(\langle n\rangle,(\langle m\rangle,S))\in\Finp\times\CMod^\otimes$ to $(\langle n\rangle,\langle n\rangle \wedge\langle m\rangle,\{\{i\}\times S\})$.
    \end{itemize}
    Since each of these functors sends inert arrows to inert arrows, it induces for every $X\in (\Cat_\infty)_{/\Finp}$ a natural square
    \[\begin{tikzcd}
        \Fun_{/\Finp}(X,\Mod^{\Finp}(\CC)^\otimes)\simeq \Alg_{\tCMod_X^\otimes}(\CC^\otimes)\ar[r]\ar[d] & \Fun_{/\Finp}(X,\Alg_{\CMod^\otimes}(\CC)^\otimes)\ar[d]\\
        \Fun_{/\Finp}(X,\Finp\times \CAlg(\CC))\simeq \Alg_{X\times\Finp}(\CC^\otimes)\ar[r] & \Fun_{/\Finp}(X,\CAlg(\CC)^\otimes)
    \end{tikzcd}\]
    and therefore a natural square of $\infty$-categories over $\Finp$
    \begin{equation}\label{eqn:diagram-of-mods}\begin{tikzcd}
        \Mod^{\Finp}(\CC)^\otimes\ar[r]\ar[d] & \Alg_{\CMod^\otimes}(\CC)^\otimes\ar[d]\\
        \Finp\times\CAlg(\CC)\ar[r] & \CAlg(\CC)^\otimes
    \end{tikzcd}\end{equation}
    \end{construction}
    Our goal now is to show that the square \eqref{eqn:diagram-of-mods} is cartesian. To do so we will show that the right vertical arrow is a cocartesian fibration in favourable situations.
    \begin{lemma}\label{lemma:families-of-commutative-algebras}
            Let $\CI$ be an $\infty$-category and $\CC^\otimes\to \CI^\amalg$ be an $\CI^\amalg$-monoidal $\infty$-category compatible with geometric realizations. Then the map of $\infty$-operads
            \[p_\CI:\Alg_{\CMod^\otimes/\CI^\amalg}(\CC)^\otimes\to \Alg_{\Finp/\CI^\amalg}(\CC)^\otimes\]
            is a cocartesian fibration.
    \end{lemma}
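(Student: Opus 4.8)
The plan is to deduce that $p_\CI$ is a coCartesian fibration by first treating the fibres over $\CI^\amalg$ — where the statement reduces to the fact that the forgetful functor from module objects to commutative algebras is a coCartesian fibration — and then assembling these fibrewise statements by a locality argument for maps between coCartesian fibrations.

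\emph{Setup over $\CI^\amalg$.} Since $\CC^\otimes\to\CI^\amalg$ is an $\CI^\amalg$-monoidal $\infty$-category, \cite[Proposition~3.2.4.3.(3)]{HA}, applied in turn to the $\infty$-operads $\CMod^\otimes$ and $\Finp$, shows that the structure maps $q\colon\Alg_{\CMod^\otimes/\CI^\amalg}(\CC)^\otimes\to\CI^\amalg$ and $r\colon\Alg_{\Finp/\CI^\amalg}(\CC)^\otimes\to\CI^\amalg$ are coCartesian fibrations, with $r\circ p_\CI=q$. The functor $p_\CI$ is precomposition along the map of $\infty$-operads $\Finp=\mathrm{Comm}^\otimes\to\CMod^\otimes$ selecting the algebra colour, so by the functoriality in \cite[Construction~3.2.4.1]{HA} it is a functor over $\CI^\amalg$ carrying $q$-coCartesian arrows to $r$-coCartesian arrows. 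Over an object $i\in\CI$ the fibre of $q$ is the underlying $\infty$-category of the operadic fibre $\Alg_{\CMod^\otimes}(\CC_i)^\otimes\simeq\Mod^{\Finp}(\CC_i)^\otimes$ (Definition~\ref{def:module-operad}), the fibre of $r$ is $\CAlg(\CC_i)$, and $p_\CI$ restricts to a fibre of the forgetful functor $\Mod^{\Finp}(\CC_i)^\otimes\to\CAlg(\CC_i)^\otimes$; over a general object $(i_1,\dots,i_n)$ of $\CI^\amalg$ the Segal decomposition of $q$ identifies $p_\CI$ on the fibre with a product of such forgetful functors.

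\emph{The crux.} It therefore suffices to show that $\Mod^{\Finp}(\CC_i)^\otimes\to\CAlg(\CC_i)^\otimes$ is a coCartesian fibration. The operadic fibre $\CC_i^\otimes$ is a symmetric monoidal $\infty$-category compatible with geometric realizations in the sense of Definition~\ref{definition:monoidal-structure-compatible-with-colimits}: $\CC_i$ has geometric realizations, and the tensor product $\CC_i\times\CC_i\to\CC_i$, being the $\phi$-tensor product of the active arrow $(i,i)\to i$ of $\CI^\amalg$ over $\mu\colon\langle 2\rangle\to\langle 1\rangle$, commutes with geometric realizations separately in each variable because $\CC^\otimes$ is compatible with colimits. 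Granting this, Lemma~\ref{lemma:cocart-arrows-in-mod} supplies coCartesian lifts of the forgetful functor over inert arrows, by part~(1), and over $\mu$, by part~(2) — the latter being exactly where compatibility with geometric realizations is used, to guarantee that the relevant bar construction is an operadic colimit diagram. Since every arrow of $\Finp$ factors as an active arrow followed by an inert one, and active arrows are generated from $\mu$ and $\langle 0\rangle\to\langle 1\rangle$ using the inert--coCartesian structure, these lifts generate a class of arrows closed under composition, whence the forgetful functor is a coCartesian fibration by \cite[Lemma~2.4.2.7]{HTT}. This recovers, over the fibres, the content of \cite[Proposition~3.3.3.10 and Theorem~4.5.2.1]{HA}, and it is the step for which the hypothesis on $\CC^\otimes$ is indispensable.

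\emph{From fibres to the total functor.} Finally one assembles the fibrewise statements using the following principle: if $q\colon X\to S$ and $r\colon Y\to S$ are coCartesian fibrations and $f\colon X\to Y$ is a functor over $S$ which carries $q$-coCartesian arrows to $r$-coCartesian ones and restricts to a coCartesian fibration $f_s\colon X_s\to Y_s$ over every $s\in S$, then $f$ is itself a coCartesian fibration (cf. \cite[Section~2.4.2]{HTT}). Concretely, the coCartesian lift of an arrow $\bar e\colon\bar x_0\to\bar x_1$ of $Y$ over $\sigma\colon s_0\to s_1$, starting from a lift $x_0$ of $\bar x_0$, is the composite $x_0\to\sigma_!x_0\to x_1$, where the first arrow is a $q$-coCartesian lift of $\sigma$ and the second is an $f_{s_1}$-coCartesian lift in $X_{s_1}$ of the factorisation $f(\sigma_!x_0)\simeq\sigma_!f(x_0)\to\bar x_1$ of $\bar e$; one then verifies, by splitting a test object of $X$ according to its image in $S$, that this composite satisfies the mapping-space criterion for being $f$-coCartesian. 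The point that needs care here is that this verification uses the compatibility of the coCartesian pushforwards of $q$ with the relative tensor products defining the fibrewise lifts — once more a consequence of compatibility with geometric realizations. Applying this principle with $S=\CI^\amalg$ and $f=p_\CI$, with the inputs assembled above, concludes the proof.
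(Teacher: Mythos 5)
Your proof follows the same overall strategy as the paper's — verify that the fibre functors are cocartesian fibrations, then globalize over $\CI^\amalg$ — but the globalization step contains a genuine gap. The ``principle'' you cite in the final paragraph is false as stated: for $q\colon X\to S$ and $r\colon Y\to S$ cocartesian fibrations and $f\colon X\to Y$ a functor over $S$ carrying $q$-cocartesian arrows to $r$-cocartesian arrows and restricting to a cocartesian fibration on every fibre, it does \emph{not} follow that $f$ is a cocartesian fibration. What \cite{HTT}*{Proposition~2.4.2.11} actually gives, under precisely these hypotheses, is that $f$ is a \emph{locally} cocartesian fibration (and a description of the locally cocartesian edges as the composites you describe). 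To upgrade this to a cocartesian fibration one must still verify, via \cite{HTT}*{Lemma~2.4.2.7} or similar, that locally cocartesian edges are closed under composition — equivalently, that fibrewise cocartesian edges are stable under the pushforward functors $\sigma_!$ along arrows of $\CI^\amalg$. That closure property is exactly where the hypothesis on $\CC^\otimes$ enters, and it is an extra input to the argument, not part of a general-position fact about fibrations.

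You gesture at this at the very end (``uses the compatibility of the coCartesian pushforwards of $q$ with the relative tensor products\ldots once more a consequence of compatibility with geometric realizations''), but this sentence is doing all the work and is not actually carried out. The paper's proof makes the reduction explicit: after invoking \cite{HTT}*{Proposition~2.4.2.11}, it reduces closure under composition to the statement that for every two objects $x,y\in\CI$ and arrow $f\colon x\to y$, the squares
\[\begin{tikzcd}
            \Mod(\CC_x)\times \Mod(\CC_x)\ar[r,"\otimes"]\ar[d] & \Mod(\CC_x)\ar[d]\\
            \CAlg(\CC_x)\times \CAlg(\CC_x)\ar[r,"\otimes"] & \CAlg(\CC_x)
          \end{tikzcd}\qquad
          \begin{tikzcd}
            \Mod(\CC_x)\ar[r,"f_*"]\ar[d] & \Mod(\CC_y)\ar[d]\\
            \CAlg(\CC_x)\ar[r,"f_*"] & \CAlg(\CC_y)
          \end{tikzcd}\]
are maps of cocartesian fibrations, and then unwinds this to two concrete formulas: $(M\otimes N)\otimes_{A\otimes B}(A'\otimes B')\simeq (M\otimes_A A')\otimes (N\otimes_B B')$ and $f_*(M\otimes_A B)\simeq f_*M\otimes_{f_*A}f_*B$. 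Both hold because $f_*$ is symmetric monoidal and preserves geometric realizations, and the fibrewise tensor product preserves geometric realizations in each variable. Your write-up needs to replace the false ``principle'' with the correct statement (locally cocartesian via Prop.~2.4.2.11, then check composability) and actually carry out this last verification rather than assert it. A smaller point: you re-derive that $\Mod^{\Finp}(\CC_i)^\otimes\to\CAlg(\CC_i)^\otimes$ is a cocartesian fibration via Lemma~\ref{lemma:cocart-arrows-in-mod}; the paper simply cites \cite{HA}*{Theorem~4.5.3.1}, which is shorter, but your route is fine.
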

    \begin{proof}
        Note that by \cite[Proposition~3.2.4.3.(3)]{HA} this is a map of cocartesian fibrations over $\CI^\amalg$. Moreover the fibre over $\{x_j\}_{j\in J}\in \CI^\amalg$ is given by
        \[\prod_{j\in J} \Mod(\CC_{x_j})\to \prod_{j\in J} \CAlg(\CC_{x_j})\]
        and therefore it is a cocartesian fibration by \cite[Theorem~4.5.3.1]{HA}. Therefore by \cite[Proposition~2.4.2.11]{HTT} $p_\CI$ is a locally cocartesian fibration with locally cocartesian arrows those given by the composition of a fibrewise cocartesian arrow and a cocartesian arrow over $\CI^\amalg$. In order to prove it is a cocartesian fibration it suffices to show then that the composition of two locally cocartesian arrow is locally cartesian, that is that fibrewise cocartesian arrows are stable under pushforward along arrows in $\CI^\amalg$. Unwrapping the various cases it suffices to show that for every $x,y\in\CI$ and arrow $f:x\to y$ the squares
        \[\begin{tikzcd}
            \Mod(\CC_x)\times \Mod(\CC_x)\ar[r,"\otimes"]\ar[d] & \Mod(\CC_x)\ar[d]\\
            \CAlg(\CC_x)\times \CAlg(\CC_x)\ar[r,"\otimes"] & \CAlg(\CC_x)
          \end{tikzcd}\textrm{ and }
          \begin{tikzcd}
            \Mod(\CC_x)\ar[r,"f_*"]\ar[d] & \Mod(\CC_y)\ar[d]\\
            \CAlg(\CC_x)\ar[r,"f_*"] & \CAlg(\CC_y)
          \end{tikzcd}\]
          are maps of cocartesian fibrations. That is that for every two maps of commutative algebras $A\to A'$, $B\to B'$, $A$-module $M$, and $B$-module $N$, the canonical maps
          \[(M\otimes N)\otimes_{A\otimes B} (A'\otimes B')\simeq (M\otimes_A A')\otimes (N\otimes_B B')\textrm{ and }f_*(M\otimes_A B)\simeq f_*M\otimes_{f_*A}f_*B\,.\]
          are equivalences. This is easily seen to be true since $f_*$ is symmetric monoidal and commutes with geometric realization, and the tensor product commutes with geometric realization in each variable.
    \end{proof}
    Finally we arrive at the main result of this section.
    
    \begin{theorem}\label{theorem:modules-are-pullbacks}
        The square~\eqref{eqn:diagram-of-mods} is cartesian for every $\infty$-operad $\CC^\otimes$.
    \end{theorem}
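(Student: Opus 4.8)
The plan is to verify the hypotheses of a standard criterion for recognizing cartesian squares of $\infty$-categories fibered over a common base (here $\Finp$), namely \cite[Proposition~2.4.2.11]{HTT} or the dual combined with a fiberwise check: since all four corners of \eqref{eqn:diagram-of-mods} live over $\Finp$ and both vertical maps are maps of $\infty$-operads, it suffices to check that (i) the square becomes a pullback after passing to underlying categories (fiber over $\langle 1\rangle$), and (ii) the horizontal functors preserve inert arrows and the induced map on the total space is an equivalence on fibers over each $\langle n\rangle\in\Finp$. More precisely, I would reduce the claim to showing that for each $n$, the map of fibers
\[
\Mod^{\Finp}(\CC)^\otimes_{\langle n\rangle}\longrightarrow \left(\Finp\times\CAlg(\CC)\right)_{\langle n\rangle}\times_{\CAlg(\CC)^\otimes_{\langle n\rangle}}\Alg_{\CMod^\otimes}(\CC)^\otimes_{\langle n\rangle}
\]
is an equivalence of $\infty$-categories. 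By Proposition~\ref{prop:modules-using-tCMod} applied to $X=\langle n\rangle$ (viewed over $\Finp$ via the identity, so $\tCMod^\otimes_{\langle n\rangle}=\tCMod^\otimes\times_{\Finp}\{\langle n\rangle\}$), the left-hand side is $\Alg_{\tCMod^\otimes_{\langle n\rangle}}(\CC)$, while the right-hand side can be unwound using the same proposition for $\CMod^\otimes$ and for $X\times\Finp$, so the statement becomes a purely operadic decomposition of $\tCMod^\otimes_{\langle n\rangle}$ in terms of copies of $\CMod^\otimes$ glued along $\CAlg$.

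\textbf{Key steps.} First I would spell out the fiber $\tCMod^\otimes_{\langle n\rangle}$ explicitly: its objects over $\langle m\rangle\in\Finp$ (the second coordinate) are tuples recording a commutative algebra together with an $n$-indexed family of modules, and the commutation square of Construction above says precisely that this $\infty$-operad is the "$n$-fold fiber product over $\CAlg$" of $\CMod^\otimes$ with itself. Concretely, I would check that the functor $\phi\colon\Finp\times\CMod^\otimes\to\tCMod^\otimes$ restricted over $\{\langle n\rangle\}$, together with the $n$ inert projections $\tCMod^\otimes_{\langle n\rangle}\to\CMod^\otimes$ collapsing all but the $i$-th coordinate, exhibits $\tCMod^\otimes_{\langle n\rangle}$ as the limit (in $\Op_\infty$, or equivalently as a pullback of the relevant categories of semi-inert arrows) of the diagram with $n$ copies of $\CMod^\otimes$ and one copy of $\Finp\times\CAlg$. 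Since $\Alg_{(-)}(\CC)$ sends such colimits of operads (Boardman--Vogt-type pushouts along $\Finp$) to limits of $\infty$-categories by \cite[Remark~2.1.3.4]{HA} and the analysis in \cite[\S3.2.4]{HA}, this gives the desired equivalence on each fiber. I would then invoke Lemma~\ref{lemma:cocart-arrows-in-mod}(1) to match inert arrows on both sides, so that the square is a map of $\infty$-operads that is an equivalence fiberwise, hence a pullback.

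\textbf{Main obstacle.} The delicate point is not the fiberwise statement but checking that the comparison functor is compatible with the \emph{operadic} (multi-)structure, i.e. with the active arrows encoding tensor products of modules over possibly different algebras — equivalently, that the horizontal functors in \eqref{eqn:diagram-of-mods} are maps of generalized $\infty$-operads and that \cite[Proposition~2.4.2.11]{HTT} (or \cite[Corollary~B.3.15]{HA}) applies. Here I would lean on Lemma~\ref{lemma:cocart-arrows-in-mod}(2): the cocartesian arrows over the active map $\langle 2\rangle\to\langle 1\rangle$ in $\Mod^{\Finp}(\CC)^\otimes$ are detected by the bar construction, and the bar functor $B$ factors through $\tCMod^\otimes$; thus the relative tensor product of modules is already visible inside $\tCMod^\otimes_{\langle 2\rangle}$ and is preserved by $\phi$. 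Combined with the observation that $\phi$ and $j_2$ send inert to inert, this upgrades the fiberwise equivalence to an equivalence of $\infty$-operads over $\Finp$, completing the proof that \eqref{eqn:diagram-of-mods} is cartesian. The one genuinely technical verification I expect to cost effort is bookkeeping the coinitial/cofinal arguments (analogous to the $\CQ(S)\subseteq\CP(S)$ argument in the proof of Proposition~\ref{prop:modules-using-tCMod}) needed to see that the relevant diagram of semi-inert arrows underlying $\tCMod^\otimes_{\langle n\rangle}$ really is the iterated pullback claimed.
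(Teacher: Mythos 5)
There is a genuine gap in your argument on two counts, one conceptual and one technical, and together they make the strategy incomplete as stated.

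First, your entire strategy — checking that both vertical arrows are (co)cartesian fibrations, invoking Lemma~\ref{lemma:cocart-arrows-in-mod}(2) and the bar construction, and then passing to a fiberwise check — presupposes hypotheses on $\CC^\otimes$ that a general $\infty$-operad does not satisfy. The bar construction in Lemma~\ref{lemma:cocart-arrows-in-mod}(2) computes relative tensor products of modules, and the map $\Alg_{\CMod^\otimes}(\CC)^\otimes\to\CAlg(\CC)^\otimes$ is a cocartesian fibration only when $\CC^\otimes$ is a symmetric monoidal $\infty$-category compatible with geometric realizations; for a bare $\infty$-operad these cocartesian lifts need not exist. The paper's proof recognizes this and first establishes the statement in the cocomplete symmetric monoidal case, then bootstraps: any small operad embeds as a full suboperad of $\CP(\operatorname{Env}\CC)^\otimes$ (the cartesian square restricts along full suboperads), and the general case follows by writing an arbitrary operad as a filtered union of small ones. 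Your proposal has no analogue of this reduction, so even if your fiberwise argument were airtight, it would only prove the theorem under extra hypotheses.

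Second, even granting the nice hypotheses, your fiberwise check is substantially harder than necessary and, as written, rests on a claim you have not proved and have stated inconsistently. You want to identify the fiber over $\langle n\rangle$ with an $n$-fold fiber product of module categories over $\CAlg(\CC)$, and you propose to do this by exhibiting $\tCMod^\otimes_{\langle n\rangle}$ as a decomposition in $\Op_\infty$ of $n$ copies of $\CMod^\otimes$ glued along $\Finp$. You call this "the limit... of the diagram" and in the next sentence invoke that "$\Alg_{(-)}(\CC)$ sends such colimits of operads... to limits"; what you actually need is that $\tCMod^\otimes_{\langle n\rangle}$ is a \emph{colimit} (an iterated pushout along $\Finp$) in $\Op_\infty$, and you give no argument for this. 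Colimits of $\infty$-operads are notoriously delicate, and establishing this pushout formula would be a nontrivial piece of work comparable to Proposition~\ref{prop:modules-using-tCMod} itself. The paper sidesteps this entirely: since the comparison map is a map of generalized $\infty$-operads (equivalently, of Segal-type cocartesian fibrations over $\Finp$), it suffices to check equivalence on the fibers over $\langle 0\rangle$ and $\langle 1\rangle$ only, where the identification is an immediate instance of Proposition~\ref{prop:modules-using-tCMod} with $X=\Delta^0$ and no decomposition of $\tCMod^\otimes_{\langle n\rangle}$ is needed. If you want to rescue the all-$n$ approach, you would need to supply the pushout statement in $\Op_\infty$ and still, separately, the reduction to the cocomplete case — at which point you have done strictly more work than the paper's route.
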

    \begin{proof}
        Let us do first the case where $\CC^\otimes$ is a symmetric monoidal $\infty$-category compatible with geometric realizations. Then both vertical arrows are cocartesian fibrations by \cite[Theorem~4.5.3.1]{HA} and Lemma~\ref{lemma:families-of-commutative-algebras}. Moreover the description of cocartesian arrows in Lemma~\ref{lemma:cocart-arrows-in-mod} and \cite[Proposition~3.2.4.3.(4)]{HA} shows that
        \[\Mod^{\Finp}(\CC)^\otimes\to (\Finp\times\CAlg(\CC))\times_{\CAlg(\CC)^\otimes} \Alg_{\CMod^\otimes}(\CC)^\otimes\]
        is a map of cocartesian fibrations over $\Finp$. So it suffices to show that it induces an equivalence on fibres. Since it is a map of generalized operads, it suffices to show it induces an equivalence on the fibres over $\langle 0\rangle$ and $\langle 1\rangle$. But this is immediate by Proposition~\ref{prop:modules-using-tCMod}.
        
        Now let us show the result for small $\infty$-operads $\CC$. Indeed, it is clear by inspection that if the square~\eqref{eqn:diagram-of-mods} is cartesian for an $\infty$-operad, then it is cartesian for any full suboperad. But every small $\infty$-operad embeds as a full suboperad of a symmetric monoidal $\infty$-category compatible with small colimits. Indeed this is just the composition $\CC^\otimes\to \operatorname{Env}\CC^\otimes\to \CP(\operatorname{Env}\CC)^\otimes$ where $\operatorname{Env}\CC^\otimes$ is the symmetric monoidal envelope of $\CC^\otimes$, and the second arrow is the Yoneda embedding.
    
        Finally, since every $\infty$-operad is a sufficiently filtered union of small suboperads, the thesis is true for any $\infty$-operad.
    \end{proof}

\bibliographystyle{plain}
\bibliography{reference}

\end{document}